\newcommand{\urltilde}{\kern -.15em\lower .7ex\hbox{~}\kern .04em}  
\newcommand{\diag}{\mathop{\mathrm{diag}}\nolimits}
\newcommand{\lie}{\mathop{\mathrm{Lie}}\nolimits}
\newcommand{\bipartite}{\mathop{\mathrm{bipartite}}\nolimits}
\newcommand{\Jordan}{\mathop{\mathrm{Jordan}}\nolimits}
\newcommand{\Proj}{\mathop{\mathrm{Proj}}\nolimits}
\newcommand{\incl}{\mathop{\mathrm{incl}}\nolimits}
\newcommand{\pr}{\mathop{\mathrm{pr}}\nolimits}
\newcommand{\Sympl}{\mathop{\mathrm{Sympl}}\nolimits}
\newcommand{\Ad}{\mathop{\mathrm{Ad}}\nolimits}
\newcommand{\ad}{\mathop{\mathrm{ad}}\nolimits}
\newcommand{\codim}{\mathop{\mathrm{codim}}\nolimits}  
\newcommand{\im}{\mathop{\mathrm{Im}}\nolimits} 
\newcommand{\ev}{\mathop{\mathrm{ev}}\nolimits} 
\newcommand{\gr}{\mathop{\mathrm{gr}}\nolimits} 
\newcommand{\In}{\mathop{\mathrm{In}}\nolimits} 
\newcommand{\nil}{\mathop{\mathrm{nil}}\nolimits} 
\newcommand{\tors}{\mathop{\mathrm{tors}}\nolimits} 
\newcommand{\I}{\mathop{\mathrm{I}_n}\nolimits} 
\newcommand{\Inoindex}{\mathop{\mathrm{I}}\nolimits}
\newcommand{\coker}{\mathop{\mathrm{coker}}\nolimits}
\newcommand{\rev}{\mathop{\mathrm{rev}}\nolimits}
\newcommand{\lex}{\mathop{\mathrm{lex}}\nolimits}
\newcommand{\spec}{\mathop{\mathrm{Spec}}\nolimits}  
\newcommand{\wt}{\mathop{\mathrm{wt}\,}\nolimits}
\newcommand{\tr}{\mathop{\mathrm{tr}}\nolimits}
\newcommand{\rk}{\mathop{\mathrm{rk}}\nolimits} 
\newcommand{\Tr}{\mathop{\mathrm{tr}}\nolimits}
\newcommand{\Rees}{\mathop{\mathbf{Rees}}\nolimits} 
\newcommand{\Filt}{\mathop{\mathbf{Filt}}\nolimits}
\newcommand{\Id}{\mathop{\mathbf{Id}}\nolimits}
\newcommand{\id}{\mathop{\mathrm{Id}}\nolimits} 
\newcommand{\vect}{\mathop{\mathrm{Vect}}\nolimits} 
\newcommand{\Mod}{\mathop{\mathrm{Mod}}\nolimits} 
\newcommand{\Rep}{\mathop{\mathrm{Rep}}\nolimits} 
\newcommand{\Hom}{\mathop{\mathrm{Hom}}\nolimits} 
\newcommand{\Hilb}{\mathop{\mathrm{Hilb}}\nolimits} 
\newcommand{\Ext}{\mathop{\mathrm{Ext}}\nolimits} 
\newcommand{\End}{\mathop{\mathrm{End}}\nolimits}
\newif\if@borderstar
   \def\bordermatrix{\@ifnextchar*{%
       \@borderstartrue\@bordermatrix@i}{\@borderstarfalse\@bordermatrix@i*}%
   }
   \def\@bordermatrix@i*{\@ifnextchar[{\@bordermatrix@ii}{\@bordermatrix@ii[()]}}
   \def\@bordermatrix@ii[#1]#2{%
   \begingroup
     \m@th\@tempdima8.75\p@\setbox\z@\vbox{%
       \def\cr{\crcr\noalign{\kern 2\p@\global\let\cr\endline }}%
       \ialign {$##$\hfil\kern 2\p@\kern\@tempdima & \thinspace %
       \hfil $##$\hfil && \quad\hfil $##$\hfil\crcr\omit\strut %
       \hfil\crcr\noalign{\kern -\baselineskip}#2\crcr\omit %
       \strut\cr}}%
     \setbox\tw@\vbox{\unvcopy\z@\global\setbox\@ne\lastbox}%
     \setbox\tw@\hbox{\unhbox\@ne\unskip\global\setbox\@ne\lastbox}%
     \setbox\tw@\hbox{%
       $\kern\wd\@ne\kern -\@tempdima\left\@firstoftwo#1%
         \if@borderstar\kern2pt\else\kern -\wd\@ne\fi%
       \global\setbox\@ne\vbox{\box\@ne\if@borderstar\else\kern 2\p@\fi}%
       \vcenter{\if@borderstar\else\kern -\ht\@ne\fi%
         \unvbox\z@\kern-\if@borderstar2\fi\baselineskip}%
         \if@borderstar\kern-2\@tempdima\kern2\p@\else\,\fi\right\@secondoftwo#1 $%
     }\null \;\vbox{\kern\ht\@ne\box\tw@}%
   \endgroup
   }
\DeclarePairedDelimiter{\floor}{\lfloor}{\rfloor}
\newtheorem{theorem}{Theorem}[section] 
\newtheorem{proposition}[theorem]{Proposition}
\newtheorem{lemma}[theorem]{Lemma}
\newtheorem{example}[theorem]{Example}
\newtheorem{definition}[theorem]{Definition}
\newtheorem{remark}[theorem]{Remark} 
\newtheorem{corollary}[theorem]{Corollary}
\newtheorem{notation}[theorem]{Notation}
\newtheorem{weight}[theorem]{Remark (Weight Function)}
\newtheorem{motivation}[theorem]{Motivation}
\newtheorem{construction}[theorem]{Construction}
\newtheorem{trace}[theorem]{Trace}
\newtheorem{strategy}[theorem]{Strategy}
\newtheorem{setup}[theorem]{Set-up}
\newtheorem{monomial}[theorem]{Weighted Monomial Order}
\newtheorem{coordinatering}[theorem]{Systematic Procedure (off-diag coords of $\mathbb{C}[\mu^{-1}(0)^{rss}]$)}
\begin{document}

\title{On semi-invariants of filtered representations of quivers and the cotangent bundle of the enhanced Grothendieck-Springer resolution
        }
\author{Mee Seong Im}
\department{Mathematics}
\phdthesis
\advisor{Thomas Nevins}
\degreeyear{2014}
\committee{	Professor Rinat Kedem, Chair\\
						Professor Thomas Nevins, Director of Research\\
						Professor Maarten Bergvelt\\
						Professor Henry Schenck}
\maketitle 
 
\frontmatter 
 
\begin{abstract}   
We introduce the notion of filtered representations of quivers, 
which is related to usual quiver representations, but is a systematic generalization of conjugacy classes of  
$n\times n$ matrices to (block) upper triangular matrices up to conjugation by invertible (block) upper triangular matrices. 
With this notion in mind, we describe the ring of invariant polynomials for interesting families of quivers, namely, finite $ADE$-Dynkin quivers and affine type $\widetilde{A}$-Dynkin quivers. We then study their relation to an important and fundamental object in representation theory called the Grothendieck-Springer resolution, and we conclude by stating several conjectures, suggesting further research.  
\end{abstract}

\begin{dedication}
 In memoriam of    \\ 
 my Grandfather Heejae Im  (1917 - 1977),  \\ 
 my Grandmother Saun Im  (1916 - 2006),  \\ 
 my Grandfather Young Woon Lee (1907 - 2007),  \\ 
 my Aunt Deok Soon (Lee) Bae (1952 - 1992),  \\
 my Aunt Sam Soon (Lee) Yoon (1941 - 2010),  \\  
  and \\ 
 my Cousin Hye Joo Lee (1974 - 2007).  \\ 
\end{dedication}

\chapter*{Acknowledgments}  
 The author would like to thank her Committee Members Maarten Bergvelt, Rinat Kedem, Tom Nevins, and Hal Schenck for serving on her doctoral dissertation committee and for carefully reading the results in this thesis.   
 She would also like to thank Tom Nevins for proposing a series of interesting open problems in representation theory and algebraic geometry, proving a pathway into the immensely fascinating and delightful world of representation theory.  
 The author would like to acknowledge Maarten Bergvelt, Rinat Kedem, Tom Nevins, and Hal Schenck for positive encouragement and guidance during her studies at the University of Illinois. 

 The author is grateful to Michael DiPasquale, Arindam Roy, Jimmy Jianyun Shan, 
 and Jinhyung To (and others) for everlasting friendship over the years and 
 she would like to thank the Department of Mathematics and the Department of Physics at the University of Georgia, the Department of Mathematics at the University of Birmingham (United Kingdom), and the Department of Mathematics at the University of Illinois at Urbana-Champaign for vigorously and enthusiastically training the author in mathematics and in physics and providing numerous research opportunities in classical and computational algebraic geometry, thermodynamics and semiconductors, logic and nonstandard analysis, and representation theory. 
She is also eternally indebted to Mrs. Aileen Hutchins Lovern (mathematics) and Madame Diane Noonan (professeur de Fran\c{c}ais) for the distinctive and excellent teaching at Parkview High School through classes, mathematics club, club Fran\c{c}ais, Mu Alpha Theta (mathematics honors society), Soci\'et\'e Honoraire de Fran\c{c}ais, evening and weekend outings for mathematics competitions, et pour offrir des possibilit\'{e}s de voyager \`a France \`{a} des fins \'{e}ducatives. 
 Finally, she would like to thank her family and her Grandmother Geum Youn Lee (1917 - present) for their unwaivering patience and constant support.  
  
The author was supported by NSA grant H98230-12-1-0216, by Campus Research Board, and by NSF grant DMS 08-38434.  

  
\tableofcontents  



%

\mainmatter 

\chapter{Introduction}\label{chapter:introduction}

This dissertation studies a refinement of the notion of a representation of a quiver by attaching filtrations of vector spaces to the space of quiver representations and restricting to the subspace of representations that preserves the filtration.  
It is related to very interesting developments in algebraic geometry and representation theory, in particular, to Khovanov-Lauda and Rouquier algebras (KLR-algebras) which are graded algebras whose representation theory is related to categorification of quantum groups (\cite{MR2525917}, \cite{MR2763732}, \cite{Rouquier-2-Kac-Moody-algebras}, \cite{Brundan-quiver-Hecke-algebras}), graded cyclotomic $q$-Schur algebras as a quotient of a convolution algebra in the study of quiver varieties via a subset of quiver representations with a notion of a flag (\cite{Hu-Mathas-quiver-schur-algebras-I} and \cite{Stroppel-Webster-quiver-schur-algebras-q-fock-space}), 
  generalized Grothendieck-Springer resolutions whose fibers are quiver flag varieties (\cite{MR2838836}), etc.    
  In the case of KLR-algebras, KLR-algebras of type $A$ are isomorphic to cyclotomic Hecke algebras (\cite{MR2551762}, \cite{Rouquier-2-Kac-Moody-algebras}), which in turn are isomorphic to cyclotomic quiver Schur algebras (\cite{Stroppel-Webster-quiver-schur-algebras-q-fock-space}, Theorem 6.3).  
   However, these developments will not be treated here. Instead, we focus on the most basic algebraic problem about such spaces, namely to describe their function theory. 
   This amounts to a generalization of classical problems of invariant theory to this new context.    
 
We begin with some definitions.  
Let $Q=(Q_0,Q_1)$ be a quiver with vertices $Q_0$, arrows $Q_1$, and head $Q_1\stackrel{h}{\rightarrow}Q_0$ and tail 
$Q_1\stackrel{t}{\rightarrow}Q_0$ maps.  
A  {\em dimension vector} $\beta$ for a quiver $Q$ is an element of $\mathbb{Z}_{\geq 0}^{Q_0}$; in other words, a dimension vector consists of a choice of non-negative integer $\beta_i$ for each $i\in Q_0$.     
Let
\begin{displaymath}
Rep(Q,\beta) :=
\bigoplus_{a \in Q_1}\Hom_{\mathbb{C}}(\mathbb{C}^{\beta_{t(a)}},\mathbb{C}^{\beta_{h(a)}}). 
 \end{displaymath}
 An element of $Rep(Q,\beta)$ is determined by a choice of linear map $\mathbb{C}^{\beta_{t(a)}}\xrightarrow{A_a} \mathbb{C}^{\beta_{h(a)}}$ for each $a\in Q_1$. 
%
%
There is a natural action of the group
$\displaystyle{\mathbb{G}_{\beta} := \prod_{i\in Q_0} GL_{\beta_i}(\mathbb{C})}$ on $Rep(Q,\beta)$ where each factor $GL_{\beta_i}(\mathbb{C})$ acts via change of basis in $\mathbb{C}^{\beta_i}$.

 We say a quiver is an {\em (affine) Dynkin quiver} if the underlying graph has the structure of an (affine) Dynkin graph.  
We say a quiver is a {\em $k$-Kronecker quiver} if the quiver has two vertices and exactly $k$ arrows (of any direction) between them, and we say a quiver is an {\em $m$-Jordan quiver} if the quiver has exactly one vertex and $m$ arrows at that vertex such that $ta=ha$ for each arrow $a$. 
   
\begin{example}\label{example:1-Jordan-1-Kronecker-quivers}
The quiver 
$\xymatrix@-1pc{\bullet \ar@(ru,rd)}\hspace{6mm}$ is called a {\em $1$-Jordan quiver} while 
$\xymatrix@-1pc{\bullet \ar[rr] & & \bullet}$ is called a {\em $1$-Kronecker quiver} or an {\em $A_2$-Dynkin quiver}.   These are examples of more general classes of quivers described in Section~\ref{subsection:introduction-to-quivers}. 

Let $Q$ denote the $1$-Jordan quiver and  
choose the dimension vector $\beta=n$.  
Then $Rep(Q,\beta)=\mathfrak{gl}_n$, the space of $n\times n$ matrices, and  
$\mathbb{G}_{\beta} = GL_n(\mathbb{C})$ acts by conjugation on $\mathfrak{gl}_n$. 

Let $Q'$ denote the $1$-Kronecker quiver and let $\beta'=(m,n)$. Then $Rep(Q',\beta') = M_{n\times m}$ is the space of 
$n\times m$ matrices, and $\mathbb{G}_{\beta} = GL_m(\mathbb{C})\times GL_n(\mathbb{C})$ acts on $M_{n\times m}$ via 
$(g,h).A=hAg^{-1}$.   
\end{example}

   The study of the space $Rep(Q,\beta)$ with its $\mathbb{G}_{\beta}$-action represents a far-reaching generalization of the classical study of similarity classes of matrices, i.e., the study of linear operators up to change of basis.

\vspace{.5em}

Now, we introduce a new, refined analogue of quiver representations.  Given a quiver $Q$ and a dimension vector $\beta = (\beta_i)_{i\in Q_0}$, choose a sequence $\gamma^1, \gamma^2, \dots, \gamma^N = \beta$ of dimension vectors so that for each $k$ and each vertex $i\in Q_0$,
$\gamma^k_i\leq \gamma^{k+1}_i$.  
For each $i\in Q_0$, one gets a filtration of $\mathbb{C}^{\beta_i}$ by taking
\begin{displaymath}
\mathbb{C}^{\gamma^1_i}\subseteq \mathbb{C}^{\gamma^2_i}\subseteq\dots \subseteq \mathbb{C}^{\beta_i}
\end{displaymath}
to be the standard sequence in which each $\mathbb{C}^k$ is spanned by the first $k$ standard basis vectors.  For example, if
$\beta_i = n$ for every $i$, we could take $\gamma_i^k = k$ for every $i$ and $k$: then we are choosing the standard filtration by coordinate subspaces at each vertex of the quiver $Q$.

Given a sequence 
$\gamma^1, \gamma^2, \dots, \gamma^N = \beta$ of dimension vectors as above,
let $F^{\bullet}Rep(Q,\beta)$ denote the subspace of $Rep(Q,\beta)$ consisting of those representations $(A_a)_{a\in Q_1}$ whose linear maps $A_a$ preserve a fixed sequence of vector spaces at every level: that is, $A_a(\mathbb{C}^{\gamma^k_{t(a)}}) \subseteq \mathbb{C}^{\gamma^k_{h(a)}}$ for every $k$ and $a$.  
Let $P_i\subseteq GL_{\beta_i}(\mathbb{C})$ be the subgroup of linear automorphisms preserving the filtration of vector spaces at vertex $i$; this is a parabolic subgroup. 
Then the product $\mathbb{P}_{\beta}:= \displaystyle{\prod_{i\in Q_0}P_i}$ of parabolic groups acts on 
$F^{\bullet}Rep(Q,\beta)$ as a change-of-basis.

\begin{example}\label{example:1-Jordan-1-Kronecker-filtered-quiver-representations}
Consider the $1$-Jordan quiver. 
Equip $\mathbb{C}^n$ with the complete standard flag of vector spaces,
\begin{displaymath}
\{0\} \subset \mathbb{C}^1\subset \mathbb{C}^2\subset\dots\subset \mathbb{C}^n.
\end{displaymath}
 Then 
$F^{\bullet}Rep(Q,\beta)=\mathfrak{b}$, the vector space of upper triangular matrices, and the group $\mathbb{P}_{\beta}=B$ of invertible upper triangular matrices acts on $\mathfrak{b}$ via conjugation. 

Now consider the $1$-Kronecker quiver and let $m=n$ (so $\beta'=(n,n)$). Let $F^{\bullet}$ be the complete standard filtration of vector spaces at each vertex. Then  
$F^{\bullet}Rep(Q',\beta')=\mathfrak{b}$, and we have  
$\mathbb{P}_{\beta'}=B\times B$ acting on $\mathfrak{b}$ via the left-right action: $(b,d).A=dAb^{-1}$. 
\end{example}  

  The study of the space $F^\bullet Rep(Q,\beta)$ with its $\mathbb{P}_{\beta}$-action thus represents a generalization of the study of upper triangular linear operators up to upper-triangular change of basis. 
 
Next, we introduce some definitions from invariant theory.   
Let $\chi:G\rightarrow \mathbb{C}^*$ be an algebraic group homomorphism from a group $G$ to the multiplicative group $\mathbb{C}^*$ of $\mathbb{C}$; the map $\chi$ is called a {\em character} of $G$.   
\begin{definition}\label{definition:invariant-semi-invariant-polynomials-introduction}  
Let $X$ be a $G$-space and let $f\in \mathbb{C}[X]$.  
If $f(g \cdot x)=f(x)$ for all $g\in G$ and for all $x\in X$, then $f$ is called an 
{\em invariant polynomial}. 
If $f(g\cdot x)=\chi(g)f(x)$ for all $g\in G$ and $x\in X$, where $\chi$ is a character of $G$, then 
$f$ is called a {\em $\chi$-semi-invariant polynomial}. 
\end{definition} 

\begin{definition}\label{definition:invariant-semi-invariant-polynomials-coordinate-rings-introduction}
We define 
\[ \mathbb{C}[X]^G :=\{ f\in \mathbb{C}[X]: f(g\cdot x)=f(x) \mbox{ for all }g\in G \mbox{ and }x\in X\} \hspace{1em} \text{and}
\]  
\[ \mathbb{C}[X]^{G,\chi}:=\{f\in \mathbb{C}[X]:f(g\cdot x)=\chi(g)f(x)\mbox{ for all }g\in G \mbox{ and }x\in X\}. 
\]  
\end{definition}  

The problem of  characterizing the spaces $\mathbb{C}[Rep(Q,\beta)]^{\mathbb{G}_{\beta},\chi}$ for a quiver $Q$ and all algebraic characters $\chi$ of $\mathbb{G}_{\beta}$ is the problem of describing the {\em semi-invariants of quivers}. 

Let $\det^i$ be the $i^{th}$ power of the determinant function, as a polynomial function of a matrix. 
 
\begin{example}\label{example:1Jordan-1Kronecker-classical-setting-invariants} 
If $Q$ is the $1$-Jordan quiver and $\beta=n$, then 
$\mathbb{C}[\mathfrak{gl}_n]^{GL_n(\mathbb{C})}=\mathbb{C}[\tr(A),\ldots, \det(A)]$, the ring of symmetric functions in the eigenvalues of an $n\times n$ matrix, and 
$\displaystyle{\bigoplus_{i\in \mathbb{Z}}} \mathbb{C}[\mathfrak{gl}_n]^{GL_n(\mathbb{C}),\det^i} \cong \mathbb{C}[\mathfrak{gl}_n]^{GL_n(\mathbb{C})}$ since there are no $\det^i$-semi-invariants ($|i|>0$) for the $GL_n(\mathbb{C})$-adjoint action on $\mathfrak{gl}_n$.  
If 
$Q'$ is the $1$-Kronecker quiver and $\beta'=(n,n)$, then $GL_n(\mathbb{C})\times GL_n(\mathbb{C})$-invariants are 
$\mathbb{C}[M_{n\times n}]^{GL_n(\mathbb{C})\times GL_n(\mathbb{C})}=\mathbb{C}$, while semi-invariants are 
$\displaystyle{\bigoplus_{(i,j)\in\mathbb{Z}^2}} \mathbb{C}[M_{n\times n}]^{GL_n(\mathbb{C}) \times GL_n(\mathbb{C}),\det^i\det^j}\cong \mathbb{C}[\det(A)]$. 
\end{example} 
 \noindent
The semi-invariants of quivers were completely described in work of Derksen-Weyman, Schofield-Van den Bergh, and Domokos-Zubkov (cf. \cite{MR1958904}, \cite{MR1908144}, \cite{MR1825166}), following work of many authors over a number of years.  

The principal problem addressed in this dissertation is the characterization of $\mathbb{C}[F^\bullet Rep(Q,\beta)]^{\mathbb{P}_{\beta},\chi}$, i.e., of 
{\em semi-invariants of filtered quivers}.\footnote{Note that this terminology is slightly abusive, since it is the representations, not the quivers themselves, that are filtered.}  The techniques of Derksen-Weyman, Schofield-Van den Bergh, and Domokos-Zubkov are based on classical invariant theory of semisimple or reductive groups and do not apply in the filtered situation.  As a result, the problem seems to be much harder, and we do not achieve a complete solution here.  However, we achieve results in many cases of interest in 
representation theory and algebraic geometry.

As a sample of the results presented here, consider $ADE$-Dynkin quivers and affine type $\widetilde{A}_r$-Dynkin quivers.   
If $Q$ if an $ADE$-Dynkin quiver, we will write $r$ to mean $Q$ has $r$ vertices and $r-1$ arrows.

Let $\mathbb{P}_{\beta} = \displaystyle{\prod_{i\in Q_0}} P_i$ be a product of parabolic groups as defined above. 
 We will write the ring of $\mathbb{P}_{\beta}$-semi-invariant polynomials as 
 $\displaystyle{\bigoplus_{\chi}} \mathbb{C}[F^{\bullet}Rep(Q,\beta)]^{\mathbb{P}_{\beta},\chi}$.   
Let $\mathfrak{t}_n$ be the space of complex diagonal matrices in $\mathfrak{gl}_n$.    
 
\begin{theorem}\label{theorem:filtered-ADE-Dynkin-quiver-introduction} 
\mbox{}
\begin{enumerate}
\item\label{item:ADE-Dynkin-quiver-filtered-si-introduction} If $Q$ is an ADE-Dynkin quiver and  
$\beta=(n,\ldots, n)\in \mathbb{Z}_{\geq 0}^{Q_0}$, 
then  
\begin{displaymath}
\displaystyle{\bigoplus_{\chi}} \mathbb{C}[F^{\bullet}Rep(Q,\beta)]^{\mathbb{P}_{\beta},\chi} \cong \mathbb{C}[\mathfrak{t}_n^{\oplus r-1}].
\end{displaymath} 
\item\label{item:affine-Ar-Dynkin-quiver-filtered-si-introduction} If $Q$ is an affine type $\widetilde{A}_r$-Dynkin quiver and $\beta=(n,\ldots, n)\in \mathbb{Z}_{\geq 0}^{Q_0}$, then 
\begin{displaymath}
\displaystyle{\bigoplus_{\chi}} \mathbb{C}[F^{\bullet}Rep(Q,\beta)]^{\mathbb{P}_{\beta},\chi} \cong \mathbb{C}[\mathfrak{t}_n^{\oplus r+1}].
\end{displaymath}
\end{enumerate}
\end{theorem}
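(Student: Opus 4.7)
The plan is to reduce the computation of semi-invariants to that of $U^{Q_0}$-invariants (for $U\subseteq B$ the unipotent radical), identify the diagonal entries of each arrow as obvious invariants, and then show by an orbit/slice argument that these generate everything; the identification with $\mathbb{C}[\mathfrak{t}_n^{\oplus|Q_1|}]$ then comes from a simple arrow count. Concretely, identify $F^\bullet Rep(Q,\beta)=\bigoplus_{a\in Q_1}\mathfrak{b}$ and $\mathbb{P}_\beta=B^{Q_0}$, with $B=T\ltimes U$; the action on the $a$-factor is $(b_v)_v\cdot A_a=b_{h(a)}A_a b_{t(a)}^{-1}$. Every algebraic character of $B^{Q_0}$ factors through $T^{Q_0}$, so every semi-invariant is automatically $U^{Q_0}$-invariant and, conversely, the $T^{Q_0}$-weight decomposition of $\mathbb{C}[F^\bullet Rep(Q,\beta)]^{U^{Q_0}}$ recovers the full graded ring of semi-invariants:
\begin{equation*}
\bigoplus_\chi \mathbb{C}[F^\bullet Rep(Q,\beta)]^{\mathbb{P}_\beta,\chi} \;=\; \mathbb{C}[F^\bullet Rep(Q,\beta)]^{U^{Q_0}}.
\end{equation*}
Because each diagonal coordinate function $(A_a)_{ii}$ is $U^{Q_0}$-invariant (unipotent elements have $1$'s on the diagonal) and transforms under $T^{Q_0}$ with an independent character, the $n|Q_1|$ coordinates $(A_a)_{ii}$ generate a polynomial subring $\mathbb{C}[\mathfrak{t}_n^{\oplus|Q_1|}]\subseteq\mathbb{C}[F^\bullet Rep(Q,\beta)]^{U^{Q_0}}$.

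\textbf{Step 2: the reverse inclusion via diagonalisation.} The substantive step is to prove that this inclusion is an equality. The key computation is that if $A_a\in\mathfrak{b}$ has nonzero diagonal $D_a=\diag A_a$, then $A_aD_a^{-1}\in U$, so setting $u:=(A_aD_a^{-1})^{-1}\in U_{h(a)}$ yields $u\cdot A_a=uA_a=D_a$. Using this elementary fact, I would show that every $U^{Q_0}$-orbit in the open locus $\Omega\subseteq F^\bullet Rep(Q,\beta)$ of tuples with nowhere-vanishing diagonal meets the slice $\{(D_a)_a:D_a\in\mathfrak{t}_n\}$; any $U^{Q_0}$-invariant polynomial is then determined by its restriction to the slice, which visibly lies in $\mathbb{C}[\mathfrak{t}_n^{\oplus|Q_1|}]$. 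Combined with Step~1 and the arrow counts $|Q_1|=r-1$ (for ADE trees) and $|Q_1|=r+1$ (for the $\widetilde{A}_r$-cycle), this gives both parts of the theorem.

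\textbf{Main obstacle.} In the ADE tree case the orbit-to-slice claim is straightforward: root the tree, list the arrows in breadth-first order, and at each step apply the diagonalisation above to the current arrow using the unipotent at its head; acyclicity of $Q$ guarantees that each $U_v$ is deployed at most once and that earlier normalisations are undisturbed. The hard part will be the cyclic case $\widetilde{A}_r$, where every vertex is shared between two arrows of the cycle, so a naive sweep around the loop returns to the starting arrow and reintroduces off-diagonal entries. My plan here is a dimension/Jacobian argument: the off-diagonal coordinates form a space of dimension $(r+1)n(n-1)/2=\dim U^{Q_0}$, and I would compute directly (using the cyclic structure) that the derivative of the $U^{Q_0}$-action on the off-diagonal fiber over a generic diagonal point is invertible; this shows the generic fiber of the projection to the diagonal slice is a single $U^{Q_0}$-orbit and forces equality of invariant rings. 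Controlling the codimension-$\geq 1$ degeneracy locus of this Jacobian and ruling out spurious rational invariants that are not polynomial is the principal technical challenge.
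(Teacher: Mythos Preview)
Your reduction to $\mathbb{U}_\beta=U^{Q_0}$-invariants in Step~1 is correct and is exactly what the paper does. Your slice/orbit approach to the reverse inclusion, however, differs from the paper's and contains one concrete error.

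\textbf{The error in the tree case.} Your algorithm ``use the unipotent at the head of each arrow'' fails whenever a vertex is the head of more than one arrow, which happens already for $A_3$ with orientation $1\to 2\leftarrow 3$: both arrows have head~$2$, so $U_2$ would be deployed twice, and you cannot simultaneously diagonalise both by left multiplication by a single $u_2$. The fix is easy---process edges by BFS from a root and at each edge use the unipotent at the endpoint \emph{farther from the root} (this may be the tail of the arrow, in which case you use the right action $A_a\mapsto A_a u^{-1}$ with $u=D_a^{-1}A_a\in U$). With this correction each non-root $U_v$ is used exactly once and your tree argument goes through.

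\textbf{Comparison with the paper.} The paper avoids the orbit/slice machinery entirely. It argues directly on the polynomial: impose a total order on pairs $(i,j)$ with $i<j$, pick the least pair and the least arrow index $m$ for which the off-diagonal variable ${}_{(m)}a_{ij}$ actually occurs in $f$, and act by a one-parameter subgroup $u_{ij}\subset\mathbb{U}_\beta$ sitting inside a single factor $U_m$. This subgroup shifts ${}_{(m)}a_{ij}$ by $\pm u\cdot(\text{diagonal entry})$ while fixing every variable earlier in the order; expanding $u_{ij}.f-f$ as a polynomial in $u$ and using linear independence of monomials forces the offending coefficients to vanish, a contradiction. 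This argument is insensitive to whether the underlying graph is a tree or a cycle: the $\widetilde{A}_r$ case requires only one extra sub-case (when $m=1$, the chosen vertex is incident to both $a_1$ and $a_{r+1}$, so two variables shift simultaneously and one expands in both), and no genericity, density, closed-orbit, or Jacobian computation is needed. What you flag as the ``principal technical challenge'' simply does not arise.

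Your Jacobian plan for the cycle is correct in principle: at a diagonal tuple $(D_a)_a$ the derivative of the $U^{Q_0}$-action on the off-diagonal fibre is $(X_v)_v\mapsto(X_{h(a)}D_a-D_aX_{t(a)})_a$, and chasing around the cycle shows this is invertible precisely when the ordered product of the $D_a$ has pairwise distinct diagonal entries; combined with the fact that unipotent orbits on affine varieties are closed, this gives that the generic fibre is a single orbit. But the paper's algebraic argument is shorter, handles all orientations at once, and treats the ADE and $\widetilde{A}_r$ cases uniformly.
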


More general versions of this theorem are given in Section~\ref{section:framed-filtered-quiver-var-affine-Dynkin-type} (Theorem~\ref{theorem:invariants-of-framed-affine-Ar-quiver})  
  and 
Section~\ref{section:semi-invariants-quivers-at-most-two-pathways} (Theorem~\ref{theorem:two-paths-max-quiver-semi-invariants} and Theorem~\ref{theorem:two-paths-max-quiver-semi-invariants-framed}).

In Chapter~\ref{chapter:background}, we review some background essential to the results in this thesis.   
In Chapter~\ref{chapter:filtered-quiver-varieties}, we relate filtered quiver varieties to quiver Grassmannians and quiver flag varieties through objects called universal quiver Grassmannian (cf. Section~\ref{subsection:quiver-grassmannians}) 
and universal quiver flag (cf. Section~\ref{subsection:quiver-flag-variety}).   
In Chapter~\ref{chapter:categories-filt-graded-quiver-varieties}, we explain Wolf's construction of reflection functors for quiver flag varieties (\cite{wolf2009geometric}, Theorem 5.16) and in Chapter~\ref{chapter-semi-invariants-filtered-quiver-vars}, we study the ring of semi-invariant polynomials for various families of filtered quiver varieties. 

In Chapter~\ref{chapter:rss-locus}, we study the Hamiltonian reduction of the cotangent bundle of the enhanced Grothendieck-Springer resolution restricted to the regular semisimple locus, 
and in Chapter~\ref{chapter:future-direction}, we conclude this thesis with a precise description of open problems and statements of conjectures, further motivating and suggesting research in the direction of filtered quiver varieties.


\chapter{Background}\label{chapter:background}

\section{Representations of quivers}\label{section:reps-of-quivers-intro} 

We refer to lecture notes by   
Michel Brion \cite{Brion-rep-of-quivers},   
William Crawley-Boevey \cite{Crawley-Boevey-rep-quivers},  
Victor Ginzburg \cite{Ginzburg-Nakajima-quivers},  
and  
Alastair King \cite{MR1315461}  
for an in-depth discussion of algebraic and geometric aspects of quiver varieties. 
Here, we mention some basic properties of quivers.

\subsection{Introduction to quivers}\label{subsection:introduction-to-quivers}  

We begin with a definition of a quiver. 
 
\begin{definition}
A {\em quiver} 
$Q=(Q_0, Q_1)$ is a directed graph: that is, it has a set $Q_0=\{1,2,\ldots, p\}$ of vertices 
and a set $Q_1=\{ a_1,a_2,\ldots, a_q\}$ of arrows, 
which come equipped with two functions: for each arrow  
$\stackrel{i}{\bullet}
            \stackrel{a}{\longrightarrow} \stackrel{j}{\bullet}$, 
            $t:Q_1\rightarrow Q_0$ maps $t(a)=ta=i$ and 
            $h:Q_1\rightarrow Q_0$ maps $h(a)=ha=j$.  We will call $t(a)$ the 
            {\em tail} of arrow $a$ and $h(a)$ the {\em head} of arrow $a$. 
\end{definition}

%
%

\begin{definition} 
 We say a quiver $Q=(Q_0,Q_1)$ is {\em nontrivial} if $|Q_0|\geq 1$,  
 {\em finite} if $|Q_0|<\infty$ and $|Q_1|< \infty$, 
 and {\em connected} if the underlying graph is connected. 
 \end{definition}

In this thesis, we will restrict all discussions to nontrivial, finite, and connected quivers.

\begin{definition}
We say a vertex $i\in Q_0$ is a  
{\em sink}  
 if it is not the head of some arrow of the quiver and 
the vertex is a 
{\em source}  
if it is not the tail of some arrow of the quiver.  
\end{definition} 

\begin{definition}
We say a vertex 
$i\in Q_0$ is {\em $+$-admissible} 
if $i$ is a sink and it is {\em $-$-admissible} if $i$ is a source, i.e.,  
\[ 
\xymatrix@-1pc{
\ar[rrr] & & & \stackrel{+}{\bullet} & & & \ar[lll] \\ 
 & & &   & & &  \\ 
 & \ar[rruu] & & \cdots & & \ar[lluu] & \\ 
}, \hspace{4mm}
\xymatrix@-1pc{
& & &\ar[lll] \ar[lldd] \stackrel{-.}{\bullet} \ar[rrdd] \ar[rrr] & & & \\ 
& & &         & & &   \\  
& & &  \cdots & & &   \\  
} 
\]  
\end{definition}

\begin{definition}\label{definition:finite-ADE-Dynkin-quiver-introduction} 
We say $Q$ is an {\em $ADE$-Dynkin} quiver if the underlying graph of $Q$ is of type $A_r$, $D_r,$ $E_6$, $E_7$, or $E_8$:
\[
  \begin{tabular}{c|c}
  \multicolumn{1}{c}{\bfseries Type} & \multicolumn{1}{c}{\bfseries Graph}   \\ 
  \hline 
  $A_r$ &
  		$\xymatrix{
  		 \stackrel{1}{\bullet}\ar@{-}[r]^{ }    
  & \stackrel{2}{\bullet} \ar@{-}[r]^{ }   
  & \cdots \ar@{-}[r]^{ }  
  & \stackrel{r-1}{\bullet} \ar@{-}[r]^{ }  
  & \stackrel{r}{\bullet}
  		}$    \\ 
  $D_r$ &
    $\xymatrix{  
   & & & & \stackrel{r-1}{\bullet} \\  
  \stackrel{1}{\bullet}	\ar@{-}[r]^{ } &  
	\stackrel{2}{\bullet}	\ar@{-}[r]^{ } &  
	\ldots 								\ar@{-}[r]^{ } &  
\stackrel{r-2}{\bullet} \ar@{-}[ur]^{ } \ar@{-}[dr]^{ } & \\ 
  & & & & \stackrel{r}{\bullet} \\   
}$   \\ 
  $E_6$ & 
$ 
\xymatrix@-1pc{
& & \stackrel{4}{\bullet} & &  \\ 
\stackrel{1}{\bullet}\ar@{-}[r]^{ }  & 
\stackrel{2}{\bullet}\ar@{-}[r]^{ }  & 
\stackrel{3}{\bullet}\ar@{-}[r]^{ } \ar@{-}[u]^{ } & 
\stackrel{5}{\bullet}\ar@{-}[r]^{ }  & 
\stackrel{6}{\bullet} \\ 
}$     \\ 
  $E_7$ & 
  $  \xymatrix@-1pc{
 & & \stackrel{4}{\bullet} & & & \\ 
\stackrel{1}{\bullet}\ar@{-}[r]^{ }  & 
\stackrel{2}{\bullet}\ar@{-}[r]^{ }  & 
\stackrel{3}{\bullet}\ar@{-}[r]^{ } \ar@{-}[u]^{ } & 
\stackrel{5}{\bullet}\ar@{-}[r]^{ }  & 
\stackrel{6}{\bullet}\ar@{-}[r]^{ }  & 
\stackrel{7}{\bullet} \\ 
}$   \\ 
  $E_8$ & $\xymatrix@-1pc{
& & \stackrel{4}{\bullet} & & & &  \\ 
\stackrel{1}{\bullet}\ar@{-}[r]^{ }  & 
\stackrel{2}{\bullet}\ar@{-}[r]^{ }  & 
\stackrel{3}{\bullet}\ar@{-}[r]^{ } \ar@{-}[u]^{ } & 
\stackrel{5}{\bullet}\ar@{-}[r]^{ }  & 
\stackrel{6}{\bullet}\ar@{-}[r]^{ }  & 
\stackrel{7}{\bullet}\ar@{-}[r]^{ }  & 
\stackrel{8}{\bullet}. 
 \\ 
} $    \\ 
  \end{tabular}
\]  
\end{definition}

If $Q$ if an $ADE$-Dynkin quiver, we will assume $Q$ has $r$ vertices and $r-1$ arrows.

\begin{definition}\label{definition:affine-Ar-dynkin-quiver-intro}
We say $Q$ is of {\em affine (type) $\widetilde{A}_r$-Dynkin} if the underlying graph is the affine $\widetilde{A}_r$-Dynkin graph: 
\[ 
\xymatrix@-1pc{ 
 & &\stackrel{r+1}{\bullet} \ar@{-}[lld]_{} &\ldots  &\stackrel{5}{\bullet}  & &  \\   
 \stackrel{1}{\bullet} \ar@{-}[rdr]^{} & & & & & & \stackrel{4}{\bullet}. \ar@{-}[llu]_{} \\  
 & & \stackrel{2}{\bullet} \ar@{-}[rr]^{} & &  \stackrel{3}{\bullet}  \ar@{-}[rru] & & \\   
} 
\]
\end{definition} 

Note that $\widetilde{A}_0$ has one vertex and one arrow whose head equals its tail, and $\widetilde{A}_1$ has two vertices joined by two edges. 
Next, we will give two important classical constructions using quivers. 

\begin{definition}\label{definition:framing-to-construct-semi-invariants}   
Let $Q=(Q_0,Q_1)$ be a quiver. 
The quiver $Q^{\dagger}=(Q_0^{\dagger},Q_1^{\dagger})$ 
is called a 
{\em framed quiver} 
if $Q_0^{\dagger} = Q_0 \coprod Q_0^{\natural}$ where for each vertex $i\in Q_0$, there is a copy $i^{\natural}$ of $i$ in $Q_0^{\natural}$ such that $|Q_0^{\natural}|=|Q_0|$, 
		and    
${Q}_1^{\dagger}=Q_1\coprod Q_1^{\natural}$, where $Q_1^{\natural}$ contains arrows of the form  
$\stackrel{i^{\natural}}{\circ} \stackrel{\iota_i }{\longrightarrow} \stackrel{i}{\bullet}$ 
for each pair of vertices $i$ and $i^{\natural}$, $i\in Q_0$.  
\end{definition}

Other notions of framings are given in \cite{MR1302318}, \cite{MR1604167} and \cite{MR1834739}. 
Nakajima defines his framing as in Definition~\ref{definition:framing-to-construct-semi-invariants}, except the arrows in $Q_1^{\natural}$ are in opposite orientation (\cite{Ginzburg-Nakajima-quivers}, Section 3), 
while Crawley-Boevey adjoins exactly one framed vertex, calls it $\infty$,  
and adds $\beta_i$ additional arrows from vertex $i\in Q_0$ to $\infty$, for each vertex $i\in Q_0$ (\cite{MR1834739}, page 261).   

\begin{definition}\label{definition:double-quiver-classical-setting}
Let $Q=(Q_0, Q_1)$ be a quiver. 
The quiver 
$\overline{Q}=(\overline{Q}_0,\overline{Q}_1)$ 
is called a 
{\em double quiver} 
if  
$\overline{Q}_0 = Q_0$ and 
$\overline{Q}_1 = Q_1 \coprod Q_1^{op}$, where for each arrow $a\in Q_1$, there is an additional arrow $a^{op}\in Q_1^{op}$ 
such that 
$ta^{op}=ha$ and 
$ha^{op}=ta$, with $|Q_1^{op}|=|Q_1|$. 
\end{definition}

\subsection{Paths of a quiver}

\begin{definition}
A nontrivial {\em path} in $Q$ is a sequence $p=a_k\cdots a_2 a_1$ ($k\geq 1$) of arrows which satisfies 
$t(a_{i+1})=h(a_i)$  
for all $1\leq i\leq k-1$. 
\end{definition} 

The path $p= a_k\cdots a_2 a_1$ begins at the tail of $a_1$ and ends at the head of $a_k$ 
and we will write $h(p)=h(a_k)$ and $t(p)=t(a_1)$.

\begin{definition}
The  
{\em length} $l(p)$ of a path $p$ is the number of arrows in the path. 
\end{definition}

If  $p= a_k\cdots a_2 a_1$ is a nontrivial path, then $l(p)= k$. 

\begin{definition}
To each vertex 
$i\in Q_0$, we associate a path $e_i$ called the {\em trivial (empty) path} 
  whose head and tail are at $i$. 
\end{definition}  
 
We say the length of an empty path is $0$.

\begin{definition}
 If the tail of a nontrivial path   
 equals the head of the path,  
then the path is said to be a {\em cycle}, 
and we say a quiver is {\em acyclic} if it has no cycles.  
If the nontrivial path is actually a single arrow whose tail equals its head, then the arrow is said to be a {\em loop}.  
\end{definition}

\subsection{The path algebra of a quiver}
 
\begin{definition} 
The {\em path algebra} $\mathbb{C}Q$ of $Q$ 
is the $\mathbb{C}$-algebra with basis the paths in 
$Q$, with the product of two nontrivial paths $p$ and $q$ given by 
\[ 
p\circ q = 
\begin{cases}  
pq, \mbox{ concatenation of sequences } &\mbox{ if } tp=hq, \\  
0 &\mbox{ if } tp\not=hq, \\  
\end{cases}  
\]  
and trivial paths having the following properties: 
\[  
e_i e_j :=  
\begin{cases} 
e_i &\mbox{ if } i    = j \\ 
0   &\mbox{ if } i\not= j \\
\end{cases}, \hspace{4mm}
p e_i := 
\begin{cases}  
p   &\mbox{ if } tp    = i \\ 
0   &\mbox{ if } tp \not= i \\
\end{cases}, \hspace{4mm}
e_i p := 
\begin{cases}  
p   &\mbox{ if } hp    = i \\ 
0   &\mbox{ if } hp \not= i \\
\end{cases},  \mbox{ and } \sum_{i\in Q_0}e_i=1. 
\]       
%
 The $e_i$'s are called {\em (mutually) orthogonal idempotents} and 
 $\displaystyle{\sum_{i\in Q_0}e_i=1}$  
 is the identity element in $\mathbb{C}Q$.   
\end{definition} 
 
 Since the composition of paths is associative, $\mathbb{C}Q$ is an associative algebra. 
 We may identify $Q_0$ with the trivial paths   
 while we many identify $Q_1$ with the set of all paths of length $1$. 
 If we identify $Q_n$ with the set of all paths of length $n$, then $\mathbb{C}Q_n$ is a subspace of $\mathbb{C}Q$ spanned by $Q_n$, i.e., 
$\displaystyle{\mathbb{C}Q =\bigoplus_{n\geq 0}\mathbb{C}Q_n}$, 
 and 
 since $l(pq)=l(p)+l(q)$ whenever the composition of $p$ and $q$ is defined, 
 we have the inclusion 
 $(\mathbb{C}Q_k)(\mathbb{C}Q_l)\subseteq \mathbb{C}Q_{k+l}$. 
 Thus, the path algebra is a graded algebra.

 Note that $\mathbb{C}Q$ is a finite-dimensional vector space if and only if $Q$ contains no cycles.

\begin{example}\label{example:A3-quiver-background-path-algebra} 
Let $Q$ be an $A_3$-Dynkin quiver: 
\[ \xymatrix{
\stackrel{1}{\bullet} \ar[rr]^{a_1} & & \stackrel{2}{\bullet} \ar[rr]^{a_2} & & \stackrel{3}{\bullet}.  
}  
\]  
The path algebra of $Q$ is  $\mathbb{C}Q= \mathbb{C}e_1 \oplus \mathbb{C}e_2 \oplus \mathbb{C}e_3 \oplus \mathbb{C}a_1 \oplus \mathbb{C}a_2 \oplus \mathbb{C}a_2a_1$,   
which is $6$ dimensional, and 
$\mathbb{C}Q$ is isomorphic to the space of $3\times 3$ upper triangular matrices via the following map: 
\[ \begin{aligned} 
e_1 &\mapsto 
\begin{pmatrix}      
0 & 0 & 0 \\
0 & 0 & 0 \\
0 & 0 & 1      
\end{pmatrix}, 
\hspace{4mm} 
e_2 \mapsto 
\begin{pmatrix} 
0 & 0 & 0 \\ 
0 & 1 & 0 \\
0 & 0 & 0  
 \end{pmatrix}, \hspace{4mm} 
e_3 \mapsto 
\begin{pmatrix}    
1 & 0 & 0 \\ 
0 & 0 & 0 \\
0 & 0 & 0   
\end{pmatrix}, 										 \\ 
a_1    &\mapsto 
\begin{pmatrix}    
0 & 0 & 0 \\ 
0 & 0 & 1 \\
0 & 0 & 0   
 \end{pmatrix}, \hspace{4mm} 
a_2     \mapsto 
\begin{pmatrix}   
0 & 1 & 0 \\  
0 & 0 & 0 \\ 
0 & 0 & 0   
 \end{pmatrix}, \hspace{4mm}  
a_2a_1 \mapsto  
\begin{pmatrix} 
0 & 0 & 1 \\ 
0 & 0 & 0 \\ 
0 & 0 & 0   
 \end{pmatrix}. 											\\  
\end{aligned}  
\] 
One extends the above map by linearity to have 
\[  \mathbb{C}Q 
\cong  
\begin{pmatrix}              
\mathbb{C} 		  & \mathbb{C}  & \mathbb{C} \\ 
0 							& \mathbb{C}  & \mathbb{C} \\
0 							& 0 				  & \mathbb{C}  
\end{pmatrix}.  
\]   
\end{example}

\begin{example} 
Let $Q$ be a $3$-Kronecker {\em equioriented} (all arrows are pointing in the same direction) quiver: 
$
\xymatrix@-1pc{
\stackrel{1}{\bullet} \ar@/^/[rrr]^{a_1} \ar[rrr]|{a_2} \ar@/_/[rrr]_{a_3}& & & \stackrel{2}{\bullet} \\
}. 
$ 
Then the path algebra $\mathbb{C}Q$ $=$ 
$\mathbb{C}e_1\oplus \mathbb{C}e_2\oplus \mathbb{C}a_1\oplus \mathbb{C}a_2 \oplus \mathbb{C}a_3$ is isomorphic to 
\[ 
\left\{ 
\begin{pmatrix}  
v_1 & w   \\ 
0   & v_2    
\end{pmatrix} : v_1, v_2 \in \mathbb{C}, w\in \mathbb{C}^3 
\right\} 
\]  
with product defined as 
\[ 
\begin{pmatrix}  
v_1 & w \\ 
0 & v_2 \\ 
\end{pmatrix}  
\begin{pmatrix} 
v_1' & w' \\ 
0 & v_2' \\ 
\end{pmatrix} 
= 
\begin{pmatrix}   
v_1 v_1' & v_1 w'+ v_2' w \\   
0        & v_2 v_2'   
\end{pmatrix}.   
\] 
The isomorphism is given via the map 
\[ 
e_1\mapsto 
\begin{pmatrix} 
 0& (0,0,0)\\ 
 0& 1       
\end{pmatrix}
, \hspace{4mm}
e_2\mapsto 
\begin{pmatrix} 
 1& (0,0,0)\\ 
 0& 0       
\end{pmatrix}, 
\]
\[ 
a_1\mapsto \begin{pmatrix} 
 0& (1,0,0)\\ 
 0& 0        
\end{pmatrix}, \hspace{4mm}
a_2\mapsto \begin{pmatrix} 
 0& (0,1,0)\\  
 0& 0       
\end{pmatrix}, \hspace{4mm}
a_3\mapsto \begin{pmatrix} 
 0& (0,0,1)\\  
 0& 0        
\end{pmatrix}.   \\
\] 
\end{example}

\begin{example} 
Let $Q$ be the $1$-Jordan quiver $\xymatrix@-1pc{\stackrel{1}{\bullet}\ar@(ru,rd)^a}$. Then paths of $Q$ consist of $e_1, a,a^2,\ldots$. Thus there exists an isomorphism  $\mathbb{C}Q\rightarrow \mathbb{C}[x]$, sending $e_1\mapsto 1$ and $a\mapsto x$. 
\end{example} 

\begin{example} 
Let $Q$ be the $4$-Jordan quiver: 
\[ 
\xymatrix@-1pc{
\stackrel{1}{\bullet} \ar@(ul,ur)^{a_1} \ar@(ur,dr)^{a_2} \ar@(dr,dl)^{a_3} \ar@(dl,ul)^{a_4} \\ 
}.
\] 
Then the map $\mathbb{C}Q\stackrel{\cong}{\longrightarrow}
\mathbb{C}\langle x_1,x_2,x_3,x_4\rangle$ given by 
$e_1\mapsto 1, a_i\mapsto x_i$ for $1\leq i\leq 4$ implies 
$\mathbb{C}Q$ is a free algebra on $4$ noncommuting variables.  
\end{example}

Definition~\ref{definition:quiver-with-relations-Brion} is given by Michel Brion. 

\begin{definition}\label{definition:quiver-with-relations-Brion} 
A 
{\em relation} 
of a quiver $Q$ is a subspace of $\mathbb{C}Q$ spanned by linear combinations of paths having a common source and a common target, and of length at least $2$. 
%
A 
{\em quiver with relations} is a pair $(Q,I)$, where $Q$ is a quiver and $I$ is a two-sided ideal of $\mathbb{C}Q$ generated by relations. 
The  
{\em quotient algebra}  
$\mathbb{C}Q/I$ is the path algebra of $(Q,I)$. 
\end{definition} 

\begin{example}
Let $Q$ be an $A_3$-Dynkin quiver given in Example~\ref{example:A3-quiver-background-path-algebra} 
and let  
$I = \langle a_2a_1 \rangle$, an ideal in $\mathbb{C}Q$. Then 
$\mathbb{C}Q/\langle a_2a_1 \rangle 
\cong  
\mathfrak{b}_3/\mathfrak{u}_3^+$,  
 where $\mathfrak{b}_3$ is the space of $3\times 3$ complex upper triangular matrices and 
\[ 
\mathfrak{u}_3^+ := \left\{ 
\begin{pmatrix} 
0 & 0 & u_{13} \\ 
0 & 0 & 0 \\ 
0 & 0 & 0 \\ 
\end{pmatrix}: 
u_{13}\in \mathbb{C} \right\}.  
\]  
\end{example}

Next, we introduce the following concepts since we restrict to quivers with specific properties in Section~\ref{section:semi-invariants-quivers-at-most-two-pathways}.

\begin{definition}\label{definition:reduced-path}
Let $Q=(Q_0,Q_1)$ be a quiver. 
Let $p=a_k\cdots a_2 a_1$ be a path where $a_i\in Q_1$ are arrows. 
If $p$ is a cycle, then 
we define $p^m$ to be the path composed with itself $m$ times, i.e., 
\[ 
p^m := p\circ p \circ \cdots \circ p = \underbrace{(a_k\cdots a_2 a_1)\cdots (a_k\cdots a_2 a_1)}_{m} = (a_k\cdots a_2 a_1)^m. 
\]  
A path $p$  
is {\em reduced} if $[p]\not=0$ in   
$\mathbb{C}Q/\langle q^2: q\in \mathbb{C}Q, l(q)\geq 1 \rangle$. 
%
\end{definition}


\begin{definition}\label{definition:pathway-of-quiver} 
 A {\em pathway} from vertex $i$ to vertex $j$ is 
 a reduced path 
 from $i$ to $j$.
 We define 
{\em pathways} of a quiver $Q$ to be the set of  
 all pathways from vertex $i$ to vertex $j$, where $i,j\in Q_0$.  
\end{definition}  

Note that pathways (of a quiver $Q$)  
include trivial paths and they 
form a finite set since $Q$ is a finite quiver.  
We will now give an example of Definition~\ref{definition:pathway-of-quiver}.  

\begin{example} 
Consider the $2$-Jordan quiver: \[ 
\xymatrix@-1pc{  
\stackrel{1.}{\bullet} \ar@(ru,rd)^{a_1} \ar@(ld,lu)^{a_2} 
}
\] 
Then $a_2^2 a_1$ is a path but not a pathway since it is not reduced. 
However, the path $a_2 a_1$ is a pathway. 
\end{example}

\subsection{Spaces of quiver representations}\label{subsection:quiver-representations}

\begin{definition} 
A   
{\em representation} 
 $W$ of a quiver $Q$ 
         assigns a vector space $W(i) = W_i$ 
         to each vertex $i\in Q_0$ and a linear map $W(a):W(ta)\rightarrow W(ha)$ 
         to each arrow $a\in Q_1$. 
\end{definition} 

\begin{definition} 
A representation $W=(W(i)_{i\in Q_0}, W(a)_{a\in Q_1})$ of $Q$ is  
{\em finite dimensional} 
if each vector space $W(i)$ is finite dimensional over $\mathbb{C}$.  
\end{definition}

\begin{definition}
A   
{\em subrepresentation}  
of a representation $W$ of $Q$   
is a subspace $V\subseteq W$ which is invariant under all operators: 
$W(a)(V(ta))\subseteq V(ha)$, where $a\in Q_1$. 
\end{definition}

\begin{definition}
A nonzero representation $W$ of $Q$ is {\em irreducible}, or {\em simple}, 
if the only subrepresentations of $W$ are $0$ and itself. 
\end{definition} 

Let $V,W$ be representations of $Q$. 
Then $V\oplus W$ has the structure of a representation of $Q$ given by 
$V(a)\oplus W(a):V(ta)\oplus W(ta)\rightarrow V(ha)\oplus W(ha)$ for all $a\in Q_1$. 

\begin{definition}
A nonzero representation $W$ of $Q$ is {\em indecomposable} if it is not isomorphic to a direct sum of two nonzero representations. 
\end{definition}

A map $W\rightarrow V$ of finite-dimensional representations $W$ and 
$V=(V(i)_{i\in Q_0}, V(a)_{a\in Q_1})$ is a family of $\mathbb{C}$-linear maps 
$\phi_i:W(i)\rightarrow V(i)$ for $i\in Q_0$ such that the diagram 
\[   
\xymatrix@-1pc{   
W(ta) \ar[dd]_{\phi_{ta}} \ar[rrr]^{W(a)} & & & W(ha) \ar[dd]^{\phi_{ha}}\\ 
& & & \\   
V(ta) \ar[rrr]^{V(a)} & & & V(ha) \\   
}   
\]   
commutes for all $a\in Q_1$. 
We will write $\phi=(\phi_i)_{i\in Q_0}$ to mean the family of linear maps for $W\rightarrow V$. 
The composition of maps of representations 
\[ 
W \stackrel{\phi}{\longrightarrow}V \stackrel{\psi}{\longrightarrow}U 
\] 
is given by $(\psi\circ\phi)_{i\in Q_0} := (\psi_i\circ \phi_i)_{i\in Q_0}$.  
Thus the composition of maps is associative and we will write   
$\id_W := (\id_{W(i)})_{i\in Q_0}$ to be the identity map of representations.

The definition of a dimension vector $\beta\in \mathbb{Z}_{\geq 0}^{Q_0}$ is given in Chapter~\ref{chapter:introduction}.

\begin{definition}\label{definition:epsilon-i-simple-dimension-vector}
We define $\epsilon_i \in \mathbb{Z}_{\geq 0}^{Q_0}$ to be the dimension vector with $1$ in the coordinate corresponding to $i\in Q_0$ and $0$ elsewhere.  
\end{definition}

Definition~\ref{definition:Euler-inner-product} and  Definition~\ref{definition:symmetrization-of-Ringel-form} 
are important in Section~\ref{subsection:derksen-weyman-technique} and Chapter~\ref{chapter:categories-filt-graded-quiver-varieties}. 

\begin{definition}\label{definition:Euler-inner-product}  
We define 
$\displaystyle{\langle \alpha,\beta\rangle}$ $:=$ 
$\displaystyle{\sum_{i\in Q_0} \alpha_i \beta_i - \sum_{a\in Q_1} \alpha_{ta}\beta_{ha}}$
to be the {\em Euler inner product}  
(or {\em Euler bilinear form} or {\em Ringel form})  
on dimension vectors $\alpha$ and $\beta$.    
\end{definition} 

\begin{definition}\label{definition:symmetrization-of-Ringel-form}
We define $(\alpha,\beta)_Q := \langle \alpha,\beta \rangle + \langle \beta,\alpha \rangle$ to be the symmetrization of the Ringel form on dimension vectors $\alpha$ and $\beta$. 
\end{definition}

\begin{definition}\label{definition:simple-representation-one-dim-vs-one-vertex}
We will define $S_i$ to be the representation of a quiver $Q$ of dimension vector $\epsilon_i\in \mathbb{Z}_{\geq 0}^{Q_0}$. 
\end{definition}

That is, $S_i$ is the representation with a $1$-dimensional vector space at vertex $i\in Q_0$ and $0$ elsewhere. 
 
Now, let $W$ be a representation of $Q$ of dimension vector $\beta\in \mathbb{Z}_{\geq 0}^{Q_0}$.  
Upon   
fixing a basis for each finite-dimensional vector space $W(i)$,   
each $W(i)$ is identified with $\mathbb{C}^{\beta_{i}}$ and each linear map 
$\xymatrix{ \stackrel{W(ta)}{\bullet} \ar[r]^{W(a)} & \stackrel{W(ha)}{\bullet}}$ 
may be identified with a  
$\beta_{ha}\times \beta_{ta}$ matrix.   
We will thus define 
\[ 
Rep(Q,\beta) := 
\displaystyle{\bigoplus_{a\in Q_1 }\Hom_{\mathbb{C}}(\mathbb{C}^{\beta_{ta}}, \mathbb{C}^{\beta_{ha} })} 
\]
throughout this thesis. 
Points in $Rep(Q,\beta)$ parameterize finite-dimensional representations of $Q$ of dimension vector $\beta$, 
and there is a natural 
$\displaystyle{\mathbb{G}_{\beta}=\prod_{i\in Q_0} GL_{\beta_i}(\mathbb{C})}$-action on $Rep(Q,\beta)$ 
as a change-of-basis; that is, given 
$(g_{\beta_i})_{i\in Q_0}\in \mathbb{G}_{\beta}$ and $W\in Rep(Q,\beta)$, 
\[ (g_{\beta_i})_{i\in Q_0}.(W(a))_{a\in Q_1} = (g_{\beta_{ha}}W(a)g_{\beta_{ta}}^{-1} )_{a\in Q_1}. 
\]  
Whenever the composition $pq$ of paths is defined,  
we set $W(pq):=W(p)W(q)$, i.e., the representation of a composition of paths is the product of representations of the paths. 
  
%
%
%
%
When we write ``general matrix" or ``general representation" of a quiver, we will mean a matrix or representation whose entries are indeterminates.

Recall Definition~\ref{definition:framing-to-construct-semi-invariants}.   
We will write ${\beta}^{\dagger} :=(\beta,\beta^{\natural})\in (\mathbb{Z}_{\geq 0}^{Q_0})^2$, where 
$\beta \in \mathbb{Z}_{\geq 0}^{Q_0}$ is the dimension vector for the original set of vertices $Q_0$
and $\beta^{\natural} \in \mathbb{Z}_{\geq 0}^{Q_0}$ is the dimension vector for the framed vertices $Q_0^{\natural}$; 
we will write 
				$W(\iota_i)\in \Hom(\mathbb{C}^{\beta_i^{\natural}}, \mathbb{C}^{\beta_i})$.  

\begin{definition}
We define 
\[ 
Rep(Q^{\dagger},\beta^{\dagger}) := \bigoplus_{a\in Q_1}\Hom_{\mathbb{C}}(\mathbb{C}^{\beta_{ta}}, \mathbb{C}^{\beta_{ha}}) 
\oplus 
\bigoplus_{i\in Q_0}\Hom_{\mathbb{C}}(\mathbb{C}^{\beta_i^{\natural}},\mathbb{C}^{\beta_i}) 
\] 
as the {\em framed quiver variety} 
of $Q^{\dagger}$ of dimension vector $\beta^{\dagger}$. 
\end{definition}

Classically, a basis for vector spaces at framed vertices is fixed; 
thus, 
there is no group action on vector spaces at framed vertices.  
Thus, $\mathbb{G}_{\beta^{\dagger}} := \mathbb{G}_{\beta} =\displaystyle{\prod_{i\in Q_0}} GL_{\beta_i}(\mathbb{C})$ acts on 
$Rep(Q^{\dagger},\beta^{\dagger})$ via the following: for 
$(g_{\beta_i})_{i\in Q_0}\in \mathbb{G}_{\beta}$ and for 
$(W(a),W(\iota_i))_{a\in Q_1, i\in Q_0}\in Rep(Q^{\dagger},\beta^{\dagger})$, 
\[ 
(g_{\beta_i})_{i\in Q_0}.(W(a),W(\iota_i))_{a\in Q_1, i\in Q_0} 
= 
(g_{\beta_{ha}} W(a) g_{\beta_{ta}}^{-1}, g_{\beta_i} W(\iota_i))_{a\in Q_1, i\in Q_0}.  
\]

Next, recall Definition~\ref{definition:double-quiver-classical-setting}. Then 
$\overline{\beta} = \beta\in \mathbb{Z}_{\geq 0}^{Q_0}$ since $\overline{Q}_0=Q_0$.  
\begin{definition}\label{definition:double-quiver-variety-classical-setting} 
We define $Rep(\overline{Q},\overline{\beta})$ $:=$  
$Rep(Q,\beta)\times Rep(Q^{op},\beta)$   
to be the {\em double quiver variety}, 
where 
$Q^{op}=(Q_0,Q_1^{op})$. 
\end{definition} 

The product $\mathbb{G}_{\overline{\beta}} := \mathbb{G}_{\beta}$ of general linear matrices acts on 
$Rep(\overline{Q},\overline{\beta})$ as a change-of-basis: given $(g_{\beta_i})_{i\in Q_0}\in \mathbb{G}_{\beta}$ and for $(W(a),W(a^{op}))_{a\in Q_1} \in Rep(\overline{Q},\overline{\beta})$, 
\[ 
(g_{\beta_i})_{i\in Q_0}. (W(a),W(a^{op}))_{a\in Q_1}  
= (g_{\beta_{ha}} W(a) g_{\beta_{ta}}^{-1}, g_{\beta_{ta}}W(a^{op})g_{\beta_{ha}}^{-1}  
)_{a\in Q_1}. 
\]

Now, given any two finite-dimensional vector spaces $V$ and $W$, there exists a canonical perfect pairing   
$\Hom(V,W) \times \Hom(W,V) \longrightarrow \mathbb{C}$, where  
$(f,g) \mapsto \Tr(f\circ g)$ $=$ $\Tr(g\circ f)$.   
This means the opposite representation space $Rep(Q^{op},\beta)$ is isomorphic to the dual $Rep(Q,\beta)^*$ of $Rep(Q,\beta)$. So 
$Rep(\overline{Q},\overline{\beta})$ $\cong$ $Rep(Q,\beta)\times Rep(Q,\beta)^*$ $\cong$ $T^*Rep(Q,\beta)$.  
The group   
$\mathbb{G}_{\beta}$ acts on    
$T^*Rep(Q,\beta)$ 
    by inducing the action of 
$\mathbb{G}_{\beta}$ on $Rep(Q,\beta)$ to $Rep(\overline{Q},\overline{\beta})$. 
The variety $T^*Rep(Q,\beta)$ is called the {\em cotangent bundle} of $Rep(Q,\beta)$. 
Associated to the $\mathbb{G}_{\beta}$-action is a moment map 
$\mu_{\beta}:T^*Rep(Q,\beta)\rightarrow \mathfrak{g}_{\beta}^* \stackrel{\Tr}{\cong} \mathfrak{g}_{\beta}$, where $\mathfrak{g}_{\beta}:=\displaystyle{\prod_{i\in Q_0}}\mathfrak{gl}_{\beta_i}$. 
Further discussion is given in Section~\ref{section:ham-reduction-symplectic-geometry}. 
  
\begin{example} 
Let $Q$ be a $2$-Dynkin quiver 
$\xymatrix@-1pc{
\stackrel{1}{\bullet}\ar[rr]^a & & \stackrel{2}{\bullet}
}$ 
and let $\beta=(1,2)$.  
Then $Rep(Q,\beta)\cong \mathbb{C}^2$.  
Let $\mathbb{G}_{\beta} = GL_1(\mathbb{C})\times GL_2(\mathbb{C})$ act on $Rep(Q,\beta)$ 
via $(g,h).W(a)$ $=$ $hW(a)g^{-1}$. 
Then the double quiver $\overline{Q}$ of $Q$ is 
$\xymatrix@-1pc{
\stackrel{1}{\bullet}\ar@/^/[rr]^a & & \ar@/^/[ll]^{a^{op}} \stackrel{2}{\bullet}
}$ 
and $T^*Rep(Q,\beta)\cong \mathbb{C}^2 \times (\mathbb{C}^2)^*$, where $(\mathbb{C}^2)^*$ is the dual vector space to $\mathbb{C}^2$. For $(g,h)\in \mathbb{G}_{\beta}$ 
and $(W(a),W(a^{op}))\in T^*Rep(Q,\beta)$, we have 
$(g,h).(W(a),W(a^{op}))$ $=$ $(hW(a)g^{-1}, gW(a^{op})h^{-1} )$. 
Let $i=W(a)$ and $j=W(a^{op})$. 
Then the moment map for the $\mathbb{G}_{\beta}$-action is  
$T^*Rep(Q,\beta)=\mathbb{C}^2 \times (\mathbb{C}^2)^*\stackrel{\mu_{\beta}}{\longrightarrow} \mathfrak{gl}_1^*\times \mathfrak{gl}_2^*$, where 
$(i,j)\mapsto (ji,ij)$. 
\end{example}

\subsection{The category of quiver representations}

Let $\Rep(Q)$ be the category of finite-dimensional representations of $Q$ and let $\mathbb{C}Q-\Mod$ be the category of finitely generated left $\mathbb{C}Q$-modules.

\begin{proposition} 
We have an equivalence of categories: $\Rep(Q)\simeq \mathbb{C}Q-\Mod$. 
\end{proposition}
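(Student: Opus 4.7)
The plan is to construct explicit functors in both directions and verify that they are mutually quasi-inverse. The underlying idea is that the primitive orthogonal idempotents $e_i$ in $\mathbb{C}Q$ decompose any module into pieces indexed by the vertices, and left multiplication by an arrow $a$ sends the piece at $t(a)$ into the piece at $h(a)$.

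First I would build the functor $F\colon\Rep(Q)\to \mathbb{C}Q\text{-}\Mod$. Given a representation $W=(W(i)_{i\in Q_0},W(a)_{a\in Q_1})$, set
\[
F(W) := \bigoplus_{i\in Q_0} W(i),
\]
and define a $\mathbb{C}Q$-action on generators by declaring $e_i$ to act as the projection onto the summand $W(i)$, and each arrow $a\in Q_1$ to act as the composition
\[
F(W) \twoheadrightarrow W(ta) \xrightarrow{\,W(a)\,} W(ha) \hookrightarrow F(W).
\]
Extending multiplicatively and linearly to all of $\mathbb{C}Q$ is consistent because the defining relations of the path algebra (orthogonality $e_ie_j=\delta_{ij}e_i$, the identities $e_{ha}a=ae_{ta}=a$, and concatenation $a_ka_{k-1}\cdots a_1$ defined only when tails match heads) all hold by construction; composition of paths becomes composition of linear maps $W(a_k)\circ\cdots\circ W(a_1)$, which is well-defined precisely when the tail/head compatibility holds. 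A morphism $\phi=(\phi_i)_{i\in Q_0}\colon W\to V$ becomes the direct sum $\bigoplus_i\phi_i\colon F(W)\to F(V)$, which is $\mathbb{C}Q$-linear exactly because the squares defining a morphism of representations commute on each arrow $a$.

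Next I would construct the reverse functor $G\colon \mathbb{C}Q\text{-}\Mod\to \Rep(Q)$ by setting
\[
G(M)(i) := e_iM, \qquad G(M)(a) := \bigl(e_{ta}M \xrightarrow{\,a\cdot\,} e_{ha}M\bigr),
\]
where $a\cdot$ denotes left multiplication by the path $a\in\mathbb{C}Q$. This lands in $e_{ha}M$ because $e_{ha}a=a$, and it is defined on $e_{ta}M$ because $ae_{ta}=a$. A $\mathbb{C}Q$-module homomorphism $f\colon M\to N$ restricts to maps $e_iM\to e_iN$ (since $f$ commutes with $e_i$) which intertwine the arrow actions, yielding a morphism of representations.

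Finally I would verify that $G\circ F\cong \id_{\Rep(Q)}$ and $F\circ G\cong \id_{\mathbb{C}Q\text{-}\Mod}$. The first is immediate: $e_iF(W)=W(i)$ and left multiplication by $a$ recovers $W(a)$. The second uses the identity $\sum_{i\in Q_0}e_i=1$ in $\mathbb{C}Q$, which gives a natural decomposition $M=\bigoplus_{i\in Q_0}e_iM$ as vector spaces, compatible with the arrow actions; this produces the natural isomorphism $FG(M)\xrightarrow{\sim}M$. The finite-dimensionality condition on $\Rep(Q)$ matches finite-dimensionality (hence finite generation, at least when $Q$ is finite) on the module side, since $F(W)$ is finite-dimensional iff each $W(i)$ is.

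The main technical nuisance, rather than an obstacle, is the careful bookkeeping that $F$ really does produce a well-defined $\mathbb{C}Q$-action: one must check that the generators-and-relations presentation of the path algebra (as given in the definition) is respected, i.e., that the zero cases ($e_ie_j=0$ for $i\neq j$, $pe_i=0$ when $tp\neq i$, and $p\circ q=0$ when $tp\neq hq$) are automatically enforced by the insertion of projections in the formula above. Once that bookkeeping is handled, the equivalence is formal.
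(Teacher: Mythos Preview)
Your proof is correct and follows essentially the same approach as the paper: both construct the functor $F$ by taking the direct sum $\bigoplus_i W(i)$ with the path algebra acting via projections and arrow maps, construct $G$ by decomposing a module via the idempotents $e_i$, and then verify that the composites are naturally isomorphic to the identity. The only cosmetic difference is that the paper writes the idempotent pieces as $Me_i$ while you write $e_iM$; your notation is the more standard one for left modules, and the content is identical.
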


\begin{proof} 
Let $W=(W(i)_{i\in Q_0}, W(a)_{a\in Q_1})$ 
be a finite-dimensional representation of $Q$, where 
$W(i)$ are finite-dimensional vector  spaces and $W(a)$ is a linear map from $W(ta)$ to $W(ha)$. 
Let $M:=
\displaystyle{\bigoplus_{i\in Q_0} W(i)}$, a $\mathbb{C}$-vector space. 
We define $\mathbb{C}Q$-module structure on $M$ as follows: for each $e_i,a\in Q_1$ and $i\in Q_0$, $e_i$ and $a$ act on $m\in M$ as follows: 
\[ 
e_i.m:= 
\begin{cases} 
m =m_i  &\mbox{ if } m\in W(i) \\
0 			&\mbox{ if } m\not\in W(i) \\
\end{cases} 
\hspace{2mm}\mbox{ and } \hspace{2mm} 
a.m := 
\begin{cases} 
W(a)(m_{ta}) &\mbox{ if } m\in W(ta) \\ 
0 					 &\mbox{ if } m\not\in W(ta), \\ 
\end{cases} 
\] 
and we extend the above actions linearly. 
This gives us a functor $F:\Rep(Q)\longrightarrow \mathbb{C}Q-\Mod$.  
Conversely, let's construct the functor $G:\mathbb{C}Q-\Mod\longrightarrow \Rep(Q)$.  
So let $M$ be a $\mathbb{C}Q$-module. 
We set $W(i):=Me_i$ for $i\in Q_0$ and for each $a\in Q_1$, define the linear maps
$W(a):W(ta)\longrightarrow W(ha)$ as $m\mapsto a.m$. 
We will now check that the maps of representations of $Q$ are precisely $\mathbb{C}Q$-module homomorphisms. 

So suppose $\{\phi_i:W(i)\rightarrow V(i), i\in Q_0 \}$ is a map of representations $W$ and $V$ of $Q$. Then we construct 
$\mathbb{C}Q$-module homomorphisms by 
\[ 
\begin{aligned} 
M &:=\bigoplus_{i\in Q_0}W(i)\stackrel{\psi}{\longrightarrow}N:=\bigoplus_{i\in Q_0}V(i), 
\mbox{ where } \\
\psi(e_i.m) &=
\begin{cases} 
\phi_i(m) &\mbox{ if } m\in W(i) \\  
 0 				&\mbox{ if } m\not\in W(i) \\  
\end{cases}  \hspace{2mm}\mbox{ and }\hspace{2mm}
\psi(a.m) = 
\begin{cases} 
\phi_{ha}(W(a)(m_{ta})) &\mbox{ if } m\in W(ta) \\ 
						0	 					&\mbox{ if } m\not\in W(ta). \\ 
\end{cases} \\
\end{aligned}
\] 
Since $\{\phi_i:i\in Q_0 \}$ is a collection of linear maps, 
we have 
\[
a.\psi(e_{ta}.m) = a.\psi(m_{ta}) = V(a)(\phi_{ta}(m_{ta})) \mbox{ and }
\psi(a.m_{ta})= \psi(W(a)(m_{ta}))
= \phi_{ha}(W(a)(m_{ta})), \mbox{ where } a\in Q_1.  
\] 
So $a.\psi(m_{ta}) = \psi(a.m_{ta})$ 
since $\{\phi_i\}$ is a map of representations. 
Thus $\psi$ is a $\mathbb{C}Q$-module homomorphism.

On the other hand, let $M\stackrel{\psi}{\longrightarrow}N$ be a $\mathbb{C}Q$-module homomorphism. 
Then we define $\{\phi_i: i\in Q_0 \}$ as follows: 
\[ 
\phi_i:W(i)\rightarrow V(i) \mbox{ where }\phi_i(e_i.m):=\psi(e_i.m) \mbox{ for all }i\in Q_0. 
\] 
If $m\in W(i)$, we will write $\phi_i(m_i)=\psi(m_i)$. 
Now, we will show that $\{\phi_i : i\in Q_0\}$ is a homomorphism of representations of $Q$. Since 
\[ 
\phi_{ha}(W(a)(m_{ta})) = \psi(W(a)(m_{ta})) = \psi(a.m_{ta}) \mbox{ and } 
V(a)(\phi_{ta}(m_{ta})) = V(a)(\psi(m_{ta}))=a.\psi(m_{ta}), 
\] 
we have $\phi_{ha}(W(a)(m_{ta}))= V(a)(\phi_{ta}(m_{ta}))$  
since $\psi$ is a $\mathbb{C}Q$-module homomorphism.  

Now consider the two functors $F$ and $G$ in 
$\xymatrix@-1pc{ Rep(Q) \ar@/^/[rr]^F & & \ar@/^/[ll]^G \mathbb{C}Q-\Mod \\
}$ 
defined above. 
Since it is clear that the two functors are mutually inverses, i.e., $F\circ G = \Id$ and $G\circ F = \Id$, we are done. 
\end{proof}

%



\subsection{Reflection functors}\label{subsection:reflection-functors-classical-setting} 
We will now define BGP reflection functors as introduced in \cite{MR0393065}.  
Let $\sigma_i(Q)$ denote the quiver obtained from $Q$ by inverting all arrows whose head or tail is $i$. 
For each vertex $i\in Q_0$, 
\[ 
\sigma_i: \mathbb{Z}Q_0 \longrightarrow \mathbb{Z}Q_0, \hspace{4mm} \beta \mapsto \beta-(\beta, \epsilon_i)_Q \epsilon_i, 
\] 
where $\epsilon_i\in \mathbb{Z}_{\geq 0}^{Q_0}$ is given in Definition~\ref{definition:epsilon-i-simple-dimension-vector} and 
$(\beta,\epsilon_i)_Q$ 
is the symmetrization of the Ringel form on $\beta$ and $\epsilon_i$ (Definition~\ref{definition:symmetrization-of-Ringel-form}).   
If $Q$ has no loops at $i\in Q_0$, then $\sigma_i^2 \beta = \beta$.

Let $i$ be a $+$-admissible vertex and $Q'=\sigma_i(Q)$. 
Let $W\in Rep(Q,\beta)$ and $W_i:=W(i)$ is the vector space at vertex $i$. 
We define the functor $S_i^+: Rep (Q,\beta)\rightarrow Rep(Q',\beta')$ as follows:  
let $W'=S_i^+ (W)$, 
where $W_j'=W_j$ if $j\not=i$ and $W_i'=\ker \phi_i^W$, where 
\[ 
\phi_i^W := \sum_{\nu=(j\rightarrow i)\in Q_1} \rho_{\nu}: \bigoplus_j W_j \rightarrow W_i, 
\] 
and for each inverted arrow $\nu=(i\rightarrow j)\in Q_1'$, we define 
$\rho_{\nu}': W_i'\rightarrow W_j=W_j'$ as the natural projection on the component 
$W_j\in \displaystyle{\bigoplus_j W_j}$.

If $i$ is a $-$-admissible vertex and $Q'=\sigma_i(Q)$, then we define the functor $S_i^-: Rep(Q,\beta)\rightarrow Rep(Q',\beta')$ as follows: 
let $W' = S_i^-(W)$, where $W_j'= W_j$ for $i\not=j$ and $W_i'=\coker \widetilde{\phi}_i^W$, where 
\[
\widetilde{\phi}_i^W := \sum_{\nu=(i\rightarrow j)\in Q_1}\rho_{\nu}: W_i \rightarrow \bigoplus_j W_j, 
\] 
and for each $\nu=(j\rightarrow i)\in Q_1'$, we define $\rho_{\nu}':W_j=W_j'\rightarrow W_i'$ 
by restriction of the projection 
$\displaystyle{\bigoplus_j W_j} \rightarrow \coker \widetilde{\phi}_i^W$ to $W_j$. 

Let $S_i$ be the representation defined in Definition~\ref{definition:simple-representation-one-dim-vs-one-vertex}. 
Theorems~\ref{theorem:reflection-functors-classical-setting} and \ref{theorem:reflection-functor-classical-setting-bijective-correspondence} 
are classical results used to relate representations of two quivers. 
\begin{theorem}\label{theorem:reflection-functors-classical-setting} 
Let $W$ be an indecomposable representation of $Q$. 
\begin{enumerate} 
\item Let $i\in Q_0$ be $+$-admissible. Then $S_i^+(W)=0$ if and only if $W=S_i$. 
If $W\not= S_i$, then $S_i^+(W)=W'$ is an indecomposable representation, 
\begin{equation}\label{equation:reflection-functor-classical-setting}
\dim W_i' = \sum_{j\rightarrow i} \dim W_j-\dim W_i, 
\end{equation}  
and $S_i^- S_i^+(W)\cong W$. 
\item Let $i\in Q_0$ be $-$-admissible. Then $S_i^-(W)=0$ if and only if $W=S_i$. 
If $W\not=S_i$, then 
$S_i^-(W)=W'$ is indecomposable, \eqref{equation:reflection-functor-classical-setting} holds, 
and $S_i^+S_i^-(W)\cong W$. 
\end{enumerate}
\end{theorem}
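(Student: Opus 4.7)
My plan is to prove part (1) in detail; part (2) follows by a straightforward dualization (reverse all arrows and interchange the roles of $\ker$ and $\coker$).

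\textbf{Step 1: The vanishing criterion.} If $W = S_i$ then $\phi_i^W$ has domain $\bigoplus_{j\to i} W_j = 0$, so $W_i' = \ker\phi_i^W = 0$ and $W_j' = W_j = 0$ for $j\neq i$; hence $S_i^+(W) = 0$. Conversely, suppose $S_i^+(W) = 0$. Then $W_j = W_j' = 0$ for all $j\neq i$, and since $i$ is a sink of $Q$, the representation $W$ is concentrated at the single vertex $i$ with no structure maps (no arrows leave $i$, and all arrows into $i$ start at vertices where $W$ vanishes). Thus $W$ is just the vector space $W_i$ viewed as a representation, which decomposes as $S_i^{\oplus \dim W_i}$. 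Indecomposability forces $W = S_i$.

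\textbf{Step 2: Surjectivity of $\phi_i^W$ for indecomposable $W \neq S_i$.} If $\im \phi_i^W \subsetneq W_i$, pick a vector-space complement $V'$ so $W_i = \im \phi_i^W \oplus V'$. Because $i$ is a sink, the subspace $V'$ receives no image from any $W_j$ and has no outgoing arrows to worry about, so the subrepresentation supported on $V'$ at vertex $i$ and zero elsewhere is a direct summand of $W$, isomorphic to $S_i^{\oplus \dim V'}$. Since $W$ is indecomposable and $W\neq S_i$, this forces $V' = 0$, i.e.\ $\phi_i^W$ is surjective. The dimension formula $\dim W_i' = \sum_{j\to i}\dim W_j - \dim W_i$ is then immediate from the rank--nullity theorem.

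\textbf{Step 3: Inversion $S_i^- S_i^+ (W) \cong W$.} In $Q' = \sigma_i(Q)$ the vertex $i$ is a source, and applying $S_i^-$ to $W'$ we must compute $\coker \widetilde{\phi}_i^{W'}$, where $\widetilde{\phi}_i^{W'}\colon W_i' \to \bigoplus_{i\to j\ \text{in}\ Q'} W_j' = \bigoplus_{j\to i\ \text{in}\ Q} W_j$. By construction $W_i' = \ker\phi_i^W$ includes into $\bigoplus_{j\to i} W_j$ via $\widetilde{\phi}_i^{W'}$, so this map is injective and its cokernel is $\bigoplus_{j\to i}W_j/\ker\phi_i^W$, which by Step~2 is canonically isomorphic to $W_i$ via the surjection $\phi_i^W$. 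Unwinding the definitions of the reflection maps, one checks that the natural isomorphism at vertex $i$ together with the identity at $j\neq i$ is compatible with all structure maps, yielding an isomorphism of representations $S_i^-S_i^+(W)\cong W$.

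\textbf{Step 4: Indecomposability of $W'$.} The cleanest route is to observe that $S_i^+$ and $S_i^-$ are functors (they act on morphisms in the obvious way) and that Step~3 together with its analogue $S_i^+S_i^-(W')\cong W'$ (using Step~2 and the fact that $\widetilde{\phi}_i^{W'}$ is injective) shows that $S_i^+$ and $S_i^-$ restrict to mutually inverse equivalences between the full subcategory of $\Rep(Q)$ of representations with $\phi_i^W$ surjective and the full subcategory of $\Rep(Q')$ of representations with $\widetilde{\phi}_i^{W'}$ injective. Equivalences of categories preserve the property of being indecomposable (since they induce ring isomorphisms on endomorphism rings), so $W' = S_i^+(W)$ is indecomposable.

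The main obstacle I anticipate is Step~4: one has to verify carefully that the functor $S_i^+$ really does take morphisms to morphisms (i.e.\ a map of representations restricts to a map between the kernels $\ker\phi_i^W$ and $\ker\phi_i^V$) and that the isomorphism in Step~3 is natural in $W$, so that $S_i^-\circ S_i^+$ is naturally isomorphic to the identity functor on the relevant subcategory. Once naturality is in hand, the indecomposability statement is formal.
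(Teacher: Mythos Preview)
The paper does not prove this theorem; it is stated as a classical result due to Bernstein--Gelfand--Ponomarev (with a citation to \cite{MR0393065}) and is used without proof. Your argument is correct and is essentially the standard BGP proof: the key points---that indecomposability of $W\neq S_i$ forces $\phi_i^W$ to be surjective (whence the dimension formula), that surjectivity of $\phi_i^W$ and injectivity of $\widetilde{\phi}_i^{W'}$ give the inversion $S_i^-S_i^+(W)\cong W$, and that the resulting equivalence of subcategories transports indecomposability---are exactly the ones used in the original source, so there is nothing to compare.
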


Let $Q$ be a quiver and let $Q'=\sigma_i(Q)$. 
Let $S_i'$ be the representation of $Q'$ with one dimensional vector space at vertex $i$ and zero dimensional vector space elsewhere. 
\begin{theorem}\label{theorem:reflection-functor-classical-setting-bijective-correspondence} 
Let $i\in Q_0$ be $+$-admissible and let $Q'=\sigma_i(Q)$. 
Then $S_i^+$ and $S_i^-$ establish a bijection between indecomposable representations of $Q$ (nonisomorphic to $S_i$) and indecomposable representations of $Q'$ (nonisomorphic to $S_i'$).
\end{theorem}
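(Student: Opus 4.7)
The plan is to deduce this bijection almost entirely from the content already packaged in Theorem~\ref{theorem:reflection-functors-classical-setting}. The key preliminary observation is that $+$-admissibility is exchanged with $-$-admissibility under the reflection of the quiver: if $i$ is a sink in $Q$, then in $Q' = \sigma_i(Q)$ all arrows incident to $i$ have been reversed, so $i$ is a source in $Q'$, i.e., $i$ is $-$-admissible for $Q'$. This symmetry is what allows parts (1) and (2) of Theorem~\ref{theorem:reflection-functors-classical-setting} to combine into a two-sided inverse statement.

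First I would set up the two candidate maps on isomorphism classes: $[W] \mapsto [S_i^+(W)]$ from indecomposables of $Q$ not isomorphic to $S_i$ into representations of $Q'$, and $[W'] \mapsto [S_i^-(W')]$ in the reverse direction (applied now with $i$ $-$-admissible for $Q'$). By Theorem~\ref{theorem:reflection-functors-classical-setting}(1), $S_i^+(W)$ is a nonzero indecomposable representation of $Q'$ whenever $W \not\cong S_i$, and by part (2) applied to $Q'$ at the $-$-admissible vertex $i$, $S_i^-(W')$ is a nonzero indecomposable representation of $Q$ whenever $W' \not\cong S_i'$. So the only thing that still needs verification is that these maps respect the exclusions: that $S_i^+(W) \not\cong S_i'$ for $W \not\cong S_i$, and dually that $S_i^-(W') \not\cong S_i$ for $W' \not\cong S_i'$.

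For the first, suppose $S_i^+(W) \cong S_i'$. Then by the vanishing clause of Theorem~\ref{theorem:reflection-functors-classical-setting}(2), $S_i^-(S_i') = 0$, hence $S_i^- S_i^+(W) = 0$. But Theorem~\ref{theorem:reflection-functors-classical-setting}(1) gives $S_i^- S_i^+(W) \cong W$, forcing $W = 0$, a contradiction since $W$ is indecomposable. The dual argument, using $S_i^+(S_i) = 0$ and $S_i^+ S_i^-(W') \cong W'$, rules out $S_i^-(W') \cong S_i$.

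At this point the two maps are well-defined between the prescribed sets of isomorphism classes, and the identities $S_i^- S_i^+(W) \cong W$ and $S_i^+ S_i^-(W') \cong W'$ from Theorem~\ref{theorem:reflection-functors-classical-setting} show they are mutual inverses, hence the claimed bijection. The only mild obstacle is making sure the swap of admissibility in $Q$ versus $Q'$ is invoked correctly so that part (2) of the preceding theorem really does apply to $Q'$; beyond that, the argument is formal.
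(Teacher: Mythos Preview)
Your argument is correct and is the standard deduction of this bijection from Theorem~\ref{theorem:reflection-functors-classical-setting}. The paper, however, does not supply its own proof of this statement: both Theorems~\ref{theorem:reflection-functors-classical-setting} and~\ref{theorem:reflection-functor-classical-setting-bijective-correspondence} are quoted as classical results from \cite{MR0393065} and stated without proof. So there is nothing in the paper to compare against; your proof is exactly the argument one would expect, and it stands on its own.
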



\section{Invariants of quiver varieties}\label{section:invariants-of-quiver-varieties}

The coordinate ring $\mathbb{C}[Rep(Q,\beta)]$ has two gradings. 
One way is called {\em $Q_1$-grading}, where the ring is graded by $\mathbb{Z}^{Q_1}$ 
since the quiver variety 
$Rep(Q,\beta)$ $=$ 
$\displaystyle{\bigoplus_{a\in Q_1}M_{\beta(ha)\times \beta(ta)}(\mathbb{C})}$
is a product of matrices.  
The second way is called {\em $Q_0$-grading}, where the ring is graded by $\mathbb{Z}^{Q_0}$. To explain further, 
there exists a natural action of 
$GL_{\beta}(\mathbb{C}) := \mathbb{G}_{\beta} =
\displaystyle{\prod_{i\in Q_0}GL_{\beta_i}(\mathbb{C})}$ on $Rep(Q,\beta)$ 
which induces an action on the ring $\mathbb{C}[Rep(Q,\beta)]$. 
So $\displaystyle{\prod_{i\in Q_0}\mathbb{C}^*}$ acts on $\mathbb{C}[Rep(Q,\beta)]$ via the characters of the group, where $\mathbb{C}^*$ is the set of all units in $\mathbb{C}$. Thus, 
we decompose the ring as a direct sum of weight spaces for the action of 
$\displaystyle{\prod_{i\in Q_0}\mathbb{C}^*}$. 
Let $\mathbb{C}[Rep(Q,\beta)]^{GL_{\beta}(\mathbb{C}),\bullet}$ $:=$ 
$\displaystyle{\bigoplus_{\chi} \mathbb{C}[Rep(Q,\beta)]^{GL_{\beta}(\mathbb{C}),\chi}}$, where $\chi$ is a character of $GL_{\beta}(\mathbb{C})$.  
Then polynomials $f\in \mathbb{C}[Rep(Q,\beta)]^{GL_{\beta}(\mathbb{C}),\bullet}$ 
are homogeneous with respect to $Q_0$-grading.


Recall from Definition~\ref{definition:invariant-semi-invariant-polynomials-introduction} 
that a polynomial $f\in \mathbb{C}[Rep(Q,\beta)]$ is an  
{\em invariant polynomial}  
if $g.f=f$ for all $g\in GL_{\beta}(\mathbb{C})$, and the polynomial $f$ is 
{\em $\chi$-semi-invariant} 
if 
$g.f=\chi(g)f$ for all $g\in GL_{\beta}(\mathbb{C})$, where $\chi:GL_{\beta}(\mathbb{C})\longrightarrow\mathbb{C}^*$ is a group homomorphism. 
We will refer (semi-)invariant polynomials as (semi-)invariants.  
Semi-invariants under the $GL_{\beta}(\mathbb{C})$-action are invariants for $SL_{\beta}(\mathbb{C})$ $:=$ 
$\displaystyle{\prod_{i\in Q_0}SL_{\beta_i}(\mathbb{C})}$-action and 
$SL_{\beta}(\mathbb{C})$-invariant polynomials that are homogeneous with respect to the $Q_0$-grading are also semi-invariant (for some $\chi$) for the $GL_{\beta}(\mathbb{C})$-action.  
Therefore, $\mathbb{C}[Rep(Q,\beta)]^{GL_{\beta}(\mathbb{C}),\bullet} \cong  \mathbb{C}[Rep(Q,\beta)]^{SL_{\beta}(\mathbb{C})}$. 
In the literature, one writes 
$SI(Rep(Q,\beta))$ to mean $\mathbb{C}[Rep(Q,\beta)]^{SL_{\beta}(\mathbb{C})}$. 
Although \cite{MR557581}, \cite{MR1113382} and \cite{MR1162487} are some of the earlier works in the study of invariants of quiver varieties, we will focus on techniques given in 
\cite{MR1758750},  
\cite{MR1825166}, and \cite{MR1908144}. 

The following techniques were constructed independently by a number of mathematicians. 
In 1999, Schofield-van den Bergh wrote down all semi-invariant polynomials for any quiver variety, in 2000, Derksen-Weyman gave a strategy to produce all semi-invariant polynomials for acyclic quivers by using representation theory techniques, 
and in 2001, Domokos-Zubkov gave a strategy on producing all semi-invariant polynomials for all quiver varieties using combinatorical techniques and a notion of a large matrix. 
We will summarize each technique in the following sections.

\subsection{Derksen-Weyman technique}\label{subsection:derksen-weyman-technique}
The following technique is found in \cite{MR1758750}. 
For a fixed quiver $Q=(Q_0,Q_1)$ and a dimension vector $\beta\in \mathbb{Z}_{\geq 0}^{Q_0}$, 
consider the vector space $Rep(Q,\beta)$ under 
$SL_{\beta}(\mathbb{C})$ $:=$ 
$\displaystyle{\prod_{i\in Q_0}SL_{\beta_i}(\mathbb{C})}$-action as a change-of-basis. 
Let $W$ be a general representation of $Rep(Q,\beta)$.   

Let $\alpha\in\mathbb{Z}_{\geq 0}^{Q_0}$ be a dimension vector. Recall the Ringel form on $\alpha$ and $\beta$ from Section~\ref{subsection:quiver-representations}.  
 It follows from \cite{MR658729} that  
$\langle \alpha,\beta\rangle = \dim \Hom(V,W)-\dim \Ext(V,W)$ where 
$V\in Rep(Q,\alpha)$ and $W\in Rep(Q,\beta)$.  
We choose $\alpha$ such that the Euler form $\langle \alpha,\beta\rangle$ equals zero and a general representation in 
$Rep(Q,\alpha)$ is indecomposable 
(we will follow Derksen and Weyman's technique with some examples). 
Let $V$ be a general representation of the vector space $Rep(Q,\alpha)$. 
Consider  
\[ 
\bigoplus_{i\in Q_0} \Hom(V(i),W(i))\stackrel{d_{W}^{V}}{\longrightarrow}
\bigoplus_{a\in Q_1} \Hom(V(ta),W(ha)), \mbox{ where }
(X_1,\ldots, X_{Q_0})\mapsto (W(a)X_{ta}-X_{ha}V(a))_{a\in Q_1},  
\] 
where $X_i\in \Hom(V(i), W(i))$ is (not necessarily a square) matrix with variables as coordinates, 
$V(a)$ is a general $\alpha(ha)\times \alpha(ta)$ matrix, and 
$W(a)$ is a general $\beta(ha)\times \beta(ta)$ matrix. 
 Write the map $d_{W}^{V}$ in terms of the basis $\{X_i: i\in Q_0 \}$.
 Since 
$\displaystyle{\sum_{i\in Q_0} \alpha_i \beta_i}$  
$=$ 
 $\displaystyle{\sum_{a\in Q_1} \alpha_{ta}\beta_{ha}}$, 
 $d_{W}^{V}$ is a square matrix so taking the determinant of $d_{W}^{V}$ makes sense. 
 Consider $c_{W}^{V} =\det(d_{W}^{V})\in \mathbb{C}[Rep(Q,\alpha)\times Rep(Q,\beta)]$; collect the polynomial coefficients of $\{V(a):a\in Q_1 \}$. 
\begin{theorem}[Derksen-Weyman]  
Let $Q$ be an acyclic quiver and let $\beta$ be a dimension vector. 
Let $SL_{\beta}(\mathbb{C})$ naturally act on $Rep(Q,\beta)$ and let $\alpha$ be a dimension vector such that $\langle \alpha,\beta\rangle=0$.  
Let $V\in Rep(Q,\alpha)$ be an indecomposable general representation.    
Then the polynomial coefficients of $\{V(a):a\in Q_1 \}$   
obtained from $c_{W}^{V}$ generate the ring of $SL_{\beta}(\mathbb{C})$-invariant polynomials. 
 \end{theorem}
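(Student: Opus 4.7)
The plan is to proceed in three stages: first that $c_W^V=\det(d_W^V)$ is well-defined, then that it is a semi-invariant, and finally that its coefficients (together with those arising from other choices of $\alpha$) generate $\mathbb{C}[Rep(Q,\beta)]^{SL_\beta(\mathbb{C})}$. Stage one is immediate: $d_W^V$ has source of dimension $\sum_{i\in Q_0}\alpha_i\beta_i$ and target of dimension $\sum_{a\in Q_1}\alpha_{ta}\beta_{ha}$, and these are equal precisely when $\langle\alpha,\beta\rangle=0$. After fixing bases on the $V(i)$ and $W(i)$, the determinant is a well-defined element of $\mathbb{C}[Rep(Q,\alpha)\times Rep(Q,\beta)]$; rescaling bases only multiplies it by a nonzero scalar, so the coefficients in the entries of the $V(a)$ are intrinsic up to a global nonzero constant.

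For stage two, I would compute how $c_W^V$ transforms. The map $d_W^V$ encodes the standard two-term projective resolution of the $\mathbb{C}Q$-module corresponding to $V$, applied to $W$. A group element $h=(h_i)\in GL_\beta(\mathbb{C})$ acting on $W$ by $W(a)\mapsto h_{ha}W(a)h_{ta}^{-1}$ induces, via $X_i\mapsto h_iX_i$, compatible changes of basis on source and target of $d_W^V$; tracking their determinants yields
\begin{equation*}
c_{h\cdot W}^V=c_W^V\cdot\prod_{i\in Q_0}\det(h_i)^{-\langle\alpha,\epsilon_i\rangle}.
\end{equation*}
Hence $c_W^V$ is a $GL_\beta(\mathbb{C})$-semi-invariant of weight $\sigma_\alpha(h):=\prod_i\det(h_i)^{-\langle\alpha,\epsilon_i\rangle}$, and a symmetric computation gives semi-invariance under $GL_\alpha(\mathbb{C})$. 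In particular $c_W^V$ is $SL_\beta(\mathbb{C})$-invariant, so expanding it as a polynomial in the entries of the $V(a)$ produces $SL_\beta(\mathbb{C})$-invariant coefficients in the entries of $W$.

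Stage three is the core of the argument: as $\alpha$ ranges over dimension vectors with $\langle\alpha,\beta\rangle=0$ and $V\in Rep(Q,\alpha)$ ranges over indecomposable general points, the resulting coefficients must span every weight space in
\begin{equation*}
\mathbb{C}[Rep(Q,\beta)]^{SL_\beta(\mathbb{C})}=\bigoplus_\sigma\mathbb{C}[Rep(Q,\beta)]^{GL_\beta(\mathbb{C}),\sigma}.
\end{equation*}
Since $Q$ is acyclic the Euler form is unimodular on $\mathbb{Z}^{Q_0}$, so each character $\sigma$ equals $\sigma_\alpha$ for a unique virtual $\alpha$; a positivity argument shows that the weight space vanishes unless $\alpha$ is effective. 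For effective $\alpha$ I would then establish a Schofield-type reciprocity
\begin{equation*}
\dim\mathbb{C}[Rep(Q,\beta)]^{GL_\beta,\sigma_\alpha}=\dim\mathbb{C}[Rep(Q,\alpha)]^{GL_\alpha,\sigma'_\beta}
\end{equation*}
for a suitable dual character $\sigma'_\beta$, and identify the left-hand side with the dimension of the span of the coefficients of $c_W^V$ via a Cauchy-type identity for $\bigwedge^{\mathrm{top}}d_W^V$. A semicontinuity argument then lets me replace a general $V$ by an indecomposable general $V$ without losing the nonvanishing, yielding enough independent semi-invariants.

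The main obstacle is this last spanning step: showing that no weight space is missed. It rests on two substantive inputs from quiver representation theory, namely the reciprocity theorem and the Cauchy-type decomposition above, together with Schofield's characterization of when $c_W^V\not\equiv 0$ in terms of the $\Ext$ and $\Hom$ pairing of general representations. Once these are in place the argument assembles cleanly; the earlier stages reduce to linear algebra with the Ringel form.
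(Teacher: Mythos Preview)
The paper does not prove this theorem: it is stated as a result of Derksen--Weyman (with a citation to \cite{MR1758750}), followed immediately by worked examples, and no proof or proof sketch appears anywhere in the thesis. So there is nothing in the paper to compare your argument against.

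That said, your outline is a faithful summary of the actual Derksen--Weyman strategy. Stages one and two are correct and routine. Your stage three correctly identifies the real content: the reciprocity between $\alpha$- and $\beta$-weight spaces and the Cauchy-type decomposition of $\det(d_W^V)$, together with Schofield's characterization of when $c_W^V$ is nonzero in terms of $\Hom$ and $\Ext$ between general representations. You are right that these are the substantive inputs, and you are also right that once they are in hand the argument assembles as you describe. What you have written is therefore an accurate roadmap rather than a self-contained proof; since the thesis treats the theorem as background and does not attempt to prove it either, that is entirely appropriate here.
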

 
\begin{example}\label{example:1-Kronecker-quiver-Derksen-Weyman-method}
 Let $Q$ be the $1$-Kronecker quiver 
 $\xymatrix@-1pc{\stackrel{1}{\bullet} \ar[rr]^a & &  \stackrel{2}{\bullet}}$ and let $\beta=(2,2)$. 
 We will write down the map $d_{W}^{V}$ from the diagram 
 \[ 
\xymatrix@-1pc{
 \stackrel{\mathbb{C}^2=W(1)}{\bullet} \ar[rr]^{W(a)} & & \stackrel{\mathbb{C}^2=W(2)}{\bullet} \\
 & & \\
 \stackrel{V(1)}{\bullet} \ar[uu]^{X_1} \ar[rr]^{V(a)} & & \stackrel{V(2)}{\bullet} \ar[uu]_{X_2.} \\   
 }
 \]  
Since we want $\langle \alpha,\beta\rangle=2\alpha_1+2\alpha_2-2\alpha_1$ to equal zero, 
this implies $\alpha_2=0$.  
So regardless of the choice of $\alpha_1$, $V(a)$ is the zero matrix. 
Let $\alpha_1=1$, $W(a)=A$, and $X_1=X$ (note that we may let $X_2=0$).  
Then  
\[ 
\xymatrix@-1pc{
\Hom(\mathbb{C}^1,\mathbb{C}^2)\ar[rrrr]^{d_{W}^{V}} & & & & \Hom(\mathbb{C}^1,\mathbb{C}^2), \hspace{4mm}
X\mapsto AX, 
}
\]   
and the map $[d_{W}^{V}]_X$ in terms of the basis $X$ is $A$. 
Therefore, we have 
$\mathbb{C}[\mathfrak{gl}_2]^{SL_2\times SL_2}=\mathbb{C}[\det(A)]$. 
\end{example}

\begin{example}\label{example:2-Kronecker-quiver-Derksen-Weyman-method}
 Let $Q$ be the $2$-Kronecker quiver 
 $\xymatrix@-1pc{\stackrel{1}{\bullet} \ar@/^/[rr]^{a} \ar@/_/[rr]_{b} & & 
 \stackrel{2}{\bullet}}$ 
 and let $\beta=(2,2)$. Since 
 $\langle \alpha,\beta\rangle$ $=$ $2\alpha_1+2\alpha_2-2\alpha_1-2\alpha_1 = 0$, 
 we must have 
 $\alpha_1=\alpha_2$. Let $\alpha_1=1$. 
 Since $V$ is an indecomposable representation in $Rep(Q,\alpha)$, let 
 $V(a)=1$ and $V(b)=\lambda$. 
 Let $W(a)=A, W(b)=B,X_1=X,X_2=Y$, where $a$ and $b$ are the two arrows of $Q$. 
 Let's write the coordinates of the matrices as follows: 
 $A=(a_{ij}), B=(b_{ij}), X=(x_i)$, and $Y=(y_j)$. 
 So we have 
 \[
 \xymatrix@-1pc{
 \stackrel{\mathbb{C}^2=W(1)}{\bullet} \ar@/^/[rr]^{W(a)} \ar@/_/[rr]_{W(b)}& & \stackrel{\mathbb{C}^2=W(2)}{\bullet} \\
 & & \\
 \stackrel{\mathbb{C}^1}{\bullet} \ar[uu]^{X_1} \ar@/^/[rr]^{1} \ar@/_/[rr]_{\lambda}& & \stackrel{\mathbb{C}^1}{\bullet} \ar[uu]_{X_2,} \\   
 }
 \]   
 where 
$\Hom(\mathbb{C}^1,\mathbb{C}^2)\oplus \Hom(\mathbb{C}^1,\mathbb{C}^2)
 \stackrel{d_{W}^{V}}{\longrightarrow}
 \Hom(\mathbb{C}^1,\mathbb{C}^2)\oplus \Hom(\mathbb{C}^1,\mathbb{C}^2)$ is the map  
 \[
 \begin{aligned} 
 (X,Y) &\mapsto (AX-Y,BX-\lambda Y) \\ 
 &= 
 \left(
  \left(
    \begin{tabular}{ll}
      $a_{11}$ & $a_{12}$ \\
      $a_{21}$ & $a_{22}$ \\
    \end{tabular}
  \right)  	
    \left(
    \begin{tabular}{l}
      $x_{1}$  \\
      $x_{2}$  \\
    \end{tabular}
  \right)   
 			- 
    \left(
    \begin{tabular}{l}
      $y_{1}$  \\
      $y_{2}$  \\
    \end{tabular}
  \right),   			
  \left(
    \begin{tabular}{ll}
      $b_{11}$ & $b_{12}$ \\
      $b_{21}$ & $b_{22}$ \\
    \end{tabular}
  \right) 			
     \left(
    \begin{tabular}{l}
      $x_{1}$  \\
      $x_{2}$  \\
    \end{tabular}
  \right)  			 
  			-  
    \left(
    \begin{tabular}{l}
      $\lambda y_{1}$  \\
      $\lambda y_{2}$  \\
    \end{tabular}
  \right)  			 
 						\right) \\ 
 &=
 \left( 
 \left( 
  \begin{tabular}{l}   
   $a_{11}x_{1}+a_{12}x_{2}-y_{1}$ \\
   $a_{21}x_{1}+a_{22}x_{2}-y_{2}$ \\ 
 \end{tabular}
 \right), 
 \left( 
 \begin{tabular}{l} 
   $b_{11}x_{1}+b_{12}x_{2}-\lambda y_{1}$\\ 
   $b_{21}x_{1}+b_{22}x_{2}-\lambda y_{2}$\\
 \end{tabular}
 \right) 
 \right). \\ 
 \end{aligned}
 \] 
So the matrix $[d_{W}^{V}]_{X,Y}$ in terms of basis $X$ and $Y$ is 
\[ 
\bordermatrix{
&x_1 &x_2 &y_1 &y_2 \cr 
& a_{11}& a_{12}& -1& 0\cr 
& a_{21}&a_{22} & 0& -1\cr 
& b_{11}&b_{12} &-\lambda & 0\cr 
& b_{21}&b_{22} & 0& -\lambda\cr 
}, 
\] and 
$\det(d_{W}^{V})=(a_{11}a_{22}-a_{12}a_{21})\lambda^2
-(a_{11}b_{22}-a_{12}b_{21}-a_{21}b_{12}+a_{22}b_{11})\lambda
+(b_{11}b_{22}-b_{12}b_{21})$. 
We conclude 
$\mathbb{C}[\mathfrak{gl}_2^{\oplus 2}]^{SL_2\times SL_2}$ $=$ $\mathbb{C}[\det(A), a_{11}b_{22}-a_{12}b_{21}-a_{21}b_{12}+a_{22}b_{11},\det(B)]$. 
\end{example}

\subsection{Domokos-Zubkov technique}\label{subsection:Domokos-Zubkov-technique}

The following strategy is proved in \cite{MR1825166}. 
Let $Q=(Q_0,Q_1)$ be an arbitary quiver (where cycles are allowed) and let $\beta$ be a dimension vector. 
Choose a set $v_1,\ldots, v_n, w_1,\ldots, w_m\in Q_0$ 
of vertices (possibly repeating) such that 
\begin{equation}\label{equation:domokos-zubkov-dimension-equality}
\sum_{i=1}^n \beta(v_i)= \sum_{j=1}^m \beta(w_j). 
\end{equation} 
Let $W\in Rep(Q,\beta)$ be a general representation and consider 
\[ M: \bigoplus_{i=1}^n W(v_i) \longrightarrow \bigoplus_{j=1}^m W(w_j),  
\] 
where $M=(m_{ij})$, with each $m_{ij}$ being a linear combination of a general representation of paths in $Q$ from $v_i$ to $w_j$, including the zero path which corresponds to the zero matrix and the identity matrix if $v_i=w_j$. 

\begin{theorem}[Domokos-Zubkov]  
Polynomial coefficients of the determinant of $M$ are in the algebra 
$\mathbb{C}[Rep(Q,\beta)]^{SL_{\beta}}$.  
Choose all possible combination of vertices satisfying 
\eqref{equation:domokos-zubkov-dimension-equality} 
and all possible combination of representations of paths to obtain polynomial generators of 
$\mathbb{C}[Rep(Q,\beta)]^{SL_{\beta}}$. 
\end{theorem}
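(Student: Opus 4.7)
The plan is to verify invariance first and then establish generation, with invariance being straightforward and generation being the crux.

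For invariance, note that under the $GL_\beta$-action, every arrow transforms by $W(a) \mapsto g_{ha} W(a) g_{ta}^{-1}$, so any path representation $W(p)$ from $v$ to $w$ transforms by $W(p) \mapsto g_w W(p) g_v^{-1}$, since the intermediate group elements telescope. The block $(j,i)$ of $M$, being a linear combination of such path representations (plus possibly identity or zero between $v_i = w_j$ or when paths are absent), therefore transforms by $m_{ij} \mapsto g_{w_j} m_{ij} g_{v_i}^{-1}$, and hence
\[
M \mapsto \diag(g_{w_1}, \ldots, g_{w_m}) \, M \, \diag(g_{v_1}, \ldots, g_{v_n})^{-1}.
\]
Taking determinants and using $\sum_i \beta(v_i) = \sum_j \beta(w_j)$ so that $M$ is a square matrix, we see $\det(M)$ is a $GL_\beta$-semi-invariant of character $\chi(g) = \prod_j \det(g_{w_j}) \prod_i \det(g_{v_i})^{-1}$, and hence an $SL_\beta$-invariant. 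Because the auxiliary scalars parametrizing the linear combinations of paths in each block commute with the group action, expanding $\det(M)$ as a polynomial in these scalars yields coefficients that again lie in $\mathbb{C}[Rep(Q,\beta)]^{SL_\beta}$.

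The generation statement is harder, and the strategy is to identify each weight space of semi-invariants with a $\Hom$-space of $\mathbb{C}Q$-modules and then realize every element there as (a coefficient of) such a determinant. Concretely, for a character $\chi \in \mathbb{Z}^{Q_0}$ with $\sum_i \chi_i \beta_i = 0$, one follows a Schofield--Van den Bergh style argument to identify the space $\mathbb{C}[Rep(Q,\beta)]^{SL_\beta,\chi}$ with a $\Hom$-space between two explicit projective $\mathbb{C}Q$-modules assembled from the positive and negative parts of $\chi$. Any such morphism is represented by a block matrix with entries given by linear combinations of representations of paths joining prescribed vertices, which is precisely the shape of $M$; the First Fundamental Theorem for matrix invariants, together with a Cauchy-type identity decomposing tensor powers into Schur functors, then lets one write every semi-invariant as a polynomial combination of the coefficients of $\det(M)$ as $(v_i)$, $(w_j)$, and the path-linear-combinations are varied.

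The main obstacle is the last step: proving that the determinantal semi-invariants \emph{span} each weight component, not merely lie inside it. This requires combinatorial bookkeeping to confirm that varying the auxiliary scalars, the multisets of source and target vertices, and the choices of path combinations exhausts the relevant $\Hom$-space; an explicit projective presentation over $\mathbb{C}Q$ (or a saturation-type dimension count) is the standard tool. A secondary subtlety, and one that distinguishes this approach from the Derksen--Weyman construction of Section~\ref{subsection:derksen-weyman-technique}, is the treatment of cycles: the trace monomials of products of $W(a)$ along closed walks must be recovered as specializations of the determinant construction, which is precisely why Domokos--Zubkov allow arbitrary (possibly repeated) vertices and arbitrary path-valued entries in $M$.
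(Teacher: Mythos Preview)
The paper does not prove this theorem: it is quoted as a result from the literature, with the proof deferred to the original Domokos--Zubkov paper \cite{MR1825166}. There is therefore no in-paper argument to compare your proposal against.

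Your invariance paragraph is correct and is exactly the standard computation. Your generation paragraph is honest about being a sketch rather than a proof: you correctly identify that the hard content is spanning each weight space, and you point toward the Schofield--Van den Bergh identification of semi-invariant weight spaces with $\Hom$-spaces between projectives. That is indeed one route, though it is closer to \cite{MR1908144} than to the actual Domokos--Zubkov argument, which proceeds more directly via classical invariant theory (reducing to the First Fundamental Theorem for products of general linear groups acting on tuples of rectangular matrices) rather than through the representation-theoretic $\Hom$-description. Either way, what you have written is an outline, not a proof; the ``combinatorial bookkeeping'' and ``saturation-type dimension count'' you allude to are precisely where the work lies, and neither is carried out here.
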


\begin{example}\label{example:1-Jordan-Domokos-Zubkov-example}
Let $Q$ be the $1$-Jordan quiver $\xymatrix@-1pc{ \stackrel{1}{\bullet}\ar@(ru,rd)^{a}}$
and let $\beta=2$. 
Let $W(a)=(a_{ij})$ be a general representation of $Rep(Q,\beta)$. 
Let $n=m=1$ with $v_1=w_1=1$. 
Let $M:W(1)=\mathbb{C}^2 \longrightarrow  W(1)=\mathbb{C}^2$, 
where $M=(t W(a))$. 
Then $\det(M)=t^2\det(W(a))$. 
So $\det(W(a))$ is an invariant polynomial. 
Now let $n=m=2$ with $v_1=v_2=w_1=w_2=1.$
Then $M:\mathbb{C}^2\oplus \mathbb{C}^2\longrightarrow \mathbb{C}^2\oplus \mathbb{C}^2$, 
and let 
\[ 
M = \begin{pmatrix}
 s W(a)& t\Inoindex_2 \\ 
 u\Inoindex_2 & v\Inoindex_2 
\end{pmatrix} 
= \begin{pmatrix}
 s a_{11}&s a_{12} &t &0 \\  
 s a_{21}& s a_{22} &0 &t \\
 u& 0&v & 0\\  
 0& u&0 & v  
\end{pmatrix}, 
\] 
where $s,t,u,v$ are formal variables. 
Then 
\[ 
\det(M)=t^2 u^2 -(a_{11}+a_{22}) stuv +(a_{11}a_{22}-a_{12}a_{21}) s^2 v^2, 
\] 
and $\tr(W(a))=a_{11}+a_{22}$ is also an invariant polynomial. 
Thus, $\mathbb{C}[Rep(Q,\beta)]^{SL_{\beta}}=\mathbb{C}[\tr(W(a)),\det(W(a))]$, 
which coincides with the classical result that generators of the ring of invariant polynomials are precisely the coefficients of the characteristic polynomial of $W(a)$. 
\end{example}

\begin{example}\label{example:2-Jordan-Domokos-Zubkov-example}
Let $Q$ be the $2$-Jordan quiver $\xymatrix@-1pc{ \ar@(ld,lu)^a \stackrel{1}{\bullet}\ar@(ru,rd)^{b}}$
and let $\beta=2$. 
Let $n=m=2$ with $v_1=v_2=w_1=w_2=1.$
First,  
let $M:\mathbb{C}^2\oplus \mathbb{C}^2\longrightarrow \mathbb{C}^2\oplus \mathbb{C}^2$, 
where 
\[ 
M = \begin{pmatrix}
 s W(a)& t\Inoindex_2 \\ 
 u\Inoindex_2 & v\Inoindex_2 
\end{pmatrix}
= \begin{pmatrix} 
 s a_{11}&s a_{12} &t &0 \\ 
 s a_{21}& s a_{22} &0 &t \\
 u& 0&v & 0\\ 
 0& u&0 & v  
\end{pmatrix}. 
\] 
Then 
\[ 
\det(M)=t^2 u^2 -(a_{11}+a_{22}) stuv +(a_{11}a_{22}-a_{12}a_{21}) s^2 v^2.  
\] 
So $\Tr(W(a))$ and $\det(W(a))$ are invariant polynomials. 
Continuing to let $m=n=2$, 
we replace $W(a)$ with $W(b)$ to conclude that 
$\Tr(W(b))$ and $\det(W(b))$ are invariant polynomials. 
Next, consider 
$M:\mathbb{C}^2\oplus \mathbb{C}^2\longrightarrow \mathbb{C}^2\oplus \mathbb{C}^2$, 
where 
\[ 
M = \begin{pmatrix}  
 s W(a)			 & t W(b)\\ 
 u\Inoindex_2 & v\Inoindex_2 
\end{pmatrix}
= \begin{pmatrix} 
 s a_{11}& s a_{12} &t b_{11}&tb_{12} \\  
 s a_{21}& s a_{22} &t b_{21}&tb_{22} \\ 
 u& 0&v & 0\\ 
 0& u&0 & v  
\end{pmatrix}. 
\] 
Then $\det(M)=\det(W(b))t^2 u^2
-(a_{11}b_{22}-a_{12}b_{21}+a_{22}b_{11}-a_{21}b_{12}) stuv
+\det(W(a))s^2 v^2$.
We obtain another polynomial $a_{11}b_{22}-a_{12}b_{21}+a_{22}b_{11}-a_{21}b_{12}$, which is, in fact, an invariant polynomial. Thus, 
$\mathbb{C}[\mathfrak{gl}_2^{\oplus 2}]^{SL_2(\mathbb{C})}$ $=$ 
$\mathbb{C}[\det(W(a)), \Tr(W(a)), \det(W(b)), \Tr(W(b)), a_{11}b_{22}-a_{12}b_{21}+a_{22}b_{11}-a_{21}b_{12}]$. 
\end{example}

\subsection{Schofield-van den Bergh technique}
 
Schofield and van den Bergh's strategy 
(\cite{MR1908144}, Theorem 2.3) 
to produce semi-invariants for any quiver is to prove that (homogeneous multilinear) semi-invariants of $Rep(Q,\beta)$ are spanned by determinantal semi-invariants. 
Since this technique is similar to Domokos and Zubkov's technique, we will omit the details.

\chapter{Filtered quiver varieties}\label{chapter:filtered-quiver-varieties}

\section{Construction of filtered quiver varieties}

Parabolic group actions arise naturally in mathematics. For example, let $B$ be the standard Borel in $GL_n(\mathbb{C})$ (the set of complex invertible upper triangular matrices) and let $\mathfrak{b}=\lie(B)$ be the set of upper triangular matrices in $\mathfrak{gl}_n$, where $\mathfrak{gl}_n$ is the set of all $n\times n$ matrices. 
We are interested in studying the orbit structure of $B$ acting on $\mathfrak{b}$ by conjugation or, in another way to put it, we are interested in studying a variety that has a certain notion of filtrations associated to it. 
Let $\widetilde{\mathfrak{g}}$ be the Grothendieck-Springer resolution (cf. Section~\ref{section:GS-resolution-intro}) and let $G=GL_n(\mathbb{C})$.
Lemma~\ref{lemma:g-s-resolution-and-B-action-on-borel} gives the isomorphism 
$\mathfrak{b}/B\cong \widetilde{\mathfrak{g}}/G$ so the $B$-orbits on $\mathfrak{b}$ are of significant interest in and of itself.

Let us now discuss the $B$-adjoint action on $\mathfrak{b}$ further.  
Let $F^{\bullet}:\{ 0\}\subseteq \mathbb{C}^1\subseteq \mathbb{C}^2 \subseteq \ldots \subseteq \mathbb{C}^n$ be the complete standard filtration of vector spaces in $\mathbb{C}^n$.  
Then $\mathfrak{b}$ can be identified with the subspace of linear maps $\mathbb{C}^n\stackrel{f}{\rightarrow} \mathbb{C}^n$ such that $f(\mathbb{C}^k)\subseteq \mathbb{C}^k$ for all $k$. 
Since one must preserve the filtration of vector spaces while changing the basis, we have the $B$-action on the domain and the codomain.   
So points of $\mathfrak{b}/B$ correspond to equivalence classes of linear maps preserving the complete standard filtration of vector spaces, where two maps $f$ and $g$ are equivalent if there exists a change-of-basis that will take $f$ to $g$.

We now give the construction of filtered quiver varieties in the general setting. 
Let $Q=(Q_0,Q_1)$ be a quiver and let $\beta=(\beta_1,\ldots, \beta_{Q_0})\in \mathbb{Z}_{\geq 0}^{Q_0}$ be a dimension vector.  
Let 
$F^{\bullet}:0\subseteq \mathbb{C}^{\gamma^1}\subseteq \mathbb{C}^{\gamma^2}\subseteq \ldots \subseteq \mathbb{C}^{\beta}$ 
be a filtration of vector spaces such that we have the filtration 
$F_i^{\bullet}:0\subseteq \mathbb{C}^{\gamma^1_i}\subseteq \mathbb{C}^{\gamma^2_i}\subseteq 
\ldots \subseteq \mathbb{C}^{\beta_i}$ of vector spaces at vertex $i$. 
Let $Rep(Q,\beta)$ be the quiver variety in the classical sense (without the filtration of vector spaces imposed). 
Then $F^{\bullet}Rep(Q,\beta)$ is a subspace of $Rep(Q,\beta)$ whose linear maps preserve the filtration of vector spaces at every level.  
Let $P_i\subseteq GL_{\beta_i}(\mathbb{C})$ be the largest parabolic group preserving the filtration of vector spaces at vertex $i$. 
Then the product $\mathbb{P}_{\beta}:= \displaystyle{\prod_{i\in Q_0}P_i}$ of parabolic groups acts on 
$F^{\bullet}Rep(Q,\beta)$ as a change-of-basis.

Now, we will say a parabolic Lie algebra $\mathfrak{p}=\lie(P)$ is in {\em standard form} 
if a general parabolic matrix has indeterminates along its block diagonal and upper triangular portion of the matrix and zero below the diagonal blocks.   
We refer to \cite{MR2772068} for a discussion on quiver Grassmannians and quiver flag varieties, which are related to filtered quiver varieties but they are not the same. In the next three sections, we will discuss quiver Grassmannians, quiver flag varieties, and filtered quiver varieties. 
 
\subsection{Quiver Grassmannians}\label{subsection:quiver-grassmannians}
  
Let $\gamma=(\gamma_1,\ldots, \gamma_{Q_0})\in \mathbb{Z}_{\geq 0}^{Q_0}$ be a dimension vector. 
If each $\gamma_k \leq \beta_k$ for each $k\in Q_0$, then we define 
$Gr_{\gamma}(\beta) := \displaystyle{\prod_{k\in Q_0}Gr_{\gamma_k}(\beta_k)}$ to be the product of Grassmannians,  
where each $Gr_{\gamma_k}(\beta_k)$ parameterizes $\gamma_k$-dimensional subspaces in a $\beta_k$-dimensional vector space. So $Gr_{\gamma}(\beta)$ has dimension 
$\displaystyle{\sum_{k\in Q_0} \gamma_k(\beta_k-\gamma_k)}$,  
which is obtained by counting the number of indeterminates after row reducing a certain product of matrices, i.e., each matrix corresponding to $Gr_{\gamma_k}(\beta_k)$ is of size $\gamma_k\times \beta_k$ with the rows spanning the $\gamma_k$-dimensional subspace. 
Writing $\mathbb{G}_{\beta} :=
\displaystyle{\prod_{k\in Q_0}GL_{\beta_{k}}(\mathbb{C})}$, $Gr_{\gamma}(\beta)$ can also be thought of as $\mathbb{G}_{\beta}/\mathbb{P}_{\gamma}$, where $\mathbb{P}_{\gamma} := \displaystyle{\prod_{k\in Q_0}} P_{\gamma_k}\subseteq \mathbb{G}_{\beta}$ is a product of largest parabolic matrices with each $P_{\gamma_k}$ stabilizing a $\gamma_k$-dimensional subspace in a $\beta_k$-dimensional vector space. 
%
%
%
We now define 
\[ \begin{aligned}
 	Gr_{\gamma}^Q(\beta) := 
		\{ ((U(k))_{k\in Q_0}, (W(a))_{a\in Q_1} ) : 
				U(k)\cong \mathbb{C}^{\gamma_k}& \mbox{ is a subspace of } W(k)\cong \mathbb{C}^{\beta_k} \mbox{ and } \\
		   	&W(a)U(i)\subseteq U(j), \mbox{ where } a:i\rightarrow j 
		\}  																																								\\
		\end{aligned}
 \]  to be the 
 {\em universal quiver Grassmannian}  
  of $\gamma$-dimensional subrepresentations in $\beta$-dimensional representations of $Q$. 
Note that $Gr_{\gamma}^Q(\beta)$ is a subvariety of the pair $Gr_{\gamma}(\beta)\times Rep(Q,\beta)$, with 
 the product $\mathbb{G}_{\beta}$ of invertible matrices acting on $Gr_{\gamma}^Q(\beta)$ diagonally on each factor, i.e., for $(g_{\beta_k})_{k\in Q_0}\in \mathbb{G}_{\beta}$ and having chosen a basis for a sequence $(W(k))_{k\in Q_0}$  of vector spaces (and thus for $(U(k))_{k\in Q_0}$ as well), the $\mathbb{G}_{\beta}$-action is given as 
 \[ 
 (g_{\beta_k})_{k\in Q_0}. ( ( u_k )_{k\in Q_0}, (W(a))_{a\in Q_1} ) 
 = ((g_{\beta_k} u_k )_{k\in Q_0}, (g_{\beta_{ha}}W(a)  g_{\beta_{ta}}^{-1} )_{a\in Q_1} ),    
 \] where $u_k$ is a vector in $U(k)$ and $W(a)$ is a matrix in  
 $M_{\beta_{ha}\times \beta_{ta}}$. 
For notational simplicity (this will be apparent in Section~\ref{subsection:quiver-flag-variety} when we discuss quiver flag varieties), 
we may also write  
\[ 
Gr_{\gamma}^Q(\beta) = \{ (U,W) \in Gr_{\gamma}(\beta)\times Rep(Q,\beta): U \mbox{ is a subrepresentation of }W \}.  
\]

Now consider the projections $Gr_{\gamma}^Q(\beta)\stackrel{p_1}{\longrightarrow} Gr_{\gamma}(\beta)$ and 
$Gr_{\gamma}^Q(\beta)\stackrel{p_2}{\longrightarrow} Rep(Q,\beta)$ onto the two factors, where $p_1$ and $p_2$ are $\mathbb{G}_{\beta}$-equivariant maps.  
We will give explicit description of the fibers $p_1^{-1}(U)$ and $p_2^{-1}(W)$. 
For a fixed sequence $(U(k) )_{k\in Q_0}$ of subspaces in the product $Gr_{\gamma}(\beta)$ of Grassmannians,  
we can choose a sequence $( U(k)^{\perp} )_{k\in Q_0}$ of complementary subspaces such that $U(k) \oplus U(k)^{\perp}=W(k)\cong \mathbb{C}^{\beta_k}$ for each vertex $k\in Q_0$. 
Since the preimage of $(U(k))_{k\in Q_0}$ under the first projection map $p_1$ must satisfy $W(a)U(i)\subseteq U(j)$ for each arrow $a:i\rightarrow j$, 
we have 
\begin{equation}\label{equation:quiver-grassmannian-filtered-quiver-variety-one-step-filtration} 
		p_1^{-1}( U ) = 
		  U \times \left( \left( 
		\begin{array}{cc}
		\Hom_Q(U(i), U(j)) & \Hom_Q(U(i)^{\perp},U(j)) \\
		0 & \Hom_Q(U(i)^{\perp},U(j)^{\perp})   \\
		\end{array}
		\right) \right)_{a:i\rightarrow j}.   
\end{equation}  
Thus the fiber of $p_1$ over $U$ is a vector space of rank 
  \[ \sum_{a:i\rightarrow j, a\in Q_1} \beta_i\gamma_j + (\beta_i-\gamma_i)(\beta_j-\gamma_j)
  =\sum_{a:i\rightarrow j, a\in Q_1} \beta_i \beta_j +\gamma_i\gamma_j - \beta_j\gamma_i,  
  \]  
since $\Hom_Q( U(i),U(j) )$ is a vector space of dimension $\gamma_i\gamma_j$,  
$\Hom_Q( U(i)^{\perp},U(j))$ is a vector space of dimension $(\beta_i-\gamma_i)\gamma_j$, and 
$\Hom_Q( U(i)^{\perp},U(j)^{\perp})$ is a vector space of dimension $(\beta_i-\gamma_i)(\beta_j-\gamma_j)$. 
So $p_1$ is flat.

For $W\in Rep(Q,\beta)$, the preimage $p_2^{-1}(W)$ parameterizes all sequences $(U(k))_{k\in Q_0}$ of subspaces  
  of dimension vector $\gamma$ such that $W(a)U(i)\subseteq U(j)$ 
  for each arrow $a:i\rightarrow j$ in $Q$. Thus, we identify the fiber of 
    $p_2$ over a fixed representation $W$ as the  
    {\em quiver Grassmannian}  
    $Gr_{\gamma}(W) :=p_2^{-1}(W)$ of $W$ of dimension vector $\gamma$; it essentially parameterizes all $\gamma$-dimensional subrepresentations of $W$.  Thus, $p_2$ is projective.

\begin{example}\label{example:quiver-grassmannian-A1}
Consider the $A_1$-Dynkin quiver, and consider 
$\beta= n$.  
Then $Rep(Q,\beta)\cong \mathbb{C}^n$, an $n$-dimensional complex vector space. 
Now let the dimension vector $\gamma$ be $m \leq n$.  
Then the universal quiver Grassmannian is   
$Gr_{m}^{Q}(n) =Gr_m(n)\times \mathbb{C}^n$  
with the quiver Grassmannian $Gr_m(\mathbb{C}^n) = Gr_m(n)$ coinciding with the classical notion of a Grassmannian: the space of $m$-dimensional subspaces in an $n$-dimensional complex vector space.   
\end{example}

\begin{example}\label{example:quiver-grassmannian-A2} 
Consider the $A_2$-Dynkin quiver 
$\xymatrix@-1pc{ \stackrel{1}{\bullet} \ar[rr]^a & & \stackrel{2}{\bullet}}$ and let $\beta =(n_1,n_2)$. 
Consider $\gamma = (m_1, m_2)$, with $m_{k}\leq n_k$ for $k=1,2$. Then 
for $W\in M_{n_2\times n_1}\cong Rep(Q,\beta)$,  
the quiver Grassmannian $Gr_{(m_1,m_2)}(W)$ is a closed subvariety of $Gr_{m_1}(\mathbb{C}^{n_1})\times Gr_{m_2}(\mathbb{C}^{n_2})$ which satisfies 
$W(U(1))\subseteq U(2)$, where $\dim (U(i))=m_i$ for $i=1,2$.    
\end{example}

\subsection{Quiver flag varieties}\label{subsection:quiver-flag-variety}

In this section, we generalize Section~\ref{subsection:quiver-grassmannians}   
by discussing the notion of quiver flag varieties. 
These varieties also appear in the literature as quiver flag manifolds. 
Similar as before, 
let $\gamma^{1}, \gamma^2, \ldots, \gamma^{N}=\beta \in \mathbb{Z}_{\geq 0}^{Q_0}$
be dimension vectors with coordinates 
$\gamma^{k}=(\gamma_1^{k},\ldots, \gamma_{Q_0}^{k})$ 
satisfying $\gamma_i^k \leq \gamma_i^{k+1}$  
for each $i\in Q_0$ and for each $1\leq k< N$.  
Define $Fl_{\gamma^{\bullet}}(\beta) := 
\displaystyle{\prod_{i\in Q_0} Fl_{\gamma_i^{\bullet}}(\beta_i)}$  
to be the product of flag varieties,  
where each $Fl_{\gamma_i^{\bullet}}(\beta_i)$
is the usual flag variety parametrizing flags of subspaces 
\[ 0 \subseteq U_i^{1}\subseteq U_i^{2} \subseteq \ldots \subseteq U_i^{N} = W(i) \cong \mathbb{C}^{\beta_i}
\] 
   with $\dim U_i^{k} =\gamma_i^k$ for all $i\in Q_0$ and for all $1\leq k< N$. 
We define the {\em universal quiver flag} to be 
\begin{equation}\label{equation:universal-quiver-flag}
  \begin{aligned}
  Fl_{\gamma^{\bullet}}^Q(\beta) := 
  \{ &(U^{1},\ldots, U^{N}, W) \in Fl_{\gamma^{\bullet}}(\beta)\times Rep(Q,\beta): 
    0 \subseteq U^{1} \subseteq U^{2} \subseteq \ldots \subseteq U^{N} = W \mbox{ is } \\
    &\mbox{ a chain of subrepresentations of }W   
    \mbox{ and } W(a)U_i^{k}\subseteq U_j^{k}\:\: \forall \: a:i\rightarrow j \mbox{ and }  
    \forall \: 1\leq k \leq N
            \}.   \\ 
    \end{aligned}
 \end{equation}

Consider the two $\mathbb{G}_{\beta}$-equivariant projections $Fl_{\gamma^{\bullet}}^Q(\beta)\stackrel{p_1}{\longrightarrow} Fl_{\gamma^{\bullet}}(\beta)$
and 
$Fl_{\gamma^{\bullet}}^Q(\beta)\stackrel{p_2}{\longrightarrow} Rep(Q,\beta)$. 
First, let us view $U_i^{k}$ as subspaces in $W(i)$ for each $i\in Q_0$. 
The fiber of $p_1$ over the tuple 
$(U^{1},\ldots, U^{N})$ is 
again a homogeneous vector bundle isomorphic to 
\begin{equation}\label{eq:quiver-flag-vector-bundle} 
\bigoplus_{a\in Q_1}
\left( 
\bigoplus_{k=1}^{N} \Hom\left(U_{ta}^{k}/U_{ta}^{k-1}, U_{ha}^{k} \right) 
\right) 
\end{equation}
where $U_{ta}^{k}/U_{ta}^{k-1}:=\{v\in U_{ta}^{k}:v\perp u \:\:\forall u\in U_{ta}^{k-1} \}$    
and $U_{ta}^{0}:=\{ 0\}$. So $p_1$ is flat. 
On the other hand, 
$p_2^{-1}(W)$ parameterizes all flags 
$0 \subseteq U^{1} \subseteq U^{2} \subseteq \ldots \subseteq U^{N} = W$ of subspaces with prescribed dimension vectors $\gamma^{k}$, with each $U^{k}$ being a subrepresentation of $U^{k+1}$. 
So $p_2$ is projective.  
The fiber $p_2^{-1}(W) \cong Fl_{\gamma^{\bullet}}(W)$ of $p_2$ over $W$ is called the  
{\em quiver flag variety}.

\begin{example}\label{example:quiver-flag-A1}
Consider the $A_1$-Dynkin quiver and consider $\beta=n$. 
Let $\gamma^{i}=i$ for $1\leq i\leq n$.  
Then the quiver flag variety $Fl_{\gamma^{\bullet}}(\mathbb{C}^n)$ 
is isomorphic to the complete flag variety for $\mathbb{C}^n$. 
\end{example}  
 
\begin{example}\label{example:quiver-flag-generalizing-grassmannian}
Let $Q$ be any quiver and consider $N=2$ to obtain the quiver Grassmannian from Section~\ref{subsection:quiver-grassmannians}. 
\end{example}

It is worth mentioning that when rewriting the direct sum of vector bundles in (\ref{eq:quiver-flag-vector-bundle}) as a product of matrices (one matrix representation for each arrow $a\in Q_1$), each matrix has the form of a parabolic matrix (cf. see \eqref{equation:quiver-grassmannian-filtered-quiver-variety-one-step-filtration} when $N=2$). 
In the next section, we will further investigate the fibers of $p_1$.

\subsection{Filtered quiver varieties}\label{subsection:filtered-quiver-varieties}

{\em Filtered quiver varieties} are precisely the fibers of $p_1$ over a fixed sequence of vector spaces in \eqref{equation:universal-quiver-flag}. 
A more direct construction is given as follows: 
let $Q$ be any quiver. 
Fix a set of dimension vectors $\gamma^{1},\gamma^2, \ldots, \gamma^{N}= \beta \in \mathbb{Z}_{\geq 0}^{Q_0}$ such that   
$\gamma_i^k\leq \gamma_i^{k+1}$ for each $i\in Q_0$ and for each $1\leq k < N$.   
Let $F^{\bullet}: 0 \subseteq U^{1} \subseteq U^{2} \subseteq \ldots \subseteq U^{N} = W$ be a flag of subspaces such that 
$\dim U_i^{k} = \gamma_i^k$ for each $i\in Q_0$ and for each $1\leq k\leq N$.


\begin{definition}\label{definition:filtered-quiver-representation}   
The {\em filtered quiver variety} is   
\[  
F^{\bullet}Rep(Q,\beta):= \{ W\in Rep(Q,\beta):
W(U_{ta}^{k})\subseteq U_{ha}^{k}\:\: \forall \: 1\leq k\leq N, \:\:\forall \:a\in Q_1
\}. 
\]   
\end{definition}  

If $P_i\subseteq GL_{\beta_i}(\mathbb{C})$ is the largest parabolic subgroup acting as a change-of-basis while preserving the filtration of vector spaces at vertex $i\in Q_0$, 
then $\mathbb{P}_{\beta}:= \displaystyle{\prod_{i\in Q_0}P_i}$ naturally acts on $F^{\bullet}Rep(Q,\beta)$ as a change-of-basis.

\begin{remark}
A filtered quiver variety is a representation space where the filtration is a structure on a representation, not on the quiver itself. 
\end{remark}

\begin{remark}\label{remark:filtration-on-framed-quiver}    
Recall a framed quiver in Definition~\ref{definition:framing-to-construct-semi-invariants}. 
We construct a 
{\em framed filtered quiver variety}, 
denoted as 
$F^{\bullet}Rep({Q}^{\dagger},\beta^{\dagger} )$, as follows: 
the filtration $F^{\bullet}$ on the framed variety 
$F^{\bullet}Rep({Q}^{\dagger},\beta^{\dagger} )$ 
modifies vector spaces assigned to vertices in $Q_0$,   
but not those vector spaces assigned to framed vertices 
$Q_0^{\natural}$.  
Writing the dimension vector for a framed quiver as 
$\beta^{\dagger}=(\beta,\beta^{\natural})$, 
$\mathbb{P}_{\beta}$  
naturally acts on  
$F^{\bullet}Rep({Q}^{\dagger},\beta^{\dagger})$. 
If the tail of the underlying arrow 
of a general representation is not a framed vertex, 
then a general representation must preserve the fixed filtration $F^{\bullet}$ of vector spaces at both the head and the tail of the underlying arrow; 
such representation is possibly block upper triangular and has indeterminates in those entries that are not zero. 
For all other linear maps whose tail is a framed vertex, i.e., 
these are of the form 
$W(\iota_i)\in\Hom(\mathbb{C}^{\beta_i^{\natural}}, \mathbb{C}^{\beta_i} )$,  
we define these general representations to have indeterminates in all coordinate entries 
since we do not impose a filtration of vector spaces on framed vertices.   
\end{remark}  
     
\begin{example} 
Consider the framed quiver $\xymatrix@-1pc{\stackrel{1^{\natural}}{\circ}\ar[rr]^{\iota_1} & & \stackrel{1}{\bullet} \ar@(ru,rd)^{a}}$, and consider $\beta=(2,2)$. 
Let $F_1^{\bullet}:0\subseteq \mathbb{C}^1\subseteq \mathbb{C}^2$ be the complete standard filtration of vector spaces at vertex $1$. 
Then $F^{\bullet}Rep(Q^{\dagger},\beta^{\dagger})\cong \mathfrak{b}_2\oplus \Hom(\mathbb{C}^2,\mathbb{C}^2)$; that is, if $W$ is a general representation in $F^{\bullet}Rep(Q^{\dagger},\beta^{\dagger})$, then 
\[ 
W(\iota_1) = \begin{pmatrix}
x_{11} & x_{12} \\ 
x_{21} & x_{22}  
\end{pmatrix} 
\mbox{ while }
W(a) = \begin{pmatrix}
a_{11} & a_{12} \\ 
0 & a_{22}  
\end{pmatrix}.  
\] 
\end{example}

\begin{remark}\label{remark:double-quiver-filtered-setting}
Using Definition~\ref{definition:double-quiver-classical-setting}, we define the 
{\em filtered double quiver variety} 
as 
$F^{\bullet}Rep(\overline{Q},\overline{\beta}) := F^{\bullet}Rep(Q,\beta)\times F^{\bullet}Rep(Q^{op},\beta)$, where $Q^{op}=(Q_0,Q_1^{op})$.  
The group $\mathbb{P}_{\overline{\beta}}=\mathbb{P}_{\beta}$ acts on $F^{\bullet}Rep(\overline{Q},\overline{\beta})$ by acting on  
$F^{\bullet}Rep(Q,\beta)$ and $F^{\bullet}Rep(Q^{op},\beta)$ as a change-of-basis. 
\end{remark}   
 
In the classical setting, a double quiver variety is the same as the cotangent bundle of a quiver variety (cf. the paragraph following Definition~\ref{definition:double-quiver-variety-classical-setting}).  
However, in our setting,  
filtered double quiver varieties are not the same as the cotangent bundle of filtered quiver varieties since $F^{\bullet}Rep(Q^{op},\beta)$ may consist of representations that are block upper triangular. 
We will give a plausible definition of the cotangent bundle of a filtered quiver variety which is used throughout this thesis 
(cf. Definition~\ref{definition:cotangent-bundle-of-filtered-quiver-variety}).

\begin{example}\label{example:quiver-with-oriented-path}  
Consider the double $1$-Kronecker quiver 
$\xymatrix{\stackrel{1}{\bullet} \ar@/^/[rr]| {a} 	&& \stackrel{2}{\bullet} \ar@/^/[ll]|{a^{op}}}$ 
with the following filtration: 
$\gamma^1=(3,1)$, $\gamma^2 = (4,3)$, and $\gamma^3 =\beta=(5,4)$, 
where the vector space representation of $\gamma_i^k$ is the span of the first $\gamma_i^k$ standard basis vectors.   
If $W$ is a general representation in $F^{\bullet}Rep(\overline{Q},\overline{\beta})$, then 
\[ W(a) = \left( \begin{array}{ccccc} 
a_{11}&a_{12}&a_{13}& a_{14}&a_{15} \\ 
0&0&0& a_{24}&a_{25} \\
0&0&0& a_{34}&a_{35} \\
0&0&0& 0&a_{45} \\
\end{array} 
\right) \mbox{ while }
W(a^{op}) = \left( \begin{array}{cccc} 
c_{11}&c_{12}&c_{13}&c_{14} \\ 
c_{21}&c_{22}&c_{23}&c_{24} \\ 
c_{31}&c_{32}&c_{33}&c_{34} \\ 
0&c_{42}&c_{43}&c_{44} \\ 
0&0&0&c_{54} \\ 
\end{array}
\right).  
\]   
Note that both $W(a)$ and $W(a^{op})$ are in block upper triangular form. 
\end{example}

Next, we want to understand the cotangent bundle of filtered quiver varieties. 
We first begin with a definition. 

\begin{definition}\label{definition:-parabolic-lie-algebra-intersection-of-all-kernels}
Let $\lie(P)=\mathfrak{p}\subseteq \mathfrak{gl}_n$ be the (standard) parabolic Lie algebra. Define
 $\mathfrak{n}_{\mathfrak{p}}\subseteq \mathfrak{p}$ as the 
 Lie algebra of the 
 intersection of the kernels of all homomorphisms from $P$ to the multiplicative group $\mathbb{C}^*$. 
\end{definition} 

Next, we give a lemma to reinterpret 
$T^*(\mathfrak{b})=\mathfrak{b}\times \mathfrak{b}^*$  
as a filtered quiver variety, where $\mathfrak{b}^*=\Hom_{\mathbb{C}}(\mathfrak{b},\mathbb{C})$.

\begin{lemma}
Let $\mathfrak{b}$ be the standard Borel in $\mathfrak{gl}_n$ and let $\mathfrak{n}^+$ be the space of strictly upper triangular matrices in $\mathfrak{b}$. 
Then $\mathfrak{b}^*\cong \mathfrak{gl}_n/\mathfrak{n}^+$.
 Similarly, let $\lie(P)=\mathfrak{p}\subseteq \mathfrak{gl}_n$ be the (standard) parabolic Lie algebra and let $\mathfrak{n}_{\mathfrak{p}}\subseteq \mathfrak{p}$ be the 
 Lie algebra given in Definition~\ref{definition:-parabolic-lie-algebra-intersection-of-all-kernels}.   
 Then 
$\mathfrak{p}^*\cong \mathfrak{gl}_n/\mathfrak{n}_{\mathfrak{p}}$. 
\end{lemma}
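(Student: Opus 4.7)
The plan is to prove both isomorphisms simultaneously by exploiting the non-degenerate trace pairing $\langle X, Y\rangle := \operatorname{Tr}(XY)$ on $\mathfrak{gl}_n \times \mathfrak{gl}_n$. For any linear subspace $V \subseteq \mathfrak{gl}_n$, set
\[
V^\perp := \{X \in \mathfrak{gl}_n : \operatorname{Tr}(XY) = 0 \text{ for all } Y \in V\}.
\]
The first general observation to establish is the canonical isomorphism $V^* \cong \mathfrak{gl}_n/V^\perp$: the linear map $\mathfrak{gl}_n \to V^*$ sending $X \mapsto (Y \mapsto \operatorname{Tr}(XY))$ has kernel precisely $V^\perp$, and it is surjective because non-degeneracy of the trace form on $\mathfrak{gl}_n$ ensures every linear functional on $V$ lifts to a functional on $\mathfrak{gl}_n$ represented by some matrix under the trace pairing.

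With this in hand, the proof reduces to computing $\mathfrak{b}^\perp$ and $\mathfrak{p}^\perp$ explicitly. The key computational input is the identity $\operatorname{Tr}(E_{ij}E_{kl}) = \delta_{jk}\delta_{il}$, which shows that the dual basis of the matrix units $\{E_{ij}\}$ under the trace pairing is $\{E_{ji}\}$. Writing $\mathfrak{b} = \operatorname{span}\{E_{kl} : k \leq l\}$, a matrix $X = \sum_{i,j} x_{ij} E_{ij}$ lies in $\mathfrak{b}^\perp$ if and only if $x_{lk} = 0$ for every pair $k \leq l$, which is exactly the condition that $X$ be strictly upper triangular. Hence $\mathfrak{b}^\perp = \mathfrak{n}^+$ and $\mathfrak{b}^* \cong \mathfrak{gl}_n/\mathfrak{n}^+$, proving the first statement.

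The parabolic case is handled the same way: writing $\mathfrak{p}$ as the span of those matrix units $E_{ij}$ whose row and column indices lie in Levi-block positions $(s,t)$ with $s \leq t$, the trace-orthogonality condition forces $\mathfrak{p}^\perp$ to consist of the strictly block upper triangular matrices. The step I expect to require the most care is verifying that this strictly block upper triangular subspace coincides with the intrinsic definition of $\mathfrak{n}_\mathfrak{p}$ as the Lie algebra of $\bigcap_\chi \ker \chi$: one enumerates the algebraic characters of $P$ (generated by the determinants of the diagonal Levi blocks), analyzes their common kernel inside $P$, and checks that the resulting Lie algebra agrees with what the trace computation produced. A dimension count, together with the Levi decomposition $\mathfrak{p} = \mathfrak{l} \oplus \mathfrak{n}_\mathfrak{p}$, will confirm the identification. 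Once both identifications are in place, the isomorphism $\mathfrak{p}^* \cong \mathfrak{gl}_n/\mathfrak{n}_\mathfrak{p}$ follows immediately from the general principle established in the first paragraph.
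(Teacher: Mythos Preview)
Your approach is essentially the same as the paper's: both define the map $\mathfrak{gl}_n \to \mathfrak{b}^*$ by $X \mapsto (Y \mapsto \Tr(XY))$, identify its kernel as $\mathfrak{n}^+$, and declare the parabolic case analogous. The paper additionally notes that the isomorphism is $B$-equivariant but dispatches the parabolic case with a single ``similar argument,'' so your plan to explicitly match $\mathfrak{p}^\perp$ against the intrinsic definition of $\mathfrak{n}_{\mathfrak{p}}$ is actually more careful than what the paper writes out.
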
 

\begin{proof}
Without loss of generality, let $\mathfrak{b}$ be the Borel in standard form (block upper triangular) 
and let $\mathfrak{n}^+\subseteq \mathfrak{b}$ be the set of strictly upper triangular matrices. 
Consider 
\[ \mathfrak{gl}_n \stackrel{\Psi}{\longrightarrow}\mathfrak{b}^* =\Hom(\mathfrak{b}, \mathbb{C}), 
\hspace{4mm} \mbox{ where }
\Psi(s):\mathfrak{b}\rightarrow \mathbb{C} \mbox{ maps }\Psi(s)(r)=\Tr(rs). 
\]   
For $s\in \mathfrak{n}^+$, $\Tr(rs)=0$. 
So $\Psi$ factors through  
$\mathfrak{gl}_n/\mathfrak{n}^+$ and  
$\ker\Psi=\mathfrak{n}^+$. 
Next, we claim that $\Psi:\mathfrak{gl}_n/\mathfrak{n}^+\rightarrow \mathfrak{b}^*$  
is a bijection and $B$-equivariant.  
It is a bijection since if 
$\Psi(\overline{s})(r)=\Tr(\overline{s}r)=0$ for all $r\in \mathfrak{b}$, then $s\in \mathfrak{n}^+$ 
(so $\overline{s}=0+\mathfrak{n}^+$); so $\Psi$ is injective.
For surjectivity, 
for $\Tr(r - )=t\in \mathfrak{b}^*$, it is clear that there exists $\overline{s}\in \mathfrak{gl}_n/\mathfrak{n}^+$ such that $\Tr(r\overline{s})=t$. 
Finally, $\Psi$ is $B$-equivariant since  
$\Psi(b\overline{s}b^{-1})(brb^{-1})=\Tr(r\overline{s})=\Psi(\overline{s})(r)$.  
Thus we have  
$\mathfrak{gl}_n/\mathfrak{n}^+ \cong \mathfrak{b}^*$.

Since we have a similar argument for  $\mathfrak{p}\subseteq \mathfrak{gl}_n$ 
and 
$\mathfrak{n}_{\mathfrak{p}}$ of $\mathfrak{p}$, we are done. 
\end{proof}

Note if $\mathfrak{b}$ is the space of upper triangular matrices, 
then we can essentially think of $\mathfrak{b}^*$ as the space of lower triangular matrices.  
Similarly, if $\mathfrak{p}$ is the space of block upper triangular matrices, then we can essentially think of 
$\mathfrak{p}^*$ as the space of block lower triangular matrices.

\begin{definition}  
Let $Q$ be a quiver and consider $\beta=(n,\ldots, n)\in \mathbb{Z}_{\geq 0}^{Q_0}$, a dimension vector. 
Let $Q^{op}$ be the opposite quiver and let $F_i^{\bullet}$ be the filtration 
 $0 \subseteq \mathbb{C}^{\gamma_1}\subseteq \mathbb{C}^{\gamma_2}\subseteq \ldots \subseteq \mathbb{C}^{\gamma_{N-1}} \subseteq \mathbb{C}^{n}$  
 of vector spaces at each vertex $i\in Q^{op}$, where $\mathbb{C}^{\gamma_k}$ 
 is the vector space spanned by the first $\gamma_k$ standard basis vectors.    
Since $F^{\bullet}Rep(Q^{op},\beta)\cong \mathfrak{p}^{Q_1}$, we define  
$\nil(F^{\bullet}Rep(Q^{op},\beta)) := \mathfrak{n}_{\mathfrak{p}}^{Q_1}$, 
where $\mathfrak{n}_{\mathfrak{p}}$  
is defined in Definition~\ref{definition:-parabolic-lie-algebra-intersection-of-all-kernels}. 
\end{definition}

\begin{definition}\label{definition:cotangent-bundle-of-filtered-quiver-variety}  
Let 
$F_i^{\bullet}: 0 \subseteq \mathbb{C}^{\gamma_1}\subseteq \mathbb{C}^{\gamma_2}\subseteq \ldots \subseteq \mathbb{C}^{\gamma_{N-1}} \subseteq \mathbb{C}^{n}$ 
be a filtration at each $i\in Q_0$, where $\mathbb{C}^{\gamma_k}$ is the space spanned by the first $\gamma_k$ standard basis vectors.  
We define 
$T^*F^{\bullet}Rep(Q,\beta)$ $:=$ 
$F^{\bullet}Rep(Q,\beta)\times F^{\bullet}Rep(Q,\beta)^*$, where 
\[ F^{\bullet}Rep(Q,\beta)^* := Rep(Q^{op},\beta)/\nil(F^{\bullet}Rep(Q^{op},\beta));   
\]   
$T^*F^{\bullet}Rep(Q,\beta)$ 
is the 
{\em cotangent bundle of a filtered quiver variety}.  
\end{definition}

The $\mathbb{P}_{\beta}$-action on $F^{\bullet}Rep(Q,\beta)$ naturally extends to $T^*F^{\bullet}Rep(Q,\beta)$. 

\begin{example} 
Let $Q$ be the $1$-Jordan quiver $\xymatrix@-1pc{\stackrel{1}{\bullet} \ar@(ru,rd)}\hspace{4mm}$ 
and consider $\beta=2$. Impose the complete standard filtration of vector spaces at vertex $1$. 
Then $F^{\bullet}Rep(Q,\beta)= \mathfrak{b}_2 \cong F^{\bullet}Rep(Q^{op},\beta)$ 
and $\nil(F^{\bullet}Rep(Q^{op},\beta))=\mathfrak{n}^+$. 
So $F^{\bullet}Rep(Q,\beta)^*=\mathfrak{gl}_2/\mathfrak{n}^+\cong \mathfrak{b}_2^*$ 
and 
$T^*F^{\bullet}Rep(Q,\beta)=\mathfrak{b}_2\times \mathfrak{b}_2^*$. 
\end{example}

\begin{example} 
In the degenerate case when $N=1$, i.e., the case when the filtration consists of $0$-dimensional vector spaces and the entire space $\mathbb{C}^n$, we have $F^{\bullet}Rep(Q,\beta)^*\cong Rep(Q^{op},\beta)$.
Thus, the cotangent bundle  
$T^*F^{\bullet}Rep(Q,\beta)$ coincides with the classical setting $T^*Rep(Q,\beta)\cong \mathfrak{gl}_n^{Q_1}\times \mathfrak{gl}_n^{Q_1}$. 
\end{example}

Next, recall Definition~\ref{definition:framing-to-construct-semi-invariants} and Remark~\ref{remark:filtration-on-framed-quiver}. 

\begin{definition} 
Let $Q^{\dagger}$ be a framed quiver, and consider the assumptions in Definition~\ref{definition:cotangent-bundle-of-filtered-quiver-variety}. 
We define the 
{\em cotangent bundle of a framed filtered quiver variety} 
as  
\[ T^*F^{\bullet}Rep(Q^{\dagger},\beta^{\dagger})
= F^{\bullet}Rep(Q^{\dagger},\beta^{\dagger})
\times 
F^{\bullet}Rep(Q^{\dagger},\beta^{\dagger})^*, 
\] 
where 
\[
\begin{aligned} 
F^{\bullet}Rep(Q^{\dagger},\beta^{\dagger})^*
&= (Rep(Q^{op},\beta)/\nil(F^{\bullet}Rep(Q^{op},\beta))) 
\times 
\Hom(\mathbb{C}^{\beta^{\natural}},\mathbb{C}^{\beta})^* \\ 
&\cong  
(Rep(Q^{op},\beta)/\nil(F^{\bullet}Rep(Q^{op},\beta))) 
\times 
\Hom(\mathbb{C}^{\beta}, \mathbb{C}^{\beta^{\natural}}),  \\ 
\end{aligned} 
\] 
with $\Hom(\mathbb{C}^{\beta^{\natural}},\mathbb{C}^{\beta})^* \cong 
\Hom(\mathbb{C}^{\beta}, \mathbb{C}^{\beta^{\natural}}) 
:= 
\displaystyle{\prod_{i\in Q_0} 
\Hom(\mathbb{C}^{\beta_i}, \mathbb{C}^{\beta_i^{\natural}})}$. 
\end{definition}

Similar as before, $\mathbb{P}_{\beta}$ acts on 
$T^*F^{\bullet}Rep(Q^{\dagger},\beta^{\dagger})$ in the obvious way.

\chapter{Categories of filtered and graded quiver varieties}\label{chapter:categories-filt-graded-quiver-varieties}

In this chapter, we will discuss categories of associated graded, graded, and filtered quiver varieties. 
We begin with some basic facts. 

\begin{definition}\label{definition:filtration-of-a-finitely-generated-module-vs-algebra}
Let $[a,b]$ be a set of integers between $a\in \mathbb{Z}$ and $b\in \mathbb{Z}$. 
We define a {\em filtration}  
of a finitely generated module $F$ to be 
\[ 
F_a \subseteq F_{a+1}\subseteq \ldots \subseteq F_b 
\] 
such that 
$F_a =\{ 0\}$ and $F_b=F$. 
\end{definition} 

Equivalently, a filtration of $F$ is $F_k\subseteq \displaystyle{\bigcup_k F_k}$ such that 
$F_a =0$ for $a <\!\!< 0$ and $F_b=\displaystyle{\bigcup_k F_k}$ for $b>\!\!>0$. 
Throughout this thesis, we assume a filtration (of a module or a vector space or a ring) is separated and exhaustive.

\begin{definition}\label{definition:induced-filtration}  
Let $X_j$ be a {\em filtered vector space}, 
i.e., $\{ 0\}= X_{j,0} \subseteq X_{j,1} \subseteq \ldots \subseteq X_{j,k}=X_j$.  
If $Y_j \subseteq X_j$ is a subspace, then the {\em induced filtration of $Y_j$} is 
$Y_{j,l} = Y_j\cap X_{j,l}$.  
If $X_j \stackrel{\varphi}{\twoheadrightarrow} Y_j$ is a quotient map, 
then the {\em induced filtration of $Y_j$}  
is defined as 
$Y_{j,l} = \varphi(X_{j,l})$. 
\end{definition}  

\begin{definition}\label{definition:filtered-linear-map}
Let $X_j$ and $Y_j$ be filtered vector spaces and let $X_j\stackrel{\varphi}{\longrightarrow}Y_j$ be a linear map.
We say $\varphi$ is {\em filtered} if $\varphi(X_{j,l})\subseteq Y_{j,l}$ for each $1\leq l\leq k$. 
\end{definition}

\begin{definition}\label{definition:filtration-on-direct-sum}  
For each $j\in J$, let $X_j$ be a filtered vector space.  
We define the 
{\em induced filtration $F_l(\displaystyle{\bigoplus_j X_j})$ on  
the direct sum} 
of $X_j$ to be $F_l(\displaystyle{\bigoplus_j X_j}):= 
\displaystyle{\bigoplus_j X_{j,l}}$, where $1\leq l\leq k$. 
\end{definition} 

\begin{definition}\label{definition:filtered-hom}
Let $X$ and $Y$ be filtered vector spaces. We define $F^{\bullet}\Hom(X,Y)\subseteq \Hom(X,Y)$  
to be the subspace of all linear maps  
$\varphi:X\rightarrow Y$ such that  
$\varphi(X_l)\subseteq Y_l$ for each $0\leq l\leq k$. 
\end{definition}

The following are some properties of filtered maps.  
\begin{lemma}\label{lemma:filtered-maps-basic-properties} 
	  \mbox{}
\begin{enumerate} 
\item\label{item:projection} Each projection 
$\displaystyle{\bigoplus_j X_j} \stackrel{\pi_j}{\longrightarrow} X_j$ is a filtered map. 
\item\label{item:inclusion} Each inclusion $X_j\hookrightarrow \displaystyle{\bigoplus_j X_j}$ is a filtered map. 
\item\label{item:induced-filtered-incl} If $Y_j\subseteq X_j$ is a subspace of a filtered vector space $X_j$, 
	then giving $Y_j$ the induced filtration,   
	the inclusion $Y_j\hookrightarrow X_j$ is a filtered map.  
\item\label{item:filtered-quotient-map} If $X_j\twoheadrightarrow Y_j$ is a quotient map where $X_j$ is a filtered vector space, 
then giving $Y_j$ the induced filtration, $X_j\twoheadrightarrow Y_j$ is a filtered map.  
\item\label{item:composition} If $X_{a}\stackrel{\varphi}{\longrightarrow} X_{b}$   
 and   
 $X_{b}\stackrel{\psi}{\longrightarrow} X_{c}$ are filtered maps, then so is $\psi\circ \varphi$.  
\end{enumerate} 
\end{lemma}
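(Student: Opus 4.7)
The plan is to verify each of the five items by directly unwinding the relevant definitions; none of them require any nontrivial construction, since each claim reduces to a single containment of subspaces.

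For \eqref{item:projection} and \eqref{item:inclusion}, I will use Definition~\ref{definition:filtration-on-direct-sum}, which gives the filtration on $\bigoplus_j X_j$ as $F_l(\bigoplus_j X_j) = \bigoplus_j X_{j,l}$. The projection $\pi_j$ then sends $\bigoplus_j X_{j,l}$ onto $X_{j,l}$, while the natural inclusion $X_j \hookrightarrow \bigoplus_j X_j$ sends $X_{j,l}$ into $\bigoplus_j X_{j,l}$; in both cases the containment required by Definition~\ref{definition:filtered-linear-map} is immediate. For \eqref{item:induced-filtered-incl}, the induced filtration on the subspace $Y_j$ from Definition~\ref{definition:induced-filtration} is $Y_{j,l} = Y_j \cap X_{j,l}$, and the inclusion $\iota: Y_j \hookrightarrow X_j$ satisfies $\iota(Y_{j,l}) = Y_j \cap X_{j,l} \subseteq X_{j,l}$, which is exactly what is required. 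For \eqref{item:filtered-quotient-map}, the induced filtration on the quotient is defined by $Y_{j,l} = \varphi(X_{j,l})$, so the inclusion $\varphi(X_{j,l}) \subseteq Y_{j,l}$ holds with equality by construction.

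Finally, for \eqref{item:composition}, given filtered maps $\varphi: X_a \to X_b$ and $\psi: X_b \to X_c$, I will simply chain the two defining containments: for each $l$,
\[
(\psi \circ \varphi)(X_{a,l}) = \psi(\varphi(X_{a,l})) \subseteq \psi(X_{b,l}) \subseteq X_{c,l},
\]
so the composition is filtered.

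No step presents a genuine obstacle; the only thing to be careful about is citing the correct definition of the induced filtration in \eqref{item:induced-filtered-incl} versus \eqref{item:filtered-quotient-map}, since Definition~\ref{definition:induced-filtration} specifies two different prescriptions depending on whether $Y_j$ sits inside $X_j$ as a subspace or arises as a quotient of $X_j$. With the definitions clearly in hand, each item is a one-line verification.
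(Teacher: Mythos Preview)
Your proposal is correct and follows essentially the same approach as the paper's proof: both verify each item by directly unwinding the definitions of the induced filtration on direct sums, subspaces, and quotients, and then checking the required containment; even the chained containment for \eqref{item:composition} is identical.
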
 

\begin{proof} 
We will begin with \eqref{item:projection}. 
Since  $\pi_j (F_l( \displaystyle{\bigoplus_j X_j} ) ) 
= 
 \pi_j ( \displaystyle{\bigoplus_j X_{j,l}} ) = X_{j,l}$, we are done. 
For \eqref{item:inclusion}, we have $X_{j,l}\hookrightarrow X_{j,l}\oplus 0 = F_l(X_j\oplus 0)$. 
Let us now prove \eqref{item:induced-filtered-incl}.  
Let $Y_j\stackrel{i}{\hookrightarrow} X_j$ be an inclusion. 
Then since 
    $i(Y_{j,l})=i(Y_j\cap X_{j,l})\cong Y_j\cap X_{j,l} \subseteq X_{j,l}$ 
    for each $0\leq l\leq k$, the inclusion $i$ is a filtered map. 
      For \eqref{item:filtered-quotient-map}, let $X_j \stackrel{\varphi}{\twoheadrightarrow} Y_j$ be a quotient map. 
      Then $\varphi(X_{j,l})=Y_{j,l}$ since $Y_j$ has the induced filtration. Thus $\varphi$ is a filtered map.
      Finally for \eqref{item:composition},  
      since $\psi\circ\varphi(X_{a,l})=\psi(\varphi(X_{a,l}))\subseteq \psi(X_{b,l})\subseteq X_{c,l}$, 
$\psi\circ\varphi$ is a filtered map. 
\end{proof}

\begin{definition}\label{definition:forgetful-functor}
Let $F^{\bullet}Rep(Q,\beta)\stackrel{forget}{\longrightarrow} Rep(Q,\beta)$ 
be the forgetful functor which sends a filtered representation to the representation 
in $Rep(Q,\beta)$,  
where the data consisting of the fixed filtration of vector spaces at each vertex is ignored. 
\end{definition} 
 
We refer the reader to Section~\ref{subsection:reflection-functors-classical-setting}  
for definitions of reflection functors in the classical setting. 

\begin{proposition}\label{proposition:filtered-representation-reflection-functors}
 If $X$ is a filtered representation of $Q$, then 
$S_i^+ X$ and $S_i^- X$ with induced filtrations are filtered representations of $F^{\bullet}Rep(Q',\beta')$, 
where $i\in Q_0$ is a sink or a source, respectively. 
Furthermore, let $F^{\bullet}Rep(Q,\beta)\longrightarrow Rep(Q,\beta)$ be the forgetful functor. 
Then the maps in the diagrams 
\[ 
\xymatrix@-1pc{ 
F^{\bullet} Rep(Q,\beta) \ar[rr]^{S_i^+} \ar[dd]_{forget} & & F^{\bullet}Rep(\sigma_i(Q),\sigma_i(\beta)) \ar[dd]^{forget} & & 
F^{\bullet} Rep(Q,\beta) \ar[rr]^{S_i^-} \ar[dd]_{forget} & & F^{\bullet}Rep(\sigma_i(Q),\sigma_i(\beta)) \ar[dd]^{forget} \\ 
& \circlearrowleft & & & &\circlearrowleft & \\ 
Rep(Q,\beta) \ar[rr]^{S_i^+} & & Rep(\sigma_i(Q),\sigma_i(\beta)) & & 
Rep(Q,\beta) \ar[rr]^{S_i^-} & & Rep(\sigma_i(Q),\sigma_i(\beta))\\
}
\]  
commute, where $i\in Q_0$ is a sink or a source, respectively. 
\end{proposition}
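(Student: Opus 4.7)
The plan is to show that the induced filtrations on $S_i^{\pm} X$ arise naturally from the direct-sum, subspace, and quotient filtration constructions of Definitions~\ref{definition:induced-filtration} and~\ref{definition:filtration-on-direct-sum}, and then to verify that each structure map of the reflected representation is filtered by invoking the formal properties collected in Lemma~\ref{lemma:filtered-maps-basic-properties}.

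First I would treat the $+$-reflection at a sink $i \in Q_0$. At each vertex $j \neq i$, set $W_j' = W_j$ with its given filtration. At vertex $i$, the new space $W_i' = \ker \phi_i^W$ sits inside $\bigoplus_{\nu : j \to i} W_j$, which carries the direct-sum filtration of Definition~\ref{definition:filtration-on-direct-sum}; I then equip $\ker \phi_i^W$ with the induced subspace filtration of Definition~\ref{definition:induced-filtration}. The key verification is that $\phi_i^W$ itself is a filtered map: each summand $\rho_\nu$ coincides with the filtered map $X(\nu)$ on the component $W_{t\nu}$ and vanishes on the remaining summands, so $\rho_\nu$ factors as a filtered projection followed by the filtered map $X(\nu)$, and hence is filtered by Lemma~\ref{lemma:filtered-maps-basic-properties}; a sum of filtered maps into a common target is itself filtered. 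For each reversed arrow $\nu^{op} : i \to j$, the new structure map $\rho_\nu'$ equals the composition of the filtered inclusion $\ker \phi_i^W \hookrightarrow \bigoplus_k W_k$ with the filtered projection onto $W_j$, and is therefore filtered by Lemma~\ref{lemma:filtered-maps-basic-properties}.

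The argument for $S_i^-$ at a source is dual. Here $W_i' = \coker \widetilde\phi_i^W$ inherits a quotient filtration from $\bigoplus_{\nu : i \to j} W_j$ in the sense of Definition~\ref{definition:induced-filtration}, and $\widetilde\phi_i^W$ is filtered for the same reason as $\phi_i^W$. The new structure maps $\rho_\nu' : W_j \to W_i'$ are then compositions of the filtered inclusion $W_j \hookrightarrow \bigoplus_k W_k$ with the filtered quotient onto $\coker \widetilde\phi_i^W$, hence are filtered.

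Commutativity of the two diagrams should then be essentially formal. The reflection functors $S_i^\pm$ on filtered representations are built from exactly the same underlying vector-space kernels, cokernels, and linear maps as their classical counterparts; the only extra data produced is the induced filtration at vertex $i$, and this is precisely the information that the forgetful functor of Definition~\ref{definition:forgetful-functor} discards. I do not anticipate a serious obstacle here: the only step that contains real content is the verification that $\phi_i^W$ and $\widetilde\phi_i^W$ are filtered maps, and this reduces immediately to the hypothesis that every structure map $X(\nu)$ is filtered together with the closure properties in Lemma~\ref{lemma:filtered-maps-basic-properties}.
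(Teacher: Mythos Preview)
Your proposal is correct and follows essentially the same route as the paper: equip $\bigoplus_j W_j$ with the direct-sum filtration, give $\ker\phi_i^W$ (respectively $\coker\widetilde\phi_i^W$) the induced subspace (respectively quotient) filtration, and then observe that the new structure maps $\rho_\nu'$ are compositions of the filtered inclusion/projection/quotient maps catalogued in Lemma~\ref{lemma:filtered-maps-basic-properties}. The paper's argument is slightly more bare-handed---it writes out the filtration level $(X_i')_l = X_i' \cap \bigoplus_j X_{j,l}$ explicitly rather than naming it as an instance of Definition~\ref{definition:induced-filtration}---but the content is the same. Your additional remark that $\phi_i^W$ is itself filtered is true but not strictly needed, since the subspace filtration on a kernel is defined regardless; the paper omits this check.
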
 

\begin{proof}  
Let $Q$ be a quiver and let $i\in Q_0$ be a sink of $Q$.  By the definition of a reflection functor, 
$X_i'=\ker \phi_i^X$, 
where $\phi_i^X 
= 
\displaystyle{\sum_{ (j\stackrel{\nu}{\rightarrow} i)\in Q_1} \rho_{\nu}}: 
\displaystyle{\bigoplus_j X_j} \rightarrow X_i$
 and for each arrow $i\stackrel{\nu}{\rightarrow} j \in Q_1'$, 
 the linear map $\rho_{\nu}':X_i'\rightarrow X_j=X_j'$ 
 is the natural projection on the component $X_j\in \displaystyle{\bigoplus_j X_j}$. 
 Thus $X_i'\hookrightarrow \displaystyle{\bigoplus_j X_j}$ is the inclusion followed by a projection onto the component $X_j$. 
 Since each $X_j$ is a filtered vector space: $\{0\} =X_{j,0}\subseteq X_{j,1}\subseteq \ldots \subseteq X_{j,k}=X_j$,
 we have 
 $(X_i')_l := X_i' \cap (\displaystyle{\bigoplus_j X_{j,l}}) \hookrightarrow \displaystyle{\bigoplus_j X_{j,l}}\twoheadrightarrow X_{j,l}$
 for each $0\leq l\leq k$. 
 Since $(X_i')_{l-1}\subseteq (X_i')_l$ for each $1\leq l\leq k$,  
 the representation 
 $X'$ with the induced filtration is a filtered representation of $F^{\bullet}Rep(Q',\beta')$.  
     Now for a sink $i\in Q_0$, consider a filtered representation $X$ of $Q$ with dimension vector $\beta$. 
     Then $S_i^+X$ is a filtered representation of $\sigma_i(Q)$ with dimension vector $\sigma_i(\beta)$. 
     Applying the forgetful functor, $S_i^+X$ is now a representation in $Rep(\sigma_iQ,\sigma_i\beta)$, 
     which coincides with applying the forgetful functor to $X$ followed by $S_i^+$.

 Now, suppose $i\in Q_0$ is a source of $Q$. By the definition of a reflection functor,   
 $X_i'=\coker \widetilde{\phi}_i^X$ with  
 $\rho_{\nu}':X_j\rightarrow X_i'$ is given by  
 $X_j\cong \displaystyle{\bigoplus_j X_j|_{X_j\oplus 0}}
 \hookrightarrow 
 \displaystyle{\bigoplus_j X_j} 
 \twoheadrightarrow 
 \displaystyle{\bigoplus_j X_j/\im \widetilde{\phi}_i^X}$. 
 Since $X_j$ is a filtered vector space, the induced filtration on $X_i'$ is 
 $\rho_{\nu}'(X_{j,l})= ((\displaystyle{\bigoplus_j X_j}) /\im \widetilde{\phi}_i^X)\cap (\displaystyle{\bigoplus_j X_{j,l}}) =: (X_{i}')_l$   
 for $0\leq l\leq k$.    
 Since $(X_{i}')_{l-1} \subseteq (X_{i}')_l$ for each $1\leq l\leq k$,  
 the representation  
 $X'$ with the induced filtration is a well-defined filtered representation of $F^{\bullet}Rep(Q',\beta')$. 
 Now for a source $i\in Q_0$, consider a filtered representation $X$ of $Q$ with dimension vector $\beta$. 
 Then $S_i^-X$ is a filtered representation of $\sigma_i(Q)$ with dimension vector $\sigma_i(\beta)$. 
 Applying the forgetful functor, $S_i^-X$ is now a representation in $Rep(\sigma_iQ,\sigma_i\beta)$, which coincides with applying the forgetful functor to $X$ followed by $S_i^-$. 
\end{proof}

We refer the reader to \cite{MR1741551} for the construction of multifiltered modules over multifiltered associative rings. 
Next, we will discuss the notion of strict filtered maps in the context of filtered representations of a quiver.  

\begin{definition}\label{definition:strict-filtered-morphisms}
Let $M\stackrel{\varphi}{\longrightarrow}N$ be a filtered map of left $\mathbb{C}Q$-modules, where 
$\{0\} = M_0\subseteq M_1\subseteq \ldots \subseteq M_k=M$ and 
$\{0\} = N_0\subseteq N_1\subseteq \ldots \subseteq N_k=N$ are filtrations of $M$ and $N$, respectively. 
If $M_l$ is maximal with the property that  
$\varphi(M_l)=\varphi(M)\cap N_l$ for all $1\leq l\leq k$, then $\varphi$ is called  
{\em strict}.  
\end{definition} 

Note that $\varphi$   
is strict if the filtration on the image is induced by the filtration of the target module, i.e., we have $M_l=\varphi^{-1}(N_l)$ for all $1\leq l\leq k$.

\begin{proposition}\label{proposition:associated-graded-and-strict-exact-sequence} 
Let $\mathbb{C}Q$ be the path algebra of $Q$ and let 
\begin{equation}\label{equation:SES-strict-exact-proposition}
\xymatrix@-1pc{
M \ar[rrr]^{\varphi_M} & & & N \ar[rrr]^{\varphi_N} & & & S  \\ 
}
\end{equation}  
be a sequence of filtered maps exact at $N$. Then 
\begin{equation}\label{equation:SES-strict-exact-gr-proposition}  
\xymatrix@-1pc{ 
\gr M \ar[rrr]^{\overline{\varphi}_M} & & & \gr N \ar[rrr]^{\overline{\varphi}_N} & & & \gr S  \\  
} 
\end{equation}
 is exact at $\gr N$ if and only if $\varphi_M$ and $\varphi_N$ are strict.  
\end{proposition}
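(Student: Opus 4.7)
The plan is to prove both directions of the equivalence by chasing the induced maps on associated gradeds back to their representatives, and by separating the use of exactness at $N$ (a set-theoretic fact about $\ker\varphi_N$ and $\im\varphi_M$) from the filtration-level data that strictness encodes.

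First I would dispatch the easier direction $(\Leftarrow)$. Assuming both $\varphi_M$ and $\varphi_N$ are strict, the containment $\im\overline{\varphi}_M \subseteq \ker\overline{\varphi}_N$ is automatic from $\varphi_N\circ\varphi_M=0$. For the reverse, pick a class $\overline{n}\in\ker\overline{\varphi}_N$ with $n\in N_l$, so $\varphi_N(n)\in S_{l-1}$. Strictness of $\varphi_N$ produces $n'\in N_{l-1}$ with $\varphi_N(n')=\varphi_N(n)$; exactness at $N$ then writes $n-n'=\varphi_M(m)$ for some $m\in M$. Because $n-n'\in N_l$, strictness of $\varphi_M$ upgrades this to $m\in M_l$, and reducing mod $N_{l-1}$ gives $\overline{\varphi}_M(\overline{m})=\overline{n}$.

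For the harder direction $(\Rightarrow)$, I would use induction on the filtration index, which terminates since the filtrations are bounded and exhaustive. To prove $\varphi_M$ strict, let $n=\varphi_M(m)\in N_l$; since $n\in\ker\varphi_N$, its class in $\gr_l N$ lies in $\ker\overline{\varphi}_N=\im\overline{\varphi}_M$, yielding $m_l\in M_l$ with $n-\varphi_M(m_l)\in N_{l-1}$. This difference lies in $\varphi_M(M)\cap N_{l-1}$, so the inductive hypothesis furnishes a correction in $M_{l-1}\subseteq M_l$ whose image equals $n-\varphi_M(m_l)$, and adding gives the desired preimage in $M_l$. To prove $\varphi_N$ strict, take $s\in \varphi_N(N)\cap S_l$ with some preimage $n\in N_k$, and induct on $k$: when $k>l$, the class $\overline{n}\in\gr_k N$ is killed by $\overline{\varphi}_N$ (because $s\in S_l\subseteq S_{k-1}$), hence lies in $\im\overline{\varphi}_M$, so subtracting $\varphi_M(m)$ for suitable $m\in M_k$ produces a new preimage $n'\in N_{k-1}$ of the same $s$, driving $k$ strictly down.

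The main obstacle I anticipate is organizing the $(\Rightarrow)$ induction so that the correction terms remain inside $\varphi_M(M)$ (respectively, map to the same $s$) at every stage, which is what lets the inductive hypothesis apply. This is ultimately bookkeeping, supported by the basic properties of filtered maps (inclusions, projections, and compositions being filtered, per the earlier lemma), together with the standing assumption that filtrations are separated and exhaustive so the recursion terminates at the trivial piece $N_a=0$.
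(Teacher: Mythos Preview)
Your proposal is correct and follows essentially the same element-chasing strategy as the paper: for $(\Leftarrow)$, use strictness of $\varphi_N$ to correct a lift into $\ker\varphi_N$ and then strictness of $\varphi_M$ to find a preimage in $M_l$; for $(\Rightarrow)$, use exactness of the graded sequence to produce approximate preimages and descend the filtration. Your explicit induction on the filtration index is a welcome refinement---the paper's proof handles these descent steps more loosely (e.g., asserting that lifting $\overline{\varphi}_M([m]_l)=[n]_l$ yields $\varphi_M(m)=n$ rather than merely congruence mod $N_{l-1}$), so your organization actually fills in gaps the paper glosses over.
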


We sometimes say the sequence is {\em strict exact} if it is exact and both $\varphi_M$ and $\varphi_N$ are strict.  
We will now prove Proposition~\ref{proposition:associated-graded-and-strict-exact-sequence}. 

\begin{proof} 
Suppose 
\[ 
\xymatrix@-1pc{ 
\displaystyle{\bigoplus_{l=1}^{k} M_l/M_{l-1}} \ar[rrr]^{\overline{\varphi}_M} & & & \displaystyle{\bigoplus_{l=1}^k N_l/N_{l-1}} \ar[rrr]^{\overline{\varphi}_N} & & & 
\displaystyle{\bigoplus_{l=1}^k S_l/S_{l-1}} 
}
\] is exact. We want to show 
$\varphi_M(M_l)=\varphi_M(M)\cap N_l$ and 
$\varphi_N(N_l)=\varphi_N(N)\cap S_l$. 
Since $M_l\subseteq M$, $\varphi_M(M_l)\subseteq \varphi_M(M)$. 
Since $\varphi$ is a filtered map, $\varphi_M(M_l)\subseteq N_l$. 
Thus, $\varphi_M(M_l)\subseteq \varphi_M(M)\cap N_l$. 
Now, we will show the other inclusion. 
 Let $n\in \varphi_M(M)\cap N_l=\ker(N\stackrel{\varphi_N}{\longrightarrow} S)\cap N_l$. 
 Let's write 
 $[n]_l =n_l+N_{l-1}\in N_l/N_{l-1}$. 
 Since $\varphi_N(n)=0$, 
 $\overline{\varphi}_N([n]_l)=[0]_l$. 
 By \eqref{equation:SES-strict-exact-gr-proposition},   
 there exists $[m]_l=m_l+M_{l-1}\in M_l/M_{l-1}$ such that 
 $\overline{\varphi}_M([m]_l)=[n]_l$. 
 Lifting $\overline{\varphi}_M$ to $\varphi_M$, $\varphi_M(m)=n$, where $m\in M_l$. 
 Thus, we have $n\in \varphi_M(M_l)$.  
  
 Similarly, since $N_l\subseteq N$,  
 we have $\varphi_N(N_l)\subseteq \varphi_N(N)$. 
 Since $\varphi$ is a filtered map, 
 $\varphi_N(N_l)\subseteq S_l$.  
 Thus, $\varphi_N(N_l)\subseteq \varphi_N(N)\cap S_l$. 
 On the other hand, suppose $s\in \varphi_N(N)\cap S_l$. 
 This means $[s]_l=s_l+S_{l-1}\in S_l/S_{l-1}$ and there exists $n\in N$ such that $\varphi_N(n)=s$. 
 We want to show that $n\in N_l$. 
 Write $[n]_j=n_j+N_{j-1}\in N_j/N_{j-1}$. 
 Suppose $j>l$. 
 Then $\overline{\varphi}_N([n]_j)=[0]_j = [s]_j \in S_j/S_{j-1}$. 
 By \eqref{equation:SES-strict-exact-gr-proposition}, there exists 
 $[m]_j =m_j+M_{j-1}$ such that $\overline{\varphi}_M([m]_j)=[n]_j$. 
 Lifting $\overline{\varphi}_M$ and $\overline{\varphi}_N$ to $\varphi_M$ and $\varphi_N$, respectively, 
 $\varphi_N\circ\varphi_M(m)=\varphi_N(n)=0=s$. 
 So $s$ was zero. 
 If $s$ is not zero, then the same argument implies $s$ must be zero, which is a contradiction. Thus, $j\leq l$. 
 If $0\not= s\in S_l$ such that $[s]_l = [0]_l$, 
 then there exists smallest $l'$ such that $[s]_{l'}\not=[0]_{l'}$. 
 Rewrite the indices such that $[s]_l\not=[0]_l$ in $S_l/S_{l-1}$. 
 Now if $j<l$, then 
 $[0]_l =[n]_l = 0+N_{l-1}\in N_l/N_{l-1}$ implies $\overline{\varphi}_N([0]_l)=[s]_l\not=[0]_l$. 
 This is a contradiction and therefore, we conclude $j=l$ and $\varphi_N$ is strict. 
 
 Now suppose $\varphi_M$ and $\varphi_N$ are strict, i.e., 
 $\varphi_M(M_l)=\varphi_M(M)\cap N_l$ and $\varphi_N(N_l)=\varphi_N(N)\cap S_l$. 
 We want to show the sequence in \eqref{equation:SES-strict-exact-gr-proposition} is exact. 
Since  $M\stackrel{\varphi_M}{\longrightarrow} N\stackrel{\varphi_N}{\longrightarrow} S$ 
is an exact sequence of filtered maps, by definition we have $\varphi_M(M_l)=\ker(N_l\stackrel{\varphi_N}{\longrightarrow}S_l)$ for all $0\leq l\leq k$. 
This means given the complex 
$\gr M \stackrel{\overline{\varphi}_M}{\longrightarrow}  \gr N \stackrel{\overline{\varphi}_N}{\longrightarrow} \gr S$, $\im \overline{\varphi}_M\subseteq \ker \overline{\varphi}_N$. 
We also have for all $0\leq l\leq k$, 
\begin{equation}\label{eq:strict-SES-proposition}
\ker(N_l\stackrel{\varphi_N}{\longrightarrow} S_l)=\varphi_M(M)\cap N_l. 
\end{equation}
Suppose $[n]_l=n_l+N_{l-1}\in \ker \overline{\varphi}_N$. 
Lifting $[n]_l$ in $N_l/N_{l-1}$ to $n$ in $N_l$, 
$\varphi_N(n)=0$.   
By the right-hand side of \eqref{eq:strict-SES-proposition}, $n\in \varphi_M(M)$.
So there exists $m\in M$ such that $\varphi_M(m)=n$. 
Since $\varphi_M(M_l)=\varphi_M(M)\cap N_l$, 
$\varphi_M(m)=n$ implies $m$ is really in $M_l$. 
So 
$[m]_l=m_l+M_{l-1}\in M_l/M_{l-1}$ such that 
$\overline{\varphi}_M([m]_l)=[n]_l$. 
So $[n]_l\in \im\overline{\varphi}_M$ and thus, we obtain that the associated graded complex is exact. 
\end{proof} 

\begin{corollary}\label{corollary:associated-graded-and-strict-exact-sequence}
Let $M\stackrel{\varphi}{\longrightarrow}N$ be a filtered map of left $\mathbb{C}Q$-modules. Then 
\begin{enumerate}[ {(}1{)} ]  
\item\label{item:injective-strict} the morphism $\varphi$ is injective and strict if and only if $\gr M \stackrel{\overline{\varphi}}{\longrightarrow}\gr N$ is injective. 
\item\label{item:surjective-strict} the morphism $\varphi$ is surjective and strict if and only if $\gr M\stackrel{\overline{\varphi}}{\longrightarrow}\gr N$ is surjective. 
\end{enumerate}
\end{corollary}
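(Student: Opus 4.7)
The plan is to deduce both parts of the Corollary from Proposition~\ref{proposition:associated-graded-and-strict-exact-sequence} in the forward direction by padding $M \xrightarrow{\varphi} N$ with zero to form a three-term sequence whose middle exactness is equivalent to injectivity or surjectivity of $\varphi$, and to handle the reverse direction by a direct filtration-level argument, since the Proposition takes middle exactness as hypothesis and therefore cannot be applied blindly when one only knows the associated graded map.

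For part \eqref{item:injective-strict}, forward direction, I would consider the three-term sequence $0 \to M \xrightarrow{\varphi} N$, which is exact at $M$ precisely because $\varphi$ is injective. The zero map $0 \to M$ is trivially strict, and $\varphi$ is strict by assumption, so Proposition~\ref{proposition:associated-graded-and-strict-exact-sequence} yields exactness of $0 \to \gr M \xrightarrow{\overline{\varphi}} \gr N$ at $\gr M$, i.e.\ $\overline{\varphi}$ is injective. For the converse, I would argue directly: if $\varphi(m) = 0$ for some $m \neq 0$, let $l$ be the smallest index with $m \in M_l$; then $[m]_l \in M_l/M_{l-1}$ is nonzero and $\overline{\varphi}([m]_l) = 0$, contradicting injectivity of $\overline{\varphi}$. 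For strictness, given $\varphi(m) \in N_l$ with $m \in M_j \setminus M_{j-1}$ and $j > l$, one has $\varphi(m) \in N_l \subseteq N_{j-1}$, so $\overline{\varphi}([m]_j) = [\varphi(m)]_j = 0$ while $[m]_j \neq 0$, again contradicting injectivity of $\overline{\varphi}$; hence $j \leq l$, so $m \in M_l$ and $\varphi^{-1}(N_l) \subseteq M_l$.

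For part \eqref{item:surjective-strict}, forward direction, I would apply Proposition~\ref{proposition:associated-graded-and-strict-exact-sequence} to $M \xrightarrow{\varphi} N \to 0$, which is exact at $N$ by surjectivity of $\varphi$; the zero map $N \to 0$ is strict, $\varphi$ is strict by assumption, and the conclusion is exactness of $\gr M \xrightarrow{\overline{\varphi}} \gr N \to 0$ at $\gr N$, i.e.\ surjectivity of $\overline{\varphi}$. For the converse, I would prove by induction on $l$ the statement that every $n \in N_l$ lies in $\varphi(M_l)$. The base case $l=0$ is trivial. For the inductive step, surjectivity of $\overline{\varphi}$ yields a class $[m']_l \in M_l/M_{l-1}$ with $\overline{\varphi}([m']_l) = [n]_l$; lifting to $m' \in M_l$ gives $\varphi(m') - n \in N_{l-1}$, and the induction hypothesis produces $m'' \in M_{l-1}$ with $\varphi(m'') = n - \varphi(m')$. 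Then $m = m' + m'' \in M_l$ and $\varphi(m) = n$. Taking $l = k$ gives $\varphi$ surjective, and the containment $\varphi(M_l) \supseteq N_l$ combined with filteredness $\varphi(M_l) \subseteq N_l$ yields $\varphi(M_l) = N_l = \varphi(M) \cap N_l$, establishing strictness.

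The main conceptual point, which is the only mild obstacle, is that Proposition~\ref{proposition:associated-graded-and-strict-exact-sequence} requires middle exactness as a standing hypothesis, so the converse directions cannot be obtained as a mechanical reversal; they require the two short direct arguments above. Beyond that, everything is a straightforward bookkeeping exercise with the induced filtrations on kernels and images (cf.\ Lemma~\ref{lemma:filtered-maps-basic-properties}).
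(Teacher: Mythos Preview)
Your proof is correct and follows essentially the same route as the paper: pad with zero to form a three-term sequence and invoke Proposition~\ref{proposition:associated-graded-and-strict-exact-sequence} for the forward direction. You are also right to flag that the Proposition carries middle exactness as a standing hypothesis, so the converse requires separately establishing that $\overline{\varphi}$ injective (resp.\ surjective) forces $\varphi$ injective (resp.\ surjective).

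The one place your argument diverges slightly from the paper's intended reasoning is in how you obtain \emph{strictness} in the converse. Once you have shown directly that $\varphi$ is injective (resp.\ surjective), the sequence $0 \to M \to N$ (resp.\ $M \to N \to 0$) is now exact at the middle, so you may re-invoke Proposition~\ref{proposition:associated-graded-and-strict-exact-sequence} in the other direction: exactness of the associated graded sequence then yields strictness of $\varphi$ for free. Your direct filtration-level arguments for strictness are perfectly valid, but this reuse of the Proposition is a touch more economical and is what the paper has in mind when it calls the Corollary a ``special case.''
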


Corollary~\ref{corollary:associated-graded-and-strict-exact-sequence} is a special case of Proposition~\ref{proposition:associated-graded-and-strict-exact-sequence} by 
taking $M=0$ (so that $\varphi_N$ is injective) or $S=0$ (so that $\varphi_M$ is surjective).    
It follows from Proposition~\ref{proposition:associated-graded-and-strict-exact-sequence} and  Corollary~\ref{corollary:associated-graded-and-strict-exact-sequence}  
that a map of filtered modules $0\rightarrow M\rightarrow N\rightarrow S\rightarrow 0$  
is a strict short exact sequence if and only if 
$0\rightarrow \gr M \rightarrow \gr N \rightarrow \gr S\rightarrow 0$ is a short exact sequence.

\begin{remark}\label{remark:relating-associated-graded-and-SES} 
Let $X_j$ and $Y_j$ be filtered vector spaces. 
If $\gr X_j \hookrightarrow \gr Y_j$ is injective, then   
$X_j\stackrel{\varphi}{\rightarrow} Y_j$ is also injective and 
the filtration of $X_j$ equals the filtration induced from $Y_j$. 
And similarly, if $\gr X_j \twoheadrightarrow \gr Y_j$ is surjective, 
then $X_j \stackrel{\psi}{\twoheadrightarrow} Y_j$  
is also surjective and the filtration of vector spaces at $X_j$ 
is determined by the filtration of vector spaces at $Y_j$. 
\end{remark}

Now let $X_j := X_{j,\bullet}$ be a filtered vector space. 
We extend the filtration of vector spaces such that it is torsion-free, i.e., 
if $X_{j,k}= X_j$ is the entire vector space, 
then let 
$X_{j,k+l} :=X_{j,k}$ for all $l\geq 0$. 

\begin{definition}\label{definition:rees-vector-space} 
The {\em Rees vector space} of $X_j$ is defined to be 
\[ 
\Rees(X_j) := \bigoplus_{l\in \mathbb{Z}} X_{j,l} t^l. 
\] 
\end{definition}

It is a graded module over the graded algebra $\mathbb{C}[t]$ as 
\[ \sum_l c_l t^l \cdot \sum_k d_k t^k 
 = \sum_m \sum_{l+k=m} c_l d_k t^m.  
\]

\begin{definition}\label{definition:graded-representation}  
Let $X$ be a filtered representation of a quiver $Q$.  
We define the   
{\em graded representation}  
$\Rees(X)$ of $X$  
to be the representation whose graded vector space at each vertex $i\in Q_0$ is   
$\Rees(X)_i :=\Rees(X_i)$, and for each arrow $i\stackrel{a}{\rightarrow} j$,    
the graded linear maps are  
$\Rees(X)_i \stackrel{\Rees(A_X)}{\longrightarrow}\Rees(X)_j$, 
where   
$\Rees(A_X)$ sends $v_{i,l}t^l\mapsto v_{j,l}t^l$ for every $l$. 
\end{definition}

\begin{definition}\label{definition:indecomposable-graded-rep} 
The Rees representation $\Rees(X)$ of $X$ is {\em decomposable} 
if 
$\Rees(X)\cong \Rees(Y)\oplus \Rees(Z)$, 
where $\Rees(Y)$ and $\Rees(Z)$ are nonzero graded $\mathbb{C}[t]$-representations of $Q$. 
If such decomposition does not exist, then we say 
$\Rees(X)$ is an {\em indecomposable graded representation}.  
\end{definition}

\begin{definition}\label{definition:category-of-graded-poly-rep}
A graded $\mathbb{C}[t]$-representation of $Q$ is a representation of $Q$ in graded $\mathbb{C}[t]$-modules. 
Let $GPRep(Q)$ be the {\em category of graded $\mathbb{C}[t]$-representations}. 
\end{definition}

\begin{lemma}\label{lemma:rees-functor}
$\Rees(-)$ defines a functor from  $F^{\bullet}Rep(Q)$ to graded $\mathbb{C}[t]$-representations: 
\[  
\xymatrix@-1pc{  
F^{\bullet}Rep(Q)\ar[rr] & & GPRep(Q) \\   
}  
\]   
by sending $X\mapsto \Rees(X)$.    
 \end{lemma}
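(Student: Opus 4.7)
The plan is to verify, in order, the three items required of a functor: (a) that $\Rees(X)$ is a well-defined object of $GPRep(Q)$; (b) that any morphism $\varphi$ in $F^{\bullet}Rep(Q)$ induces a morphism $\Rees(\varphi)$ in $GPRep(Q)$; and (c) that $\Rees$ preserves identity morphisms and composition. All three rest on a single observation about filtered linear maps between filtered vector spaces, namely that such a map splits degree-by-degree on the associated Rees modules.

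First I would unpack Definition~\ref{definition:rees-vector-space} and Definition~\ref{definition:graded-representation} carefully. For each vertex $i\in Q_0$, the filtration $X_{i,0}\subseteq X_{i,1}\subseteq\dots\subseteq X_{i,k}$ (extended torsion-freely) makes $\Rees(X)_i=\bigoplus_{l\in\mathbb{Z}}X_{i,l}t^l$ a graded $\mathbb{C}[t]$-module under multiplication by $t$, since $X_{i,l}\subseteq X_{i,l+1}$. For each arrow $i\stackrel{a}{\to}j$, the filtered linear map $A_X\colon X_i\to X_j$ satisfies $A_X(X_{i,l})\subseteq X_{j,l}$ by Definition~\ref{definition:filtered-linear-map}, so the prescription $v_{i,l}t^l\mapsto A_X(v_{i,l})t^l$ gives a well-defined, degree-preserving, $\mathbb{C}[t]$-linear map $\Rees(A_X)\colon \Rees(X)_i\to\Rees(X)_j$ (linearity in $t$ being automatic because the definition is degree-by-degree and $\mathbb{C}$-linear in each piece). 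Thus $\Rees(X)\in GPRep(Q)$.

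Next I would treat morphisms. A morphism $\varphi\colon X\to Y$ in $F^{\bullet}Rep(Q)$ is a tuple $(\varphi_i)_{i\in Q_0}$ of filtered linear maps $\varphi_i\colon X_i\to Y_i$ with $\varphi_{ha}\circ A_X=A_Y\circ\varphi_{ta}$ for each $a\in Q_1$. Define $\Rees(\varphi)_i\colon\Rees(X)_i\to\Rees(Y)_i$ by $v_{i,l}t^l\mapsto\varphi_i(v_{i,l})t^l$; this is well-defined and graded $\mathbb{C}[t]$-linear because $\varphi_i(X_{i,l})\subseteq Y_{i,l}$. The intertwining relation at the filtered level forces the corresponding relation at each graded component, so $\Rees(\varphi)$ is a morphism in $GPRep(Q)$.

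Finally, functoriality is immediate from the level-wise definition: $\Rees(\id_X)_i$ sends $v_{i,l}t^l\mapsto v_{i,l}t^l$, and given a composable pair $X\stackrel{\varphi}{\to}Y\stackrel{\psi}{\to}Z$ of filtered-representation maps (whose composition is filtered by Lemma~\ref{lemma:filtered-maps-basic-properties}\eqref{item:composition}), one checks on homogeneous elements that $\Rees(\psi\circ\varphi)_i=\Rees(\psi)_i\circ\Rees(\varphi)_i$. I do not anticipate a serious obstacle; the only conceptual point that needs care is confirming that filteredness of $A_X$ and of each $\varphi_i$ is exactly what makes the degree-by-degree recipe land in the right graded pieces, which is where the definitions of Chapter~\ref{chapter:categories-filt-graded-quiver-varieties} do the real work.
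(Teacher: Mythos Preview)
Your proof is correct and complete. The paper states this lemma without proof, treating it as a routine verification; your argument supplies exactly the details one would expect (well-definedness on objects, on morphisms, and preservation of identities and composition), so there is nothing to compare against.
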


\begin{definition}\label{definition:indecomposable-filtered-reps} 
We say a filtered representation $X$ is decomposable if $X\cong Y\oplus Z$, 
where $Y$ and $Z$ are nonzero filtered representations. 
If such decomposition does not exist, we say $X$ is an indecomposable filtered representation. 
\end{definition}

\begin{lemma}\label{lemma:total-space-of-graded-space} 
Let 
\[  \xymatrix@-1pc{ 
GPRep(Q) \ar[rr] & & F^{\bullet}Rep(Q)
}
\] be the functor defined as follows:  
writing 
$\Rees(X)_i$ $=$ $\displaystyle{\bigoplus_l R_{i,l}}$, 
if $t\cdot -:R_{i,l}\rightarrow R_{i,l+1}$ is injective for all $l$, 
then  
$\Rees(X)_i/\Rees(X)_i(t-1)$ is isomorphic to $\bigcup_l R_{i,l}$ 
via $R_{i,l}\stackrel{t\cdot -}{\hookrightarrow} R_{i,l+1}$. 
\end{lemma}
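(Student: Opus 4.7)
The plan is to establish the stated isomorphism and then package the assignment $\Rees(X) \mapsto \Rees(X)/(t-1)\Rees(X)$ (equipped with the induced filtration) into a functor from $GPRep(Q)$ to $F^{\bullet}Rep(Q)$. Concretely, for each vertex $i \in Q_0$ I would define
\[
L(\Rees(X))_i := \Rees(X)_i \big/ \Rees(X)_i(t-1),
\]
filtered by $F_l L(\Rees(X))_i := \mathrm{image}\bigl(\bigoplus_{k \leq l} R_{i,k} \to L(\Rees(X))_i\bigr)$, and for each arrow $a : i \to j$ use the graded map $\Rees(A_X)$ to induce a linear map on quotients. The main assertion to verify is the identification of this quotient with $\bigcup_l R_{i,l}$ (the colimit along the injections $t\cdot - : R_{i,l} \hookrightarrow R_{i,l+1}$).

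For the isomorphism, I would proceed in two steps. First, since each $R_{i,l}$ maps to the colimit $C_i := \operatorname{colim}_l R_{i,l}$ via the structural inclusions, the direct sum map $\Rees(X)_i \to C_i$ sends $r_{i,l} \in R_{i,l}$ to its class in $C_i$. The defining relation $t \cdot r_{i,l} = r_{i,l}$ in $C_i$ (forced by the telescoping $R_{i,l} \hookrightarrow R_{i,l+1}$) shows that $(t-1)\Rees(X)_i$ lies in the kernel, so one obtains a well-defined surjection $L(\Rees(X))_i \twoheadrightarrow C_i$. Second, for the inverse, the composite $R_{i,l} \hookrightarrow \Rees(X)_i \twoheadrightarrow L(\Rees(X))_i$ is compatible with the colimit maps (since the relation $t \equiv 1$ modulo $(t-1)$ identifies $r_{i,l} \in R_{i,l}$ with $t \cdot r_{i,l} \in R_{i,l+1}$), producing a map $C_i \to L(\Rees(X))_i$. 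A direct verification shows these two maps are mutually inverse; the hypothesis that each $t \cdot - : R_{i,l} \to R_{i,l+1}$ is injective ensures the colimit is a genuine increasing union and that no collapsing occurs.

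Next I would verify this assembles into a filtered representation of $Q$. Given an arrow $a : i \to j$, the graded map $\Rees(A_X)$ satisfies $\Rees(A_X)(R_{i,l}) \subseteq R_{j,l}$ by definition of a graded morphism, so on passing to the quotient one obtains $L(\Rees(A_X))_a : L(\Rees(X))_i \to L(\Rees(X))_j$ taking $F_l L(\Rees(X))_i$ into $F_l L(\Rees(X))_j$; this is precisely the filtered-morphism condition of Definition~\ref{definition:filtered-linear-map}. Functoriality then follows from the universal property of the quotient: a morphism $\varphi : \Rees(X) \to \Rees(Y)$ in $GPRep(Q)$ commutes with multiplication by $t$ and with the graded pieces, hence descends to a filtered morphism of the quotients, and composition and identities are preserved on the nose.

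The main obstacle, though a mild one, will be bookkeeping the filtration on the target carefully and checking that $t \cdot -$ being injective is used essentially to ensure the natural map $R_{i,l} \to L(\Rees(X))_i$ is injective (so that the filtration one writes down has the "right" size and matches the telescope). Without the injectivity hypothesis one still gets a quotient, but the identification with $\bigcup_l R_{i,l}$ and the description of the filtration by the images of $R_{i,l}$ may fail. Everything else amounts to a direct application of the colimit description $R[t]/(t-1) \cong \operatorname{colim} R_l$ for graded modules over $\mathbb{C}[t]$ whose multiplication-by-$t$ maps are injective.
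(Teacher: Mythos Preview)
Your proposal is correct and follows essentially the same approach as the paper: construct maps in both directions between $\Rees(X)_i/(t-1)\Rees(X)_i$ and the colimit $\bigcup_l R_{i,l}$ and check they are inverse. The paper organizes the same verification slightly differently---it shows directly that each $R_{i,l} \hookrightarrow \Rees(X)_i \twoheadrightarrow \Rees(X)_i/(t-1)$ is injective by computing that a nonzero element of $(t-1)\Rees(X)_i$ always has nonzero components in at least two graded pieces (using injectivity of $t\cdot -$ to ensure the top term $t v_{l_0+k}$ survives), then checks compatibility with the transition maps and surjectivity---whereas you invoke the universal property of the colimit; but the content is the same. Your additional discussion of the filtration and functoriality goes a bit beyond this lemma and corresponds to what the paper handles separately in Lemma~\ref{lemma:filt-functor}.
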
   
 
That is, the total space at vertex $i$ is $\Rees(X)_i/\Rees(X)_i(t-1)$.   
Moreover, the vector space $\displaystyle{\bigcup_l R_{i,l}} = R_i$    
has the induced filtration from $\Rees(X)/\Rees(X)(t-1)$.

\begin{proof}   
Suppose $t\cdot -:R_{i,l}\rightarrow R_{i,l+1}$ is injective for each $l$.   
We will show the existence of an isomorphism   
\[   
\xymatrix@-1pc{   
\displaystyle{\bigcup_l R_{i,l}} \ar[rr]^{} & & \Rees(X)_i/\Rees(X)_i(t-1).   
}   
\]  
Consider the composite   
\[   
\xymatrix@-1pc{  
R_{i,l} \ar[ddrr]_{\pi_l} \ar@{^{(}->}[rr]^{\incl} & & \Rees(X)_i \ar@{->>}[dd]^{\pr_i} \\  
 				& & \\  
 				& & \Rees(X)_i/\Rees(X)_i(t-1). \\  
}
\] 
Then the kernel of $\pi_l$ is 
$R_{i,l}\cap \Rees(X)_i(t-1)$. 
Note that an element of $\Rees(X)_i(t-1)$ is of the form 
\[ 
v = (t-1)\left( \sum_{l=l_0}^{l_0+k} v_l \right), 
\]   
where  
$v_l\in R_{i,l}$, $v_{l_0}\not=0$, $v_{l_0+k}\not=0$.  
Expanding the product for $v$, we get   
\[   
v = -v_{l_0} + t v_{l_0}-v_{l_0+1}+\ldots + tv_{l_0+k}, 
\] 
where $-v_{l_0}\in R_{i,l_0}$, $t v_{l_0}-v_{l_0+1}\in R_{i,l_0+1}$, $\ldots$,  
$tv_{l_0+k}\in R_{i,l_0+k+1}$.  
Since $v_{l_0}\not=0$ by assumption and $tv_{l_0+k}\not=0$ since multiplication by $t$ is injective,  
any such $v$ has nonzero components in at least two graded module pieces.  
So $R_{i,l}\cap \Rees(X)_i(t-1)=\{ 0\}$ for every $l$ since $R_{i,l}$ is just one graded piece.  
Thus, $R_{i,l}\stackrel{\pi_l}{\hookrightarrow} \Rees(X)_i/\Rees(X)_i(t-1)$ is injective.   

Next, we will prove that the diagram 
\[ 
\xymatrix@-1pc{ 
R_{i,l} \ar@{^{(}->}[ddrr]_{\pi_l} \ar[rr]^{t\cdot -} & & R_{i,l+1} \ar@{^{(}->}[dd]^{\pi_{l+1}} \\ 
 & & \\  
 & & \Rees(X)_i/\Rees(X)_i(t-1) \\ 
} 
\] 
commutes, where $\pi_l:v\mapsto v$, $t\cdot -:v\mapsto tv$, and $\pi_{l+1}:tv\mapsto tv$.  
So if $v\in R_{i,l}$, then $tv-v = (t-1)v=0$ in $\Rees(X)_i/\Rees(X)_i(t-1)$. 
So $tv=v$ in $\Rees(X)_i/\Rees(X)_i(t-1)$. 

Finally, we need to show   
\[  
\bigcup_l \im \pi_l = \Rees(X)_i/\Rees(X)_i(t-1).  
\]   
Since $\Rees(X)_i/\Rees(X)_i(t-1)$ consists of images of elements of the form 
$\displaystyle{\sum_{l=l_0}^{l_0+k} v_l}$,  
where $v_l\in R_{i,l}$, 
the image of such element equals the image of 
\[ \sum_{l=l_0}^{l_0+k} v_l t^{l_0+k-l} 
  = v_{l_0}t^k + v_{l_0+1}t^{k-1} + v_{l_0+2}t^{k-2} +\ldots + v_{l_0+k} \in R_{i,l_0+k}. 
\] 
\end{proof}

Now, consider the map from a graded representation to a filtered representation: 
$X_{\bullet,\bullet}\mapsto (X/X(t-1))_{\bullet,\bullet}$, 
where 
$(X/X(t-1))_{i,l}=\im (X_{i,l}\rightarrow X_{i,\bullet}/X_{i,\bullet}(t-1))$. 
That is, let $X$ be a graded $\mathbb{C}[t]$-representation; so 
$X_i = \displaystyle{\bigoplus_l X_{i,l}}$ 
is a graded $\mathbb{C}[t]$-module. 
Define 
\[  \xymatrix@-1pc{ 
\Filt(X_{\bullet})_{i,l} = \im(X_{i,l}\ar[rr] & & X_i/X_i(t-1)).  
}
\] 

\begin{lemma}\label{lemma:filt-functor} 
The module $\Filt(X_{\bullet})$ defined above is a filtered representation of $Q$. 
\end{lemma}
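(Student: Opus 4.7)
The plan is to verify the two conditions required of a filtered representation of $Q$: (i) at each vertex $i \in Q_0$ the subspaces $\Filt(X_{\bullet})_{i,l} \subseteq X_i/X_i(t-1)$ form a nested, exhaustive (and separated) chain, and (ii) for each arrow $a : i \to j$ of $Q$, the map on quotients induced by the $\mathbb{C}[t]$-linear map $A_a : X_i \to X_j$ is well defined and filtered. Both facts should follow directly from the relation $tv \equiv v \pmod{X_i(t-1)}$ together with the fact that the arrow maps are degree-preserving homomorphisms of graded $\mathbb{C}[t]$-modules.

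First I would establish the inclusion $\Filt(X_{\bullet})_{i,l} \subseteq \Filt(X_{\bullet})_{i,l+1}$. Writing $\pi : X_i \twoheadrightarrow X_i/X_i(t-1)$ for the quotient map, for any $v \in X_{i,l}$ we have $tv \in X_{i,l+1}$ and $\pi(v) = \pi(tv)$, so $\pi(X_{i,l}) \subseteq \pi(X_{i,l+1})$, which is exactly the desired inclusion. The filtration is exhaustive because $X_i = \bigoplus_l X_{i,l}$ implies every element of $X_i/X_i(t-1)$ is a finite sum of images of graded pieces, hence lies in $\Filt(X_{\bullet})_{i,l}$ for $l$ sufficiently large (by the same $tv \equiv v$ trick one can consolidate these into a single graded piece, as was done in the proof of Lemma~\ref{lemma:total-space-of-graded-space}). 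If the convention requires separatedness, this follows by taking $l$ sufficiently negative so that $X_{i,l} = 0$, which is the standing assumption on graded $\mathbb{C}[t]$-representations.

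Next I would verify the arrow condition. Since $X$ is a graded $\mathbb{C}[t]$-representation of $Q$, for each arrow $a : i \to j$ the map $A_a : X_i \to X_j$ is $\mathbb{C}[t]$-linear and degree preserving, so $A_a(X_i(t-1)) \subseteq X_j(t-1)$ and $A_a(X_{i,l}) \subseteq X_{j,l}$. The first containment shows that $A_a$ descends to a linear map $\overline{A}_a : X_i/X_i(t-1) \to X_j/X_j(t-1)$, and the second then yields
\[
\overline{A}_a\bigl(\Filt(X_{\bullet})_{i,l}\bigr)
= \overline{A}_a\bigl(\pi(X_{i,l})\bigr)
= \pi\bigl(A_a(X_{i,l})\bigr)
\subseteq \pi(X_{j,l})
= \Filt(X_{\bullet})_{j,l},
\]
which is precisely the statement that $\overline{A}_a$ is a filtered map in the sense of Definition~\ref{definition:filtered-linear-map}.

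Assembling these pieces identifies $\Filt(X_{\bullet})$ as a representation of $Q$ whose vector space at each vertex carries a nested exhaustive filtration and whose arrow maps respect these filtrations, which is exactly the data of a filtered representation. I do not anticipate a serious obstacle; the only minor subtlety is checking that the two interpretations of $X_{i,l}$ (a single graded piece versus the partial sum $\bigoplus_{k \leq l} X_{i,k}$) give the same subspace after applying $\pi$, which follows from $\pi(t^n v) = \pi(v)$ and so poses no real issue.
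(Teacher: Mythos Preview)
Your proof is correct and follows essentially the same approach as the paper's: both verify nestedness of the filtration at each vertex (via $tv \equiv v \pmod{X_i(t-1)}$) and that the arrow maps preserve graded pieces and hence the induced filtration. Your version is considerably more detailed than the paper's two-sentence proof, which simply asserts these two facts without spelling out the $\mathbb{C}[t]$-linearity argument or the exhaustiveness/separatedness checks.
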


Regardless of $t$ having torsion or being torsion-free, 
defining $\Filt(X_{\bullet})_{i,l}$ as in Lemma~\ref{lemma:filt-functor} defines a filtration of $X_i/X_i(t-1)$ for each $i$.
Note if $t$ has torsion,  
then multiplication by $t$ sends torsion submodules to the zeroth graded piece while sending the rest of the module to the next graded module.

\begin{proof} 
Since $\Filt(X_{\bullet})_{i,l}$ is naturally contained in $\Filt(X_{\bullet})_{i,l+1}$ for each $i$ and $l$ and 
for each arrow $i\rightarrow j$, 
each graded vector space $X_{i,l}$ at vertex $i$ maps to each graded piece 
$X_{j,l}$ at vertex $j$, 
$\Filt(X_{\bullet})$ is indeed a filtered representation of $Q$. 
\end{proof}

\begin{proposition}\label{proposition:rees-functor-and-filtered-functor} 
Consider the $\Rees$ and the $\Filt$ functors: 
\[ 
\xymatrix@-1pc{
F^{\bullet}Rep(Q) \ar@/^/[rr]^{\Rees} & & GPRep(Q) \ar@/^/[ll]^{\Filt}. 
}
\] 
Then $\Filt\circ \Rees\cong \Id$ on $F^{\bullet}Rep(Q)$  
while   
$\Rees \circ\Filt \cong \Id$   
on the subcategory of $t$-torsion-free graded modules in $GPRep(Q)$.   
\end{proposition}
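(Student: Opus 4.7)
The plan is to prove each composition is isomorphic to the identity by analyzing what happens vertex-by-vertex and then verifying compatibility with the arrow maps and with morphisms.

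For $\Filt \circ \Rees \cong \Id$ on $F^{\bullet}Rep(Q)$: Let $X$ be a filtered representation, so $\Rees(X)_i = \bigoplus_l X_{i,l} t^l$. The filtration inclusions $X_{i,l} \hookrightarrow X_{i,l+1}$ are injective by definition, so multiplication by $t$ is injective on each graded piece of $\Rees(X)_i$. Hence Lemma~\ref{lemma:total-space-of-graded-space} applies and yields a natural isomorphism $\Rees(X)_i/\Rees(X)_i(t-1) \cong \bigcup_l X_{i,l} = X_i$, under which the image of the $l$-th graded piece $X_{i,l} t^l$ is precisely $X_{i,l} \subseteq X_i$. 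Thus $\Filt(\Rees(X))_{i,l} = X_{i,l}$, recovering the original filtration. For an arrow $a: i \to j$ with linear map $A_X$, the Rees map $\Rees(A_X)$ acts on each graded piece as $A_X$, and after quotienting by $(t-1)$ we recover $A_X$ itself; so the arrow data is preserved.

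For $\Rees \circ \Filt \cong \Id$ on the $t$-torsion-free subcategory of $GPRep(Q)$: Let $Y$ be a $t$-torsion-free graded $\mathbb{C}[t]$-representation, so $Y_i = \bigoplus_l Y_{i,l}$ with multiplication by $t$ injective on each piece. Again by Lemma~\ref{lemma:total-space-of-graded-space}, the quotient $Y_i/Y_i(t-1)$ is naturally isomorphic to $\bigcup_l Y_{i,l}$ with transition maps given by $t \cdot -$. Under this identification, $\Filt(Y)_{i,l} = \im(Y_{i,l} \to Y_i/Y_i(t-1)) \cong Y_{i,l}$, and the inclusion $\Filt(Y)_{i,l} \hookrightarrow \Filt(Y)_{i,l+1}$ corresponds to the map $t \cdot - : Y_{i,l} \to Y_{i,l+1}$ inside $Y_i$. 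Therefore
\[
\Rees(\Filt(Y))_i = \bigoplus_l \Filt(Y)_{i,l}\, t^l \;\cong\; \bigoplus_l Y_{i,l}\, t^l = Y_i,
\]
as a graded $\mathbb{C}[t]$-module, since multiplication by $t$ on both sides is given by the same transition map. The arrow maps are preserved because each $A_Y$ is grading-preserving and $\Filt$ simply records $A_Y$ on each graded piece, which is then reassembled by $\Rees$.

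Finally, I would check naturality: a morphism of filtered representations $\varphi \colon X \to X'$ induces $\Rees(\varphi) \colon \Rees(X) \to \Rees(X')$ sending $X_{i,l} t^l$ into $X_{i,l}' t^l$, and after $\Filt$ this becomes $\varphi$ itself under the identifications above; similarly in the other direction. Thus both natural transformations to (and from) the identity functor are isomorphisms on objects and commute with all morphisms.

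The only real subtlety lies in the second direction, where $t$-torsion-freeness is essential: if $Y$ had $t$-torsion, then $\Filt(Y)_{i,l}$ would lose that torsion (multiplication by $t$ becomes invertible in $Y_i/Y_i(t-1)$), and $\Rees(\Filt(Y))$ would be strictly smaller than $Y$. Making sure to invoke the torsion-free hypothesis at the precise step where Lemma~\ref{lemma:total-space-of-graded-space} is applied is the main care point; everything else is a bookkeeping exercise in unwinding the definitions.
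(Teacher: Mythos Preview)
Your proof is correct and follows essentially the same route as the paper: unwind the definitions of $\Rees$ and $\Filt$ vertex-by-vertex using Lemma~\ref{lemma:total-space-of-graded-space}, and observe that $t$-torsion-freeness is exactly what makes the second composite recover the original graded module. You are in fact more thorough than the paper, which does not explicitly check naturality with respect to morphisms. One minor difference worth noting: the paper's proof also includes, within the same proof block, an argument that these functors preserve direct sums (equivalently, indecomposability), which is not part of the proposition as stated but is used immediately afterward for Corollary~\ref{corollary:relating-indecomp-filt-rep-indecomp-graded-rep}; your argument does not address this, but it is not required for the proposition itself.
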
 

Proposition~\ref{proposition:rees-functor-and-filtered-functor} implies  
$\Rees \circ\Filt(M)=M/\tors(M)$, where  
$\tors(M)$ is a $t$-torsion submodule of $M$.   

\begin{proof}  
 Let $X$ be a filtered representation. Then 
 $\Filt \circ \Rees(X_{i,l}) = \Filt(X_{i,l}t^l) = X_{i,l}$
  for each $l$, 
 so 
 $\Filt (\Rees(X_{i,l}))$ is naturally contained inside $\Filt(\Rees(X_{i,l+1}))$. 
 It is then clear that for each $l$ and for each arrow $i\rightarrow j$, linear maps of the form 
 $\Filt \circ \Rees(X_{i,l})\rightarrow \Filt \circ \Rees(X_{j,l})$  
 are compatible with filtered linear maps $X_{i,l}\rightarrow X_{j,l}$.  
On the other hand, let  
$\displaystyle{\bigoplus_{l\in \mathbb{Z}}X_{i,l}t^l}$  
be $t$-torsion-free graded module.  
Then  
 $\Rees\circ \Filt(X_{i,l}t^l)$ $=$ $\Rees(X_{i,l})$ $=$ $X_{i,l}t^l$, so we are done. 
 
 Now, we will show that the functors preserve direct sums. 
Suppose $X$ is an indecomposable filtered representation in $F^{\bullet} Rep(Q)$. 
Consider $\Rees(X)$. If $\Rees(X)$ were decomposable, then 
$\Rees(X)$ $=$ $\Rees(Y)$ $\oplus$ $\Rees(Z)$, where 
$\Rees(Y)$ and $\Rees(Z)$ are nonzero graded $\mathbb{C}[t]$-representations. 
This means 
for each arrow $i\stackrel{a}{\rightarrow} j$, we have 
\[ 
\xymatrix@-1pc{ 
\Rees(X)_i \ar[dd]_{\Rees(A_X)} &   & \Rees(Y)_i \ar[dd]^{\Rees(A_Y)} & &  & \Rees(Z)_i \ar[dd]^{\Rees(A_Z)} \\ 
 						&   \cong &    &   & \bigoplus  &  & \\ 
 \Rees(X)_j &     &     \Rees(Y)_j & &  &  \Rees(Z)_j. \\ 
}
\] 
Applying the functor $GPRep(Q)\rightarrow F^{\bullet}Rep(Q)$, which maps  
$\Rees(X)\mapsto \Rees(X)/\Rees(X)(t-1)$, 
where 
$\Rees(X)_i/\Rees(X)_i(t-1)\cong \displaystyle{\bigcup_{l\in \mathbb{Z}} X_{i,l}}$,  
we have 
\[ 
\xymatrix@-1pc{ 
\displaystyle{\bigcup_l X_{i,l}} \ar[dd]_{ } &   & \displaystyle{\bigcup_l Y_{i,l}} \ar[dd]^{} & &  & \displaystyle{\bigcup_l Z_{i,l}} \ar[dd]^{} \\ 
 						&   \cong &    &   & \bigoplus  &  & \\ 
\displaystyle{\bigcup_l X_{j,l}} &     &     \displaystyle{\bigcup_l Y_{j,l}} & &  &  \displaystyle{\bigcup_l Z_{j,l}}, \\ 
}
\] 
where 
$\im (X_{i,l})\subseteq X_{j,l}$, 
$\im (Y_{i,l})\subseteq Y_{j,l}$, 
$\im (Z_{i,l})\subseteq Z_{j,l}$ for all $l$ 
since $X$ is a filtered representation. 
So $X$ is decomposable, which is a contradiction. 
Thus $\Rees(X)$ is indecomposable.

Now suppose $\Rees(X)$ is indecomposable. 
For a contradiction, suppose $X$ is a decomposable filtered representation. 
This means $X$ is isomorphic to $Y\oplus Z$, where $Y$ and $Z$ are nonzero filtered representations. 
Applying $\Rees(-)$, we have 
$\Rees(X)$ $\cong$ $\Rees(Y\oplus Z)$, where 
the graded vector space at vertex $i$ is $\Rees(Y\oplus Z)_i$ $\cong$ 
$\Rees(Y)_i\oplus \Rees(Z)_i$ and   
$\Rees(Y\oplus Z)_i$ $\rightarrow$ $\Rees(Y\oplus Z)_j$ is the map  
$\Rees(Y)_i\oplus \Rees(Z)_i$ $\rightarrow$ $\Rees(Y)_j \oplus \Rees(Z)_j$,   
which decomposes as   
$\Rees(Y)_i\rightarrow \Rees(Y)_j$ together with   
$\Rees(Z)_i\rightarrow \Rees(Z)_j$.    
So since   
$\Rees(Y\oplus Z) \cong \Rees(Y)\oplus \Rees(Z)$, $\Rees(X)$ must be decomposable, which is a contradiction.   
Thus, $X$ is an indecomposable filtered representation.   
\end{proof}

Since Proposition~\ref{proposition:rees-functor-and-filtered-functor} preserves direct sums, Corollary~\ref{corollary:relating-indecomp-filt-rep-indecomp-graded-rep} follows. 

\begin{corollary}\label{corollary:relating-indecomp-filt-rep-indecomp-graded-rep} 
The filtered representation $X$ is indecomposable in $F^{\bullet}Rep(Q)$ if and only if 
the graded $\mathbb{C}[t]$-representation 
$\Rees(X)$ is indecomposable 
in $GPRep(Q)$. 
\end{corollary}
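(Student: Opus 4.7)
The plan is to derive the corollary directly from Proposition~\ref{proposition:rees-functor-and-filtered-functor}, exploiting the fact that the functors $\Rees$ and $\Filt$ are both additive and restrict to mutually quasi-inverse equivalences between $F^{\bullet}Rep(Q)$ and the full subcategory of $t$-torsion-free objects in $GPRep(Q)$. Since equivalences of additive categories preserve and reflect indecomposability, the corollary follows once these two properties are verified.

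First, I would check additivity of $\Rees$. Given filtered representations $Y$ and $Z$ in $F^{\bullet}Rep(Q)$, Definition~\ref{definition:filtration-on-direct-sum} gives $(Y\oplus Z)_{i,l} = Y_{i,l}\oplus Z_{i,l}$ at each vertex $i$, so
\[
\Rees(Y\oplus Z)_i \;=\; \bigoplus_{l\in\mathbb{Z}}(Y_{i,l}\oplus Z_{i,l})t^l \;\cong\; \Rees(Y)_i\oplus \Rees(Z)_i,
\]
and the induced maps on each arrow decompose componentwise. An analogous verification shows $\Filt$ is additive. Furthermore, $\Rees(X)$ is automatically $t$-torsion-free for any filtered $X$, since multiplication by $t$ takes $X_{i,l}t^l$ into $X_{i,l}t^{l+1}\subseteq X_{i,l+1}t^{l+1}$ injectively, so the image of $\Rees$ lies in the subcategory where Proposition~\ref{proposition:rees-functor-and-filtered-functor} supplies $\Filt\circ\Rees\cong\Id$ and $\Rees\circ\Filt\cong\Id$.

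With these in hand, the two implications are immediate. If $X\cong Y\oplus Z$ with both summands nonzero in $F^{\bullet}Rep(Q)$, then $\Rees(X)\cong \Rees(Y)\oplus\Rees(Z)$, and neither summand can be zero since applying $\Filt$ would recover the nonzero $Y$ and $Z$. Conversely, if $\Rees(X)\cong M\oplus N$ with both $M,N$ nonzero in $GPRep(Q)$, then since $\Rees(X)$ is $t$-torsion-free, so are its direct summands $M$ and $N$; applying $\Filt$ yields $X\cong \Filt(M)\oplus\Filt(N)$ with both summands nonzero (otherwise $M=\Rees(\Filt(M))=0$ or similarly for $N$).

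I do not expect any serious obstacle here: the essential content is packaged into Proposition~\ref{proposition:rees-functor-and-filtered-functor}, and the only mild care needed is to track that nonzero direct summands on one side of the equivalence correspond to nonzero summands on the other, which is exactly what the quasi-inverse property guarantees on the $t$-torsion-free subcategory.
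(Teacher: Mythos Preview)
Your proposal is correct and follows essentially the same route as the paper: the corollary is deduced from Proposition~\ref{proposition:rees-functor-and-filtered-functor} via additivity of $\Rees$ and $\Filt$ together with the quasi-inverse identities, which is exactly what the paper does (the argument is actually folded into the proof of Proposition~\ref{proposition:rees-functor-and-filtered-functor} itself, and the corollary is stated as an immediate consequence). Your treatment is slightly more careful in one respect: you explicitly note that direct summands of the $t$-torsion-free object $\Rees(X)$ are again $t$-torsion-free, which is what licenses applying $\Rees\circ\Filt\cong\Id$ to an arbitrary summand $M$; the paper sidesteps this by its Definition~\ref{definition:indecomposable-graded-rep}, which effectively restricts attention to decompositions into objects already of Rees type.
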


\begin{lemma}\label{lemma:graded-rep-to-associated-graded-rep} 
Let 
\[ \xymatrix@-1pc{
GPRep(Q) \ar[rr] & & GRep(Q) 
}
\] be the functor defined as follows: writing 
$\Rees(X)_i 
= 
\displaystyle{\bigoplus_l R_{i,l}}$,   
if $t\cdot -:R_{i,l} \rightarrow R_{i,l+1}$  
is torsion-free for all $l$, then 
$\Rees(X)_i/\Rees(X)_i\cdot t$ is isomorphic to $\gr(X)_i$.
\end{lemma}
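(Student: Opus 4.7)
The plan is to compute the $l$-th graded component of the quotient $\Rees(X)_i/\Rees(X)_i\cdot t$ and to identify it with the $l$-th associated graded piece of the filtered representation $\Filt(\Rees(X))$. Since both objects are naturally graded by $l\in\mathbb{Z}$, it suffices to produce a natural isomorphism in each degree and then verify that these isomorphisms assemble into a morphism of graded quiver representations.

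First I would observe that the submodule $\Rees(X)_i\cdot t$ is homogeneous, and its $l$-th graded piece is exactly the image of the multiplication-by-$t$ map $R_{i,l-1}\to R_{i,l}$; hence it equals $t\cdot R_{i,l-1}$. Consequently
\[
\bigl(\Rees(X)_i/\Rees(X)_i\cdot t\bigr)_l \;\cong\; R_{i,l}/t\cdot R_{i,l-1}.
\]
Under the hypothesis that $t\cdot -:R_{i,l-1}\to R_{i,l}$ has no kernel (i.e.\ is torsion-free, hence injective), the image $t\cdot R_{i,l-1}$ is a copy of $R_{i,l-1}$ sitting inside $R_{i,l}$.

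Next I would invoke Lemma~\ref{lemma:total-space-of-graded-space}, which, under the same injectivity assumption, identifies $R_{i,l}$ with the filtered piece $X_{i,l}:=\Filt(\Rees(X))_{i,l}\subseteq X_i/X_i(t-1)$ via the composite $R_{i,l}\hookrightarrow \Rees(X)_i\twoheadrightarrow \Rees(X)_i/\Rees(X)_i(t-1)$. The commuting square in the proof of that lemma shows that the inclusion $t\cdot R_{i,l-1}\hookrightarrow R_{i,l}$ corresponds, under these identifications, to the inclusion $X_{i,l-1}\hookrightarrow X_{i,l}$ of the filtration. Therefore
\[
R_{i,l}/t\cdot R_{i,l-1} \;\cong\; X_{i,l}/X_{i,l-1} \;=\; \gr(X)_{i,l},
\]
and summing over $l$ yields the desired isomorphism $\Rees(X)_i/\Rees(X)_i\cdot t\cong \gr(X)_i$ of graded vector spaces.

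Finally I would check functoriality: for each arrow $i\stackrel{a}{\to}j$ the map $\Rees(A_X):\Rees(X)_i\to\Rees(X)_j$ is $\mathbb{C}[t]$-linear and homogeneous, so it descends to a map on the quotients and respects the degree $l$ components; by the identifications above it corresponds to the induced map $\gr(X)_i\to\gr(X)_j$, making the assignment a well-defined functor to $GRep(Q)$. The main obstacle here is really only bookkeeping, ensuring that the grading shift in $t\cdot -$ and the two different identifications (with $X_{i,l}$ on the filtered side and with the $l$-th graded component on the $\Rees$ side) are compatible; no genuinely new idea beyond Lemma~\ref{lemma:total-space-of-graded-space} should be required.
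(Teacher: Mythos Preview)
Your proposal is correct and follows essentially the same approach as the paper: compute the degree-$l$ piece of the quotient as $R_{i,l}/t\cdot R_{i,l-1}$ and identify it with $X_{i,l}/X_{i,l-1}$. The only difference is that you route the identification $R_{i,l}\cong X_{i,l}$ through Lemma~\ref{lemma:total-space-of-graded-space}, whereas the paper simply uses the definition $R_{i,l}=X_{i,l}t^l$ directly (since $\Rees(X)$ is built from a given filtered $X$), which makes the step $R_{i,l+1}/tR_{i,l}=X_{i,l+1}t^{l+1}/X_{i,l}t^{l+1}\cong X_{i,l+1}/X_{i,l}$ immediate; your detour is harmless but unnecessary.
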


That is, the associated graded vector space at vertex $i$ is $\Rees(X)_i/\Rees(X)_i \cdot t$. 
Furthermore, $\gr(X)_i$ is a $\gr_{(t)}\mathbb{C}[t]$-module.

\begin{proof} 
Suppose $t\cdot -:R_{i,l}\rightarrow R_{i,l+1}$ is injective for all $l$. 
We will show that 
\[ \xymatrix@-1pc{
\gr (X)_i \ar[rr] & & \Rees(X)_i/\Rees(X)_i \cdot t
}
\] 
is an isomorphism. 
Consider the composite 
\[ 
\xymatrix@-1pc{ 
R_{i,l+1}  \ar[ddrr]_{\pi_{l+1}} \ar@{^{(}->}[rr]^{\incl} & & \Rees(X)_i \ar@{->>}[dd]^{\pr_i} \\ 
 & & \\ 
 & & \Rees(X)_i/\Rees(X)_i \cdot t. \\ 
}
\]  
Show that the kernel of $R_{i,l+1}\cap \Rees(X)_i \cdot t$ is $R_{i,l}\cdot t$. 
Consider $v = t v_l\in R_{i,l}\cdot t$, where $v_l\in R_{i,l}=X_{i,l}t^l$.   
Then  $t v_l\in X_{i,l}t^{l+1}\subseteq X_{i,l+1}t^{l+1}=R_{i,l+1}$.  
Since $v\in \Rees(X)_i\cdot t$ and since $t$ is injective, 
the kernel is indeed $R_{i,l}\cdot t$, 
and thus, 
$R_{i,l+1}/R_{i,l}\cdot t \cong X_{i,l+1}/X_{i,l}\hookrightarrow \Rees(X)_i/\Rees(X)_i\cdot t$ is injective. 
Since 
\[  
(t)/(t^2)\cdot X_{i,l+1}/X_{i,l} = X_{i,l+2}/X_{i,l+1}, 
\]   
we conclude that $\Rees(X)_i/\Rees(X)_i\cdot t\cong \gr(X)_i$. 
\end{proof}


\begin{definition}\label{definition:associated-graded-functor} 
Let $F^{\bullet} Rep(Q,\beta)\stackrel{\gr}{\longrightarrow} GRep(Q,\beta)$ be the 
{\em associated graded functor} which sends a filtered representation to its associated graded representation. 
\end{definition}

Recall that $S_i$ is the representation of $Q$ with $\mathbb{C}$ at vertex $i$ and $0$ in all other vertices (Definition~\ref{definition:simple-representation-one-dim-vs-one-vertex}), and also recall that if 
$i$ is a sink, then $S_i^+(S_i)$ is the the zero representation, and if $i$ is a source, 
then $S_i^-(S_i)=0$ (Theorem~\ref{theorem:reflection-functors-classical-setting}).

%
%

\begin{example}\label{example:counter-example-to-functorial-isom}
Consider the $A_3$-quiver with dimension vector $\beta=(2,2,2)$ and representation 
\[ 
X: \xymatrix@-1pc{  
\mathbb{C}^2 \ar@{-}[d] & & & \mathbb{C}^2 \ar@{-}[d] & &  & \mathbb{C}^2 \ar@{-}[d] \\
\stackrel{\mathbb{C}^1}{\bullet} \ar[rrr]^{\bordermatrix{ & & \cr & 0& 1\cr &0 &0 \cr } } & & & \stackrel{\mathbb{C}^1}{\bullet} & & & \ar[lll]_{\bordermatrix{ & & \cr & 0& 0\cr &0 &0 \cr }} \stackrel{\mathbb{C}^1}{\bullet}.  \\ 
}  
\]   
One calculates that $S_2^+ \gr X \not\cong \gr S_2^+ X$. 
\end{example}  

It would be interesting to find appropriate conditions such that 
 $S_i^+ \gr X \cong \gr S_i^+ X$ and $S_i^- \gr X \cong \gr S_i^- X$ for any $i\in Q_0$.   
More interestingly, it remains an open problem to construct a new filtration  
$\sigma_iF^{\bullet}$ such that 
$\widehat{D}:F^{\bullet}Rep(Q,\beta)\rightarrow (\sigma_iF^{\bullet})Rep(Q^{op},\beta')$ is an isomorphism of varieties,  
and if $i$ is a sink, construct a map 
$S_i^+:F^{\bullet}Rep(Q,\beta)\rightarrow (\sigma_iF^{\bullet})Rep(\sigma_iQ,\sigma_i\beta)$ such that 
$\widehat{D}\circ S_i^+=S_i^-\circ \widehat{D}$.

Now, we will discuss Stefan Wolf's construction of reflection functors $S_a^{+}$ and $S_a^{-}$ 
(\cite{wolf2009geometric}, Theorem 5.16) for quiver flag varieties $Fl_{\gamma^{\bullet}}(W)$,   
i.e., these are precisely the fibers of $p_2$ over $W\in Rep(Q,\beta)$ of the universal quiver flag given in \eqref{equation:universal-quiver-flag}.  
%
%
We refer to Section~\ref{subsection:reflection-functors-classical-setting} for reflection functors for quiver varieties in the classical setting. 
Let $Q$ be a quiver and let $\beta\in \mathbb{Z}_{\geq 0}^{Q_0}$ be a dimension vector. 
Let $W\in Rep(Q,\beta)$. This means $\dim W_i=\beta_i$ for each $i\in Q_0$. 
Let $a$ be a sink and consider 
$\phi_a^W: \displaystyle{\bigoplus_{j\stackrel{\alpha}{\rightarrow} a}} W_j \rightarrow W_a$, where $\alpha$ is an arrow from vertex $j$ to vertex $a$. 
Let 
$\sigma_a(Q)$ be the reflection of the arrows at vertex $a\in Q_0$, that is, 
$\sigma_a:\mathbb{Z}Q_0\rightarrow \mathbb{Z}Q_0$ is the map 
$\sigma_a(\beta)=\beta - (\beta,\epsilon_a)_Q \epsilon_a$, 
where 
$(\beta,\epsilon_a)_Q := \langle \beta,\epsilon_a \rangle + \langle \epsilon_a, \beta \rangle$  
is the symmetrization of the Ringel form (cf. Definition~\ref{definition:symmetrization-of-Ringel-form})
and $\epsilon_a$ is the dimension vector with $1$ in the component corresponding to vertex $a$ and zero elsewhere. 
Let 
$\gamma^{1},\ldots, \gamma^{N}\in \mathbb{Z}_{\geq 0}^{Q_0}$ such that 
$\gamma_i^k\leq \gamma_i^{k+1}$, 
$\dim U_i^{k} = \gamma_i^{k}$,  
$W(\alpha)U_{t\alpha}^{k}\subseteq U_{h\alpha}^{k}$, 
and 
$U^N=W$  
 for each $ k $, $i\in Q_0$, and $\alpha\in Q_1$. 
So 
$U^{\bullet}: U^{0}= \{0\} \subseteq U^{1}\subseteq U^{2}\subseteq \ldots \subseteq U^{N}= W$ is a quiver flag of $W$.  
We will construct $S_a^{+}U^{\bullet}$ as follows.  
For each $0\leq i\leq N$,  
we have a short exact sequence of vector spaces: 
\[  
\xymatrix@-1pc{  
0 \ar[rrr] & & & (S_a^+ U^{i-1})_a \ar[rrr] \ar[dd]_f  & & & \bigoplus_{j\rightarrow a} U_j^{i-1} \ar[rrr]^{\phi_a^{U^{i-1}}} \ar@{^{(}->}[dd]_g & & & \im \phi_a^{U^{i-1}} \ar[rrr] \ar@{^{(}->}[dd]^{h} & & & 0\\ 
& & &   & & &     & & &   & & & \\ 
0 \ar[rrr] & & & (S_a^+ U^{i})_a \ar[rrr]  & & & \bigoplus_{j\rightarrow a} U_j^{i}  \ar[rrr]^{\phi_a^{U^{i}}}  & & &   \im \phi_a^{U^{i}} \ar[rrr] & & & 0, \\ 
}  
\]  
where $g$ and $h$ are injective. This implies that $f$ must also be injective. 
So $S_a^+ U^{\bullet}$ defined in the commutative diagram is the new quiver flag of 
$S_a^+ W$.  
Thus given 
$\underline{\gamma}$ $=$ 
$(\gamma^{0}= \{0\} \leq \gamma^{1} \leq \gamma^{2}\leq \ldots \leq  \gamma^{N} = \beta)$, 
$\sigma_a \underline{\gamma}$ is defined in the following way: 
for $s := \beta_a -\rk \phi_a^W$ being the codimension of $\im \phi_a^W$ in the vector space $W_a$, 
we have 
\[  
\begin{aligned}
e_a &:= \dim(S_a^+W)_a  = \sum_{\alpha=(i\rightarrow a)}\beta_i - \rk \phi_a^W  \\ 
&= \beta_a - (\beta_a-0)-(\beta_a-\sum_{\alpha=(i\rightarrow a)} \beta_i) 
+ \beta_a -\rk \phi_a^W  \\ 
&= \beta_a -( \langle \beta,\epsilon_a \rangle + \langle \epsilon_a,\beta \rangle ) 
+ \beta_a -\rk \phi_a^W  \\ 
&= \beta_a -(\beta,\epsilon_a)_Q + s  
= \beta_a -s-(\beta -s\epsilon_a,\epsilon_a)_Q.  \\  
\end{aligned} 
\]    
Therefore, 
\[ 
\begin{aligned} 
\dim (S_a^+W)
&= \sigma_a(\beta-s\epsilon_a)
= \beta-s\epsilon_a -(\beta-s\epsilon_a,\epsilon_a)_Q \epsilon_a  
= \beta-s\epsilon_a -\langle \beta-s\epsilon_a,\epsilon_a \rangle \epsilon_a
-\langle \epsilon_a,\beta-s \epsilon_a \rangle \epsilon_a \\
&=\beta-s\epsilon_a -(\beta_a-s-\sum_{(c\rightarrow a)\in Q_1} \beta_c )\epsilon_a -(\beta_a-s )\epsilon_a  
=\beta-(\beta_a-\sum_{(c\rightarrow a)\in Q_1}\beta_c)\epsilon_a 
-\beta_a\epsilon_a +s \epsilon_a \\
&=\beta - \langle \beta,\epsilon_a \rangle \epsilon_a 
-\langle \epsilon_a,\beta \rangle \epsilon_a +s \epsilon_a  
=\beta-(\beta,\epsilon_a )_Q\epsilon_a + s \epsilon_a 
=\sigma_a(\beta)+s \epsilon_a 
\end{aligned} 
\]  
and 
$e_a^{j}:=\dim(S_a^+ U^{j})_a = 
\displaystyle{\sum_{\alpha=(i\rightarrow a)} \gamma_i^{j} -\rk \phi_a^{U^{j}}}$.

Since the dimension of $S_a^+ U^{i}$ depends on the rank of $\phi_a^{U^{i}}$, Wolf gives the next definition. 
Recall that $\dim \Hom(W,S_a)=\beta_a-\rk \phi_a^W$ for a representation $W\in Rep(Q,\beta)$,  
where 
\begin{center}
$S_a$ 
is the representation of $Q$ with one dimensional vector space at vertex $a$ and zero dimensional vector space elsewhere 
\end{center}
(cf. Definition~\ref{definition:simple-representation-one-dim-vs-one-vertex}).  
In general, we have 
$\dim \Hom(U^{i},S_a)=\dim U_a^{i} - \rk \phi_a^{U^{i}}$ for a representation 
$U^{i}\in Rep(Q,  \gamma^{i} )$.

\begin{definition} 
Let $a$ be a sink and $\beta\in \mathbb{Z}_{\geq 0}^{Q_0}$ be a dimension vector. 
Let $s\in \mathbb{Z}$. Define 
\[ 
Rep(Q,\beta)\langle a \rangle^s := \{ W\in Rep(Q,\beta): \dim \Hom(W,S_a)=s\}.   
\] 
Let $\underline{\gamma}=(\gamma^{0}=0 \leq \gamma^{1}\leq \gamma^{2} 
\leq \ldots \leq 
 \gamma^{N} = \beta)$
be a tuple of dimension vectors, where $\gamma^{i}\in \mathbb{Z}_{\geq 0}^{Q_0}$ for all $0\leq i\leq N$. 
Let $\vec{r}=(r^{0},r^{1},\ldots, r^{N})$ be a tuple of integers. Then for each $W\in Rep(Q,\beta)$, 
\[ 
Fl_{\underline{\gamma}}(W)\langle a \rangle^{\vec{r}} 
:=  
\{ U^{\bullet}\in Fl_{\gamma^{\bullet}}(W):\dim \Hom(U^{i},S_a)=r^{i}\}. 
\] 
We let $Rep(Q,\beta)\langle a\rangle := Rep(Q,\beta)\langle a \rangle^0$
and 
$Fl_{\underline{\gamma}}(W)\langle a \rangle := Fl_{\underline{\gamma}}(W)\langle a \rangle^{\vec{0}}$. 
\end{definition}

Note that $Fl_{\gamma^{\bullet}}(W) =
\displaystyle{\coprod_{\vec{r}\geq 0} Fl_{\underline{\gamma}}(W)\langle a \rangle^{\vec{r}}}$. 
Now suppose $b\in Q_0$ is a source in $Q$. 
Since $b^{op}\in Q^{op}$ is a sink in $Q^{op}$, we will dualize via the functor $D := \Hom_{\mathbb{C}}(-,\mathbb{C})$. 

\begin{definition}
Let $U^{\bullet}\in Fl_{\gamma^{\bullet}}(W)$ be a quiver flag of $W\in Rep(Q,\beta)$. Let 
$e^{N-i} =\gamma^N-\gamma^i$. 
Let $\underline{e}=(e^{0}\leq e^{1}\leq \ldots \leq e^{N})$. 
Define 
\[ 
\widehat{D}: Fl_{\gamma^{\bullet}}(W) \longrightarrow Fl_{\underline{e}}^{op}(DW), \hspace{4mm}
U^{\bullet} \mapsto (\widehat{D}(U^{\bullet}))^i := \ker(DW\longrightarrow DU^{i}) = D(W/U^{i}). 
\] 
\end{definition}

Note that $\widehat{D}^2 \cong \mathbf{1}$ and $\widehat{D}$ is an isomorphism of varieties. 

\begin{definition}
Let $b$ be a source, $\beta\in \mathbb{Z}_{\geq 0}^{Q_0}$, and $s\in \mathbb{Z}$. 
Define 
\[ 
Rep(Q,\beta)\langle b \rangle^s := \{W\in Rep(Q,\beta):\dim \Hom(S_b,W)=s \}.  
\] 
Let $\underline{\gamma}=(\gamma^{0} \leq \gamma^{1}\leq \gamma^{2}\leq \ldots \leq  \gamma^{N} = \beta)$ be a tuple of dimension vectors, where 
$\gamma^{i}\in \mathbb{Z}_{\geq 0}^{Q_0}$ for all $0\leq i\leq N$. 
Let $\vec{r}=(r^{0},r^{1},\ldots, r^{N})$ be a tuple of integers. Then for each 
$W\in Rep(Q,\beta)$, 
\[
Fl_{\underline{\gamma}}(W)\langle b \rangle^{\vec{r}} := \{U^{\bullet}\in Fl_{\gamma^{\bullet}}(W):\dim \Hom(S_b,W/U^{i})=r^{i} \}.  
\] 
Moreover, we let 
$Rep(Q,\beta)\langle b \rangle := Rep(Q,\beta)\langle b \rangle^0 $ and 
$Fl_{\underline{\gamma}}(W)\langle b \rangle := Fl_{\underline{\gamma}}(W)\langle b \rangle^{\vec{0}}$. 
\end{definition} 

\begin{remark} 
We have 
$U^{\bullet}\in Fl_{\underline{\gamma}}(W)\langle b \rangle^{\vec{r}}$ if and only if 
$\widehat{D}U^{\bullet}\in Fl_{\underline{e}}^{op}(DW)\langle b \rangle^{\vec{r}}$. 
\end{remark}

\begin{theorem}[Wolf]\label{theorem:wolf-reflection-functors}
Let $a$ be a sink of $Q$, $\underline{\gamma}$ be a tuple of dimension vectors, and $W\in Rep(Q,\beta)\langle a \rangle$. 
Then 
\[ 
S_a^+: Fl_{\underline{\gamma}}(W)\langle a \rangle \longrightarrow Fl_{\sigma_a\underline{\gamma}}^{\sigma_a Q}(S_a^+W)\langle a \rangle, \hspace{4mm}
U^{\bullet} \mapsto S_a^+U^{\bullet}  
\] 
is an isomorphism of varieties with inverse 
$\widehat{D}\circ S_a^+\circ \widehat{D}=S_a^-$. 
\end{theorem}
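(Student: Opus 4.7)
The plan is to establish three things: (1) the assignment $U^\bullet \mapsto S_a^+ U^\bullet$ defines an algebraic morphism landing in $Fl_{\sigma_a\underline{\gamma}}^{\sigma_a Q}(S_a^+W)\langle a\rangle$; (2) the composite $\widehat{D} \circ S_a^+ \circ \widehat{D}$ is a well-defined morphism in the reverse direction; and (3) the two compositions yield the identity. I would work entirely inside the $\langle a\rangle$ locus, where $\Hom(U^i, S_a)=0$ forces every $\phi_a^{U^i}$ to be surjective; this ensures the kernels $(S_a^+U^i)_a := \ker \phi_a^{U^i}$ all have the codimension prescribed by Wolf's dimension formula with $s=0$, giving precisely the dimension vector sequence $\sigma_a\underline{\gamma}$.

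For (1), I would exploit the ladder of short exact sequences already displayed in the excerpt with typical row
\[
0 \to (S_a^+U^i)_a \to \bigoplus_{j\to a} U_j^i \to W_a^i \to 0,
\]
so that the snake lemma (applied to the injection from row $i-1$ into row $i$) produces injections $(S_a^+U^{i-1})_a \hookrightarrow (S_a^+U^i)_a$; at vertices $j\neq a$ I simply set $(S_a^+U^i)_j = U_j^i$. The new linear maps along arrows $a\to j$ of $\sigma_a Q$ are the restrictions of the canonical projections $\bigoplus_{k\to a}U_k^i \twoheadrightarrow U_j^i$ to $(S_a^+U^i)_a$; compatibility with the flag inclusions and membership of the image in the $\langle a\rangle$ locus for the reflected quiver follow by naturality of the kernel construction. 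Algebraicity is immediate because the rank of $\phi_a^{U^i}$ is locally constant on the $\langle a\rangle$ locus, so the kernels form subbundles over the base and $S_a^+$ is a regular morphism of varieties.

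For (2) and (3), I would use $\widehat{D}^2 \cong \mathbf{1}$ together with the fact that $\widehat{D}$ exchanges the sink/source $\langle a\rangle$-loci across $Q\leftrightarrow Q^{op}$: in $\sigma_a Q$ the vertex $a$ is a source, so $\widehat{D}$ carries $Fl_{\sigma_a\underline{\gamma}}^{\sigma_a Q}(S_a^+W)\langle a\rangle$ to the sink case for $(\sigma_a Q)^{op}$, where step (1) applies, and a final $\widehat{D}$ returns to $Fl_{\underline{\gamma}}(W)\langle a\rangle$. To verify $(\widehat{D}\circ S_a^+\circ \widehat{D})\circ S_a^+ = \id$ pointwise, I would reduce at each flag term to the classical identity $S_a^- S_a^+ X \cong X$ (valid since each $U^i \in \langle a\rangle$ has no summand isomorphic to $S_a$) and check that the recovered flag agrees with the original term-by-term as subrepresentations of $W$; the symmetric identity is handled by swapping the roles of $Q$ and $\sigma_a Q$. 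The principal obstacle is precisely this last step: while the classical identity handles single representations cleanly, the snake-lemma diagrams must be strictly compatible with the inclusions $U^i \subseteq U^{i+1}$, and the $\langle a\rangle$ hypothesis is exactly what forces the relevant connecting homomorphisms to vanish so that the term-by-term compatibility, and hence the whole inverse identification, goes through.
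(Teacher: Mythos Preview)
Your overall strategy is correct and matches the paper's in spirit, but the paper organizes the argument around a different diagram that does more work at once. Instead of the two-row ladder between consecutive flag levels $U^{i-1}\subseteq U^i$, the paper fixes a single index $i$ and writes the $3\times 3$ diagram with rows for $U^i$, $W$, and $W/U^i$:
\[
\xymatrix@-1pc{
0 \ar[r] & (S_a^+U^{i})_a \ar[r] \ar[d] & \bigoplus_{j\to a} U_j^{i} \ar[r]^{\phi_a^{U^i}} \ar[d] & U_a^{i} \ar[r] \ar[d] & 0 \\
0 \ar[r] & (S_a^+W)_a \ar[r] \ar[d] & \bigoplus_{j\to a} W_j \ar[r]^{\phi_a^W} \ar[d] & W_a \ar[r] \ar[d] & 0 \\
0 \ar[r] & (S_a^+W/S_a^+U^{i})_a \ar[r] & \bigoplus_{j\to a} (W/U^{i})_j \ar[r] & (W/U^{i})_a \ar[r] & 0
}
\]
with exact columns; the snake lemma then makes the third row exact. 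This single observation buys two things you handle separately and somewhat loosely. First, the injectivity of $(S_a^+W/S_a^+U^{i})_a \hookrightarrow \bigoplus_{j\to a}(W/U^{i})_j$ is exactly the condition $\Hom(S_a, S_a^+W/S_a^+U^i)=0$, i.e.\ membership of $S_a^+U^\bullet$ in the source-side $\langle a\rangle$ locus of $\sigma_a Q$; your ``by naturality of the kernel construction'' is the place where this needs to be made explicit. Second, because $\widehat{D}$ is defined via the quotients $D(W/U^i)$, the third row is literally the computation needed to identify $\widehat{D}\circ S_a^+\circ\widehat{D}\circ S_a^+$ with the identity, so the paper reads the inverse off the diagram rather than invoking the classical $S_a^-S_a^+\cong\id$ term by term and then arguing compatibility with inclusions. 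Your route through the termwise BGP identity is valid, but the quotient-row diagram is what makes the compatibility automatic and removes the ``principal obstacle'' you flag.
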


\begin{proof} 
For each $i$, we have the commutative diagram 
\begin{equation}\label{eq:quiver-flag-varieties-commutative-diagram} 
\xymatrix@-1pc{ 
& & 0\ar[dd]  & & 0\ar[dd]  & &  & 0\ar[dd] & &   \\  
& &   & &   & & &    & &   \\ 
0 \ar[rr] & & (S_a^+U^{i})_a \ar[rr] \ar[dd] & & \bigoplus_{j\rightarrow a} U_j^{i} \ar[rrr]^{\phi_a^{U^{i}}} \ar[dd] & & &  U_a^{i} \ar[rr] \ar[dd]  & & 0 \\ 
& &   & &   & &  &   & &   \\  
0 \ar[rr] & & (S_a^+W)_a \ar[rr] \ar[dd] & & \bigoplus_{j\rightarrow a} W_j \ar[rrr]^{\phi_a^W} \ar[dd] & & &  W_a  \ar[rr] \ar[dd]  & & 0 \\ 
& &   & &   &  & &   & &   \\ 
0 \ar[rr] & & (S_a^+W/S_a^+U^{i})_a \ar[rr] \ar[dd] & & (\bigoplus_{j\rightarrow a} W/U^{i})_j \ar[rrr]^{\phi_a^{W/U^{i}} } \ar[dd] & & & (W/U^{i})_a  \ar[rr] \ar[dd]  & & 0 \\  
& &   & &   & & &    & &   \\ 
& & 0   & & 0   & & &   0 & &   \\  
}
\end{equation} 
where the columns are exact. The first two rows are exact, which imply the third row is also exact by the snake lemma. 
Therefore, we have an inclusion 
\[ 
\xymatrix@-1pc{ 
(S_a^+W/S_a^+ U^{i})_a \ar@{^{(}->}[rrr]  & & & 
(\displaystyle{\bigoplus_{j\rightarrow a} W/U^{i}} )_j 
} 
\] 
of modules and 
$S_a^+W/S_a^+U^{i}\in Rep(\sigma_aQ,\sigma_a(
\beta-\gamma^i  
   )) \langle a \rangle$. 
The diagram \eqref{eq:quiver-flag-varieties-commutative-diagram} also gives 
$\widehat{D}\circ S_a^+ \circ \widehat{D} \circ S_a^+ =\Id$. 
Since $S_a^+$ is a functor where all choices were natural, we have $S_a^+$ is a natural transformation between the functors of points of the two varieties 
$Fl_{\underline{\gamma}}(W)\langle a \rangle$
and 
$Fl_{\sigma_a\underline{\gamma}}^{\sigma_a Q}(S_a^+W)\langle a \rangle$. 
Thus, $S_a^+$ is a morphism of varieties and dually, 
$S_a^+ \circ \widehat{D}\circ S_a^+ \circ \widehat{D}  =\Id$. 
\end{proof}

\chapter{Semi-invariants of filtered quiver  varieties}\label{chapter-semi-invariants-filtered-quiver-vars} 

In the following sections, we will give an explicit description of (semi)-invariants that generate the subring of invariant polynomials.  

\section{Filtered quiver varieties associated to finite type ADE Dynkin graphs}

We introduce multi-index notation. 
 
\begin{notation}\label{notation:multiindex}
Let $K=(k_1,\ldots, k_{r-1})\in \mathbb{Z}^{r-1}$ be an $r-1$-tuple of nonnegative integers and let 
\[ a_{ij}^K := \prod_{\alpha=1}^{r-1} {}_{(\alpha)}a_{ij}^{k_{\alpha}}.  
\] 
\end{notation} 
Recall our basic assumption: 
$F^{\bullet}$ is the complete standard filtration of vector spaces at each nonframed vertex and 
$F^{\bullet}Rep(Q,\beta)$ is a subspace of  
$Rep(Q,\beta)$ whose representations preserve $F^{\bullet}$; 
the product $\mathbb{U}_{\beta}$ of maximal unipotent subgroups 
acts on 
$F^{\bullet}Rep(Q,\beta)$ as a change-of-basis.  
Recall that a quiver is a Dynkin quiver if the underlying graph has the structure of a Dynkin graph (cf. Section~\ref{section:reps-of-quivers-intro}). 

Let $\mathfrak{t}_n$ be the set of complex diagonal matrices in the set $\mathfrak{gl}_n$ of $n\times n$ complex matrices. 

\begin{theorem}\label{theorem:filtered-ADE-Dynkin-quiver} 
If $Q$ is an $A_r$, $D_r$, or $E_r$-Dynkin quiver and  
$\beta=(n,\ldots, n)\in \mathbb{Z}_{\geq 0}^{Q_0}$, 
then  
$\mathbb{C}[F^{\bullet}Rep(Q,\beta)]^{\mathbb{U}_{\beta}}$ $=$ $\mathbb{C}[\mathfrak{t}_n^{\oplus r-1}]$. 
\end{theorem}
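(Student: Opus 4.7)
Since $F^\bullet$ is the complete standard flag at every vertex and $\beta = (n, \ldots, n)$, the filtered representation space is
\begin{equation*}
F^\bullet Rep(Q, \beta) \cong \mathfrak{b}^{\oplus (r-1)},
\end{equation*}
one copy of the upper-triangular Borel $\mathfrak{b} \subseteq \mathfrak{gl}_n$ for each of the $r-1$ arrows, and $\mathbb{U}_\beta = U^r$ acts by $(u_i)_{i\in Q_0} \cdot (A_a)_{a\in Q_1} = (u_{ha} A_a u_{ta}^{-1})_{a\in Q_1}$. A direct calculation, analogous to Example~\ref{example:1-Kronecker-quiver-Derksen-Weyman-method}, shows that for every arrow $a$ and index $1\le i\le n$, the diagonal entry $(A_a)_{ii}$ is $\mathbb{U}_\beta$-invariant: since all three factors of $u_{ha} A_a u_{ta}^{-1}$ are upper triangular and both $u_{ha}, u_{ta}$ are unipotent, the $(i,i)$-entry collapses to $(A_a)_{ii}$. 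Assembling diagonals across all arrows produces a natural injection $\mathbb{C}[\mathfrak{t}_n^{\oplus(r-1)}] \hookrightarrow \mathbb{C}[F^\bullet Rep(Q,\beta)]^{\mathbb{U}_\beta}$, realized geometrically by the diagonal projection $\pi\colon \mathfrak{b}^{\oplus(r-1)} \twoheadrightarrow \mathfrak{t}_n^{\oplus(r-1)}$.

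The strategy for the reverse inclusion is to prove that $\pi$ is a geometric quotient on a $\mathbb{U}_\beta$-stable dense open subset $V$, namely the locus on which every $A_a$ has $n$ pairwise distinct diagonal entries. A dimension count gives $\dim F^\bullet Rep(Q,\beta) = (r-1)n(n+1)/2$, $\dim \mathbb{U}_\beta = rn(n-1)/2$, and $\dim \mathfrak{t}_n^{\oplus(r-1)} = (r-1)n$, so generic fibers of $\pi$ have dimension $(r-1)n(n-1)/2$. At a point $(D_a)_{a\in Q_1}$ where each $D_a$ is diagonal with distinct entries, the stabilizer in $\mathbb{U}_\beta$ is cut out by the equations $u_{ha} D_a = D_a u_{ta}$, which entry-wise read $(u_{ha})_{ij} = (D_a)_{ii}(D_a)_{jj}^{-1}(u_{ta})_{ij}$ for $i<j$. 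Because $Q$ is a tree, one may root it at any vertex $v_0$ and solve these linear relations recursively outward along the unique path from $v_0$ to each other vertex, expressing every coordinate of every $u_i$ as an explicit scalar multiple of the corresponding coordinate of $u_{v_0}$. The stabilizer is thus an $n(n-1)/2$-dimensional ``diagonal'' copy of $U$ inside $U^r$, so the orbit through $(D_a)$ has dimension $(r-1)n(n-1)/2$, matching the generic fiber dimension.

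To finish, I would verify that the fiber $\pi^{-1}((D_a))$ over a generic diagonal point is in fact a single $\mathbb{U}_\beta$-orbit. This follows from the same triangularity pattern: given $A_a = D_a + N_a$ with $N_a$ strictly upper triangular, one solves the normalization equations $u_{ha} A_a u_{ta}^{-1} = D_a$ iteratively on super-diagonals of increasing distance from the main diagonal, propagating the solution along the tree in the same order as the stabilizer recursion. Hence $\pi|_V$ is a geometric quotient onto the open subset $W \subseteq \mathfrak{t}_n^{\oplus(r-1)}$ where each $D_a$ has distinct entries, and every $\mathbb{U}_\beta$-invariant regular function on $F^\bullet Rep(Q,\beta)$ descends to a regular function on $W$. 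Since $\mathfrak{t}_n^{\oplus(r-1)}$ is smooth and the complement of $W$ has codimension at least one, normality forces the descended function to extend regularly across the discriminant, yielding the equality $\mathbb{C}[F^\bullet Rep(Q,\beta)]^{\mathbb{U}_\beta} = \mathbb{C}[\mathfrak{t}_n^{\oplus(r-1)}]$.

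The principal obstacle is the propagation argument along the tree, and the entire proof ultimately rests on the fact that ADE-Dynkin graphs are trees: the recursive solution of both the stabilizer equations and the orbit-normalization equations requires that any two vertices be joined by a unique reduced path. When cycles or multiple parallel paths are present, the matching conditions along distinct paths force additional invariants (for instance, traces of cycle representations as in Example~\ref{example:1-Jordan-Domokos-Zubkov-example}), and the clean identification $\mathbb{C}[F^\bullet Rep(Q,\beta)]^{\mathbb{U}_\beta} \cong \mathbb{C}[\mathfrak{t}_n^{\oplus(r-1)}]$ can fail; this is precisely the reason why the more general Theorem~\ref{theorem:two-paths-max-quiver-semi-invariants} must be stated under a restrictive hypothesis on the pathways between vertices.
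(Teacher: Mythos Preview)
Your approach is genuinely different from the paper's. The paper argues purely algebraically: given an invariant $f$, it supposes some off-diagonal variable ${}_{(\alpha)}a_{ij}$ appears, picks the least such $(i,j)$ under a specific total order and then the least arrow index $m$, and acts by the one-parameter root subgroup $U_{ij}$ placed at a single well-chosen vertex. Expanding $u_{ij}\cdot f - f$ and using that the monomials $\{{}_{(m)}a_{ij}^{k-l}u^l\}$ are linearly independent over the ring in the remaining variables forces the offending coefficients to vanish, a contradiction. The vertex choice is done type by type for $A_r$, $D_r$, $E_r$; no geometric-quotient or orbit argument appears. Your approach trades that case analysis for a single transparent geometric picture in which the tree structure is the visible mechanism.

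That said, your extension step contains a genuine error. The sentence ``the complement of $W$ has codimension at least one, normality forces the descended function to extend'' is false: Hartogs-type extension on a normal variety requires the complement to have codimension at least \emph{two}, and your discriminant is a union of hyperplanes of codimension exactly one (already $1/x$ on $\mathbb{C}\setminus\{0\}$ shows the claim fails). The correct argument uses no normality at all: the diagonal inclusion $\sigma\colon\mathfrak t_n^{\oplus(r-1)}\hookrightarrow\mathfrak b^{\oplus(r-1)}$ is a section of $\pi$, so $f\circ\sigma$ is a global polynomial on $\mathfrak t_n^{\oplus(r-1)}$; on $W$ it agrees with your descended $g$ (the diagonal point lies in its own fiber), hence $g=f\circ\sigma$ everywhere and $f=\pi^*(f\circ\sigma)$ on $V$, hence on all of $\mathfrak b^{\oplus(r-1)}$. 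A smaller point: for the tree recursion in the orbit step you need to invert $A_a$ (so that, once $u$ is fixed at the near vertex of an arrow, you can write $u_h=D_a u_t A_a^{-1}$ or $u_t=D_a^{-1}u_h A_a$ at the far one), so your open set $V$ should require every diagonal entry of every $A_a$ to be nonzero, not merely pairwise distinct.
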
 

We list the $ADE$-Dynkin graphs with a preferred orientation.   
\[
  \begin{tabular}{c|c|c}
  \multicolumn{1}{c}{\bfseries Type} & \multicolumn{1}{c}{\bfseries Graph} & \multicolumn{1}{c}{\bfseries Preferred Orientation}  \\ 
  \hline 
  $A_r$ &
  		$\xymatrix{
  		 \stackrel{1}{\bullet}\ar@{-}[r]^{ }    
  & \stackrel{2}{\bullet} \ar@{-}[r]^{ }   
  & \cdots \ar@{-}[r]^{ }  
  & \stackrel{r-1}{\bullet} \ar@{-}[r]^{ }  
  & \stackrel{r}{\bullet}
  		}$ & 
  		$\xymatrix{
  		 \stackrel{1}{\bullet}\ar[r]^{ }    
  & \stackrel{2}{\bullet} \ar[r]^{ }   
  & \cdots \ar[r]^{ }  
  & \stackrel{r-1}{\bullet} \ar[r]^{ }  
  & \stackrel{r}{\bullet}
  		}$
  \\ 
  $D_r$ &
    $\xymatrix{  
   & & & & \stackrel{r-1}{\bullet} \\  
  \stackrel{1}{\bullet}	\ar@{-}[r]^{ } &  
	\stackrel{2}{\bullet}	\ar@{-}[r]^{ } &  
	\ldots 								\ar@{-}[r]^{ } &  
\stackrel{r-2}{\bullet} \ar@{-}[ur]^{ } \ar@{-}[dr]^{ } & \\ 
  & & & & \stackrel{r}{\bullet} \\   
}$ & 
   $\xymatrix{  
   & & & & \stackrel{r-1}{\bullet} \\  
  \stackrel{1}{\bullet}	\ar[r]^{ } &  
	\stackrel{2}{\bullet}	\ar[r]^{ } &  
	\ldots 								\ar[r]^{ } &  
\stackrel{r-2}{\bullet} \ar[ur]^{ } \ar[dr]^{ } & \\ 
  & & & & \stackrel{r}{\bullet} \\   
}$  \\ 
  $E_6$ & 
$ 
\xymatrix@-1pc{
& & \stackrel{4}{\bullet} & &  \\ 
\stackrel{1}{\bullet}\ar@{-}[r]^{ }  & 
\stackrel{2}{\bullet}\ar@{-}[r]^{ }  & 
\stackrel{3}{\bullet}\ar@{-}[r]^{ } \ar@{-}[u]^{ } & 
\stackrel{5}{\bullet}\ar@{-}[r]^{ }  & 
\stackrel{6}{\bullet} \\ 
} $ & 
$ 
\xymatrix@-1pc{
& & \stackrel{4}{\bullet} & &  \\ 
\stackrel{1}{\bullet}\ar[r]^{ }  & 
\stackrel{2}{\bullet}\ar[r]^{ }  & 
\stackrel{3}{\bullet}\ar[r]^{ } \ar[u]^{ } & 
\stackrel{5}{\bullet}\ar[r]^{ }  & 
\stackrel{6}{\bullet} \\ 
 }$    \\ 
  $E_7$ & 
  $  \xymatrix@-1pc{
 & & \stackrel{4}{\bullet} & & & \\ 
\stackrel{1}{\bullet}\ar@{-}[r]^{ }  & 
\stackrel{2}{\bullet}\ar@{-}[r]^{ }  & 
\stackrel{3}{\bullet}\ar@{-}[r]^{ } \ar@{-}[u]^{ } & 
\stackrel{5}{\bullet}\ar@{-}[r]^{ }  & 
\stackrel{6}{\bullet}\ar@{-}[r]^{ }  & 
\stackrel{7}{\bullet} \\ 
}  $ & 
  $  \xymatrix@-1pc{
 & & \stackrel{4}{\bullet} & & & \\ 
\stackrel{1}{\bullet}\ar[r]^{ }  & 
\stackrel{2}{\bullet}\ar[r]^{ }  & 
\stackrel{3}{\bullet}\ar[r]^{ } \ar[u]^{ } & 
\stackrel{5}{\bullet}\ar[r]^{ }  & 
\stackrel{6}{\bullet}\ar[r]^{ }  & 
\stackrel{7}{\bullet} \\ 
}  $  \\ 
  $E_8$ & $\xymatrix@-1pc{
& & \stackrel{4}{\bullet} & & & &  \\ 
\stackrel{1}{\bullet}\ar@{-}[r]^{ }  & 
\stackrel{2}{\bullet}\ar@{-}[r]^{ }  & 
\stackrel{3}{\bullet}\ar@{-}[r]^{ } \ar@{-}[u]^{ } & 
\stackrel{5}{\bullet}\ar@{-}[r]^{ }  & 
\stackrel{6}{\bullet}\ar@{-}[r]^{ }  & 
\stackrel{7}{\bullet}\ar@{-}[r]^{ }  & 
\stackrel{8}{\bullet} 
 \\ 
} $   & 
 $\xymatrix@-1pc{
& & \stackrel{4}{\bullet} & & & &  \\ 
\stackrel{1}{\bullet}\ar[r]^{ }  & 
\stackrel{2}{\bullet}\ar[r]^{ }  & 
\stackrel{3}{\bullet}\ar[r]^{ } \ar[u]^{ } & 
\stackrel{5}{\bullet}\ar[r]^{ }  & 
\stackrel{6}{\bullet}\ar[r]^{ }  & 
\stackrel{7}{\bullet}\ar[r]^{ }  & 
\stackrel{8}{\bullet} 
 \\ 
} $   \\ 
  \end{tabular}
\]

We will now prove Theorem~\ref{theorem:filtered-ADE-Dynkin-quiver}.   
\begin{proof}  
If $Q$ is a quiver on $A_r$-Dynkin graph, then writing $a_{\alpha}$  
to be the arrow connecting vertices $\alpha$ and $\alpha+1$, define 
\[  
\epsilon(a_{\alpha})= 
\begin{cases} 
1 &\mbox{ if } a_{\alpha} \mbox{ is in the preferred orientation,} \\ 
0 &\mbox{ otherwise.} \\ 
\end{cases} 
\] 
If $Q$ is a quiver on $D_r$-Dynkin graph, then writing $a_{\alpha}$ 
to be the arrow connecting vertices $\alpha$ and $\alpha+1$ when $\alpha < r-1$ and 
$a_{r-1}$ is the arrow connecting vertices $r-2$ and $r$, define 
\[ 
\epsilon(a_{\alpha}) = 
\begin{cases} 
1 &\mbox{ if } a_{\alpha} \mbox{ is in the preferred orientation and }\alpha <r-1, \\ 
0 &\mbox{ otherwise and }\alpha <r-1  \\   
\end{cases}  
\]   
   and define 
\[ 
 \epsilon(a_{r-1}) = 
\begin{cases} 
 1 &\mbox{ if } a_{r-1} \mbox{ is in the preferred orientation,} \\ 
-1 &\mbox{ otherwise.} \\ 
\end{cases} 
 \]  
 If $Q$ is a quiver on $E_r$-Dynkin graph, where $r=6,7,8$, then letting $a_{\alpha}$  
 to be the arrow connecting vertices $\alpha$ and $\alpha+1$ if $\alpha\not=4$, or else, 
 $a_4$ is the arrow connecting vertices 
  $3$ and $5$, we have 
 \[ 
\epsilon(a_{\alpha}) = 
\begin{cases} 
1 &\mbox{ if } a_{\alpha} \mbox{ is in the preferred orientation and }\alpha\not=4, \\ 
0 &\mbox{ otherwise and } \alpha\not=4 \\ 
\end{cases} 
\]  
and 
 \[ 
\epsilon(a_{4}) = 
\begin{cases} 
 1 &\mbox{ if } a_{4} \mbox{ is in the preferred orientation,}  \\  
-1 &\mbox{ otherwise.}  \\  
\end{cases}   
\]   
Let $Q$ be a quiver on a Dynkin graph of finite type. 
  Then the filtered quiver variety  
  $F^{\bullet}Rep(Q,\beta)$   
 is $\mathfrak{b}_n^{\oplus r-1}$, where the product $\mathbb{U}_{\beta}:=U^r$ of unipotent groups acts as a change-of-basis.  
 We will prove the theorem by cases. 

 \underline{\textbf{Type A}}.
 In the $A_r$ type,    
 letting  
 $u_{\alpha}$ be an unipotent matrix at vertex $\alpha$ and writing $A_{\alpha}$ 
 to be a general representation of arrow 
 $a_{\alpha}$, 
 \[ 
 \begin{aligned} 
 (u_1,&\ldots, u_r).(A_1,\ldots, A_{r-1}) \\ 
 &= (u_{1+\epsilon(a_1)} A_1 u_{1-\epsilon(a_1)+1}^{-1}, \ldots, 
    u_{\alpha+\epsilon(a_{\alpha})} A_{\alpha} u_{\alpha-\epsilon(a_{\alpha})+1}^{-1}, \ldots,
    u_{r-1+\epsilon(a_{r-1})} A_{r-1} u_{r-1-\epsilon(a_{r-1})+1}^{-1}). \\ 
  \end{aligned}
 \] 
  On the level of functions,  
 \[ 
 \begin{aligned}
 (u_1,&\ldots, u_r).f(A_1,\ldots, A_{r-1}) \\ 
 &= f(u_{1+\epsilon(a_1)}^{-1} A_1   u_{1-\epsilon(a_1)+1}, \ldots, 
    u_{\alpha+\epsilon(a_{\alpha})}^{-1} A_{\alpha} u_{\alpha-\epsilon(a_{\alpha})+1}, 
    \ldots,
    u_{r-1+\epsilon(a_{r-1})}^{-1} A_{r-1} u_{r-1-\epsilon(a_{r-1})+1}). \\ 
 \end{aligned}
 \]  
 
 Since it is clear that $\mathbb{C}[\mathfrak{t}_n^{\oplus r-1}]\subseteq \mathbb{C}[\mathfrak{b}_n^{\oplus r-1}]^{\mathbb{U}_{\beta}}$, we will prove the converse containment. 
 Let the coordinate variables of  
 $A_{\alpha}$ be ${}_{(\alpha)}a_{st}$. 
 Fix a total ordering $\leq$ on pairs $(i,j)$, where $1\leq i\leq j\leq n$, by defining 
 $(i,j)\leq (i',j')$ if either  
 \begin{itemize}  
 \item $i < i'$ or  
 \item $i=i'$ and $j> j'$. 
 \end{itemize} 
 Let $f\in \mathbb{C}[F^{\bullet}Rep(Q,\beta)]^{\mathbb{U}_{\beta}}$. 
 Then for each $(i,j)$, we can write  
 \begin{equation}\label{eq:aij-invariant-function-borel}
 f = \sum_K a_{ij}^K f_{ij,K}, \mbox{ where } f_{ij,K}\in \mathbb{C}[\{ {}_{(\alpha)}a_{st}: (s,t)\not=(i,j)\} ],  
 \end{equation}   
 where $a_{ij}^K$ is the multi-index notation in Notation~\ref{notation:multiindex}.  
 Fix the least pair (under $\leq$) $(i,j)$ with $i< j$ for which there exists   $K\not=(0,\ldots, 0)$ with $f_{ij,K}\not=0$;  
 we will continue to denote it by $(i,j)$.  
 If no such $(i,j)$ exists, then $f\in \mathbb{C}[{}_{(\alpha)}a_{ii}]$ and we are done.

Let $K=(k_1,\ldots, k_{r-1})$. Let $m\geq 1$ be the least integer satisfying the following: 
for all $p<m$, if some component $k_p$ in $K$ is strictly greater than $0$, then $f_{ij,K}=0$.   
 Let $U_{ij}$ be the subgroup of matrices of the form 
 $u_{ij}:= (\I,\ldots, \I, \widehat{u}_m,\I, \ldots, \I)$, where $\I$ is the $n\times n$  
 identity matrix and  
 $\widehat{u}_m$ is the matrix with $1$ along the diagonal, the variable $u$ in the $(i,j)$-entry, and $0$ elsewhere. 
 Let $u_{ij}^{-1}:=(\I,\ldots, \I, \widehat{u}_m^{-1},\I, \ldots, \I)$. 
 Then since $u_{ij}$ acts on $f$ via 
 $u_{ij}.f(A_1,\ldots, A_{r-1})$ $=$ 
 $f(u_{ij}^{-1}.(A_1,\ldots, A_{r-1}))$ 
  $=$ 
  $f(A_1,$ 
  $\ldots,$  
  $\widehat{u}_{m-1+\epsilon(a_{m-1})}^{-1} A_{m-1}\widehat{u}_{m-1-\epsilon(a_{m-1})+1},$  
  $\widehat{u}_{m +\epsilon(a_{m})}^{-1}    A_m    \widehat{u}_{m-\epsilon(a_m)+1},$ 
  $\ldots,$  
  $A_{r-1})$, 
  where $\widehat{u}_{m-1}:=\I$, $\widehat{u}_{m+1}:=\I$,  
  and $\widehat{u}_m$ is the unipotent matrix defined as before,  
 \begin{equation}\label{eq:ar-dynkin-Uij-action-at-most-two-pathways} 
  u_{ij}.{}_{(\alpha)}a_{st}= 
 \begin{cases} 
 {}_{(m)}a_{ij} +{}_{(m)}a_{ii}u &\mbox{ if } \alpha=m, (s,t)=(i,j), \mbox{ and } \epsilon(a_m)=1, \\
 {}_{(m)}a_{ij} -{}_{(m)}a_{jj}u &\mbox{ if } \alpha=m, (s,t)=(i,j), \mbox{ and } \epsilon(a_m)=0, \\
 {}_{(\alpha)}a_{st} &\mbox{ if }s > i\mbox{ or }s=i\mbox{ and }t <j. \\ 
 \end{cases} 
 \end{equation}
  Now write 
 \[ f = \sum_{k\geq 0} {}_{(m)}a_{ij}^k F_k, \mbox{ where } 
      F_k\in \mathbb{C}[\{ {}_{(\alpha)}a_{st}: (s,t)\geq (i,j)\mbox{ and if }(s,t)=(i,j), \mbox{ then }\alpha >m
 %
       \} ].   
 \]   
 Let 
 $R_0 := \mathbb{C}[\{ {}_{(\alpha)}a_{st}: (s,t)\geq (i,j)\mbox{ and if }(s,t)=(i,j), \mbox{ then }\alpha > m \}]$. 
 If $\epsilon(a_m)=1$, then  
 \[  
 0 = u_{ij}.f-f=  \sum_{k\geq 1} \sum_{1\leq l\leq k}{}_{(m)}a_{ij}^{k-l} {}_{(m)}a_{ii}^l u^l \binom{k}{l}F_k.
 \] 
 Now $\{ {}_{(m)}a_{ij}^{k-l}u^l : 1\leq l\leq k, k\geq 0 \}$ is linearly independent over 
 $R_0$. 
  If $\epsilon(a_m)=0$, then 
 \[ 
 0 = u_{ij}.f-f= \sum_{k\geq 1} \sum_{1\leq l\leq k} (-1)^{l}  {}_{(m)}a_{ij}^{k-l} {}_{(m)}a_{jj}^l u^l \binom{k}{l}F_k, 
 \] 
 and $\{ {}_{(m)}a_{ij}^{k-l}u^l : 1\leq l\leq k, k\geq 0 \}$ is linearly independent over 
 $R_0$. 
 Hence each $F_k=0$ for $k\geq 1$, contradicting the choices of $(i,j)$ and $m$. 
 It follows that $f\in \mathbb{C}[{}_{(\alpha)}a_{ii}]$ as claimed.

\underline{\textbf{Type D}}.
Next, consider the $D_r$-Dynkin quiver. 
Let $A := (A_1,\ldots, A_{r-2}, A_{r-1})\in \mathfrak{b}_n^{\oplus r-1}$ 
be a tuple of matrices, where $A_{\alpha}$ is a general representation at arrow $a_{\alpha}$, 
and let $(u_1,\ldots, u_r)\in \mathbb{U}_{\beta}$ act on $A$ by 
\[ 
\begin{aligned} 
(u_1,&\ldots, u_r).(A_1,\ldots, A_{r-1})  \\ 
=  (&u_{1+\epsilon(a_1)} A_1 u_{1-\epsilon(a_1)+1}^{-1}, \ldots, 
    u_{\alpha+\epsilon(a_{\alpha})} A_{\alpha} u_{\alpha-\epsilon(a_{\alpha})+1}^{-1}, \ldots, \\
    &u_{r-2+\epsilon(a_{r-2})} A_{r-2} u_{r-2-\epsilon(a_{r-2})+1}^{-1}, 
    u_{r-1+\epsilon(a_{r-1})} A_{r-1} u_{r-1-\epsilon(a_{r-1})  }^{-1}). \\ 
\end{aligned}
 \] 
 On the level of functions, we have 
 \[
 \begin{aligned}
 (u_1,&\ldots, u_r).f(A_1,\ldots, A_{r-1}) \\ 
 = f(&u_{1+\epsilon(a_1)}^{-1} A_1 u_{1-\epsilon(a_1)+1}, \ldots, 
    u_{\alpha+\epsilon(a_{\alpha})}^{-1} A_{\alpha} u_{\alpha-\epsilon(a_{\alpha})+1}, 
    \ldots, \\ 
     &u_{r-2+\epsilon(a_{r-2})}^{-1} A_{r-2} u_{r-2-\epsilon(a_{r-2})+1}, 
    u_{r-1+\epsilon(a_{r-1})}^{-1} A_{r-1} u_{r-1-\epsilon(a_{r-1})  }).  \\ 
 \end{aligned}
 \]  
The argument for this quiver is the same as before if $m \leq r-3$. 
If $m=r-2$, then the variable ${}_{(r-2)}a_{ij}$ is of interest; 
this variable corresponds to a general matrix $A_{r-2}$  
which is associated to the arrow $a_{r-2}$ pointing northeast or southwest. 
So let $U_{ij}$ be the subgroup of matrices of the form $(\I,\ldots, \I,\widehat{u}_{r-1},\I)$, 
where the entries of $\widehat{u}_{r-1}$  are the following: 
$u$ in the $(i,j)$-entry, $1$ along the diagonal, and  $0$ otherwise. 
Thus, $u_{ij}\in U_{ij}$ acts on ${}_{(r-2)} a_{st}$ in the following way: 
\[ 
u_{ij}.{}_{(r-2)}a_{st} =  
\begin{cases}  
{}_{(r-2)}a_{ij}- {}_{(r-2)}a_{jj}u & \mbox{ if } (s,t)=(i,j) \mbox{ and }\epsilon(a_{r-2})=1,\\ 
{}_{(r-2)}a_{it}- {}_{(r-2)}a_{jt}u & \mbox{ if }  s=i, t >j  \mbox{ and } \epsilon(a_{r-2})=1,\\ 
{}_{(r-2)}a_{ij}+ {}_{(r-2)}a_{ii}u & \mbox{ if } (s,t)=(i,j) \mbox{ and }\epsilon(a_{r-2})=0,\\ 
{}_{(r-2)}a_{sj}+ {}_{(r-2)}a_{si}u & \mbox{ if }  s<i, t=j  \mbox{ and } \epsilon(a_{r-2})=0,\\ 
{}_{(r-2)}a_{st} &\mbox{ otherwise. } \\ 
\end{cases} 
\] 

If $m=r-1$, then the variable ${}_{(r-1)}a_{ij}$ is of interest; 
this variable corresponds to a general matrix $A_{r-1}$ which is associated to the arrow 
$a_{r-1}$ pointing southeast or northwest.   
In this case, let $U_{ij}$ be the subgroup of matrices of the form 
$(\I,\ldots, \I,\widehat{u}_{r})$, 
where the entries of the matrix $\widehat{u}_r$ are $u$ in the $(i,j)$-entry, $1$ along the main diagonal, and $0$ otherwise. 
Thus, 
\[
u_{ij}.{}_{(r-1)}a_{st} = 
\begin{cases}
{}_{(r-1)}a_{ij}- {}_{(r-1)}a_{jj}u & \mbox{ if } (s,t)=(i,j) \mbox{ and } \epsilon(a_{r-1})=1, \\ 
{}_{(r-1)}a_{it}- {}_{(r-1)}a_{jt}u & \mbox{ if }  s=i, t >j, \mbox{ and } \epsilon(a_{r-1})=1,\\ 
{}_{(r-1)}a_{ij}+ {}_{(r-1)}a_{ii}u & \mbox{ if } (s,t)=(i,j) \mbox{ and } \epsilon(a_{r-1})=-1, \\ 
{}_{(r-1)}a_{sj}+ {}_{(r-1)}a_{si}u & \mbox{ if }  s<i, t =j, \mbox{ and } \epsilon(a_{r-1})=-1, \\ 
{}_{(r-1)}a_{st} &\mbox{ otherwise. } \\ 
\end{cases} 
\] 
So if $m=r-2$ and $\epsilon(a_{r-2})=1$ or $m=r-1$ and $\epsilon(a_{r-1})=1$, we have 
\[ 
0= u_{ij}.f-f = \sum_{k\geq 1} \sum_{1\leq l\leq k}(-1)^l {}_{(m)}a_{ij}^{k-l} {}_{(m)}a_{jj}^l u^l \binom{k}{l}F_k. 
\] 
 If $m=r-2$ and $\epsilon(a_{r-2})=0$ or $m=r-1$ and $\epsilon(a_{r-1})=-1$, we have 
\[ 
0= u_{ij}.f-f = \sum_{k\geq 1} \sum_{1\leq l\leq k} {}_{(m)}a_{ij}^{k-l} {}_{(m)}a_{ii}^l u^l \binom{k}{l}F_k. 
\]
Since $\{ {}_{(m)}a_{ij}^{k-l}u^l : 1\leq l\leq k, k\geq 0 \}$ is linearly independent over 
 $R_0$,  
we conclude that  
$f\in \mathbb{C}[{}_{(\alpha)}a_{ii}]$.

\underline{\textbf{Type E}}.
Finally, consider the $E_{r}$-Dynkin quiver, where $r = 6,7,8$. 
Since there exists a natural embedding of 
$E_6 \hookrightarrow E_7 \hookrightarrow E_8$, 
the filtered representation space for $E_{\gamma}$ embeds into 
the filtered representation space for $E_{\gamma+1}$.  
So for  
$r=6,7,8$,  
let $u_{\alpha}$ be an unipotent matrix at vertex $\alpha$ and let $A_{\alpha}$  
be a general representation of arrow $a_{\alpha}$.  
Then $(u_1,\ldots, u_r)\in \mathbb{U}_{\beta}$ acts on $(A_1,\ldots, A_{r-1})$ via 
\[ 
\begin{aligned}
(u_1,&\ldots, u_r).(A_1,\ldots, A_{r-1}) \\ 
  = (&u_{1+\epsilon(a_1)} A_1 u_{1-\epsilon(a_1)+1}^{-1}, \ldots, 
   u_{4+\epsilon(a_4)} A_{4} u_{4-\epsilon(a_4)}^{-1}, 
    u_{\alpha+\epsilon(a_{\alpha})} A_{\alpha} u_{\alpha-\epsilon(a_{\alpha})+1}^{-1},  
    \ldots, \\ 
    &u_{r-1+\epsilon(a_{r-1})} A_{r-1} u_{r-1-\epsilon(a_{r-1})+1}^{-1}). \\ 
\end{aligned}
\]  
We use similar strategy as in the $A_r$ type setting when $1\leq m\leq 8$ 
but not when 
$m=3$ or $4$. 
If $m=3$, then let 
$U_{ij}$ be the subgroup of matrices of the form $(\I,\I,\I,\widehat{u}_4,\ldots, \I)$, where 
$\widehat{u}_4$ has $u$ in $(i,j)$-entry, $1$ along the diagonal, and $0$ elsewhere. 
Under the action by $u_{ij}\in U_{ij}$,  
the coordinates of $F^{\bullet}Rep(Q,\beta)$ change as follows: 
\[
u_{ij}. {}_{(\alpha)}a_{st} = 
\begin{cases} 
{}_{(3)}a_{ij}- {}_{(3)}a_{jj}u &\mbox{ if } \alpha =3, (s,t)=(i,j), \mbox{ and } \epsilon(a_3)=1,\\ 
{}_{(3)}a_{it}- {}_{(3)}a_{jt}u &\mbox{ if } \alpha =3, s=i, t > j,  \mbox{ and } \epsilon(a_3)=1,\\ 
{}_{(3)}a_{ij}+ {}_{(3)}a_{ii}u &\mbox{ if } \alpha =3, (s,t)=(i,j), \mbox{ and } \epsilon(a_3)=0,\\ 
{}_{(3)}a_{sj}+ {}_{(3)}a_{si}u &\mbox{ if } \alpha =3, s<i, t = j,  \mbox{ and } \epsilon(a_3)=0,\\ 
{}_{(\alpha)}a_{st} &\mbox{ otherwise,} \\ 
\end{cases}  
\]  
which will give us 
\[ 
0= u_{ij}.f-f = (-1)^{\epsilon(a_{3})-1} \sum_{k\geq 1} \sum_{1\leq l\leq k} (-1)^l {}_{(3)}a_{ij}^{k-l} {}_{(3)}a_{jj}^l u^l \binom{k}{l}F_k \mbox{ if }\epsilon(a_{3})=1 
\] 
or 
\[ 
0= u_{ij}.f-f = (-1)^{\epsilon(a_{3})-1} \sum_{k\geq 1} \sum_{1\leq l\leq k} {}_{(3)}a_{ij}^{k-l} {}_{(3)}a_{ii}^l u^l \binom{k}{l}F_k \mbox{ if }\epsilon(a_{3})=0.  
\]  
Since $\{ {}_{(3)}a_{ij}^{k-l}u^l : 1\leq l\leq k, k\geq 0 \}$ is linearly independent over 
 $R_0$,   
we conclude     
$f\in \mathbb{C}[{}_{(\alpha)}a_{ii}]$.

Finally, if $m=4$, then we use 
$U_{ij}$ to be the subgroup of matrices of the form $(\I,\I,\widehat{u}_3,\I,\ldots, \I)$, where 
$\widehat{u}_3$ has $u$ in $(i,j)$-entry,  
$1$ along the diagonal entries, and $0$ elsewhere. This implies 
\[ 
u_{ij}. {}_{(\alpha)}a_{st} = 
\begin{cases} 
{}_{(4)}a_{ij}+{}_{(4)}a_{ii}u &\mbox{ if } \alpha =4, (s,t)=(i,j),\mbox{ and }\epsilon(a_4)=1, \\ 
{}_{(4)}a_{sj}+{}_{(4)}a_{si}u &\mbox{ if } \alpha =4, s<i, t = j,\mbox{ and }\epsilon(a_4)=1,  \\ 
{}_{(4)}a_{ij}-{}_{(4)}a_{jj}u &\mbox{ if } \alpha =4, (s,t)=(i,j),\mbox{ and }\epsilon(a_4)=0, \\ 
{}_{(4)}a_{it}-{}_{(4)}a_{jt}u &\mbox{ if } \alpha =4, s=i, t > j,\mbox{ and }\epsilon(a_4)=0,  \\ 
{}_{(\alpha)}a_{st} &\mbox{ otherwise, } \\ 
\end{cases}   
\] 
which will give us 
\[ 
0= u_{ij}.f-f = (-1)^{\epsilon(a_{4})-1} \sum_{k\geq 1} \sum_{1\leq l\leq k}{}_{(4)}a_{ij}^{k-l} {}_{(4)}a_{ii}^l u^l \binom{k}{l}F_k \mbox{ if }\epsilon(a_4)=1 
\] 
or 
\[ 
0= u_{ij}.f-f = (-1)^{\epsilon(a_{4})-1} \sum_{k\geq 1} \sum_{1\leq l\leq k} (-1)^l {}_{(4)}a_{ij}^{k-l} {}_{(4)}a_{jj}^l u^l \binom{k}{l}F_k \mbox{ if }\epsilon(a_4)=0. 
\] 
Since $\{ {}_{(4)}a_{ij}^{k-l}u^l : 1\leq l\leq k, k\geq 0 \}$ is linearly independent over 
 $R_0$, 
  $f\in \mathbb{C}[{}_{(\alpha)}a_{ii}]$.  
\end{proof}

Recall the affine Dynkin graphs: 
$\widetilde{D}_r$, $\widetilde{E}_6$, $\widetilde{E}_7$, and  $\widetilde{E}_8$: 
\[  
 \begin{tabular}{c|c}
  \multicolumn{1}{c}{\bfseries Type} & \multicolumn{1}{c}{\bfseries Graph}  \\ 
  \hline 
$\widetilde{D}_r$ &  $\xymatrix{  
  \stackrel{r+1}{\bullet}	\ar@{-}[dr]^{ } & & & & \stackrel{r-1}{\bullet} \\  
   &  
	\stackrel{2}{\bullet}	\ar@{-}[r]^{ } &  
	\ldots 								\ar@{-}[r]^{ } &  
\stackrel{r-2}{\bullet} \ar@{-}[ur]^{ } \ar@{-}[dr]^{ } & \\ 
  \stackrel{1}{\bullet} \ar@{-}[ur]^{ } & & & & \stackrel{r}{\bullet} \\   
}$  \\   
$\widetilde{E}_6$ &   
$\xymatrix@-1pc{  
& & \stackrel{7}{\bullet} \ar@{-}[d] & & \\ 
& & \stackrel{4}{\bullet} & &  \\  
\stackrel{1}{\bullet}\ar@{-}[r]^{ }  &   
\stackrel{2}{\bullet}\ar@{-}[r]^{ }  &  
\stackrel{3}{\bullet}\ar@{-}[r]^{ } \ar@{-}[u]^{ } & 
\stackrel{5}{\bullet}\ar@{-}[r]^{ }  & 
\stackrel{6}{\bullet} \\ 
}$ \\ 
&  \\ 
$\widetilde{E}_7$ & 
 $  \xymatrix@-1pc{
& & & \stackrel{4}{\bullet} & & & \\ 
\stackrel{8}{\bullet} \ar@{-}[r]^{}  & 
\stackrel{1}{\bullet}\ar@{-}[r]^{ }  & 
\stackrel{2}{\bullet}\ar@{-}[r]^{ }  & 
\stackrel{3}{\bullet}\ar@{-}[r]^{ } \ar@{-}[u]^{ } & 
\stackrel{5}{\bullet}\ar@{-}[r]^{ }  & 
\stackrel{6}{\bullet}\ar@{-}[r]^{ }  & 
\stackrel{7}{\bullet} \\ 
}$    \\
&  \\ 
$\widetilde{E}_8$ &  
$\xymatrix@-1pc{
& & \stackrel{4}{\bullet} & & & &  & \\ 
\stackrel{1}{\bullet}\ar@{-}[r]^{ }  & 
\stackrel{2}{\bullet}\ar@{-}[r]^{ }  & 
\stackrel{3}{\bullet}\ar@{-}[r]^{ } \ar@{-}[u]^{ } & 
\stackrel{5}{\bullet}\ar@{-}[r]^{ }  &  
\stackrel{6}{\bullet}\ar@{-}[r]^{ }  &  
\stackrel{7}{\bullet}\ar@{-}[r]^{ }  &  
\stackrel{8}{\bullet}\ar@{-}[r]^{ }  &  
\stackrel{9}{\bullet}.  
 \\ 
} $ \\ 
  \end{tabular}  
\]

\begin{corollary}\label{corollary:affine-Dr-quiver-with-no-framing}  
For an affine $\widetilde{D}_r$, $\widetilde{E}_6$, $\widetilde{E}_7$, or   
$\widetilde{E}_8$-quiver with no framing and $\beta=(n,\ldots, n)\in \mathbb{Z}_{\geq 0}^{Q_0}$ 
with the complete standard filtration of vector spaces at each vertex of the quiver, we have 
$\mathbb{C}[\mathfrak{b}^{\oplus Q_1}]^{\mathbb{U}_{\beta}}\cong \mathbb{C}[\mathfrak{t}^{\oplus Q_1}]$. 
\end{corollary}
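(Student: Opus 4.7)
The plan is to reduce the corollary to (a minor extension of) the proof strategy used for Theorem~\ref{theorem:filtered-ADE-Dynkin-quiver}. The inclusion $\mathbb{C}[\mathfrak{t}^{\oplus Q_1}] \subseteq \mathbb{C}[\mathfrak{b}^{\oplus Q_1}]^{\mathbb{U}_{\beta}}$ is immediate, since each diagonal coordinate ${}_{(\alpha)}a_{ii}$ is clearly fixed by the product $\mathbb{U}_{\beta}$ of maximal unipotent subgroups (unipotent conjugation on $\mathfrak{b}$ does not alter diagonal entries; this uses only that the action at each arrow is by left/right multiplication by unipotents). So the real content is the reverse inclusion.

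For the reverse inclusion, I would fix an arbitrary orientation on the affine Dynkin graph $Q$ and define $\epsilon(a_\alpha)\in\{0,1\}$ for each arrow $a_\alpha\in Q_1$ exactly as in the finite-type proof (with one endpoint playing the role of ``head'' and the other ``tail''). Choose the same total ordering on pairs $(i,j)$ with $1\leq i\leq j\leq n$, and let $f\in \mathbb{C}[\mathfrak{b}^{\oplus Q_1}]^{\mathbb{U}_\beta}$. Suppose for contradiction that $f\notin \mathbb{C}[\mathfrak{t}^{\oplus Q_1}]$; pick the least pair $(i,j)$ with $i<j$ such that some off-diagonal coordinate ${}_{(\alpha)}a_{ij}$ appears nontrivially in the expansion \eqref{eq:aij-invariant-function-borel}, and pick the least arrow index $m$ (in the multi-index $K$) for which this happens. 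The goal is then exactly as before: exhibit a one-parameter unipotent subgroup $U_{ij}\subset \mathbb{U}_\beta$, supported at a single vertex adjacent to the arrow $a_m$, whose action on ${}_{(m)}a_{ij}$ is affine linear in the parameter $u$ with nonzero translational term coming from a diagonal coordinate (${}_{(m)}a_{ii}$ or ${}_{(m)}a_{jj}$, depending on the orientation of $a_m$), while fixing all earlier coordinates. The linear independence of $\{{}_{(m)}a_{ij}^{k-l}u^l\}$ over the ring $R_0$ of later coordinates then forces each $F_k=0$ for $k\geq 1$, contradicting the choice of $(i,j)$ and $m$.

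The only subtlety specific to the affine types is bookkeeping: one must verify that for each arrow $a_m$ in $Q_1$ there exists some endpoint vertex where a single-variable unipotent can be placed so that (after conjugation by all other identity matrices) it affects only the coordinates of $a_m$ at position $(i,j)$ and later, and no earlier coordinates. This is a purely local check at each arrow and follows the same casework as in the $D_r$ and $E_r$ portions of the proof of Theorem~\ref{theorem:filtered-ADE-Dynkin-quiver}: for each arrow one chooses the vertex endpoint whose orientation sign $\epsilon(a_m)$ dictates whether the resulting translation is by $+{}_{(m)}a_{ii}u$ or $-{}_{(m)}a_{jj}u$. The extra edges introduced by passing from $D_r,E_6,E_7,E_8$ to $\widetilde{D}_r,\widetilde{E}_6,\widetilde{E}_7,\widetilde{E}_8$ attach either to a leaf or to a trivalent vertex that already appears in the finite-type argument, so each new arrow is handled by one of the existing case analyses (the ``type $A$'' chain case, the $D$-type fork case at a trivalent vertex, or the $E$-type case at vertices $3$ or $4$).

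The main obstacle I anticipate is purely organizational: one must verify arrow-by-arrow that the local unipotent chosen at the chosen endpoint does not disturb coordinates at any other arrow incident to that vertex in a way that destroys the leading-term argument. At a trivalent vertex of $\widetilde{D}_r$ (for instance, the new vertex $2$ in the $\widetilde{D}_r$ diagram, which is incident to three arrows) one has to be careful to choose the unipotent at the \emph{opposite} endpoint of $a_m$, so that the other two arrows incident to the shared vertex remain untouched. Once that case-check is done the computation \eqref{eq:ar-dynkin-Uij-action-at-most-two-pathways} and its variants transcribe verbatim, the linear-independence argument concludes the proof, and the isomorphism $\mathbb{C}[\mathfrak{b}^{\oplus Q_1}]^{\mathbb{U}_\beta}\cong \mathbb{C}[\mathfrak{t}^{\oplus Q_1}]$ follows immediately.
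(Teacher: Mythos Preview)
Your proposal is correct and matches the paper's approach exactly: the paper itself states that the proof is ``essentially the same as the proof of Theorem~\ref{theorem:filtered-ADE-Dynkin-quiver}'' and omits all details. One small inaccuracy worth flagging: for $\widetilde{D}_r$ the extra edge attaches to vertex $2$, which was \emph{bivalent} in $D_r$ and becomes a \emph{new} trivalent vertex, not to a leaf or an existing trivalent vertex as you claim; this is still handled by your $D$-type fork casework, but it does require labeling the new arrow with an index smaller than that of the chain arrow $2$--$3$ so that placing the unipotent at vertex $2$ (when $m$ equals the index of the arrow $2$--$3$) remains legitimate.
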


Since the proof of Corollary~\ref{corollary:affine-Dr-quiver-with-no-framing} is essentially the same as the proof of Theorem~\ref{theorem:filtered-ADE-Dynkin-quiver}, we omit the details.

\section{(Framed) filtered quiver varieties associated to affine Dynkin type \texorpdfstring{$\widetilde{A}_r$}{Ar}}\label{section:framed-filtered-quiver-var-affine-Dynkin-type}   
 
Let $\lambda=(\lambda_1\geq \lambda_2\geq \ldots \geq \lambda_l\geq 0)$ be a {\em partition} of size 
$|\lambda|$ $=$ $\sum_{i=1}^{l} \lambda_i$.  
One identifies to $\lambda$ a left-justified shape of $l$ rows of boxes of length $\lambda_1$, $\lambda_2$, $\ldots$, $\lambda_l$,  
which is called the {\em Young diagram} associated to $\lambda$. 
A {\em (Young) filling} or a {\em Young tableau} 
of $\lambda$ assigns a positive integer to each box of Young diagram. 

\begin{example}\label{example:young-diagram}  
Associated to $\lambda=(5,3,3,1)$ is the Young diagram  
\[ 
\ydiagram{5,3,3,1} 
\] 
and a Young tableau    
\ytableausetup{mathmode, boxsize=2em} 
 \begin{ytableau} 
 4& 1& 2 & 3&3 \\ 
 3& 4& 5\\
 2& 5& 6\\    
 1 
 \end{ytableau}.    
\end{example}

\begin{definition}\label{definition:standard-form-bitableau}  
A Young tableau is   
{\em normal}   
if the entries in each row are strictly increasing from left to right. 
It is called {\em standard} if it is normal and the entries in each column are nondecreasing from top to bottom. 
A {\em bitableau} $J|I$ is a pair of Young tableaux $J$ and $I$ having the same shape and the bitableau is called {\em standard} if both $J$ and $I$ are standard Young tableaux. 
\end{definition}  
The {\em bideterminant} $(J|I)$  
of a bitableau is defined in the following way: 
the positive integer entries in $J$ in a fixed row correspond to the rows of a matrix while the positive integer entries in   
$I$ in the same row correspond to the columns of a matrix. Take the determinant of these minors of a matrix and repeat for each row in $J|I$ to obtain the bideterminant 
$(J|I)$.   

Note that the bideterminant $(J|I)$ associated to a bitableau $J|I$ is a product of minors of a matrix, where $J$ are the row indices and $I$ are the column indices. 
Furthermore, a matrix (or a product of matrices) must be specified when calculating the bideterminant associated to a bitableau; 
such specification will be denoted on the lower-right corner of each row of the bitableau (and the bideterminant), i.e., see 
\eqref{eq:bitableau-general-form-for-framed-quivers} 
and  
\eqref{eq:bideterminant-general-form-for-framed-quivers}.  

\begin{example} 
Consider the two Young tableaux:  
\[ J = 
\ytableausetup{mathmode, boxsize=2em} 
 \begin{ytableau} 
 1& 2& 4 \\ 
 2& 4    \\
 1& 4    \\    
 \end{ytableau}
\hspace{2mm} 
\mbox{ and } 
\hspace{2mm}
I = 
\ytableausetup{mathmode, boxsize=2em} 
 \begin{ytableau} 
 1& 2 & 4 \\ 
 2& 4 \\
 2& 5 \\    
 \end{ytableau}. 
\] 
$J$ is normal but not standard (since the entries in the first column are not nondecreasing when reading from top to bottom), while $I$ is standard. 
\end{example}

From this point forward, we will not draw a box around each entry of a Young (bi)tableau or the bideterminant of a bitableau.

Let $Q$ be a quiver with one framed vertex labelled as $1'$  
and all other (nonframed) vertices labelled as $1$, $2$, $\ldots$, $|Q_0|-1$,  
and the arrows labelled as $a_0$, $a_1$, $\ldots$, $a_{|Q_1|-1}$,  
where $ta_0=1'$ and $ha_0=1$.  
Let $A_{\phi_u}$ be a general representation of the arrow $a_{\phi_u}$. 
The product $A_{\phi_u}A_{\phi_u-1}A_{\phi_u-2}\cdots A_0$  
of general matrices is associated to the quiver path 
$a_{\phi_u}a_{\phi_u-1}a_{\phi_u-2}\cdots a_0$.  
Moreover, 
$A_{\phi_u}A_{\phi_u-1}A_{\phi_u-2}\cdots A_0$  
is uniquely associated to the sequence 
\begin{equation}\label{equation:unique-sequence-ass-to-prod-of-matrices}
\mathbf{\Phi_u} := [\phi_u, \phi_u-1, \phi_u-2, \ldots, 0]
\end{equation}  
of integers obtained by reading the indices of 
$A_{\phi_u}A_{\phi_u-1}A_{\phi_u-2}\cdots A_0$  
from right to left.  
Consider all $\mathbf{\Phi_u}$ whose quiver paths begin at the framed vertex;  
we will fix a partial ordering $\leq$ on these finite sequences of nonnegative integers. We say 
\[ 
\mathbf{\Phi_u} := [\phi_u, \phi_u-1, \phi_u-2, \ldots, 0] \leq 
[\phi_v, \phi_v-1, \phi_v-2, \ldots, 0] =: \mathbf{\Phi_v} 
\]  
if the number of integers in 
$\mathbf{\Phi_u}$ 
is less than the number of integers in 
$\mathbf{\Phi_v}$, 
or 
if the number of integers in  
$\mathbf{\Phi_u}$ 
equals the number of integers in 
$\mathbf{\Phi_v}$ 
and the right-most nonzero entry in 
$\mathbf{\Phi_v}-\mathbf{\Phi_u}$ is positive when subtracting component-wise.   
 
\begin{definition}\label{definition:bideterminant-product-of-general-matrices-framed-quiver}  
The $s^{th}$ row of the bitableau  
\begin{equation}\label{eq:bitableau-general-form-for-framed-quivers}
 \begin{matrix}
   j_1^{(1)} & j_2^{(1)} & \ldots & j_{v_1}^{(1)}  &| 
  i_1^{(1)}& i_2^{(1)} &\ldots & {i_{v_1}^{(1)}}_{A_{\phi_1}\cdots  A_0} \\
   j_1^{(2)} & j_2^{(2)} & \ldots & j_{v_2}^{(2)}  &| 
  i_1^{(2)}& i_2^{(2)} &\ldots & { i_{v_2}^{(2)}}_{A_{\phi_2}\cdots  A_0} \\
       & \vdots&        &    &           &  \vdots   &       &                   \\
   j_1^{(l)} & j_2^{(l)} & \ldots & j_{v_l}^{(l)}  &| 
  i_1^{(l)}& i_2^{(l)} &\ldots & {i_{v_l}^{(l)}}_{A_{\phi_l}\cdots  A_0} \\
  \end{matrix} 
\end{equation} 
is defined to be the bitableau associated to rows 
$j_1^{(s)}$, $j_2^{(s)}$, $\ldots$, $j_{v_s}^{(s)}$ and columns 
$i_1^{(s)}$, $i_2^{(s)}$, $\ldots$, $i_{v_s}^{(s)}$ in the product $A_{\phi_s}A_{\phi_s-1}\cdots A_0$ of general matrices. 
The bideterminant 
\begin{equation}\label{eq:bideterminant-general-form-for-framed-quivers} 
\begin{matrix}
   (j_1^{(1)} & j_2^{(1)} & \ldots & j_{v_1}^{(1)}  &| 
  i_1^{(1)}& i_2^{(1)} &\ldots & i_{v_1}^{(1)})_{A_{\phi_1}\cdots  A_0} \\
   (j_1^{(2)} & j_2^{(2)} & \ldots & j_{v_2}^{(2)}  &| 
  i_1^{(2)}& i_2^{(2)} &\ldots & i_{v_2}^{(2)})_{A_{\phi_2}\cdots  A_0} \\
       & \vdots&        &    &           &  \vdots   &       &                   \\
   (j_1^{(l)} & j_2^{(l)} & \ldots & j_{v_l}^{(l)}  &| 
  i_1^{(l)}& i_2^{(l)} &\ldots & i_{v_l}^{(l)})_{A_{\phi_l}\cdots  A_0}  
\end{matrix}
\end{equation}
is the product of bideterminants 
\[ 
\begin{matrix}
(j_1^{(s)} & j_2^{(s)} & \ldots & j_{v_s}^{(s)}  &| 
  i_1^{(s)}& i_2^{(s)} &\ldots & i_{v_s}^{(s)})_{A_{\phi_s}A_{\phi_s-1}\cdots  A_0}  
  \end{matrix} 
\]  
obtained by taking the determinant of minors of rows  
$j_1^{(s)}, j_2^{(s)},\ldots, j_{v_s}^{(s)}$ and columns  
$i_1^{(s)}, i_2^{(s)},\ldots, i_{v_s}^{(s)}$ in the product $A_{\phi_s}A_{\phi_s-1}\cdots A_0$ of general matrices.   
\end{definition}  
Note that the $s^{th}$ row of \eqref{eq:bitableau-general-form-for-framed-quivers} is associated to the sequence $\mathbf{\Phi_s}$ of integers (cf. \eqref{equation:unique-sequence-ass-to-prod-of-matrices}).  
We say the bitableau \eqref{eq:bitableau-general-form-for-framed-quivers}  
is in {\em block standard form}  
if the sequence $\mathbf{\Phi_s}$ of integers associated to each row of the bitableau is in nondecreasing order (with respect to the partial ordering $\leq$ defined earlier in this section) when ascending down the rows and in the case the sequence $\mathbf{\Phi_s}$ of integers for multiple rows are identical, then these rows are in standard form as defined in Definition~\ref{definition:standard-form-bitableau}.  
In the case that the bitableau \eqref{eq:bitableau-general-form-for-framed-quivers} is in block standard form, we will interchangeably say the bideterminant (associated to the bitableau) is in block standard form.

\begin{example}\label{example:standard-form-bitableau} 
Let 
\[  
A_0 = \begin{pmatrix}   
 x_{11}& x_{12} \\  
 x_{21} &x_{22}   
\end{pmatrix}
 \mbox{ and } 
 A_1= \begin{pmatrix} 
  a_{11}&a_{12} \\   
   0    &a_{22}  
 \end{pmatrix}. 
\] 
Then 
\[ 
\begin{matrix} 
 ( 2& 		  |& 1 & 		&)_{A_0} \\  
 (2&  		  |& 2 & 		&)_{A_0} \\
 (1& 		2 |& 1 & 2  &)_{A_1A_0} \\ 
 ( 2& 		  |& 1 & 		&)_{A_1A_0}  
\end{matrix}
\] 
is a bideterminant in block standard form. 
\end{example} 


\begin{example}\label{example:determinant-of-standard-bitableau} 
The polynomial associated to the bideterminant in Example~\ref{example:standard-form-bitableau} is 
\[
\begin{aligned}
x_{21} x_{22} \det(A_1A_0)\cdot (A_1A_0)_{2,1}&= 
x_{21}x_{22} a_{11}a_{22}(x_{11}x_{22}-x_{12}x_{21})a_{22} x_{21} \\ 
&= x_{21}^2 x_{22}  a_{11}a_{22}^2 (x_{11}x_{22} - x_{12} x_{21}).\\ 
\end{aligned}
\] 
\end{example}

The following is Theorem 13.1 in \cite{MR1489234}. 

\begin{theorem}\label{theorem:standard-basis-theorem-Grosshans}
Let $R= \mathbb{C}[\{ x_{ij}: 1\leq i\leq n, 1\leq j\leq m\}]$. 
Then bideterminants of standard bitableaux form a basis over $\mathbb{C}$ of $R$. 
\end{theorem}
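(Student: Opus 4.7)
The plan is to establish the two required statements separately: (i) that bideterminants of standard bitableaux span $R$ over $\mathbb{C}$, and (ii) that they are linearly independent.

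For spanning, I would first note that every monomial in $R$ is itself a bideterminant: each variable $x_{ij}$ is a $1\times 1$ bideterminant $(i\,|\,j)$, and concatenating rows shows any monomial $\prod x_{i_k j_k}$ is the bideterminant of a (possibly non-standard) bitableau of column-shape $(1,1,\ldots,1)$. The heart of the argument is a straightening algorithm that rewrites an arbitrary bideterminant as a $\mathbb{C}$-linear combination of standard ones. I would introduce a well-ordering on bitableaux — first by shape in the dominance order on partitions, then lexicographically on the pair $(J,I)$ read row by row — and prove a Plücker-type straightening identity: whenever a bitableau $J|I$ fails the standard condition in some pair of adjacent rows or within a single row, the corresponding bideterminant can be written as a $\mathbb{C}$-linear combination of bideterminants strictly smaller in the ordering. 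The identity itself comes from Laplace expansion of a determinant of a larger matrix with a repeated row/column, which forces a linear relation among products of minors. Since the ordering is well-founded and $1\times 1$ bideterminants are automatically standard, iterating the straightening terminates and yields the desired expression.

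For linear independence, I would argue via leading monomials. Fix the lexicographic monomial order on $R$ induced by the total order $x_{ij} > x_{i'j'}$ iff $(i,j) <_{\mathrm{lex}} (i',j')$. For a standard bitableau $J|I$, expand $(J|I)$ as a sum of signed monomials and show that the diagonal term $\prod_{s,k} x_{j_k^{(s)},\, i_k^{(s)}}$ is the unique maximal monomial with respect to the chosen order; this uses in an essential way that both $J$ and $I$ are standard (entries strictly increasing along rows, weakly increasing down columns), so the diagonal choice is forced. Next, I would verify that the map sending a standard bitableau to its leading monomial is injective: two distinct standard bitableaux of the same shape have different diagonals, and bitableaux of different shapes produce monomials of different total degrees or different multi-degree profiles. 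A hypothetical $\mathbb{C}$-linear dependence among standard bideterminants would then force two distinct standard bitableaux to share a leading monomial, a contradiction.

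The main obstacle will be establishing the straightening law in a form that strictly decreases the ordering on bitableaux. One must show both that the Laplace/Plücker relation expresses a given non-standard bideterminant in terms of others, and that every term appearing on the right-hand side is genuinely smaller in the chosen well-ordering. This requires a careful analysis of how swapping entries between two rows of a bitableau affects lexicographic position, together with a verification that the straightening relations are strong enough to reach a standard form in finitely many steps. A secondary subtlety is the leading-monomial argument: the monomial order must be chosen so that the diagonal term of a standard bideterminant is unambiguously maximal, which pins down the precise lexicographic convention. Once these combinatorial pieces are in place, both the spanning and independence statements follow cleanly.
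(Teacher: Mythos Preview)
The paper does not prove this theorem at all: it is stated with the preface ``The following is Theorem 13.1 in \cite{MR1489234}'' and no proof is given. Your outline is the classical Doubilet--Rota--Stein straightening argument, which is the standard proof of this result and is essentially what one finds in the cited reference (Grosshans); the two pillars you identify --- a straightening law that decreases a well-ordering on bitableaux for spanning, and an injective leading-monomial map for independence --- are exactly the right ingredients, and the subtleties you flag (that the straightening relation genuinely decreases the order, and that distinct standard bitableaux have distinct leading monomials) are the points where the real work lies.
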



\begin{lemma}\label{lemma:borel-invariants-on-bideterminants}  
Let invertible upper triangular matrices $B\subseteq GL_n(\mathbb{C})$ act on $M_{n\times m}$ via left translation. 
Consider the character  
$\chi_p(b)$ $=$ $\displaystyle{\prod_{i=p}^n b_{ii}}$ of $B$ and let 
$f=(p\:\: p+1\:\cdots n | i_1 \cdots i_{n-p+1})\in \mathbb{C}[M_{n\times m}]$ 
where $1\leq i_1 < i_2 < \ldots < i_{n-p+1}\leq m$. 
Then $b.f=\chi_p(b)f$. 
\end{lemma}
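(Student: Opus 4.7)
The plan is a direct matrix computation exploiting the block structure of $b \in B$ relative to the row partition $\{1, \ldots, p-1\} \sqcup \{p, \ldots, n\}$. Writing $f(X) = \det \hat{X}$, where $\hat{X}$ denotes the $(n-p+1) \times (n-p+1)$ submatrix of $X$ on rows $p, p+1, \ldots, n$ and columns $i_1 < i_2 < \ldots < i_{n-p+1}$, I would first observe that because $b$ is upper triangular, its entry $b_{ik}$ vanishes whenever $k < i$. Consequently, for any row index $i \in \{p, \ldots, n\}$, the $i$-th row of $bX$ equals $\sum_{k \geq i} b_{ik} (\text{row } k \text{ of } X)$, which is a linear combination only of rows $p, p+1, \ldots, n$ of $X$, since $i \geq p$ forces $k \geq p$ as well.

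Let $\hat{b}$ denote the principal submatrix of $b$ on rows and columns $p, \ldots, n$; by the standing assumption on $b$, this block is upper triangular with diagonal entries $b_{pp}, b_{p+1,p+1}, \ldots, b_{nn}$. The observation in the previous paragraph says exactly that the submatrix of $bX$ on rows $p, \ldots, n$ and columns $i_1, \ldots, i_{n-p+1}$ is the matrix product $\hat{b}\hat{X}$. Multiplicativity of the determinant then yields
\[
f(bX) \;=\; \det(\hat{b}\hat{X}) \;=\; \det(\hat{b}) \det(\hat{X}) \;=\; \Bigl(\prod_{i=p}^{n} b_{ii}\Bigr) f(X) \;=\; \chi_p(b)\, f(X),
\]
and translating through the induced action on $\mathbb{C}[M_{n\times m}]$ gives $b.f = \chi_p(b)\, f$ as claimed. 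There is no genuine mathematical obstacle here; the content of the lemma is simply that the lower-left zero block of $b$ kills any contribution from rows $1, \ldots, p-1$ of $X$ to the minor in question, so the action of $b$ on $\hat{X}$ reduces to left multiplication by the upper-triangular block $\hat{b}$. The only care needed is notational bookkeeping of the block decomposition and of the convention relating the action of $b$ on $M_{n\times m}$ to the induced action on functions.
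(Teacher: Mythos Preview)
Your proof is correct and somewhat cleaner than the paper's. The paper proceeds by writing $b=tu$ with $t$ in the diagonal torus and $u$ unipotent, then checks the torus factor by hand and verifies invariance under each elementary unipotent subgroup $U_{i,j}$ via a row-operation argument (the substitution $x_{it}\mapsto x_{it}-u\,x_{jt}$ produces a determinant with a repeated row when $p\le i$, and has no effect when $p>i$). Your approach bypasses this decomposition entirely: the block-triangular shape of $b$ with respect to the partition $\{1,\dots,p-1\}\sqcup\{p,\dots,n\}$ immediately gives $\widehat{bX}=\hat b\,\hat X$, and one application of multiplicativity of the determinant finishes. What your route buys is brevity and a uniform treatment of all of $B$ at once; what the paper's route buys is an explicit generator-by-generator verification, which dovetails with the style of argument used elsewhere in that chapter (e.g., the proofs of the $ADE$ and $\widetilde A_r$ theorems, which repeatedly isolate one-parameter unipotent subgroups $U_{ij}$). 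Your closing remark about the convention for the induced action on functions is apt and is the only place care is needed.
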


\begin{proof} 
 Write $b=tu$, where $t=(t_{ii})\in T$ and $u\in U$, $T$ is the maximal torus in $B$ and $U$ is the maximal unipotent subgroup of $B$. 
Then 
$t.f = \displaystyle{\prod_{i=p}^n t_{ii} f} = \chi_p(t)f$ 
and for the subgroup $U_{i,j}$ which has $1$ along the main diagonal, the variable $u$ in the $(i,j)$-entry and $0$ elsewhere,
we will show that $U_{i,j}$ fixes $f$. 
So let $\widehat{u}\in U_{i,j}$. 
Then since $\widehat{u}.f(x) = f(\widehat{u}^{-1}.x) = f(\widehat{u}^{-1}x)$, 
\[
 \widehat{u}.x_{st} = 
\begin{cases} 
x_{it}-x_{jt}u &\mbox{ if } s =i, \\ 
x_{st} &\mbox{ otherwise}. \\ 
\end{cases}
\]
So $\widehat{u}.f=f-u(p\:\: p+1\cdots j \cdots j \cdots n| i_1 \cdots i_{n-p+1})=f$ if $p\leq i$
and $\widehat{u}.f=f$ if $p>i$. 
This concludes the proof. 
\end{proof}

We will call the graph 
\[ 
\xymatrix@-1pc{ 
& & & &\stackrel{r+1}{\bullet} \ar@{-}[lld]_{} &\ldots  &\stackrel{5}{\bullet}  & &  \\   
\stackrel{1'}{\circ} \ar@{-}[rr]^{} & & \stackrel{1}{\bullet} \ar@{-}[rdr]^{} & & & & & & \stackrel{4}{\bullet} \ar@{-}[llu]_{} \\  
& & & & \stackrel{2}{\bullet} \ar@{-}[rr]^{} & &  \stackrel{3}{\bullet}  \ar@{-}[rru] & & \\   
} 
\]
 the {\em framed affine $\widetilde{A}_r$-Dynkin diagram}.

\begin{setup}\label{setup:framed-affine-Ar-dynkin-diagram} 
Let $Q$ be a quiver whose underlying graph is the framed affine $\widetilde{A}_r$-Dynkin diagram, 
where the orientation of the arrow between vertices $1'$ and $1$ starts at $1'$ and ends at $1$.  
 Let $\beta=(n,\ldots, n,m)\in \mathbb{Z}^{r+2}$ be the dimension vector of $Q$, where $m$ corresponds to the dimension at the framed vertex $1'$. 
 We impose the complete standard filtration of vector spaces only at the nonframed vertices. 
 Let $F^{\bullet}Rep(Q,\beta)$ be the representation subspace of $Rep(Q,\beta)$ whose representations preserve the filtration of vector spaces and suppose the product $\mathbb{U}_{\beta}:= U^{r+1}$ 
 of maximal unipotent subgroups of 
$\mathbb{G}_{\beta} = (GL_{n}(\mathbb{C}))^{r+1}$ 
acts on $F^{\bullet}Rep(Q,\beta)$ as a change-of-basis. 
Let $A_k$ be a general representation of the arrow between vertices $k$ and $k+1$ where $1\leq k\leq r$,  
$A_{r+1}$ be a general matrix of the arrow between vertices $1$ and $r+1$,  
and $A_0$ be a general representation of the arrow $a_0$ from the framed vertex  $1'$ to vertex $1$ (we note that $A_0$ 
is a general representation corresponding to a map from $\mathbb{C}^m$ to $\mathbb{C}^n$). 
\end{setup}

The following lemma generalizes Lemma~\ref{lemma:borel-invariants-on-bideterminants}. 

\begin{lemma}\label{lemma:borel-invariants-on-equioriented-acylic-quiver}
Consider the {\em equioriented} (all arrows are pointing in the same direction) quiver 
\[ 
\xymatrix@-1pc{
\stackrel{1'}{\circ} \ar[rr]^{a_0} & & \stackrel{1}{\bullet} \ar[rr]^{a_1} & & \stackrel{2}{\bullet} 
&\ldots & \stackrel{r}{\bullet} \ar[rr]^{a_r} & & \stackrel{r+1}{\bullet},   
}
\]  
where the dimension of the vector space at the framed vertex is $m$ and the dimension of the vector space at nonframed vertices is $n$. 
Impose the complete standard filtration $F^{\bullet}$ of vector spaces at the nonframed vertices. 
Let $\mathbb{U}_{\beta}\subseteq B^{r+1}$ be the product of maximal unipotent subgroups and let  
$A_m$ be a general matrix associated to arrow $a_m$ for each $0\leq m\leq r$.  
Then $F^{\bullet}Rep(Q,\beta)=M_{n\times m}\oplus \mathfrak{b}^{\oplus r}$ 
and standard bideterminants of the form 
\begin{equation}\label{eq:standard-bideterminant-one-row-only-lemma}
(p\:\: p+1 \: \cdots \: n \: | \: i_1 \: i_2 \: \cdots \: i_{n-p+1} )_{A_{m}\cdots A_0}, 
\hspace{4mm} \mbox{ where }  1\leq p\leq n \mbox{ and } 0 \leq m\leq r, 
\end{equation}  
are $\mathbb{U}_{\beta}$-invariant polynomials. 
\end{lemma}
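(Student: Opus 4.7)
The plan is to reduce the claim to a short linear algebra computation by exploiting a telescoping cancellation. First I would verify the identification $F^\bullet Rep(Q,\beta) \cong M_{n\times m} \oplus \mathfrak{b}^{\oplus r}$: the map $A_0\colon \mathbb{C}^m \to \mathbb{C}^n$ attached to the framed arrow is unconstrained at its source (no filtration is imposed at the framed vertex), while each $A_k$ for $1 \leq k \leq r$ must send the standard $j$-dimensional subspace into itself for every $j$, and so lies in $\mathfrak{b}$. I would then record the action of $(u_1, u_2, \ldots, u_{r+1}) \in \mathbb{U}_\beta = U^{r+1}$ (with $u_k$ acting at nonframed vertex $k$) as $A_0 \mapsto u_1 A_0$ and $A_k \mapsto u_{k+1} A_k u_k^{-1}$ for $1 \leq k \leq r$.

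The crucial observation is a telescoping of the internal factors: under this action the matrix product transforms as
\[
A_m A_{m-1} \cdots A_1 A_0 \;\longmapsto\; (u_{m+1} A_m u_m^{-1})(u_m A_{m-1} u_{m-1}^{-1}) \cdots (u_2 A_1 u_1^{-1})(u_1 A_0) \;=\; u_{m+1}\,(A_m A_{m-1} \cdots A_0),
\]
because all internal pairs $u_k^{-1}u_k$ cancel and $A_0$ has no right-hand action. Thus the invariance of the bideterminant reduces to the following statement: for any $n \times n'$ matrix $M$ (with $n' = m$ in our setting) and any $u \in U$, the minor of $uM$ on rows $p, p+1, \ldots, n$ and columns $i_1 < i_2 < \cdots < i_{n-p+1}$ equals the corresponding minor of $M$.

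This last fact is essentially the restriction of Lemma~\ref{lemma:borel-invariants-on-bideterminants} to the unipotent subgroup (where the character $\chi_p$ is trivial), and I would prove it directly. Since $u$ is upper triangular unipotent, row $i$ of $uM$ equals row $i$ of $M$ plus a linear combination of the rows strictly below $i$. Consequently the $(n-p+1) \times m$ submatrix of $uM$ with row indices $p, p+1, \ldots, n$ is obtained from the analogous submatrix of $M$ by left multiplication with the lower-right $(n-p+1)\times(n-p+1)$ principal block $\tilde u$ of $u$, which is itself upper triangular unipotent with $\det(\tilde u) = 1$. Selecting the columns $i_1, \ldots, i_{n-p+1}$ and taking determinants then yields the desired equality.

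No real obstacle is expected, since both the telescoping cancellation and the minor computation are routine; the only conceptual point is that the "flag-shaped" row index set $\{p, p+1, \ldots, n\}$ is precisely what makes the minor stable under left multiplication by $U$. This same framework will organize the search for further invariants in the subsequent sections, where the nontrivial task becomes combining such bideterminants along multiple paths.
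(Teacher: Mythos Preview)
Your proof is correct and follows essentially the same approach as the paper: both reduce to showing that the minor on rows $p,\ldots,n$ of the product $A_m\cdots A_0$ is unchanged under left multiplication by a unipotent matrix. The only cosmetic difference is that the paper checks invariance one root subgroup $U_{ij}$ at a time (using the repeated-row argument to kill the extra term), whereas you handle all of $U$ at once via the block-determinant observation; your packaging is a bit slicker but the content is the same.
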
  

Note that the polynomial \eqref{eq:standard-bideterminant-one-row-only-lemma} is a row of \eqref{eq:bideterminant-general-form-for-framed-quivers}.   
  
\begin{proof}   
For $u=(u_1,\ldots, u_{r+1})\in \mathbb{U}_{\beta}$ and  
$(A_0,A_1,\ldots, A_r)\in M_{n\times m}\oplus \mathfrak{b}^{\oplus r}$, 
\[ 
u.(A_0,A_1,\ldots, A_r) = 
(u_1 A_0, u_2 A_1 u_1^{-1},\ldots, u_{\alpha+1}A_{\alpha}u_{\alpha}^{-1},\ldots, u_{r+1}A_r u_r^{-1}). 
\] 
We will write the entries of the product $A_{\alpha}\cdots A_0$ 
of matrices as $({}_{(\alpha)}y_{st})$, where $({}_{(0)}y_{st})=(x_{st})\in M_{n\times m}$. 
Then for the subgroup $U_{ij}$ of $\mathbb{U}_{\beta}$ which is    
\[ 
\begin{aligned} 
U_{ij} =   \{ u_{ij} = (\I,\ldots, \widehat{u}_m,\ldots, \I):   
\widehat{u}_m &\mbox{ is the matrix with the variable $u$ in the }(i,j)\mbox{-}entry, \mbox{ where }i<j, \\ 
&1 \mbox{ along the diagonal entries, and }
0 \mbox{ elsewhere}     \},    \\ 
\end{aligned}
\] 
$u_{ij}\in U_{ij}$ acts on $({}_{(\alpha)} y_{st}) \in A_{\alpha}\cdots A_1 A_0$ as follows: $U_{ij}$ changes the coordinate polynomial ${}_{(\alpha)}y_{st}$ via  
\[   
u_{ij}.{}_{(\alpha)}y_{st} = 
\begin{cases} 
{}_{(m-1)}y_{it}-{}_{(m-1)}y_{jt}u &\mbox{ if }\alpha = m-1 \mbox{ and }s=i, \\ 
{}_{(\alpha)}y_{st} &\mbox{ otherwise. } \\ 
\end{cases} 
\] 
So for $f=(p\:\: p+1\:\cdots \: n| i_1\: i_2 \: \cdots i_{n-p+1})_{A_{\alpha}\cdots A_0}$, 
\[ 
u_{ij}.f=
\begin{cases} 
f-u(p\:\: p+1\:\cdots j\:\cdots \:j \cdots \: n| i_1\: i_2 \: \cdots i_{n-p+1})_{A_{m-1}\cdots A_0} = f  &\mbox{ if } \alpha=m-1 \mbox{ and } p\leq i, \\ 
f &\mbox{ otherwise. }    \\ 
\end{cases}
\] 
Thus standard bideterminants of the form 
$(p\:\: p+1 \: \cdots \: n \: | \: i_1 \: i_2 \: \cdots \: i_{n-p+1} )_{A_{m}\cdots A_0}$
are $\mathbb{U}_{\beta}$-invariant polynomials, 
where $1\leq p\leq n$ 
 and 
$0 \leq m\leq r$. 
\end{proof}

\begin{definition}\label{definition:bideterminant-poly-for-theorem}
Assume Basic Set-up~\ref{setup:framed-affine-Ar-dynkin-diagram}. 
Define 
\begin{equation}\label{eq:bideterminant-block-standard-form-generic-notation}
(J|I)_{\mathbf{\Phi_{\phi_1}}\mathbf{\Phi_{\phi_2}} \cdots \mathbf{\Phi_{\phi_l}}} := 
  \begin{matrix}
   (j_1 & j_1+1 & \ldots & n  &| i_1^{(1)}& i_2^{(1)} &\ldots & i_{n-j_1+1}^{(1)})_{A_{\phi_1}\cdots  A_0} \\
   (j_2 & j_2+1 & \ldots & n  &| i_1^{(2)}& i_2^{(2)} &\ldots & i_{n-j_2+1}^{(2)})_{A_{\phi_2}\cdots  A_0} \\ 
       & \vdots&        &    &           &  \vdots   &       &                   \\
   (j_l & j_l+1 & \ldots & n  &| i_1^{(l)}& i_2^{(l)} &\ldots & i_{n-j_l+1}^{(l)})_{A_{\phi_l}\cdots  A_0}  
  \end{matrix}
\end{equation} 
as the bideterminant in block standard form, 
where  
$A_{\phi_k}\cdots A_0$ 
is a general representation of the quiver path $a_{\phi_k}\cdots a_0$ 
which begins at the framed vertex. 
\end{definition}

Note that each sequence $\mathbf{\Phi_{\phi_s}}$ of integers associated to each row of  \eqref{eq:bideterminant-block-standard-form-generic-notation}  
corresponds to a general representation of a quiver path that begins at the framed vertex (this is important as this will imply the uniqueness of \eqref{eq:bideterminant-block-standard-form-generic-notation}:   
 if 
 $(J|I)_{\mathbf{\Phi_{\phi_1}}\mathbf{\Phi_{\phi_2}} \cdots \mathbf{\Phi_{\phi_l}}}
 = 
 (J'|I')_{\mathbf{\Phi_{\phi_1'}}\mathbf{\Phi_{\phi_2'}} \cdots \mathbf{\Phi_{\phi_l'}}}$
 where $(J|I)_{\mathbf{\Phi_{\phi_1}}\mathbf{\Phi_{\phi_2}} \cdots \mathbf{\Phi_{\phi_l}}}$ and 
 $(J'|I')_{\mathbf{\Phi_{\phi_1'}}\mathbf{\Phi_{\phi_2'}} \cdots \mathbf{\Phi_{\phi_l'}}}$ are in block standard form, 
 then $J=J'$, $I=I'$, and $\mathbf{\Phi_{\phi_s}}=\mathbf{\Phi_{\phi_s'}}$ for all $1\leq s\leq l$).

\begin{theorem}\label{theorem:invariants-of-framed-affine-Ar-quiver}  
Assume Basic Set-Up~\ref{setup:framed-affine-Ar-dynkin-diagram}. 
 Then 
$F^{\bullet}Rep(Q,\beta)\cong \mathfrak{b}^{\oplus r+1}\oplus M_{n\times m}$ and 
$\mathbb{C}[F^{\bullet}Rep(Q,\beta)]^{\mathbb{U}_{\beta}}$ $\cong$ $\mathbb{C}[\mathfrak{t}^{\oplus r+1}]\otimes_{\mathbb{C}} \mathbb{C}[\{f \}]$, 
where  
\begin{equation}\label{eq:affine-dk-quiver-general-n-inv-poly}
  f = \sum_{\nu} g_{\nu}({}_{(\alpha)}a_{st})  
(J_{\nu}|I_{\nu})_{\mathbf{\Phi_{\phi_1}}\mathbf{\Phi_{\phi_2}} \cdots \mathbf{\Phi_{\phi_l}}}, 
\end{equation} 
where  
$g_{\nu}({}_{(\alpha)}a_{st} )\in \mathbb{C}[\mathfrak{t}^{\oplus r+1}]$ 
 and $(J_{\nu}|I_{\nu})_{\mathbf{\Phi_{\phi_1}}\mathbf{\Phi_{\phi_2}} \cdots \mathbf{\Phi_{\phi_l}}}$  
 is the block standard bideterminant given in Definition~\ref{definition:bideterminant-poly-for-theorem}. 
\end{theorem}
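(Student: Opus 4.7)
The plan is to mirror the two-step structure of the proof of Theorem~\ref{theorem:filtered-ADE-Dynkin-quiver}: first verify that the proposed generators are $\mathbb{U}_{\beta}$-invariant, then show by a leading-term argument that they exhaust the invariant ring. The identification $F^{\bullet}Rep(Q,\beta) \cong \mathfrak{b}^{\oplus r+1} \oplus M_{n\times m}$ is immediate, since each of the $r+1$ cyclic arrows connects two non-framed vertices equipped with the complete standard flag, while $a_0$ starts at the unfiltered framed vertex $1'$. Invariance of the diagonal entries ${}_{(\alpha)}a_{ii}$ is clear: the action $u_{ha_{\alpha}} A_{\alpha} u_{ta_{\alpha}}^{-1}$ preserves diagonals because upper-triangular unipotents do. For the block-standard bideterminants $(J|I)_{\mathbf{\Phi_{\phi_1}}\cdots \mathbf{\Phi_{\phi_l}}}$, the key computation is that the $\mathbb{U}_{\beta}$-action on a product $A_{\phi_s} A_{\phi_s-1}\cdots A_0$ telescopes -- the interior factors $u_{ta_k}^{-1} u_{ha_{k-1}}$ cancel since $ta_k = ha_{k-1}$ along a quiver path -- leaving only left-multiplication by $u_{ha_{\phi_s}}$. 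Lemma~\ref{lemma:borel-invariants-on-bideterminants}, applied row by row, then yields $\mathbb{U}_{\beta}$-invariance because each $J$-row is bottom-aligned of the form $(j_s, j_s+1, \ldots, n)$.

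For completeness, the plan is to apply Theorem~\ref{theorem:standard-basis-theorem-Grosshans} to the coordinate rings of each product $A_{\phi_k}\cdots A_0$, together with $\mathbb{C}[M_{n\times m}]$, to expand an arbitrary invariant $f$ as a $\mathbb{C}$-linear combination of products of standard bideterminants weighted by monomials in the diagonal and off-diagonal coordinate variables ${}_{(\alpha)}a_{st}$. Ordering triples $(i,j,\alpha)$ lexicographically as in the proof of Theorem~\ref{theorem:filtered-ADE-Dynkin-quiver}, I would isolate the minimal triple with $i<j$ at which $f$ has a dependence that is not absorbed into a block-standard bideterminant factor, and derive a contradiction by acting with the one-parameter unipotent $U_{ij}$ at the appropriate vertex. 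The identity $u_{ij}.f = f$ expands into a polynomial identity in the parameter $u$ whose coefficients must vanish by linear independence of $\{{}_{(\alpha)}a_{ij}^{k-l} u^l\}$ over the subring generated by larger triples, forcing the offending coefficient of $f$ to be zero and contradicting minimality. Iterating yields the asserted decomposition into a polynomial in diagonal entries and block-standard bideterminants.

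The main obstacle will be the cyclic structure of $\widetilde{A}_r$: paths from the framed vertex can wind around the loop arbitrarily many times, producing an infinite family of products $A_{\phi_k}\cdots A_0$ and hence an infinitely generated invariant algebra $\mathbb{C}[\{f\}]$. A single off-diagonal variable ${}_{(\alpha)}a_{ij}$ typically appears in infinitely many distinct path-products, so the leading-term computation must track the simultaneous $U_{ij}$-variation across all of them, rather than in a single product as in the acyclic Dynkin cases treated by Theorem~\ref{theorem:filtered-ADE-Dynkin-quiver}. Organizing the straightening-to-block-standard-form reduction to terminate under a well-founded order on the sequences $\mathbf{\Phi_{\phi_s}}$, and proving uniqueness of the block-standard presentation in the presence of the cycle, is the technical heart of the argument and the place where care is most needed.
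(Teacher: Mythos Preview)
Your outline for the $x$-independent part (invariants depending only on the $A_\alpha$'s) matches the paper's: minimal pair $(i,j)$, minimal index $m$, one-parameter unipotent $U_{ij}$ at vertex $m$, and linear independence of $\{{}_{(m)}a_{ij}^{k-l}u^l\}$ to force the contradiction. The only wrinkle the paper treats explicitly and you do not is the cycle case $m=1$: there the unipotent at vertex $1$ moves both ${}_{(1)}a_{ij}$ and ${}_{(r+1)}a_{ij}$ simultaneously, so you must expand in both and argue linear independence of the double family $\{{}_{(r+1)}a_{ij}^{k-l}\,{}_{(1)}a_{ij}^{k'-l'}u^{l+l'}\}$.

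The genuine gap is in the $x$-dependent part. Your minimal-triple $(i,j,\alpha)$ argument is aimed at killing off-diagonal coefficients $g_\nu({}_{(\alpha)}a_{st})$, but it does not address the other half of the claim: that each row of $J_\nu$ must be bottom-aligned $(p,p+1,\ldots,n)$. The entries of a path-product $A_{\phi}\cdots A_0$ are polynomials in many ${}_{(\alpha)}a_{st}$ and $x_{st}$ jointly, so Theorem~\ref{theorem:standard-basis-theorem-Grosshans} applied path-by-path does not give a basis of $\mathbb{C}[F^\bullet Rep(Q,\beta)]$, and a leading ${}_{(\alpha)}a_{ij}$-term in the expansion does not correspond to a single non-bottom-aligned row. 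The paper's argument here is different in kind: it takes the \emph{largest} $j\le n-1$ such that some $J_\nu$ contains a row with $j$ not followed by $j+1$, classifies the five possible row-types at $j,j+1$, picks the bideterminant $(J_1|I_1)$ with the maximal number $M$ of rows of types ``$j$ followed by $>j+1$'' or ``$j$ ends a row,'' and applies $U_{j,j+1}$ (the same parameter $u$ at every head vertex). The resulting block-standard term $u^M(J_1'|I_1)$, with $J_1'$ obtained by replacing those $j$'s by $j+1$, is uniquely recoverable from $J_1'$ by maximality of $j$, and every other contribution to $(J_1'|I_1)$ carries $u^k$ with $k<M$; hence the coefficient of $(J_1'|I_1)$ depends on $u$, contradicting invariance. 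This is a straightening argument from standard-monomial theory, not a least-pair leading-term argument, and it is exactly the piece you flag as ``the technical heart'' but do not supply.
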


\begin{proof}
The identification $F^{\bullet}Rep(Q,\beta)\cong \mathfrak{b}^{\oplus r+1}\oplus M_{n\times m}$ is clear. 
Let 
${}_{(\alpha)}a_{st}$ be the entries of the general matrix $A_{\alpha}$  
 and let $x_{st}$ 
be the entries of the general matrix $A_0$.  

First, we prove that invariant polynomials independent of $x_{st}$ must be coming from the diagonal entries of $A_{\alpha}$. 
We then prove that invariants involving $x_{st}$ must be of the form \eqref{eq:affine-dk-quiver-general-n-inv-poly}. 
So suppose $f\in \mathbb{C}[F^{\bullet}Rep(Q,\beta)]^{\mathbb{U}_{\beta}}$ such that 
$f$ is a polynomial in only ${}_{(\alpha)}a_{st}$, i.e., $f$ does not depend on $x_{st}$. 
Writing $a_{\alpha}$ to be the arrow connecting vertices $\alpha$ and $\alpha+1$ for $1\leq \alpha\leq r$ and $a_{r+1}$ to be the arrow connecting vertices $r+1$ and $1$, 
define 
\[ 
\epsilon(a_{\alpha}) = 
\begin{cases}  
1 &  \mbox{ if } a_{\alpha} \mbox{ points in the counterclockwise direction, } \\ 
0 & \mbox{ if }a_{\alpha} \mbox{ points in the clockwise direction. } \\ 
\end{cases} 
\] 
Writing $U_{\alpha}$ to be the maximal unipotent group acting as a change-of-basis of the filtration of vector spaces at vertex $\alpha$, for $u_{\alpha}\in U_{\alpha}$, we have 
\[ 
\begin{aligned} 
(u_1, &\ldots, u_{r+1}).(A_1,\ldots, A_{r+1}) \\ 
&= (u_{1+\epsilon(a_1)} A_1 u_{1-\epsilon(a_1)+1}^{-1}, \ldots, 
    u_{\alpha+\epsilon(a_{\alpha})} A_{\alpha} u_{\alpha-\epsilon(a_{\alpha})+1}^{-1}, \ldots,
    u_{\floor{r+1+\epsilon(a_{r+1})}} A_{r+1} u_{\floor{r+1-\epsilon(a_{r+1})+1}}^{-1}), \\ 
\end{aligned}
\] 
where 
\[ 
\floor{r+1+\epsilon(a_{r+1})} := 
\begin{cases} 
1 & \mbox{ if } \epsilon(a_{r+1})=1, \\ 
r+1 &\mbox{ if }\epsilon(a_{r+1})=0, \\ 
\end{cases}
\] 
and 
\[ 
\floor{r+1-\epsilon(a_{r+1})+1} := 
\begin{cases} 
r+1 &\mbox{ if } \epsilon(a_{r+1})=1, \\ 
1 &\mbox{ if } \epsilon(a_{r+1})=0. \\  
\end{cases}  
\] 
On the level of functions, we have 
 \[ 
 \begin{aligned}
 (u_1,&\ldots, u_{r+1}).f(A_1,\ldots, A_{r+1}) \\ 
 &= f(u_{1+\epsilon(a_1)}^{-1} A_1   u_{1-\epsilon(a_1)+1}, \ldots, 
    u_{\alpha+\epsilon(a_{\alpha})}^{-1} A_{\alpha} u_{\alpha-\epsilon(a_{\alpha})+1}, 
    \ldots,
    u_{\floor{r+1+\epsilon(a_{r+1})}}^{-1} A_{r+1} u_{\floor{r+1-\epsilon(a_{r+1})+1}}). \\ 
 \end{aligned}
 \]

If $f$ is a polynomial only in ${}_{(\alpha)}a_{ii}$, 
then $f\in \mathbb{C}[\mathfrak{t}^{\oplus r+1}]$ and we are done. 
So suppose not. 
Fix a total ordering $\leq$ on pairs $(i,j)$, where $1\leq i\leq j\leq n$, by defining 
 $(i,j)\leq (i',j')$ if either  
 \begin{itemize}  
 \item $i < i'$ or  
 \item $i=i'$ and $j> j'$. 
 \end{itemize} 
 Let $f\in \mathbb{C}[F^{\bullet}Rep(Q,\beta)]^{\mathbb{U}_{\beta}}$. 
 For each $(i,j)$, we can write  
 \begin{equation}\label{eq:aij-invariant-function-borel-affine}
 f = \sum_K a_{ij}^K f_{ij,K}, \mbox{ where } f_{ij,K}\in \mathbb{C}[\{ {}_{(\alpha)}a_{st}: (s,t)\not=(i,j)\} ] 
 \mbox{ and }
 a_{ij}^K := \prod_{\alpha=1}^{r+1} {}_{(\alpha)}a_{ij}^{k_{\alpha}}. 
 \end{equation}  
 Fix the least pair (under $\leq$) $(i,j)$ with $i< j$ for which there exists $K\not=(0,\ldots, 0)$ with $f_{ij,K}\not=0$; 
 we will continue to denote it by $(i,j)$. 
 If no such $(i,j)$ exists, then $f\in \mathbb{C}[{}_{(\alpha)}a_{ii}]$ and we are done.

Let $K=(k_1,\ldots, k_{r+1})$. Let $m\geq 1$ be the least integer satisfying the following: 
for all $p<m$, if some component $k_p$ in $K$ is strictly greater than $0$, then $f_{ij,K}=0$.  
 Let $U_{ij}$ be the subgroup of matrices of the form 
 $u_{ij} = (\I,\ldots, \I, \widehat{u}_m,\I, \ldots, \I)$, where $\I$ is the $n\times n$  
 identity matrix and  
 $\widehat{u}_m$ is the matrix with $1$ along the diagonal, the variable $u$ in the $(i,j)$-entry, and $0$ elsewhere. 
 Let $u_{ij}^{-1}:=(\I,\ldots, \I, \widehat{u}_m^{-1},\I, \ldots, \I)$. 
 Then since $u_{ij}$ acts on $f$ via  
 $u_{ij}.f(A_1,\ldots, A_{r+1})$ $=$ $f(u_{ij}^{-1}.(A_1,\ldots, A_{r+1}))$ 
  $=$ $f(A_1, 
  \ldots,  
  \widehat{u}_{m-1+\epsilon(a_{m-1})}^{-1} A_{m-1}\widehat{u}_{m-1-\epsilon(a_{m-1})+1},  
  \widehat{u}_{m +\epsilon(a_{m})}^{-1}    A_m    \widehat{u}_{m-\epsilon(a_m)+1}, 
  \ldots,  
  A_{r+1})$, 
  where $\widehat{u}_{m-1}$ $:=$ $\I$, 
  $\widehat{u}_{m+1}$ $:=$ $\I$,  
  and $\widehat{u}_m$ is the unipotent matrix defined as before,  
  $u_{ij}$ changes coordinate variables of $\mathfrak{b}^{\oplus r+1}$ as follows:   
 \begin{equation}\label{eq:ar-dynkin-Uij-action-affine}
  u_{ij}.{}_{(\alpha)}a_{st}= 
 \begin{cases} 
 {}_{(m)}a_{ij} +{}_{(m)}a_{ii}u &\mbox{ if } \alpha=m, (s,t)=(i,j), \mbox{ and } \epsilon(a_m)=1, \\ 
 {}_{(m-1)}a_{ij} -{}_{(m-1)}a_{jj}u &\mbox{ if } \alpha=m-1, (s,t)=(i,j), \mbox{ and } \epsilon(a_{m-1})=1, \\
  {}_{(m)}a_{ij} -{}_{(m)}a_{jj}u &\mbox{ if } \alpha=m, (s,t)=(i,j), \mbox{ and } \epsilon(a_m)=0, \\
 {}_{(m-1)}a_{ij} +{}_{(m-1)}a_{ii}u &\mbox{ if } \alpha=m-1, (s,t)=(i,j), \mbox{ and } \epsilon(a_{m-1})=0, \\
 {}_{(\alpha)}a_{st} &\mbox{ if }s>i \mbox{ or }s=i \mbox{ and }t<j, \\ 
 \end{cases} 
 \end{equation}
where $m-1 := r+1$ if $m=1$.

First, suppose $m>1$.  
 Write 
 \[ f = \sum_{k\geq 0} {}_{(m)}a_{ij}^k F_k, \mbox{ where } 
      F_k\in 
      \mathbb{C}[\{ {}_{(\alpha)}a_{st}: 
      (s,t)\geq (i,j)\mbox{ and if } (s,t)=(i,j), \mbox{ then } \alpha > m 
      \} ]. 
 \]  
 Let $R_0:= \mathbb{C}[\{ {}_{(\alpha)}a_{st}: 
      (s,t)\geq (i,j)\mbox{ and if } (s,t)=(i,j), \mbox{ then } \alpha > m 
      \} ]$. 
If $\epsilon(a_m)=1$, then $u_{ij}.{}_{(m)}a_{ij}={}_{(m)}a_{ij}+{}_{(m)}a_{ii}u$.
So  
 \[  
 0 = u_{ij}.f-f=   \sum_{k\geq 1} \sum_{1\leq l\leq k}{}_{(m)}a_{ij}^{k-l} {}_{(m)}a_{ii}^l u^l \binom{k}{l}F_k, 
 \] 
 and 
 $\{ {}_{(m)}a_{ij}^{k-l}u^l : 1\leq l\leq k, k\geq 1 \}$ is linearly independent over 
 $R_0$.
 If $\epsilon(a_m)=0$, then $u_{ij}.{}_{(m)}a_{ij}$ $=$ ${}_{(m)}a_{ij}-{}_{(m)}a_{jj}u$. 
 So 
  \[ 
 0 = u_{ij}.f-f=  \sum_{k\geq 1} \sum_{1\leq l\leq k} (-1)^l {}_{(m)}a_{ij}^{k-l} {}_{(m)}a_{jj}^l u^l \binom{k}{l}F_k, 
 \] 
 and 
 $\{ {}_{(m)}a_{ij}^{k-l}u^l : 1\leq l\leq k, k\geq 1 \}$ is again linearly independent over 
 $R_0$. 
 Hence each $F_k=0$ for $k\geq 1$, contradicting the choices of $(i,j)$ and $m$. 
 
 Now suppose $m=1$. 
 If the least pair for $A_{m-1}:=A_{r+1}$ is strictly greater than the least pair for $A_m=A_1$, 
 then this case reduces to the previous case by choosing $U_{ij}$ as above. 
 If the least pair for $A_{m-1}=A_{r+1}$ equals the least pair for $A_m=A_1$, then writing $m-1$ to mean $r+1$
 (since $m=1$), write  
 \[ f = \sum_{k\leq d,k'\leq d'} {}_{(m-1)}a_{ij}^{k} {}_{(m)}a_{ij}^{k'} F_{k,k'}, 
 \]   
 where  $F_{k,k'} \in 
 \mathbb{C}[\{ {}_{(\alpha)}a_{st}: 
 (s,t)\geq (i,j) \mbox{ and if } (s,t)=(i,j), \mbox{ then }\alpha\not=m-1,m
 %
 \} ]$ and $d,d'\geq 1$. 
Let $R_1:=  \mathbb{C}[\{ {}_{(\alpha)}a_{st}: 
 (s,t)\geq (i,j) \mbox{ and if } (s,t)=(i,j), \mbox{ then }\alpha\not=m-1,m  \} ]$. 
If $\epsilon(a_m)=1$ and $\epsilon(a_{m-1})=1$, then 
\[ 
\begin{aligned} 
0 &= u_{ij}.f-f  \\ 
&= \sum_{k\leq d,k'\leq d'}  
			({}_{(m-1)}a_{ij}-{}_{(m-1)}a_{jj}u)^k  ({}_{(m)}a_{ij}+{}_{(m)}a_{ii} u)^{k'} F_{k,k'}-\sum_{k\leq d,k'\leq d'}{}_{(m-1)}a_{ij}^{k} {}_{(m)}a_{ij}^{k'} F_{k,k'} \\ 
			&= \sum_{k\leq d, k'\leq d'} \sum_{\stackrel{1\leq l+l'\leq  k+k' }{l\leq k,l'\leq k'}} \binom{k}{l}\binom{k'}{l'}{}_{(m-1)}a_{ij}^{k-l}{}_{(m)}a_{ij}^{k'-l'}u^{l+l'} (-{}_{(m-1)}a_{jj})^l {}_{(m)}a_{ii}^{l'}F_{k,k'}. \\ 
\end{aligned}
\]  
Since $\{ {}_{(m-1)}a_{ij}^{k-l} {}_{(m)}a_{ij}^{k'-l'}u^{l+l'}:1\leq l+l'\leq k+k', l\leq k, l'\leq k'\}$ is linearly independent over 
$R_1$,  
we have $F_{k,k'}=0$ for all $k+k'\geq 1$, which contradicts the choices of $(i,j)$ and $m=1$. 
Thus $f\in\mathbb{C}[\mathfrak{t}^{\oplus r+1}]$ as claimed.   
Moreover, we have similar arguments when $\epsilon(a_m)=0$ or $\epsilon(a_{m-1})=0$.

Finally, suppose $f$ is a $\mathbb{U}_{\beta}$-invariant polynomial in  $x_{st}$ (and possibly of ${}_{(\alpha)}a_{st}$).  
Without loss of generality, if 
$f(x_{st},{}_{(\alpha)}a_{st})=g(x_{st},{}_{(\alpha)}a_{st}) +h({}_{(\alpha)}a_{ii})$, 
where all the monomials of $g$ are divisible by some $x_{st}$ for some $s$ and $t$ and $h\in \mathbb{C}[\mathfrak{t}^{\oplus r+1}]$, 
then we only consider $g$ by subtracting off $h$ since we have already proved that $h=h({}_{(\alpha)}a_{ii})$ is an invariant polynomial. 
One may check directly that the polynomial in \eqref{eq:affine-dk-quiver-general-n-inv-poly} 
is a $\mathbb{U}_{\beta}$-invariant polynomial.  
Now suppose there exists a polynomial not in $\mathbb{C}[\mathfrak{t}^{\oplus r+1}]$ or not of the form \eqref{eq:affine-dk-quiver-general-n-inv-poly}  
 which is in $\mathbb{C}[F^{\bullet}Rep(Q,\beta)]^{\mathbb{U}_{\beta}}$.
 That is, 
 suppose there exists $f\in \mathbb{C}[F^{\bullet}Rep(Q,\beta)]$ fixed by $\mathbb{U}_{\beta}$, 
 with at least one of its monomials divisible by $x_{st}$ for some $s$ and $t$,  
 which cannot be written as 
 \eqref{eq:affine-dk-quiver-general-n-inv-poly}. 
Let $j\leq n-1$ be the biggest integer which satisfies the following: 
\begin{center}\label{center:unique-choice-of-j-affine-Ar-dynkin}
there exists a $\mathbb{U}_{\beta}$-invariant 
$F\in \mathbb{C}[F^{\bullet}Rep(Q,\beta)]$ such that when $F$ is written in terms of the block standard basis, i.e., 
$F=\sum_{\nu}g_{\nu}({}_{(\alpha)}a_{st})(J_{\nu}|I_{\nu})_{\mathbf{\Phi_{\phi_1}}\mathbf{\Phi_{\phi_2}} \cdots \mathbf{\Phi_{\phi_l}}}$
with each $(J_{\nu}|I_{\nu})_{\mathbf{\Phi_{\phi_1}}\mathbf{\Phi_{\phi_2}} \cdots \mathbf{\Phi_{\phi_l}}}$ 
a standard Young bideterminant in block standard form and each $g_{\nu}\not=0$, 
then there exists a $v$ and a row in $J_v$ where $j$ is not followed by $j+1$. 
Let us label this choice of $j$ as 
$(\dagger)$. 
\end{center}

Let $a_{\phi}$ be the arrow associated to a general representation $A_{\phi}$. 
Writing $ha_{\phi}$ to be the head of the arrow $a_{\phi}$, let 
$U_{j,j+1}$ be the subgroup consisting of matrices of the form 
$u_{j,j+1} = (\I,\ldots, u_{ha_{\phi}},\ldots, \I)$ where $ha_{\phi}$ has diagonal entries $1$, the variable $-u$ in $(j,j+1)$-entry, and $0$ elsewhere.  
Let's write the entries of the product $A_{\alpha}\cdots A_0$ of matrices as $y_{st}$. 
Then for $u_{j,j+1}\in U_{j,j+1}$, 
\[ 
u_{j,j+1}.y_{st} = 
\begin{cases} 
y_{jt}+u y_{j+1,t} &\mbox{ if }\alpha=\phi \mbox{ and } s=j, \\ 
y_{st} 						&\mbox{ otherwise} \\ 
\end{cases} 
\] 
since $A_{\alpha}\cdots A_0$ is a general representation of the quiver path $a_{\alpha}\cdots a_0$. 
To explain further, if the path $a_{\alpha}\cdots a_0$ includes $a_{\phi}$ 
somewhere strictly in the middle of the path, i.e., 
$a_{\alpha}\cdots a_0$ $=$ $a_{\alpha}\cdots a_{\phi}\cdots a_0$, then although $u_{ha_{\phi}}$ 
acts by left multiplication on $A_{\phi}$, 
$u_{ha_{\phi}}$ acts by right (inverse) multiplication on the general representation of the arrow in the path immediately following $a_{\phi}$ (this is the arrow which is immediately to the left of $a_{\phi}$ 
in the concatenation of the arrows $a_{\alpha}\cdots a_0$). 
Thus, the action by $u_{ha_{\phi}}$ is canceled.  
So for any $\alpha$,  
$u$ fixes every minor of the form   
\[ 
(\cdots j\:\: j+1 \cdots | \cdots )_{\mathbf{\Phi_{\phi_1}}\mathbf{\Phi_{\phi_2}} \cdots \mathbf{\Phi_{\phi_l}}}. 
\]

Now let us write 
\[ 
F = \sum_{\nu} g_{\nu}({}_{(\alpha)} a_{st})(J_{\nu}|I_{\nu})_{\mathbf{\Phi_{\phi_1}}\mathbf{\Phi_{\phi_2}} \cdots \mathbf{\Phi_{\phi_l}}}
+ \sum_{\gamma} g_{\gamma}({}_{(\alpha)} a_{st})(J_{\gamma}|I_{\gamma})_{\mathbf{\Phi_{\phi_1}}\mathbf{\Phi_{\phi_2}} \cdots \mathbf{\Phi_{\phi_l}}}, 
\] 
with the following properties: 
\begin{itemize} 
\item the $g_{\nu}$ are nonzero, 
\item there exists at least one row in each $J_{\nu}$ which contains $j$ but not $j+1$,
\item if $j$ appears in any row of $J_{\gamma}$, then so does $j+1$, 
\item the $(J_{\nu}|I_{\nu})_{\mathbf{\Phi_{\phi_1}}\mathbf{\Phi_{\phi_2}} \cdots \mathbf{\Phi_{\phi_l}}}$ and $(J_{\gamma}|I_{\gamma})_{\mathbf{\Phi_{\phi_1}}\mathbf{\Phi_{\phi_2}} \cdots \mathbf{\Phi_{\phi_l}}}$ are unique. 
\end{itemize} 
By Lemma~\ref{lemma:borel-invariants-on-equioriented-acylic-quiver}, $\mathbb{U}_{\beta}$ fixes each row of $(J_{\gamma}|I_{\gamma})_{\mathbf{\Phi_{\phi_1}}\mathbf{\Phi_{\phi_2}} \cdots \mathbf{\Phi_{\phi_l}}}$. Since 
$(J_{\gamma}|I_{\gamma})_{\mathbf{\Phi_{\phi_1}}\mathbf{\Phi_{\phi_2}} \cdots \mathbf{\Phi_{\phi_l}}}$  
is the product of the rows in the bideterminant, 
$\mathbb{U}_{\beta}$ 
fixes 
$(J_{\gamma}|I_{\gamma})_{\mathbf{\Phi_{\phi_1}}\mathbf{\Phi_{\phi_2}} \cdots \mathbf{\Phi_{\phi_l}}}$,   
which in turn fixes   
$\displaystyle{\sum_{\gamma} g_{\gamma}({}_{(\alpha)} a_{st})}  (J_{\gamma}|I_{\gamma})_{\mathbf{\Phi_{\phi_1}}\mathbf{\Phi_{\phi_2}} \cdots \mathbf{\Phi_{\phi_l}}}$.  
Among those rows with identical sequence $\mathbf{\Phi_u}$
in each $J_{\nu}$ in block standard form,   
the only possible occurrences of $j$ and $j+1$ in its rows are as follows:  
\begin{enumerate}[ {[[}1{]]} ]  
\item $j$ is followed by $j+1$,  
\item $j$ is followed by an integer larger than $j+1$,  
\item $j$ ends in a row,   
\item $j+1$ is preceded by an integer smaller than $j$, 
\item $j+1$ starts a row. 
\end{enumerate}
Since $J_{\nu}$ is in block standard form, all rows of type $[[i]]$ 
must occur above all rows of type $[[i+1]]$ within each block.

Since each $(J_{\nu}|I_{\nu})_{\mathbf{\Phi_{\phi_1}}\mathbf{\Phi_{\phi_2}} \cdots \mathbf{\Phi_{\phi_l}}}$ is unique, 
after re-numbering the indices $\nu$, 
let $J_1$ have the greatest number of rows, say $M$, of types $[[2]]$ and $[[3]]$. 
There may be other Young tableaux, say $J_2,\ldots, J_W$ in 
$(J_{\nu}|I_{\nu})_{\mathbf{\Phi_{\phi_1}}\mathbf{\Phi_{\phi_2}} \cdots \mathbf{\Phi_{\phi_l}}}$, 
with $M$ rows of types $[[2]]$ and $[[3]]$, but we ignore them for now. 
We label the sequence 
$\mathbf{\Phi_{\alpha}}$ of integers associated to the rows of 
$(J_{\nu}|I_{\nu})_{\mathbf{\Phi_{\phi_1}}\mathbf{\Phi_{\phi_2}} \cdots \mathbf{\Phi_{\phi_l}}}$  
as 
$\mathbf{\Phi_{\phi_1(\nu)}}$ $\leq$ $\mathbf{\Phi_{\phi_2(\nu)}}$ $\leq$ $\ldots$ $\leq$ $\mathbf{\Phi_{\phi_l(\nu)}}$. 
Let 
\[ 
\begin{aligned} 
U_{j,j+1} := \{ u_{j,j+1} &= (\I,\ldots, u_{ha_{\phi_1(1)}},\ldots, u_{ha_{\phi_l(1)}},\ldots, \I ): 
u_{ha_{\phi_1(1)}} = \ldots= u_{ha_{\phi_l(1)}} \mbox{ is the matrix with } \\ 
&\mbox{ 1 along the diagonal, the same variable $u$ in $(j,j+1)$-entry, and 0 elsewhere}  \}. \\ 
\end{aligned}
\] 
Applying $u_{j,j+1}\in U_{j,j+1}$ to $F$, we see that 
$g_1({}_{(\alpha)}a_{st})(J_1|I_1)_{\mathbf{\Phi_{\phi_1}}\mathbf{\Phi_{\phi_2}} \cdots \mathbf{\Phi_{\phi_l}}}$ gives a term 
\[ 
u^M g_1({}_{(\alpha)}a_{st}) (J_1'|I_1)_{\mathbf{\Phi_{\phi_1}}\mathbf{\Phi_{\phi_2}} \cdots \mathbf{\Phi_{\phi_l}}}, 
\]  
where $J_1'$ is obtained from $J_1$ by replacing each $j$ in rows of type 
$[[2]]$ and $[[3]]$ by $j+1$; 
furthermore, $(J_1'|I_1)_{\mathbf{\Phi_{\phi_1}}\mathbf{\Phi_{\phi_2}} \cdots \mathbf{\Phi_{\phi_l}}}$ is block standard. 
Thus, the tableau $J_1'$ uniquely determines $J_1$ for the following reasons: 
first, all rows of type 
$[[3]]$ have been changed to rows ending with $j+1$; 
such rows must end with $j+1$ in $J_1$ by our choice of $j$. 
Otherwise, to obtain $J_1$, 
we change $j+1$ to $j$ in $M$ rows of $J_1'$ reading from top to bottom 
(while ignoring those rows which contain both $j$ and $j+1$).

Now, in $u_{j,j+1}.F$, any other occurrence of $(J_1'|I_1)_{\mathbf{\Phi_{\phi_1}}\mathbf{\Phi_{\phi_2}} \cdots \mathbf{\Phi_{\phi_l}}}$ is with a coefficient 
$u^k g'({}_{(\alpha)}a_{st})$ 
where $k<M$ since $j$ was carefully chosen such that $j$ is the biggest integer satisfying $(\dagger)$. 
Since $F$ is a polynomial over a field of characteristic $0$, the coefficient of 
$(J_1'|I_1)_{\mathbf{\Phi_{\phi_1}}\mathbf{\Phi_{\phi_2}} \cdots \mathbf{\Phi_{\phi_l}}}$ depends on $u$. 
Thus, $F$ is not fixed by $\mathbb{U}_{\beta}$. 
This shows that $j$ cannot appear in a row of $J_{\nu}$ without $j+1$, which shows that $\mathbb{U}_{\beta}$-invariant polynomials must be of the form \eqref{eq:affine-dk-quiver-general-n-inv-poly}.  
Thus, if some of the terms of a 
$\mathbb{U}_{\beta}$-invariant polynomial $f$ are divisible by $x_{st}$ for some $s$ and $t$, then we write $f$ as 
\[ 
f(x_{st}, {}_{(\alpha)}a_{st}) = g(x_{st}, {}_{(\alpha)}a_{st}) + h({}_{(\alpha)}a_{st}),  
\] where all the terms in $g$ are divisible by $x_{st}$ for some $s$ and $t$ and $h$ is a polynomial in ${}_{(\alpha)}a_{st}$. 
By the first part of this proof, $h\in \mathbb{C}[\mathfrak{t}^{\oplus r+1}]$ while $g$ must be of the form \eqref{eq:affine-dk-quiver-general-n-inv-poly}. 
It is immediate by the first part of the proof that if none of the terms in a 
$\mathbb{U}_{\beta}$-invariant polynomial are divisible by $x_{st}$ for all $1\leq s\leq n$ and $1\leq t\leq m$, 
then the polynomial is in $\mathbb{C}[\mathfrak{t}^{\oplus r+1}]$. 
This concludes our proof. 
\end{proof}

\begin{corollary}\label{corollary:affine-quiver-no-framing} 
For an affine $\widetilde{A}_r$-quiver with no framing and 
$\beta=(n,\ldots, n)$ with the complete standard filtration of vector spaces at each vertex of the quiver, we have 
$\mathbb{C}[\mathfrak{b}^{\oplus r+1}]^{\mathbb{U}_{\beta}}\cong \mathbb{C}[\mathfrak{t}^{\oplus r+1}]$. 
\end{corollary}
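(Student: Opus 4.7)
The plan is to adapt, essentially verbatim, the first part of the proof of Theorem~\ref{theorem:invariants-of-framed-affine-Ar-quiver} — namely the subargument that shows invariants independent of the framing coordinates $x_{st}$ already lie in $\mathbb{C}[\mathfrak{t}^{\oplus r+1}]$. In the unframed setting there simply are no $x_{st}$ variables, so that subargument delivers the corollary outright. First I would identify $F^{\bullet}Rep(Q,\beta) \cong \mathfrak{b}^{\oplus r+1}$ and describe the $\mathbb{U}_{\beta} = U^{r+1}$-action using the orientation function $\epsilon(a_\alpha)$ of Theorem~\ref{theorem:invariants-of-framed-affine-Ar-quiver}, with indices on vertices read cyclically modulo $r+1$. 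The inclusion $\mathbb{C}[\mathfrak{t}^{\oplus r+1}] \subseteq \mathbb{C}[\mathfrak{b}^{\oplus r+1}]^{\mathbb{U}_{\beta}}$ is immediate, since unipotent conjugation fixes diagonal entries of upper-triangular matrices.

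For the reverse containment, let $f \in \mathbb{C}[\mathfrak{b}^{\oplus r+1}]^{\mathbb{U}_{\beta}}$. Order pairs $(i,j)$ with $1 \le i \le j \le n$ by $(i,j) \le (i',j')$ iff $i < i'$, or $i = i'$ and $j > j'$, and choose the least pair $(i,j)$ with $i < j$ such that some off-diagonal indeterminate ${}_{(\alpha)}a_{ij}$ appears in $f$ with nonzero coefficient; if no such pair exists, then $f \in \mathbb{C}[\{ {}_{(\alpha)}a_{ii}\}] = \mathbb{C}[\mathfrak{t}^{\oplus r+1}]$ and we are finished. Let $m$ be the least index in $\{1, \ldots, r+1\}$ such that ${}_{(m)}a_{ij}$ occurs. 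I would then act by the one-parameter subgroup $U_{ij} = \{(\I, \ldots, \widehat{u}_m, \ldots, \I)\}$, where $\widehat{u}_m$ has a variable $u$ in the $(i,j)$-entry, $1$'s on the diagonal, and zeros elsewhere, so that $u_{ij}$ transforms the coordinates precisely by formula \eqref{eq:ar-dynkin-Uij-action-affine}.

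Writing $f = \sum_{k \ge 0} {}_{(m)}a_{ij}^k F_k$ with $F_k$ in the polynomial ring over the remaining relevant variables, the identity $u_{ij}.f - f = 0$ expands to a polynomial in $u$ whose monomials $\{{}_{(m)}a_{ij}^{k-l} u^l : 1 \le l \le k\}$ are linearly independent over that ring; this forces every $F_k = 0$ for $k \ge 1$, contradicting the minimality of $(i,j)$ and $m$. The one subtle case — the only place where the cyclic structure genuinely intervenes — is $m = 1$ with the least pair for $A_{r+1}$ coinciding with the least pair for $A_1$; here, exactly as in Theorem~\ref{theorem:invariants-of-framed-affine-Ar-quiver}, I would expand $f = \sum_{k,k'} {}_{(r+1)}a_{ij}^{k} {}_{(1)}a_{ij}^{k'} F_{k,k'}$ and derive the contradiction from the linear independence of the resulting bivariate monomials in $u$. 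Since this is the only delicate step, and it is already carried out in detail in the theorem, no new obstacle arises.
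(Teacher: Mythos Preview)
Your proposal is correct and matches the paper's approach exactly: the paper states the corollary without a separate proof because the unframed case is precisely the ``$f$ independent of $x_{st}$'' subargument already carried out in the proof of Theorem~\ref{theorem:invariants-of-framed-affine-Ar-quiver}, which you have faithfully reproduced, including the cyclic $m=1$ subtlety.
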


\begin{example} 
Consider the framed $1$-Jordan quiver:  
$\xymatrix@-1pc{ \stackrel{1'}{\circ} \ar[rr]^{a_0} & & \stackrel{1}{\bullet} \ar@(ru,rd)^{a_1}}$.   
Let $\beta=(2,2)$ and let $F^{\bullet}$ be the complete standard filtration of vector spaces. Then 
$F^{\bullet}Rep(Q,\beta)=\mathfrak{b}_2\oplus M_{2\times 2}$. 
Let $(A_1,A_0)\in \mathfrak{b}_2\oplus M_{2\times 2}$, where 
\[ 
A_1 = \bordermatrix{
& & \cr  
& a_{11}& a_{12}\cr  
& 0     & a_{22}\cr  
}, \hspace{4mm} 
A_0 = \bordermatrix{ 
& & \cr  
&x_{11} &x_{12} \cr  
&x_{21} &x_{22} \cr  
},  
\]    
let $u\in U\subseteq B$ act on $(A_1,A_0)$ via $(u A_1 u^{-1},u A_0)$.
By Theorem~\ref{theorem:invariants-of-framed-affine-Ar-quiver},  
\[ 
\begin{aligned}
\mathbb{C}[\mathfrak{b}_2\oplus M_{2\times 2}]^{U} &= \mathbb{C}[a_{11},a_{22},x_{21},x_{22},\det(A_0), 
(A_1A_0 )_{2,1}, (A_1 A_0)_{2,2}, \det(A_1 A_0)] \\ 
&= \mathbb{C}[a_{11},a_{22},x_{21},x_{22},\det(A_0)].  \\ 
\end{aligned}
\]
Note that the polynomial $f(x_{st}, a_{st}) = a_{22}x_{22}$ can be written as $f=(2|2)_{A_1A_0}$ or as 
\begin{equation}\label{equation:block-standard-form-with-Cartan-coefficients-alternative-bitableau}
f = a_{22}(2|2)_{A_0} = (2|2)_{A_1} (2|2)_{A_0} = 
\begin{matrix} 
(2|2)_{A_0} \\ 
(2|2)_{A_1}
\end{matrix}, 
\end{equation} 
but \eqref{equation:block-standard-form-with-Cartan-coefficients-alternative-bitableau} is not in the form \eqref{eq:bideterminant-block-standard-form-generic-notation} since a general representation of the quiver path in the second row does not begin at the framed vertex. 
\end{example}

\section{Quivers with at most two pathways between any two vertices}\label{section:semi-invariants-quivers-at-most-two-pathways}

%
We refer to Definition~\ref{definition:pathway-of-quiver} for the definition of pathways.   
Recall our basic assumption: given any quiver $Q$, let $F^{\bullet}$ be the complete standard filtration of vector spaces at the nonframed vertices of $Q$ and let $\mathbb{U}_{\beta}$ be a product of the maximal unipotent subgroup of the standard Borel acting on $F^{\bullet}Rep(Q,\beta)$ as a change-of-basis. 
   
\begin{theorem}\label{theorem:two-paths-max-quiver-semi-invariants}   
Let $Q$ be a quiver and let $\beta=(n,\ldots, n)$, a dimension vector.   
Then  
$Q$ is a nonframed quiver with at most two distinct pathways between any two vertices if and only if  $\mathbb{C}[F^{\bullet}Rep(Q,\beta)]^{\mathbb{U}_{\beta}}=\mathbb{C}[\mathfrak{t}^{\oplus Q_1}]$, where $Q_1$ is the set of arrows whose tail and head are nonframed vertices. 
\end{theorem}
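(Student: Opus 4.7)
The plan is to prove each implication separately, using techniques adapted from Theorem~\ref{theorem:filtered-ADE-Dynkin-quiver} and Theorem~\ref{theorem:invariants-of-framed-affine-Ar-quiver}. For the forward direction, I will first note that $\mathbb{C}[\mathfrak{t}^{\oplus Q_1}] \subseteq \mathbb{C}[F^{\bullet}Rep(Q,\beta)]^{\mathbb{U}_{\beta}}$ is immediate, since diagonal entries of each $W(a)\in\mathfrak{b}_n$ are preserved by any unipotent upper-triangular conjugation or one-sided multiplication. For the reverse inclusion, I take a $\mathbb{U}_{\beta}$-invariant $f\notin\mathbb{C}[\mathfrak{t}^{\oplus Q_1}]$ and aim for a contradiction. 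Fix the total order on pairs $(i,j)$ from the earlier proofs and choose the least $(i,j)$ with $i<j$ and the least arrow index $m$ such that ${}_{(m)}a_{ij}$ appears nontrivially in $f$, so that $f$ does not depend on ${}_{(\alpha)}a_{ij}$ for $\alpha<m$. When $a_m$ is the unique arrow producing a pathway contributing a variable ${}_{(\alpha)}a_{ij}$ at the active arrow-position, applying a one-parameter subgroup $U_{ij}$ at a suitable endpoint of $a_m$ reproduces the argument of Theorem~\ref{theorem:filtered-ADE-Dynkin-quiver}: linear independence of $\{{}_{(m)}a_{ij}^{k-l}u^l\}_{1\leq l\leq k}$ forces $F_k=0$ for all $k\geq1$. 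When a second parallel arrow $a_{m'}$ with $m'>m$ yields a second pathway between the same endpoints, the two-parameter expansion from the $m=1$ subcase of Theorem~\ref{theorem:invariants-of-framed-affine-Ar-quiver} produces the contradiction via linear independence of $\{{}_{(m)}a_{ij}^{k-l}{}_{(m')}a_{ij}^{k'-l'}u^{l+l'}\}$. The at-most-two-pathways hypothesis guarantees no third variable ${}_{(\alpha)}a_{ij}$ needs to be tracked simultaneously.

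For the reverse direction, I argue the contrapositive: if $Q$ is framed or has at least three pathways between some ordered pair of vertices, I exhibit an invariant outside $\mathbb{C}[\mathfrak{t}^{\oplus Q_1}]$. In the framed case, by Lemma~\ref{lemma:borel-invariants-on-equioriented-acylic-quiver} the bottom row entries of the representation $W(\iota_i)$ of the framed arrow are $\mathbb{U}_{\beta}$-invariant, and they involve variables $x_{st}$ of a framed arrow which lie outside $\mathbb{C}[\mathfrak{t}^{\oplus Q_1}]$ since $Q_1$ excludes arrows to or from framed vertices. If $P_1,P_2,P_3$ are three distinct pathways from $v_1$ to $v_2$, set $M_k:=W(P_k)\in\mathfrak{b}_n$; since the intermediate unipotents cancel telescopically in the product, $M_k\mapsto u_{v_2}M_k u_{v_1}^{-1}$ under the full $\mathbb{U}_{\beta}$-action. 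When $v_1\neq v_2$, for any $1\leq i<j\leq n$ the $3\times 3$ determinant
\begin{equation*}
D_{ij} := \det \begin{pmatrix} (M_1)_{ij} & (M_2)_{ij} & (M_3)_{ij} \\ (M_1)_{jj} & (M_2)_{jj} & (M_3)_{jj} \\ (M_1)_{ii} & (M_2)_{ii} & (M_3)_{ii} \end{pmatrix}
\end{equation*}
is $\mathbb{U}_{\beta}$-invariant, because each elementary unipotent at $v_1$ or $v_2$ adds a scalar multiple of a lower row to the first row of this matrix while unipotents at other vertices fix each $M_k$. When $v_1=v_2$, the trivial pathway $e_{v_1}$ is one of the three, so the other two pathways are nontrivial and the analogous $2\times 2$ determinant built from their $(i,j)$ and $(j,j)-(i,i)$ entries is invariant under the single relevant parameter. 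In either case $D_{ij}$ involves the off-diagonal entries $(M_k)_{ij}$ and so does not lie in $\mathbb{C}[\mathfrak{t}^{\oplus Q_1}]$.

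The main obstacle in the forward direction will be the two-pathway case: carefully choosing a single unipotent subgroup whose action addresses both ${}_{(m)}a_{ij}$ and ${}_{(m')}a_{ij}$ cleanly, and bookkeeping the orientation signs $\epsilon(a_m),\epsilon(a_{m'})$ consistently across the diverse quiver geometries allowed beyond the special affine setting; a secondary obstacle is verifying that arrows at the chosen vertex other than $a_m,a_{m'}$ produce only changes in variables to which $f$ does not depend (this is guaranteed by the minimality of $(i,j,m)$ combined with the pathway bound). In the reverse direction, the subtle point will be verifying that $D_{ij}$ does not vanish identically for some choice of $(i,j)$, which requires that the three pathway matrices be polynomially independent in the targeted entries; generically this holds, but degenerate configurations (for example when one pathway factors through another, or when all three pathways share long initial or terminal sub-paths) may need separate treatment.
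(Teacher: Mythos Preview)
Your reverse direction is essentially the paper's argument: for the framed case you exhibit the bottom-row invariant of the framed arrow, and for three pathways you build a $2\times 2$ or $3\times 3$ determinant from the entries of the pathway representations $M_k=W(P_k)$. This matches the paper's explicit invariants in its Cases~[1] and~[2]. One correction: your claim that $D_{ij}$ is invariant \emph{for any} $1\le i<j\le n$ is false. Under a unipotent $I+cE_{pq}$ at $v_2$ with $p=i$ and $i<q<j$, the entry $(M_k)_{ij}$ picks up $c(M_k)_{qj}$, which is not a row of your $3\times 3$ matrix. You should take $(i,j)=(1,2)$ specifically, where the only relevant perturbation of $(M_k)_{12}$ is by $(u_{v_2})_{12}(M_k)_{22}-(u_{v_1})_{12}(M_k)_{11}$, exactly a combination of your lower two rows; this is what the paper does.

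The forward direction has a real gap. Your claim that ``the at-most-two-pathways hypothesis guarantees no third variable ${}_{(\alpha)}a_{ij}$ needs to be tracked simultaneously'' conflates the number of pathways between a fixed pair of vertices with the number of arrows incident to a single vertex. These are unrelated: a star-shaped quiver with many arrows meeting at one vertex still has at most one pathway between any two vertices. When you apply $U_{ij}$ at the head $v$ of $a_m$, \emph{every} arrow $\alpha$ with head or tail at $v$ has its $(i,j)$-entry changed, and $f$ may depend on several of these ${}_{(\alpha)}a_{ij}$ with $\alpha>m$. Minimality of $m$ only kills those with $\alpha<m$, and the pathway bound gives no control over how many $\alpha>m$ occur. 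The paper confronts this head-on: it lists the possible local configurations at $v$ (sink, source, loop, and mixtures), relabels \emph{all} incident arrows as $a_1,\dots,a_l$ (and $c_1,\dots,c_k$, $\zeta$), writes
\[
f=\sum_{|K'|\le d}\ \prod_{\alpha=1}^{l}{}_{(\alpha)}a_{ij}^{k_\alpha}\, f_{ij,K'},
\]
and expands $u_{ij}^{-1}.f-f$ as a full multinomial in $u$. Linear independence of the monomials
$\prod_\alpha {}_{(\alpha)}a_{ij}^{k_\alpha-r_\alpha}\, u^{|R|}$
then forces all $f_{ij,K'}=0$ for $|K'|\ge 1$. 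Your two-variable expansion, borrowed from the affine $\widetilde A_r$ case where each vertex has exactly two incident arrows, does not suffice here; you need the full multi-arrow version.
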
  

Theorem~\ref{theorem:two-paths-max-quiver-semi-invariants} has a number of important consequences,  
with one being that if $\mathbb{U}_{\beta}$-invariants for filtered quiver varieties  
only come from diagonal blocks (i.e., the semisimple part), then $Q$   
has at most two pathways between any two vertices.  

\begin{theorem}\label{theorem:two-paths-max-quiver-semi-invariants-framed} 
Let $Q=(Q_0,Q_1)$ be a framed quiver with at most two distinct pathways between any two vertices, where $Q$ has one arrow from the framed vertex to any of the nonframed vertices. 
Let 
$\beta=(n,\ldots,n,m)\in \mathbb{Z}^{Q_0}$ be a dimension vector, where the framed vertex has dimension $m$.   
Then 
$\mathbb{C}[F^{\bullet}Rep(Q,\beta)]^{\mathbb{U}_{\beta}}$ 
$=$ 
$\mathbb{C}[\mathfrak{t}^{\oplus Q_1-1}]\otimes_{\mathbb{C}} \mathbb{C}[\{ f\}]$ 
where $f$ is of the form \eqref{eq:affine-dk-quiver-general-n-inv-poly}. 
\end{theorem}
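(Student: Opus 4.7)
My plan is to adapt the strategy used in the proof of Theorem~\ref{theorem:invariants-of-framed-affine-Ar-quiver}, which handles the framed affine $\widetilde{A}_r$ case, to the more general setting of a framed quiver with at most two pathways between any two vertices. The overall architecture is to split any invariant into a part that depends on the framing coordinates $x_{st}$ (the entries of $A_0$, the general representation of the unique framing arrow) and a part that does not, and then to treat the two parts separately using previously established results.

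First, I would write an arbitrary $f\in\mathbb{C}[F^{\bullet}Rep(Q,\beta)]^{\mathbb{U}_{\beta}}$ as $f(x_{st},{}_{(\alpha)}a_{st})=g(x_{st},{}_{(\alpha)}a_{st})+h({}_{(\alpha)}a_{st})$, where every monomial of $g$ is divisible by some $x_{st}$ and $h$ is independent of $x_{st}$. Since the $\mathbb{U}_{\beta}$-action preserves the subalgebra of polynomials in only ${}_{(\alpha)}a_{st}$ and since forgetting the framed vertex gives precisely the nonframed quiver obtained from $Q$ by deleting the framed vertex together with its unique outgoing arrow, I would invoke Theorem~\ref{theorem:two-paths-max-quiver-semi-invariants} (which applies because removing a leaf vertex and its arrow cannot create new pathways) to conclude $h\in\mathbb{C}[\mathfrak{t}^{\oplus Q_1-1}]$.

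For the part $g$ involving the framed coordinates, I would generalize the block-standard bideterminant machinery from Definition~\ref{definition:bideterminant-product-of-general-matrices-framed-quiver} and Theorem~\ref{theorem:invariants-of-framed-affine-Ar-quiver}. Here the sequences $\mathbf{\Phi_u}=[\phi_u,\phi_u-1,\ldots,0]$ are replaced by the (finite set of) pathways in $Q$ that begin at the framed vertex $1'$; the two-pathway hypothesis ensures this set is finite and, crucially, that for each pair of vertices only finitely many compositions can produce a given quiver path, so the bideterminant notation $(J|I)_{\mathbf{\Phi_{\phi_1}}\cdots\mathbf{\Phi_{\phi_l}}}$ is well-defined and uniquely determines its entries. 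By a straightforward generalization of Lemma~\ref{lemma:borel-invariants-on-equioriented-acylic-quiver}, the block standard bideterminants built from these pathways are $\mathbb{U}_{\beta}$-invariant (the key point being that whenever the unipotent $u_{ha_{\phi}}$ acts nontrivially on $A_{\phi}$ from the left, it is cancelled by the inverse action from the right on the representation of the next arrow in the pathway). Then choosing the biggest integer $j$ satisfying property $(\dagger)$ as in the proof of Theorem~\ref{theorem:invariants-of-framed-affine-Ar-quiver}, applying the unipotent subgroup $U_{j,j+1}$ diagonally at all heads of arrows appearing in the dominant bideterminant, and using the linear independence of standard bideterminants (Theorem~\ref{theorem:standard-basis-theorem-Grosshans}), I would derive a contradiction unless $g$ is already of the form \eqref{eq:affine-dk-quiver-general-n-inv-poly}.

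The main obstacle I anticipate is making rigorous the uniqueness and block standard form of the bideterminants $(J|I)_{\mathbf{\Phi_{\phi_1}}\cdots\mathbf{\Phi_{\phi_l}}}$ in this more general setting, since unlike the $\widetilde{A}_r$ case the pathways from the framed vertex no longer form a single totally ordered chain. The two-pathway hypothesis is precisely what is needed here: it ensures that for any two vertices $i,j\in Q_0$ there are at most two distinct monomials in the entries of products of general matrices that represent paths from $i$ to $j$, so the linear independence argument over the subring $R_0$ (as in \eqref{eq:aij-invariant-function-borel-affine}) carries through. Once this combinatorial/linear-algebraic bookkeeping is in place, the cancellation argument with $u_{j,j+1}\in U_{j,j+1}$ runs almost verbatim as in the proof of Theorem~\ref{theorem:invariants-of-framed-affine-Ar-quiver}, and the two halves $g$ and $h$ together give the claimed tensor product decomposition $\mathbb{C}[\mathfrak{t}^{\oplus Q_1-1}]\otimes_{\mathbb{C}}\mathbb{C}[\{f\}]$.
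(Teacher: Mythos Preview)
Your proposal is correct and follows essentially the same approach as the paper: the paper's own proof consists of a single sentence stating that by Theorem~\ref{theorem:two-paths-max-quiver-semi-invariants} it suffices to handle invariants for paths starting at the framed vertex, and that this part is essentially the same as the proof of Theorem~\ref{theorem:invariants-of-framed-affine-Ar-quiver}, with details omitted. You have in fact supplied more detail than the paper does, including the anticipated bookkeeping issue with block standard form when the pathways from the framed vertex do not form a single chain; this is the right point to flag, and your resolution via the two-pathway hypothesis is in the spirit of what the paper relies on implicitly.
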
 

In Section~\ref{section:comment-regarding-at-most-two-paths}, 
we will give a reason for the condition for the quiver to have at most two distinct pathways between any two vertices.  
Furthermore, 
this implies that Domokos-Zubkov's technique in Section~\ref{subsection:Domokos-Zubkov-technique} is applicable in the (nontrivially) filtered setting only when a quiver has at most two pathways between any two vertices.

We will first prove Theorem~\ref{theorem:two-paths-max-quiver-semi-invariants}.  
 
\begin{proof} 
First, we will prove from right to left. Assume $\mathbb{C}[F^{\bullet}Rep(Q,\beta)]^{\mathbb{U}_{\beta}}=\mathbb{C}[\mathfrak{t}^{\oplus Q_1}]$, where $Q_1$ is the set of arrows whose tail and head are nonframed vertices. 
If $Q$ has a framing at some vertex, say at $1$: $\xymatrix@-1pc{ \stackrel{1'}{\circ}\ar[rr]^{a_0} & & \stackrel{1}{\bullet}}$, 
then consider a representation of vertices $1'$ and $1$ and the arrow $a_0$:   
$\xymatrix@-1pc{\stackrel{\mathbb{C}^m}{\circ}\ar[rr]^{A_0} & & \stackrel{\mathbb{C}^n}{\bullet} }$   
where $A_0\in M_{n\times m}$ is a general matrix. 
Consider the maximal unipotent subgroup $U\subseteq GL_n(\mathbb{C})$ acting on $A_0$ 
via left multiplication: 
$u.A_0 =uA_0$. 
By direct calculation, it is clear that for each $1\leq k\leq m$,  
$(n,k)$-entry of $A_0$ is an invariant polynomial. 
By assumption, invariant polynomials only come from general representation of arrows whose tail and head are nonframed vertices. This implies that $Q$ cannot be a framed quiver. 

Next, we will prove that $Q$ has at most two distinct pathways at each vertex. 
For a contradiction, suppose $Q$ has $3$ or more pathways at some vertices. 
There are two cases to consider: 
\begin{enumerate}[ {[}1{]} ]  
\item\label{item:3pathways-in-one-vertex} $Q$ has $3$ or more pathways from a vertex to itself, e.g., $\hspace{6mm}\xymatrix@-1pc{ \ar@(lu,ld) \bullet \ar@(ru,rd) }$, 
\item\label{item:3pathways-from-one-vertex-to-another} $Q$ has $3$ or more pathways from vertex $i$ to vertex $j$, where $i\not=j$, e.g., $\xymatrix@-1pc{ \stackrel{i}{\bullet} \ar@/^/[rr] \ar[rr] \ar@/_/[rr] & & \stackrel{j}{\bullet} }$
or $\xymatrix@-1pc{ \stackrel{i}{\bullet} \ar@/^/[rr] \ar@/_/[rr] & & \stackrel{j}{\bullet} \ar@(ru,rd)}\hspace{6mm}$
or $\xymatrix@-1pc{ \stackrel{i}{\bullet} \ar@/^/[rr] \ar@/_/[rr] & & \bullet \ar@/^/[rr] \ar@/_/[rr] & & \stackrel{j}{\bullet}  }$ 
or $\xymatrix@-1pc{ \stackrel{i}{\bullet} \ar[rr] & & \bullet \ar@(rd,ld) \ar@/^/[rr] \ar@/_/[rr] & & \stackrel{j}{\bullet}  }$.  
\end{enumerate}    
Consider Case~[\ref{item:3pathways-in-one-vertex}].  
Label the vertex as $1$ and label the pathways from vertex $1$ to itself as   
\[  
a_1 = p_{i_1} \cdots p_{i_{a_1}}, \hspace{4mm}
a_2 = p_{j_1} \cdots p_{j_{a_2}}, \hspace{2mm} \ldots,   
a_m = p_{k_1} \cdots p_{k_{a_m}},  
\]   
where $m\geq 2$. 
 Write a general representation of $a_1,\ldots, a_m$ as 
 $A_1,\ldots, A_m$, each of which is in $\mathfrak{b}$ with polynomial entries (as long as $a_l$ is not the trivial path).   
 Without loss of generality,   
 suppose $A_1=({}_{(1)}a_{ij})$ and $A_2 = ({}_{(2)}a_{ij})$   
 are general representations corresponding to a nontrivial pathway and consider the polynomial   
 \[ f(A_1,\ldots, A_m)=({}_{(1)}a_{11} - {}_{(1)}a_{22}){}_{(2)}a_{12}  
                    -({}_{(2)}a_{11} - {}_{(2)}a_{22}){}_{(1)}a_{12}. 
                    \]     
  For $u\in U\cong U\times \I^{Q_0-1}\subseteq \mathbb{U}_{\beta}$ where $u.(A_1,\ldots, A_m)=(u A_1 u^{-1},\ldots, u A_m u^{-1} )$,  
  the coordinate variable 
  ${}_{(l)}a_{12}\mapsto {}_{(l)}a_{12}+u_{12}( {}_{(l)}a_{22}-{}_{(l)}a_{11}  )$  
  under the group action.   
  So   
  \[   
  \begin{aligned}  
  u.&f(A_1,\ldots, A_m)   \\  
   &= ({}_{(1)}a_{11} - {}_{(1)}a_{22})({}_{(2)}a_{12}+u_{12}({}_{(2)} a_{22}-{}_{(2)}a_{11} ))
                    -({}_{(2)}a_{11} - {}_{(2)}a_{22})({}_{(1)}a_{12}+u_{12}({}_{(1)} a_{22}-{}_{(1)}a_{11} )) 
                    																										\\ 
 &= ({}_{(1)}a_{11} - {}_{(1)}a_{22}){}_{(2)}a_{12}   -({}_{(2)}a_{11} - {}_{(2)}a_{22}){}_{(1)}a_{12}  \\ 
                    &\hspace{4mm}+ u_{12}({}_{(1)}a_{11} - {}_{(1)}a_{22})({}_{(2)} a_{22}-{}_{(2)}a_{11})
                    -u_{12}({}_{(2)}a_{11} - {}_{(2)}a_{22})({}_{(1)} a_{22}-{}_{(1)}a_{11})  \\
                    &= f(A_1,\ldots, A_m). \\ 
  \end{aligned} 
  \] 
This implies $\mathbb{C}[F^{\bullet}Rep(Q,\beta)]^{\mathbb{U}_{\beta}} \not= \mathbb{C}[\mathfrak{t}^{\oplus Q_1}]$, which is a contradiction.

Now consider Case~[\ref{item:3pathways-from-one-vertex-to-another}]. 
Suppose 
\[ 
a_1 = p_{i_1} \cdots p_{i_{a_1}}, \hspace{4mm}
a_2 = p_{j_1} \cdots p_{j_{a_2}}, \hspace{2mm} \ldots,  
a_k = p_{l_1} \cdots p_{l_{a_k}}  
\] 
are pathways from vertex $i$ to vertex $j$, where $i\not=j$ and $k\geq 3$. 
Write a general representation of the pathways $a_1,\ldots, a_k$ as 
$A_1=({}_{(1)}a_{ij}),\ldots, A_k=({}_{(k)}a_{ij})\in \mathfrak{b}$. 
Consider the $\mathbb{U}_{\beta}$-action on $F^{\bullet}Rep(Q,\beta)$. 
In particular, consider $\I^{Q_0-2}\times U^2$ acting locally on 
$\mathfrak{b}^{\oplus k}$ via 
\[ 
(\I,\ldots, \I, u,v).(A_1,\ldots, A_k) = (u A_1v^{-1},\ldots, u A_k v^{-1} ). 
\] 
Consider the polynomial 
\[ 
\begin{aligned} 
f(A_1,\ldots, A_k) = ({}_{(1)} a_{11} {}_{(2)} a_{22} &- {}_{(1)} a_{22} {}_{(2)}a_{11} ) {}_{(3)} a_{12}   
+ ({}_{(3)} a_{11} {}_{(1)} a_{22} - {}_{(3)} a_{22} {}_{(1)}a_{11} ) {}_{(2)} a_{12}    \\  
&+ ({}_{(2)} a_{11} {}_{(3)} a_{22} - {}_{(2)} a_{22} {}_{(3)}a_{11} ) {}_{(1)} a_{12}.   \\ 
\end{aligned} 
\] 
Then 
\[ 
\begin{aligned} 
(\I,\ldots, \I,u,v).f(A_1,\ldots, A_k) 
&= ({}_{(1)} a_{11} {}_{(2)} a_{22} - {}_{(1)} a_{22} {}_{(2)}a_{11} ) ({}_{(3)} a_{12} -{}_{(3)}a_{22}u_{12}+{}_{(3)}a_{11}v_{12} )  \\ 
&\:\:+ ({}_{(3)} a_{11} {}_{(1)} a_{22} - {}_{(3)} a_{22} {}_{(1)}a_{11} ) ({}_{(2)} a_{12} -{}_{(2)}a_{22}u_{12}+{}_{(2)}a_{11}v_{12} )   \\  
&\:\:+ ({}_{(2)} a_{11} {}_{(3)} a_{22} - {}_{(2)} a_{22} {}_{(3)}a_{11} ) ({}_{(1)} a_{12} -{}_{(1)}a_{22}u_{12}+{}_{(1)}a_{11}v_{12} )   \\ 
&= f(A_1,\ldots, A_k) 
+  ({}_{(1)} a_{11} {}_{(2)} a_{22} - {}_{(1)} a_{22} {}_{(2)}a_{11} ) ( 
 -{}_{(3)}a_{22}u_{12}+{}_{(3)}a_{11}v_{12} )  \\ 
&\:\:+ ({}_{(3)} a_{11} {}_{(1)} a_{22} - {}_{(3)} a_{22} {}_{(1)}a_{11} ) (
 -{}_{(2)}a_{22}u_{12}+{}_{(2)}a_{11}v_{12} )   \\  
&\:\:+ ({}_{(2)} a_{11} {}_{(3)} a_{22} - {}_{(2)} a_{22} {}_{(3)}a_{11} ) ( 
 -{}_{(1)}a_{22}u_{12}+{}_{(1)}a_{11}v_{12} )  
 = f(A_1,\ldots, A_k). \\  
\end{aligned}
\] 
This also contradicts that $\mathbb{C}[F^{\bullet}Rep(Q,\beta)]^{\mathbb{U}_{\beta}}=\mathbb{C}[\mathfrak{t}^{\oplus Q_1}]$. 
Thus, $Q$ is a quiver with at most two pathways between any two vertices.

Now suppose $Q$ is a nonframed quiver with at most two distinct pathways between any two vertices.   
The heart of this proof is analogous to the proof of Theorem~\ref{theorem:filtered-ADE-Dynkin-quiver}:  
we first define a notion of total ordering on pairs and 
then choose the least pair.    
Since $Q$ is nonframed with at most two pathways between any two vertices, we list all possible local models of arrows of $Q$ at a vertex.   
Writing a polynomial similar as in the proof of Theorem~\ref{theorem:filtered-ADE-Dynkin-quiver},  
we carefully choose a subgroup of $\mathbb{U}_{\beta}$ and show that the invariant polynomial must only come from diagonal coordinates of each general matrix in the filtered representation space. 
We now prove the statement.

We label the arrows of $Q$ as $a_1$, $a_2$, $\ldots$, $a_{Q_1}$. 
It is clear that $\mathbb{C}[\mathfrak{t}^{\oplus Q_1}]\subseteq   \mathbb{C}[F^{\bullet}Rep(Q,\beta)]^{\mathbb{U}_{\beta}}$ so we will prove the other inclusion.  
Consider a general representation of $F^{\bullet}Rep(Q,\beta)$, which consists of a product of matrices. We define total ordering $\leq$ on pairs $(i,j)$, where $1\leq i\leq j\leq n$, by defining 
 $(i,j)\leq (i',j')$ if either  
 \begin{itemize}  
 \item $i < i'$ or  
 \item $i=i'$ and $j> j'$. 
 \end{itemize} 
 Consider $f\in \mathbb{C}[F^{\bullet}Rep(Q,\beta)]^{\mathbb{U}_{\beta}}$.   
  For each $(i,j)$, we can write  
 \begin{equation}\label{eq:aij-invariant-function-borel-at-most-two-pathways}
 f = \sum_{|K|\leq d} a_{ij}^K f_{ij,K}, \mbox{ where } f_{ij,K}\in \mathbb{C}[\{ {}_{(\alpha)}a_{st}: (s,t)\not=(i,j)\} ], 
 a_{ij}^K := \prod_{\alpha\in Q_1} {}_{(\alpha)}a_{ij}^{k_{\alpha}}, 
 \mbox{ and } 
 |K|=\sum_{\alpha=1}^{Q_1} k_{\alpha}.
 \end{equation}  
 Fix the least pair (under $\leq$) $(i,j)$ with $i< j$ for which there exists $K\not=(0,\ldots, 0)$ with $f_{ij,K}\not=0$; 
 we will continue to denote it by $(i,j)$. 
 If no such $(i,j)$ exists, then $f\in \mathbb{C}[{}_{(\alpha)}a_{ii}]$ and we are done. 
 Let $K=(k_1,\ldots, k_{Q_1})$. 
 Let $m\geq 1$ be the least integer satisfying the following: 
for all $p<m$, if some component $k_p$ in $K$ is strictly greater than $0$, then $f_{ij,K}=0$. 
%
%
 Relabel the head of the arrow corresponding to a general representation $A_m$ of arrow $a_m$
 as vertex $m$.  

 Let $U_{ij}$ be the subgroup of matrices of the form 
 $u_{ij} := (\I,\ldots, \I, \widehat{u}_m,\I, \ldots, \I)$, where $\I$ is the $n\times n$  
 identity matrix and  
 $\widehat{u}_m$ is the matrix with $1$ along the diagonal, the variable $u$ in the $(i,j)$-entry, and $0$ elsewhere. 
 Let $u_{ij}^{-1}:=(\I,\ldots, \I, \widehat{u}_m^{-1},\I, \ldots, \I)$. 
 Then since $u_{ij}^{-1}$ acts on $f$ via 
 \[ 
 \begin{aligned}
 u_{ij}^{-1}.f(A_1,\ldots, A_{Q_1}) &= 
 f(u_{ij}.(A_1,\ldots, A_{Q_1}))  \\ 
 &= 
 \begin{cases}  
 f(A_1, 
  \ldots,  
  \widehat{u}_{m}  A_{m'}     , 
  \ldots,  
  A_{Q_1}) &\mbox{ whenever vertex $m$ is a sink of arrow $a_{m'}$}, \\ 
   f(A_1, 
  \ldots,  
    A_{m'} \widehat{u}_{m}^{-1}    , 
  \ldots,  
  A_{Q_1}) &\mbox{ whenever vertex $m$ is a source of arrow $a_{m'}$}, \\ 
  f(A_1,  
  \ldots,  
  \widehat{u}_{m}  A_{m'}  \widehat{u}_{m}^{-1}, 
  \ldots,   
  A_{Q_1}) &\mbox{ whenever arrow $a_{m'}$ is a loop at vertex $m$}, \\ 
 \end{cases}  \\ 
 \end{aligned}
 \] 
 \begin{equation}\label{eq:ar-dynkin-Uij-action} 
  u_{ij}^{-1}.{}_{(\alpha)}a_{st}= 
 \begin{cases} 
 {}_{(m')}a_{ij} +{}_{(m')}a_{jj}u &\mbox{ if } \alpha=m', (s,t)=(i,j), \mbox{ and } m \mbox{ is a sink to } a_{m'}, \\ 
 {}_{(m')}a_{ij} -{}_{(m')}a_{ii}u &\mbox{ if } \alpha=m', (s,t)=(i,j), \mbox{ and } m \mbox{ is a source to } a_{m'}, \\
 {}_{(m')}a_{ij} +({}_{(m')}a_{jj}- {}_{(m')}a_{ii})u &\mbox{ if } \alpha=m', (s,t)=(i,j), \mbox{ and } a_{m'} \mbox{ is a loop at } m, \\ 
 {}_{(\alpha)}a_{st} &\mbox{ if }s>i \mbox{ or }s=i \mbox{ and }t<j.  \\ 
 \end{cases}  
 \end{equation}

Now, locally at vertex $m$, $Q$ looks like one of the following local models:     
\begin{multicols}{2}
 \begin{enumerate} 
 \item\label{item:twopathways-case1} $\xymatrix@-1pc{ \stackrel{m}{\bullet} \ar@(ru,rd)^{a_m}}$
 \item\label{item:twopathways-case2} $\xymatrix@-1pc{ \stackrel{m}{\bullet}\ar@/^/[rr]^{a_{m'}} & & \ar@/^/[ll]^{a_m} \stackrel{v_1}{\bullet} }$
 \item\label{item:twopathways-case3} $\xymatrix@-1pc{ 
 \stackrel{v_1}{\bullet} \ar[rr]^{a_m} & & \stackrel{m}{\bullet} & & \stackrel{v_l}{\bullet}\ar[ll]_{a_l} \\ 
 & & & & \\
 & \stackrel{v_2}{\bullet} \ar[uur]^{a_2} & \cdots & \stackrel{v_{l-1}}{\bullet} \ar[luu]_{a_{l-1}} & \\  
 }$
%
 \item\label{item:twopathways-case4}   $\xymatrix@-1pc{ 
 \stackrel{v_1}{\bullet} \ar@/^/[rr]^{a_m} \ar@/_/[rr]_{c_1}& & \stackrel{m}{\bullet} & & \stackrel{v_l}{\bullet}\ar@/^/[ll]^{c_l} \ar@/_/[ll]_{a_{l}} \\ 
 & & & & \\
 & \stackrel{v_2}{\bullet} \ar@/^/[uur]^{a_2} \ar@/_/[uur]_{c_2} & \cdots & \stackrel{v_{l-1}}{\bullet} \ar@/^/[luu]^{ } \ar@/_/[luu]_{ }& \\  
 }$ 
%
 \item\label{item:twopathways-case5}   $\xymatrix@-1pc{ 
 \stackrel{v_1}{\bullet} \ar[ddrr]^{a_m} & & & & \stackrel{w_{l'}}{\bullet} \\ 
 \stackrel{v_2}{\bullet} \ar[drr]_{a_2}  & & & &  \\ 
 \vdots & & \stackrel{m}{\bullet} \ar[drr]^{c_{2}} \ar[ddrr]_{c_{1}} \ar[uurr]^{c_{l'}} & & \vdots \\ 
 & & &  & \stackrel{w_{2}}{\bullet} \\  
 \stackrel{v_{l}}{\bullet} \ar[uurr]_{a_{l}} & & & &\stackrel{w_{1}}{\bullet} \\ 
 }$
 \item\label{item:twopathways-case6}  $\xymatrix@-1pc{
 \stackrel{v_1}{\bullet} & & \stackrel{m}{\bullet} \ar[lld]^{a_2} \ar[ll]_{a_1} \ar[dd]^{a_l} \ar@/^/[rr]^{c_1} & & \ar@/^/[ll]^{a_{m}}  \stackrel{w}{\bullet} \\
  \stackrel{v_2}{\bullet}  & \ddots & & & \\ 
 &  &\stackrel{v_l}{\bullet} & & \\
 }$ 
 \item\label{item:twopathways-case7}  $\xymatrix@-1pc{
  \stackrel{v_1}{\bullet} \ar[rr]^{a_1}  & & \stackrel{m}{\bullet} \ar@/^/[rr]^{c_1} & & \ar@/^/[ll]^{a_{m}}  \stackrel{w}{\bullet} \\
  \stackrel{v_2}{\bullet}  \ar[rru]^{a_2}  & \ddots & & & \\ 
 &  &\stackrel{v_l}{\bullet} \ar[uu]_{a_l} & & \\
 }$
 \item\label{item:twopathways-case8}     $\xymatrix@-1pc{ 
 \stackrel{v_1}{\bullet} \ar[rr]^{a_1} & & \stackrel{m}{\bullet} \ar@(lu,ru)^{a_m} & & \stackrel{v_l}{\bullet}\ar[ll]_{a_l} \\ 
 & & & & \\
 & \stackrel{v_2}{\bullet} \ar[uur]^{a_2} & \cdots & \stackrel{v_{l-1}}{\bullet} \ar[luu]_{a_{l-1}} & \\  
 }$  
 \item\label{item:twopathways-case9}  $\xymatrix@-1pc{ 
 \stackrel{v_1}{\bullet} & & \ar[ll]_{a_1} \ar[ddl]_{a_2} \stackrel{m}{\bullet} \ar@(lu,ru)^{a_m}  \ar[ddr]^{a_{l-1}}  \ar[rr]^{a_l} & & \stackrel{v_l}{\bullet} \\ 
 & & & & \\
 & \stackrel{v_2}{\bullet} & \cdots & \stackrel{v_{l-1}}{\bullet} & \\  
 }$  
 \item\label{item:twopathways-case10}   $\xymatrix@-1pc{ 
 \stackrel{v_1}{\bullet} \ar[drr]^{a_1}   & & & & \stackrel{w_{l'}}{\bullet} \\ 
 \stackrel{v_2}{\bullet} \ar[rr]_{a_2}  & & \stackrel{m}{\bullet} \ar@(lu,ru)^{a_m} \ar[drr]^{c_{2}} \ar[ddrr]_{c_{1}} \ar[urr]^{c_{l'}} & & \vdots \\ 
 \vdots & & &  & \stackrel{w_{2}}{\bullet} \\  
 \stackrel{v_{l}}{\bullet} \ar[uurr]_{a_{l}} & & & &\stackrel{w_{1}}{\bullet} \\ 
 }$   
%
%
 \item\label{item:twopathways-case11}   $\xymatrix@-1pc{ 
 \stackrel{v_1}{\bullet} \ar[rr]^{a_m}   & &   \stackrel{m}{\bullet} \ar@/^/[ddr]^{c_l} \ar@/_/[ddr]_{s_l} \ar@/^/[rr]^{c_l} \ar@/_/[rr]_{s_{1}}& & \stackrel{w_1}{\bullet}\\ 
 \vdots  &  & & & .^{.^.}   \\
 & \stackrel{v_{l'}}{\bullet}\ar[uur]^{a_{l'}}  &   & \stackrel{w_{l}}{\bullet} & \\  
 }$ 
 \item\label{item:twopathways-case12}  $\xymatrix@-1pc{ 
 \stackrel{v_1}{\bullet} \ar[rr]^{a_m}  & & \stackrel{m}{\bullet} & & \stackrel{w_1}{\bullet} \ar@/^/[ll]^{s_1} \ar@/_/[ll]_{c_{1}} \\ 
 \vdots & & & & .^{.^.}\\
 & \stackrel{v_{l'}}{\bullet} \ar[uur]^{a_{l'}}  &  & \stackrel{w_{l}}{\bullet} \ar@/^/[luu]^{s_l} \ar@/_/[luu]_{c_l}& \\  
 }$  
 \item\label{item:twopathways-case13}  $\xymatrix@-1pc{ 
 \stackrel{v_1}{\bullet}   & &  \ar[ll]_{a_1} \ar[ddl]_{a_{l'}}  \stackrel{m}{\bullet} & & \stackrel{w_1}{\bullet} \ar@/^/[ll]^{s_1} \ar@/_/[ll]_{a_{m}} \\ 
 \vdots & & & & .^{.^.}\\
 & \stackrel{v_{l'}}{\bullet}  &  & \stackrel{w_{l}}{\bullet} \ar@/^/[luu]^{s_l} \ar@/_/[luu]_{c_l}& \\  
 }$   
 \item\label{item:twopathways-case14}    $\xymatrix@-1pc{ 
 \stackrel{v_1}{\bullet} \ar[ddrr]^{a_m} & &  & & \stackrel{w_1}{\bullet} \\ 
  \vdots    & &  & &  \vdots \\  
 \stackrel{v_{l'}}{\bullet}\ar[rr]^{a_{l'}} & & \stackrel{m}{\bullet} \ar[ddl]_{r_1} \ar[ddr]^{r_{l''}} \ar@/^/[rruu]^{c_1} \ar@/_/[rruu]_{s_1}  \ar@/^/[rr]^{c_l} \ar@/_/[rr]_{s_l}& & \stackrel{w_l}{\bullet} \\ 
 & &  & &  \\ 
  & \stackrel{x_1}{\bullet} &\cdots  & \stackrel{x_{l''}}{\bullet} &   \\ 
 }$       
 \item\label{item:twopathways-case15}    $\xymatrix@-1pc{ 
 \stackrel{v_1}{\bullet} \ar[ddrr]^{a_m} & &  & & \stackrel{w_1}{\bullet} \ar@/^/[lldd]^{s_1} \ar@/_/[lldd]_{c_1}  \\ 
  \vdots    & &  & &  \vdots \\  
 \stackrel{v_{l'}}{\bullet}\ar[rr]^{a_{l'}} & & \stackrel{m}{\bullet} \ar[ddl]_{r_1} \ar[ddr]^{r_{l''}} & & \stackrel{w_l}{\bullet} \ar@/^/[ll]^{s_l} \ar@/_/[ll]_{c_l} \\ 
 & &  & &  \\ 
  & \stackrel{x_1}{\bullet} &\cdots  & \stackrel{x_{l''}}{\bullet} &   \\ 
 }$ 
 \end{enumerate}    
\end{multicols}   
Cases $\ref{item:twopathways-case1}$ and $\ref{item:twopathways-case2}$  
have been covered by Corollary~\ref{corollary:affine-quiver-no-framing}. 
So consider when $m$ is a sink, i.e., consider Cases $\ref{item:twopathways-case3}$, $\ref{item:twopathways-case4}$, and $\ref{item:twopathways-case12}$. 
Relabel the arrows in Cases $\ref{item:twopathways-case3}$, $\ref{item:twopathways-case4}$, and $\ref{item:twopathways-case12}$  
as $a_1,\ldots, a_l$ with 
$A_1,\ldots, A_l$ as general representations of  
$a_1,\ldots, a_l$, respectively, where $A_{\alpha}=({}_{(\alpha)}a_{st})$ is a general upper triangular matrix. 
Write 
\[ 
f=\sum_{|K'|\leq d} \hspace{2mm}\prod_{\alpha=1}^{l} {}_{(\alpha)}a_{ij}^{k_{\alpha}} f_{ij,K'}, 
\] 
where $K'=(k_1,\ldots, k_l)$, 
$|K'|= \displaystyle{\sum_{\alpha=1}^{l} k_{\alpha}}$, 
 and 
$f_{ij,K'}\in \mathbb{C}[\{ {}_{(\alpha)}a_{st}:(s,t)\not=(i,j) \mbox{ and }\alpha\not\in \{1,\ldots,l \} \}]$. 
Define 
$R_2:= \mathbb{C}[\{ {}_{(\alpha)}a_{st}:(s,t)\not=(i,j) \mbox{ and }\alpha\not\in \{1,\ldots,l \} \}]$. Then 
\[ 
\begin{aligned} 
0 &= u_{ij}^{-1}.f-f 
= \sum_{|K'|\leq d} \hspace{2mm}\prod_{\alpha=1}^{l} ({}_{(\alpha)}a_{ij}+{}_{(\alpha)}a_{jj}u )^{k_{\alpha}}f_{ij,K'} 
- \sum_{|K'|\leq d} \hspace{2mm}\prod_{\alpha=1}^{l} ({}_{(\alpha)}a_{ij} )^{k_{\alpha}}f_{ij,K'} \\ 
&= \sum_{1\leq |K'|\leq d} \hspace{2mm} \sum_{r_{\alpha}\leq k_{\alpha}} 
  \binom{k_1}{r_1} \binom{k_2}{r_2} \cdots \binom{k_{l}}{r_{l}} 
  \prod_{\alpha=1}^{l} {}_{(\alpha)}a_{ij}^{k_{\alpha}-r_{\alpha}} 
  \hspace{1mm} 
  u^{|R|} 
  \prod_{\alpha=1}^{l} ({}_{(\alpha)}a_{jj})^{r_{\alpha}} f_{ij,K'}, \\ 
\end{aligned} 
\] 
where $|R|=\displaystyle{\sum_{\alpha=1}^{l}r_{\alpha}}$,  
and we see that  
$\{\displaystyle{ \prod_{\alpha=1}^{l}{}_{(\alpha)}a_{ij}^{k_{\alpha}-r_{\alpha}}u^{|R|}}:
r_{\alpha}\leq k_{\alpha} \mbox{ for all }1\leq \alpha\leq l \}$ 
is linearly independent over $R_2$. 

Next, consider Cases 
$\ref{item:twopathways-case5}$, 
$\ref{item:twopathways-case6}$, 
$\ref{item:twopathways-case7}$, 
$\ref{item:twopathways-case11}$, 
$\ref{item:twopathways-case13}$, 
$\ref{item:twopathways-case14}$, 
and 
$\ref{item:twopathways-case15}$. 
Relabel the arrows in the following way: 
write $a_1,\ldots, a_l$ if $ta_i=m$, where $1\leq i\leq l$, and 
write $c_1,\ldots, c_k$ if $hc_j=m$, where $1\leq j\leq k$. 
Write $A_{\alpha}=({}_{(\alpha)}a_{st})$ as a general representation of $a_{\alpha}$ and 
$C_{\gamma}=({}_{(\gamma)}c_{st})$ as a general representation of $c_{\gamma}$. 
Let us write 
\[ 
f = \sum_{|K'|+|\Gamma| \leq d} \hspace{2mm}
\prod_{\alpha=1}^{l} {}_{(\alpha)}a_{ij}^{k_{\alpha}} 
\prod_{\gamma=1}^{k} {}_{(\gamma)}c_{ij}^{\mu_{\gamma}}
f_{ij,K', \Gamma}, 
\] 
where 
$K'=(k_1,\ldots, k_l)$,  
$\Gamma = (\mu_1,\ldots, \mu_k )$,  
$|K'| = \displaystyle{\sum_{\alpha=1}^{l}k_{\alpha}}$, 
$|\Gamma| = \displaystyle{\sum_{\gamma=1}^{k} \mu_{\gamma}}$, 
and 
$f_{ij,K',\Gamma}\in \mathbb{C}[\{ {}_{(\alpha)}a_{st}, {}_{(\gamma)}c_{st}:$ 
$(s,t)\not=(i,j)$ 
$\mbox{ and }$ 
$\alpha\not\in \{ 1,\ldots, l\}$
$\mbox{ and }$ 
$\gamma\not\in \{1,\ldots, k \}
\}]$. 
Define 
$R_3:= \mathbb{C}[\{ {}_{(\alpha)}a_{st}, {}_{(\gamma)}c_{st}: (s,t)\not=(i,j)$ 
$\mbox{ and }$ 
$\alpha\not\in \{ 1,\ldots, l\}$
$\mbox{ and }$ 
$\gamma\not\in \{1,\ldots, k \}
\}]$.  
Then  
\[  
\begin{aligned}  
0 &= u_{ij}^{-1}.f-f  
= \sum_{|K'|+|\Gamma| \leq d} \hspace{2mm} 
\prod_{\alpha=1}^{l} ({}_{(\alpha)}a_{ij}+{}_{(\alpha)}a_{jj}u )^{k_{\alpha}} 
\prod_{\gamma=1}^{k} ({}_{(\gamma)}c_{ij}-{}_{(\gamma)}c_{ii}u )^{\mu_{\gamma}}
f_{ij,K', \Gamma} \\
   &\hspace{4mm}-   
\sum_{|K'|+|\Gamma| \leq d}  \hspace{2mm} 
\prod_{\alpha=1}^{l} {}_{(\alpha)}a_{ij}^{k_{\alpha}} 
\prod_{\gamma=1}^{k} {}_{(\gamma)}c_{ij}^{\mu_{\gamma}}
f_{ij,K', \Gamma}  \\ 
&= \sum_{1\leq |K'|+|\Gamma| \leq d}   \hspace{2mm}   
   \sum_{\stackrel{r_{\alpha}\leq k_{\alpha} }{ 
s_{\gamma}\leq \mu_{\gamma} }} 
  \binom{k_1}{r_1} \binom{k_2}{r_2} \cdots \binom{k_{l}}{r_{l}} 
  \binom{\mu_1}{s_1} \binom{\mu_2}{s_2} \cdots \binom{\mu_{k}}{s_{k}}
  \prod_{\alpha=1}^{l} {}_{(\alpha)}a_{ij}^{k_{\alpha}-r_{\alpha}}
  \prod_{\gamma=1}^{k} {}_{(\gamma)}c_{ij}^{\mu_{\gamma}-s_{\gamma}}\cdot \\ 
  &\hspace{4mm}\cdot u^{|R|+|S|}  
  \prod_{\alpha=1}^{l} ({}_{(\alpha)}a_{jj})^{r_{\alpha}} 
  \prod_{\gamma=1}^{k} (-{}_{(\gamma)}c_{ii})^{s_{\gamma}}
  f_{ij,K',\Gamma}, \\ 
\end{aligned}
\] 
where 
$|R|=\displaystyle{\sum_{\alpha=1}^{l}r_{\alpha}}$ and 
$|S|=\displaystyle{\sum_{\gamma=1}^{k}s_{\gamma}}$. 
We see that 
$\{\displaystyle{ \prod_{\alpha=1}^{l}{}_{(\alpha)}a_{ij}^{k_{\alpha}-r_{\alpha}}
\prod_{\gamma=1}^{k} {}_{(\gamma)}c_{ij}^{\mu_{\gamma}-s_{\gamma}}
u^{|R|+|S|} }:$   
$r_{\alpha}\leq k_{\alpha}$, $s_{\gamma}\leq \mu_{\gamma}$ $\mbox{ for all } 1\leq \alpha\leq l 
\mbox{ and for all }1\leq \gamma\leq k \}$   
is linearly independent over $R_3$.

Finally, consider Cases \ref{item:twopathways-case8}, 
\ref{item:twopathways-case9}, 
and 
\ref{item:twopathways-case10}. 
Relabel the arrows in the following way: 
write 
$a_1,\ldots, a_l$ if $ta_i=m$, 
$c_1,\ldots, c_k$ if $hc_j=m$, and 
$\zeta$ if $\zeta$ is the loop at $m$. 
Write the general representations of $a_{\alpha}$ as $A_{\alpha} =({}_{(\alpha)}a_{st})$,
$c_{\gamma}$ as $C_{\gamma}=({}_{(\gamma)}c_{st})$, and 
$\zeta$ as $\Xi = (\zeta_{st})$. 
Write 
\[ 
f = \sum_{\rho+|K'|+|\Gamma| \leq d} 
\zeta_{ij}^{\rho} 
\prod_{\alpha=1}^{l} {}_{(\alpha)}a_{ij}^{k_{\alpha}} 
\prod_{\gamma=1}^{k} {}_{(\gamma)}c_{ij}^{\mu_{\gamma}} \hspace{1mm}
f_{ij,\rho, K', \Gamma}, 
\] 
where $K'=(k_1,\ldots, k_l)$, 
$\Gamma = (\mu_1,\ldots, \mu_k)$, 
$|K'|=\displaystyle{\sum_{\alpha=1}^{l} k_{\alpha}}$, 
$|\Gamma| = \displaystyle{ \sum_{\gamma=1}^{k} \mu_{\gamma}}$, 
and 
$f_{ij,K',\Gamma} \in  
\mathbb{C}[\{ {}_{(\alpha)}a_{st}$, ${}_{(\gamma)}c_{st}$, $\zeta_{st}:$ 
$(s,t)\not=(i,j)$ 
$\mbox{ and }$ 
$\alpha\not\in \{ 1,\ldots, l\}$
$\mbox{ and }$ 
$\gamma\not\in \{1,\ldots, k \}
\}]$. 
Define 
$R_4:= \mathbb{C}[\{ {}_{(\alpha)}a_{st}, {}_{(\gamma)}c_{st},\zeta_{st}$: $(s,t)\not=(i,j)$ 
$\mbox{ and }$ 
$\alpha\not\in \{ 1,\ldots, l\}$
$\mbox{ and }$ 
$\gamma\not\in \{1,\ldots, k \}  
\}]$.  
Then 
\[ 
\begin{aligned} 
0 &= u_{ij}^{-1}.f-f 
= \sum_{\rho+ |K'|+|\Gamma| \leq d}  
(\zeta_{ij}+(\zeta_{jj}-\zeta_{ii})u)^{\rho} 
\prod_{\alpha=1}^{l} ({}_{(\alpha)}a_{ij}+{}_{(\alpha)}a_{jj}u )^{k_{\alpha}} 
\prod_{\gamma=1}^{k} ({}_{(\gamma)}c_{ij}-{}_{(\gamma)}c_{ii}u )^{\mu_{\gamma}}
f_{ij,\rho, K', \Gamma} \\
   &\hspace{4mm}-
\sum_{\rho+ |K'|+|\Gamma| \leq d} 
\zeta_{ij}^{\rho}
\prod_{\alpha=1}^{l} {}_{(\alpha)}a_{ij}^{k_{\alpha}} 
\prod_{\gamma=1}^{k} {}_{(\gamma)}c_{ij}^{\mu_{\gamma}}\hspace{1mm}
f_{ij,\rho, K', \Gamma}  \\  
&= \sum_{1\leq \rho+ |K'|+|\Gamma| \leq d}  \sum_{ \stackrel{\tau\leq \rho}{
r_{\alpha}\leq k_{\alpha}, s_{\gamma}\leq \mu_{\gamma}
}}  
\binom{\rho}{\tau} 
  \binom{k_1}{r_1} \binom{k_2}{r_2} \cdots \binom{k_{l}}{r_{l}} 
  \binom{\mu_1}{s_1} \binom{\mu_2}{s_2} \cdots \binom{\mu_{k}}{s_{k}} \cdot \\  
  &\hspace{4mm}\cdot \zeta_{ij}^{\rho-\tau} 
  \prod_{\alpha=1}^{l} {}_{(\alpha)}a_{ij}^{k_{\alpha}-r_{\alpha}}
  \prod_{\gamma=1}^{k} {}_{(\gamma)}c_{ij}^{\mu_{\gamma}-s_{\gamma}} 
  \cdot u^{\tau+ |R|+|S|} 
  (\zeta_{jj}-\zeta_{ii})^{\tau}  
  \prod_{\alpha=1}^{l} ({}_{(\alpha)}a_{jj})^{r_{\alpha}} 
  \prod_{\gamma=1}^{k} (-{}_{(\gamma)}c_{ii})^{s_{\gamma}}
  f_{ij,\rho, K',\Gamma}, \\ 
\end{aligned}  
\] 
where $|R|=\displaystyle{\sum_{\alpha=1}^{l}r_{\alpha}}$ and 
$|S|=\displaystyle{\sum_{\gamma=1}^{k}s_{\gamma}}$,  
and we see that 
$\{\zeta_{ij}^{\rho-\tau} 
\displaystyle{ \prod_{\alpha=1}^{l}{}_{(\alpha)}a_{ij}^{k_{\alpha}-r_{\alpha}}
\prod_{\gamma=1}^{k} {}_{(\gamma)}c_{ij}^{\mu_{\gamma}-s_{\gamma}}
u^{\tau+ |R|+|S|} }:    
\tau\leq \rho,  
r_{\alpha}\leq k_{\alpha}, s_{\gamma}\leq \mu_{\gamma} \mbox{ for all }1\leq \alpha\leq l  
\mbox{ and for all }1\leq \gamma\leq k \}$   
is linearly independent over $R_4$.   
In all cases, 
we conclude that $f_{ij,K'}$, $f_{ij,K',\Gamma}$, and $f_{ij,\rho, K',\Gamma}$ $=$ $0$ 
for all $|K'|\geq 1$, $|\Gamma|\geq 1$, and $\rho\geq 1$, which contradict our choices of $(i,j)$ and $m$. We conclude that 
 $f\in \mathbb{C}[\mathfrak{t}^{\oplus Q_1}]$.   
\end{proof}    
Note that one needs to handle with care of the gluing of the arrows and vertices in 
the itemized list in the proof of Theorem~\ref{theorem:two-paths-max-quiver-semi-invariants}, or else, $Q$ will have more than two pathways between some of its vertices so Theorem~\ref{theorem:two-paths-max-quiver-semi-invariants} will not hold for such quivers.

Now, we will discuss Theorem~\ref{theorem:two-paths-max-quiver-semi-invariants-framed}.  
By Theorem~\ref{theorem:two-paths-max-quiver-semi-invariants}, it suffices to find all (semi-)invariants for paths starting at a framed vertex, 
but since the proof is essentially the same as the proof of  Theorem~\ref{theorem:invariants-of-framed-affine-Ar-quiver}, we will omit the details.

\section{Other interesting family of quivers}\label{section:other-interesting-quivers}

A comet-shaped quiver (of any orientation) has $k$ legs, each of length $s_k$, with $1$ loop on the central vertex, e.g., 
\[ 
\xymatrix@-1pc{
\stackrel{[1,s_1]}{\bullet} \ar[rr] & & \bullet & &\ar[ll] \bullet &\cdots &  \stackrel{[1,2]}{\bullet} & & \ar[ll] \stackrel{[1,1]}{\bullet} \ar[rrdd]& & \\ 
& &  & & & &   & &  & & \\ 
\stackrel{[2,s_2]}{\bullet} \ar[rr] & & \bullet \ar[rr]& & \bullet & \cdots &
 \stackrel{[2,2]}{\bullet} \ar[rr] & & \stackrel{[2,1]}{\bullet} & & \ar[ll] \stackrel{1}{\bullet}\ar@(ur,dr) \ar[ddll] \\ 
\vdots & &  & & \vdots & & \vdots   & &  & & \\ 
\stackrel{[k,s_k]}{\bullet} & & \bullet \ar[ll] & & \ar[ll]\bullet & \cdots & \stackrel{[k,2]}{\bullet} \ar[rr] & &  \stackrel{[k,1]}{\bullet} & & \\  
} 
\]  
 and 
 a star-shaped quiver is a comet-shaped quiver with $k$ legs, each of length $s_k$, but with no loops: 
 \[ 
\xymatrix@-1pc{
\stackrel{[1,s_1]}{\bullet} \ar[rr] & & \bullet & &\ar[ll] \bullet &\cdots &  \stackrel{[1,2]}{\bullet} & & \ar[ll] \stackrel{[1,1]}{\bullet} \ar[rrdd]& & \\ 
& &  & & & &   & &  & & \\ 
\stackrel{[2,s_2]}{\bullet} \ar[rr] & & \bullet \ar[rr]& & \bullet & \cdots &
 \stackrel{[2,2]}{\bullet} \ar[rr] & & \stackrel{[2,1]}{\bullet} & & \ar[ll] \stackrel{1}{\bullet}.  \ar[ddll] \\ 
\vdots & &  & & \vdots & & \vdots   & &  & & \\ 
\stackrel{[k,s_k]}{\bullet} & & \bullet \ar[ll] & & \ar[ll]\bullet & \cdots & \stackrel{[k,2]}{\bullet} \ar[rr] & &  \stackrel{[k,1]}{\bullet} & & \\  
} 
\] 
Theorem~\ref{theorem:two-paths-max-quiver-semi-invariants} 
holds for all families of comet-shaped and star-shaped quivers.

Since Theorem~\ref{theorem:invariants-of-framed-affine-Ar-quiver} holds for a quiver whose arrows are of any orientation with the underlying graph being the affine Dynkin $\widetilde{A}_r$-graph, consider the quiver with orientation 
\begin{equation}\label{equation:generalized-2-kronecker-quiver}
\xymatrix@-1pc{ 
 & & \bullet & {\overbrace{\cdots\cdots\cdots}^{m_1}} & \bullet \ar[rrdd] & &  \\ 
 & &   & &   & & \\ 
\bullet \ar[rruu] \ar[rrdd] & &   & &    & & \bullet,      \\ 
& &  & &   & & \\ 
& & \bullet & {\underbrace{\cdots\cdots\cdots}_{m_2}} & \bullet \ar[rruu] & &   \\ 
}
\end{equation}
where the quiver has at least one source and one sink. Note that  Theorem~\ref{theorem:invariants-of-framed-affine-Ar-quiver} is being applied for a quiver without a framing.  
We will call the quiver \eqref{equation:generalized-2-kronecker-quiver} the 
{\em generalized $2$-Kronecker quiver} $K_{2,m_1,m_2}$.  
Consider when $r=1$, i.e., 
$\widetilde{A}_1$-quiver, with the above orientation: 
\[ 
\xymatrix@-1pc{
\bullet \ar@/^/[rrr] \ar@/_/[rrr] & & & \bullet. \\ 
}
\] 
Suppose the dimension at each vertex is $n$ and suppose we impose the complete standard filtration of vector spaces at each vertex. 
By Corollary~\ref{corollary:affine-quiver-no-framing}, $\mathbb{U}_{\beta}$-invariants are coming from the diagonal coordinates of $\mathfrak{b}^{\oplus 2}$, i.e.,  
$\mathbb{C}[F^{\bullet}Rep(\widetilde{A}_1, (n,n))]^{\mathbb{U}_{\beta}}=\mathbb{C}[\mathfrak{t}^{\oplus 2}]$. 
Thus, the following corollary follows: 
\begin{corollary}\label{corollary:generalized-2-Kronecker-quiver-semi-invariants}  
Let  $K_{2,m_1,m_2}=(Q_0,Q_1)$ be the generalized $2$-Kronecker quiver with at least one source and one sink as in \eqref{equation:generalized-2-kronecker-quiver}.   
Let $\beta=(n,\ldots, n)\in \mathbb{Z}_{\geq 0}^{Q_0}$ and let 
$F^{\bullet}$ be the complete standard filtration of vector spaces at each vertex. Let $\mathbb{U}_{\beta}:=U^{Q_0}$. 
Then $\mathbb{C}[F^{\bullet}Rep(K_{2,m_1,m_2}, \beta)]^{\mathbb{U}_{\beta}}=\mathbb{C}[\mathfrak{t}^{\oplus Q_1}]$. 
\end{corollary}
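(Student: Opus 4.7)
The plan is to identify $K_{2,m_1,m_2}$ as an affine $\widetilde{A}_r$-Dynkin quiver and then invoke one of the general results already established in the excerpt. Concretely, I would first count: $K_{2,m_1,m_2}$ has $1$ source, $1$ sink, and $m_1+m_2$ intermediate vertices (so $|Q_0|=m_1+m_2+2$), and the two paths from source to sink contain $m_1+1$ and $m_2+1$ arrows respectively, giving $|Q_1|=m_1+m_2+2$. Since the number of vertices equals the number of arrows and the underlying graph is a cycle, the underlying graph is precisely the affine $\widetilde{A}_r$-Dynkin graph with $r=m_1+m_2+1$.

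Now I would invoke Corollary~\ref{corollary:affine-quiver-no-framing}. The author has already noted in the paragraph preceding the statement that Theorem~\ref{theorem:invariants-of-framed-affine-Ar-quiver}, and hence Corollary~\ref{corollary:affine-quiver-no-framing}, holds for any orientation of the $\widetilde{A}_r$-Dynkin graph, not just the specific orientation drawn in Setup~\ref{setup:framed-affine-Ar-dynkin-diagram}. Since $K_{2,m_1,m_2}$ is unframed and has constant dimension vector $\beta=(n,\ldots,n)$ with the complete standard filtration imposed at every vertex, we fall exactly into the hypotheses of Corollary~\ref{corollary:affine-quiver-no-framing}, which gives
\[
\mathbb{C}[F^{\bullet}Rep(K_{2,m_1,m_2},\beta)]^{\mathbb{U}_{\beta}}
\;\cong\;\mathbb{C}[\mathfrak{b}^{\oplus |Q_1|}]^{\mathbb{U}_{\beta}}
\;\cong\;\mathbb{C}[\mathfrak{t}^{\oplus |Q_1|}],
\]
which is the desired conclusion.

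As an alternative route, I would point out that $K_{2,m_1,m_2}$ admits at most two distinct pathways between any two vertices: exactly two pathways between the unique source and the unique sink (one along each branch), and at most one pathway between any other pair (since any two intermediate vertices on different branches are joined by no pathway at all, and pairs of vertices on the same branch are connected by a single directed segment). Thus $K_{2,m_1,m_2}$ is an unframed quiver satisfying the hypothesis of Theorem~\ref{theorem:two-paths-max-quiver-semi-invariants}, and applying that theorem directly yields the same conclusion.

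There is no real obstacle here; the only point requiring care is the bookkeeping that confirms $K_{2,m_1,m_2}$ genuinely falls under the hypotheses of the earlier results — namely, that its underlying graph is $\widetilde{A}_{m_1+m_2+1}$ and that no two vertices are joined by three or more pathways. Once these combinatorial facts are in place, the corollary is an immediate consequence of either Corollary~\ref{corollary:affine-quiver-no-framing} or Theorem~\ref{theorem:two-paths-max-quiver-semi-invariants}, and no further computation with standard bideterminants or explicit unipotent actions is needed.
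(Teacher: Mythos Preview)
Your proposal is correct and matches the paper's approach: the paper simply says the proof is analogous to that of Theorem~\ref{theorem:invariants-of-framed-affine-Ar-quiver} and omits the details, which amounts to exactly your first route---recognizing that $K_{2,m_1,m_2}$ has underlying graph $\widetilde{A}_r$ (for the appropriate $r$) and that Corollary~\ref{corollary:affine-quiver-no-framing} applies for any orientation. Your alternative route via Theorem~\ref{theorem:two-paths-max-quiver-semi-invariants} is also valid and is not mentioned explicitly in the paper, but since Corollary~\ref{corollary:affine-quiver-no-framing} is itself a special case of that theorem, the two routes are not genuinely different in substance.
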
  

Since the proof of Corollary~\ref{corollary:generalized-2-Kronecker-quiver-semi-invariants} is analogous to the proof of Theorem~\ref{theorem:invariants-of-framed-affine-Ar-quiver}, 
we will omit the details.

We will now generalize the generalized $2$-Kronecker quiver in \eqref{equation:generalized-2-kronecker-quiver} in the following way. 
Consider the graph $\Gamma_{K(M,N)}$, where $M=(m_1,\ldots, m_k)$ and $N=(n_1,\ldots, n_k)$: 
\begin{equation}\label{equation:generalized-2-Kronecker-quiver-repeated} 
\hspace{-0.5em}
\xymatrix@-1pc{
& \bullet & \hspace{-2mm}{\overbrace{\cdots}^{m_1}}\hspace{-2mm} & \bullet \ar@{-}[rd] &       & \bullet & \hspace{-2mm}{\overbrace{\cdots}^{m_2}}\hspace{-2mm} & \bullet \ar@{-}[rd] &   & \bullet & \hspace{-2mm}{\overbrace{\cdots}^{m_3}}\hspace{-2mm} & \bullet \ar@{-}[rd] &      & \hspace{-4mm}\hspace{-4mm}&     & \bullet & \hspace{-2mm} {\overbrace{\cdots}^{m_k}}\hspace{-2mm} & \bullet \ar@{-}[rd] &       \\  
\stackrel{1}{\bullet} \ar@{-}[ru] \ar@{-}[rd] & &  &  & \stackrel{2}{\bullet}    \ar@{-}[ru] \ar@{-}[rd] & &  &  & \stackrel{3}{\bullet}    \ar@{-}[ru] \ar@{-}[rd] & &  &  & \stackrel{4}{\bullet}   & \hspace{-4mm}\cdots \hspace{-4mm}&   \stackrel{k}{\bullet} \ar@{-}[ru] \ar@{-}[rd] & & \hspace{-4mm} &  & \stackrel{k+1}{\bullet}    \\ 
 & \bullet & \hspace{-2mm}{\underbrace{\cdots}_{n_1}}\hspace{-2mm} & \bullet \ar@{-}[ru]  &       & \bullet &\hspace{-2mm}{\underbrace{\cdots}_{n_2}}\hspace{-2mm} & \bullet \ar@{-}[ru]  &    & \bullet & \hspace{-2mm}{\underbrace{\cdots}_{n_3}} \hspace{-2mm}& \bullet \ar@{-}[ru]  &     & \hspace{-4mm}\hspace{-4mm}&   & \bullet &\hspace{-2mm} {\underbrace{\cdots}_{n_k}} \hspace{-2mm}& \bullet \ar@{-}[ru]  &       \\ 
}
\end{equation} 
where $m_i+2$ and $n_i+2$ denote the number of edges between vertices $i$ and $i+1$. 
As long as the vertices labeled as $2,3, \ldots, k$ are a source or a sink (in any order), 
then the quiver associated to $\Gamma_{K(M,N)}$ has at most two pathways. 
This implies Theorem~\ref{theorem:two-paths-max-quiver-semi-invariants} 
may also be applied quivers whose underlying graph is $\Gamma_{K(M,N)}$ with the above source/sink condition. 

\begin{lemma}   
Let $Q$ be a quiver whose graph is $\Gamma_{K(M,N)}$ as in \eqref{equation:generalized-2-Kronecker-quiver-repeated}.   
Assume the vertices labeled by $1,2,\ldots, k+1$ are a source or a sink.   
Let $\beta = (n,\ldots,n)\in \mathbb{Z}^{Q_0}$  and let $F^{\bullet}$ be the complete standard filtration of vector spaces. 
Then $\mathbb{C}[F^{\bullet}Rep(Q,\beta)]^{\mathbb{U}_{\beta}}=\mathbb{C}[\mathfrak{t}^{\oplus Q_1}]$. 
\end{lemma}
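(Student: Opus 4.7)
My plan is to reduce this lemma to Theorem~\ref{theorem:two-paths-max-quiver-semi-invariants}. Since the quiver $Q$ is nonframed by construction, it will suffice to verify the hypothesis that $Q$ admits at most two distinct pathways between any pair of vertices; once this is in hand, the cited theorem yields the desired equality $\mathbb{C}[F^{\bullet}Rep(Q,\beta)]^{\mathbb{U}_{\beta}}=\mathbb{C}[\mathfrak{t}^{\oplus Q_1}]$ immediately.

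The key step is the observation that the source/sink hypothesis on the main vertices $1,2,\dots,k{+}1$ confines every directed path of $Q$ to a single ``Kronecker block.'' Let $B_i$ denote the full subquiver on $\{i,i{+}1\}$ together with the $m_i$ top-chain and $n_i$ bottom-chain intermediate vertices between them. If a main vertex $j$ is a source, then no arrow terminates at $j$, so no directed path can continue past $j$; dually, if $j$ is a sink, then no arrow originates at $j$. Consequently, no directed path in $Q$ can cross between two distinct blocks, and every pathway of $Q$ lies inside a single $B_i$.

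Within a single block $B_i$, the underlying undirected graph is a simple cycle (the top and bottom chains glued at the two endpoints $i$ and $i{+}1$), and between any two vertices of a simple cycle there are at most two reduced undirected paths, obtained by traversing the cycle in the two possible senses. Since every pathway of $Q$ projects to a reduced path in the underlying graph, there are at most two pathways between any pair of vertices of $B_i$, hence at most two pathways between any two vertices of $Q$. The only mildly delicate point is that the orientations on the intermediate arrows within each $B_i$ are left unconstrained by the hypotheses, but this can only cut one of the two candidate undirected paths from being realized as a directed pathway, never enlarge the count. With the hypothesis of Theorem~\ref{theorem:two-paths-max-quiver-semi-invariants} thereby verified, the lemma follows, and I do not anticipate any serious obstacle beyond the elementary block-isolation and cycle-counting arguments just sketched.
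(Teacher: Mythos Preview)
Your proposal is correct and follows exactly the approach the paper takes: the paragraph immediately preceding the lemma asserts that the source/sink condition on the main vertices forces $Q$ to have at most two pathways between any two vertices, and then invokes Theorem~\ref{theorem:two-paths-max-quiver-semi-invariants}. One small refinement worth making explicit in your within-block argument is that the source/sink hypothesis on $i$ (or $i{+}1$) forces each block $B_i$ to be an \emph{acyclic} orientation of a cycle, which is precisely why every pathway in $B_i$ projects to a simple undirected path; without acyclicity a directed cycle would admit reduced paths that wrap around once, so your projection step needs this observation to go through.
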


\begin{example}
For the $1$-Jordan quiver with the complete standard filtration of vector spaces imposed with dimension $\beta=2$, we have 
$\mathbb{C}[\mathfrak{b}_2]^U 
=\mathbb{C}[\mathfrak{b}_2]^{B_2} 
= \mathbb{C}[a_{11},a_{22}]
= \mathbb{C}[\mathfrak{t}_2]
\cong \mathbb{C}[\mathbb{C}^2]$.  
For the $2$-Jordan quiver with the complete standard filtration of vector spaces imposed with $\beta=2$, we have 
$\mathbb{C}[\mathfrak{b}_2^{\oplus 2}]^U \supseteq  
\mathbb{C}[a_{11},a_{22}, c_{11},c_{22}, (c_{11}-c_{22})a_{12}-(a_{11}-a_{22})c_{12}]$.   
For the $3$-Jordan quiver with the complete standard filtration of vector spaces imposed with $\beta=2$, we have 
\[ 
\begin{aligned}
\mathbb{C}[\mathfrak{b}_2^{\oplus 3}]^U &\supseteq  
\mathbb{C}[a_{11},a_{22},c_{11},c_{22},s_{11},s_{22}, 
(a_{11}-a_{22})c_{12}-(c_{11}-c_{22})a_{12},   \\ 
&(s_{11}-s_{22})a_{12}-(a_{11}-a_{22})s_{12},    (s_{11}-s_{22})c_{12} -(c_{11}-c_{22})s_{12}] \\ 
&=\dfrac{\mathbb{C}[f_1,f_2,g_1,g_2,h_1,h_2,k_1,k_2,k_3]}{ 
\langle (h_1-h_2)k_1-(g_1-g_2)k_2+(f_1-f_2)k_3\rangle}.  \\ 
\end{aligned}
\]   
Filtered representations of the $2$-Jordan and the $3$-Jordan quivers have semi-invariants coming from the nilradical of the Borel since they have more than two pathways at a vertex. 
Recall that $B=TU$, where $T$ is the (reductive) Levi factor while $U$ is the unipotent radical of $B$, with $\mathfrak{u}=\lie(U)$ being the nilradical of $B$.  
We believe that the two inclusions above are, in fact, equalities,  
and we will discuss this further in Section~\ref{section:comment-regarding-at-most-two-paths} and in Section~\ref{subsection:m-Jordan-quiver} as open problems.  
\end{example}

\begin{notation}\label{notation:undefined-sums} 
We define an empty sum or a summation whose indices are not defined to be zero, i.e., 
\[ \sum_{\alpha = \iota}^{\gamma} f(\alpha):=0 
		\mbox{ where }\iota > \gamma  
   	\mbox{ or }
   	\sum_{k} s_{k0}:=0 \mbox{ if }s_{k0} \mbox{ is not one of the coordinates of } s\in \mathfrak{b}^*.  
\]   
Furthermore, we will set all variables that are not defined to be zero, i.e., if $j\in (\mathbb{C}^n)^*$   
is a covector with entries  
$j_{\alpha}=y_{\alpha}$ where $1\leq \alpha\leq n$, then $y_0 :=0$. 
\end{notation}

\begin{lemma}\label{lemma:b-invariants-on-bdual}
For $B$-adjoint action on $\mathfrak{b}^*$, where  
$\mathfrak{b}^*= \mathfrak{gl}_n/\mathfrak{n}^+$, 
we have 
$\mathbb{C}[\mathfrak{b}^*]^B = \mathbb{C}[\tr(s)]$. 
\end{lemma}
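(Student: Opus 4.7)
The plan is to identify $\mathfrak{b}^*$ with the space of lower triangular matrices (including diagonal) via the trace pairing, and then analyze the induced $B$-action by decomposing $B = TU$. Writing the coadjoint action explicitly, for $b\in B$ and $s\in\mathfrak{b}^*$, $b\cdot s$ is the image of $bsb^{-1}\in \mathfrak{gl}_n$ in $\mathfrak{gl}_n/\mathfrak{n}^+$. For the torus part $t=\diag(t_1,\dots,t_n)$, this gives $(t\cdot s)_{ij} = (t_i/t_j)s_{ij}$ on coordinates, and for the one-parameter unipotent subgroup $u = \I + uE_{k,k+1}$, a direct computation of $usu^{-1}$ followed by projection shows that only the coordinates $s_{kj}$ ($j\le k$) and $s_{i,k+1}$ ($i\ge k+1$) are affected, via
\[
(u\cdot s)_{kj}=s_{kj}+us_{k+1,j},\qquad (u\cdot s)_{i,k+1}=s_{i,k+1}-us_{ik},
\]
with all other coordinates unchanged. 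In particular $s_{kk}\mapsto s_{kk}+us_{k+1,k}$ and $s_{k+1,k+1}\mapsto s_{k+1,k+1}-us_{k+1,k}$.

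First I would reduce to $T$-invariants. The coordinate function $s_{ij}$ has $T$-weight $e_j-e_i$, so a monomial $\prod_{i\ge j} s_{ij}^{m_{ij}}$ is $T$-invariant exactly when $\sum_{j<k}m_{kj}=\sum_{i>k}m_{ik}$ for every $k$. A straightforward induction on $k$ (starting with $k=1$ where the left sum is empty, forcing $m_{i1}=0$ for $i>1$, then $k=2$, etc.) shows $m_{ij}=0$ for every $i>j$. Consequently
\[
\mathbb{C}[\mathfrak{b}^*]^T=\mathbb{C}[s_{11},s_{22},\dots,s_{nn}].
\]

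Next I would impose $U$-invariance on $g(s_{11},\dots,s_{nn})$. Applying the one-parameter subgroup $u=\I+uE_{k,k+1}$ gives
\[
g(s_{11},\dots,s_{kk}-us_{k+1,k},s_{k+1,k+1}+us_{k+1,k},\dots,s_{nn})=g(s_{11},\dots,s_{nn}).
\]
Since $s_{k+1,k}$ is an independent coordinate on $\mathfrak{b}^*$ that does not appear in $g$, the product $v:=us_{k+1,k}$ is an arbitrary scalar, so $g$ is invariant under the shift $(s_{kk},s_{k+1,k+1})\mapsto(s_{kk}-v,s_{k+1,k+1}+v)$ for every $k=1,\dots,n-1$. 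These shifts generate the full hyperplane $\{(v_1,\dots,v_n):\sum v_i=0\}$ acting by translation on $(s_{11},\dots,s_{nn})$, so the invariants are precisely polynomials in $s_{11}+\cdots+s_{nn}=\tr(s)$.

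There is no serious obstacle: the only point requiring care is the explicit description of the induced coadjoint action on $\mathfrak{b}^*\cong\mathfrak{gl}_n/\mathfrak{n}^+$ (equivalently, on lower triangular representatives), after which the two-step reduction $B$-invariants $\subseteq T$-invariants $=\mathbb{C}[s_{11},\dots,s_{nn}]$, followed by further restriction under $U$ to $\mathbb{C}[\tr(s)]$, is immediate.
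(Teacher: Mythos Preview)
Your proof is correct and takes a genuinely different route from the paper. The paper argues via orbit closures: it first uses the diagonal one--parameter subgroup $\lambda_1(t)=\diag(1,t,\dots,t^{n-1})$ and the limit $t\to 0$ to show that any $B$-invariant $F$ is independent of the strictly lower entries $s_{\iota\gamma}$; then it applies a second, upper triangular one--parameter subgroup $\lambda_2(t)$ and, exploiting that $F$ is constant on orbit closures, chooses the (now irrelevant) strictly lower entries so that the limit point has all diagonal entries zero except $s_{nn}=\tr(s)$. Your approach instead uses the algebraic decomposition $B=TU$: the weight argument for $T$ gives $\mathbb{C}[\mathfrak{b}^*]^T=\mathbb{C}[s_{11},\dots,s_{nn}]$ directly (and more transparently than the $\lambda_1$ limit), and then the simple root subgroups $I+uE_{k,k+1}$ combined with the independence of the coordinate $s_{k+1,k}$ yield translation invariance of $g(s_{11},\dots,s_{nn})$ along all directions summing to zero. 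Your argument is shorter and more structural; the paper's orbit--closure method is more geometric and adapts readily to the neighboring lemmas in that section (e.g.\ the analysis of $\mathfrak{b}^*\times(\mathbb{C}^n)^*$), where a uniform limit argument handles the extra factor simultaneously.
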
 

\begin{proof}
Let $F$ be a polynomial in $\mathbb{C}[\tr(s)]$. 
Since $F(\tr(bsb^{-1}))
= F(\tr(sb^{-1}b)) 
= F(\tr(s))$ for any $b\in B$, $F$ is in the $B$-invariant subring $\mathbb{C}[\mathfrak{b}^*]^B$.  

Now suppose $F\in \mathbb{C}[\mathfrak{b}^*]^B$ and let $s\in\mathfrak{b}^*$. 
Then for a $1$-parameter subgroup $\lambda_1(t)$ with coordinates 
\[ \lambda_1(t)_{\iota\gamma} = \left\{ 
\begin{aligned}
t^{\iota-1} 			& \mbox{ if } \iota =\gamma \\ 
0  \hspace{2mm} 	& \mbox{ if } \iota \not=\gamma, \\ 
\end{aligned}
\right.
\]
\[ 
(\lambda_1(t).s)_{\iota\gamma} 
    = (\lambda_1(t)s(\lambda_1(t))^{-1})_{\iota\gamma}  
		= \left\{ 
\begin{aligned} 
* 							\hspace{4mm} 		&	\mbox{ if } \iota < \gamma \\ 
s_{\iota\iota} 	\hspace{3mm} 		& \mbox{ if } \iota = \gamma \\ 
t^{\iota-\gamma}s_{\iota\gamma} & \mbox{ if }\iota > \gamma.  \\
\end{aligned} 
\right.
\] Taking the limit as $t\rightarrow 0$, we have 
\[ \lim_{t\rightarrow 0} (\lambda_1(t).s)_{\iota\gamma} = 
\left\{ 
\begin{aligned} 
* 				\hspace{1mm}			&	\mbox{ if } \iota < \gamma \\ 
s_{\iota\iota} 							& \mbox{ if } \iota = \gamma \\ 
0 				\hspace{1mm}			& \mbox{ if }	\iota > \gamma.  \\
\end{aligned}
\right. 
\]   
Since off-diagonal entries of $s$ are zero, our $B$-invariant polynomial $F$ 
is independent of the coordinates $\{ s_{\iota\gamma} \}_{\iota > \gamma}$. 
Now consider another $1$-parameter subgroup $\lambda_2(t)$, where 
\[\lambda_2(t)_{\iota\gamma} 
= \left\{ 
\begin{aligned} 
t^{\iota-1} 		& \mbox{ if } \iota \leq \gamma  \\ 
0 \hspace{2mm}	&	\mbox{ if } \iota > \gamma. \\
\end{aligned} 
\right.
\] 
Then $(\lambda_2(t).s)_{\iota\gamma} = $ 
\begin{equation}\label{eq:lambda2s} 
= \left\{ 
\begin{aligned} 
* \hspace{20mm}&\mbox{ if }\iota < \gamma  \\
t^{\iota-\gamma} \left( \sum_{k=\iota}^n s_{k \gamma} - \sum_{k=\iota}^n s_{k,\gamma-1}\right) 
 &\mbox{ if } \iota \geq \gamma \\ 
\end{aligned} 
\right. 
\hspace{3mm}
= \hspace{3mm}
\left\{ 
\begin{aligned} 
* \hspace{20mm}& \mbox{ if } \iota < \gamma \\
s_{\iota\iota}-\sum_{k=\iota}^n s_{k,\iota -1}+ \sum_{k=\iota+1}^n s_{k\iota}\hspace{4mm}				 & \mbox{ if } \iota = \gamma  \\ 
t^{\iota-\gamma} \left( \sum_{k=\iota}^n s_{k\gamma} - \sum_{k=\iota}^n s_{k,\gamma-1} \right)  & \mbox{ if }  \iota > \gamma.  \\ 
\end{aligned} 
\right. 
\end{equation} 
Since $F(s)=F(s')$ for any $s'\in \overline{B.s}$ (the polynomial $F$ must take the same value on any orbit closure), the equality 
$\displaystyle{\lim_{t\rightarrow 0}F(\lambda_1(t).s)}
= 
\displaystyle{\lim_{t\rightarrow 0}F(\lambda_2(t).s)}$ must hold for any values of  
 $\{ s_{\alpha\beta}\}_{\alpha>\beta}$. 
So for each $1\leq \iota < n$ (starting with $\iota=1$ in ascending order),  
choose $\{ s_{k\iota}\}_{k>\iota}$
in $\displaystyle{\sum_{\iota < k\leq n} s_{k\iota}}$ such that 
\begin{equation}\label{eq:choosing-free-vars}
-\sum_{k=\iota+1}^n s_{k\iota}=s_{\iota\iota}-\sum_{k=\iota}^n s_{k,\iota-1}.  
\end{equation} 

Move the sum in (\ref{eq:choosing-free-vars}) to the left-hand side so that for each $1\leq \iota <n$,  
$\displaystyle{\sum_{\iota-1<k\leq n}s_{k,\iota-1}} 
-
\displaystyle{\sum_{\iota < k\leq n} s_{k,\iota}}$ $=$ $s_{\iota\iota}$, which implies 
the sum of all such sum as $\iota$ varies over $1$ to $n-1$ is 
\[\begin{aligned}
\sum_{\iota=1}^{n-1} \left(\sum_{k=\iota}^n s_{k,\iota-1} -\sum_{k=\iota+1}^n s_{k \iota}\right) 
		& = \sum_{k=1}^n \not{s_{k0}} 
				+ \left(\sum_{\iota=2}^{n-1} \sum_{k=\iota}^n s_{k,\iota-1}
					  -\sum_{\iota=1}^{n-2} \sum_{ k=\iota+1 }^n s_{k\iota}  \right)
		   			-\sum_{k=n}^n s_{k,n-1} \\
		& = -\sum_{k=n}^n s_{k,n-1} = \sum_{\iota=1}^{n-1} s_{\iota\iota}. \\
\end{aligned}
\] 

By (\ref{eq:lambda2s}) and by choosing appropriate choices for $\{ s_{\alpha\beta}\}_{\alpha > \beta}$ in 
(\ref{eq:choosing-free-vars}), we have 
$(\lambda_2(t).s)_{\iota\iota}=0$ for each $1\leq \iota <n$ while  
$(\lambda_2(t).s)_{nn}=s_{nn}- 
\displaystyle{\sum_{n-1<k\leq n} s_{k,n-1}} = \tr(s)$.  
This means all coordinate entries are zero except the $(n,n)$-entry, which is $\tr(s)$. 
Thus for $F$ in $\mathbb{C}[\mathfrak{b}^*]^B$, $F(s)$ must be of the form $F(s')$, where all coordinates of $s'$ are zero   
except the entry $s_{nn}'$, which equals $\tr(s)$. So $F$ is a polynomial in $\tr(s)$. 
\end{proof}

\begin{lemma}\label{lemma:b-invariants-b-V} 
Let $B$ act on $\mathfrak{b}\times \mathbb{C}^n$ by $b.(r,i)=(brb^{-1},bi)$, where $b\in B$ and $(r,i)\in B\times \mathbb{C}^n$. 
Then 
$\mathbb{C}[\mathfrak{b}\times \mathbb{C}^n]^B=\mathbb{C}[\mathfrak{t}] \cong\mathbb{C}[\mathbb{C}^n]$ while 
 $\mathbb{C}[\mathfrak{b}\times \mathbb{C}^n]^U=\mathbb{C}[\mathfrak{t}]\otimes_{\mathbb{C}} \mathbb{C}[x_n]\cong \mathbb{C}[\mathbb{C}^{n+1}]$, where $x_n$ is the last component of $i\in \mathbb{C}^n$. 
\end{lemma}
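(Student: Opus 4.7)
The plan is to recognize the lemma as a special case of Theorem~\ref{theorem:invariants-of-framed-affine-Ar-quiver}. Indeed, the space $\mathfrak{b}\times \mathbb{C}^n$ with its $B$-action is exactly $F^{\bullet}Rep(Q,\beta)$, where $Q$ is the framed $1$-Jordan quiver $\xymatrix@-1pc{\stackrel{1'}{\circ}\ar[rr]^{a_0}&&\stackrel{1}{\bullet}\ar@(ru,rd)^{a_1}}$ equipped with the complete standard filtration at the unique nonframed vertex, and $\beta=(n,1)$ (so that the $\Hom(\mathbb{C}^1,\mathbb{C}^n)$-factor is identified with $\mathbb{C}^n$ via $i\mapsto A_0$). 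Thus $r=0$ in the notation of Theorem~\ref{theorem:invariants-of-framed-affine-Ar-quiver}, and we have a single loop $A_1\in\mathfrak{b}$ with diagonal entries $a_{11},\ldots,a_{nn}$.

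First I would compute $\mathbb{C}[\mathfrak{b}\times\mathbb{C}^n]^{U}$ using the theorem. By the theorem,
\[
\mathbb{C}[F^{\bullet}Rep(Q,\beta)]^{\mathbb{U}_{\beta}}\;\cong\;\mathbb{C}[\mathfrak{t}]\otimes_{\mathbb{C}}\mathbb{C}[\{f\}],
\]
where the $f$'s are the block standard bideterminants of Definition~\ref{definition:bideterminant-poly-for-theorem}. The key point is that because $m=1$, the column indices $i_1<i_2<\cdots<i_{n-j+1}$ of any row of such a bideterminant must be chosen from $\{1\}$, forcing $n-j+1=1$, i.e., $j=n$, and the only admissible single row has the form $(n\mid 1)_{A_1^{k}A_0}$ for some $k\geq 0$. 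Since $A_1$ is upper triangular, a direct calculation gives $(A_1^{k}A_0)_{n,1}=a_{nn}^{k}\,x_n$. Therefore every block standard bideterminant is a monomial in $a_{nn}$ and $x_n$, and so
\[
\mathbb{C}[\mathfrak{b}\times \mathbb{C}^n]^{U}\;=\;\mathbb{C}[a_{11},\ldots,a_{nn},x_n]\;\cong\;\mathbb{C}[\mathfrak{t}]\otimes_{\mathbb{C}}\mathbb{C}[x_n],
\]
proving the second assertion (the $n+1$ generators are manifestly algebraically independent).

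Next I would pass to $B$-invariants via the residual $T=B/U$-action on $\mathbb{C}[\mathfrak{b}\times\mathbb{C}^n]^{U}$. For $t=\diag(t_1,\ldots,t_n)\in T$ one has $(trt^{-1})_{ii}=a_{ii}$, so each diagonal coordinate $a_{ii}$ is $T$-fixed; on the other hand $t.x_n=t_n x_n$, so $x_n$ is a nontrivial weight vector. Taking $T$-invariants of the polynomial ring $\mathbb{C}[a_{11},\ldots,a_{nn},x_n]$ therefore kills every monomial involving $x_n$, yielding
\[
\mathbb{C}[\mathfrak{b}\times\mathbb{C}^n]^{B}\;=\;\bigl(\mathbb{C}[\mathfrak{b}\times\mathbb{C}^n]^{U}\bigr)^{T}\;=\;\mathbb{C}[a_{11},\ldots,a_{nn}]\;=\;\mathbb{C}[\mathfrak{t}],
\]
the first assertion. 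The only nontrivial point is the reduction of arbitrary block standard bideterminants to monomials in $a_{nn}$ and $x_n$; every other step is routine. If one preferred a self-contained proof avoiding Theorem~\ref{theorem:invariants-of-framed-affine-Ar-quiver}, one could instead mimic the multi-index argument of Theorem~\ref{theorem:filtered-ADE-Dynkin-quiver}: expand $f\in\mathbb{C}[\mathfrak{b}\times \mathbb{C}^n]^{U}$ in the least off-diagonal variable ${}_{(1)}a_{ij}$ or $x_j$ for $j<n$, act by the one-parameter subgroup $\widehat{u}_{ij}\in U_{ij}$, and invoke linear independence to force all higher-order coefficients to vanish, concluding that the invariant depends only on $a_{11},\ldots,a_{nn}$ and $x_n$.
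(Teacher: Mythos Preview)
Your proof is correct and follows essentially the same strategy as the paper: the paper's proof is the one-liner ``The lemma holds by Theorem~\ref{theorem:two-paths-max-quiver-semi-invariants-framed},'' whereas you invoke the closely related Theorem~\ref{theorem:invariants-of-framed-affine-Ar-quiver} (the framed $1$-Jordan quiver is simultaneously a framed affine $\widetilde{A}_0$-quiver and a framed quiver with at most two pathways). You supply more detail than the paper does---in particular, you make explicit the reduction of the block standard bideterminants to monomials in $a_{nn}$ and $x_n$ when $m=1$, and you spell out the passage from $U$-invariants to $B$-invariants via the residual $T$-action, which the paper leaves implicit.
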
 

\begin{proof} 
The lemma holds by Theorem~\ref{theorem:two-paths-max-quiver-semi-invariants-framed}. 
\end{proof}

\begin{lemma}\label{lemma:b-invariants-bdual-V}
Let $\mathfrak{b}^*=\mathfrak{gl}_n/\mathfrak{n}^+$ and $(\mathbb{C}^n)^*$ be the dual vector space to $\mathbb{C}^n$. 
Let $B$ act on $\mathfrak{b}^*\times (\mathbb{C}^n)^*$ via $b.(s,j)=(bsb^{-1},jb^{-1})$, where $b\in B$ and $(s,j)\in \mathfrak{b}^*\times (\mathbb{C}^n)^*$.  
Then 
$\mathbb{C}[\mathfrak{b}^*\times (\mathbb{C}^n)^*]^B=\mathbb{C}[\tr(s)]$ while 
$\mathbb{C}[\mathfrak{b}^*\times (\mathbb{C}^n)^*]^U=\mathbb{C}[\tr(s)]\otimes_{\mathbb{C}}\mathbb{C}[y_1] \cong \mathbb{C}[\mathbb{C}^2]$, 
where $y_1$ is the first component of $j\in (\mathbb{C}^n)^*$.  
\end{lemma}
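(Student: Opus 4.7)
The plan is to first verify the easy containments $\mathbb{C}[\tr(s)] \subseteq \mathbb{C}[\mathfrak{b}^*\times(\mathbb{C}^n)^*]^B$ and $\mathbb{C}[\tr(s), y_1] \subseteq \mathbb{C}[\mathfrak{b}^*\times(\mathbb{C}^n)^*]^U$ by direct computation, and then to establish the reverse containments via a $T$-weight argument in the $B$-case and the one-parameter-subgroup/orbit-closure and ordering-casework techniques of Lemma~\ref{lemma:b-invariants-on-bdual} and Theorem~\ref{theorem:filtered-ADE-Dynkin-quiver} in the $U$-case. The easy containments are immediate: $\tr(s)$ is $B$-invariant by Lemma~\ref{lemma:b-invariants-on-bdual} and remains so after adjoining the $j$-factor, while for $y_1$ the fact that $b\in U$ implies $b^{-1}$ is upper unipotent gives $(jb^{-1})_1 = \sum_l y_l(b^{-1})_{l,1}= y_1$, since $(b^{-1})_{l,1}=\delta_{l,1}$.

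For the reverse containment in the $B$-case, I would let $F\in \mathbb{C}[\mathfrak{b}^*\times(\mathbb{C}^n)^*]^B$ and restrict the action to the maximal torus $T\subseteq B$: under $t=\diag(t_1,\ldots,t_n)$, the coordinate $s_{ij}$ (with $i\geq j$) has $T$-weight $\epsilon_i-\epsilon_j$, vanishing on the diagonal, while $y_k$ has weight $-\epsilon_k$. For any monomial $\prod s_{ij}^{a_{ij}}\prod y_k^{b_k}$ the sum of the $\epsilon$-coefficients in its total weight equals $-\sum_k b_k$, since every $s$-weight already sums to zero; hence $T$-invariance forces $b_k=0$ for every $k$. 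So $F$ depends only on the $s$-coordinates, and Lemma~\ref{lemma:b-invariants-on-bdual} yields $F\in\mathbb{C}[\tr(s)]$.

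For the $U$-case, my strategy is to mimic the orbit-closure argument of Lemma~\ref{lemma:b-invariants-on-bdual}: first use a one-parameter subgroup such as $\lambda(t)=\diag(t^{n-1},\ldots,t,1)$ to take a limit that concentrates $s$ on its diagonal and $j$ on the last coordinate, and then use a $\lambda_2$-type flow to collapse the diagonal entries of $s$ onto $\tr(s)$ while reducing $j$ to $(y_1,0,\ldots,0)$. Alternatively, following Theorem~\ref{theorem:filtered-ADE-Dynkin-quiver}, I would fix a total ordering on the coordinates outside $\{\tr(s),y_1\}$, assume a $U$-invariant $f$ depends on the least such coordinate $v$, expand $f=\sum_k v^k f_k$, and apply the unipotent subgroup $U_{ij}\subseteq U$ that shifts $v$ by a nonzero multiple of a strictly smaller coordinate; linear independence of the resulting monomials over the smaller coefficient ring would then force $f_k=0$ for $k\geq 1$, contradicting the choice of $v$.

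The hard part will be the $U$-invariant case, because certain ``corner'' coordinates — most notably $s_{n,1}$, which is fixed by every $U_{ij}$ since the $n$th row and first column of $s$ are both untouched by upper-unipotent conjugation — sit in the kernel of the linearized $U$-action and so cannot be eliminated by the generic ordering step. In the ordering/casework approach one must therefore verify that any such corner coordinate appearing in an invariant can be paired with an adjacent off-diagonal coordinate so that the linear-independence argument still goes through, and in the orbit-closure approach one must check that the chosen $1$-parameter limits remain compatible with mere $U$-invariance and genuinely specialize every putative new invariant to an element of $\mathbb{C}[\tr(s), y_1]$. This delicate coordinate-by-coordinate bookkeeping, which has no counterpart in the $B$-invariant argument, is the crux of the proof.
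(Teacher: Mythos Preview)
Your $B$-case argument is correct and differs from the paper's. The paper reruns the two one-parameter-subgroup computation of Lemma~\ref{lemma:b-invariants-on-bdual} on the enlarged space, tracking the $j$-coordinates alongside the $s$-coordinates: the $\lambda_1$-limit kills $y_\alpha$ for $\alpha<n$ together with the strictly-lower $s_{\iota\gamma}$, and in the $\lambda_2$-step one additionally sets $y_{n-1}=y_n$ so that the last surviving entry of $j$ degenerates to zero as well. Your $T$-weight argument bypasses this: summing the $\epsilon$-coefficients of a monomial weight forces every $y_k$-exponent to vanish, after which Lemma~\ref{lemma:b-invariants-on-bdual} applies verbatim. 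This is shorter and more conceptual than the paper's computation.

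For the $U$-case there is a genuine gap, and in fact you have already located it but drawn the wrong conclusion. Your observation that $s_{n,1}$ is fixed by every element of $U$ is correct, and it is not a technical hurdle to be ``paired away'': $s_{n,1}$ is a $U$-invariant polynomial that does not lie in $\mathbb{C}[\tr(s),y_1]$, so the $U$-statement as written is false. Already for $n=2$ one checks directly that $s_{21}$, $\tr(s)$, $y_1$, and $(js)_1=y_1s_{11}+y_2s_{21}$ are four algebraically independent $U$-invariants, whereas the claimed ring has Krull dimension $2$. Neither of your proposed strategies can succeed: the one-parameter subgroups you name lie in $T$, not $U$, so their limits are not available for mere $U$-invariants, and no ordering argument can eliminate a coordinate on which $U$ acts trivially. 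The paper's own ``proof'' of the $U$-claim is a one-line citation of Theorem~\ref{theorem:two-paths-max-quiver-semi-invariants-framed}, but that theorem concerns filtered representation spaces built from $\mathfrak{b}$ (upper-triangular), not the coadjoint space $\mathfrak{b}^*$, so the citation does not apply here.
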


\begin{proof} 
 Let $F(s,j)\in\mathbb{C}[\mathfrak{b}^*\times (\mathbb{C}^n)^*]^B$. We will show that $F(s,j)\in\mathbb{C}[\tr(s)]$ 
since the other containment is clear. 
Choose a $1$-parameter subgroup $\lambda_1(t)$ in the Borel with coordinates 
\[ \lambda_1(t)_{\iota\gamma} = \left\{ 
\begin{aligned}
t^{\iota-n}    &\mbox{ if } \iota = \gamma \\ 
0 \hspace{2mm} &\mbox{ if } \iota \not=\gamma. \\
 \end{aligned}
\right. 
 \] 
Then 
\[
 (\lambda_1(t).s)_{\iota\gamma} = \left\{ 
\begin{aligned} 
*\hspace{4mm}                    &\mbox{ if }\iota < \gamma \\ 
s_{\iota\iota}\hspace{3mm}       &\mbox{ if } \iota =\gamma \\ 
t^{\iota-\gamma} s_{\iota\gamma} &\mbox{ if } \iota > \gamma \\
 \end{aligned} 
\right. 
\]
while 
      $(\lambda_1(t).j)_{\alpha} = t^{n-\alpha}y_{\alpha}$. 
Take the limit as $t\rightarrow 0$ to obtain 
\[ \lim_{t\rightarrow 0} (\lambda_1(t).s)_{\iota\gamma} = 
\left\{ \begin{aligned} 
         * \hspace{1mm} &\mbox{ if } \iota < \gamma \\ 
s_{\iota\iota}          &\mbox{ if } \iota = \gamma \\ 
0 \hspace{1mm}          &\mbox{ if } \iota > \gamma \\
        \end{aligned}
\right. 
\]
and 
\[
\lim_{t\rightarrow 0}(\lambda_1(t).j)_{\alpha} = \left\{ 
\begin{aligned}
0  \hspace{1mm} &\mbox{ if } \alpha < n \\ 
y_n             &\mbox{ if } \alpha =n. \\
\end{aligned}
\right. 
\] 
So the $B$-invariant function $F(s,j)$ is independent of the variables $\{ s_{\iota\gamma} \}_{\iota > \gamma}$
and $\{ y_{\alpha} \}_{\alpha < n}$. 
Now consider another $1$-parameter subgroup 
\[
 \lambda_2(t)_{\iota\gamma} = \left\{ 
\begin{aligned}
 t^{\iota-n}    &\mbox{ if }\iota \leq \gamma \\ 
 0 \hspace{2mm} &\mbox{ if } \iota > \gamma. \\
\end{aligned}
\right. 
\]
Then $(\lambda_2(t).s)_{\iota\gamma} = $ 
\begin{equation}\label{eq:lambda2s-b-times-V} 
= \left\{ 
\begin{aligned} 
* \hspace{20mm}&\mbox{ if }\iota < \gamma  \\
t^{\iota-\gamma} \left( \sum_{k=\iota}^n s_{k \gamma} - \sum_{k=\iota}^n s_{k,\gamma-1}\right) 
 &\mbox{ if } \iota \geq \gamma \\ 
\end{aligned} 
\right. 
= \left\{ 
\begin{aligned} 
* \hspace{20mm}& \mbox{ if } \iota < \gamma \\
s_{\iota\iota}-\sum_{k=\iota}^n s_{k,\iota -1}+ \sum_{k=\iota+1}^n s_{k\iota}\hspace{4mm}				 & \mbox{ if } \iota = \gamma  \\ 
t^{\iota-\gamma} \left( \sum_{k=\iota}^n s_{k\gamma} - \sum_{k=\iota}^n s_{k,\gamma-1} \right)  & \mbox{ if }  \iota > \gamma  \\ 
\end{aligned} 
\right. 
\end{equation} 
while 
$(\lambda_2(t).j)_{\alpha} =t^{n-\alpha}(y_{\alpha}-y_{\alpha-1} )$. 
Since $F(s,j)=F(s',j')$ for any $(s',j')\in \overline{B.(s,j)}$, 
the equality 
$\displaystyle{\lim_{t\rightarrow 0}F(\lambda_1(t).(s,j) )}
=
\displaystyle{\lim_{t\rightarrow 0}F(\lambda_2(t).(s,j))}$ 
 must hold regardless of the values of $\{ s_{\mu\nu}\}_{\mu>\nu}$ and $\{ y_{\alpha}\}_{\alpha<n}$. 
So for each  
$1\leq \iota < n$ (starting with $\iota=1$ in ascending order), choose $\{ s_{k\iota}\}_{k>\iota}$
in  $\displaystyle{\sum_{\iota < k\leq n} s_{k\iota}}$ so that 
\begin{equation}\label{eq:choosing-free-vars-b-times-V}
-\sum_{k=\iota+1}^n s_{k\iota}=s_{\iota\iota}-\sum_{k=\iota}^n s_{k,\iota-1}  
\end{equation} 
and choose $y_{n-1}$ so that $y_{n-1}=y_n$.

Rewrite (\ref{eq:choosing-free-vars-b-times-V}) so that for each $1\leq \iota <n$,  
$\displaystyle{\sum_{\iota-1<k\leq n}s_{k,\iota-1}}- 
\displaystyle{\sum_{\iota < k\leq n} s_{k,\iota}}$ $=$ $s_{\iota\iota}$, which implies 
the sum of all such sum as $\iota$ varies over $1$ to $n-1$ is 
\[\begin{aligned}
\sum_{\iota=1}^{n-1} \left(\sum_{k=\iota}^n s_{k,\iota-1} -\sum_{k=\iota+1}^n s_{k \iota}\right) 
		& = \sum_{k=1}^n \not{s_{k0}} 
				+ \left(\sum_{\iota=2}^{n-1} \sum_{k=\iota}^n s_{k,\iota-1}
					  -\sum_{\iota=1}^{n-2} \sum_{ k=\iota+1 }^n s_{k\iota}  \right)
		   			-\sum_{k=n}^n s_{k,n-1} \\
		& = -\sum_{k=n}^n s_{k,n-1} = \sum_{\iota=1}^{n-1} s_{\iota\iota}. \\
\end{aligned}
\] 

By (\ref{eq:lambda2s-b-times-V}) and by making appropriate choices for $\{ s_{\mu\nu}\}_{\mu>\nu}$ and $y_{n-1}$, 
we have 
$(\lambda_2(t).s)_{\iota\iota}=0$ for each $1\leq \iota <n$ while 
$(\lambda_2(t).s)_{nn}
=
s_{nn}- \displaystyle{\sum_{n-1<k\leq n} s_{k,n-1}} 
= 
\tr(s)$. Furthermore, all entries of $j$ are zero, so the entries of the covector $j$ do not contribute as $B$-invariant polynomials.

Thus for $F$ in $\mathbb{C}[\mathfrak{b}^*\times (\mathbb{C}^n)^* ]^B$, $F(s,j)=F(s',j')$,  
where all entries of $s'$ are zero   
except the entry $s_{nn}'$ (which equals $\tr(s)$), and all entries of $j$ are zero. 
Thus in order for $F$ to be a $B$-invariant polynomial, $F$ must a polynomial in $\tr(s)$. 

The second statement of the lemma holds due to this proof and Theorem~\ref{theorem:two-paths-max-quiver-semi-invariants-framed}.  
\end{proof}

\section{Comment regarding at most two pathways}\label{section:comment-regarding-at-most-two-paths}

Revisiting Theorems~\ref{theorem:two-paths-max-quiver-semi-invariants} and \ref{theorem:two-paths-max-quiver-semi-invariants-framed},  
we will now discuss the restriction of having at most two pathways at each vertex of a quiver.

Consider the $2$-Jordan quiver: 
$\xymatrix@-1pc{ \stackrel{1}{\bullet} \ar@(dl,ul)^{a_1} \ar@(ur,dr)^{a_2}  
}$ where $\beta=2$ and $F^{\bullet}$ is the complete standard filtration of vector spaces. 
Then this quiver has seven pathways at vertex $1$: 
$e_1$, $a_1$, $a_2$, $a_1 a_2$, $a_2 a_1$, $a_1 a_2 a_1$, and $a_2 a_1 a_2$. 
The filtered representation space is  
$F^{\bullet}Rep(Q,2) = \mathfrak{b}_2^{\oplus 2}$,  
and let  
\[  
u = \begin{pmatrix} 
1 & u_{12} \\ 
0 & 1 
\end{pmatrix} \in U,  
\hspace{4mm} 
A = \begin{pmatrix} 
a_{11} & a_{12} \\ 
0 & a_{22} 
\end{pmatrix} \in \mathfrak{b}_2, \mbox{ and } 
C = \begin{pmatrix} 
c_{11} & c_{12} \\ 
0 & c_{22} 
\end{pmatrix} \in \mathfrak{b}_2. 
\] 
Then 
\[ 
u.(A,C) = (uAu^{-1}, uCu^{-1}) 
= \left( 
\begin{pmatrix}
a_{11} & a_{12}-(a_{11}-a_{22})u_{12} \\ 
0 & a_{22} 
\end{pmatrix}, 
\begin{pmatrix}
c_{11} & c_{12}-(c_{11}-c_{22})u_{12} \\ 
0 & c_{22} 
\end{pmatrix}
\right). 
\] 
We can easily check that 
\begin{equation}\label{eq:2-Jordan-quiver-n2-5-generators-open-problem}
\mathbb{C}[\mathfrak{b}_2^{\oplus 2}]^U \supseteq 
\mathbb{C}[a_{11},a_{22},c_{11},c_{22}, (a_{11}-a_{22})c_{12}-(c_{11}-c_{22})a_{12}]. 
\end{equation} 
Thus $\mathbb{C}[\mathfrak{b}_2^{\oplus 2}]^U\supsetneq \mathbb{C}[\mathfrak{t}_2^{\oplus 2}]$. 
Furthermore, it remains an open problem that the containment in 
\eqref{eq:2-Jordan-quiver-n2-5-generators-open-problem} 
in the other direction holds. 
We refer the reader to Section~\ref{subsection:m-Jordan-quiver} for statements of open problems related to the $m$-Jordan quiver.

Now consider the $3$-Jordan quiver:    
\[   
\xymatrix@-1pc{  
\stackrel{1}{\bullet} \ar@(dl,ul)^{a_1} \ar@(ul,ur)^{a_2} \ar@(ur,dr)^{a_3} \\
} 
\]   
with $\beta=3$ and $F^{\bullet}$ the complete standard filtration of vector spaces. 
Then $F^{\bullet}Rep(Q,\beta)=\mathfrak{b}_3^{\oplus 3}$ and under the maximal unipotent subgroup $U\subseteq B$ action, 
$\mathbb{C}[\mathfrak{t}_3^{\oplus 3}] \subsetneq \mathbb{C}[F^{\bullet}Rep(Q,\beta)]^{U}$ 
due to the following reason. 
Let $(A,C,S)\in \mathfrak{b}_3^{\oplus 3}$, where 
\[ 
A =\begin{pmatrix}  
 a_{11}& a_{12}& a_{13}\\ 
 0& a_{22}& a_{23}\\ 
 0& 0& a_{33}   
\end{pmatrix}, \hspace{2mm}
C =\begin{pmatrix}  
 c_{11}& c_{12}& c_{13}\\ 
 0& c_{22}& c_{23}\\ 
 0& 0& c_{33}  
\end{pmatrix}, \hspace{2mm} 
S =\begin{pmatrix} 
 s_{11}& s_{12}& s_{13}\\ 
 0& s_{22}& s_{23}\\ 
 0& 0& s_{33} 
\end{pmatrix}. 
\] 
 Consider the polynomial 
 $f(A,C,S)=(a_{11}-a_{22})c_{12}-(c_{11}-c_{22})a_{12}$. 
 Then for $u\in U$, where 
 \[ 
 u = \begin{pmatrix} 
 1 &u_{12} &u_{13} \\ 
 0 &1 & u_{23}\\  
 0 &0 & 1 
  \end{pmatrix}, 
 \]  
 \[ 
 \begin{aligned} 
 u.f(A,C,S) &= f(uAu^{-1},uCu^{-1}, uSu^{-1}) \\ 
 &= (a_{11}-a_{22})(c_{12}+(c_{22}-c_{11})u_{12} )-(c_{11}-c_{22})(a_{12}+(a_{22}-a_{11})u_{12}) \\ 
 &=(a_{11}-a_{22})c_{12}+(a_{11}-a_{22})(c_{22}-c_{11})u_{12} 
 -(c_{11}-c_{22})a_{12}-(c_{11}-c_{22})(a_{22}-a_{11})u_{12} \\ 
 &=f(A,C,S).\\
 \end{aligned} 
 \]  
 Thus, $f$ is a polynomial in $\mathbb{C}[F^{\bullet}Rep(Q,\beta)]^{U}$ but not in $\mathbb{C}[\mathfrak{t}_3^{\oplus 3}]$. 
 Similarly, we obtain the following $U$-invariant polynomials: 
 \[ 
 \begin{aligned} 
 (c_{11}-c_{22})s_{12}-&(s_{11}-s_{22})c_{12}, \hspace{4mm} (a_{11}-a_{22})s_{12}-(s_{11}-s_{22})a_{12}, \\ 
 (a_{22}-a_{33})c_{23}-&(c_{22}-c_{33})a_{23}, \hspace{4mm} (a_{22}-a_{33})s_{23}-(s_{22}-s_{33})a_{23},\\ 
                       &(c_{22}-c_{33})s_{23}-(s_{22}-s_{33})c_{23}.     \\ 
 \end{aligned}
 \] 
 We believe that there are other invariant polynomials, possibly of degree $3$ (for this particular example $F^{\bullet}Rep(3$-$\Jordan,3)$), 
 such that at least one of their monomials is divisible by the variable $a_{13}$, $c_{13}$, or $s_{13}$.
 In fact, for the $m$-Jordan quiver,  we conjecture that for $m\geq 1$, 
 \[ \dim_{\mathbb{C}} \mathfrak{b}^{\oplus m}/\!\!/U = \dim_{\mathbb{C}} \mathfrak{b}^{\oplus m}-\dim_{\mathbb{C}} U.
 \]  
 Note that by setting the coordinates $s_{ij}=0$, we obtain the $2$-Jordan quiver 
 $\xymatrix@-1pc{\stackrel{1}{\bullet} \ar@(ru,rd)^{a_2} \ar@(lu,ld)_{a_1}}$, which is still a quiver with more than $2$ distinct pathways, and it is clear that for the $2$-Jordan quiver, 
 $\mathbb{C}[\mathfrak{t}^{\oplus 2}]\subsetneq \mathbb{C}[\mathfrak{b}^{\oplus 2}/U]$ as we have seen earlier this section.  
 Returning to the $3$-Jordan quiver,  
 there are relations among the degree $1$ and the degree $2$ polynomials,  
 i.e.,  
 let 
 \[
 \begin{aligned} 
 f_1 = a_{11}, \hspace{2mm} 
 f_2 = a_{22}, \hspace{2mm}  
 &g_1 = c_{11}, \hspace{2mm} 
 g_2 = c_{22},                \\ 
 h_1 = s_{11}, \hspace{2mm}   
 h_2 = s_{22}, \hspace{2mm}   
 &k_1 = (a_{11}-a_{22})c_{12}-(c_{11}-c_{22})a_{12}, \\  
 k_2 = (c_{11}-c_{22})s_{12}-(s_{11}-s_{22})c_{12}, \hspace{2mm} 
 &k_3 = (a_{11}-a_{22})s_{12}-(s_{11}-s_{22})a_{12}.  \\   
 \end{aligned}   
 \]   
Then $(h_1-h_2)k_1 - (g_1-g_2)k_2 + (f_1-f_2)k_3 =0$.   
 We conjecture that there are relations (of higher degree) among all the invariant generators.  
 The $m$-Jordan quiver is revisited in Section~\ref{subsection:m-Jordan-quiver}.

Now, consider the $3$-Kronecker quiver: 
\[ 
\xymatrix@-1pc{
\stackrel{1}{\bullet} \ar@/^/[rrr]^{a_1} \ar[rrr]|{a_2} \ar@/_/[rrr]_{a_3} & & & \stackrel{2}{\bullet} \\ 
}
\]  
which has $3$ pathways from vertex $1$ to $2$. 
Let $(A,C,S)\in\mathfrak{b}_3^{\oplus 3}$, where 
\[
 A =\begin{pmatrix}  
 a_{11}& a_{12}& a_{13}\\ 
 0& a_{22}& a_{23}\\ 
 0& 0& a_{33}  
\end{pmatrix}, \hspace{2mm}
C =\begin{pmatrix}  
 c_{11}& c_{12}& c_{13}\\ 
 0& c_{22}& c_{23}\\ 
 0& 0& c_{33}   
\end{pmatrix}, \hspace{2mm} 
S =\begin{pmatrix}  
 s_{11}& s_{12}& s_{13}\\ 
 0& s_{22}& s_{23} \\ 
 0& 0& s_{33}  
\end{pmatrix}. 
\]
Suppose $(u,v)\in \mathbb{U}_{\beta}:=U_3^{2}$ acts on $(A,C,S)$ by $(uAv^{-1}, uCv^{-1}, uSv^{-1})$,
where \[
u = \begin{pmatrix}  
 1& u_{12}& u_{13}\\
 0& 1& u_{23}\\
 0& 0& 1  
\end{pmatrix},  \hspace{2mm} 
v= \begin{pmatrix}   
 1& v_{12}& v_{13}\\ 
 0& 1& v_{23}\\ 
 0& 0& 1   
\end{pmatrix}.  
\]
Consider 
$f(A,C,S) = 
  (a_{11} c_{22} - a_{22} c_{11}) s_{12}  
+ (s_{11} a_{22} - s_{22} a_{11}) c_{12}  
+ (c_{11} s_{22} - c_{22} s_{11}) a_{12}$.  
Then  
\begin{equation}\label{equation:3-kronecker-quiver-semi-invariant}
 \begin{aligned}  
(u,v).f(A,C,S)&=f(u^{-1}Av, u^{-1}Cv, u^{-1}Sv) \\  
&= (a_{11} c_{22} - a_{22} c_{11})(s_{12} - s_{22} u_{12} + s_{11} v_{12}) 
+ (s_{11} a_{22} - s_{22} a_{11})(c_{12} - c_{22} u_{12} + c_{11} v_{12}) \\ 
&+ (c_{11} s_{22} - c_{22} s_{11})(a_{12} - a_{22} u_{12} + a_{11} v_{12}) =f(A,C,S).\\  
\end{aligned} 
\end{equation}
Thus $f$ is an invariant polynomial which is not in $\mathbb{C}[\mathfrak{t}_3^{\oplus 3}]$.  
Analogously, we obtain one other degree $3$ polynomial  
\begin{equation}\label{eq:3-kronecker-quiver-2-3-entries}
(a_{22} c_{33} - a_{33} c_{22}) s_{23} - c_{23} (a_{22} s_{33} - a_{33} s_{22}) + 
 a_{23} (c_{22} s_{33} - c_{33} s_{22}) 
\end{equation} 
and we believe that there is at least one polynomial of degree $6$ with at least one of its terms divisible by 
$a_{13}$, $c_{13}$, or $s_{13}$. 
Thus, we conjecture that 
$\mathbb{C}[\mathfrak{b}_3^{\oplus 3}]^{U_3\times U_3}$ is generated polynomials of degrees $1$, $3$ and $6$. 
In general, we conjecture that for the $k$-Kronecker quiver,   
$\mathbb{C}[\mathfrak{b}^{\oplus k}]^{U\times U}$ is generated by polynomials of degrees $\dfrac{d(d+1)}{2}$ for each $1\leq d\leq n$,   
where $\mathfrak{b}\subseteq \mathfrak{gl}_n$.  
Furthermore, we conjecture that for $k\gneq 1$, 
$\dim_{\mathbb{C}} \mathfrak{b}^{\oplus k}/\!\!/U\times U=\dim_{\mathbb{C}} \mathfrak{b}^{\oplus k}-\dim_{\mathbb{C}} U\times U$.

One final remark is the following observation: 
note that the polynomial $f$ in \eqref{equation:3-kronecker-quiver-semi-invariant} is the determinant of the matrix 
\[
 \begin{pmatrix}  
 a_{11}&c_{11} &s_{11} \\ 
 a_{22}&c_{22} &s_{22} \\ 
 a_{12}&c_{12} &s_{12}  
\end{pmatrix} 
\]
and note that the polynomial in \eqref{eq:3-kronecker-quiver-2-3-entries} is the determinant of the matrix 
\[ 
 \begin{pmatrix}  
 a_{22}&c_{22} &s_{22} \\ 
 a_{33}&c_{33} &s_{33} \\ 
 a_{23}&c_{23} &s_{23}  
\end{pmatrix}.  
\] 
We believe that this observation may help us in finding other invariant polynomials of higher degrees. 
The $k$-Kronecker quiver is revisited in Section~\ref{subsection:k-Kronecker-quiver}.

\chapter{The \texorpdfstring{$B$}{B}-moment map restricted to the regular semisimple locus}\label{chapter:rss-locus}

\section{Introduction}

\begin{notation}
Let $B$ be the set of all upper triangular matrices in the general linear group $GL(n,\mathbb{C})$. We will call $B$ the Borel subgroup, or the Borel. 
\end{notation} 

\begin{notation}
We will write $\mathfrak{b}$ to denote the Lie algebra of $B$. It is the tangent space of $B$ at the identity element. 
\end{notation}

\begin{motivation}
Let $B$ act on the vector space $\mathfrak{b}\times \mathbb{C}^n$ with group action $b.(r,i)=(brb^{-1},bi)$. This action is induced onto the cotangent bundle $T^*(\mathfrak{b}\times \mathbb{C}^n) = \mathfrak{b}\times \mathfrak{b}^*\times \mathbb{C}^n\times (\mathbb{C}^n)^*$ of $\mathfrak{b}\times \mathbb{C}^n$ as: 
\[ 
b.(r,s,i,j)= (\Ad_b(r), \Ad_b^*(s),bi, jb^{-1})= (brb^{-1},bsb^{-1},bi,jb^{-1}).
\]  
 
 The infinitesimal action of $G$ induces the map $a:\mathfrak{b}\rightarrow Vect(\mathfrak{b}\times\mathbb{C}^n)$ which is given as $a(v)(r,i)=\frac{d}{dt}(g_t.(r,i))|_{t=0}=([v,r],vi)$ where $g_t=\exp(tv)$.
  Dualizing this map gives the moment map $\mu:T^*\left( \mathfrak{b}\times\mathbb{C}^n \right)\rightarrow \mathfrak{b}^*$ where $(r,s,i,j)$ is mapped to $\ad^*_r(s)+\overline{ij}$, which is  $[r,s]+\overline{ij}$ in our case, with $\overline{v}:\mathfrak{g}^* \rightarrow \mathfrak{b}^*$ being the projection map. 
For each character $\chi:B\rightarrow\mathbb{C}^*$, we are interested in understanding 
$\mu^{-1}(0)/\!\!/_{\chi}B$.  

Another main motivation for an interest in understanding the affine and GIT quotients of the above moment map is because $\mu^{-1}(0)/B$ is isomorphic to the cotangent bundle $T^*( G\times_B \mathfrak{b}\times\mathbb{C}^n/G )$ of a well-known stack where $G=GL(n,\mathbb{C})$. We refer the reader to \cite{Nevins-GSresolutions}
for the construction of the cotangent bundle of the Grothendieck-Springer resolution and to Chapter 3 of \cite{MR2838836} and Section 6 of \cite{MR1649626} for some background on the Grothendieck-Springer resolution.  
\end{motivation}  

We begin by restricting $\mathfrak{b}$ to its regular semisimple locus. 

\begin{notation}\label{notation:distinct-r-eigenvalues}
Let $\mu^{-1}(0)^{rss}$ be the set of quadruples 
\[ \{ (r,s,i,j)\in \mu^{-1}(0): r \mbox{ has distinct eigenvalues}\}.  
  \]  
\end{notation}

\begin{notation}
Let $r=(r_{\iota\gamma})$ be an $n\times n$ matrix. We write $\diag(r)$ to mean the diagonal matrix 
with entries $(r_{11},r_{22},\ldots, r_{nn})$ along its main diagonal. 
\end{notation}

\begin{trace}\label{trace:map-from-B-rss-to-complex-space}
For $(r,s,i,j)$ in $\mu^{-1}(0)^{rss}$,  choose $b\in B$ as in Proposition~\ref{proposition:distinct-ev-diagonalizable} so that 
\[(brb^{-1},bsb^{-1},bi,jb^{-1})=(\diag(r),s',i',j'). \] 
We prove in Proposition~\ref{proposition:deriving-diag-coordinates-of-bsb-inverse} that 
each diagonal coordinate function of $s'=(s_{\iota\gamma}')$ is 
\[ 
\begin{aligned}
&s_{\iota\iota}' 
=  s_{\iota\iota}
+ \sum_{\mu=\iota+1}^n \left( 
\dfrac{r_{\iota\mu} s_{\mu\iota} }{r_{\iota\iota}-r_{\mu\mu}} 
+ \sum_{v=1}^{\mu-\iota-1} \sum_{\iota < k_1 < \ldots < k_v<\mu} 
\dfrac{r_{\iota k_1}r_{k_v \mu} s_{\mu\iota} }{(r_{\iota\iota}-r_{k_1 k_1 })( r_{\iota\iota}-r_{\mu\mu})} \prod_{u=1}^{v-1} \dfrac{r_{k_u k_{u+1}}}{r_{\iota\iota}-r_{k_{u+1}k_{u+1}}}
\right) 
\\
&+ 
\sum_{\gamma=1}^{\iota-1}\left[ 
   \dfrac{ r_{\gamma\iota} s_{\iota\gamma}  }{r_{\iota\iota}-r_{\gamma\gamma}}
+\sum_{\widetilde{v}=1}^{\iota-\gamma-1} 
\sum_{\gamma< l_1 < \ldots < l_{\widetilde{v}}<\iota}
\dfrac{r_{\gamma l_1} r_{l_{\widetilde{v}}\iota} s_{\iota \gamma} }{
(r_{\iota\iota}-r_{l_{\widetilde{v}}l_{\widetilde{v}}})(r_{\iota\iota}-r_{\gamma\gamma})} \prod_{\widetilde{u}=1}^{\widetilde{v}-1} 
\dfrac{r_{l_{\widetilde{u}}l_{\widetilde{u}+1}}}{r_{\iota\iota}-r_{l_{\widetilde{u}}l_{\widetilde{u}}} } 
 \right. \\
&+ \sum_{\mu=\iota+1}^n \dfrac{r_{\gamma\iota} r_{\iota\mu}s_{\mu\gamma}}{ (r_{\iota\iota}-r_{\gamma\gamma})( r_{\iota\iota}-r_{\mu\mu})}
+ \sum_{\mu=\iota+1}^n   \sum_{\widetilde{v}=1}^{\iota-\gamma-1} \sum_{\gamma < l_1 < \ldots < l_{\widetilde{v}}<\iota} 
 \dfrac{r_{\gamma l_1}r_{l_{\widetilde{v}} \iota} r_{\iota\mu} s_{\mu\gamma} }{(r_{\iota\iota}-r_{l_{\widetilde{v}}l_{\widetilde{v}} })(r_{\iota\iota}-r_{\gamma\gamma})(r_{\iota\iota}-r_{\mu\mu})} \prod_{\widetilde{u}=1}^{\widetilde{v}-1} \dfrac{r_{l_{\widetilde{u}} l_{\widetilde{u}+1} }}{r_{\iota\iota}-r_{l_{\widetilde{u}}l_{\widetilde{u}} }}  
\\ 
&+ \sum_{\mu=\iota+1}^n \sum_{v=1}^{\mu-\iota-1}  
\sum_{\iota < k_1 < \ldots < k_v < \mu} 
 \dfrac{r_{\gamma\iota} r_{\iota k_1}r_{k_v \mu}s_{\mu\gamma} }{
(r_{\iota\iota}-r_{\gamma\gamma}) (r_{\iota\iota}-r_{k_1 k_1})(r_{\iota\iota}-r_{\mu\mu})} 
\prod_{u=1}^{v-1} \dfrac{r_{k_u k_{u+1}}}{r_{\iota\iota}-r_{k_{u+1}k_{u+1}}} 
 \\
&+ 
 \sum_{\mu=\iota+1}^n 
 \sum_{v=1}^{\mu-\iota-1} 
\sum_{\iota < k_1 < \ldots < k_v < \mu} 
 \sum_{\widetilde{v}=1}^{\iota-\gamma-1} 
\sum_{\gamma < l_1<\ldots < l_{\widetilde{v}}<\iota } 
\dfrac{r_{\gamma l_1}r_{l_{\widetilde{v}}\iota} r_{\iota k_1} r_{k_v \mu}s_{\mu\gamma}
}{(r_{\iota\iota}-
r_{l_{\widetilde{v}}l_{\widetilde{v}}})(r_{\iota\iota}-r_{\gamma\gamma})
(r_{\iota\iota}-r_{k_1 k_1})(r_{\iota\iota}-r_{\mu\mu})}\cdot \\
&\left. \;\;\;\;\;\;\;\;\;\;\;\;\cdot\prod_{u=1}^{v-1} \dfrac{r_{k_u k_{u+1}} }{r_{\iota\iota}-r_{k_{u+1}k_{u+1}}} 
 \prod_{\widetilde{u}=1}^{\widetilde{v}-1} 
\dfrac{r_{l_{\widetilde{u}}l_{\widetilde{u}+1}} }{r_{\iota\iota}-r_{l_{\widetilde{u}}l_{\widetilde{u}}} }  
 \right],\\
\end{aligned}
\] 
which is compactly written as 
\[ s_{\iota\iota}'=
 \left[ \tr\left( \prod_{
1\leq k\leq n, k\not=\iota 
} l_k(r) \right)\right]^{-1} 
  \tr \left(\prod_{
1\leq k\leq n, k\not=\iota
} l_k(r) \:s\right) 
\] where $l_k(r) = r -r_{kk}I$. 
\end{trace}

\begin{notation}\label{notation:definition-of-Delta-n-set}
We denote $\Delta_n\subseteq \mathbb{C}^{2n}$ to be the set $\{ (x_{11},\ldots, x_{nn},0,\ldots, 0): x_{\iota\iota}=x_{\gamma\gamma} 
$\mbox{ for some $\iota\not=\gamma$}$ \}$.
\end{notation}

Thus $\mathbb{C}^{2n}\setminus \Delta_n$ is the locus  
\[\{ (x_{11},\ldots, x_{nn},y_{11},\ldots,y_{nn}): x_{\iota\iota}\not= x_{\gamma\gamma} \mbox{ whenever } \iota\not= \gamma
\}.
\] 

We now state the main Propositions proved in this paper. 

\begin{proposition}\label{proposition:rss-extending-to-all-of-the-reg-semi-stable-locus}
The map 
\[ P:\mu^{-1}(0)^{rss}\rightarrow\hspace{-8pt}\rightarrow \mathbb{C}^{2n}\setminus \Delta_n 
\] 
given by sending 
\[(r,s,i,j)\mapsto (r_{11},\ldots, r_{nn},s_{11}',\ldots, s_{nn}'),
\] where $s_{\iota\iota}'$ is defined as in Trace~\ref{trace:map-from-B-rss-to-complex-space}, 
is a regular, well-defined, and $B$-invariant surjective map separating orbit closures. 
\end{proposition}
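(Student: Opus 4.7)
The plan is to establish the four asserted properties of $P$ (well-definedness, regularity, $B$-invariance, surjectivity) directly, and then to analyze the fibers of $P$ to prove orbit closure separation. The compact trace expression from Trace~\ref{trace:map-from-B-rss-to-complex-space},
\[
s_{\iota\iota}' \;=\; \frac{\tr\bigl(\prod_{k\neq\iota} l_k(r)\, s\bigr)}{\tr\bigl(\prod_{k\neq\iota} l_k(r)\bigr)}, \hspace{4mm} l_k(r) := r - r_{kk}I,
\]
handles the first three properties at once. Since $r$ is upper triangular with eigenvalues $r_{11},\ldots,r_{nn}$, for any polynomial $p$ we have $\tr(p(r)) = \sum_\mu p(r_{\mu\mu})$; applied to $p(X) = \prod_{k\neq\iota}(X - r_{kk}I)$, only the summand $\mu = \iota$ survives (every other summand contains a factor $r_{\mu\mu} - r_{\mu\mu}$), so $\tr(\prod_{k\neq\iota} l_k(r)) = \prod_{k\neq\iota}(r_{\iota\iota} - r_{kk})$, which is nonzero on $\mu^{-1}(0)^{rss}$. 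This establishes regularity. Well-definedness is immediate because this trace expression depends only on $(r,s)$ and not on any chosen $b$. $B$-invariance follows from the invariance of trace under conjugation by any invertible matrix, together with the observation that conjugation of an upper-triangular matrix by an upper-triangular matrix preserves the diagonal entries $r_{\iota\iota}$.

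For surjectivity, given $(\vec{x},\vec{y}) \in \mathbb{C}^{2n}\setminus\Delta_n$ the quadruple $\widetilde{q} := (\diag(\vec{x}),\diag(\vec{y}),0,0)$ lies in $\mu^{-1}(0)^{rss}$ (both $[r,s]$ and $ij$ vanish) and maps to $(\vec{x},\vec{y})$: when $r$ is diagonal, $\prod_{k\neq\iota} l_k(r)$ is diagonal with $(\iota,\iota)$-entry $\prod_{k\neq\iota}(x_{\iota\iota}-x_{kk})$ and zero elsewhere, so $\tr(\prod_{k\neq\iota} l_k(r)\,s) = s_{\iota\iota}\prod_{k\neq\iota}(x_{\iota\iota}-x_{kk})$, yielding $s_{\iota\iota}' = s_{\iota\iota} = y_{\iota\iota}$.

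The main technical step is separating orbit closures. The plan is to show that for every $q = (\vec{x},\vec{y}) \in \mathbb{C}^{2n}\setminus\Delta_n$ and every $p \in P^{-1}(q)$, the $B$-orbit closure of $p$ contains the distinguished preimage $\widetilde{q}$ above; once established, all orbit closures over $q$ share the common point $\widetilde{q}$, hence cannot be disjoint, so $P$ separates orbit closures in the usual sense. Using Proposition~\ref{proposition:distinct-ev-diagonalizable} one first $B$-conjugates $p$ to the slice where $r = \diag(\vec{x})$, on which the moment map condition $[r,s] + ij \equiv 0 \pmod{\mathfrak{n}^+}$ becomes $i_\iota j_\iota = 0$ for all $\iota$ together with $s_{\iota\gamma} = -i_\iota j_\gamma/(x_{\iota\iota}-x_{\gamma\gamma})$ for $\iota > \gamma$, while the diagonal of $s$ equals $\vec{y}$ and the strictly upper-triangular entries of $s$ remain free. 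One then constructs a limit in the maximal torus $T$ driving the free upper-triangular entries of $s$ and the vectors $i,j$ to zero; the moment map relation then forces the strictly lower-triangular entries of $s$ to vanish as well, leaving precisely $\widetilde{q}$.

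The chief technical obstacle is that no single one-parameter subgroup of $T$ scales every coordinate to zero: for example under $\lambda(t) = \diag(t^n, t^{n-1},\ldots, t)$ as $t \to 0$, the strictly upper entries of $s$ and the vector $i$ all go to zero, but the components of $j$ blow up. One must exploit the vanishing condition $i_\iota j_\iota = 0$, which implies that for each index $\iota$ either $i_\iota$ or $j_\iota$ is already zero, to decouple the scaling and build the degeneration index by index (or by a secondary torus limit after the first limit has killed certain coordinates). This case-by-case limit analysis, respecting the moment map constraints throughout, is the heart of the argument.
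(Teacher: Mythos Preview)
Your approach matches the paper's: the trace formula handles regularity and $B$-invariance, and the heart of the orbit-closure separation is exactly the paper's Proposition~\ref{proposition:openlocus-affine-quotient}, namely that every orbit in a fiber $P^{-1}(q)$ degenerates to the diagonal point $(\diag(\vec{x}),\diag(\vec{y}),0,0)$. Your surjectivity argument via this diagonal preimage is in fact cleaner than the paper's.

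Two small clarifications sharpen your limit argument. First, since $s\in\mathfrak{b}^*=\mathfrak{gl}_n/\mathfrak{n}^+$, there are no strictly upper-triangular entries of $s$ to drive to zero; they are already quotiented out, so you can drop that concern entirely. Second, no secondary limit or index-by-index construction is needed: the single one-parameter subgroup $\lambda(t)=\diag(t^{a_1},\ldots,t^{a_n})$ with $a_\iota=1$ if $i_\iota\neq 0$, $a_\iota=-1$ if $j_\iota\neq 0$, and $a_\iota=0$ otherwise (well-defined precisely because $i_\iota j_\iota=0$) does the job in one step. Under $\lambda(t)$ one has $i_\iota\mapsto t^{a_\iota}i_\iota\to 0$ and $j_\iota\mapsto t^{-a_\iota}j_\iota\to 0$, and whenever $s_{\iota\gamma}\neq 0$ for $\iota>\gamma$ the moment-map relation forces $i_\iota\neq 0$ and $j_\gamma\neq 0$, so $a_\iota-a_\gamma=2$ and $s_{\iota\gamma}\mapsto t^2 s_{\iota\gamma}\to 0$; the diagonal entries of $r$ and $s$ are fixed.
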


\begin{proposition}\label{proposition:rss-locus-to-complex-space-B-invariant}
The map given in Proposition~\ref{proposition:rss-extending-to-all-of-the-reg-semi-stable-locus}, which descends 
to a set-theoretic bijective homeomorphism 
$p: \mu^{-1}(0)^{rss}/\!\!/B\rightarrow \mathbb{C}^{2n}\setminus \Delta_n$, where 
\[ \overline{B.(r,s,i,j)} \mapsto (r_{11},\ldots, r_{nn},s_{11}',\ldots, s_{nn}') 
\]  with $s_{\iota\iota}'$ given as above, induces an isomorphism of varieties.  
\end{proposition}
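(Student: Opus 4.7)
The plan is to leverage Proposition~\ref{proposition:rss-extending-to-all-of-the-reg-semi-stable-locus} to factor $P$ through the affine quotient, then to construct an explicit regular inverse. Since $P$ is $B$-invariant and regular, the universal property of the affine quotient $\pi: \mu^{-1}(0)^{rss} \twoheadrightarrow \mu^{-1}(0)^{rss}/\!\!/B$ yields the morphism $p$ with $p \circ \pi = P$. Surjectivity of $P$ gives surjectivity of $p$. Because $P$ separates orbit closures, distinct closed $B$-orbits in $\mu^{-1}(0)^{rss}$ have distinct images under $P$; since the points of $\mu^{-1}(0)^{rss}/\!\!/B$ are in bijection with the closed $B$-orbits, $p$ is injective on closed points and is therefore a set-theoretic bijection of varieties.

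Next, I would construct a regular section
\[ \sigma: \mathbb{C}^{2n}\setminus \Delta_n \longrightarrow \mu^{-1}(0)^{rss}, \qquad (x_1,\ldots,x_n,y_1,\ldots,y_n) \longmapsto (\diag(x_1,\ldots,x_n),\, \diag(y_1,\ldots,y_n),\, 0,\, 0). \]
This lands in $\mu^{-1}(0)^{rss}$ because $[\diag(x),\diag(y)] = 0$, $\overline{ij} = 0$ when $i = j = 0$, and the $x_\iota$ are pairwise distinct off $\Delta_n$. Inspecting the formula in Trace~\ref{trace:map-from-B-rss-to-complex-space}, every summand of $s'_{\iota\iota}$ other than the leading $s_{\iota\iota}$ carries at least one off-diagonal factor $r_{\gamma\mu}$ with $\gamma \neq \mu$; when $r$ is diagonal all such terms vanish, so $s'_{\iota\iota} = s_{\iota\iota} = y_\iota$. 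Hence $P \circ \sigma = \id_{\mathbb{C}^{2n}\setminus \Delta_n}$, and $\pi \circ \sigma$ is a regular right inverse of $p$.

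To upgrade this to an isomorphism of varieties, it suffices to show $(\pi \circ \sigma) \circ p = \id$ on $\mu^{-1}(0)^{rss}/\!\!/B$. The quotient is the spectrum of the ring of invariants inside the reduced coordinate ring of $\mu^{-1}(0)^{rss}$, hence itself reduced, so two endomorphisms agree as soon as they coincide on closed points. For a closed point represented by $x \in \mu^{-1}(0)^{rss}$, the previous step yields $P(\sigma(P(x))) = P(x)$, and since $P$ separates orbit closures one concludes $\overline{B \cdot \sigma(P(x))} = \overline{B \cdot x}$, so $\sigma(P(x))$ and $x$ represent the same point of the quotient. Thus $p$ is an isomorphism of varieties with inverse $\pi \circ \sigma$, and in particular a bijective Zariski homeomorphism.

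The main obstacle I anticipate is not the bijectivity or the section (both are essentially formal given Proposition~\ref{proposition:rss-extending-to-all-of-the-reg-semi-stable-locus} and the transparent form of $\sigma$), but rather the verification that $\mu^{-1}(0)^{rss}$ is a reduced scheme, so that $\mathbb{C}[\mu^{-1}(0)^{rss}]^{B}$ is reduced and the closed-point argument suffices to check equality of morphisms. This should fall out of the complete-intersection/regular-sequence analysis of the moment map equations carried out elsewhere in the thesis, restricted to the open regular semisimple locus; once reducedness is granted, the section-based argument above closes the proof.
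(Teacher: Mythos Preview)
Your approach is correct and genuinely different from the paper's. The paper proceeds algebraically: it uses the off-diagonal moment-map equations to eliminate the strictly lower-triangular coordinates $s_{\iota\gamma}$ from $\mathbb{C}[\mu^{-1}(0)^{rss}]$, rewrites the diagonal equations as $F_\iota = jL^\iota i$, verifies that the $F_\iota$ form a regular sequence via their initial terms, and then asserts that the $B$-invariant subring is generated by the explicit invariants $H_\iota = r_{\iota\iota}$, $G_\iota = s'_{\iota\iota}$, $K_{\gamma\nu}$, and the $F_\iota$ (which vanish on the fiber), giving $\mathbb{C}[\mathbb{C}^{2n}\setminus\Delta_n]$ directly. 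Your section argument sidesteps the need to exhibit a generating set of invariants: once you know $\sigma(P(x)) \in \overline{B\cdot x}$ for every $x$ (this is the content of Proposition~\ref{proposition:openlocus-affine-quotient}), every $B$-invariant $f$ satisfies $f(x) = f(\sigma(P(x)))$ on closed points, hence $f = P^*(\sigma^* f)$ by reducedness, and $p^*$ is surjective. This is arguably more conceptual.

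Two small points. First, your assertion that the points of $\mu^{-1}(0)^{rss}/\!\!/B$ biject with closed $B$-orbits is not automatic for a non-reductive group, and your equation $\overline{B\cdot\sigma(P(x))} = \overline{B\cdot x}$ is generally false (the left side is the unique closed orbit strictly inside the right when $i$ or $j$ is nonzero). Neither matters: run the identity $f = f\circ\sigma\circ P$ directly on $\mu^{-1}(0)^{rss}$ (reduced, finite type over $\mathbb{C}$) rather than on the quotient, using only $\sigma(P(x))\in\overline{B\cdot x}$. Second, on reducedness: complete intersection alone does not give it, but the paper's initial-term computation does. Lemma~\ref{lemma:initial-term-of-each-F} shows $\In(F_\iota) = x_\iota y_\iota$, so the initial ideal $\langle x_1 y_1,\ldots,x_n y_n\rangle$ is a squarefree monomial ideal and hence radical; a standard Gr\"obner argument then forces $\langle F_1,\ldots,F_n\rangle$ to be radical, so $\mu^{-1}(0)^{rss}$ is reduced. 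With that in hand your proof is complete.
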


\section{Moduli spaces, geometric invariant theory, and Hilbert schemes}\label{section:GIT-moduli-spaces-Hilbert-schemes}

Applications of quivers are prominent in many areas of mathematics, i.e., 
Hilbert schemes and almost-commuting varieties \cite{MR2210660}, 
symplectic geometry \cite{MR2838836},
representation theory  
\cite{MR1302318}, 
\cite{MR1604167},  
\cite{MR1649626},  
coding theory 
\cite{MR0424398}, 
\cite{MR0398664}, 
\cite{MR1643864}, 
etc. 
We begin by discussing moduli spaces.

\subsection{Moduli spaces}\label{subsection:moduli-spaces}

Important problems in mathematics include not only on understanding the geometry of various spaces, but on classifying a family of geometric objects. 
Moduli theory is the study of the geometry of families of algebraic objects by limiting the objects of study, deciding how two objects are considered equivalent, and how the objects will be parameterized. 
Some references include \cite{MR1631825}, \cite{MR0262240}, \cite{MR2258040}, and \cite{MR1315461}.

Invariants and semi-invariants are important for the description of the moduli spaces of representations of a quiver for fixed dimension vector.  
Thus, the notion of (semi-)stability from geometric invariant theory is important in moduli problems. Furthermore, constructions from the previous section apply to quiver varieties (\cite{MR1315461} and \cite{MR1908144}): 
%
%
%
a representation $W$ of a quiver variety $Rep(Q,\beta)$ is thought of as a point in an affine variety, and a point in the representation space is {\em semistable} if some nonconstant semi-invariant polynomial does not vanish at the point. 
If the orbit $G.W$ is closed of dimension $\dim G$, then $W$ is called a 
{\em stable point}. 
It is also a classical notion that  
the character $\chi$ is called  
{\em generic}  
if all $\chi$-semi-stable representations are $\chi$-stable.  

\subsection{Geometric invariant theory}\label{subsection:geometric-invariant-theory-introduction}   

An extensive treatment on geometric invariant theory is given in \cite{MR1304906} and \cite{MR2537067}. 
Another important reference connecting quiver representations and geometric invariant theory is A. King's \cite{MR1315461}. 
In this section, we will give some of the key ideas in this area of mathematics that are used in this thesis. 
Geometric invariant theory focuses on studying geometric interpretations of orbit spaces since group actions on a variety   
naturally arise in many areas of mathematics.   

Let $X$ be an affine variety with an algebraic $G$-action. Then $G$ acts on $\mathbb{C}[X]$ by 
\[ 
(g.f)(x)=f(g^{-1}.x).
\]

Throughout this section, let $\chi:G\rightarrow \mathbb{C}^*$ be a group homomorphism from $G$ to the units $\mathbb{C}^*$ in $\mathbb{C}$.  
We refer to Chapter~\ref{chapter:introduction} for the definition of invariant and semi-invariant polynomials, and the definition of invariant subring.  

Using Definition~\ref{definition:invariant-semi-invariant-polynomials-coordinate-rings-introduction}, we have the following constructions: 
\[ 
X/\!\!/G :=\spec(\mathbb{C}[X]^G), \hspace{4mm} 
X/\!\!/_{\chi}G :=\Proj(\bigoplus_{i\geq 0}\mathbb{C}[X]^{G,\chi^i}). 
\]


\begin{definition} 
A {\em $1$-parameter subgroup}   
$\lambda:\mathbb{C}^*\rightarrow G$ is a group homomorphism from $\mathbb{C}^*$ to $G$. 
\end{definition} 

Since the $G$-action on $X$ is linear, we have a composition of maps 
\[ 
\mathbb{C}^* \stackrel{\lambda}{\longrightarrow} G\stackrel{\sigma}{\longrightarrow} GL_{n+1}(\mathbb{C})
\] 
with the property that there exists a basis $e_0,\ldots, e_n\in \mathbb{C}^{n+1}$ such that 
\[ 
\sigma \circ \lambda(t) = 
\begin{pmatrix}
 t^{c_0} &        &0 \\ 
  			 & \ddots & \\
    0    & 		   &t^{c_n}  
\end{pmatrix}. 
\]

Next we closely follow A. King in \cite{MR1315461} 
which specializes to the case of a reductive group $G$ acting on a finite dimensional vector space $X$. 
For the rest of this section, we will write $\chi:G\rightarrow \mathbb{C}^*$ as a character of the group $G$ and $\lambda:\mathbb{C}^*\rightarrow G$ as a $1$-parameter subgroup. 
Given a character $\chi$, $G$ acts on the trivial line bundle $\mathcal{L}$, i.e., the total space $X\times \mathbb{C}$ of $\mathcal{L}^{-1}$,  
via 
$g.(x,z)=(g.x,\chi^{-1}(g)z)$. 
Then an invariant section of $\mathcal{L}^n$ is $f(x)z^n\in \mathbb{C}[X\times \mathbb{C}]$, 
where $f(x)\in \mathbb{C}[X]^{G,\chi^n}$. 
When $\mathbb{C}[X]^G=\mathbb{C}$, then $X/\!\!/_{\chi}G$ is a projective variety. 
Furthermore, $X/\!\!/_{\chi}G$ has a geometric description as the quotient of the open set $X^{\chi-ss}$ of $\chi$-semistable points by the equivalence relation that $x\sim y$ if and only if 
$\overline{G.x}\cap \overline{G.y}\not=\varnothing$ in $X^{\chi-ss}$. 
Two semistable points (or orbits) under this identification is called ``GIT equivalent." Each orbit closure contains a unique closed orbit, so the points of the quotient are in $1$-$1$ correspondence with the closed orbits in $X^{\chi-ss}$. 
This implies that there is an open subset of the quotient corresponding to $\chi$-stable orbits, all of which are closed. 
A. King allows a representation to have a kernel $\Delta$. 

\begin{lemma} 
Let $(x,z)\in X\times \mathbb{C}$ be a lift of $x\in X$, with $z\not=0$. Then 
\begin{enumerate} 
\item $x$ is $\chi$-semistable if and only if $\overline{G.(x,z)}\subseteq X\times \mathbb{C}$ is disjoint from the zero section $X\times \{ 0\}$. In particular, $\chi(\Delta)$ must be $\{ 1\}$. 
\item $x$ is $\chi$-stable if and only if $G.(x,z)$ is closed and the stabilizer group $G_{(x,z)}$ of $(x,z)$ contains $\Delta$ with finite index. 
\end{enumerate} 
\end{lemma}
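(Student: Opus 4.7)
The plan is to prove both parts by going back and forth between semi-invariants on $X$ and genuine $G$-invariant functions on the total space $X \times \mathbb{C}$, using that the natural map $f(x) \mapsto f(x) z^n$ gives a bijection between $\chi^n$-semi-invariants of $X$ and the weight-$n$ piece $\mathbb{C}[X \times \mathbb{C}]^G_n$ of the $\mathbb{Z}_{\geq 0}$-graded ring $\mathbb{C}[X \times \mathbb{C}]^G = \bigoplus_{n \geq 0} \mathbb{C}[X]^{G,\chi^n} z^n$. For (1), the direction ``$x$ $\chi$-semistable $\Rightarrow$ orbit closure avoids the zero section'' is easy: by definition there exists $n \geq 1$ and $f \in \mathbb{C}[X]^{G,\chi^n}$ with $f(x) \neq 0$, and then $F(y,w) := f(y) w^n$ is a $G$-invariant regular function on $X \times \mathbb{C}$ which is nonzero at $(x,z)$ (since $z \neq 0$) but vanishes identically on $X \times \{0\}$; hence $F \equiv F(x,z)$ on $\overline{G.(x,z)}$ and the zero section cannot lie in the closure.

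For the converse direction in (1), I would use the standard separation property for reductive group actions: two disjoint closed $G$-invariant subvarieties of an affine $G$-variety can be separated by a $G$-invariant function. Applied to $\overline{G.(x,z)}$ and $X \times \{0\}$, this produces $F \in \mathbb{C}[X \times \mathbb{C}]^G$ with $F(x,z) = 1$ and $F|_{X \times \{0\}} = 0$. Decomposing $F = \sum_{n \geq 0} f_n(x) z^n$ into its $\mathbb{C}^*$-weight components (where $\mathbb{C}^*$ acts by scaling $z$), each $f_n$ lies in $\mathbb{C}[X]^{G,\chi^n}$; vanishing on the zero section kills $f_0$, and $F(x,z) \neq 0$ forces some $f_n(x) \neq 0$ with $n \geq 1$, giving $\chi$-semistability. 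For the ``in particular'' claim, observe that each $\delta \in \Delta$ satisfies $\delta.(x,z) = (x, \chi^{-1}(\delta) z)$, so the orbit $G.(x,z)$ contains $\{x\} \times \chi(\Delta)^{-1} z$; since $\chi(\Delta)$ is an algebraic subgroup of $\mathbb{C}^*$, it is either finite or all of $\mathbb{C}^*$, and in the latter case the closure of the fiber slice meets $(x,0)$, contradicting semistability. (More refinedly, any nonzero $\chi^n$-semi-invariant forces $\chi^n|_\Delta = 1$ since $\Delta$ acts trivially on $X$, so $\chi(\Delta)$ lies in $\mu_n$; this is the sense in which the statement is to be read.)

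For (2), both implications rest on (1). Assume $x$ is $\chi$-stable. Then $x$ is semistable, so $G.(x,z)$ is a locally closed subvariety of $X \times \mathbb{C}$ disjoint from $X \times \{0\}$. Its closure projects to $\overline{G.x}$ in $X$, and by $\chi$-stability $G.x$ is closed in $X^{\chi\text{-ss}}$; a short argument using that $\overline{G.(x,z)} \setminus G.(x,z)$ would have to project into $\overline{G.x} \setminus G.x \subseteq X \setminus X^{\chi\text{-ss}}$ shows there are no boundary points off the zero section, and part (1) rules out the zero section, so $G.(x,z)$ is closed. The stabilizer $G_{(x,z)}$ sits inside $G_x$ as the subgroup where $\chi = 1$, so contains $\Delta$; stability forces $\dim G_x = \dim \Delta$, whence $[G_{(x,z)} : \Delta] < \infty$. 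Conversely, if $G.(x,z)$ is closed and $G_{(x,z)}$ contains $\Delta$ with finite index, part (1) gives semistability, closedness of $G.(x,z)$ projects to closedness of $G.x$ in the semistable locus, and the finite-index condition pins down $\dim G_x = \dim \Delta$, which is precisely stability.

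The main obstacle I expect is the part (2) argument that $G.(x,z)$ is closed in $X \times \mathbb{C}$ given $\chi$-stability of $x$: one must promote closedness of $G.x$ inside the open set $X^{\chi\text{-ss}}$ to genuine closedness of a lifted orbit inside all of $X \times \mathbb{C}$, which requires carefully ruling out degenerations in the $z$-direction and in the boundary $X \setminus X^{\chi\text{-ss}}$ simultaneously. The standard trick is to use the graded structure of $\mathbb{C}[X \times \mathbb{C}]^G$ together with the Hilbert–Mumford criterion applied to one-parameter subgroups fixing $x$, reducing the question to the finite-index condition on $G_{(x,z)} / \Delta$.
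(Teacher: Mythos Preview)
The paper does not supply its own proof of this lemma; it is quoted as background from King \cite{MR1315461} without argument. Your proposal is the standard proof and is correct: the identification $\mathbb{C}[X\times\mathbb{C}]^G=\bigoplus_{n\ge 0}\mathbb{C}[X]^{G,\chi^n}z^n$ together with the reductive separation property handles part (1), and your treatment of part (2) correctly isolates the one genuinely nontrivial step, namely promoting closedness of $G.x$ inside the open set $X^{\chi\text{-ss}}$ to closedness of $G.(x,z)$ in all of $X\times\mathbb{C}$. Your caveat on the ``$\chi(\Delta)=\{1\}$'' claim is also apt: the statement is literally correct when $\Delta$ is connected (as in the quiver setting, where $\Delta\cong\mathbb{C}^*$), and in general should be read as $\chi^n|_{\Delta}=1$ for the relevant $n$, exactly as you note.
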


The GIT equivalence relation on $X^{\chi-ss}$ can also be interpreted by 
$x\sim y$ if and only if there exist lifts $(x,z)$ and $(y,w)$ for which $\overline{G.(x,z)}\cap \overline{G.(y,w)}\not=\varnothing$ in $X\times \mathbb{C}$. 
In particular, the orbits $G.x$ which are closed in $X^{\chi-ss}$ are precisely those whose lifted orbits $G.(x,z)$ are closed in $X\times \mathbb{C}$. 

\begin{lemma}[Hilbert's Lemma] 
Let $(x,z)\in X\times \mathbb{C}$ be a lift of $x\in X$. Then 
\begin{enumerate} 
\item $x$ is $\chi$-semistable if and only if for all $1$-parameter subgroups $\lambda$, $\displaystyle{\lim_{t\rightarrow 0}} \lambda(t).(x,z)\not\in X\times \{ 0\}$. 
\item $x$ is $\chi$-stable if and only if $\lambda$ is a $1$-parameter subgroup such that $\displaystyle{\lim_{t\rightarrow 0}}\lambda(t).(x,z)$ exists, then $\lambda$ is in $\Delta$. 
\end{enumerate}
\end{lemma}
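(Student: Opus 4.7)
The plan is to reduce both statements to the preceding lemma, which reformulates $\chi$-(semi)stability in terms of the orbit closure of the lift $(x,z)$ in $X\times\mathbb{C}$, and then apply the classical Hilbert--Mumford numerical criterion. Recall that the Hilbert--Mumford criterion, proved by Mumford in the affine/reductive setting, says the following: if $G$ is reductive acting linearly on an affine variety $V$ and $W\subseteq V$ is a $G$-invariant closed subset disjoint from a point $y$, then $\overline{G.y}\cap W\neq\varnothing$ if and only if there exists a $1$-parameter subgroup $\lambda$ such that $\lim_{t\to 0}\lambda(t).y$ exists and lies in $W$. I would cite this and work entirely inside the affine $G$-variety $X\times\mathbb{C}$, with its induced $G$-action through $\chi^{-1}$ on the second factor.

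For part (1), the preceding lemma gives $x$ $\chi$-semistable $\iff$ $\overline{G.(x,z)}\cap(X\times\{0\})=\varnothing$. Since $X\times\{0\}$ is a $G$-invariant closed subset of $X\times\mathbb{C}$, applying Hilbert--Mumford with $W=X\times\{0\}$ yields
\[
  \overline{G.(x,z)}\cap (X\times\{0\})\neq\varnothing \iff \exists\,\lambda\ \text{with}\ \lim_{t\to 0}\lambda(t).(x,z)\in X\times\{0\}.
\]
Negating both sides gives exactly the numerical condition in (1), with the convention that a non-existent limit is treated as not lying in $X\times\{0\}$.

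For part (2), I would use the characterization that $x$ is $\chi$-stable $\iff$ $G.(x,z)$ is closed in $X\times\mathbb{C}$ and the stabilizer $G_{(x,z)}$ contains $\Delta$ with finite index. I split the proof of the two-way implication into the closedness and finiteness parts. First, a $1$-PS valued in $\Delta$ acts trivially on $(x,z)$ (since $\Delta$ acts trivially on $X$ and $\chi(\Delta)=\{1\}$), so the limit at $0$ exists and equals $(x,z)$; this shows the ``easy'' direction of the characterization. For the nontrivial direction, assume the numerical condition: whenever $\lim_{t\to 0}\lambda(t).(x,z)$ exists, $\lambda$ takes values in $\Delta$. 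Applied to any $\lambda$ for which the limit exists, the limit equals $(x,z)$; hence no $1$-PS can move $(x,z)$ toward the boundary of its orbit, so by Hilbert--Mumford (applied to $G.(x,z)$ versus the complement orbits in its closure) the orbit $G.(x,z)$ is closed. For the stabilizer, any $1$-PS $\mu\colon\mathbb{C}^\ast\to G_{(x,z)}$ has a trivial limit (since $\mu(t).(x,z)=(x,z)$ for all $t$), so by hypothesis $\mu$ is valued in $\Delta$; thus every $1$-PS of $G_{(x,z)}/\Delta$ is trivial, which by standard Lie-theoretic arguments forces $G_{(x,z)}/\Delta$ to be a finite group. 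Conversely, if the orbit is closed with finite stabilizer mod $\Delta$, then any $1$-PS $\lambda$ whose limit at $0$ exists produces a limit inside the (closed) orbit, hence a conjugate of $\lambda$ lies in $G_{(x,z)}$; the composed $1$-PS in the finite group $G_{(x,z)}/\Delta$ must be constant, forcing $\lambda$ into $\Delta$.

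The main technical ingredient is the Hilbert--Mumford criterion itself; once that is granted, the argument is essentially bookkeeping in $X\times\mathbb{C}$. The only mildly delicate step is the ``$1$-PS into a finite group is trivial'' observation used to convert closedness plus the numerical condition into finiteness of $G_{(x,z)}/\Delta$ and vice versa, which rests on the fact that any algebraic group homomorphism $\mathbb{C}^\ast\to F$ with $F$ finite is constant (the connected group $\mathbb{C}^\ast$ has trivial image in a finite group). I would also briefly note the convention on limits at $0$ versus $\infty$: replacing $\lambda(t)$ by $\lambda(t^{-1})$ shows there is no loss of generality in examining $t\to 0$ only.
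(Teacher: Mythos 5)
The paper does not prove Hilbert's Lemma; it is quoted as background, cited to King \cite{MR1315461}, so there is no internal proof to compare against. Your route—pass through the orbit-closure characterization of the preceding lemma and apply the Kempf/Hilbert--Mumford criterion inside the affine cone $X\times\mathbb{C}$—is indeed the standard argument and the one implicit in King, and your treatment of part (1) is correct.

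Part (2) has an organizational problem and a genuine gap. The observation you label ``the easy direction'' (a $1$-PS valued in $\Delta$ fixes $(x,z)$) does not prove either implication of the biconditional: the nontrivial forward direction is ``$\chi$-stable $\Rightarrow$ every $\lambda$ with existing limit lies in $\Delta$,'' which requires the closed-orbit plus finite-stabilizer characterization, the conjugation argument (if $\lim\lambda(t).(x,z)=g.(x,z)$ then $g^{-1}\lambda g$ lands in $G_{(x,z)}$, and since $G_{(x,z)}/\Delta$ is finite the composite $1$-PS is trivial, whence $g^{-1}\lambda g$ and, by normality of $\Delta$, $\lambda$ itself are valued in $\Delta$); you do state this later, so the ``easy direction'' paragraph is just misplaced. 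The real gap is in the converse: you assert ``the limit equals $(x,z)$'' for any $\lambda$ with existing limit, which uses $\chi(\Delta)=\{1\}$. But the numerical condition of part (2) alone does not give $\chi(\Delta)=\{1\}$: a $\Delta$-valued $\lambda$ with $\langle\chi,\lambda\rangle<0$ has $\lim_{t\to 0}\lambda(t).(x,z)=(x,0)\neq(x,z)$ while still satisfying $\lambda\in\Delta$, so your closedness argument would fail. The fix is to read part (2) the way King intends—``in addition'' to part (1)—and to invoke the preceding lemma's clause that semistability forces $\chi(\Delta)=\{1\}$. Also, your phrasing of the Hilbert--Mumford criterion as requiring $W$ ``disjoint from a point $y$'' is garbled; what you need is that $W$ is a $G$-invariant closed subset, no disjointness hypothesis.
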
 

King also introduces the integral pairing between $\lambda$ and $\chi$, defined by $\langle \chi,\lambda \rangle=m$ if $\chi(\lambda(t))=t^m$. 

\begin{proposition}[Mumford's Numerical Criterion]
A point $x\in X$ is $\chi$-semistable if and only if $\chi(\Delta)=\{ 1\}$ and every $\lambda$  
for which $\displaystyle{\lim_{t\rightarrow 0}} \lambda(t).x$ exists, then 
$\langle \chi, \lambda \rangle\geq 0$. 
Such a point is $\chi$-stable if and only if $\lambda$ is a $1$-parameter subgroup of $G$ for which $\displaystyle{\lim_{t\rightarrow 0}}\lambda(t).x$ exists and $\langle \chi,\lambda \rangle =0$, 
then $\lambda$ is in $\Delta$. 
\end{proposition}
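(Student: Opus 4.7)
The plan is to deduce Mumford's numerical criterion as a direct corollary of Hilbert's Lemma (stated just above), by explicitly computing the action of a one-parameter subgroup on the lift $(x,z)\in X\times\mathbb{C}$ that linearizes $\chi$. The key elementary calculation is: since $\chi(\lambda(t))=t^{\langle\chi,\lambda\rangle}$ and $G$ acts on $X\times\mathbb{C}$ by $g.(x,z)=(g.x,\chi^{-1}(g)z)$, one has
\[
\lambda(t).(x,z)=\bigl(\lambda(t).x,\ t^{-\langle\chi,\lambda\rangle}z\bigr).
\]
All subsequent assertions will be read off from this identity.

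First I would dispose of the condition $\chi(\Delta)=\{1\}$. Because $\Delta$ acts trivially on $X$, any $\chi$-semi-invariant polynomial $f$ satisfies $f=d.f=\chi(d)f$ for all $d\in\Delta$, so $\chi(\Delta)\neq\{1\}$ forces $\mathbb{C}[X]^{G,\chi^n}=0$ for all $n\geq 1$ and no point can be $\chi$-semistable. Alternatively, from the Hilbert's Lemma side, if $\chi(\Delta)\neq\{1\}$ one can find a 1-parameter subgroup $\lambda$ in the identity component of $\Delta$ (or reduce to the finite case) with $\langle\chi,\lambda\rangle\neq 0$; then $\lambda(t).x=x$ for all $t$ yet $t^{-\langle\chi,\lambda\rangle}z$ tends to $0$ for a suitable orientation, so $\overline{G.(x,z)}$ meets the zero section. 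Thus $\chi(\Delta)=\{1\}$ is necessary for semistability and I will assume it henceforth.

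Next I would prove the semistability criterion. By Hilbert's Lemma, $x$ is $\chi$-semistable iff for every 1-ps $\lambda$ the limit $\lim_{t\to 0}\lambda(t).(x,z)$ does not lie in the zero section $X\times\{0\}$. Using the identity above, this limit lands in the zero section exactly when both $\lim_{t\to 0}\lambda(t).x$ exists in $X$ and $\lim_{t\to 0}t^{-\langle\chi,\lambda\rangle}z=0$, i.e.\ $\langle\chi,\lambda\rangle<0$. Negating: $x$ is semistable iff for every 1-ps $\lambda$ such that $\lim_{t\to 0}\lambda(t).x$ exists, one has $\langle\chi,\lambda\rangle\geq 0$, which is precisely the first assertion.

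For the stability part I would apply Hilbert's Lemma in the stable form: $x$ is $\chi$-stable iff every 1-ps $\lambda$ for which $\lim_{t\to 0}\lambda(t).(x,z)$ exists in $X\times\mathbb{C}$ lies in $\Delta$. Again from $\lambda(t).(x,z)=(\lambda(t).x,\,t^{-\langle\chi,\lambda\rangle}z)$, this limit exists iff $\lim_{t\to 0}\lambda(t).x$ exists and $\langle\chi,\lambda\rangle\leq 0$. Combined with the semistability conclusion $\langle\chi,\lambda\rangle\geq 0$ already available under stability, the condition collapses to $\langle\chi,\lambda\rangle=0$, yielding exactly the stated criterion: if $\lim_{t\to 0}\lambda(t).x$ exists and $\langle\chi,\lambda\rangle=0$ then $\lambda\in\Delta$. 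The main obstacle is essentially bookkeeping — being careful with the sign conventions in the pairing $\langle\chi,\lambda\rangle$ and the inverse character on the fiber — together with the auxiliary point that in the finite-order case of $\chi|_\Delta$ one must argue that semi-invariants of all positive powers vanish, forcing the algebraic requirement $\chi(\Delta)=\{1\}$. Everything else is a direct translation of Hilbert's Lemma through the single identity displayed above.
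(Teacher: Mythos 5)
The paper itself gives no proof of this proposition — it is stated, like Hilbert's Lemma preceding it, as cited background from King's paper \cite{MR1315461}. Your derivation from Hilbert's Lemma via the explicit identity $\lambda(t).(x,z)=(\lambda(t).x,\ t^{-\langle\chi,\lambda\rangle}z)$ is precisely the standard route, and the semistability and stability translations are handled correctly, including the point that the ``$\leq 0$'' constraint coming from ``$\lim_{t\to 0}\lambda(t).(x,z)$ exists'' collapses to ``$=0$'' once semistability is in force.

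There is, however, a genuine gap in your disposal of the $\chi(\Delta)=\{1\}$ clause. Your claim that ``$\chi(\Delta)\neq\{1\}$ forces $\mathbb{C}[X]^{G,\chi^n}=0$ for all $n\geq 1$'' is not correct in general: if $d\in\Delta$ with $\chi(d)$ a primitive $m$th root of unity ($m>1$), the constraint $f(x)=f(d.x)=\chi^n(d)f(x)$ is vacuous whenever $m\mid n$, so $\mathbb{C}[X]^{G,\chi^{km}}$ may be nonzero and $x$ could a priori be $\chi$-semistable. The alternative argument via a one-parameter subgroup inside $\Delta$ only controls $\chi$ on the identity component $\Delta^{\circ}$ (indeed for any $\lambda$ in $\Delta$, applying your own semistability criterion to both $\lambda$ and $\lambda^{-1}$ forces $\langle\chi,\lambda\rangle=0$), and so says nothing about $\chi$ on the finite component group $\Delta/\Delta^{\circ}$. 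Your parenthetical ``or reduce to the finite case'' together with the closing remark about vanishing semi-invariants thus does not actually close the case it flags. The cleanest fix is to quote the preceding Lemma in the paper, whose first part already asserts ``In particular, $\chi(\Delta)$ must be $\{1\}$'' as part of the orbit-closure characterization, rather than reprove it; alternatively, one must be explicit about the hypotheses King places on $\Delta$ (for instance that $\Delta$ is connected, or that $\chi$ is tacitly assumed to factor through $G/\Delta$) under which the torsion subtlety disappears.
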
 

\begin{proposition} 
\mbox{} 
\begin{enumerate} 
\item An orbit $G.x$ is closed in $X^{\chi-ss}$ if and only if for each $\lambda$ with $\langle \chi,\lambda \rangle=0$ and $\displaystyle{\lim_{t\rightarrow 0}}\lambda(t).x$ exists, then 
$\displaystyle{\lim_{t\rightarrow 0}}\lambda(t).x\in G.x$. 
\item For $x,y\in X^{\chi-ss}$, $x\sim y$ if and only if there exist $\lambda_1,\lambda_2:\mathbb{C}^*\rightarrow G$ such that 
$\langle \chi,\lambda_1 \rangle = \langle \chi,\lambda_2 \rangle =0$ and 
$\displaystyle{\lim_{t\rightarrow 0}} \lambda_1(t).x$, 
$\displaystyle{\lim_{t\rightarrow 0}} \lambda_2(t).y\in G.x$.  
\end{enumerate} 
\end{proposition}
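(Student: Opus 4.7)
The plan is to reduce both assertions to Hilbert's Lemma applied to the line-bundle cover $X\times\mathbb{C}\to X$ on which $G$ acts via $g.(x,z)=(g.x,\chi^{-1}(g)z)$. The key structural input is the lemma already stated: a point $x\in X$ is $\chi$-semistable iff $\overline{G.(x,z)}$ avoids $X\times\{0\}$ for any lift $(x,z)$ with $z\neq 0$, and the orbits $G.x$ that are closed in $X^{\chi-ss}$ are exactly those whose lifts $G.(x,z)$ are closed in $X\times\mathbb{C}$. So throughout I fix $z\neq 0$ and pass freely between $G.x\subseteq X^{\chi-ss}$ and $G.(x,z)\subseteq X\times\mathbb{C}$.

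For part (1), first I would observe the basic calculation $\lambda(t).(x,z)=(\lambda(t).x,\,t^{-\langle\chi,\lambda\rangle}z)$, so the limit in $X\times\mathbb{C}$ as $t\to 0$ exists and is off the zero section precisely when $\lim_{t\to 0}\lambda(t).x$ exists in $X$ and $\langle\chi,\lambda\rangle=0$. For the forward direction, assume $G.x$ is closed in $X^{\chi-ss}$, hence $G.(x,z)$ is closed in $X\times\mathbb{C}$. Given any $\lambda$ with $\langle\chi,\lambda\rangle=0$ such that $y:=\lim_{t\to0}\lambda(t).x$ exists, the preceding calculation gives $\lim_{t\to 0}\lambda(t).(x,z)=(y,z)\in\overline{G.(x,z)}=G.(x,z)$, so $y\in G.x$ as required. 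For the converse, I use Hilbert--Mumford for closed orbits in $X\times\mathbb{C}$: to show $G.(x,z)$ is closed, it suffices to show every $\lambda$ for which $\lim_{t\to 0}\lambda(t).(x,z)$ exists has its limit in $G.(x,z)$. Such existence forces $\lim\lambda(t).x$ to exist and $\langle\chi,\lambda\rangle\leq 0$; but since $x$ is $\chi$-semistable, Mumford's numerical criterion forces $\langle\chi,\lambda\rangle\geq 0$, hence equality. The hypothesis then gives $\lim\lambda(t).x\in G.x$, and lifting back yields the limit in $G.(x,z)$.

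For part (2), the $(\Leftarrow)$ direction is immediate: if the limits $u=\lim\lambda_1(t).x$ and $v=\lim\lambda_2(t).y$ both lie in the common orbit, then $u\in \overline{G.x}$ and $v\in\overline{G.y}$ with $G.u=G.v$, and because $\langle\chi,\lambda_i\rangle=0$ the corresponding lifts stay off the zero section, so $u,v\in X^{\chi-ss}$ and $\overline{G.x}\cap\overline{G.y}\cap X^{\chi-ss}\neq\varnothing$, i.e.\ $x\sim y$. For $(\Rightarrow)$, assume $x\sim y$. The general GIT fact that every equivalence class in $X^{\chi-ss}$ contains a unique closed orbit produces a point $w\in\overline{G.x}\cap\overline{G.y}\cap X^{\chi-ss}$ with $G.w$ closed in $X^{\chi-ss}$. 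Lifting to $X\times\mathbb{C}$, the closed orbits $G.(w,z)\subseteq\overline{G.(x,z)}$ and $G.(w,z)\subseteq\overline{G.(y,z)}$ can be reached by $1$-parameter subgroups by Hilbert's Lemma: there exist $\lambda_1,\lambda_2$ with $\lim_{t\to 0}\lambda_1(t).(x,z)$ and $\lim_{t\to 0}\lambda_2(t).(y,z)$ both lying in $G.(w,z)$. Projecting, $\lim_{t\to 0}\lambda_i(t).x,\,\lim_{t\to 0}\lambda_i(t).y\in G.w$, and because the limits in $X\times\mathbb{C}$ stay off the zero section, the weight computation above forces $\langle\chi,\lambda_1\rangle=\langle\chi,\lambda_2\rangle=0$.

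The main technical obstacle is the existence, inside each GIT equivalence class in $X^{\chi-ss}$, of a unique closed orbit reachable by a $1$-parameter subgroup with vanishing $\chi$-pairing; this is what upgrades the abstract orbit-closure relation to the concrete $1$-PS witnesses. I expect to import it from the standard GIT machinery (Mumford's stability theorem together with the Iwahori/Hilbert--Mumford-type statement that any point in the closure of an orbit under a reductive group can be reached by a $1$-PS). Once that is in hand, the weight bookkeeping $\lambda(t).(x,z)=(\lambda(t).x,t^{-\langle\chi,\lambda\rangle}z)$ and Mumford's numerical criterion handle the rest uniformly.
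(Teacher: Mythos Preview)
The paper does not supply its own proof of this proposition: it is stated as background in the GIT subsection, which explicitly says it is ``closely follow[ing] A.~King'' and quotes this result alongside Hilbert's Lemma and Mumford's numerical criterion without argument. So there is nothing to compare against in the paper itself.

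Your approach is the standard one and is correct in outline: pass to the lift $(x,z)\in X\times\mathbb{C}$, use the weight calculation $\lambda(t).(x,z)=(\lambda(t).x,\,t^{-\langle\chi,\lambda\rangle}z)$ to translate ``limit exists off the zero section'' into ``$\langle\chi,\lambda\rangle=0$ and $\lim\lambda(t).x$ exists,'' and then invoke the Hilbert--Mumford/Kempf criterion for closed orbits together with the existence of a unique closed orbit in each GIT class. Two small remarks. First, the converse in part~(1) needs the Kempf-type statement that an orbit under a reductive group is closed iff every $1$-PS limit of a point of the orbit already lies in the orbit; this is not literally the ``Hilbert's Lemma'' stated in the paper, so you should cite it separately (you already flag this). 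Second, in part~(2) your argument actually lands both limits in the unique closed orbit $G.w$, not in $G.x$ as the displayed statement reads. This is almost certainly a typo in the statement (King's formulation is that the two limits lie in the \emph{same} $G$-orbit), and your proof proves that correct version; just note the discrepancy rather than trying to force the limits into $G.x$.
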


Since it is possible to approximate $X/G$ in more than one way, another interesting aspect is to study polarization and wall-crossing, which are interesting in their own right. Such problems are known as variational GIT, or VGIT (cf. \cite{MR1659282} and \cite{MR1333296}). 

\subsection{Hilbert schemes}\label{Section:hilbert-schemes}

Some references for this section are  
\cite{MR2223407}, 
\cite{MR2073194}, 
\cite{MR2221028}, 
\cite{MR2592504}, and 
\cite{MR0463157}.
{\em Hilbert scheme} is a scheme which is a parameter (moduli) space for the closed subschemes of some projective scheme. More precisely, 
let $M$ be a finitely generated graded module over a polynomial ring $R= \mathbb{C}[x_0,\ldots, x_n]$. Then the  
{\em Hilbert function}  
is defined as 
$H_M(t)= \dim_{\mathbb{C}}M_t$, 
where $\dim_{\mathbb{C}}$ means the dimension of $M_t$ as a vector space over $\mathbb{C}$, and it is a well-known fact that if $M$ is a finitely generated 
$R$-module, then a unique polynomial $P_M$ exists such that for all $t>\!\!>0$,
$H_M(t)=P_M(t)$. $P_M$ is called the 
{\em Hilbert polynomial} of $M$.

Let $p(t)\in \mathbb{Q}[t]$ be a polynomial such that $p(\mathbb{Z})\subseteq \mathbb{Z}$. 
Let $n>0$ and let $X\subseteq \mathbb{P}^n$ be a closed subscheme with Hilbert polynomial $p(t)=P_X(t)$. 

\begin{theorem} 
A projective scheme $\Hilb_p$ exists which represents flat families of subschemes of $\mathbb{P}^n$ with Hilbert polynomial $p$. 
\end{theorem}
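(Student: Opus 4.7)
The plan is to prove the existence of $\Hilb_p$ by realizing the Hilbert functor $\underline{\Hilb}_p$, which assigns to a $\mathbb{C}$-scheme $S$ the set of $S$-flat closed subschemes $Z \subseteq \mathbb{P}^n_S$ whose fibers $Z_s$ all have Hilbert polynomial $p$, as a closed subfunctor of the Grassmannian functor. The key tool is the uniform boundedness provided by Castelnuovo--Mumford regularity.

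The first step is to establish the existence of an integer $m_0 = m_0(p,n)$ with the following property: for every closed subscheme $Z \subseteq \mathbb{P}^n$ with Hilbert polynomial $p$, the ideal sheaf $\mathcal{I}_Z$ is $m_0$-regular, that is, $H^i(\mathbb{P}^n, \mathcal{I}_Z(m_0 - i)) = 0$ for all $i \geq 1$. This is a classical theorem of Mumford, typically proved by induction on $n$ using a generic hyperplane section, and it depends only on the numerical data $(p,n)$. An immediate consequence is that for every $m \geq m_0$ and every such $Z$, one has $\dim H^0(\mathbb{P}^n, \mathcal{I}_Z(m)) = \binom{n+m}{m} - p(m) =: r$, and moreover the natural map $H^0(\mathcal{I}_Z(m)) \otimes H^0(\mathcal{O}(1)) \to H^0(\mathcal{I}_Z(m+1))$ is surjective, so $Z$ is determined by the $r$-dimensional subspace $H^0(\mathcal{I}_Z(m)) \subseteq H^0(\mathcal{O}(m)) = V_m$.

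This suggests embedding $\underline{\Hilb}_p$ into the Grassmannian $G := \Gr(r, V_m)$, which is itself a projective scheme representing $r$-dimensional subbundles of the trivial bundle $V_m \otimes \mathcal{O}$. Next I would construct the embedding functorially: given an $S$-flat family $Z \subseteq \mathbb{P}^n_S$ with the required Hilbert polynomial, cohomology and base change together with the regularity bound show that $\pi_*\mathcal{I}_Z(m)$ is a locally free $\mathcal{O}_S$-module of rank $r$, and its formation commutes with base change. This produces an $S$-point of $G$ and defines a morphism of functors $\underline{\Hilb}_p \hookrightarrow \underline{G}$.

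The remaining task, and the main technical obstacle, is to identify the image as a closed subfunctor cut out by explicit determinantal conditions. Given a tautological subbundle $\mathcal{F} \subseteq V_m \otimes \mathcal{O}_G$ on the Grassmannian, one multiplies by $V_k$ to obtain a map $\mathcal{F} \otimes V_k \to V_{m+k} \otimes \mathcal{O}_G$, and the locus where the image has rank at most $\binom{n+m+k}{n} - p(m+k)$ for all $k \geq 0$ is closed by the standard rank stratification of morphisms of vector bundles; by regularity this locus is exactly $\Hilb_p$ and, crucially, only finitely many such $k$ are needed (this uses the flattening stratification or a flatness criterion, which is the subtle point). Then one verifies directly that the universal closed subscheme over this locus, defined by the tautological ideal generated in degree $m$, is flat over $\Hilb_p$ with fiberwise Hilbert polynomial $p$, using the regularity to rule out jumps in cohomology. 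Since $\Hilb_p$ is a closed subscheme of the projective scheme $G$, it is itself projective, completing the construction.
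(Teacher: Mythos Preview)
Your sketch follows the classical Grothendieck argument: bound the Castelnuovo--Mumford regularity uniformly in terms of $(p,n)$, use this to embed the Hilbert functor into a Grassmannian, and then carve out $\Hilb_p$ as a closed subscheme via determinantal rank conditions (or equivalently via the flattening stratification). The outline is correct and the subtle points you flag --- uniform regularity, cohomology and base change, and the fact that only finitely many degree conditions are needed --- are exactly the right ones.

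However, note that the paper itself does not give a proof of this theorem. It is stated in the background section on Hilbert schemes as a known result, with references to standard sources (Nakajima, Hartshorne, etc.) at the start of the section, and the paper immediately moves on to consequences and the special case of $n$ points on a surface. So there is no ``paper's own proof'' to compare against; your proposal is simply the standard proof one would find in the cited references.
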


Specifying a map $\phi:R\longrightarrow \Hilb_p$ is equivalent to giving a flat family $\pi:\mathcal{X}\rightarrow R$ of closed subschemes in $\mathbb{P}^n$ with Hilbert polynomial $p$. Thus there exists a universal flat family 
$\mathcal{U}\longrightarrow \Hilb_p$  
such that $\mathcal{X}$ $=$ $\mathcal{U}\times_{\Hilb_p} R$. 
It may be worth mentioning that the Hilbert scheme of closed schemes in a fixed subscheme of $\mathbb{P}^n$ with Hilbert polynomial $p$ exists, which is the following proposition. 

\begin{proposition} 
For a fixed closed subscheme $W\subseteq \mathbb{P}^n$, 
there exists a projective scheme $\Hilb_{p,W}$ which represents flat families of subschemes of $W$ with Hilbert polynomial $p$. 
\end{proposition}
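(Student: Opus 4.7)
The plan is to realize $\Hilb_{p,W}$ as a closed subscheme of the ambient Hilbert scheme $\Hilb_p$, exploiting the universal property already established for $\Hilb_p$. The natural transformation of functors is clear: a flat family $\mathcal{X} \to R$ of closed subschemes of $W$ with Hilbert polynomial $p$ is in particular a flat family of closed subschemes of $\mathbb{P}^n$ with Hilbert polynomial $p$, so the functor $\underline{\Hilb}_{p,W}$ is a subfunctor of $\underline{\Hilb}_p$. By the universal property, $\underline{\Hilb}_{p,W}$ corresponds to a subscheme of $\Hilb_p$, and the content of the proposition is that this subscheme is representable, closed (hence projective since $\Hilb_p$ is projective), and in fact cut out by a natural geometric condition on the universal family.

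First I would let $\pi:\mathcal{U}\to \Hilb_p$ denote the universal family with its embedding $\mathcal{U}\hookrightarrow \mathbb{P}^n\times \Hilb_p$, and define $\Hilb_{p,W}\subseteq \Hilb_p$ set-theoretically as the locus of points $[X]$ for which the fiber $\mathcal{U}_{[X]}=X$ is contained in $W\subseteq \mathbb{P}^n$. The next step is to give this locus a scheme structure. Write $\mathcal{I}_W\subseteq \mathcal{O}_{\mathbb{P}^n}$ for the ideal sheaf of $W$ and pull it back along the second projection to obtain an ideal sheaf $\mathcal{I}_W\boxtimes \mathcal{O}_{\Hilb_p}$ on $\mathbb{P}^n\times \Hilb_p$. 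The condition that $\mathcal{U}\subseteq W\times \Hilb_p$ is the vanishing of the composite $\mathcal{I}_W\boxtimes \mathcal{O}_{\Hilb_p}\to \mathcal{O}_{\mathbb{P}^n\times \Hilb_p}\to \mathcal{O}_{\mathcal{U}}$. Pushing forward by $\pi$ and twisting appropriately (using a sufficiently large integer $m$ so that higher cohomology vanishes uniformly on the flat family) yields a map of locally free sheaves on $\Hilb_p$; the vanishing locus of this map is a well-defined closed subscheme, which I take as the scheme structure on $\Hilb_{p,W}$.

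Next I would verify the universal property. Given a scheme $R$ and a flat family $\mathcal{X}\subseteq W\times R\subseteq \mathbb{P}^n\times R$ with Hilbert polynomial $p$, the universal property of $\Hilb_p$ gives a unique morphism $\phi:R\to \Hilb_p$ with $\mathcal{X}=\mathcal{U}\times_{\Hilb_p}R$. Since $\mathcal{X}\subseteq W\times R$, the pullback of the defining ideal of $\Hilb_{p,W}$ vanishes on $R$, so $\phi$ factors through $\Hilb_{p,W}$; conversely, any morphism $R\to \Hilb_{p,W}$ produces, via pullback of $\mathcal{U}$, a flat family of closed subschemes of $\mathbb{P}^n$ with Hilbert polynomial $p$ whose defining ideals contain the pullback of $\mathcal{I}_W$, hence a family of subschemes of $W$. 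Uniqueness is inherited from the universal property of $\Hilb_p$. Finally, projectivity of $\Hilb_{p,W}$ follows because a closed subscheme of the projective scheme $\Hilb_p$ is projective.

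The main obstacle is the scheme-theoretic definition of the locus cut out by the containment condition $\mathcal{U}\subseteq W\times \Hilb_p$: one needs the vanishing of the natural map $\mathcal{I}_W\boxtimes \mathcal{O}_{\Hilb_p}\to \mathcal{O}_{\mathcal{U}}$ to descend to a well-defined closed subscheme of $\Hilb_p$ whose formation commutes with base change, and this requires choosing a twist by $\mathcal{O}(m)$ with $m\gg 0$ and invoking cohomology-and-base-change together with the uniform flatness of $\mathcal{U}\to \Hilb_p$ so that pushforward yields a map of vector bundles whose zero locus represents the desired subfunctor. Once this is executed carefully, the remaining verifications are formal consequences of the universal property of $\Hilb_p$ and standard properties of closed immersions.
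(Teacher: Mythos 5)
The paper states this proposition as a standard background fact, without proof, so there is nothing in the paper to compare your argument against. On its own merits, your proposal is the standard construction (as in Grothendieck's FGA, or Fantechi et al., \emph{FGA Explained}, Chapter 5): realize $\Hilb_{p,W}$ as the closed subscheme of $\Hilb_p$ where the universal family factors through $W\times\Hilb_p$, cut out by the vanishing of the twisted pushforward map $p_{2*}\bigl((\mathcal{I}_W\boxtimes\mathcal{O}_{\Hilb_p})(m)\bigr)\to p_{2*}\bigl(\mathcal{O}_{\mathcal{U}}(m)\bigr)$ for $m\gg 0$. You have correctly identified the one nontrivial technical point: that a single uniform twist $m$ works for all base schemes $R$, which requires that $\mathcal{I}_W(m)$ be globally generated with vanishing higher cohomology, and that the pushforward $p_{2*}\mathcal{O}_{\mathcal{U}}(m)$ be locally free with formation commuting with base change; both follow from flatness of $\mathcal{U}\to\Hilb_p$, constancy of the Hilbert polynomial, and cohomology-and-base-change. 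The source $p_{2*}\bigl(p_1^*\mathcal{I}_W(m)\bigr)\cong H^0(\mathbb{P}^n,\mathcal{I}_W(m))\otimes\mathcal{O}_{\Hilb_p}$ is free by flat base change, so the zero locus of a map of locally free sheaves is well defined, and projectivity follows since a closed subscheme of the projective scheme $\Hilb_p$ is projective. The argument is complete in outline and would only need the routine cohomology-and-base-change bookkeeping spelled out to be fully rigorous.
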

 
 

We will now restrict to the Hilbert scheme parametrizing $n$ points in $X$; 
the Hilbert polynomial of such subscheme $Z$ is a constant polynomial since the degree of the Hilbert polynomial of a variety is equal to the dimension of the variety, i.e., $P_Z(t)=n$. 
We will write the parameter scheme as $X^{[n]}:=\Hilb_{\mathbb{C}}^n(X)$.    
The points of $X^{[n]}$ are $0$-dimensional subvarieties supported at at most $n$ points in $X$ satisfying 
$\dim H^0(Z,\mathcal{O}_Z)=n$. 

\begin{example} 
If $\{p_1,\ldots, p_n\}$ is the set of $n$ distinct closed points of $X$, then the sheaf $\mathcal{O}_Z$ is the direct sum of the skyscraper sheaves over each point $p_i$, where $1\leq i\leq n$. 
\end{example}

Next, we restrict to an open subset $X$ 
of a projective space and state a result of the Hilbert scheme of $n$ points on $X$. 

\begin{theorem} 
Let $X$ be irreducible, quasi-projective and smooth of dimension $d=1,2$. Then 
$X^{[n]}$ is smooth and irreducible of dimension $dn$. 
If $X$ is connected, then $X^{[n]}$ is connected. 
\end{theorem}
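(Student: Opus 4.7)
\medskip

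\noindent\textbf{Proof proposal.} The strategy is to reduce to a local question at each ideal $[Z]\in X^{[n]}$, handle the two dimensions separately, and then use the Hilbert--Chow morphism to pin down irreducibility and connectedness from a dense open stratum.

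First I would dispose of the curve case $d=1$. Since $X$ is smooth of dimension $1$, every length-$n$ subscheme $Z\subset X$ is a Cartier divisor, so the functor of length-$n$ subschemes is represented by the symmetric product $\mathrm{Sym}^n(X)=X^n/S_n$. For a smooth curve the symmetric product is known to be smooth of dimension $n$ (one checks this locally by choosing a uniformizer and using that $\mathbb C[x_1,\dots,x_n]^{S_n}$ is a polynomial ring in the elementary symmetric functions), which gives the statement for $d=1$. Irreducibility and connectedness follow because $X^n$ is irreducible (resp.\ connected) when $X$ is, and these properties descend to the quotient.

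For $d=2$ I would invoke Fogarty's tangent space computation: at $[Z]\in X^{[n]}$ the Zariski tangent space is canonically $\mathrm{Hom}_{\mathcal O_X}(I_Z,\mathcal O_Z)$. The plan is to show this has dimension exactly $2n$ for every $[Z]$. Working locally (since $X$ is smooth of dimension $2$, we may analyze a colength-$n$ ideal $I\subset\mathcal O_{X,p}\cong\mathbb C\{x,y\}$ at each point of its support, and then sum over points of the support), one reduces to bounding $\dim\mathrm{Hom}(I,\mathcal O/I)$ from above and below by the length of $\mathcal O/I$ times $2$. The lower bound $\dim T_{[Z]}X^{[n]}\ge 2n$ comes from the fact that $X^{[n]}$ dominates the open locus $U^{[n]}\subset X^{[n]}$ of reduced length-$n$ subschemes, which has dimension $2n$; the upper bound is the content of Fogarty's argument and uses in an essential way that the local ring at a point of a smooth surface has global dimension $2$. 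Matching these gives the tangent dimension $2n$ everywhere, hence smoothness of pure dimension $2n$.

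Next I would establish irreducibility and connectedness simultaneously. Let $U^{[n]}\subset X^{[n]}$ be the open subscheme parametrizing reduced subschemes consisting of $n$ distinct closed points. There is an obvious identification $U^{[n]}\cong (X^n\setminus\Delta)/S_n$, where $\Delta$ is the big diagonal; in particular, when $X$ is irreducible, $U^{[n]}$ is irreducible (and connected when $X$ is connected), and of dimension $dn$. The key claim is that $U^{[n]}$ is dense in $X^{[n]}$. In dimension $1$ this is automatic from the isomorphism with $\mathrm{Sym}^n(X)$; in dimension $2$ one uses the Hilbert--Chow morphism $X^{[n]}\to\mathrm{Sym}^n(X)$, constructs explicit one-parameter deformations smearing a non-reduced length-$n$ subscheme into $n$ distinct points, and invokes that $X^{[n]}$ is locally of pure dimension $2n$ (from smoothness) so any component must have dimension $2n$ and therefore meet $U^{[n]}$. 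Once $U^{[n]}$ is dense, the smoothness established above implies that the connected and irreducible components of $X^{[n]}$ coincide, and since every component meets the irreducible dense open $U^{[n]}$, there is only one such component.

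The main obstacle is the $d=2$ tangent space bound, i.e.\ Fogarty's theorem itself: showing $\dim\mathrm{Hom}(I,\mathcal O/I)\le 2n$ for every colength-$n$ ideal $I$ in a $2$-dimensional regular local ring is the delicate step, and is exactly where the hypothesis $d\le 2$ enters (the analogous statement fails for $d\ge 3$, where $X^{[n]}$ is generally singular and reducible). The density step for surfaces is secondary but also requires care, as one must produce an explicit smoothing of an arbitrary fat point on $X$ to a union of $n$ distinct reduced points.
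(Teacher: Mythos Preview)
The paper does not prove this theorem. It appears in the background section on Hilbert schemes as a cited result (the references given at the start of that section include Fogarty's original paper and standard textbook treatments), and is simply stated and then used to motivate the ensuing discussion of $(\mathbb{C}^2)^{[n]}$. There is no proof in the paper to compare your proposal against.

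That said, your outline is essentially the standard argument due to Fogarty. The reduction of the curve case to the symmetric product, the identification $T_{[Z]}X^{[n]}\cong \mathrm{Hom}_{\mathcal O_X}(I_Z,\mathcal O_Z)$, the use of global dimension $2$ to bound this by $2n$, and the density of the reduced locus combined with the Hilbert--Chow morphism for irreducibility and connectedness---all of this matches the classical treatment. You have correctly identified the one nontrivial step, namely the inequality $\dim\mathrm{Hom}(I,\mathcal O/I)\le 2\,\mathrm{length}(\mathcal O/I)$ in a two-dimensional regular local ring, and correctly flagged that this is exactly where $d\le 2$ is essential.
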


Let $M_n:=M_n(\mathbb{C})$ be the space of all $n\times n$ complex matrices. 
The following is a result by Nakajima (\cite{MR1711344}, Theorem 1.9). 

\begin{theorem} 
Let $GL_n(\mathbb{C})$ act on $M_n\times M_n\times \mathbb{C}^n$ by 
$g.(r,s,i)=(grg^{-1},gsg^{-1},gi)$. 
Then 
\[ 
(\mathbb{C}^2)^{[n]} \cong \{(r,s,i)\in M_n\times M_n\times \mathbb{C}^n :
[r,s]=0 \mbox{ and }\forall S\subsetneq \mathbb{C}^n \mbox{ s.t. }i\in S, rS\not\subseteq S 
\mbox{ and } sS\not\subseteq S \}/GL_n(\mathbb{C}). 
\] 
\end{theorem}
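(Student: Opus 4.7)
The plan is to construct mutually inverse morphisms between $(\mathbb{C}^2)^{[n]}$ and the quotient $\mathcal{M} := \{(r,s,i) : [r,s]=0, (r,s,i) \text{ stable}\}/GL_n(\mathbb{C})$, where ``stable'' means $\mathbb{C}^n$ has no proper $r,s$-invariant subspace containing $i$. First I would build the map $\Phi:(\mathbb{C}^2)^{[n]} \to \mathcal{M}$. Given a length-$n$ closed subscheme $Z\subset \mathbb{C}^2$, the coordinate ring $A_Z := H^0(Z,\mathcal{O}_Z)$ is an $n$-dimensional $\mathbb{C}$-algebra on which the coordinate functions $x,y$ act as commuting endomorphisms. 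After choosing a $\mathbb{C}$-linear isomorphism $A_Z \cong \mathbb{C}^n$, multiplication by $x$ and by $y$ becomes a pair of commuting matrices $r,s\in M_n$, and the unit $1\in A_Z$ becomes a vector $i\in\mathbb{C}^n$. The key point is that $1$ generates $A_Z$ as a $\mathbb{C}[x,y]$-module (since $A_Z$ is a quotient of $\mathbb{C}[x,y]$), which translates exactly into the stability condition: the only $(r,s)$-invariant subspace of $\mathbb{C}^n$ containing $i$ is $\mathbb{C}^n$ itself. A different choice of basis modifies $(r,s,i)$ by a $GL_n(\mathbb{C})$-conjugation, so $\Phi([Z])$ is well defined.

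Next I would build the inverse $\Psi:\mathcal{M}\to(\mathbb{C}^2)^{[n]}$. Given a stable commuting triple $(r,s,i)$, define the evaluation map $\ev:\mathbb{C}[x,y]\to\mathbb{C}^n$ by $f\mapsto f(r,s)\,i$; since $r$ and $s$ commute, this is a ring-module homomorphism. Stability is precisely the statement that $\ev$ is surjective, so its kernel $I_{(r,s,i)} \subset \mathbb{C}[x,y]$ is an ideal of colength $n$, cutting out a length-$n$ subscheme $Z_{(r,s,i)}\subset\mathbb{C}^2$. Conjugating $(r,s,i)$ by $g\in GL_n(\mathbb{C})$ replaces $\ev$ by $g\circ\ev$, hence does not change $I$; so $\Psi$ descends to the quotient. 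A short computation shows $\Phi\circ\Psi$ and $\Psi\circ\Phi$ are the identities on points: starting with $Z$ and tracing through, $A_{Z_{(r,s,i)}}\cong\mathbb{C}[x,y]/I_{(r,s,i)}\cong\mathbb{C}^n$ by the first isomorphism theorem, and the residues of $x,y,1$ are recovered as $r,s,i$.

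For the quotient structure, I would argue that every stable triple has trivial stabilizer in $PGL_n(\mathbb{C})$: any $g$ fixing $(r,s,i)$ preserves $\langle i, ri, si, r^2i, rsi,\ldots\rangle = \mathbb{C}^n$ and acts as the identity on $i$ and its $r,s$-translates, hence on all of $\mathbb{C}^n$. Standard GIT (or Luna's slice theorem) then gives that the quotient of the stable locus by $GL_n(\mathbb{C})$ is a geometric quotient, and the orbit map is a principal $PGL_n$-bundle. To upgrade the set-theoretic bijection to an isomorphism of schemes, I would exhibit a flat family of length-$n$ subschemes over $\mathcal{M}$: namely, the locally free quotient $\mathcal{O}_{\mathbb{C}^2\times\mathcal{M}}\twoheadrightarrow\mathcal{F}$ whose fiber at $[(r,s,i)]$ is the coherent sheaf on $\mathbb{C}^2$ supported on $Z_{(r,s,i)}$; equivalently, the ideal sheaf cut out by the entries of $xI-r$ and $yI-s$ acting on the trivial $\mathbb{C}^n$-bundle via $i$. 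This family is flat (fibers have constant length $n$) and so induces a morphism $\mathcal{M}\to(\mathbb{C}^2)^{[n]}$ by the universal property of the Hilbert scheme; it is the algebraic incarnation of $\Psi$.

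The main obstacle will be verifying the universal property in families rather than on points: I would need to show that $\Phi$ extends to an algebraic morphism and that the two morphisms are mutually inverse as morphisms of schemes, which reduces to checking that the tautological family on $\mathcal{M}$ above is universal among flat families of length-$n$ subschemes of $\mathbb{C}^2$. Equivalently, given any flat family $\mathcal{Z}\to T$, one must locally trivialize $\pi_*\mathcal{O}_\mathcal{Z}$ to produce matrices $r,s$ and a cyclic section $i$ varying algebraically with $T$; the trivialization is only defined \'etale locally, but the resulting map $T\to\mathcal{M}$ is independent of the trivialization. Once this is established, $\Phi$ and $\Psi$ are inverse isomorphisms of schemes, proving the theorem.
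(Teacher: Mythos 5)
Your construction of the two mutually inverse maps is exactly the one in the paper: the evaluation map $f\mapsto f(r,s)\,i$ sends a stable commuting triple to its colength-$n$ kernel ideal, and conversely a point of the Hilbert scheme is sent to the $\mathbb{C}[z_1,z_2]$-module structure on $R/I\cong\mathbb{C}^n$ together with the cyclic vector $1$, the $GL_n$-ambiguity being the choice of basis. Where you go further is in the last two paragraphs: the paper's proof stops at the set-theoretic bijection (it literally says the two maps ``induce a bijection''), while you address the scheme-theoretic content of the claimed isomorphism by observing that stable triples have trivial $PGL_n$-stabilizer (making the quotient a geometric quotient / principal bundle) and by exhibiting the tautological flat family over $\mathcal{M}$ whose classifying map realizes $\Psi$ as a morphism of schemes. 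That extra work is genuinely needed if one wants $\cong$ to mean isomorphism of varieties rather than bijection of closed points, and your sketch of how to supply it — flatness from constant fiber length, \'etale-local trivialization of $\pi_*\mathcal{O}_{\mathcal{Z}}$ for the inverse — is the standard and correct route (essentially Nakajima's, which the paper cites but does not reproduce). So this is the same core argument, but a more complete one; no gaps.
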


\begin{proof} 
 We will first construct a map from right to left.  
 Let 
 $\Phi_{r,s,i}:\mathbb{C}[z_1,z_2]\longrightarrow \mathbb{C}^n$ be the map $f\mapsto f(r,s)i$. 
 Consider $S=\im \Phi_{r,s,i}$. 
 Since  
 $rS\subseteq S$ and $sS\subseteq S$, $S=\mathbb{C}^n$. 
 So $\ker \Phi_{r,s,i}$ is an ideal of codimension $n$. 
 We send the triple $(r,s,i)$ to $\ker \Phi_{r,s,i}$, and we see that 
 $\ker\Phi_{r,s,i}$ is invariant under the $GL_n(\mathbb{C})$-action.  

 We will now construct a map from left to right. 
Since points of $(\mathbb{C}^2)^{[n]}$ are ideals $I\subseteq R=\mathbb{C}[z_1,z_2]$ such that 
$\dim_{\mathbb{C}}R/I=n$, we choose an isomorphism $R/I\cong \mathbb{C}^n$. 
Let $r\in M_n$ be the action of $z_1$, 
    $s\in M_n$ be the action of $z_2$, and 
    $i\in \mathbb{C}^n$ equal $1\in R/I$. 
    Since $R/I\cong \mathbb{C}^n$, any nonzero vector $i\in \mathbb{C}^n$ is cyclic and since 
    $z_1$ and $z_2$ commute, $[r,s]=0$. 
    Since our choices of $r$, $s$, and $i$ are unique up to the $GL_n(\mathbb{C})$-action, this gives us a map from left to right, 
    which, indeed, induces a bijection. 
%
%
%
\end{proof} 

Next, we will give an explicit correspondence between $(r,s,i)$ and its represented ideal in $\mathbb{C}[z_1,z_2]$. 
Suppose the matrices $r$ and $s$ have distinct eigenvalues. Then they are simultaneously diagonalizable by some $g\in GL_n(\mathbb{C})$. 
Let $i=(1,\ldots,1)\in \mathbb{C}^n$. 
Suppose the eigenvalues of $r$ are denoted as $\lambda_1,\ldots, \lambda_n$ 
and the eigenvalues of $s$ are denoted as $\mu_1,\ldots, \mu_n$. 
Then the corresponding ideal consists of all polynomials $f\in \mathbb{C}[z_1, z_2]$
such that $f(\lambda_i,\mu_i)=0$ for all $1\leq i\leq n$. 
Thus this closed subscheme represents $n$ distinct points on $\mathbb{C}^2$. 
In general, since $rs=sr$, we can simultaneously put $r$ and $s$ into upper triangular matrices under the $GL_n(\mathbb{C})$-adjoint action. 
Let $\lambda_1,\ldots, \lambda_n$ be the diagonal entries of $r$ and 
$\mu_1,\ldots, \mu_n$ be the diagonal entries of $s$. 
Let $S_n$ be the symmetric group of $n$ letters. 

\begin{theorem}[Hilbert-Chow morphism, \cite{MR0237496}]
The morphism 
\[ 
(\mathbb{C}^2)^{[n]}\stackrel{\pi}{\longrightarrow}S^n\mathbb{C}^2=\mathbb{C}^2\times \cdots \times \mathbb{C}^2/S_n \mbox{ where } 
(r,s,i)\mapsto \{(\lambda_1,\mu_1),\ldots, (\lambda_n,\mu_n) \}
\] 
is proper, surjective, and a resolution of singularities of the symmetric product. 
\end{theorem}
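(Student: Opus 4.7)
The plan is to establish the four properties (well-definedness, surjectivity, properness, resolution of singularities) in sequence, leveraging the identification of $(\mathbb{C}^2)^{[n]}$ with the $GL_n(\mathbb{C})$-quotient from Nakajima's theorem that precedes the statement. First I would verify well-definedness: if $(r,s,i)$ satisfies $[r,s]=0$ with $i$ cyclic, then since $r$ and $s$ commute they are simultaneously conjugated into upper triangular form, and the unordered multiset of paired diagonal entries $\{(\lambda_k,\mu_k)\}_{k=1}^n$ coincides with the joint spectrum of the commuting pair. Two different simultaneous triangularizations differ by an element of the normalizer of a Borel in the simultaneous centralizer, which permutes the diagonal pairs; hence the image in $S^n \mathbb{C}^2$ is well-defined, and the map is manifestly $GL_n(\mathbb{C})$-invariant, so it descends to a morphism $\pi$ from the quotient.

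Next I would establish surjectivity together with birationality at once by identifying the \emph{distinct-points locus}. Given $n$ distinct ordered pairs $(\lambda_k,\mu_k)\in \mathbb{C}^2$, choose $r = \diag(\lambda_1,\ldots,\lambda_n)$, $s = \diag(\mu_1,\ldots,\mu_n)$, and $i = (1,1,\ldots,1)^T$: then $[r,s]=0$ and $i$ is cyclic precisely because the pairs are distinct (the Vandermonde-type argument shows $\mathbb{C}[r,s]i$ has dimension $n$). This shows surjectivity onto the open dense locus $(S^n \mathbb{C}^2)^{\circ}$ of configurations supported at $n$ distinct points; conversely any triple mapping to such a configuration is conjugate to $(\diag \lambda, \diag \mu, (1,\ldots,1)^T)$, so $\pi$ is an isomorphism over $(S^n\mathbb{C}^2)^\circ$. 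For surjectivity onto the full symmetric product, I would take limits along a curve $(\lambda_k(t),\mu_k(t))$ approaching a configuration with collisions; the closed embedding of $(\mathbb{C}^2)^{[n]}$ inside a Hilbert scheme of a projective compactification produces a limit ideal with the correct Hilbert polynomial and the correct support multiplicities.

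For properness, my approach is to factor $\pi$ through a projective structure: $(\mathbb{C}^2)^{[n]}$ sits as an open subscheme of the Hilbert scheme of a projective compactification of $\mathbb{C}^2$, on which the classical Hilbert-Chow morphism is known to be projective (Grothendieck), and $\pi$ is identified with the restriction to the open subset of length-$n$ subschemes supported in $\mathbb{C}^2$. The valuative criterion then reduces to extending one-parameter families of length-$n$ subschemes, which is where the flatness of the universal family pays off. Alternatively, one can invoke the explicit construction of $\pi$ via the morphism $(r,s,i)\mapsto$ coefficients of $\det(tI-r)$ and $\det(tI-s)$, together with the mixed symmetric functions $\tr(r^a s^b)$, which give polynomial generators of $\mathbb{C}[S^n\mathbb{C}^2]$.

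Finally, the resolution-of-singularities claim combines three ingredients: smoothness of $(\mathbb{C}^2)^{[n]}$ of dimension $2n$ (Fogarty's theorem, already cited in Section~\ref{Section:hilbert-schemes}), the birationality established above, and the fact that $S^n \mathbb{C}^2$ is singular along the big diagonal (where the symmetric group action has nontrivial stabilizer). Since $\pi$ is a proper birational morphism from a smooth variety to a normal variety of the same dimension, it is a resolution of singularities. I expect the main technical obstacle to be the rigorous verification of properness: extending limits of triples $(r(t),s(t),i(t))$ along colliding eigenvalue configurations, while keeping $i(t)$ cyclic in the limit, requires either the full machinery of flat limits of ideals (the Hilbert scheme viewpoint) or a delicate GIT argument to replace $i(t)$ by an appropriate $GL_n(\mathbb{C})$-translate that does not degenerate.
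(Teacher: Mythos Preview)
The paper does not supply its own proof of this theorem: it is stated as a classical result with a citation to Fogarty \cite{MR0237496}, immediately following the discussion of simultaneous triangularization that defines the map, and no proof environment appears after the statement. So there is nothing in the paper to compare your argument against.

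That said, your sketch is the standard route and is essentially sound. A few remarks on the logic: once you have properness and birationality over the distinct-points locus, surjectivity onto all of $S^n\mathbb{C}^2$ follows automatically (a proper morphism has closed image, and the distinct-points locus is dense), so the separate limit argument you outline for surjectivity is unnecessary and in fact circular, since it already presupposes the existence of limits that properness is meant to guarantee. Your identification of properness as the main technical obstacle is accurate; the cleanest way is exactly what you describe first, restricting the projective Hilbert--Chow morphism from $\Hilb^n(\mathbb{P}^2)$ to the preimage of $S^n\mathbb{C}^2$, rather than attempting to track degenerating triples by hand. Finally, for the resolution claim you correctly invoke normality of $S^n\mathbb{C}^2$; it may be worth noting explicitly that this follows because it is the quotient of the smooth affine variety $(\mathbb{C}^2)^n$ by the finite group $S_n$.
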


\section{Hamiltonian reduction construction in symplectic geometry}\label{section:ham-reduction-symplectic-geometry}

Let $G$ be a complex Lie group and suppose $G$ acts on a smooth, complex symplectic variety 
$M=(M,\omega)$ preserving the symplectic form, i.e., 
$\omega(gx,gy)=\omega(x,y)$ for all $g\in G$ and for all $x,y\in T_mM$, $m\in M$. 
Taking the derivative of the $G$-action, we get 
\[ 
\mathfrak{g}=\lie(G)\longrightarrow \vect(M). 
\] 

\begin{definition} 
A symplectic $G$-action is  
{\em Hamiltonian}  
if given a $G$-action on $M$ and a map $\partial$
\[ 
\xymatrix@-1pc{ 
 & & \mathfrak{g} \ar[dd] \\ 
 & & \\ 
\mathbb{C}[M] \ar[rr]^{\partial} & & \vect(M), \\ 
}
\] 
where $\partial:f\mapsto \xi_f$, the vector field at $f$, 
there exists a Lie algebra homomorphism 
$H:\mathfrak{g}\rightarrow \mathbb{C}[M]$, i.e., 
$H([X,Y])=\{HX,HY \}$,  
where $\{\bullet,\bullet \}$ is the Poisson bracket on $\mathbb{C}[M]$,  
 such that the above diagram commutes. 
\end{definition}

In the case that the Hamiltonian $G$-action exists, we call $H$ the 
{\em comoment map} 
and $H(X)$ the Hamiltonian for the vector field $X^{\#}$ on $M$ generated by the $1$-parameter subgroup $\{ \exp(tX):t\in \mathbb{R}\}\subseteq G$.

\begin{definition}
The map 
$\mu:M\rightarrow \mathfrak{g}^*$ such that 
$\mu(m):\mathfrak{g}\rightarrow \mathbb{C}$ with 
$\mu(m)(X)=H(X)(m)$ is called the {\em moment map}. 
\end{definition}

We explain another (more explicit) way to construct moment maps for symplectic varieties with a $G$-action and we include some details from symplectic geometry in \cite{MR1853077} for completeness. 
Let $(M,\omega)$ be a symplectic variety with a Lie group $G$-action. 
Let $\psi:G\rightarrow \Sympl(M,\omega)$ be a group homomorphism such that the evaluation map $\ev_{\psi}(g,p)=\psi_g(p)$ is smooth. 
For each  $X\in \mathfrak{g}$, let $\mu:M\rightarrow \mathfrak{g}^*$ and let 
$\mu^X:M\rightarrow \mathbb{R}$ such that $\mu^X(p):=\langle \mu(p),X\rangle$, which is the component of $\mu$ along $X$. 
Let $X^{\#}$ be the vector field on $M$ generated by the $1$-parameter subgroup given as above. 

\begin{definition} 
The action $\psi$ is  
{\em Hamiltonian} 
if there exists a map $\mu:M\rightarrow \mathfrak{g}^*$ such that 
\begin{enumerate}
\item for each $X\in \mathfrak{g}$, $d\mu^X=d\langle \mu,X \rangle =\iota_{X^{\#}}\omega$, 
\item $\mu$ is equivariant with respect to the given action $\psi$ of $G$ on $M$ and the coadjoint action $\Ad^*$ of $G$ on $\mathfrak{g}^*$, i.e., we have 
$\mu\circ\psi_g=\Ad_g^*\circ \mu$ for all $g\in G$. 
\end{enumerate} 
\end{definition}

We say $\mu^X$ is the {\em Hamiltonian function} for the vector field $X^{\#}$. 
The function $H(X)$ from earlier this section is the function $\mu^X$. 
The quadruple $(M,\omega, G,\mu)$ is called a {\em Hamiltonian $G$-space} 
and $\mu$ is called a {\em moment map}.  
If $G$ is connected, then $\mu$ is $G$-equivariant.  
Note that for all $g\in G$,  $\langle \mu(g.p),X\rangle =\langle \mu(p),g^{-1}.X \rangle$. 
More precisely, any $X$ in the Lie algebra $\mathfrak{g}=\lie(G)$ gives rise to a vector field $X^{\#}$ on $M$ by 
\[ 
(X.f)(p)= \dfrac{d}{dt}f(\exp(-tX).p)|_{t=0}. 
\] 
That is, the flow of the vector field $X^{\#}$ is the $1$-parameter group of global diffeomorphisms 
$p\mapsto \exp(-tX).p$, 
where we make the map $X\mapsto X^{\#}$ a Lie algebra homomorphism from $\mathfrak{g}$  to the Lie algebra of vector fields on $M$: $[X,Y]^{\#}=[X^{\#},Y^{\#}]$. 
%
%
We will give two examples connecting our moment map definition to linear and angular momentum in classical mechanics. 
\begin{example}\label{example:linear-momentum}
Let $a=(a_1,a_2,a_3),x=(x_1,x_2,x_3)\in \mathbb{R}^3$. 
Let $\mathbb{R}^3$ act on $\mathbb{R}^3$ via the linear translation: $a.x=x+a$.
The $\mathbb{R}^3$-action is induced to $T^*(\mathbb{R}^3)\simeq \mathbb{R}^6$ by 
$a.(x,y)=(x+a,y)$, where $y=(y_1,y_2,y_3)\in \mathbb{R}^3$. 
Let $\displaystyle{\omega=\sum_{i=1}^3 dx_i\wedge dy_i}$ and 
for notational purposes, let $X = X^{\#} = a_1\dfrac{\partial}{\partial x_1}+a_2\dfrac{\partial}{\partial x_2}
+a_3\dfrac{\partial}{\partial x_3}$. 
Then 
\[
d\mu^X =\iota_X\omega = \omega(X)
=\sum_{i=1}^3 dx_i\wedge dy_i(\sum_{j=1}^3a_j\dfrac{\partial}{\partial x_j}) 
=-\sum_{i=1}^3 dy_i\wedge dx_i (\sum_{j=1}^3a_j\dfrac{\partial}{\partial x_j}) 
=-\sum_{i=1}^3 a_idy_i. 
\]  
So 
$\mu^X:\mathbb{R}^6\rightarrow \mathbb{R}^1$ is 
$\mu^X(x,y)$ $=$ 
$\displaystyle{-\sum_{i=1}^{3}a_iy_i=- a \circ y}$, which is the usual dot product between two vectors,   
and 
$\mu^X(x,y)=\langle \mu(x,y),a\rangle$ 
imply the moment map $\mu:T^*\mathbb{R}^3\rightarrow \mathbb{R}^3$ is $\mu(x,y)=-y$. 
\end{example}

\begin{example}\label{example:angular-momentum} 
Let $SO(3)=\{ A\in GL_3(\mathbb{R}):\det(A)=1, \:\: AA^t=\Inoindex_3 \}$ act on $\mathbb{R}^3$ by rotation. Then $\mathfrak{so}(3)=\{A\in \mathfrak{gl}_3(\mathbb{R}):A+A^t=0 \}$ is isomorphic to $\mathbb{R}^3$ via the identification
\[
A =\begin{pmatrix} 
 0   &-a_3 &a_2 \\ 
 a_3 & 0   & -a_1\\  
 -a_2& a_1 & 0  
\end{pmatrix} \mapsto a=(a_1,a_2,a_3).
\] 
Let $x=(x_1,x_2,x_3)\in \mathbb{R}^3$. Consider the left multiplication 
\[ 
Ax = \begin{pmatrix}  
 -a_3 x_2 + a_2 x_3 \\ 
 a_3 x_1 - a_1 x_3 \\ 
 -a_2 x_1 + a_1 x_2  
\end{pmatrix} = a\times x. 
\] 
This implies 
$(AB-BA)x = A(Bx)-B(Ax) 
= A(B\times x)-B(A\times x) 
=(A\times B)x$; 
thus, 
$[A,B]=A\times B$. 
Furthermore, the infinitesimal $SO(3)$-action on $\mathbb{R}^3$ is $g.x=g\times x$. 
So the Hamiltonian is 
\[ 
\mu^X(x,y) = \langle A\times x, y\rangle 
= \det(A,x,y) = \det(x,y,A) 
= \langle x\times y,A \rangle =\langle \mu(x,y),A\rangle, 
\] and 
$\mu:T^*\mathbb{R}^3\rightarrow \mathbb{R}^3\simeq \mathfrak{so}(3)^*$ is precisely 
$\mu(x,y)=x\times y$. 
\end{example}

We refer the reader to \cite{MR0213476} for more background on symplectic geometry and representation theory. 


\begin{proposition}  
Let $G$ be a Lie group and let $X$ be a $G$-variety.  
The $G$-action on $X$ induces a Hamiltonian action on $T^*X$.  
\end{proposition}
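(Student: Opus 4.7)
The plan is to exhibit the standard canonical lift of the $G$-action on $X$ to $T^{*}X$ and show it is Hamiltonian by writing down an explicit moment map built from the Liouville $1$-form. First I would recall that $T^{*}X$ carries the tautological (Liouville) $1$-form $\theta$, characterized at a covector $\xi \in T^{*}_{x}X$ by $\theta_{(x,\xi)}(v) = \xi(d\pi_{(x,\xi)}(v))$ for $v \in T_{(x,\xi)}(T^{*}X)$, where $\pi: T^{*}X \to X$ is the projection; then $\omega := -d\theta$ is the canonical symplectic form. For each $g \in G$, the action $\psi_{g}: X \to X$ lifts to a diffeomorphism $\widetilde{\psi}_{g}: T^{*}X \to T^{*}X$ defined by $\widetilde{\psi}_{g}(x,\xi) = (g.x,\, \xi \circ d\psi_{g^{-1}})$. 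A direct computation from the defining property of $\theta$ shows $\widetilde{\psi}_{g}^{*}\theta = \theta$, and consequently $\widetilde{\psi}_{g}^{*}\omega = \omega$, so this lifted action is symplectic.

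Next I would produce the moment map explicitly. For $X \in \mathfrak{g}$, let $X^{\#}$ denote the vector field on $X$ generated by the one-parameter subgroup $\exp(tX)$, and let $\widetilde{X}^{\#}$ denote the corresponding generator on $T^{*}X$ coming from $\widetilde{\psi}$. Define
\[
\mu: T^{*}X \longrightarrow \mathfrak{g}^{*}, \qquad \langle \mu(x,\xi), X \rangle := \xi(X^{\#}_{x}) = \theta_{(x,\xi)}(\widetilde{X}^{\#}_{(x,\xi)}),
\]
so that $\mu^{X} = \iota_{\widetilde{X}^{\#}}\theta$. The key computation is then to verify the Hamiltonian condition $d\mu^{X} = \iota_{\widetilde{X}^{\#}}\omega$, which I would carry out via Cartan's magic formula: since $\widetilde{\psi}$ preserves $\theta$, $\mathcal{L}_{\widetilde{X}^{\#}}\theta = 0$, whence $d\iota_{\widetilde{X}^{\#}}\theta + \iota_{\widetilde{X}^{\#}}d\theta = 0$, i.e., $d\mu^{X} = -\iota_{\widetilde{X}^{\#}}d\theta = \iota_{\widetilde{X}^{\#}}\omega$.

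Finally I would check $G$-equivariance: for $g \in G$ and $Y \in \mathfrak{g}$, one computes
\[
\langle \mu(\widetilde{\psi}_{g}(x,\xi)), Y \rangle = (\xi \circ d\psi_{g^{-1}})(Y^{\#}_{g.x}) = \xi\bigl((\mathrm{Ad}_{g^{-1}}Y)^{\#}_{x}\bigr) = \langle \mathrm{Ad}_{g}^{*}\mu(x,\xi), Y \rangle,
\]
using the standard identity $d\psi_{g^{-1}}(Y^{\#}_{g.x}) = (\mathrm{Ad}_{g^{-1}}Y)^{\#}_{x}$, which is the infinitesimal form of the fact that $\psi$ is a group action. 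This gives the required $\mathrm{Ad}^{*}$-equivariance of $\mu$, completing the proof that $(T^{*}X, \omega, G, \mu)$ is a Hamiltonian $G$-space. I do not anticipate any serious obstacle here; the only subtlety is book-keeping of signs in the definitions of $\omega$ versus $d\theta$ and confirming that $X \mapsto X^{\#}$ is a Lie algebra homomorphism in the same sign convention as used in the rest of the paper, so that the lifted assignment $X \mapsto \mu^{X}$ is a Lie algebra homomorphism into $(\mathbb{C}[T^{*}X], \{\cdot,\cdot\})$.
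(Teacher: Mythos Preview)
Your argument is the standard and correct one: lift the action to $T^{*}X$ via pullback of covectors, observe that the Liouville form $\theta$ is preserved, define $\mu^{X}=\iota_{\widetilde{X}^{\#}}\theta$, and use Cartan's formula together with $\mathcal{L}_{\widetilde{X}^{\#}}\theta=0$ to get $d\mu^{X}=\iota_{\widetilde{X}^{\#}}\omega$; the equivariance check is also done correctly. There is nothing to compare against, however, because the paper states this proposition without proof---it is quoted as a standard fact from symplectic geometry (the paper cites da Silva's lecture notes for background), and the text immediately moves on to Hamiltonian reduction. Your write-up supplies exactly the argument one would find in such a reference.
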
 

Now, let us discuss the Hamiltonian reduction of a moment map. 
The Hamiltonian reduction is a procedure to construct quotient symplectic and Poisson varieties in the Hamiltonian category. It is a systematic method to get intuition on symmetric systems as well as to reduce the number of coordinates.  
Let $G$ be a Lie group and suppose $N$ is a $G$-variety. Differentiate the group action to get 
\[ 
\mathfrak{g}=\lie(G)\stackrel{a}{\longrightarrow}\vect(N), \hspace{4mm}
a(v)(p)=\dfrac{d}{dt}(\exp(tv).p)|_{t=0}, 
\] 
and then dualize $a$ to get the moment map 
\[ T^*N\stackrel{\mu}{\longrightarrow }\mathfrak{g}^*. 
\] 
The {\em Hamiltonian reduction} of $N$ by $G$ is defined to be $\mu^{-1}(0)/G$.

\begin{example}\label{example:ham-reduction-gan-ginzburg}
Let $G=GL_n(\mathbb{C})$ and let 
$N=\mathfrak{g}\times \mathbb{C}^n$ be a matrix variety, where $\mathfrak{g}=\lie(G)$.
Let $G$ act on $N$ via $g.(r,i)=(grg^{-1}, gi)$. 
The infinitesimal action is given by 
\[ \mathfrak{g}\stackrel{a}{\longrightarrow} \vect(\mathfrak{g}\times \mathbb{C}^n), \hspace{4mm} 
a(v)(r,i)=\dfrac{d}{dt}(\exp(tv).(r,i))|_{t=0} = ([v,r],vi) 
\] 
since 
\[ 
\begin{aligned} 
\dfrac{d}{dt}(\exp(tv).r)|_{t=0} &= \dfrac{d}{dt}(\exp(tv)r\exp(-tv))|_{t=0} 
=\dfrac{d}{dt}((\I+tv+\mathcal{O}(t^2))r(\I-tv+\mathcal{O}(t^2)))|_{t=0} \\ 
&=\dfrac{d}{dt}((r+tvr+\mathcal{O}(t^2)) (\I-tv+\mathcal{O}(t^2)))|_{t=0}  
=\dfrac{d}{dt}(r+tvr-trv+\mathcal{O}(t^2))|_{t=0} \\ 
&=vr-rv+\mathcal{O}(t)|_{t=0} =[v,r] \\ 
\end{aligned} 
\] 
and 
\[ 
\begin{aligned} 
\dfrac{d}{dt}(\exp(tv).i)|_{t=0} &=\dfrac{d}{dt}((\I+tv+\mathcal{O}(t^2)).i)|_{t=0} 
=\dfrac{d}{dt}(i+tvi+\mathcal{O}(t^2))|_{t=0} 
=vi+\mathcal{O}(t)|_{t=0} = vi. 
\end{aligned}
\] 
Then identifying  
$\mathfrak{g}^*\cong \mathfrak{g}$ via $\mathfrak{g}\times \mathfrak{g}\rightarrow \mathbb{C}$, $(A,B)\mapsto \tr(AB)$,  
the moment map $\mu:= a^*$ is 
\[ 
T^*(\mathfrak{g}\times \mathbb{C}^n)\cong \mathfrak{g}\times \mathfrak{g}^* \times \mathbb{C}^n\times (\mathbb{C}^n)^*\stackrel{\mu}{\longrightarrow}\mathfrak{g}^*\cong \mathfrak{g},  \hspace{4mm} (r,s,i,j)\mapsto [r,s]+ij.
\]    
\end{example}

\begin{theorem}[Gan-Ginzburg, \cite{MR2210660}, Theorem 1.1.2]\label{theorem:almost-commuting-vars}
The subscheme $\mu^{-1}(0)$ in Example~\ref{example:ham-reduction-gan-ginzburg} is a complete intersection with $n+1$ irreducible components; $\mu^{-1}(0)$ is reduced of dimension $n^2+2n$.  
 \end{theorem}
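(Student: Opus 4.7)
The plan is to establish three things in sequence: the complete intersection property (which pins down the dimension $n^2+2n$), the count of irreducible components, and reducedness. Write $N = \mathfrak{g} \times \mathbb{C}^n$ and note that $T^*N$ has dimension $2n^2+2n$, while the target $\mathfrak{g}^*$ has dimension $n^2$; hence the expected dimension of $\mu^{-1}(0)$ is $n^2+2n$, matching the claimed value.

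First I would establish the complete intersection property via the standard criterion in symplectic geometry (the same one used earlier in the excerpt, cf.~\eqref{eq:well-known-technique-symplectic-geometry}): the scheme $\mu^{-1}(0)$ is a complete intersection if and only if
$$\operatorname{codim}_N\{(r,i) \in N : \dim G_{(r,i)} \geq k\} \geq k \quad \text{for every } k \geq 1,$$
where $G_{(r,i)} = Z_G(r) \cap \operatorname{Stab}_G(i)$. To verify this, I would stratify $N$ by the Jordan type of $r$ together with the relative position of $i$ with respect to the generalized eigenspace decomposition. For $r$ regular semisimple and $i$ with all coordinates nonzero in an eigenbasis, the stabilizer is trivial; coincidences among eigenvalues and vanishing of coordinates of $i$ each contribute independently, and a case-by-case bookkeeping across Jordan types yields the required inequality. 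This is the point where one must be most careful.

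Second, I would identify the irreducible components by stratifying $\mu^{-1}(0)$ according to the discrete invariant
$$d(r,s,i,j) = \dim_{\mathbb{C}} \mathbb{C}\langle r, s \rangle \cdot i,$$
the dimension of the smallest $(r,s)$-invariant subspace of $\mathbb{C}^n$ containing $i$. This takes values in $\{0, 1, \ldots, n\}$, producing $n+1$ strata $\mathcal{M}_k$. On $\mathcal{M}_k$ one has a block decomposition $\mathbb{C}^n = U \oplus U'$ with $\dim U = k$, where $U$ carries a cyclic $(r|_U, s|_U)$-module structure generated by $i$, and the relation $[r,s] + ij = 0$ decouples into conditions on the blocks that can be parametrized explicitly. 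I would use this parametrization to show each $\mathcal{M}_k$ is irreducible and of dimension $n^2+2n$, then take closures and check that the closures are distinct (using the upper semicontinuity of $d$ or an explicit invariant detecting the generic stratum). This yields exactly $n+1$ irreducible components.

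Finally, reducedness follows from complete intersection together with generic reducedness, so it suffices to exhibit one smooth point on each component; the differential of $\mu$ at a quadruple with $r$ regular semisimple, $s=0$, and $i, j$ chosen so that $d$ equals the prescribed value is straightforward to check surjective by hand, giving a smooth point of $\mu^{-1}(0)$ on each $\overline{\mathcal{M}_k}$. The hardest part will be the stabilizer codimension estimate in the first step: the worst behavior occurs at very degenerate Jordan types of $r$ with $i$ living in small invariant subspaces, and confirming that those loci still have codimension at least $k$ requires a uniform inductive argument over the partition data, rather than an ad hoc orbit-by-orbit count.
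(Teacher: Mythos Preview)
The paper does not prove this theorem; it merely quotes it from Gan--Ginzburg \cite{MR2210660} and moves on. So there is no ``paper's own proof'' to compare against. That said, your outline is close to the actual Gan--Ginzburg argument in steps~2 and~3: they stratify $\mu^{-1}(0)$ by exactly the invariant you call $d(r,s,i,j) = \dim \mathbb{C}\langle r,s\rangle \cdot i$, parametrize each stratum $\mathcal{M}_k$ explicitly (using that the commutator relation forces $j$ to vanish on $\mathbb{C}\langle r,s\rangle \cdot i$, plus the block decomposition you describe), and then deduce reducedness from generic smoothness on each component.

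Where you diverge is step~1. Gan--Ginzburg do \emph{not} prove the complete intersection property first via the stabilizer--codimension criterion on $\mathfrak{g}\times\mathbb{C}^n$. Instead they get it for free as a by-product of step~2: once the explicit parametrization shows every $\mathcal{M}_k$ is irreducible of dimension exactly $n^2+2n$, the scheme $\mu^{-1}(0)$, being cut out by $n^2$ equations in affine space of dimension $2n^2+2n$, is automatically a complete intersection. Your proposed route through stabilizer dimensions is valid in principle, but it duplicates effort---the Jordan-type stratification you would need to carry it out is at least as intricate as the direct dimension count of the $\mathcal{M}_k$, and once you have the latter the former is unnecessary. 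If you reorder your argument to do step~2 first, step~1 collapses to a one-line observation.
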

 
The subscheme given in Theorem~\ref{theorem:almost-commuting-vars} is called an almost-commuting variety.   
An important result connecting Sections \ref{subsection:geometric-invariant-theory-introduction}, \ref{Section:hilbert-schemes}, and \ref{section:ham-reduction-symplectic-geometry} is the following theorem in \cite{MR1711344}. 

\begin{theorem}[Nakajima, Theorem 3.24 and Exercise 3.26]\label{theorem:nakajima-git-and-affine-quotient-identification}
Let $\mu$ be the moment map given in Example~\ref{example:ham-reduction-gan-ginzburg}. 
Then we have 
\[ 
(\mathbb{C}^2)^{[n]}\cong \mu^{-1}(0)/\!\!/_{\det}GL_n(\mathbb{C})\cong 
\mu^{-1}(0)/\!\!/_{\det^{-1}}GL_n(\mathbb{C})
\mbox{ and } 
S^n\mathbb{C}^2\cong \mu^{-1}(0)/\!\!/GL_n(\mathbb{C}). 
\]   
\end{theorem}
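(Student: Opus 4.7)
The plan is to combine King's GIT stability criterion with an explicit cyclic-module dictionary, following Nakajima's framed quiver construction (cf.\ Theorem~1.9 of the excerpt). The proof has four parts: (i) characterize the $\det^{\pm 1}$-stable loci of $\mu^{-1}(0)$, (ii) prove the vanishing of the dual variable ($j = 0$ on the $\det$-stable locus and $i = 0$ on the $\det^{-1}$-stable locus), (iii) construct mutually inverse morphisms to $(\mathbb{C}^2)^{[n]}$, and (iv) descend to the affine quotient via the Hilbert--Chow morphism.

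First, I would apply Mumford's numerical criterion to one-parameter subgroups $\lambda(t) = \mathrm{diag}(t^{a_1},\ldots,t^{a_n})$ of $GL_n$ (every $1$-PS is conjugate to one of this form). The limit $\lim_{t\to 0}\lambda(t).(r,s,i,j)$ exists precisely when the weight filtration $V_{\geq k} = \bigoplus_{l\geq k} V_l$ is $r,s$-invariant, $\im i\subseteq V_{\geq 0}$, and $V_{>0}\subseteq\ker j$, while $\langle\det,\lambda\rangle = \sum_k k\dim V_k$. Choosing $\lambda$ with $V_0 = V$ and $V_{-1}$ a complement, for any proper $r,s$-invariant subspace $V\supseteq\im i$, gives a destabilizing $1$-PS with $\langle\det,\lambda\rangle < 0$; conversely every destabilizer has this form. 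Thus $(r,s,i,j)$ is $\det$-stable iff $i$ is cyclic for $\mathbb{C}\langle r,s\rangle$, and dually $\det^{-1}$-stable iff $j$ is cocyclic (no nonzero $r,s$-invariant subspace lies in $\ker j$). Any stabilizer $g\in GL_n$ of a $\det$-stable point fixes $p(r,s)i$ for every non-commutative word $p$, hence fixes $\mathbb{C}\langle r,s\rangle i = \mathbb{C}^n$ pointwise; the $GL_n$-action is therefore free on the stable locus, making the quotient a geometric principal bundle.

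The principal technical obstacle is the vanishing $j = 0$ on $\mu^{-1}(0)\cap\{i\ \text{cyclic}\}$. Taking the trace of $[r,s] + ij = 0$ yields $ji = 0$, so $j[r,s] = -(ji)j = 0$ and $jrs = jsr$ as row vectors. I would then establish $\eta_k(w) := j\,w(r,s)\,i = 0$ for every length-$k$ word $w\in\mathbb{C}\langle r,s\rangle$ by induction on $k$, with base case $\eta_0 = ji = 0$. For the inductive step, first apply the hypothesis to $j(u(rs - sr)v)i = -(jui)(jvi)$ with $|u|+|v| = k - 2$ to conclude that $\eta_k$ is constant on each $(\#r,\#s)$-type. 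Next, combine the two identities $\tr(w[r,s]) = -\eta_k(w)$ (from $ij = -[r,s]$) and $\tr(w[r,s]) = \tr([w,r]s)$ (trace cyclicity). The Leibniz expansion $[w,r] = \sum_{i\colon x_i=s} x_1\cdots x_{i-1}\cdot ij\cdot x_{i+1}\cdots x_k$, followed by one cyclic shift of each trace, gives $\tr([w,r]s) = \sum_{i\colon x_i = s}\eta_k(w_i')$, where each $w_i' = x_{i+1}\cdots x_k\cdot s\cdot x_1\cdots x_{i-1}$ is a length-$k$ word of the same $(\#r,\#s)$-type as $w$. By constancy the sum equals $(\#s\ \text{in}\ w)\cdot\eta_k(w)$, so $(\#s + 1)\eta_k(w) = 0$ forces $\eta_k(w) = 0$ in characteristic zero. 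Since $\mathbb{C}\langle r,s\rangle i = \mathbb{C}^n$ by cyclicity, this forces $j = 0$, and $\mu = 0$ collapses to $[r,s] = 0$. The transpose involution $(r,s,i,j)\mapsto (s^T,r^T,j^T,i^T)$ is $GL_n$-equivariant under $g\mapsto g^{-T}$, preserves $\mu^{-1}(0)$, swaps $\det$ with $\det^{-1}$, and exchanges cyclic with cocyclic, giving $i = 0$ on the $\det^{-1}$-stable locus and identifying the two GIT quotients.

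With these two facts in hand, the identification $\mu^{-1}(0)/\!\!/_{\det}GL_n\cong(\mathbb{C}^2)^{[n]}$ is standard. Forward: send $(r,s,i,0)$ to the codimension-$n$ ideal $I_{r,s,i} := \ker\bigl(\Phi_{r,s,i}\colon\mathbb{C}[x,y]\to\mathbb{C}^n,\ f\mapsto f(r,s)i\bigr)$, which is well-defined and surjective by commutativity and cyclicity; this assignment is $GL_n$-invariant and descends to a regular morphism to the Hilbert scheme. Backward: given $I\in(\mathbb{C}^2)^{[n]}$, any $\mathbb{C}$-isomorphism $\mathbb{C}[x,y]/I\cong\mathbb{C}^n$ yields $(r,s,i,0)$ with $r,s$ the multiplications by $x,y$ and $i = [1]$, in a single free $GL_n$-orbit, varying flatly with $I$. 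These are mutually inverse, and the transpose involution gives the isomorphism with $\mu^{-1}(0)/\!\!/_{\det^{-1}}GL_n$. Finally for $\mu^{-1}(0)/\!\!/GL_n\cong S^n\mathbb{C}^2$: Gan--Ginzburg's reducedness (Theorem~\ref{theorem:almost-commuting-vars}) together with a contraction argument via torus $1$-PS on the diagonalizable stratum show every closed $GL_n$-orbit in $\mu^{-1}(0)$ meets $\{i = j = 0,\ [r,s] = 0\}$, so the affine quotient coincides with $\{(r,s):[r,s] = 0\}/\!\!/GL_n$; simultaneous triangularization of commuting pairs together with the Chevalley restriction theorem identifies this with $(\mathbb{C}^2)^n/S_n = S^n\mathbb{C}^2$, and compatibility with the Hilbert--Chow morphism $(\mathbb{C}^2)^{[n]}\to S^n\mathbb{C}^2$ is automatic, as the ideal $I_{r,s,i}$ is supported on the set of simultaneous eigenvalue pairs with multiplicities.
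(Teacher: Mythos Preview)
The paper does not prove this theorem; it is stated with attribution to Nakajima's lecture notes and followed only by a reference to the $n=2$ computation in Remark~\ref{remark:hilbert-scheme-of-n-points-symmetric-product}. The only nearby argument is the proof of the preceding theorem (the set-theoretic bijection between $(\mathbb{C}^2)^{[n]}$ and cyclic commuting triples modulo $GL_n$), which is essentially your part~(iii).

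Your proof is correct and substantially more complete than anything in the paper. The induction in~(ii) is the real content: note that the vanishing $j\,w(r,s)\,i = 0$ for every word $w$ holds on \emph{all} of $\mu^{-1}(0)$, since the argument uses only the relation $[r,s] = -ij$ and never cyclicity; cyclicity of $i$ enters only at the very end to conclude $j = 0$ from the fact that $\{w(r,s)i\}$ spans $\mathbb{C}^n$. This observation actually streamlines your part~(iv): by Le~Bruyn--Procesi the $GL_n$-invariants on $T^*(\mathfrak{gl}_n\times\mathbb{C}^n)$ are generated by $\tr(w(r,s))$ and $j\,w(r,s)\,i$, and you have shown the latter all vanish on $\mu^{-1}(0)$; moreover $\tr(w_1[r,s]w_2) = -j\,w_2w_1\,i = 0$ forces the trace invariants to depend only on the abelianized word, i.e.\ on $\tr(r^as^b)$. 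Restriction to the component $\{i=j=0,\ [r,s]=0\}$ is then visibly surjective on invariant rings, and injectivity follows from reducedness plus the density argument you sketch. This bypasses the need to argue directly that every closed orbit has $i=j=0$, which your torus contraction only establishes on the diagonalizable locus.
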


We also refer the reader to Remark~\ref{remark:hilbert-scheme-of-n-points-symmetric-product} for computational details when $n=2$. 
In \cite{MR1834739}, Crawley-Boevey extended Theorem~\ref{theorem:almost-commuting-vars} to all of Nakajima's quiver varieties.   
Definition~\ref{definition:double-quiver-classical-setting}  
and lecture notes by Ginzburg  \cite{Ginzburg-Nakajima-quivers} thoroughly discuss the notion of doubling quiver varieties, which is crucial in the construction of Nakajima quiver varieties.

\section{Grothendieck-Springer resolution}\label{section:GS-resolution-intro} 

The Grothendieck-Springer resolution plays one of the fundamental roles in geometric representation theory (cf. \cite{MR2838836}).

Let $G$ be a complex reductive group, where $\lie(G)=\mathfrak{g}$, and $W=N(T)/Z(T)$ be its corresponding Weyl group, where $T$ is any torus in $G$. 
Let $G/B$ be the variety of Borel subalgebras $\mathfrak{b}\subseteq \mathfrak{g}$. 
For each $\mathfrak{b}$, let $\mathfrak{u}\subseteq \mathfrak{b}$ be $\lie(U)$, where $U\subseteq B$ is the maximal unipotent subgroup of $B$. 
Let $\mathfrak{h}=\mathfrak{b}/\mathfrak{u}$ be the Cartan subalgebra. 
Let $\widetilde{\mathfrak{g}}=\{ (x,\mathfrak{b})\in \mathfrak{g}\times G/B:x\in \mathfrak{b}\}$ 
be the Grothendieck-Springer space of pairs and let 
$\widetilde{\mathcal{N}}=\{(x,\mathfrak{b})\in \mathcal{N}\times G/B:x\in \mathfrak{b}\}$ 
be the Springer space of pairs, where $\mathcal{N}\subseteq \mathfrak{g}$ is the nilpotent cone, i.e., $\mathcal{N}$ is the variety of nilpotent elements in $\mathfrak{g}$. 
For example, if $A\in \mathfrak{g}=\mathfrak{gl}_n$ is a nilpotent matrix, then the characteristic polynomial of $A$ is $p_A(t) = \det(t\I-A)=t^n$. 
It follows that we have the natural maps 
\[ 
\xymatrix@-1pc{ 
& & & T^*(G/B)\cong \widetilde{\mathcal{N}}\ar[ddlll]_p \ar[dd]^{\mu_{\widetilde{\mathcal{N}}}}  \ar@{^{(}->}[rrr]^{\widetilde{i}}& & & \widetilde{\mathfrak{g}}\ar[dd]^{\mu_{\widetilde{\mathfrak{g}}}} \ar[rrr]^{\widetilde{q}} & & & \mathfrak{h} \ar[dd]^h \\ 
& & &   & & &   & & & \\ 
G/B& & & \mathcal{N} \ar@{->>}[dd] \ar@{^{(}->}[rrr]^i  & & & \mathfrak{g}\ar@{->>}[dd] \ar[rrr]^q  & & &\mathfrak{h}/\!\!/W, \\ 
& & &   & & &   & & &   \\ 
& & & \{ 0\}=\mathcal{N}/\!\!/G \ar@{^{(}->}[rrr]^0 & & & \mathfrak{g}/\!\!/G \ar[uurrr]_{\cong} & & & \\
} 
\] 
where $p,\mu_{\widetilde{\mathcal{N}}},\mu_{\widetilde{\mathfrak{g}}}$ are projections, 
$h$ is a geometric quotient, 
$\mu_{\widetilde{\mathcal{N}}},\mu_{\widetilde{\mathfrak{g}}}$ are moment maps, where 
$\mu_{\widetilde{\mathcal{N}}}, \mu_{\widetilde{\mathfrak{g}}}:(x,\mathfrak{b})\mapsto x$, 
the maps $i,\widetilde{i}$ are inclusions, $q$ is the affine adjoint quotient map, and 
$\widetilde{q}$ maps the pair $(x,\mathfrak{b})$ to the class of $x$ in $\mathfrak{b}/\mathfrak{u}$. 
Note that $\widetilde{\mathcal{N}}$ and $\widetilde{\mathfrak{g}}$ are smooth and projective and $\mu_{\widetilde{\mathcal{N}}},\mu_{\widetilde{\mathfrak{g}}}$ are proper, with $\mu_{\widetilde{\mathfrak{g}}}$ being generically finite of degree $|W|$.   
Note that in the case $G=GL_n(\mathbb{C})$, 
then $\mathfrak{g}/\!\!/G = \spec (\mathbb{C}[\Tr(A),\ldots, \det(A)])\cong \mathbb{C}^n$, where $A$ is a general matrix in $\mathfrak{g}$.

Now let $\mathfrak{g}^{rss}\subseteq \mathfrak{g}$ be the regular semisimple locus and let $\mathfrak{h}^r\subseteq \mathfrak{h}$ denote the $W$-regular locus. If we let $\mathfrak{g}$ to be the space of all $n\times n$ matrices, then $\mathfrak{g}^{rss}$ is the subset of these matrices with pairwise distinct eigenvalues. 
Then the above diagram restricts as the following: 
\[ 
\xymatrix@-1pc{
\widetilde{\mathfrak{g}}^{rss} \ar[dd]_{\mu_{\widetilde{\mathfrak{g}}}} \ar[rrr]^{\widetilde{q}} & & & \mathfrak{h}^r \ar[dd]^h \\ 
& & & \\ 
\mathfrak{g}^{rss} \ar@{->>}[dd] \ar[rrr]^q & & & \mathfrak{h}^r/\!\!/W. \\ 
& & & \\ 
\mathfrak{g}^{rss}/\!\!/G \ar[uurrr]^{\cong} & & & \\ 
}
\] 

Note that $\widetilde{\mathfrak{g}}\stackrel{\mu_{\widetilde{\mathfrak{g}}}}{\longrightarrow}\mathfrak{g}$ factors as follows: 
\[ 
\xymatrix@-1pc{
\widetilde{\mathfrak{g}}\ar[rr]^{\phi} & & \mathfrak{g} \times_{\mathfrak{h}/W}\mathfrak{h}\ar[rr]^{\psi}&& \mathfrak{g}, \\ 
}
\] 
where $\phi$ is a resolution of singularities and $\psi$ is a finite projection. 
Now let $\mathfrak{g}^{reg}\subseteq \mathfrak{g}$ be the subspace of regular elements.
Centralizers of $\mathfrak{g}^{reg}$ have minimum possible dimension and they are not necessarily semisimple   
(if $\mathfrak{g}=\mathfrak{gl}_n$, then a matrix is regular if and only if its Jordan canonical form contains a single Jordan block for each distinct eigenvalue).  
Let  
$\widetilde{\mathfrak{g}}^{reg}:=\mu_{\widetilde{\mathfrak{g}}}^{-1}(\mathfrak{g}^{reg})$.
Then $\phi$ induces an isomorphism 
$\widetilde{\mathfrak{g}}^{reg}\cong \mathfrak{g}^{reg}\times_{\mathfrak{h}/W}\mathfrak{h}$. 
Note that $W$ acts on $\widetilde{\mathfrak{g}}^{reg}$ via its action on the second factor. 

Lemma~\ref{lemma:CG-G-S-resolution-without-modding-by-G} 
is also given in Chriss and Ginzburg (\cite{MR2838836}, Corollary 3.1.33);   
we will discuss this identification further in Section~\ref{section:cotangent-bundle-of-GS-resolutions}.
\begin{lemma}\label{lemma:CG-G-S-resolution-without-modding-by-G}
We have $\widetilde{\mathfrak{g}} \cong (G\times \mathfrak{b})/ B$, where 
the $B$-action on the pair $G\times \mathfrak{b}$ is   
$(g,X).b=(gb^{-1}, \Ad_b(X))$.  
\end{lemma}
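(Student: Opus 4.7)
The plan is to exhibit an explicit $B$-equivariant morphism from $G \times \mathfrak{b}$ to $\widetilde{\mathfrak{g}}$ and check that it descends to an isomorphism of varieties on the quotient. Recall that the variety of Borel subalgebras of $\mathfrak{g}$ is identified with $G/B$ via $gB \mapsto \operatorname{Ad}_g(\mathfrak{b})$, so a point of $\widetilde{\mathfrak{g}}$ is a pair $(x, gB)$ with $x \in \operatorname{Ad}_g(\mathfrak{b})$. I would define the map
\[
\Phi: G \times \mathfrak{b} \longrightarrow \widetilde{\mathfrak{g}}, \qquad (g, X) \longmapsto \bigl(\operatorname{Ad}_g(X),\, \operatorname{Ad}_g(\mathfrak{b})\bigr).
\]
This is clearly a morphism of varieties, and the image lies in $\widetilde{\mathfrak{g}}$ since $\operatorname{Ad}_g(X) \in \operatorname{Ad}_g(\mathfrak{b})$.

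Next, I would verify that $\Phi$ is constant on $B$-orbits under the stated action $(g,X).b = (gb^{-1}, \operatorname{Ad}_b(X))$: indeed, since $B$ normalizes $\mathfrak{b}$,
\[
\Phi(gb^{-1}, \operatorname{Ad}_b(X)) = \bigl(\operatorname{Ad}_{gb^{-1}} \operatorname{Ad}_b(X),\, \operatorname{Ad}_{gb^{-1}}(\mathfrak{b})\bigr) = \bigl(\operatorname{Ad}_g(X),\, \operatorname{Ad}_g(\mathfrak{b})\bigr) = \Phi(g,X).
\]
Hence $\Phi$ factors through a morphism $\bar{\Phi}: (G \times \mathfrak{b})/B \to \widetilde{\mathfrak{g}}$. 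To see $\bar{\Phi}$ is bijective on points, given $(x, gB) \in \widetilde{\mathfrak{g}}$, the preimage contains $(g, \operatorname{Ad}_{g^{-1}}(x))$, proving surjectivity. For injectivity, if $\Phi(g_1, X_1) = \Phi(g_2, X_2)$, then $\operatorname{Ad}_{g_1}(\mathfrak{b}) = \operatorname{Ad}_{g_2}(\mathfrak{b})$, so $b := g_2^{-1} g_1 \in N_G(\mathfrak{b}) = B$, and one checks directly that $(g_1, X_1).b = (g_2, X_2)$.

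Finally, to promote this bijection to an isomorphism of varieties, I would invoke the fact that $G \to G/B$ is a Zariski-locally trivial principal $B$-bundle, so the associated bundle $G \times_B \mathfrak{b}$ exists as a smooth variety that is the geometric quotient of $G \times \mathfrak{b}$ by the $B$-action. Over any Zariski open $U \subseteq G/B$ trivializing the bundle via a local section $\sigma: U \to G$, we obtain an inverse to $\bar{\Phi}$ on $\mu_{\widetilde{\mathfrak{g}}}^{-1}(U)$ by sending $(x, gB) \mapsto [\sigma(gB), \operatorname{Ad}_{\sigma(gB)^{-1}}(x)]$, which is regular. Patching these local inverses gives a regular inverse to $\bar{\Phi}$, completing the proof. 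The only nontrivial technical point is the local trivialization step; this is standard and could be dispatched either by an appeal to the associated bundle construction or, if preferred, by giving an explicit cover of $G/B$ by Bruhat cells.
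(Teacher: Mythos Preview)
Your proof is correct and follows essentially the same approach as the paper: both construct the map $(g,X)\mapsto(\Ad_g(X),\,gB)$ and verify it descends to an isomorphism. Your argument is in fact more complete, as you explicitly check $B$-invariance, establish bijectivity via $N_G(\mathfrak{b})=B$, and address the passage from bijection to isomorphism of varieties using local triviality, whereas the paper's proof is terser on these points.
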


\begin{proof}  
There are natural projection maps 
\[ 
\xymatrix@-1pc{ 
& & & \ar[llldd]_{\pi_1}^{\mbox{flat}} \widetilde{\mathfrak{g}} \ar[rrrdd]^{\pi_2}_{\mbox{proj}}& & & \\ 
& & & & & & \\  
G/B & & &  & & & \mathfrak{g} \\  
}
\] 
where both $\pi_1$ and $\pi_2$ are $G$-equivariant. 
The fiber of $x$ via $\pi_2$ consist of the set of all flags $\mathfrak{b}'$ satisfying $x\in \mathfrak{b}'$. 
Since any two Borels are conjugate under the $G$-action, 
$x$ is in $\mathfrak{b}'$ for some $\mathfrak{b}' \subseteq \mathfrak{g}$. 
Fibers of $\pi_1$ consist of Borels preserving a flag in $G/B$, 
so $\pi_1$ is flat with fiber $\mathfrak{b}'$. The $B$-action on $G\times \mathfrak{b}$ is free since 
$gb^{-1}=g$ implies $b$ is the identity element in $B$. 
So the map $G\times \mathfrak{b}/B\rightarrow \widetilde{\mathfrak{g}}$, 
where $(g,x)\mapsto (gxg^{-1}, (g.B)/B)$, is $G$-equivariant and is an isomorphism. 
\end{proof}

Finally, we end this section by proving $\widetilde{\mathfrak{g}}/ G\cong \mathfrak{b}/ B$. 
\begin{lemma}\label{lemma:g-s-resolution-and-B-action-on-borel}   
We have 
$\widetilde{\mathfrak{g}}/G\cong \mathfrak{b}/B$ as orbit spaces. 
\end{lemma}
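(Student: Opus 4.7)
The plan is to deduce this from Lemma~\ref{lemma:CG-G-S-resolution-without-modding-by-G} by a quotient-swapping argument, using that the $G$-action on $\widetilde{\mathfrak{g}}$ and the $B$-action used in the construction $(G\times\mathfrak{b})/B$ commute.

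First, I would rewrite $\widetilde{\mathfrak{g}}\cong (G\times\mathfrak{b})/B$ and trace the residual $G$-action through the identification of Lemma~\ref{lemma:CG-G-S-resolution-without-modding-by-G}. Recall the isomorphism sends $[(g,X)]\mapsto (\Ad_g(X),\, gB/B)$, and the $G$-action on $\widetilde{\mathfrak{g}}$ is the diagonal adjoint action on both factors, $h.(x,\mathfrak{b}')=(\Ad_h(x),\,h\mathfrak{b}'h^{-1})$. Pulling this back to $G\times\mathfrak{b}$, one checks that the induced $G$-action is just left multiplication on the first factor: $h\cdot(g,X)=(hg,X)$. This commutes with the right $B$-action $(g,X).b=(gb^{-1},\Ad_b(X))$ on the nose.

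Next, since the two commuting actions assemble into a $G\times B$-action on $G\times\mathfrak{b}$, I would take the iterated quotient in the opposite order:
\[
\widetilde{\mathfrak{g}}/G \;\cong\; \bigl((G\times\mathfrak{b})/B\bigr)/G \;\cong\; (G\times\mathfrak{b})/(G\times B) \;\cong\; \bigl((G\times\mathfrak{b})/G\bigr)/B.
\]
The projection $(g,X)\mapsto X$ induces an isomorphism $(G\times\mathfrak{b})/G\stackrel{\sim}{\to}\mathfrak{b}$, since the $G$-action by left multiplication on $G$ is free with every $G$-orbit uniquely represented by $(e,X)$.

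Finally, I would track the residual $B$-action under this last identification: starting from $(e,X)$, the $B$-action sends $(e,X)\mapsto (b^{-1},\Ad_b(X))$, and the unique $G$-orbit representative of $(b^{-1},\Ad_b(X))$ is $(e,\Ad_b(X))$. Hence the induced $B$-action on $\mathfrak{b}$ is the adjoint action, giving $\widetilde{\mathfrak{g}}/G\cong \mathfrak{b}/B$ as orbit spaces. There is no serious obstacle here; the only care required is in verifying that the $G$- and $B$-actions really do commute and that the $G$-action is free enough for the quotient swap to make sense as a bijection of orbit spaces (which it is, since $G$ acts freely on the first factor of $G\times\mathfrak{b}$). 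For an orbit-space statement no GIT subtleties arise, so the argument reduces to this straightforward bookkeeping.
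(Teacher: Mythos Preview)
Your proof is correct and follows essentially the same route as the paper: both use Lemma~\ref{lemma:CG-G-S-resolution-without-modding-by-G} to replace $\widetilde{\mathfrak{g}}$ by $(G\times\mathfrak{b})/B$ and then project to the $\mathfrak{b}$ factor. The paper simply writes down the resulting map $(G\times\mathfrak{b})/B\to\mathfrak{b}/B$ and asserts it is well-defined and $G$-equivariant (hence descends), whereas you unpack the same construction as a commuting-actions quotient swap $\bigl((G\times\mathfrak{b})/B\bigr)/G\cong\bigl((G\times\mathfrak{b})/G\bigr)/B\cong\mathfrak{b}/B$; your version is more explicit but the content is identical.
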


\begin{proof} 
Consider the map 
$(G\times \mathfrak{b})/B=\widetilde{\mathfrak{g}} \rightarrow \mathfrak{b}/B$ where 
$(g,x)\mapsto gxg^{-1}$.  
The map is well-defined up to the $B$-conjugation action. 
Since the map is $G$-equivariant, it descends to an isomorphism 
$\widetilde{\mathfrak{g}}/G\cong \mathfrak{b}/B$ as orbit spaces. 
\end{proof}

\section{The cotangent bundle of the enhanced Grothendieck-Springer resolution}\label{section:cotangent-bundle-of-GS-resolutions} 

The details of the construction of the cotangent bundle of the enhanced Grothendieck-Springer resolution are given in \cite{Nevins-GSresolutions}. 
In this section, we will go through the details as an additional source for the construction. 

If $V$ is a linear space and  $\mathcal{N}=\mathcal{N}(V)$ is the set of nilpotent linear endomorphisms of $V$, then the 
{\em enhanced nilpotent cone} of $V$ is the set $\mathcal{N}\times V$; here, $GL(V)$ acts on $\mathcal{N}\times V$.  
In the similar manner, 
if $V=\mathbb{C}^n$ is a linear space and $\mathfrak{b}$ is the set of linear endomorphisms $\phi$  
such that $\phi(\mathbb{C}^k)\subseteq \mathbb{C}^k$ for all $1\leq k\leq n$, then the 
{\em enhanced Grothendieck-Springer resolution} of $V$ is $\mathfrak{b}\times V$; here, $B$ acts on 
$\mathfrak{b}\times V$ as a change-of-basis. 
We refer to Syu Kato's work for an extensive treatment on enhanced nilpotent representations. 

We refer to Section~\ref{section:GS-resolution-intro} for the relationship between the Springer resolution and the Grothendieck-Springer resolution.  
A {\em symplectic resolution}  $\widetilde{X}\rightarrow X$ 
is a resolution of singularities such that the symplectic $2$-form on $X$ 
pulls back to a nondegenerate and closed symplectic $2$-form on all of $\widetilde{X}$. 
The map $\widetilde{\mathfrak{g}}\stackrel{\mu_{\widetilde{\mathfrak{g}}}}{\longrightarrow}\mathfrak{g}^* 
\stackrel{\Tr}{\cong} \mathfrak{g}$ in Section~\ref{section:GS-resolution-intro} is a symplectic resolution which encapsulates the cotangent bundle 
$T^*(G/B)= \widetilde{\mathcal{N}} \subseteq \widetilde{\mathfrak{g}}$ 
of the flag variety. 
The map $\mu_{\widetilde{\mathfrak{g}}}$ is also a (projective) moment map for the $G$-action on $\widetilde{\mathfrak{g}}$ with each fiber of $x\in \mathfrak{b}'$ being a flag variety satisfying $x$. Furthermore, the Springer resolution $\widetilde{\mathcal{N}} \rightarrow \mathcal{N}$ 
of the nilpotent cone in a semisimple Lie algebra 
is a semi-small morphism while the Grothendieck-Springer
simultaneous resolution is a small morphism. 
Here, we use definitions of small and semi-small in the sense according to Goresky-MacPherson;  
a proper morphism $f:X\rightarrow Y$ is 
{\em semi-small} if $\dim (X\times_Y X)\leq \dim X$ 
and  
{\em small}  if 
$\dim (X\times_Y X\setminus \Delta (X))<\dim X$, where $\Delta (X)$ is the diagonal of $X$ in $X\times_Y X$. 
There are other interesting properties of these examples which have had a significant impact 
in representation theory, but we will not exhaustively list all of them. 
We begin with some propositions in symplectic geometry.  
 
%
%

\begin{proposition}\label{proposition:group-action-on-product-space}
Let $G$ be a Lie group with Hamiltonian action on symplectic manifolds 
$(M_1,\omega_1)$ and $(M_2,\omega_2)$ with moment maps 
$\mu_1:M_1\rightarrow \mathfrak{g}^*$ and 
$\mu_2:M_2\rightarrow \mathfrak{g}^*$. 
Then the diagonal action of $G$ on $M_1\times M_2$ is Hamiltonian with $\mu:M_1\times M_2 \rightarrow \mathfrak{g}^*$, 
where $\mu(p_1,p_2)=\mu_1(p_1)+\mu_2(p_2)$ 
for $p_i\in M_i$.  
\end{proposition}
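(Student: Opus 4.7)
The plan is to first equip $M_1 \times M_2$ with its canonical product symplectic form $\omega = \pi_1^*\omega_1 + \pi_2^*\omega_2$, where $\pi_i : M_1 \times M_2 \to M_i$ are the projections, and then verify directly the two axioms of a Hamiltonian action for the candidate map $\mu = \mu_1 \circ \pi_1 + \mu_2 \circ \pi_2$. Since closedness and nondegeneracy of $\omega$ are standard (each $\pi_i^*\omega_i$ is closed, and the direct-sum structure on each tangent space $T_{(p_1,p_2)}(M_1\times M_2) \cong T_{p_1}M_1 \oplus T_{p_2}M_2$ makes the sum nondegenerate), the substantive content is in checking the moment-map identity and the equivariance.

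First I would identify the fundamental vector field generated by $X \in \mathfrak{g}$ for the diagonal action. If $X^{\#}_i$ denotes the fundamental vector field on $M_i$, then at a point $(p_1,p_2)$ the diagonal action differentiates to $X^{\#}(p_1,p_2) = (X^{\#}_1(p_1), X^{\#}_2(p_2))$ under the identification $T_{(p_1,p_2)}(M_1\times M_2) \cong T_{p_1}M_1 \oplus T_{p_2}M_2$. Writing $\mu^X = \langle \mu, X\rangle$ and using linearity of the pairing,
\[
\mu^X = \mu_1^X\circ \pi_1 + \mu_2^X \circ \pi_2,
\]
so $d\mu^X = \pi_1^* d\mu_1^X + \pi_2^* d\mu_2^X$. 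The Hamiltonian hypothesis on each factor gives $d\mu_i^X = \iota_{X^{\#}_i}\omega_i$, and I would then compute
\[
\iota_{X^{\#}}(\pi_1^*\omega_1 + \pi_2^*\omega_2) = \pi_1^*\iota_{X^{\#}_1}\omega_1 + \pi_2^*\iota_{X^{\#}_2}\omega_2,
\]
using the fact that $(\pi_i)_* X^{\#} = X^{\#}_i$ and that $\pi_j^*\omega_j$ evaluated on a pair of tangent vectors depends only on their $M_j$-components. Combining these equalities yields $d\mu^X = \iota_{X^{\#}}\omega$, which is the first axiom.

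For equivariance, the diagonal action satisfies $\pi_i \circ \psi_g = (\psi_i)_g \circ \pi_i$ for $g \in G$, where $(\psi_i)_g$ is the $G$-action on $M_i$. Hence
\[
\mu \circ \psi_g = \mu_1\circ (\psi_1)_g \circ \pi_1 + \mu_2 \circ (\psi_2)_g \circ \pi_2 = \mathrm{Ad}_g^*\circ \mu_1 \circ \pi_1 + \mathrm{Ad}_g^* \circ \mu_2 \circ \pi_2 = \mathrm{Ad}_g^* \circ \mu,
\]
by $G$-equivariance of each $\mu_i$ and $\mathbb{C}$-linearity of $\mathrm{Ad}_g^*$ on $\mathfrak{g}^*$. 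This establishes the second axiom and completes the verification that $(M_1\times M_2, \omega, G, \mu)$ is a Hamiltonian $G$-space.

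The only subtle step is the interchange $\iota_{X^{\#}}\pi_i^*\omega_i = \pi_i^*\iota_{X^{\#}_i}\omega_i$; this is not entirely formal because $X^{\#}$ is not itself a pullback of a vector field, but it follows from the pointwise splitting of the tangent space together with the observation that $\pi_i^*\omega_i$ annihilates any vector whose $M_i$-component is zero. I would spell this out by choosing $(v_1,v_2) \in T_{p_1}M_1 \oplus T_{p_2}M_2$ and computing both sides explicitly, which is the one place a reader might want to see detail but presents no real obstacle.
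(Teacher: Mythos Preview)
Your proof is correct and complete. The paper states this proposition without proof, treating it as a standard fact from symplectic geometry (which it is---see, e.g., Cannas da Silva, \emph{Lectures on Symplectic Geometry}, cited in the paper as \cite{MR1853077}). Your argument supplies exactly the details one would expect: the product symplectic form, the splitting of the fundamental vector field, the verification of $d\mu^X = \iota_{X^\#}\omega$ factor by factor, and the equivariance via linearity of $\Ad_g^*$. Your flagged ``subtle step'' about $\iota_{X^\#}\pi_i^*\omega_i = \pi_i^*\iota_{X_i^\#}\omega_i$ is handled correctly and is indeed the only place where a careless reader might stumble.
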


Consider $M=T^*(G\times \mathfrak{b}\times \mathbb{C}^n)$ under the $G\times B$  action. We want to relate $M$ with 
$T^*(G\times_B \mathfrak{b}\times \mathbb{C}^n)=T^*(\widetilde{\mathfrak{g}}\times \mathbb{C}^n)$ under the $B$-action. 
Let $B$ act on $G\times \mathfrak{b}$ via $b.(g,r)=(gb^{-1},brb^{-1})$, 
Chriss and Ginzburg prove the identification 
$G\times_B \mathfrak{b}=\{(x,\mathfrak{b})\in \mathfrak{g}\times G/B:x\in \mathfrak{g} \}$ (\cite{MR2838836}, Corollary 3.1.33). 
Let $(\mathbb{C}^n)^*$ be the dual of $\mathbb{C}^n$. 
That is, if we think of  $i\in \mathbb{C}^n$ as a column vector, then we should think of $j\in (\mathbb{C}^n)^*$ as a row vector. 
Let $G\times B$ act on $T^*(G\times \mathfrak{b}\times \mathbb{C}^n)$ in the following way:  
for $g\in G$ and  
$(g',\theta,r,s,i,j)\in G\times \mathfrak{g}\times\mathfrak{b}\times \mathfrak{b}^*\times \mathbb{C}^n \times (\mathbb{C}^n)^*$, 
\begin{equation}\label{equation:constructing-gs-resolution-g-action}
g.(g',\theta,r,s,i,j) = (g'g^{-1},g\theta g^{-1},r,s,gi,jg^{-1})
\end{equation}
and for $b\in B$ and $(g',\theta,r,s,i,j)$, 
\begin{equation}\label{equation:constructing-gs-resolution-b-action}  
b.(g',\theta,r,s,i,j) = (g'b^{-1},b\theta b^{-1},brb^{-1},bsb^{-1}, i,j). 
\end{equation}
From 
\eqref{equation:constructing-gs-resolution-g-action} and \eqref{equation:constructing-gs-resolution-b-action}, 
we have the following moment maps: 
\[ 
\begin{aligned}  
T^*(G)\cong G\times \mathfrak{g}^* \rightarrow \mathfrak{g}^*, &\hspace{4mm}(g',\theta)\mapsto \theta,  
		\hspace{6mm}
T^*(\mathbb{C}^n)\cong \mathbb{C}^n \times (\mathbb{C}^n)^*\rightarrow \mathfrak{g}^*, \hspace{4mm}(i,j)\mapsto -ij, \\ 
T^*(G)\cong G\times \mathfrak{g}^*\rightarrow \mathfrak{b}^*, &\hspace{4mm}(g',\theta)\mapsto \overline{\theta},  
		\hspace{6mm}
T^*(\mathfrak{b})\cong \mathfrak{b}\times \mathfrak{b}^*\rightarrow \mathfrak{b}^*, \hspace{4mm}(r,s)\mapsto [r,s], \\ 
\end{aligned} 
\] 
where $\overline{v}:\mathfrak{g}^*\cong \mathfrak{g} \rightarrow \mathfrak{b}^*$ is a projection. 
By Proposition~\ref{proposition:group-action-on-product-space}, 
$T^*(G\times \mathfrak{b}\times \mathbb{C}^n)\rightarrow \mathfrak{g}^*\times \mathfrak{b}^*$ is the map 
\[ 
(g',\theta,r,s,i,j)\mapsto (\theta -ij, [r,s] + \overline{\theta}).  
\]  
We will explain how to obtain some of the above moment maps. 
Let $G$ act on $T^*(G)=G\times \mathfrak{g}^*\stackrel{1\times \Tr}{\cong} G\times \mathfrak{g}$  
via  
$g.(g',\theta)=(g'g^{-1},g\theta g^{-1})$.  
Let $g_i$ be the coordinates of $g'$,   
$\theta_i$ be the coordinates of $\theta$, and  
let $\omega=-\displaystyle{\sum_{i} dg_i\wedge d\theta_i}=\displaystyle{\sum_i d\theta_i\wedge dg_i}$ be a symplectic form on $T^*(G)$. 
Let $X=\displaystyle{\sum_i \partial/\partial g_i}$. 
Then 
\[
d\mu^X =\iota_X\omega = \sum_i d\theta_i = \omega(X). 
\] 
So $\mu^X = \displaystyle{\sum_i\theta_i}.$ 
Viewing $\theta\in \mathfrak{g}$ as a vector in $\mathbb{C}^{n^2}$, 
$\displaystyle{\sum_i\theta_i} = \theta\circ (1,\ldots, 1)$. 
Since $\mu^X(g',\theta)=\langle \mu(g',\theta),1\rangle$
where $\mu:T^*G\rightarrow \mathfrak{g}^*\stackrel{\Tr}{\cong}\mathfrak{g}$, 
$\mu(g',\theta)=\theta$.

Suppose $G$ acts on $T^*\mathbb{C}^n$ via $g.(i,j)=(gi,jg^{-1})$.
Let $\omega = 
\displaystyle{\sum_{i=1}^n dx_i\wedge dy_i}$, 
where $(x_i)$ are the coordinates of $\mathbb{C}^n$ and $(y_i)$ are the coordinates of $(\mathbb{C}^n)^*$.
We will explicitly write down the moment map for $n=2$.
Let $X=E_{11}\in \mathfrak{gl}_2$, which is a $2\times 2$ matrix with $1$ in the $(1,1)$ entry and $0$ elsewhere. 
Since 
\[ 
\begin{aligned} 
\dfrac{d}{dt}(\exp(t 
\begin{pmatrix}   
 1& 0\\ 
 0& 0 
\end{pmatrix}
).(i,j))|_{t=0}
&= \left( 
\begin{pmatrix}  
 1& 0\\ 
 0& 0 
\end{pmatrix}
\begin{pmatrix} 
 x_1\\ 
 x_2 
\end{pmatrix}, 
- \begin{pmatrix}  
 y_1& y_2  
\end{pmatrix}
\begin{pmatrix}  
1 &0 \\ 
0 &0  
\end{pmatrix}
\right) \\ 
&= \left( 
\begin{pmatrix}   
 x_1\\ 
 0   
\end{pmatrix}, 
\begin{pmatrix} 
 -y_1& 0  
\end{pmatrix} 
\right) = x_1 \dfrac{\partial}{\partial x_1} - y_1\dfrac{\partial}{\partial y_1}, 
\end{aligned}
\] 
let $X=  x_1 \dfrac{\partial}{\partial x_1} - y_1\dfrac{\partial}{\partial y_1}$. 
Then 
\[ 
d\mu^X =\iota_X\omega 
= (dx_1 \wedge dy_1 + dx_2\wedge dy_2 )(x_1 \dfrac{\partial}{\partial x_1} - y_1\dfrac{\partial}{\partial y_1})
=- (x_1dy_1+y_1dx_1)
\] 
imples $\mu^X= - x_1y_1$. 
So 
$\mu^X
=\langle \mu(i,j),X\rangle
=\langle \mu(i,j),E_{11}\rangle= - x_1y_1$, 
and the $(1,1)$-entry of the moment map $\mu$ is $\mu(i,j)_{1,1}= - x_1y_1$.

Now let $X=E_{12}$, with $1$ in the $(1,2)$ entry and $0$ elsewhere. 
Then we have 
\[ 
\begin{aligned} 
\dfrac{d}{dt}(\exp(t 
\bordermatrix{
& &   \cr 
& 0& 1\cr 
& 0& 0\cr 
}
).(i,j))|_{t=0}
&= \left( 
\begin{pmatrix}  
 0& 1\\ 
 0& 0 
\end{pmatrix}
\begin{pmatrix}
 x_1 \\ 
 x_2 
\end{pmatrix}, 
- \begin{pmatrix} 
 y_1& y_2  
\end{pmatrix}
\begin{pmatrix} 
0 &1 \\ 
0 &0  
\end{pmatrix}
\right) \\ 
&= \left( 
\begin{pmatrix}  
 x_2\\ 
 0   
\end{pmatrix}, 
\begin{pmatrix}  
 0   & -y_1   
\end{pmatrix}  
\right) = x_2 \dfrac{\partial}{\partial x_1} -y_1\dfrac{\partial}{\partial y_2},   
\end{aligned}
\] 
which implies 
$d\mu^X=\iota_X\omega = (dx_1\wedge dy_1+dx_2\wedge dy_2)(x_2 \dfrac{\partial}{\partial x_1} -y_1\dfrac{\partial}{\partial y_2})=-(x_2dy_1+y_1dx_2)$. 
Thus we obtain the Hamiltonian $\mu^X=- x_2y_1$. 
Repeat the above calculation for $E_{22}$ to get 
$X= x_2 \dfrac{\partial}{\partial x_2} -y_2\dfrac{\partial}{\partial y_2}$. 
Since $d\mu^X =\iota_X\omega = -(x_2dy_2 +y_2dx_2)$, 
$\mu^X = - x_2y_2$. 
Finally, let $X=E_{21}\in \mathfrak{g}^*$ to get that the infinitesimal action is given in coordinates as 
$x_1 \dfrac{\partial}{\partial x_2} -y_2\dfrac{\partial}{\partial y_1}$. 
Then 
$d\mu^X =\iota_X\omega=-(x_1dy_2+y_2dx_1)$ implies $\mu^X = - x_1y_2$. 
Thus we get four Hamiltonian functions from $T^*(\mathbb{C}^2)\rightarrow \mathbb{C}$, which are  
$- x_1y_1, - x_1y_2,- x_2y_1$, and $- x_2y_2$. 
Since $\mu^X=\langle \mu(i,j),X\rangle$ and 
since the derivative 
$d\mu:TM\cong T^*M \rightarrow \mathfrak{g}^*$ 
of the moment map 
$\mu:M\rightarrow \mathfrak{g}^*$ 
is the transpose of the derivative 
$a:\mathfrak{g}\rightarrow \vect(M)$ 
of the $G$-action,   
the moment map is given as 
\[ 
\mu(i,j)=- \begin{pmatrix}
 x_1y_1& x_1y_2\\
 x_2y_1&x_2y_2 \\   
\end{pmatrix} = - ij. 
\] 

\subsection{Hamiltonian reduction in stages}\label{subsection:ham-reduction-in-stages-symplectic-geometry}

Next, we will discuss a concept known as reduction in stages in symplectic geometry. 
We will closely follow the manuscript \cite{Nevins-GSresolutions} so the reader may skip the rest of this section if desired to do so. 
Let $G\times \mathfrak{b}\times \mathbb{C}^n\stackrel{f}{\longrightarrow} G\times_B\mathfrak{b}\times \mathbb{C}^n$ 
be a principle $B$-bundle, i.e. $B$ acts freely and transitively on the fibers of $G\times \mathfrak{b}\times \mathbb{C}^n$. 
The map $f$ preserves the fibers of 
$G\times \mathfrak{b}\times \mathbb{C}^n$, i.e., 
if $y\in (G\times \mathfrak{b}\times \mathbb{C}^n)_x$ is a point in the fiber of $x$, then $b.y\in (G\times \mathfrak{b}\times \mathbb{C}^n)_x$ still lies in the fiber of $x$ for all $b\in B$. 
Thus each fiber is homeomorphic to $B$ and 
$(G\times \mathfrak{b}\times \mathbb{C}^n)/B \cong G\times_B \mathfrak{b}\times \mathbb{C}^n$.  
Next, note that $f$ has a $G$-action which commutes with the $B$-action, i.e., the map from above 
\[
T^*(G\times \mathfrak{b}\times \mathbb{C}^n) \stackrel{\mu_{G\times B}}{\longrightarrow} 
\mathfrak{g}^*\times \mathfrak{b}^*, \hspace{4mm} 
(g',\theta,r,s,i,j)\mapsto (\theta - ij,[r,s] + \overline{\theta}), 
\] 
is $G\times B$-equivariant. 
Let 
$\psi_G:T^*(G\times \mathfrak{b}\times \mathbb{C}^n)\rightarrow \mathfrak{g}^*$ and 
$\psi_B:T^*(G\times \mathfrak{b}\times \mathbb{C}^n)\rightarrow \mathfrak{b}^*$  
be the projection maps obtained from $\mu_{G\times B}$.  
Consider $\psi_B^{-1}(0)\subseteq T^*(G\times \mathfrak{b}\times \mathbb{C}^n)$.  
Since $B$ acts freely on $\psi_B^{-1}(0)$,  
\[  
\psi_B^{-1}(0)/B\cong T^*(G\times \mathfrak{b}\times \mathbb{C}^n/B)\simeq T^*(G\times_B\mathfrak{b}\times \mathbb{C}^n) 
\]  
is a reduced space.  

Now consider $G$ acting on $\psi_B^{-1}(0)$, 
which commutes with the $B$-action. Since $G$ preserves the symplectic form, say $\omega$, 
on $T^*(G\times \mathfrak{b}\times \mathbb{C}^n)$, 
$G$ acts symplectically on the reduced space $(\psi_B^{-1}(0)/B,\omega')$, where $\omega'$ is the symplectic form on $\psi_B^{-1}(0)/B$. 
Since $B$ preserves $\psi_G:T^*(G\times \mathfrak{b}\times \mathbb{C}^n)\longrightarrow \mathfrak{g}^*$, 
$B$ preserves 
$\psi_G\circ \iota_1:\psi_B^{-1}(0)\rightarrow \mathfrak{g}^*$. 
So $\psi_G\circ \iota_1$ is constant on fibers of 
$\psi_B^{-1}(0)\stackrel{p_1}{\longrightarrow}\psi_B^{-1}(0)/B$. 
Thus, there exists a smooth map 
\[ 
\mu_G:\psi_B^{-1}(0)/B\simeq T^*(G\times_B\mathfrak{b}\times \mathbb{C}^n)\longrightarrow \mathfrak{g}^* 
\] 
such that 
$\mu_G\circ p_1=\psi_G\circ \iota_1:\psi_B^{-1}(0)\longrightarrow \mathfrak{g}^*$. 

\begin{proposition}
\mbox{} 
\begin{enumerate}
\item $\mu_G:T^*(G\times_B\mathfrak{b}\times \mathbb{C}^n)\rightarrow \mathfrak{g}^*$ is a moment map for $G$ acting on $(T^*(G\times_B\mathfrak{b}\times \mathbb{C}^n), \omega')$. 
\item Since $G$ acts freely on $\mu_{G\times B}^{-1}(0)$, $G$ acts freely on $\mu_G^{-1}(0)$ and a symplectomorphism 
$\mu_G^{-1}(0)/G\cong \mu_{G\times B}^{-1}(0)/G\times B$ exists. 
\end{enumerate}
\end{proposition}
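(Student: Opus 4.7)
The plan is to verify the two statements using the standard commuting reduction / reduction in stages machinery, starting from what has already been constructed above. Let me write $M = T^*(G\times \mathfrak{b}\times \mathbb{C}^n)$ with symplectic form $\omega$, let $Z = \psi_B^{-1}(0)$, and let $p_1 \colon Z \twoheadrightarrow Z/B \simeq T^*(G\times_B \mathfrak{b}\times \mathbb{C}^n)$ be the principal $B$-bundle of the previous discussion, so that $\omega'$ is the unique $2$-form on $Z/B$ with $p_1^*\omega' = \iota_1^*\omega$, where $\iota_1 \colon Z \hookrightarrow M$ is the inclusion.

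For part (1), I would proceed in three steps. First, I would check $G$-equivariance of $\mu_G$: since $\psi_G$ is $G\times B$-equivariant and the $G$- and $B$-actions commute, $\psi_G \circ \iota_1$ is $G$-equivariant on $Z$ and $B$-invariant along the fibers of $p_1$, so the induced map $\mu_G$ on $Z/B$ is automatically $G$-equivariant with respect to the residual $G$-action and the coadjoint action on $\mathfrak{g}^*$. Second, for each $X \in \mathfrak{g}$ let $X^\#$ denote the fundamental vector field on $M$, $Z$, or $Z/B$ (which are compatible under $\iota_1$ and $p_1$ because the $G$-action preserves $Z$ and descends to the quotient). The defining property on $M$ gives $\iota_{X^\#}\omega = d\psi_G^X$. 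Pulling back by $\iota_1$ gives $\iota_{X^\#}(\iota_1^*\omega) = d(\psi_G^X \circ \iota_1)$. Third, since $\psi_G^X \circ \iota_1 = \mu_G^X \circ p_1$ and $\iota_1^*\omega = p_1^*\omega'$, and since the $G$-vector field on $Z$ is $p_1$-related to the $G$-vector field on $Z/B$, a short check shows $p_1^*(\iota_{X^\#}\omega') = \iota_{X^\#}(p_1^*\omega') = d(\mu_G^X \circ p_1) = p_1^*(d\mu_G^X)$. Because $p_1$ is a submersion, this implies $\iota_{X^\#}\omega' = d\mu_G^X$, which is the moment-map axiom for $G$ on $(Z/B,\omega')$.

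For part (2), I would first observe that $\mu_G^{-1}(0) = p_1(\psi_G^{-1}(0)\cap Z) = p_1(\mu_{G\times B}^{-1}(0))$, since $Z = \psi_B^{-1}(0)$ and $\mu_{G\times B} = (\psi_G,\psi_B)$. As $B$ acts freely on $\mu_{G\times B}^{-1}(0)$ and this action is the restriction of the principal $B$-bundle $Z \to Z/B$, the map $p_1 \colon \mu_{G\times B}^{-1}(0) \to \mu_G^{-1}(0)$ is a principal $B$-bundle, and because $G$ and $B$ commute, the residual $G$-action on $\mu_G^{-1}(0)$ is free (any fixed point would lift to a $G$-fixed $B$-orbit in $\mu_{G\times B}^{-1}(0)$, on which $G\times B$ could not act freely). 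Therefore
\[
\mu_G^{-1}(0)/G \;\cong\; \bigl(\mu_{G\times B}^{-1}(0)/B\bigr)\big/G \;\cong\; \mu_{G\times B}^{-1}(0)\big/(G\times B),
\]
where the first isomorphism is the identification above and the second uses that the $G$- and $B$-actions commute and are both free. Finally, to upgrade this to a symplectomorphism, I would compare the reduced symplectic forms: on one side the form is induced from $\omega'|_{\mu_G^{-1}(0)}$, on the other from $\omega|_{\mu_{G\times B}^{-1}(0)}$, and the identity $\iota_1^*\omega = p_1^*\omega'$ together with the uniqueness clause of the Marsden--Weinstein reduction theorem applied to each quotient step forces the two to agree on $\mu_{G\times B}^{-1}(0)/(G\times B)$.

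The main obstacle, and the only place where one has to be careful rather than merely formal, is the third step in part (1): verifying that the factored form $\omega'$ genuinely satisfies $\iota_{X^\#}\omega' = d\mu_G^X$ rather than just the pulled-back version. The argument hinges on the fact that the $G$-fundamental vector field on $Z$ is tangent to $Z$ (so $\iota_1^*(\iota_{X^\#}\omega) = \iota_{X^\#}(\iota_1^*\omega)$) and is $p_1$-projectable to its counterpart on $Z/B$ because $G$ commutes with $B$; these two commutativity statements are exactly what makes reduction in stages work, and I would spell them out explicitly rather than invoke them as a black box.
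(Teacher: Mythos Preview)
Your proof is correct and follows the standard reduction-in-stages argument. The paper does not actually supply a proof of this proposition; it is stated without proof as part of the background discussion on Hamiltonian reduction in stages (Section~\ref{subsection:ham-reduction-in-stages-symplectic-geometry}), with the surrounding text setting up exactly the maps $\psi_G$, $\psi_B$, $p_1$, $\iota_1$, and the defining relation $\mu_G\circ p_1=\psi_G\circ\iota_1$ that you use. Your argument fills in precisely the details one would expect: equivariance and the Hamiltonian condition for part~(1), and the identification $\mu_G^{-1}(0)=p_1(\mu_{G\times B}^{-1}(0))$ together with the comparison of reduced forms for part~(2). There is nothing to compare against beyond noting that your write-up is faithful to the paper's setup.
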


Now, consider the reduction of $T^*(G\times \mathfrak{b}\times \mathbb{C}^n)$ by $G$ first: 
\[ 
\psi_G^{-1}(0)
= \{(g',\theta,r,s,i,j)\in T^*(G\times \mathfrak{b}\times \mathbb{C}^n):\theta-ij=0\} 
= \{(g',ij,r,s,i,j)\in T^*(G\times \mathfrak{b}\times \mathbb{C}^n) \}.  
\] 
Since the $G$-action is transitive, we will identify 
\[ 
T^*(\mathfrak{b}\times \mathbb{C}^n) 
\cong \{(1,ij,r,s,i,j)\in T^*(G\times \mathfrak{b}\times \mathbb{C}^n)\}\simeq \psi_G^{-1}(0)/G.  
\] 
As above, 
$\mu_B:T^*(\mathfrak{b}\times \mathbb{C}^n)\longrightarrow\mathfrak{b}^*$ is a moment map for $B$ acting on 
$(T^*(\mathfrak{b}\times \mathbb{C}^n),\omega_B)$, where $\omega_B$ is the symplectic form on $T^*(\mathfrak{b}\times \mathbb{C}^n)$. 
Secondly, since $B$ acts freely on 
$\mu_{G\times B}^{-1}(0)$, $B$ acts freely on $\mu_B^{-1}(0)$ and we have 
\[ 
\mu_B^{-1}(0)/B \cong \mu_{G\times B}^{-1}(0)/G\times B. 
\] 

Thus studying the $G$-orbits on $T^*(G\times_B \mathfrak{b}\times \mathbb{C}^n)$ is equivalent to studying the $B$-orbits on $T^*(\mathfrak{b}\times \mathbb{C}^n)$.

Now, let $B$ be the set of invertible upper triangular matrices and let $\mathfrak{b}=\lie(B)$. The adjoint action of $B$ on $\mathfrak{b}\times \mathbb{C}^n$ is $b.(r,i)=(brb^{-1},bi)$, which is induced onto 
$T^*(\mathfrak{b\times \mathbb{C}^n})$ via $b.(r,s,i,j)=(brb^{-1},bsb^{-1},bi,jb^{-1})$. 
In terms of Section~\ref{subsection:filtered-quiver-varieties}, we will explain how $T^*(\mathfrak{b}\times \mathbb{C}^n)$ is identified as the cotangent bundle of filtered quiver variety. 

Let $Q$ be the $1$-Jordan quiver $\xymatrix@-1pc{\stackrel{1}{\bullet}\ar@(ur,dr)^a}$ 
and let $\beta=n$.  
Let $F^{\bullet}$ be the complete standard filtration of vector spaces of $\mathbb{C}^n$. 
Then  $F^{\bullet}Rep(Q,n)=\mathfrak{b}$. 
Now we adjoin a framing to the $1$-Jordan quiver to get 
$Q^{\dagger}: \xymatrix@-1pc{ \stackrel{1^{\natural}}{\circ}\ar[rr]^{\iota_1} & & \stackrel{1}{\bullet}\ar@(ur,dr)^a }$.  
Let $\beta^{\dagger}=(n,1)$ where $F^{\bullet}$ is imposed only on the representation at the nonframed vertex. 
This gives us a $B$-action on 
$F^{\bullet}Rep(Q^{\dagger},\beta^{\dagger})\cong \mathfrak{b}\times \mathbb{C}^n$.  
One doubles this quiver $Q^{\dagger}$ to obtain 
\[ 
\xymatrix@-1pc{
\stackrel{1^{\natural}}{\circ} \ar@/^/[rrr]^{\iota_1} & & & \stackrel{1}{\bullet} \ar@/^/[lll]^{\iota_1^{op}} \ar@(ul,ur)^{a} \ar@(dr,dl)^{a^{op}}  \\ 
}
\] 
and we have the identification  
$T^*(F^{\bullet}Rep(Q^{\dagger},\beta^{\dagger}))\cong 
T^*(\mathfrak{b}\times \mathbb{C}^n)$. 

One fact we would like to mention is that by defining $\widetilde{Rep(Q,\beta)} := \mathbb{G}\times_{\mathbb{P}}F^{\bullet}Rep(Q,\beta)$,  
it follows immediately from Lemma~\ref{lemma:g-s-resolution-and-B-action-on-borel}  
that we have an isomorphism  
$\widetilde{Rep(Q,\beta)}/\mathbb{G}\cong F^{\bullet}Rep(Q,\beta)/\mathbb{P}$ as orbit spaces,  
where 
$\mathbb{G}:= \displaystyle{\prod_{i\in Q_0}GL_{\beta_i}(\mathbb{C})}$ and 
$\mathbb{P}:= \displaystyle{\prod_{i\in Q_0}P_i}$. 
We refer the reader to Section~\ref{subsection:P-moment-map-filtered-quiver-varieties} for further discussion on 
$\widetilde{Rep(Q,\beta)}$.

%
%

\section{Preliminaries}

\subsection{Orthonormal projection operators in \texorpdfstring{$B$}{B}} 

We begin with a discussion on a certain set of orthonormal projection operators in $B$. 

\begin{notation}\label{notation:empty-sum-empty-product-convention}
We will use the convention that an empty sum is defined to be 0 while an empty product is defined to be 1; that is, 
\[ \sum_{k= \iota}^{\gamma} f(k) := 0 \mbox{ and } \prod_{k=\iota}^{\gamma} f(k) := 1 
\] if $\gamma <\iota$. 
We also like to remark that 
\[ \sum_{\iota < k_1 < \ldots < k_v < \mu} f(k_i) :=0 
\]  if $v\geq \mu-\iota$.   
\end{notation}

\begin{notation}\label{notation:coordinatefunction-of-product-of-rs}
For $l_k(r)=r-r_{kk}I$, 
we will write 
\[ L^{\iota} := \left[\tr\left(\prod_{1\leq k\leq n, k\not= \iota} l_k(r)\right)\right]^{-1} \prod_{1\leq k\leq n, k\not= \iota} 
l_k(r) 
\] 
and 
will write  
\[ 
L_{\gamma\mu}^{\iota} := 
\left( \left[\tr\left(\prod_{1\leq k\leq n, k\not= \iota} l_k(r)\right)\right]^{-1} \prod_{1\leq k\leq n, k\not= \iota} 
l_k(r) \right)_{\gamma\mu} 
\]   to denote coordinate functions of $L^{\iota}$. 
\end{notation} 
For Lemma~\ref{lemma:Baction-on-Liota-operator} and in Section~\ref{subsection:B-invariantfunctions} (Prop~\ref{proposition:diagonal-rss-natural-matrix-representation} and \ref{proposition:B-invariant-functions-involving-s}), we will write $L^{\iota}(r)$, instead of $L^{\iota}$ since $L^{\iota}$ depends on $r$.

\begin{lemma}\label{lemma:coordinate-fns-of-product-of-lkr}
The operator $L^{\iota}$ in Notation~\ref{notation:coordinatefunction-of-product-of-rs} has 
coordinate functions 
\[
\begin{aligned}
L_{\gamma\mu}^{\iota} 
&=\left\{ 
\begin{aligned}
0\qquad\qquad\qquad\qquad\qquad\qquad\qquad\qquad & \mbox{ if } \mu < \iota 
						\\
\dfrac{r_{\gamma\iota}}{r_{\iota\iota}-r_{\gamma\gamma}} 
+ \sum_{\widetilde{v}=1}^{\iota-\gamma-1}
\sum_{\gamma < l_1<\ldots < l_{\widetilde{v}}<\iota }
   \dfrac{r_{\gamma l_1} r_{l_{\widetilde{v}}\iota}}{(r_{\iota\iota}-r_{l_{\widetilde{v}}l_{\widetilde{v}}})(r_{\iota\iota}-r_{\gamma\gamma})} \prod_{\widetilde{u}=1}^{\widetilde{v}-1} \dfrac{r_{l_{\widetilde{u}}l_{\widetilde{u}+1}}}{r_{\iota\iota}-r_{l_{\widetilde{u}}l_{\widetilde{u}}}} 
   \qquad\qquad\qquad
 & \mbox{ if }\gamma < \iota = \mu 
 						\\
1\qquad\qquad\qquad\qquad\qquad\qquad\qquad\qquad & \mbox{ if }\gamma = \iota = \mu 
						\\
\dfrac{r_{\iota \mu}}{r_{\iota\iota}-r_{\mu\mu}} 
+ \sum_{v=1}^{\mu-\iota-1} \sum_{\iota < k_1 < \ldots < k_v<\mu}
\dfrac{r_{\iota k_1}r_{k_v \mu}}{(r_{\iota\iota}-r_{k_1k_1})(r_{\iota\iota}-r_{\mu\mu})} 
\prod_{u=1}^{v-1} \dfrac{r_{k_u k_{u+1}}}{r_{\iota\iota}-r_{k_{u+1}k_{u+1}}}
\qquad\qquad  
&
 \mbox{ if }\gamma = \iota < \mu 
 						\\
0\qquad\qquad\qquad\qquad\qquad\qquad\qquad\qquad & \mbox{ if }\gamma >\iota  
						\\ 
 		\left( \dfrac{r_{\gamma\iota}r_{\iota\mu}}{(r_{\iota\iota}-r_{\gamma\gamma})(r_{\iota\iota}-r_{\mu\mu})}
 		\qquad\qquad\qquad\qquad\qquad\qquad\qquad\qquad\qquad\qquad\qquad 
 		  \right. & \\	
 		 + \sum_{\widetilde{v}=1}^{\iota-\gamma-1} 
\sum_{\gamma <l_1<\ldots < l_{\widetilde{v}}<\iota} 
\dfrac{r_{\gamma  l_1}r_{l_{\widetilde{v}}\iota}r_{\iota\mu}}{(r_{\iota\iota}-r_{l_{\widetilde{v}}l_{\widetilde{v}} })(r_{\iota\iota}-r_{\gamma\gamma})(r_{\iota\iota}-r_{\mu\mu})} \prod_{\widetilde{u}=1}^{\widetilde{v}-1} \dfrac{r_{l_{\widetilde{u}} l_{\widetilde{u}+1}} }{r_{\iota\iota}-r_{l_{\widetilde{u}}l_{\widetilde{u}}}}\qquad & 
						\\ 
+\sum_{v=1}^{\mu-\iota-1} \sum_{\iota < k_1< \ldots < k_v < \mu} 
\dfrac{r_{\gamma\iota}r_{\iota k_1}r_{k_v \mu} }{(r_{\iota\iota}-r_{\gamma\gamma})(r_{\iota\iota}-r_{k_1 k_1})(r_{\iota\iota}-r_{\mu\mu})} \prod_{u=1}^{v-1}
\dfrac{r_{k_u k_{u+1}}}{r_{\iota\iota}-r_{k_{u+1}k_{u+1}}} 
\qquad  \;\;  & \mbox{ if }\gamma < \iota < \mu. 
						\\ 
+\sum_{v=1}^{\mu -\iota-1} \sum_{\iota < k_1 < \ldots < k_v < \mu} 
\sum_{\widetilde{v}=1}^{\iota-\gamma-1} 
\sum_{\gamma < l_1 < \ldots < l_{\widetilde{v}} < \iota} 
\dfrac{r_{\gamma l_1}r_{l_{\widetilde{v}}\iota }r_{\iota k_1}r_{k_v \mu}}{
(r_{\iota\iota}-r_{l_{\widetilde{v}} l_{\widetilde{v}} })(r_{\iota\iota}-r_{\gamma\gamma})(r_{\iota\iota}-r_{k_1k_1})(r_{\iota\iota}-r_{\mu\mu})}& 
						\\ 
\left. \prod_{u=1}^{v-1} \dfrac{r_{k_u k_{u+1}}}{r_{\iota\iota}-r_{k_{u+1}k_{u+1} }} 
\prod_{\widetilde{u}=1}^{\widetilde{v}-1} \dfrac{r_{l_{\widetilde{u}}l_{\widetilde{u}+1}}}{r_{\iota\iota}-r_{l_{\widetilde{u}}l_{\widetilde{u}}}} \right)
\qquad\qquad\qquad\qquad 
& 
						\\ 
\end{aligned} 
\right.    		\\ 
\end{aligned}
\]
\end{lemma}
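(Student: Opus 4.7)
My plan is to identify $L^{\iota}$ with the spectral projector onto the $\iota$-th eigenspace of $r$, factor it as an outer product of the $\iota$-th right and left eigenvectors, and then unwind the eigenvector recursion into the displayed sums over chains.

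First I would record that, although the setting restricts to $r \in \mathfrak{b}$ regular semisimple (as in Notation~\ref{notation:distinct-r-eigenvalues}), $r$ is upper triangular with pairwise distinct diagonal entries, so $l_k(r) = r - r_{kk} I$ are commuting upper triangular matrices whose product $\prod_{k \neq \iota} l_k(r)$ has $(j,j)$-diagonal entry $\prod_{k \neq \iota}(r_{jj} - r_{kk})$. This vanishes unless $j = \iota$, so
\[
\tr\!\left(\prod_{k \neq \iota} l_k(r)\right) \;=\; \prod_{k \neq \iota}(r_{\iota\iota} - r_{kk}),
\]
which is nonzero, making the normalization defining $L^{\iota}$ well posed. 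By Cayley--Hamilton the full product $\prod_{k=1}^{n} l_k(r)$ vanishes, so $(r - r_{\iota\iota} I)\, L^{\iota} = L^{\iota}\,(r - r_{\iota\iota} I) = 0$ and $(L^{\iota})^2 = L^{\iota}$ (the diagonal normalization makes it idempotent). Hence $L^{\iota}$ is the rank-one spectral projector onto the $\iota$-th eigenspace.

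Next I would write $L^{\iota} = v_{\iota} w_{\iota}^{T}$, where $r v_{\iota} = r_{\iota\iota} v_{\iota}$, $w_{\iota}^{T} r = r_{\iota\iota} w_{\iota}^{T}$, and the normalization $w_{\iota}^{T} v_{\iota} = 1$ is fixed by setting $v_{\iota,\iota} = w_{\iota,\iota} = 1$. Because $r$ is upper triangular, $v_{\iota}$ is supported on coordinates $\gamma \leq \iota$ and $w_{\iota}$ on coordinates $\mu \geq \iota$; this immediately forces $L^{\iota}_{\gamma\mu} = 0$ when $\gamma > \iota$ or $\mu < \iota$, and $L^{\iota}_{\iota\iota} = 1$. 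The remaining coordinates are then determined by the standard recursions
\[
v_{\iota,\gamma} = \frac{1}{r_{\iota\iota} - r_{\gamma\gamma}} \sum_{\gamma < \nu \leq \iota} r_{\gamma\nu}\, v_{\iota,\nu} \quad (\gamma < \iota), \qquad
w_{\iota,\mu} = \frac{1}{r_{\iota\iota} - r_{\mu\mu}} \sum_{\iota \leq \nu < \mu} w_{\iota,\nu}\, r_{\nu\mu} \quad (\mu > \iota),
\]
which I would solve by induction on $\iota - \gamma$ and $\mu - \iota$ respectively.

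Unwinding these recursions is where the combinatorial bookkeeping lives, and it is the main obstacle. Iterating the $v$-recursion expresses $v_{\iota,\gamma}$ as a sum over increasing chains $\gamma = j_{0} < j_{1} < \cdots < j_{m} = \iota$ of the weight $\prod_{s=0}^{m-1} r_{j_{s}, j_{s+1}}/(r_{\iota\iota} - r_{j_{s} j_{s}})$; setting $\widetilde{v} = m-1$ and relabeling $j_{1}, \ldots, j_{m-1}$ as $l_{1}, \ldots, l_{\widetilde{v}}$ reproduces the formula for $L^{\iota}_{\gamma, \iota}$ in the lemma (the $\widetilde{v} = 0$ term being the single fraction $r_{\gamma \iota}/(r_{\iota\iota}-r_{\gamma\gamma})$). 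The same argument applied to the $w$-recursion, with the inductive step dividing by $r_{\iota\iota} - r_{k_{s+1} k_{s+1}}$, gives the claimed expression for $L^{\iota}_{\iota, \mu}$.

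Finally I would obtain the off-block entries for $\gamma < \iota < \mu$ by taking the product $L^{\iota}_{\gamma\mu} = v_{\iota,\gamma}\, w_{\iota,\mu}$ and expanding; the four summands displayed in the lemma correspond exactly to the four ways of pairing the $\widetilde{v} = 0$ or $\widetilde{v} \geq 1$ case for $v_{\iota,\gamma}$ with the $v = 0$ or $v \geq 1$ case for $w_{\iota,\mu}$. With the conventions of Notation~\ref{notation:empty-sum-empty-product-convention}, all boundary cases ($\gamma = \iota - 1$, $\mu = \iota + 1$, trivial chains) are absorbed uniformly, which completes the verification.
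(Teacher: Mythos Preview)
Your argument is correct. The paper itself does not supply a proof of this lemma; it records the coordinate formulas and then invokes them in later results (e.g., in the proof of Lemma~\ref{lemma:lkrmatrices-orthogonal-rss}), treating them as a direct computation left to the reader. Your approach through the spectral-projector interpretation is both valid and conceptually transparent: once one checks that $\tr\bigl(\prod_{k\neq\iota} l_k(r)\bigr)=\prod_{k\neq\iota}(r_{\iota\iota}-r_{kk})$ and that $L^{\iota}$ annihilates every eigenvector except $v_{\iota}$, the rank-one factorization $L^{\iota}=v_{\iota}w_{\iota}^{T}$ follows, and the chain expansions of the triangular recursions reproduce the displayed sums exactly as you describe. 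The support conditions $v_{\iota,\gamma}=0$ for $\gamma>\iota$ and $w_{\iota,\mu}=0$ for $\mu<\iota$, together with the normalization $v_{\iota,\iota}=w_{\iota,\iota}=1$, automatically give $w_{\iota}^{T}v_{\iota}=1$ (the sum collapses to the single term at $\iota$), so the outer product is correctly normalized. This derivation has the added benefit of explaining \emph{why} the $\gamma<\iota<\mu$ entry splits into four pieces, which a bare inductive computation of the matrix product would obscure.
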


\begin{lemma}\label{lemma:lkrmatrices-orthogonal-rss}  
The $L^{\iota}$'s form mutually orthonormal projection matrices. That is, 
we have 
\[ \det(L^{\iota})=1,\:\: \left( L^{\iota} \right)^2 = L^{\iota}\:\: \mbox{ and }
\:\: L^{\iota} L^{\gamma}=0 
\] for any $\iota\not=\gamma$. 
In particular, any row of $L^{\iota}$
is orthogonal to any column of $L^{\gamma}$ 
for $\iota\not=\gamma$. 
\end{lemma}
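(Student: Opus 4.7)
The plan is to interpret $L^{\iota}$ as the spectral projector onto the $\iota$-th eigenspace of $r$, so that all three identities reduce to elementary properties of orthogonal projectors. Since we are working on the regular semisimple locus, $r$ has $n$ distinct eigenvalues, which must be exactly its diagonal entries $r_{11}, \ldots, r_{nn}$ because $r$ is upper triangular. Hence $r$ is diagonalizable: there exists $P \in GL_n$ with $r = P D P^{-1}$ where $D = \diag(r_{11}, \ldots, r_{nn})$.

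First I would rewrite $l_k(r) = P(D - r_{kk} I)P^{-1}$ and conjugate the product through $P$ to obtain
\[
\prod_{1 \leq k \leq n,\, k \neq \iota} l_k(r) = P \Big(\prod_{k \neq \iota}(D - r_{kk} I)\Big) P^{-1}.
\]
The matrix inside the parentheses is diagonal with $(j,j)$-entry $\prod_{k \neq \iota}(r_{jj} - r_{kk})$. By the distinct-eigenvalue hypothesis this vanishes unless $j = \iota$, in which case it equals $c_\iota := \prod_{k \neq \iota}(r_{\iota\iota} - r_{kk}) \neq 0$. Writing $E_{\iota\iota}$ for the standard diagonal matrix unit, this shows $\prod_{k \neq \iota} l_k(r) = c_\iota \cdot P E_{\iota\iota} P^{-1}$. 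Taking traces (using cyclicity) gives $\tr\bigl(\prod_{k \neq \iota} l_k(r)\bigr) = c_\iota$, so the normalization in the definition of $L^\iota$ produces
\[
L^\iota = P E_{\iota\iota} P^{-1},
\]
the rank-one projector onto the $\iota$-th eigenline of $r$. (Equivalently, the polynomial $\prod_{k\neq\iota}(x-r_{kk})/c_\iota$ is the Lagrange basis polynomial taking value $1$ at $r_{\iota\iota}$ and $0$ at the other $r_{kk}$; applying this to $r$ gives the spectral projector directly.)

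From this identification the three asserted identities follow immediately: $(L^\iota)^2 = P E_{\iota\iota}^2 P^{-1} = P E_{\iota\iota} P^{-1} = L^\iota$; for $\iota \neq \gamma$, $L^\iota L^\gamma = P(E_{\iota\iota} E_{\gamma\gamma})P^{-1} = 0$; and $\tr(L^\iota) = \tr(E_{\iota\iota}) = 1$ (which is the content of the ``$\det(L^\iota)=1$'' normalization, read as a rank-one projector whose trace equals one). The ``in particular'' clause is then just the entrywise restatement of $L^\iota L^\gamma = 0$: the $(a,b)$-entry of $L^\iota L^\gamma$ is precisely the inner product of the $a$-th row of $L^\iota$ with the $b$-th column of $L^\gamma$, so each such product vanishes.

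There is no serious obstacle: the whole argument is a one-line spectral calculation once one recognizes $L^\iota$ as a Lagrange interpolation projector. The only technical care needed is to justify working with $P$ abstractly rather than with the explicit coordinate formulas of Lemma~\ref{lemma:coordinate-fns-of-product-of-lkr}; this is legitimate because the identities to be verified are polynomial identities in the matrix entries of $r$ and $s$ that hold on the dense open regular semisimple locus, hence everywhere they are defined.
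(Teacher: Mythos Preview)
Your spectral/Lagrange-interpolation argument is correct for the idempotent and orthogonality assertions and is a genuinely different route from the paper's proof. The paper proceeds by direct entrywise computation using the explicit coordinate formulas of Lemma~\ref{lemma:coordinate-fns-of-product-of-lkr}: it verifies $(L^\iota L^\iota)_{\mu\nu} = L^\iota_{\mu\nu}$ and $(L^\iota L^\gamma)_{\mu\nu} = 0$ case by case, exploiting the vanishing $L^\iota_{\mu\alpha}=0$ whenever $\mu > \iota$ or $\alpha < \iota$. Your approach is cleaner and more conceptual---it explains \emph{why} the identities hold (they are the defining properties of spectral projectors obtained via Lagrange interpolation)---while the paper's approach stays closer to the explicit formulas, which are in any case needed for the later coordinate calculations such as Proposition~\ref{proposition:deriving-diag-coordinates-of-bsb-inverse}.

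One point worth noting: you correctly flag that ``$\det(L^\iota) = 1$'' cannot be right for a rank-one projector in dimension $n\geq 2$ and reinterpret it as $\tr(L^\iota) = 1$. The paper's own argument here reads ``the only nonzero diagonal entry is $L^\iota_{\iota\iota} = 1$,'' which indeed gives trace $1$ but determinant $0$; the stated ``$\det = 1$'' appears to be a slip in the statement, and your reading is the intended one. Your final remark about extending ``polynomial identities'' from a dense open set is unnecessary, since the $L^\iota$ are only defined on the regular semisimple locus anyway and your spectral argument already works there directly.
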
 

We like to mention that $L^{\iota}$'s in Lemma~\ref{lemma:lkrmatrices-orthogonal-rss} 
are $n$ idempotents satisfying 
$\displaystyle{\sum_{\iota=1}^{n} L^{\iota}}=\I$.

\begin{proof} We refer to Lemma~\ref{lemma:coordinate-fns-of-product-of-lkr} for the coordinates of $L^{\iota}$.  
It is clear that $\det(L^{\iota})=1$ 
since $L_{\mu\alpha}^{\iota}=0$ if $\mu>\iota$ or $\alpha <\iota$, and the only nonzero diagonal entry is  $L_{\iota\iota}^{\iota}=1$. 

Suppose 
$\iota=\gamma$.  
Then 
\begin{gather}\label{gather:L-iota-operator-options}
\begin{aligned}
L_{\mu\alpha}^{\iota} =0 &\mbox{ if } \mu>\iota \mbox{ or } \alpha <\iota, \mbox{ and }  \\
L_{\alpha\nu}^{\iota} =0 &\mbox{ if } \alpha >\iota \mbox{ or } \nu<\iota.   \\
\end{aligned}
\end{gather}
So 
\[\begin{aligned}
\left(L^{\iota}L^{\iota} \right)_{\mu\nu} 
    &= 
\sum_{\alpha=1}^n L_{\mu\alpha}^{\iota} L_{\alpha\nu}^{\iota}  
  \quad
  \stackrel{\mbox{ by }(\ref{gather:L-iota-operator-options})}{=} 
  \quad
  \sum_{\alpha=\iota}^{\iota} L_{\mu\alpha}^{\iota} L_{\alpha\nu}^{\iota}  
   = 
L_{\mu\iota}^{\iota} L_{\iota\nu}^{\iota}  \\
&= \left\{ 
\begin{aligned} 
L_{\mu\iota}^{\iota} = L_{\mu\nu}^{\iota} 
		\quad
			&\mbox{ if } \mu < \iota, \nu = \iota \\
1\;\; = L_{\mu\nu}^{\iota} 
		\quad  
			&\mbox{ if } \mu=\iota, \nu = \iota \\
L_{\iota\nu}^{\iota} = L_{\mu\nu}^{\iota}  
		\quad 
			&\mbox{ if } \mu = \iota, \nu > \iota \\  
 L_{\mu\nu}^{\iota} \mbox{ (see $\dagger$)} 
		\quad 
 			&\mbox{ if } \mu <\iota, \nu > \iota \\  
0\;\; = L_{\mu\nu}^{\iota}   
		\quad  
 			&\mbox{ if } \mu > \iota \mbox{ or } \nu<\iota,   
 \\  
\end{aligned}  
\right. \\ 
\end{aligned}  
\] where $\dagger$ from above is 
\[ \begin{aligned}
L_{\mu\iota}^{\iota} L_{\iota\nu}^{\iota} &\stackrel{\dagger}{=}  \left( 
\dfrac{r_{\mu\iota}}{r_{\iota\iota}-r_{\mu\mu}} + 
\sum_{\widetilde{v}=1}^{\iota-\mu-1} 
\sum_{\mu<l_1<\ldots < l_{\widetilde{v}}<\iota}
\dfrac{r_{\mu l_1}r_{l_{\widetilde{v} }\iota} }{(r_{\iota\iota}-r_{l_{\widetilde{v}}l_{\widetilde{v}} })(r_{\iota\iota}-r_{\mu\mu}) } \prod_{\widetilde{u}=1}^{\widetilde{v}-1} \dfrac{r_{l_{\widetilde{u}} l_{\widetilde{u}+1} } }{r_{\iota\iota}-r_{l_{\widetilde{u}} l_{\widetilde{u}} }}   
\right)\\ 
&\quad 
   \cdot \left( 
\dfrac{r_{\iota\nu}}{r_{\iota\iota}-r_{\nu\nu}}+ 
\sum_{v=1}^{\nu-\iota-1} \sum_{\iota < k_1 < \ldots < k_v < \nu}  
\dfrac{r_{\iota k_1}r_{k_v \nu} }{(r_{\iota\iota}-r_{k_1 k_1})(r_{\iota\iota}-r_{\nu\nu}) } \prod_{u=1}^{v-1} \dfrac{r_{k_u k_{u+1}} }{r_{\iota\iota}-r_{k_{u+1} k_{u+1}}} 
\right) \\
&= \dfrac{r_{\mu\iota}r_{\iota\nu}}{(r_{\iota\iota}-r_{\mu\mu})(r_{\iota\iota} 
-r_{\nu\nu}) } + \sum_{\widetilde{v}=1}^{\iota-\mu-1} \sum_{\mu < l_1 < \ldots < l_{\widetilde{v}}<\iota } \dfrac{r_{\mu l_1}r_{l_{\widetilde{v}} \iota} r_{\iota \nu} }{(r_{\iota\iota}-r_{l_{\widetilde{v}}l_{\widetilde{v}} })(r_{\iota\iota}-r_{\mu\mu}) (r_{\iota\iota}-r_{\nu\nu} ) } \prod_{\widetilde{u}=1}^{\widetilde{v}-1}
 \dfrac{r_{l_{\widetilde{u}}l_{\widetilde{u}+1} } }{r_{\iota\iota}-r_{l_{\widetilde{u}} l_{\widetilde{u}} } } \\ 
&\quad 
   + \sum_{v=1}^{\nu-\iota-1} \sum_{\iota < k_1 < \ldots < k_v < \nu} 
\dfrac{r_{\mu\iota} r_{\iota k_1} r_{k_v \nu} }{ (r_{\iota\iota}-r_{\mu\mu} )
(r_{\iota\iota}-r_{k_1 k_1} )(r_{\iota\iota}-r_{\nu\nu} ) } 
\prod_{u=1}^{v-1} \dfrac{r_{k_u k_{u+1}} }{r_{\iota\iota}-r_{k_{u+1}k_{u+1} } } 
\\
&\quad  
   + \sum_{v=1}^{\nu-\iota-1} \sum_{\iota < k_1 < \ldots < k_v < \nu} 
\sum_{\widetilde{v}=1}^{\iota -\mu-1} 
\sum_{\mu < l_1 < \ldots < l_{\widetilde{v}}<\iota } \dfrac{r_{\mu l_1}r_{l_{\widetilde{v}}\iota }r_{\iota k_1}r_{k_v \nu} }{ 
(r_{\iota\iota}-r_{l_{\widetilde{v}} l_{\widetilde{v}} } ) (r_{\iota\iota}-r_{\mu\mu}) (r_{\iota\iota}-r_{k_1 k_1})(r_{\iota\iota}-r_{\nu\nu} ) } 
\\ 
&\quad  
    \cdot \prod_{u=1}^{v-1} \dfrac{r_{k_u k_{u+1} } }{r_{\iota\iota}-r_{k_{u+1} k_{u+1} } } \prod_{\widetilde{u}=1 }^{\widetilde{v}-1 } \dfrac{r_{l_{\widetilde{u}}l_{\widetilde{u}+1} } }{r_{\iota\iota}-r_{l_{\widetilde{u}} l_{\widetilde{u}}} }  \\ 
&= L_{\mu\nu}^{\iota}. \\
\end{aligned}
\] Thus $\left( L^{\iota}\right)^2 = L^{\iota}$.

For 
$\iota > \gamma$, 
\[
\left( L^{\iota}L^{\gamma}\right)_{\mu\nu} 
=  \sum_{\alpha=1}^n L_{\mu\alpha}^{\iota}L_{\alpha\nu}^{\gamma}=0
\] since $L_{\mu\alpha}^{\iota}=0$ for each  
$\alpha <\iota$ 
and $L_{\alpha\nu}^{\gamma}=0$ for each $\alpha>\gamma$. 
 Thus $L^{\iota}L^{\gamma}=0$ whenever $\iota >\gamma$.

Finally for 
$\iota < \gamma$,  
\[ L^{\iota}L^{\gamma} = L^{\gamma} L^{\iota} =0 
\] since $L^{\iota}$'s commute and by previous case.  
\end{proof}

\begin{corollary}\label{corollary:lkrmatrices-orthogonal}
The product of matrices 
\[ \prod_{1\leq k\leq n, k\not=\iota} l_k(r) \mbox{ and }
\prod_{1\leq \widetilde{k} \leq n, \widetilde{k} \not=\gamma} l_{\widetilde{k}}(r)
\] is zero for any $\iota\not=\gamma$. In particular, any row of 
\[ \prod_{1\leq k\leq n, k\not=\iota} l_k(r) 
\] is orthogonal to any column of 
\[ \prod_{1\leq \widetilde{k}\leq n, \widetilde{k}\not=\gamma} l_{\widetilde{k}}(r) 
\] for $\iota\not=\gamma$. 
\end{corollary}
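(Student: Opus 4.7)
The plan is to derive the corollary as an immediate consequence of Lemma~\ref{lemma:lkrmatrices-orthogonal-rss}. By Notation~\ref{notation:coordinatefunction-of-product-of-rs}, we have the identity
\[
\prod_{1\leq k\leq n,\, k\neq \iota} l_k(r) \;=\; c_\iota \, L^\iota, \qquad c_\iota := \tr\!\left(\prod_{1\leq k\leq n,\, k\neq \iota} l_k(r)\right),
\]
so the first claim reduces to showing that the scalars $c_\iota$ are nonzero on the regular semisimple locus and that $L^\iota L^\gamma = 0$ for $\iota \neq \gamma$.

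The second of these is exactly the content of Lemma~\ref{lemma:lkrmatrices-orthogonal-rss}. For the first, recall that we are working on $\mu^{-1}(0)^{rss}$, where $r$ has pairwise distinct eigenvalues $r_{11}, \ldots, r_{nn}$; after conjugating $r$ into diagonal form (using Proposition~\ref{proposition:distinct-ev-diagonalizable}, which is invoked in Trace~\ref{trace:map-from-B-rss-to-complex-space}), the matrix $\prod_{k\neq \iota} l_k(r)$ becomes the diagonal matrix whose only nonzero entry is $\prod_{k\neq \iota}(r_{\iota\iota}-r_{kk})$ in the $(\iota,\iota)$-position. Since $r$ has pairwise distinct eigenvalues, this entry, which equals $c_\iota$, is nonzero. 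Thus $c_\iota$ is a nonvanishing scalar on the rss locus, and
\[
\left(\prod_{k\neq \iota} l_k(r)\right)\left(\prod_{\widetilde{k}\neq \gamma} l_{\widetilde{k}}(r)\right) \;=\; c_\iota c_\gamma \, L^\iota L^\gamma \;=\; 0
\]
whenever $\iota \neq \gamma$, establishing the first assertion.

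The second assertion, on row/column orthogonality, is just the interpretation of matrix multiplication: the $(p,q)$-entry of the product above is the standard inner product of the $p$-th row of $\prod_{k\neq \iota} l_k(r)$ with the $q$-th column of $\prod_{\widetilde{k}\neq \gamma} l_{\widetilde{k}}(r)$, and since the entire product matrix is zero, every such inner product vanishes. There is no genuine obstacle here; the only minor point to verify carefully is the nonvanishing of the trace factors $c_\iota$, which as noted above is immediate from the regular semisimple assumption. Hence the corollary follows directly from Lemma~\ref{lemma:lkrmatrices-orthogonal-rss}.
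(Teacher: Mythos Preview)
Your proof is correct and follows the same approach as the paper: both derive the corollary from Lemma~\ref{lemma:lkrmatrices-orthogonal-rss} by observing that $\prod_{k\neq\iota} l_k(r) = c_\iota L^\iota$, i.e., by clearing the trace denominators. You add the extra verification that $c_\iota\neq 0$ on the rss locus, which the paper leaves implicit; note that this is even simpler than you make it, since $r$ is already upper triangular, so $\prod_{k\neq\iota} l_k(r)$ is upper triangular with $(j,j)$-entry $\prod_{k\neq\iota}(r_{jj}-r_{kk})$, and the trace can be read off directly without invoking Proposition~\ref{proposition:distinct-ev-diagonalizable}.
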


\begin{proof}
This follows from Lemma~\ref{lemma:lkrmatrices-orthogonal-rss} 
by clearing the denominators of $L^{\iota}$ and $L^{\gamma}$. 
\end{proof}

\begin{lemma}\label{lemma:product-of-r-minus-rkk} 
For each $1\leq \iota\leq n$, 
\[ \prod_{1\leq k\leq n, k\not= \iota} l_k(r) 
\] is zero strictly to the left of $\iota^{th}$ column or 
strictly below $\iota^{th}$ row. 
\end{lemma}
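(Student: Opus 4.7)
My plan is to deduce the stated vanishing directly from two elementary observations about how each factor $l_k(r)=r-r_{kk}I$ acts on the standard flag $0\subset \mathbb{C}^1\subset \mathbb{C}^2\subset\dots\subset \mathbb{C}^n$, combined with the fact that the $l_k(r)$ all commute (they are polynomials in $r$), so I may reorder the product in $P^{\iota}:=\prod_{k\neq \iota} l_k(r)$ however is convenient.

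First I would record two basic facts. Since $r\in \mathfrak{b}$ is upper triangular, each $l_k(r)$ is upper triangular; hence $l_k(r)$ preserves every coordinate subspace $\mathbb{C}^j$ (Fact A). Moreover, for $v\in \mathbb{C}^k$ the $k$-th coordinate of $l_k(r)v$ equals $\sum_{j\geq k}(r_{kj}-r_{kk}\delta_{kj})v_j = \sum_{j>k}r_{kj}v_j$, which vanishes because $v_j=0$ for $j>k$; so $l_k(r)(\mathbb{C}^k)\subseteq \mathbb{C}^{k-1}$ (Fact B). Fact B is the key mechanism: each $l_k(r)$ drops the flag exactly at level $k$.

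Next I would establish the two vanishing assertions independently by ordering the commuting factors of $P^{\iota}$ appropriately.

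\medskip

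\noindent\emph{Vanishing below row $\iota$.} I would write
\[
P^{\iota}=\Bigl(\prod_{k<\iota}l_k(r)\Bigr)\Bigl(\prod_{k=n}^{\iota+1}l_k(r)\Bigr),
\]
where the right-hand product is taken in descending order. Applying Fact B successively to the descending block gives the chain of inclusions $\mathbb{C}^n\xrightarrow{l_n}\mathbb{C}^{n-1}\xrightarrow{l_{n-1}}\cdots\xrightarrow{l_{\iota+1}}\mathbb{C}^{\iota}$. By Fact A the remaining factors preserve $\mathbb{C}^{\iota}$, so $\operatorname{im}(P^{\iota})\subseteq \mathbb{C}^{\iota}$, which is precisely the statement that $(P^{\iota})_{\gamma\mu}=0$ whenever $\gamma>\iota$.

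\medskip

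\noindent\emph{Vanishing left of column $\iota$.} Here I would write
\[
P^{\iota}=\Bigl(\prod_{k>\iota}l_k(r)\Bigr)\Bigl(\prod_{k=\iota-1}^{1}l_k(r)\Bigr),
\]
again with the right-hand product in descending order, and show that $P^{\iota}$ annihilates $\mathbb{C}^{\iota-1}$. Iterating Fact B on this block gives $\mathbb{C}^{\iota-1}\xrightarrow{l_{\iota-1}}\mathbb{C}^{\iota-2}\xrightarrow{l_{\iota-2}}\cdots\xrightarrow{l_1}\mathbb{C}^{0}=\{0\}$, so the second factor kills $\mathbb{C}^{\iota-1}$ and therefore so does $P^{\iota}$. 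In particular $P^{\iota}e_{\mu}=0$ for $\mu<\iota$, i.e. $(P^{\iota})_{\gamma\mu}=0$ whenever $\mu<\iota$.

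\medskip

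There is no real obstacle: the only non-formal input is Fact B, which is a one-line computation using that $r$ is upper triangular, and the permissibility of reordering the product, which is immediate because the $l_k(r)$ are polynomials in $r$. The proof is independent of the regular semisimple hypothesis on $r$ (that hypothesis was needed to normalize by $\tr(P^{\iota})$ in Lemma~\ref{lemma:coordinate-fns-of-product-of-lkr}, but here we only need the zero pattern). Thus the whole argument is essentially a short flag-chasing exercise that I would expect to occupy only a few lines when written out.
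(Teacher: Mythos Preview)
Your proof is correct and is genuinely different from the paper's. The paper simply cites Lemma~\ref{lemma:coordinate-fns-of-product-of-lkr}, which provides explicit coordinate formulas for $L^{\iota}_{\gamma\mu}$; the vanishing for $\gamma>\iota$ and $\mu<\iota$ is then read off directly from those formulas (and cleared of the normalizing trace). Your argument instead bypasses all explicit computation by a flag-chasing argument: each $l_k(r)$ drops the standard flag at level $k$, and since the factors commute you may order them so that the descending blocks $l_n,\dots,l_{\iota+1}$ push $\mathbb{C}^n$ down to $\mathbb{C}^{\iota}$ (image statement) and $l_{\iota-1},\dots,l_1$ push $\mathbb{C}^{\iota-1}$ down to $0$ (kernel statement). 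This is cleaner and, as you note, makes transparent that the regular semisimple hypothesis is irrelevant here; the paper's route, by contrast, passes through formulas whose denominators only make sense on the rss locus, so one must implicitly clear denominators to recover the lemma for all $r\in\mathfrak{b}$. The trade-off is that the paper gets the explicit nonzero entries of $L^{\iota}$ for free from the same computation, which it needs elsewhere, whereas your approach gives only the shape.
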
 

\begin{proof} 
This follows from Lemma~\ref{lemma:coordinate-fns-of-product-of-lkr}.
\end{proof}

%
%
 
\begin{lemma}\label{lemma:lkr-times-r-identity}
For each $\gamma$ and $r\in\mathfrak{b}$,  
\[ (L^{\iota} r)_{\gamma\gamma} = (rL^{\iota})_{\gamma\gamma} = r_{\iota\iota}L_{\gamma\gamma}^{\iota}.  
\]  
\end{lemma}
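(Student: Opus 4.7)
The plan is to reduce the identity to two simple observations: that $L^{\iota}$ commutes with $r$, and that $rL^{\iota}=r_{\iota\iota}L^{\iota}$ as matrices (not merely on the diagonal). Once we have the scalar identity $rL^{\iota}=r_{\iota\iota}L^{\iota}$, taking the $(\gamma,\gamma)$-entry gives the claimed formula, and the commutation $L^{\iota}r=rL^{\iota}$ handles the other equality.

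First I would observe that $\prod_{1\le k\le n, k\ne\iota}l_k(r)$ is a polynomial in $r$, and so is its scalar prefactor $[\tr(\prod_{k\ne\iota}l_k(r))]^{-1}$ (which is well defined on the regular semisimple locus: the eigenvalues of $\prod_{k\ne\iota}l_k(r)$ are $\prod_{k\ne\iota}(r_{jj}-r_{kk})$ for $j=1,\dots,n$, which all vanish except at $j=\iota$, whose value $\prod_{k\ne\iota}(r_{\iota\iota}-r_{kk})$ is nonzero by the rss hypothesis). Consequently $L^{\iota}$ is a polynomial in $r$, so $rL^{\iota}=L^{\iota}r$, which yields the first equality $(L^{\iota}r)_{\gamma\gamma}=(rL^{\iota})_{\gamma\gamma}$.

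Next I would apply the Cayley--Hamilton theorem: since $r$ is upper triangular with diagonal entries $r_{11},\dots,r_{nn}$, the characteristic polynomial of $r$ is $\prod_{k=1}^{n}(t-r_{kk})$, and therefore
\[
\prod_{k=1}^{n}l_k(r)=\prod_{k=1}^{n}(r-r_{kk}I)=0.
\]
Factoring out $l_{\iota}(r)=r-r_{\iota\iota}I$ and using that the $l_k(r)$ commute, we get $(r-r_{\iota\iota}I)\prod_{k\ne\iota}l_k(r)=0$, i.e.\ $r\cdot\prod_{k\ne\iota}l_k(r)=r_{\iota\iota}\prod_{k\ne\iota}l_k(r)$. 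Multiplying by the scalar $[\tr(\prod_{k\ne\iota}l_k(r))]^{-1}$ gives the matrix identity $rL^{\iota}=r_{\iota\iota}L^{\iota}$.

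Reading off the $(\gamma,\gamma)$-entry then yields $(rL^{\iota})_{\gamma\gamma}=r_{\iota\iota}L^{\iota}_{\gamma\gamma}$, and combined with the commutation from the first step this proves the full chain of equalities. There is no real obstacle here; the only subtlety is being careful that the scalar factor $[\tr(\prod_{k\ne\iota}l_k(r))]^{-1}$ makes sense, which is exactly the rss hypothesis built into the definition of $L^{\iota}$.
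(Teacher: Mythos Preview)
Your proof is correct and takes a cleaner route than the paper's. The paper argues by direct coordinate computation: after noting that $L^{\iota}$ and $r$ commute, it expands $(L^{\iota}r)_{\gamma\gamma} = \sum_{\mu} L^{\iota}_{\gamma\mu}\, r_{\mu\gamma}$ and uses the explicit vanishing pattern of the entries $L^{\iota}_{\gamma\mu}$ (established in the preceding lemma on coordinate functions of $L^{\iota}$) together with the upper-triangularity of $r$ to reduce the sum case-by-case according to whether $\gamma<\iota$, $\gamma=\iota$, or $\gamma>\iota$. Your Cayley--Hamilton argument bypasses all of this entrywise bookkeeping and in fact establishes the stronger matrix identity $rL^{\iota}=r_{\iota\iota}L^{\iota}$, from which the diagonal statement is immediate. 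This is more conceptual and does not depend on the explicit coordinate formulas; the paper's approach, by contrast, stays close to the entrywise description that it relies on elsewhere anyway.
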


\begin{proof}
For a fixed $\iota$, it is clear that $L^{\iota}r=rL^{\iota}$ since $r$ and $L^{\iota}$ commute. We will thus show 
 for each $\gamma$, 
		$(L^{\iota}r)_{\gamma\gamma} = r_{\iota\iota}L_{\gamma\gamma}^{\iota}$.  
So 
\[
  \begin{aligned} 
\left( 
  L^{\iota}r\right)_{\gamma\gamma} &= \sum_{\mu=1}^n 
L_{\gamma\mu}^{\iota} r_{\mu\gamma} \\ 
&= \sum_{\mu=\iota}^n L_{\gamma\mu}^{\iota}r_{\mu\gamma} \mbox{ since }L_{\gamma\mu}^{\iota} =0 \;\;\;\mbox{ for }\mu<\iota \\  
&= \left\{  
\begin{aligned}  
\sum_{\mu=\iota}^n 0\cdot r_{\mu\gamma} = 0=r_{\iota\iota}L_{\gamma\gamma}^{\iota} 
			\qquad\qquad\qquad 
			&\mbox{ if }\gamma >\iota \\
 \sum_{\mu=\iota}^n L_{\gamma\mu}^{\iota} r_{\mu\gamma} 
 	\stackrel{\dagger}{=}		\sum_{\mu=\iota}^{\iota} L_{\iota\mu}^{\iota} r_{\mu\iota} 
	= 	r_{\iota\iota}\cdot 1= r_{\iota\iota}L_{\gamma\gamma}^{\iota}      
			\;\; 
			&\mbox{ if }\gamma = \iota \\
\sum_{\mu=\iota}^n L_{\gamma\mu}^{\iota} r_{\mu\gamma} 
	\stackrel{\ddagger}{=}   \sum_{\mu=\iota}^n L_{\gamma\mu}^{\iota}\cdot 0 = r_{\iota\iota}L_{\gamma\gamma}^{\iota}  
			\qquad 
			&\mbox{ if }\gamma < \iota,  \\  
\end{aligned} 
\right.  
\end{aligned}  
\] 
where 
	$\dagger$ holds since $r_{\mu\gamma}=0$ for all $\mu > \gamma$ and since $\gamma=\iota$, 
 		and 
 			$\ddagger$ holds since $r_{\mu\gamma}=0$ for all $\mu > \gamma$ 
 				and since $\mu$ ranges over $\iota \leq \mu\leq n$.  
\end{proof}

%
%
%

\begin{lemma}\label{lemma:productoflkr-r}  
For each $\iota$ and for any $r\in\mathfrak{b}$, 
\[ \prod_{k\not=\iota} l_k(r)\: r = r \prod_{k\not=\iota} l_k(r) 
\]  is zero strictly to the left of $\iota^{th}$ column or strictly below $\iota^{th}$ row. 
\end{lemma}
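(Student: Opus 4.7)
The plan is to reduce the lemma to Lemma~\ref{lemma:product-of-r-minus-rkk} together with the upper triangularity of $r$. First I would observe that each factor $l_k(r)=r-r_{kk}I$ is a polynomial in $r$, so all such factors commute with $r$ and with each other; consequently $\prod_{k\neq\iota} l_k(r)\cdot r = r\cdot \prod_{k\neq\iota} l_k(r)$, which disposes of the equality and leaves only the vanishing statement to prove.

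Next, let $M := \prod_{k\neq\iota} l_k(r)$. By Lemma~\ref{lemma:product-of-r-minus-rkk}, we have $M_{\gamma\mu}=0$ whenever $\gamma>\iota$ or $\mu<\iota$. I would then show that multiplication by $r$ on either side preserves this vanishing pattern by a direct index computation. For $(Mr)_{\gamma\mu} = \sum_{\alpha=1}^n M_{\gamma\alpha}\,r_{\alpha\mu}$: if $\gamma>\iota$, every $M_{\gamma\alpha}=0$ and the sum vanishes; if $\mu<\iota$, upper triangularity of $r$ forces $r_{\alpha\mu}=0$ for $\alpha>\mu$, so only terms with $\alpha\leq \mu<\iota$ contribute, but for those $M_{\gamma\alpha}=0$ since $\alpha<\iota$. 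The argument for $(rM)_{\gamma\mu}=\sum_{\alpha} r_{\gamma\alpha} M_{\alpha\mu}$ is symmetric: if $\mu<\iota$, each $M_{\alpha\mu}=0$; if $\gamma>\iota$, upper triangularity gives $r_{\gamma\alpha}=0$ for $\alpha<\gamma$, so only $\alpha\geq\gamma>\iota$ remains, where $M_{\alpha\mu}=0$.

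There is no real obstacle here; the content of the lemma is essentially a compatibility check between the vanishing region established in Lemma~\ref{lemma:product-of-r-minus-rkk} and the shape of an upper triangular matrix. The only point requiring any care is to make sure the index ranges in the two cases ($\gamma>\iota$ versus $\mu<\iota$) are handled separately, since each uses upper triangularity of $r$ in a slightly different way (to cut off the summation from above or from below).
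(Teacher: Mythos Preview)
Your proposal is correct and follows essentially the same argument as the paper: a direct index computation combining the vanishing pattern of $\prod_{k\neq\iota} l_k(r)$ from Lemma~\ref{lemma:product-of-r-minus-rkk} with the upper triangularity of $r$. The paper works with the normalized operator $L^{\iota}$ and clears denominators at the end, and only writes out the $(L^{\iota}r)$ case (leaving commutation implicit), but the substance is identical to what you wrote.
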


\begin{proof}  
We have  
\[
		\begin{aligned} 
\left( L^{\iota}r\right)_{\gamma\nu} &= \sum_{\mu=1}^n 
L_{\gamma\mu}^{\iota} r_{\mu\nu} \\ 
&= \sum_{\mu=\iota}^n L_{\gamma\mu}^{\iota}r_{\mu\nu} \mbox{ since }L_{\gamma\mu}^{\iota} =0 \;\;\;\mbox{ for }\mu<\iota \\
&= \left\{ 
\begin{aligned}
\sum_{\mu=\iota}^n 0\cdot r_{\mu\nu} = 0 
			\quad\qquad	&\mbox{ if }\gamma >\iota \\ 
\sum_{\mu=\iota}^n L_{\gamma\mu}^{\iota} r_{\mu\nu} =
		\sum_{\mu=\iota}^n L_{\gamma\mu}^{\iota}\cdot 0 = 0 
			\;\;	&\mbox{ if }\gamma \leq \iota,  \nu<\iota  \\ 
\end{aligned} 
%
%
%
\right.  
\end{aligned}  
\]  
		since $r_{\mu\nu}=0$ for each $\mu > \nu$.
Clear the denominator to obtain the result. 
\end{proof}

%

\begin{lemma}\label{lemma:productoflkr-s-RHS}
For each $\iota$ and for any $s\in\mathfrak{b}^*$,  
\[ 
   \prod_{1\leq k\leq n, k\not=\iota} l_k(r)\: s 
\] 
    is zero strictly below $\iota^{th}$ row. 
\end{lemma}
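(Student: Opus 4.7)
The plan is to deduce this directly from Lemma~\ref{lemma:product-of-r-minus-rkk}, which already tells us that $\prod_{k\not=\iota}l_k(r)$ vanishes strictly below its $\iota^{th}$ row (and strictly to the left of its $\iota^{th}$ column). Since multiplication on the right by $s$ forms linear combinations of the rows of $\prod_{k\not=\iota}l_k(r)$, any row of the product $\prod_{k\not=\iota}l_k(r)\cdot s$ that sits strictly below the $\iota^{th}$ row must inherit the vanishing.

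Concretely, I would set $M:=\prod_{1\leq k\leq n,\,k\not=\iota}l_k(r)$ and compute the $(\gamma,\nu)$-entry
\[
(Ms)_{\gamma\nu}=\sum_{\mu=1}^{n}M_{\gamma\mu}\,s_{\mu\nu}.
\]
For $\gamma>\iota$, Lemma~\ref{lemma:product-of-r-minus-rkk} gives $M_{\gamma\mu}=0$ for every $\mu$, so the entire sum is $0$ for every $\nu$. This is the desired conclusion, and no hypothesis on the structure of $s\in\mathfrak{b}^{*}$ (e.g.\ upper-triangularity modulo $\mathfrak{n}^{+}$) is actually needed for this side of the multiplication; the argument would work for any $s\in\mathfrak{gl}_{n}$.

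There is no real obstacle here: the content of the lemma is almost entirely packaged inside Lemma~\ref{lemma:product-of-r-minus-rkk}, and the only step is the trivial observation that a matrix with a zero row remains zero in that row after right multiplication. The only thing to be careful about is interpreting $s$ as a representative in $\mathfrak{gl}_{n}$ of its class in $\mathfrak{b}^{*}\cong\mathfrak{gl}_{n}/\mathfrak{n}^{+}$, but since the conclusion is about rows strictly below the $\iota^{th}$ row and the vanishing is forced by $M$ alone, the choice of lift is irrelevant. I would therefore present the proof in one short paragraph.
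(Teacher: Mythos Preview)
Your proposal is correct and matches the paper's proof essentially verbatim: the paper also writes out the $(\gamma,\mu)$-entry of the product as $\sum_{\kappa} \bigl(\prod_{k\neq\iota} l_k(r)\bigr)_{\gamma\kappa} s_{\kappa\mu}$ and invokes Lemma~\ref{lemma:product-of-r-minus-rkk} to kill every term when $\gamma>\iota$. Your additional remark that nothing about the $\mathfrak{b}^*$-structure of $s$ is used is accurate and a nice observation.
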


Unlike the case in Lemma~\ref{lemma:productoflkr-r}, 
we make a note that 
$\displaystyle{\prod_{1\leq k\leq n, k\not=\iota} l_k(r)\: s }$ is not zero strictly to the left of $\iota^{th}$ column. 

\begin{proof}
For $\gamma > \iota$, 
\[ 
\left( \prod_{1\leq k \leq n, k\not=\iota} l_k(r) \: s \right)_{\gamma\mu} 
	= \sum_{\kappa=1}^n \left(\prod_{1\leq k\leq n, k\not=\iota }l_k(r) \right)_{\gamma\kappa} s_{\kappa\mu}  
			\stackrel{\dagger}{=} 
	\sum_{\kappa=1}^n \: 0\cdot s_{\kappa\mu}  = 0  \\  
\] 
where $\dagger$ holds by Lemma~\ref{lemma:product-of-r-minus-rkk}. 
\end{proof} 

%

The following Corollary will be used in the proof of Proposition~\ref{proposition:deriving-diag-coordinates-of-bsb-inverse}.

\begin{corollary}\label{corollary:projectionmatrices-with-s} 
For each $\gamma > \iota$, $\left(L^{\iota}s\right)_{\gamma\mu}=0$.  
\end{corollary}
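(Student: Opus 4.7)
The plan is to deduce this corollary directly from Lemma~\ref{lemma:productoflkr-s-RHS} by noting that $L^{\iota}$ differs from $\prod_{1\leq k\leq n,\, k\neq \iota} l_k(r)$ only by the scalar factor $\left[\tr\left(\prod_{1\leq k\leq n,\, k\neq \iota} l_k(r)\right)\right]^{-1}$, which is a function of $r$ alone and does not depend on the row or column index. Scalar multiplication commutes with matrix multiplication and preserves the zero pattern of entries, so any entry of $L^{\iota}s$ is simply a scalar multiple of the corresponding entry of $\prod_{k\neq \iota} l_k(r)\cdot s$.

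More concretely, I would expand
\begin{equation*}
(L^{\iota}s)_{\gamma\mu} \;=\; \left[\tr\left(\prod_{1\leq k\leq n,\, k\neq \iota} l_k(r)\right)\right]^{-1} \left(\prod_{1\leq k\leq n,\, k\neq \iota} l_k(r)\cdot s\right)_{\gamma\mu},
\end{equation*}
and then invoke Lemma~\ref{lemma:productoflkr-s-RHS}, which asserts that $\prod_{1\leq k\leq n,\, k\neq \iota} l_k(r)\cdot s$ vanishes strictly below its $\iota^{th}$ row. Since $\gamma > \iota$ by hypothesis, the right-hand matrix entry is zero, and hence the whole product is zero.

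Since the argument is essentially a one-line consequence of the preceding lemma together with the definition of $L^{\iota}$, I do not anticipate any obstacle — there is no case analysis or computation needed, only the observation that multiplying by a (nonzero) scalar function of $r$ does not alter the vanishing locus of matrix entries. The only mild subtlety is that the scalar $\left[\tr\left(\prod_{k\neq \iota} l_k(r)\right)\right]^{-1}$ should be nonzero on the regular semisimple locus where we work, but this is guaranteed because on $\mu^{-1}(0)^{rss}$ the eigenvalues $r_{11},\ldots,r_{nn}$ are pairwise distinct, so each $l_k(r) = r - r_{kk}I$ is a rank-$(n-1)$ matrix and the product $\prod_{k\neq \iota} l_k(r)$ has the nonzero trace appearing in the denominators of Lemma~\ref{lemma:coordinate-fns-of-product-of-lkr}.
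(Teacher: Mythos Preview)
Your proposal is correct and takes essentially the same approach as the paper, which simply states that the corollary follows from Lemma~\ref{lemma:productoflkr-s-RHS}. You have merely made explicit the (trivial) observation that $L^{\iota}$ is a nonzero scalar multiple of $\prod_{k\neq\iota} l_k(r)$, so the vanishing of entries strictly below row $\iota$ carries over.
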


\begin{proof} 
This follows from Lemma~\ref{lemma:productoflkr-s-RHS}. 
\end{proof}

We will see an application of the following Lemma in Section~\ref{subsection:B-invariantfunctions}. 

\begin{lemma}\label{lemma:Baction-on-Liota-operator}
For any $d\in B$, $L^{\iota}(d.r)=d.L^{\iota}(r)$, where the $B$-action on the operator is by conjugation. 
\end{lemma}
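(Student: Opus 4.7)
The plan is to exploit the fact that conjugation by an element of $B$ preserves diagonal entries, so the scalars $r_{kk}$ appearing in $l_k(r) = r - r_{kk}I$ are unchanged when $r$ is replaced by $d.r = drd^{-1}$. Concretely, I would first observe that since $d \in B$ is upper triangular (as is $r$), the diagonal of $drd^{-1}$ agrees with the diagonal of $r$, so $(d.r)_{kk} = r_{kk}$ for every $k$. This lets me rewrite
\[
l_k(d.r) \;=\; drd^{-1} - r_{kk}I \;=\; d(r - r_{kk}I)d^{-1} \;=\; d\, l_k(r)\, d^{-1}.
\]

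Next, I would take the product over $k \neq \iota$. Since conjugation is a ring homomorphism on $\mathfrak{gl}_n$, the factors of $d$ and $d^{-1}$ telescope, yielding
\[
\prod_{1 \leq k \leq n,\, k \neq \iota} l_k(d.r) \;=\; d \left( \prod_{1 \leq k \leq n,\, k \neq \iota} l_k(r) \right) d^{-1}.
\]
Taking the trace and invoking the cyclic invariance of trace gives
\[
\tr\!\left( \prod_{1 \leq k \leq n,\, k \neq \iota} l_k(d.r) \right) \;=\; \tr\!\left( \prod_{1 \leq k \leq n,\, k \neq \iota} l_k(r) \right),
\]
so the normalizing scalar in the definition of $L^{\iota}$ (Notation~\ref{notation:coordinatefunction-of-product-of-rs}) is unchanged.

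Assembling the two pieces, I obtain
\[
L^{\iota}(d.r) \;=\; \left[\tr\!\left(\prod_{k \neq \iota} l_k(r)\right)\right]^{-1} d \left(\prod_{k \neq \iota} l_k(r)\right) d^{-1} \;=\; d\, L^{\iota}(r)\, d^{-1} \;=\; d.L^{\iota}(r),
\]
which is exactly the claim. There is no real obstacle here: the entire argument reduces to the observation that the Borel action on $\mathfrak{b}$ fixes diagonal entries, so every ingredient in $L^{\iota}$ is either conjugation-equivariant (the matrix factors) or conjugation-invariant (the trace in the denominator and the scalars $r_{kk}$). The only place one needs to be careful is ensuring that $\tr(\prod_{k \neq \iota} l_k(r))$ is nonzero on the regular semisimple locus so that $L^{\iota}(r)$ and $L^{\iota}(d.r)$ are both defined; this is precisely the condition implicit in restricting to $\mu^{-1}(0)^{rss}$ (cf. Notation~\ref{notation:distinct-r-eigenvalues}), where the eigenvalues of $r$ are pairwise distinct.
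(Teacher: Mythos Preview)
Your proof is correct and follows essentially the same approach as the paper: both unwind the definition of $L^{\iota}$, use that $(d.r)_{kk}=r_{kk}$ (the paper's Proposition~\ref{proposition:diagonal-being-invariant}) to write each factor as $d\,l_k(r)\,d^{-1}$, telescope the product, and apply trace invariance to handle the normalizing scalar. Your version is slightly more explicit about why the diagonal entries are preserved and about the nonvanishing of the trace on the $rss$-locus, but the logical structure is identical.
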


\begin{proof}
For any $d\in B$, 
\[\begin{aligned} 
L^{\iota}(d.r) &=  \left[
\tr\left( \prod_{
1\leq k\leq n, k\not=\iota 
} 
      drd^{-1} -r_{kk}I   
\right)\right]^{-1}   \left(\prod_{
1\leq k\leq n, k\not=\iota
}    
   drd^{-1} - r_{kk}I
\right) \\  
&=  \left[\tr\left(  d\left( \prod_{1\leq k \leq n,k\not=\iota} 
     r-r_{kk}I \right) 
 d^{-1} \right) \right]^{-1}  d   
   \left( 
   \prod_{1\leq k\leq n, k\not=\iota} r-r_{kk}I 
  \right)
 d^{-1}  
\\ 
&= d \left( \left[\tr\left(   \prod_{1\leq k \leq n,k\not=\iota} 
     r-r_{kk}I   
  \right) \right]^{-1}    
   \prod_{1\leq k\leq n, k\not=\iota} 
   \left(  r-r_{kk}I 
 					 \right) 
  \right) d^{-1}   \\    
   &= d.L^{\iota}(r)   \\  
\end{aligned} 
\] 
\end{proof}

%
%
%

%
%
%
%

\subsection{On the fixed points by the Borel} 
  
We now prove some basic facts about the action of $B$. 
Proposition~\ref{proposition:diagonal-being-invariant} shows that the diagonal entries of an upper triangular matrix do not change under $B$-conjugation whilst Lemma~\ref{lemma:s-diagonal-conjugation-still-diagonal} shows that diagonal entries of a diagonal matrix are preserved under $B$-conjugation.

\begin{proposition}\label{proposition:diagonal-being-invariant}
For any $r$ in $\mathfrak{b}$ and for any $b$ in $B$, we have $\diag(brb^{-1})=\diag(r)$. 
\end{proposition}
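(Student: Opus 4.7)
The plan is to prove this by direct computation using the fact that the product of upper triangular matrices is upper triangular, and the diagonal entries of such a product are the products of the corresponding diagonal entries of the factors. Observe first that $b^{-1} \in B$ as well, so $b$, $r$, and $b^{-1}$ are all upper triangular, i.e., their $(i,j)$-entries vanish for $i > j$.

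First, I would compute $(brb^{-1})_{ii}$ by expanding the matrix product twice. Writing
\[
(brb^{-1})_{ii} = \sum_{k=1}^n (br)_{ik} (b^{-1})_{ki},
\]
the upper triangularity of $br$ kills all terms with $k < i$ and the upper triangularity of $b^{-1}$ kills all terms with $k > i$. Only $k = i$ survives, giving
\[
(brb^{-1})_{ii} = (br)_{ii} \, (b^{-1})_{ii}.
\]
A second expansion $(br)_{ii} = \sum_j b_{ij} r_{ji}$ picks out only $j = i$ by the same upper triangularity argument, yielding $(br)_{ii} = b_{ii} r_{ii}$. Finally, since $b^{-1} \in B$ has diagonal entries $b_{ii}^{-1}$ (the diagonal of the inverse of an upper triangular invertible matrix), we conclude $(brb^{-1})_{ii} = b_{ii} r_{ii} b_{ii}^{-1} = r_{ii}$.

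There is no real obstacle here — the whole argument is a direct unwinding of the definition of matrix multiplication together with the upper triangularity constraint. An alternative (slightly more conceptual) route would be to use the Levi decomposition $b = tu$ with $t \in T$ a diagonal torus element and $u \in U$ unipotent: the torus conjugation clearly preserves the diagonal, and for the unipotent part one checks that $uru^{-1} - r$ lies in $\mathfrak{n}^+$, so its diagonal is zero. But the direct computation above is the cleanest and most self-contained presentation, requiring no auxiliary structural results.
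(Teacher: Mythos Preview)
Your proof is correct and follows essentially the same approach as the paper: a direct entrywise computation using upper triangularity to collapse the sums, arriving at $(brb^{-1})_{ii} = b_{ii} r_{ii} b_{ii}^{-1} = r_{ii}$. The paper presents the same calculation, just writing out the intermediate product $(br)_{\iota\gamma}$ in slightly more detail before multiplying by $b^{-1}$.
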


\begin{proof}
Denote the entries of $b$, $r$, and $b^{-1}$ as 
$b_{\iota\gamma}$, 
$r_{\iota\gamma}$, and  
$b_{\iota\gamma}'$, respectively,  
with 
$b_{\iota\iota}'=b_{\iota\iota}^{-1}$. 
Then   
$(br)_{\iota\gamma} 
= \displaystyle{\sum_{k=1}^n b_{\iota k}r_{k\gamma}}$. 
Since $b_{\iota k}$ and $r_{\iota k}$ equal 0 whenever $\iota >k$, 
\[ (br)_{\iota\gamma} 
= \left\{ 
 \begin{aligned}
 \sum_{k=\iota}^{\gamma} b_{\iota k}r_{k\gamma}\qquad & \mbox{ if } \iota <\gamma \\ 
   b_{\iota\iota}r_{\iota\iota}\qquad       \quad    & \mbox{ if } \iota =\gamma \\ 
   0 \qquad\qquad                               &  \mbox{ if } \iota>\gamma. \\ 
 \end{aligned}
\right. 
\] 
Renaming 
$(br)_{\iota k}$ as $d_{\iota k}$, we obtain  
 $(brb^{-1})_{\iota \gamma}
=
\displaystyle{\sum_{k=1}^n d_{\iota k}b_{k\gamma}'}$. 
Since $d_{\iota k}$ and $b_{\iota k}'$ equal 0 whenever $\iota >k$, 
\[ (brb^{-1})_{\iota\gamma} = \left\{ 
\begin{aligned} 
\sum_{k=\iota }^{\gamma} d_{\iota k}b_{k\gamma}' \qquad 
       & \mbox{ if $\iota <\gamma$} \\ 
  d_{\iota\iota}b_{\iota\iota }'  \qquad\quad 
       & \mbox{ if $\iota =\gamma$} \\ 
  0\qquad\qquad 
       &  \mbox{ if $\iota>\gamma$}.  \\   
\end{aligned}
\right. 
\] 
Since $(brb^{-1})_{\iota\iota}=d_{\iota\iota}b_{\iota\iota}'=b_{\iota\iota}r_{\iota\iota}b_{\iota\iota}^{-1} = r_{\iota\iota}$, we are done.  
\end{proof}

\begin{lemma}\label{lemma:s-diagonal-conjugation-still-diagonal}
Let $s=\diag(s)$. For any $b$ in $B$, $\diag(bsb^{-1})=\diag(s)$.  
\end{lemma}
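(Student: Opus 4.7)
The plan is to observe that Lemma~\ref{lemma:s-diagonal-conjugation-still-diagonal} is an immediate specialization of Proposition~\ref{proposition:diagonal-being-invariant}. Since a diagonal matrix is in particular upper triangular, any $s = \diag(s)$ lies in $\mathfrak{b}$. Thus the hypothesis of Proposition~\ref{proposition:diagonal-being-invariant} is satisfied, and applying it to $s$ gives $\diag(bsb^{-1}) = \diag(s)$ directly.

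Consequently there is essentially nothing to prove beyond this one-line reduction. The only step is to note that $\{\text{diagonal matrices}\} \subseteq \mathfrak{b}$ and invoke the preceding proposition. No separate calculation (using the explicit entries $b_{\iota\gamma}$, $b_{\iota\gamma}'$, $s_{\iota\iota}$) is needed, since Proposition~\ref{proposition:diagonal-being-invariant} already carried out that computation in the more general setting of arbitrary $r \in \mathfrak{b}$.

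There is no genuine obstacle here; the lemma is really a corollary rather than a new result. If one preferred a self-contained proof, one could redo the bookkeeping: write $(bsb^{-1})_{\iota\iota} = \sum_{k,l} b_{\iota k}\, s_{kl}\, b'_{l\iota}$, use $s_{kl} = \delta_{kl} s_{kk}$ to collapse this to $\sum_{k} b_{\iota k} s_{kk} b'_{k\iota}$, and then use upper-triangularity of $b$ and $b^{-1}$ (so $b_{\iota k} = 0$ for $\iota > k$ and $b'_{k\iota} = 0$ for $k > \iota$) to reduce the sum to the single term $k = \iota$, namely $b_{\iota\iota} s_{\iota\iota} b_{\iota\iota}^{-1} = s_{\iota\iota}$. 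But the cleanest route is simply to cite the proposition.
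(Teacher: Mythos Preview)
Your proposal is correct and matches the paper's own proof, which is the one-line remark ``Restrict $r$ in Proposition~\ref{proposition:diagonal-being-invariant} to those in $\mathfrak{b}\cap\mathfrak{b}^*=\mathfrak{h}$.'' Both arguments simply specialize the preceding proposition to diagonal matrices.
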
 

\begin{proof} 
Restrict $r$ in Proposition~\ref{proposition:diagonal-being-invariant} to those in  $\mathfrak{b}\cap\mathfrak{b}^*=\mathfrak{h}$. 
\end{proof}

\begin{remark}\label{remark:diagonalize-borel-and-its-dual}
For a diagonal matrix $r$ in $\mathfrak{b}$ and for a general $b$ in $B$, 
$brb^{-1}$ need not equal $r$ (that is, $brb^{-1}$ need not be diagonal). 
However, for a diagonal matrix $s$ in $\mathfrak{b}^*$ and for any $b$ in $B$, $bsb^{-1}=s$ (i.e., $b\:\diag(s)\: b^{-1}=\diag(s)$) always holds in $\mathfrak{b}^* = \mathfrak{b}/\mathfrak{n}$ since the strictly upper triangular part is killed in $\mathfrak{b}^*$. 
\end{remark}

\begin{proposition}\label{proposition:diag-entries-r-is-trace}
For each $\iota$, 
\[  r_{\iota\iota} = \tr(L^{\iota}r). 
\] 
\end{proposition}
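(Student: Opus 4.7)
The plan is to expand the trace of $L^{\iota}r$ diagonal-entry by diagonal-entry, apply Lemma~\ref{lemma:lkr-times-r-identity} to reduce each summand to a single scalar multiple of $L^{\iota}_{\gamma\gamma}$, and then show $\tr(L^{\iota}) = 1$ using the explicit coordinate description from Lemma~\ref{lemma:coordinate-fns-of-product-of-lkr}. The computation is short and does not require reopening the normalization factor in the definition of $L^{\iota}$.

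More concretely, I would first write
\[
\tr(L^{\iota}r) \;=\; \sum_{\gamma=1}^n (L^{\iota}r)_{\gamma\gamma}.
\]
By Lemma~\ref{lemma:lkr-times-r-identity}, each diagonal entry satisfies $(L^{\iota}r)_{\gamma\gamma} = r_{\iota\iota}\, L^{\iota}_{\gamma\gamma}$, so
\[
\tr(L^{\iota}r) \;=\; r_{\iota\iota}\sum_{\gamma=1}^n L^{\iota}_{\gamma\gamma} \;=\; r_{\iota\iota}\,\tr(L^{\iota}).
\]
Thus everything reduces to showing $\tr(L^{\iota}) = 1$.

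For this I would invoke the case-by-case formula for $L^{\iota}_{\gamma\mu}$ given in Lemma~\ref{lemma:coordinate-fns-of-product-of-lkr}, specialized to $\mu=\gamma$. If $\gamma>\iota$ the entry is $0$; if $\gamma<\iota$ then $\mu=\gamma<\iota$ and the entry is again $0$; and if $\gamma=\iota$ the entry is $1$. Hence $\sum_{\gamma=1}^n L^{\iota}_{\gamma\gamma} = 1$, which gives $\tr(L^{\iota}r) = r_{\iota\iota}$.

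There is no real obstacle here: the work has already been done in Lemmas~\ref{lemma:coordinate-fns-of-product-of-lkr} and \ref{lemma:lkr-times-r-identity}. The only point to be careful about is that the identity $\tr(L^{\iota}) = 1$ is read off directly from the coordinate formulas rather than invoked from Lemma~\ref{lemma:lkrmatrices-orthogonal-rss} (where only $\det(L^{\iota}) = 1$ is stated). Once that is done, the proposition follows in two lines.
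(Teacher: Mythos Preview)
Your proof is correct and follows essentially the same approach as the paper: the paper also invokes Lemma~\ref{lemma:lkr-times-r-identity} to pass from $\tr(L^{\iota}r)$ to $r_{\iota\iota}\tr(L^{\iota})$, and then reads off $\tr(L^{\iota})=1$ from the coordinate formulas in Lemma~\ref{lemma:coordinate-fns-of-product-of-lkr}. Your write-up is slightly more explicit in spelling out the diagonal-entry computation, but the logic is identical.
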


\begin{proof}
We have 
$\tr(L^{\iota}r)=\tr(r_{\iota\iota}L^{\iota})=r_{\iota\iota}$ where the first equality holds by Lemma~\ref{lemma:lkr-times-r-identity} and the second equality holds by Lemma~\ref{lemma:coordinate-fns-of-product-of-lkr} ($L_{\gamma\gamma}^{\iota}$ is 0 if $\gamma\not=\iota$  and equals 1 if $\gamma = \iota$).  
\end{proof}

\begin{proposition}\label{proposition:difference-of-r-is-trace}
For each $1\leq \gamma < \nu\leq n$, 
\[ (r_{\nu\nu} - r_{\gamma\gamma})^{-1} = [\tr((L^{\nu}-L^{\gamma})r)]^{-1}.  
\] 
\end{proposition}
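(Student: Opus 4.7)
The statement is essentially an immediate consequence of the preceding Proposition~\ref{proposition:diag-entries-r-is-trace} together with linearity of the trace, so the proof plan is very short. The plan is to apply Proposition~\ref{proposition:diag-entries-r-is-trace} to each index $\nu$ and $\gamma$ separately, obtaining $r_{\nu\nu} = \tr(L^{\nu}r)$ and $r_{\gamma\gamma} = \tr(L^{\gamma}r)$. Then I would use linearity of the trace to combine these into
\[ \tr((L^{\nu}-L^{\gamma})r) = \tr(L^{\nu}r) - \tr(L^{\gamma}r) = r_{\nu\nu} - r_{\gamma\gamma}, \]
and take reciprocals of both sides to conclude.

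The only point that requires a brief comment is that the reciprocal on the left-hand side is well-defined. Since we are working on the regular semisimple locus $\mu^{-1}(0)^{rss}$ (cf.\ Notation~\ref{notation:distinct-r-eigenvalues}), the eigenvalues of $r$, which are precisely the diagonal entries $r_{11}, \ldots, r_{nn}$, are pairwise distinct. Hence $r_{\nu\nu} - r_{\gamma\gamma} \neq 0$ whenever $\nu \neq \gamma$, so the inversion step is legitimate.

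There is no real obstacle here; the lemma is a bookkeeping corollary that packages the difference of two diagonal entries as a single trace expression. Its utility, I expect, will come later: the formulas in Trace~\ref{trace:map-from-B-rss-to-complex-space} and the diagonal coordinates of $bsb^{-1}$ involve denominators of the form $r_{\iota\iota} - r_{kk}$, and rewriting these as traces of operators built from the projections $L^{\iota}$ will be precisely what allows one to exhibit the map $P$ of Proposition~\ref{proposition:rss-extending-to-all-of-the-reg-semi-stable-locus} as a regular, $B$-invariant morphism on $\mu^{-1}(0)^{rss}$.
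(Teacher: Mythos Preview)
Your proposal is correct and matches the paper's proof essentially verbatim: the paper applies Proposition~\ref{proposition:diag-entries-r-is-trace} and linearity of trace to get $\tr(L^{\nu}r-L^{\gamma}r)=\tr(L^{\nu}r)-\tr(L^{\gamma}r)=r_{\nu\nu}-r_{\gamma\gamma}$, then inverts. Your added remark that invertibility holds on the regular semisimple locus is a welcome clarification the paper leaves implicit.
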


\begin{proof}
Applying Proposition~\ref{proposition:diag-entries-r-is-trace}, we have  $\tr(L^{\nu}r-L^{\gamma}r)$ $=$ $\tr(L^{\nu}r)-\tr(L^{\gamma}r)$ $=$ $r_{\nu\nu}-r_{\gamma\gamma}$. 
\end{proof}

We now apply the operators from Notation~\ref{notation:coordinatefunction-of-product-of-rs} for the following 

\begin{proposition}\label{proposition:distinct-ev-diagonalizable}
For each $r$ in $\mathfrak{b}^{rss}$, there exists $b$ 
in the Borel with coordinates  
\[ b_{\iota\gamma} = \left\{ 
\begin{aligned} 
0 \;\;\;\;\;\;\;\;\;\;\;\;\;\;\;\;\;\;\;\;\;\;\;\;\;\;\;\;\;\;\; \;\;\;\;\;\;\;\;\;\;\;\;\;\;\;\;\;\;\;\;\;\;\;\;\;\;\;\;\;\;\;\;\;\;\; &\mbox{ if } \iota >\gamma \\
b_{\iota\iota}\;\;\;\;\;\;\;\;\;\;\;\;\;\;\;\;\;\;\;\;\;\;\;\;\;\;\;\;\;\; \;\;\;\;\;\;\;\;\;\;\;\;\;\;\;\;\;\;\;\;\;\;\;\;\;\;\;\;\;\;\;\;\;\;\; &\mbox{ if } \iota = \gamma \\ 
b_{\iota\iota}\left(
\dfrac{r_{\iota\gamma}}{r_{\iota\iota}-r_{\gamma\gamma}}  
+
\sum_{v=1}^{\gamma-\iota-1}\sum_{\iota<k_1<\ldots <k_{v}<\gamma} 
\dfrac{r_{\iota k_1}r_{k_v \gamma} }{(r_{\iota\iota}-r_{k_1k_1})(r_{\iota\iota}-r_{\gamma\gamma})} 
\prod_{u=1}^{v-1} \dfrac{r_{k_uk_{u+1}}}{r_{\iota\iota}-r_{k_{u+1}k_{u+1}}} 
\right) &\mbox{ if } \iota < \gamma \\ 
\end{aligned}
\right. 
\] 
which diagonalizes $r$. 
This $b$ can be written as 
\begin{equation}\label{eqn:diagonalizingmatrix-B}
b = \sum_{\iota =1}^n  
 E_{\iota\iota} \: \diag(b) L^{\iota},  
\end{equation} 
where $L^{\iota}$ is from 
Notation~\ref{notation:coordinatefunction-of-product-of-rs} 
and 
$E_{\iota\iota}$ has 1 in $(\iota,\iota)$-entry and $0$'s elsewhere. 
\end{proposition}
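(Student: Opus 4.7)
The plan is to prove the proposition in two stages: first verify the compact formula \eqref{eqn:diagonalizingmatrix-B}, and then use it together with the spectral identity $rL^{\iota}=r_{\iota\iota}L^{\iota}$ to check that $br=\diag(r)\,b$. Once that matrix equation is established, invertibility of $b$ (which holds whenever the free scalars $b_{\iota\iota}$ are nonzero, since $b$ is upper triangular) gives $brb^{-1}=\diag(r)$, and since $\diag(brb^{-1})=\diag(r)$ by Proposition~\ref{proposition:diagonal-being-invariant}, this is exactly the diagonalization statement.

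First I would verify the identity \eqref{eqn:diagonalizingmatrix-B}. Using $(E_{\iota\iota})_{\alpha k}=\delta_{\alpha\iota}\delta_{k\iota}$ and the fact that $\diag(b)$ is diagonal with entries $b_{\mu\mu}$, the $(\alpha,\gamma)$-entry of $\sum_{\iota}E_{\iota\iota}\diag(b)L^{\iota}$ equals $\delta_{\alpha\iota}\,b_{\iota\iota}\,L^{\iota}_{\iota\gamma}$ summed over $\iota$, i.e.\ $b_{\alpha\alpha}\,L^{\alpha}_{\alpha\gamma}$. Reading off $L^{\alpha}_{\alpha\gamma}$ from Lemma~\ref{lemma:coordinate-fns-of-product-of-lkr} (the $\gamma=\iota=\mu$, $\gamma=\iota<\mu$, and $\gamma>\iota$ cases) and comparing with the piecewise formula for $b_{\iota\gamma}$ in the statement yields agreement termwise, which is a purely notational check.

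Next I would establish the key spectral identity
\[
rL^{\iota}=L^{\iota}r=r_{\iota\iota}L^{\iota}.
\]
Since $L^{\iota}=\bigl[\prod_{k\neq\iota}(r_{\iota\iota}-r_{kk})\bigr]^{-1}\prod_{k\neq\iota}(r-r_{kk}I)$ (the trace computation in the denominator collapses to this product because every summand with $\gamma\neq\iota$ contains the factor $r_{\gamma\gamma}-r_{\gamma\gamma}=0$), multiplying by $r-r_{\iota\iota}I$ gives
\[
(r-r_{\iota\iota}I)L^{\iota}=\bigl[\tfrac{\;}{\;}\bigr]^{-1}\prod_{k=1}^{n}(r-r_{kk}I),
\]
which vanishes by the Cayley--Hamilton theorem applied to the upper triangular matrix $r$ whose characteristic polynomial is $\prod_{k}(x-r_{kk})$. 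Thus $rL^{\iota}=r_{\iota\iota}L^{\iota}$, and the equality $L^{\iota}r=r_{\iota\iota}L^{\iota}$ follows because $r$ and $L^{\iota}$ commute (both are polynomials in $r$).

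Finally I would assemble the two steps. Using \eqref{eqn:diagonalizingmatrix-B} and the spectral identity,
\[
br=\sum_{\iota=1}^{n}E_{\iota\iota}\diag(b)L^{\iota}r=\sum_{\iota=1}^{n}r_{\iota\iota}\,E_{\iota\iota}\diag(b)L^{\iota},
\]
and since $\diag(r)E_{\iota\iota}=r_{\iota\iota}E_{\iota\iota}$,
\[
\diag(r)\,b=\sum_{\iota=1}^{n}\diag(r)E_{\iota\iota}\diag(b)L^{\iota}=\sum_{\iota=1}^{n}r_{\iota\iota}\,E_{\iota\iota}\diag(b)L^{\iota}.
\]
The two right-hand sides agree, so $br=\diag(r)b$ and hence $brb^{-1}=\diag(r)$. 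The only real obstacle is the spectral identity $rL^{\iota}=r_{\iota\iota}L^{\iota}$; once that is secured via Cayley--Hamilton, the rest is bookkeeping with Lemma~\ref{lemma:coordinate-fns-of-product-of-lkr}, and the orthogonality/idempotency properties in Lemma~\ref{lemma:lkrmatrices-orthogonal-rss} are not even needed here (they enter elsewhere in the paper, e.g.\ for computing $s'$).
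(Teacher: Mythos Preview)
Your argument is correct and follows the same overall skeleton as the paper: verify that the formula \eqref{eqn:diagonalizingmatrix-B} matches the stated entries via Lemma~\ref{lemma:coordinate-fns-of-product-of-lkr}, then prove $br=\diag(r)\,b$ by manipulating $\sum_\iota E_{\iota\iota}\diag(b)L^{\iota}r$ and comparing with $\sum_\iota \diag(r)E_{\iota\iota}\diag(b)L^{\iota}$.

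The one genuine difference is in how the key step $E_{\iota\iota}\diag(b)L^{\iota}r = r_{\iota\iota}\,E_{\iota\iota}\diag(b)L^{\iota}$ is justified. The paper does \emph{not} invoke Cayley--Hamilton: instead it commutes $r$ past $\prod_{k\neq\iota}l_k(r)$, writes $E_{\iota\iota}r$ as the matrix with $\vec r_{\iota}$ in the $\iota$-th row, and then uses the structural vanishing of Lemma~\ref{lemma:product-of-r-minus-rkk} (that $\prod_{k\neq\iota}l_k(r)$ is zero strictly below row~$\iota$) to collapse the row-times-matrix product to $r_{\iota\iota}$ times the $\iota$-th row of $\prod_{k\neq\iota}l_k(r)$. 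Your Cayley--Hamilton argument establishes the stronger spectral identity $rL^{\iota}=r_{\iota\iota}L^{\iota}$ in one line and bypasses the coordinate lemma entirely; this is cleaner and more conceptual. The paper's route, on the other hand, stays inside the lemma framework already built up and avoids appealing to an outside theorem. Both are fine; yours is shorter.
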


One may deduce from the proof of Proposition~\ref{proposition:distinct-ev-diagonalizable} that 
one may replace $\diag(b)$ with $b$ on the right-hand side of (\ref{eqn:diagonalizingmatrix-B})  
and still obtain a matrix that diagonalizes $r$. 

\begin{proof}
For $r=(r_{ij})$, 
we will show $brb^{-1}=\diag(r)$, or equivalently, $br=\diag(r)b$. So 
\[ 
\begin{aligned} 
br &=  \sum_{\iota =1}^n  
 \left[\tr\left( \prod_{
1\leq k\leq n, k\not=\iota 
} l_k(r) \right)\right]^{-1} E_{\iota\iota} \: \diag(b) \left(\prod_{
1\leq k\leq n, k\not=\iota
} l_k(r) \right) r\\
&=\sum_{\iota =1}^n  
 \left[ \tr\left( \prod_{
1\leq k\leq n, k\not=\iota 
} l_k(r) \right)\right]^{-1} b_{\iota\iota} E_{\iota\iota} \: r \left(\prod_{
1\leq k\leq n, k\not=\iota
} l_k(r) \right) \\ 
&=\sum_{\iota =1}^n  
 \left[ \tr\left( \prod_{
1\leq k\leq n, k\not=\iota 
} l_k(r) \right)\right]^{-1} \left[ 
\begin{array}{c} 
0 \\ 
\vdots \\ 
b_{\iota\iota} \vec{r}_{\iota} \\ 
\vdots \\ 
0 \\  
\end{array}
\right] \left(\prod_{
1\leq k\leq n, k\not=\iota
} l_k(r) \right) \mbox{ where }\vec{r}_{\iota}\mbox{ is the }
\iota^{th} \mbox{ row of } r \\ 
&= \sum_{\iota =1}^n  
 \left[ \tr\left( \prod_{
1\leq k\leq n, k\not=\iota 
} l_k(r) \right)\right]^{-1} b_{\iota\iota} r_{\iota\iota} E_{\iota\iota}  \left(\prod_{
1\leq k\leq n, k\not=\iota
} l_k(r) \right)\mbox{ by Lemma~\ref{lemma:product-of-r-minus-rkk}} \\ 
&= \sum_{\iota =1}^n  
\left[ \tr\left( \prod_{
1\leq k\leq n, k\not=\iota 
} l_k(r) \right)\right]^{-1} \diag(r) E_{\iota\iota} \diag(b) \left(\prod_{
1\leq k\leq n, k\not=\iota
} l_k(r) \right) \\ 
&= \diag(r)b. \\
\end{aligned} 
\]     
\end{proof}

\begin{corollary}\label{corollary:inverse-of-diagonalizingmatrix-rss-locus}
The inverse of $b$ in 
Proposition~\ref{proposition:distinct-ev-diagonalizable} has coordinates 
\[ (b^{-1})_{\iota\gamma} = \left\{ 
\begin{aligned} 
0 \;\;\;\;\;\;\;\;\;\;\;\;\;\;\;\;\;\;\;\;\;\;\;\;\;\;\;\;\;\;\; \;\;\;\;\;\;\;\;\;\;\;\;\;\;\;\;\;\;\;\;\;\;\;\;\;\;\;\;\;\;\;\;\;\;\; &\mbox{ if } \iota >\gamma \\
b_{\gamma\gamma}^{-1}\;\;\;\;\;\;\;\;\;\;\;\;\;\;\;\;\;\;\;\;\;\;\;\;\;\;\;\;\;\; \;\;\;\;\;\;\;\;\;\;\;\;\;\;\;\;\;\;\;\;\;\;\;\;\;\;\;\;\;\;\;\;\;\;\; &\mbox{ if } \iota = \gamma \\ 
b_{\gamma\gamma}^{-1}\left(\dfrac{r_{\iota\gamma}}{
r_{\gamma\gamma}-r_{\iota\iota}} 
+
 \sum_{v=1}^{\gamma-\iota-1}
 \sum_{\iota <k_1<\ldots <k_{v}<\gamma} 
\dfrac{r_{\iota k_1}r_{k_v \gamma} }{
(r_{\gamma\gamma}-r_{k_vk_v})
(r_{\gamma\gamma}-r_{\iota\iota})} 
\prod_{u=1}^{v-1} \dfrac{r_{k_uk_{u+1}}}{r_{\gamma\gamma}-r_{k_{u}k_{u}}} 
\right) &\mbox{ if } \iota < \gamma \\ 
\end{aligned}
\right.  
\]  
with   
 matrix representation 
\[ 
b^{-1} = \sum_{\iota =1}^n  L^{\iota} 
\diag(b)^{-1} \: E_{\iota\iota}. 
\]  
\end{corollary}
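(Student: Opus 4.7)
The plan is to first propose the closed matrix form
\[
b^{-1} \;=\; \sum_{\iota=1}^{n} L^{\iota}\,\diag(b)^{-1}\, E_{\iota\iota}
\]
as the candidate inverse, then verify it by direct multiplication, and finally read off the coordinate expressions from this matrix form via Lemma~\ref{lemma:coordinate-fns-of-product-of-lkr}. Since the ambient matrices are finite square, it suffices to check $b\cdot b^{-1}=\I$.

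First, I would expand
\[
b\cdot b^{-1} \;=\; \Bigl(\sum_{\iota} E_{\iota\iota}\diag(b)L^{\iota}\Bigr)\Bigl(\sum_{\gamma} L^{\gamma}\diag(b)^{-1}E_{\gamma\gamma}\Bigr) \;=\; \sum_{\iota,\gamma} E_{\iota\iota}\diag(b)\, L^{\iota}L^{\gamma}\,\diag(b)^{-1}E_{\gamma\gamma}.
\]
The orthonormality $L^{\iota}L^{\gamma}=\delta_{\iota\gamma}L^{\iota}$ established in Lemma~\ref{lemma:lkrmatrices-orthogonal-rss} collapses the double sum to $\sum_{\iota} E_{\iota\iota}\diag(b)L^{\iota}\diag(b)^{-1}E_{\iota\iota}$. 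Using that $E_{\iota\iota}$ commutes with the diagonal factor through scalar extraction, $E_{\iota\iota}\diag(b)=b_{\iota\iota}E_{\iota\iota}$, and that $L^{\iota}_{\iota\iota}=1$ by Lemma~\ref{lemma:coordinate-fns-of-product-of-lkr}, a short index check shows $E_{\iota\iota}L^{\iota}\diag(b)^{-1}E_{\iota\iota}=b_{\iota\iota}^{-1}E_{\iota\iota}$. Hence each summand becomes $E_{\iota\iota}$ and the total equals $\sum_{\iota}E_{\iota\iota}=\I$, confirming the candidate is indeed the inverse of $b$ from Proposition~\ref{proposition:distinct-ev-diagonalizable}.

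Next I would extract the entrywise formulas. From the matrix representation,
\[
(b^{-1})_{\iota\gamma} \;=\; \sum_{\mu=1}^{n} L^{\mu}_{\iota,\mu}\,b_{\mu\mu}^{-1}\,\delta_{\gamma,\mu} \;=\; b_{\gamma\gamma}^{-1}\,L^{\gamma}_{\iota,\gamma},
\]
so everything reduces to evaluating the single scalar $L^{\gamma}_{\iota,\gamma}$. Applying Lemma~\ref{lemma:coordinate-fns-of-product-of-lkr} with the superscript set to $\gamma$, the row index to $\iota$, and the column index to $\gamma$: the case $\iota>\gamma$ gives $0$; the case $\iota=\gamma$ gives $1$; the case $\iota<\gamma$ falls under the ``$\gamma'<\iota'=\mu'$'' branch of that lemma and yields precisely the nested-sum expression claimed in the corollary (with the dummy indices $l_j$ renamed to $k_j$ and $\widetilde v$ renamed to $v$). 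Multiplying by $b_{\gamma\gamma}^{-1}$ gives the stated coordinates.

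The main obstacle, such as it is, is purely notational: the Borel factor $\diag(b)$ does not commute with $L^{\iota}$ in general, so one must keep the left–right ordering straight throughout the multiplication and be careful that $\diag(b)^{-1}$ always sits between $L^{\gamma}$ and $E_{\gamma\gamma}$ when extracting entries. All other steps are direct consequences of the orthogonal-projector calculus for the $L^{\iota}$ assembled in the preceding lemmas, so there is no genuine conceptual difficulty beyond this bookkeeping.
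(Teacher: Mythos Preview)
Your proof is correct and follows essentially the same approach as the paper: both verify $b\,b^{-1}=\I$ by exploiting the orthonormality relations $L^{\iota}L^{\gamma}=\delta_{\iota\gamma}L^{\iota}$ from Lemma~\ref{lemma:lkrmatrices-orthogonal-rss}. You are slightly more explicit than the paper in that you also derive the coordinate expressions $(b^{-1})_{\iota\gamma}=b_{\gamma\gamma}^{-1}L^{\gamma}_{\iota\gamma}$ from the matrix form and match them against Lemma~\ref{lemma:coordinate-fns-of-product-of-lkr}, whereas the paper simply states both the coordinates and the matrix form and checks only the product.
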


\begin{proof}
We will prove that $b b^{-1}=\I$. 
The $\iota^{th}$ row of $b$ is $e_{\iota}^* \diag(b)L^{\iota}$ and the 
$\gamma^{th}$ column of $b^{-1}$ is 
$L^{\gamma}\diag(b)^{-1}e_{\gamma}$ 
  where 
   $e_{\gamma}$ is an elementary column vector 
and $e_{\iota}^*$ is an elementary covector. 
By Lemma~\ref{lemma:lkrmatrices-orthogonal-rss},  
\[
e_{\iota}^* \diag(b)L^{\iota}  L^{\gamma}\diag(b)^{-1}e_{\gamma} =
\left\{  
\begin{aligned} 
1 \quad  &\mbox{ if }\iota=\gamma \\  
0 \quad  &\mbox{ if }\iota\not=\gamma.  \\ 
\end{aligned} 
\right. 
\] 
\end{proof}

\begin{proposition}\label{proposition:deriving-diag-coordinates-of-bsb-inverse}
For each $\iota$, 
\[ (bsb^{-1})_{\iota\iota} = \tr(L^{\iota}s), 
\] where 
$b$ is from  Proposition~\ref{proposition:distinct-ev-diagonalizable} and $L^{\iota}$ is from Notation~\ref{notation:coordinatefunction-of-product-of-rs}. 
\end{proposition}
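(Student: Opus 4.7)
The plan is to reduce the identity to a statement purely about the projection operators $L^{\iota}$, exploiting the explicit matrix representations for $b$ and $b^{-1}$ established just above.

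First I would substitute the representations
\[
b = \sum_{\iota=1}^n E_{\iota\iota}\,\diag(b)\,L^{\iota}, \qquad
b^{-1} = \sum_{\gamma=1}^n L^{\gamma}\,\diag(b)^{-1}\,E_{\gamma\gamma}
\]
from Proposition~\ref{proposition:distinct-ev-diagonalizable} and Corollary~\ref{corollary:inverse-of-diagonalizingmatrix-rss-locus} into $bsb^{-1}$ to obtain the double sum
\[
bsb^{-1} \;=\; \sum_{\iota,\gamma} b_{\iota\iota}\,b_{\gamma\gamma}^{-1}\, E_{\iota\iota}\,L^{\iota}\,s\,L^{\gamma}\,E_{\gamma\gamma}.
\]
Since $E_{\iota'\iota'} X E_{\gamma\gamma}$ has its only nonzero entry in position $(\iota',\gamma)$, the $(\iota,\iota)$-diagonal entry picks out only the $\iota'=\gamma=\iota$ summand, and the diagonal scalars $b_{\iota\iota}$ and $b_{\iota\iota}^{-1}$ cancel. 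This reduces the problem to showing
\[
(L^{\iota}\,s\,L^{\iota})_{\iota\iota} \;=\; \tr(L^{\iota} s).
\]

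The key observation I would then exploit is that $L^{\iota}$ is a rank-one idempotent. By Lemma~\ref{lemma:coordinate-fns-of-product-of-lkr}, $L^{\iota}_{a\mu}$ vanishes unless $\mu\geq \iota$, while $L^{\iota}_{\mu b}$ vanishes unless $\mu\leq \iota$; therefore in computing $(L^{\iota} L^{\iota})_{ab}=\sum_\mu L^{\iota}_{a\mu}L^{\iota}_{\mu b}$ only the index $\mu=\iota$ survives, and combining this with the idempotent identity $(L^{\iota})^{2}=L^{\iota}$ from Lemma~\ref{lemma:lkrmatrices-orthogonal-rss} gives the rank-one factorization
\[
L^{\iota}_{ab} \;=\; L^{\iota}_{a\iota}\,L^{\iota}_{\iota b}.
\]

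With this factorization in hand, the remaining identity is a direct index manipulation. The plan is to expand
\[
\tr(L^{\iota} s) \;=\; \sum_{a,\mu} L^{\iota}_{a\mu}\,s_{\mu a} \;=\; \sum_{a,\mu} L^{\iota}_{a\iota}\,L^{\iota}_{\iota\mu}\,s_{\mu a} \;=\; \sum_{\mu,a} L^{\iota}_{\iota\mu}\,s_{\mu a}\,L^{\iota}_{a\iota} \;=\; (L^{\iota} s L^{\iota})_{\iota\iota},
\]
which closes the argument. There is no substantial obstacle here; the only subtlety is that although one could invoke cyclicity of trace together with $(L^{\iota})^{2}=L^{\iota}$ to write $\tr(L^{\iota} s)=\tr(L^{\iota} s L^{\iota})$, one still needs the rank-one fact to promote the trace to the single diagonal entry $(L^{\iota} s L^{\iota})_{\iota\iota}$. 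The rank-one factorization does both jobs simultaneously and cleanly, so that step is where I would place the weight of the proof.
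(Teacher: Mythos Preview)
Your proof is correct. The paper follows the Strategy outlined immediately before the proposition: it shows that $(bs)_{\iota\gamma}(b^{-1})_{\gamma\iota}=(L^{\iota}s)_{\gamma\gamma}$ for each $\gamma$ and then sums, but it verifies this equality through a page-long explicit expansion of the coordinate formulas of Lemma~\ref{lemma:coordinate-fns-of-product-of-lkr}, tracking every summand until they reassemble into $\tr(L^{\iota}s)$. Your route reaches the same destination by isolating the governing structural fact---the rank-one factorization $L^{\iota}_{ab}=L^{\iota}_{a\iota}L^{\iota}_{\iota b}$, which is implicit in the paper's proof of Lemma~\ref{lemma:lkrmatrices-orthogonal-rss}---and then disposing of the identity $(L^{\iota}sL^{\iota})_{\iota\iota}=\tr(L^{\iota}s)$ in three lines of index algebra. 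Both arguments use exactly the same ingredients, so the gain is one of economy and transparency rather than of any new input; as a small bonus, your argument never appeals to the lower-triangular shape of $s$ and so shows the identity for arbitrary $s$.
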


\begin{strategy}\label{strategy:deriving-diag-coordinates-of-bsb-inverse}
We outline a proof for  Proposition~\ref{proposition:deriving-diag-coordinates-of-bsb-inverse}. 
For a fixed $\iota$ and for each $\gamma \leq \iota$, consider the entry 
\[ (bs)_{\iota\gamma}=\sum_{\mu=\iota}^{n} b_{\iota\mu}s_{\mu\gamma}. 
\] Then 
\[ (bsb^{-1})_{\iota\iota} = \sum_{\gamma=1}^{\iota} (bs)_{\iota\gamma} (b^{-1})_{\gamma\iota} 
\]   
since $(bs)_{\iota\gamma} (b^{-1})_{\gamma\iota}=0$ for each $\gamma >\iota$.  
Now considering the product of matrices 
$L^{\iota}s$, 
$(L^{\iota}s)_{\gamma\gamma}=0$
for all $\gamma > \iota$ by Corollary~\ref{corollary:projectionmatrices-with-s}. 
We will thus show 
\[ (bs)_{\iota\gamma} (b^{-1})_{\gamma\iota} =
 \left( 
 L^{\iota}s 
\right)_{\gamma\gamma}  
 \] for each $\gamma$, which implies 
 \[ (bsb^{-1})_{\iota\iota} = \sum_{\gamma=1}^{\iota} 
 \underbrace{(bs)_{\iota\gamma} (b^{-1})_{\gamma\iota}}_{
 (L^{\iota}s)_{\gamma\gamma}  
 } = s_{\iota\iota}'. 
\]  
\end{strategy} 

We now prove Proposition~\ref{proposition:deriving-diag-coordinates-of-bsb-inverse}, thus deriving Trace~\ref{trace:map-from-B-rss-to-complex-space}.  

\begin{proof}
Fix $\iota$ and let $\gamma \leq \iota$. We will adopt   Notation~\ref{notation:coordinatefunction-of-product-of-rs} near the end of the proof. 
Since 
\[  
\begin{aligned} 
(bs)_{\iota\gamma} &= \sum_{\mu=\iota}^n b_{\iota\mu}s_{\mu\gamma} \\ 
&= b_{\iota\iota}s_{\iota\gamma}+ \sum_{\mu=\iota+1}^n b_{\iota\iota}
\left( \dfrac{ r_{\iota\mu} }{r_{\iota\iota}-r_{\mu\mu} } 
+ \sum_{v=1}^{\mu -\iota-1} \sum_{\iota < k_1 < \ldots < k_v < \mu} 
\dfrac{r_{\iota k_1}r_{k_v \mu} }{(r_{\iota\iota}-r_{k_1 k_1})(r_{\iota\iota}-r_{\mu\mu}) } \prod_{u=1}^{v-1} 
\dfrac{r_{k_u k_{u+1}}}{r_{\iota\iota}-r_{k_{u+1}k_{u+1}} } 
\right) s_{\mu\gamma},  \\ 
\end{aligned} 
\]  we have 
\[
\begin{aligned} 
&(bsb^{-1})_{\iota\iota} = \sum_{\gamma=1}^{\iota} (bs)_{\iota\gamma} 
(b^{-1})_{\gamma\iota} \\ 
&= \sum_{\gamma=1}^{\iota-1} 
\left[ \left( \not{b}_{\iota\iota}s_{\iota\gamma} \right.\right. \\ 
&  \left.+ \sum_{\mu=\iota+1}^n \not{b}_{\iota\iota}  
\left(
\dfrac{r_{\iota\mu}}{r_{\iota\iota}-r_{\mu\mu}} 
+  \sum_{v=1}^{\mu-\iota-1} 
\sum_{\iota < k_1<\ldots < k_v < \mu} 
\dfrac{r_{\iota k_1}r_{k_v \mu}}{(r_{\iota\iota}-r_{k_1 k_1})(r_{\iota\iota}-r_{\mu\mu})} 
\prod_{u=1}^{v-1}\dfrac{r_{k_u k_{u+1}}}{r_{\iota\iota}-r_{k_{u+1}k_{u+1}}}  
 \right)s_{\mu\gamma}
\right)    \\ 
&\cdot\left.\left( \not{b}_{\iota\iota}^{-1} 
\left( \dfrac{r_{\gamma\iota}}{r_{\iota\iota}-r_{\gamma\gamma}}   
+\sum_{\widetilde{v}=1}^{\iota-\gamma-1} 
\sum_{\gamma < l_1 <\ldots < l_{\widetilde{v}}<\iota }
\dfrac{r_{\gamma l_1}r_{l_{\widetilde{v}}\iota} }{(r_{\iota\iota}-r_{l_{\widetilde{v}}l_{\widetilde{v}}})(r_{\iota\iota}-r_{\gamma\gamma}) } \prod_{\widetilde{u}=1}^{\widetilde{v}-1} 
\dfrac{r_{l_{\widetilde{u}}l_{\widetilde{u}+1} } }{r_{\iota\iota}-r_{l_{\widetilde{u}}l_{\widetilde{u}} }} 
\right)\right)\right] \\
&+ \underbrace{ \left( \not{b}_{\iota\iota}s_{\iota\iota}
+\hspace{-1mm} \sum_{\mu=\iota+1}^n \hspace{-1mm}\not{b}_{\iota\iota}\left( 
\dfrac{r_{\iota\mu}}{r_{\iota\iota}-r_{\mu\mu}} 
+ \sum_{v=1}^{\mu-\iota-1} \sum_{\iota < k_1 < \ldots < k_v<\mu} 
\dfrac{r_{\iota k_1}r_{k_v \mu}}{(r_{\iota\iota}-r_{k_1 k_1 })( r_{\iota\iota}-r_{\mu\mu})} \prod_{u=1}^{v-1} \dfrac{r_{k_u k_{u+1}}}{r_{\iota\iota}-r_{k_{u+1}k_{u+1}}}
\right)s_{\mu\iota}
\right)\hspace{-1mm}\not{b}_{\iota\iota}^{-1}}_{\mbox{ for }\gamma=\iota} \\
\end{aligned}
\] 
\[ 
\begin{aligned}  
&=  \sum_{\gamma=1}^{\iota-1}\left[ \dfrac{ r_{\gamma\iota} s_{\iota\gamma}  }{r_{\iota\iota}-r_{\gamma\gamma}}
+\sum_{\widetilde{v}=1}^{\iota-\gamma-1} 
\sum_{\gamma< l_1 < \ldots < l_{\widetilde{v}}<\iota}
\dfrac{r_{\gamma l_1} r_{l_{\widetilde{v}}\iota} s_{\iota \gamma} }{
(r_{\iota\iota}-r_{l_{\widetilde{v}}l_{\widetilde{v}}})(r_{\iota\iota}-r_{\gamma\gamma})} \prod_{\widetilde{u}=1}^{\widetilde{v}-1} 
\dfrac{r_{l_{\widetilde{u}}l_{\widetilde{u}+1}}}{r_{\iota\iota}-r_{l_{\widetilde{u}}l_{\widetilde{u}}} } 
\right. \\
&+ \sum_{\mu=\iota+1}^n \dfrac{r_{\iota\mu}s_{\mu\gamma}}{r_{\iota\iota}-r_{\mu\mu}} \left( \dfrac{r_{\gamma\iota}}{r_{\iota\iota}-r_{\gamma\gamma}} 
+ \sum_{\widetilde{v}=1}^{\iota-\gamma-1} \sum_{\gamma < l_1 < \ldots < l_{\widetilde{v}}<\iota} \dfrac{r_{\gamma l_1}r_{l_{\widetilde{v}} \iota} }{(r_{\iota\iota}-r_{l_{\widetilde{v}}l_{\widetilde{v}} })(r_{\iota\iota}-r_{\gamma\gamma})} \prod_{\widetilde{u}=1}^{\widetilde{v}-1} \dfrac{r_{l_{\widetilde{u}} l_{\widetilde{u}+1} }}{r_{\iota\iota}-r_{l_{\widetilde{u}}l_{\widetilde{u}} }}  
\right) 
\\ 
&+ \sum_{\mu=\iota+1}^n \sum_{v=1}^{\mu-\iota-1} 
\sum_{\iota < k_1 < \ldots < k_v < \mu} 
\left[ 
\dfrac{r_{\iota k_1}r_{k_v \mu}s_{\mu\gamma} }{
(r_{\iota\iota}-r_{k_1 k_1})(r_{\iota\iota}-r_{\mu\mu})} 
\prod_{u=1}^{v-1} \dfrac{r_{k_u k_{u+1}}}{r_{\iota\iota}-r_{k_{u+1}k_{u+1}}} 
\right. \\
&\cdot \left. \left.\left( \dfrac{r_{\gamma\iota}}{r_{\iota\iota}-r_{\gamma\gamma}}
+ \sum_{\widetilde{v}=1}^{\iota-\gamma-1} \hspace{-2mm}
\sum_{\gamma < l_1<\ldots < l_{\widetilde{v}}<\iota } \hspace{-2mm} 
\dfrac{r_{\gamma l_1}r_{l_{\widetilde{v}}\iota} }{(r_{\iota\iota}-
r_{l_{\widetilde{v}}l_{\widetilde{v}}})(r_{\iota\iota}-r_{\gamma\gamma}) } \prod_{\widetilde{u}=1}^{\widetilde{v}-1} 
\dfrac{r_{l_{\widetilde{u}}l_{\widetilde{u}+1}} }{r_{\iota\iota}-r_{l_{\widetilde{u}}l_{\widetilde{u}}} }  
\right) \right]\right]\\
&+ \underbrace{ \left( s_{\iota\iota}
+ \sum_{\mu=\iota+1}^n \left( 
\dfrac{r_{\iota\mu}}{r_{\iota\iota}-r_{\mu\mu}} 
+ \sum_{v=1}^{\mu-\iota-1} \sum_{\iota < k_1 < \ldots < k_v<\mu} 
\dfrac{r_{\iota k_1}r_{k_v \mu}}{(r_{\iota\iota}-r_{k_1 k_1 })( r_{\iota\iota}-r_{\mu\mu})} \prod_{u=1}^{v-1} \dfrac{r_{k_u k_{u+1}}}{r_{\iota\iota}-r_{k_{u+1}k_{u+1}}}
\right)s_{\mu\iota}
\right) }_{\mbox{ for }\gamma=\iota} \\
\end{aligned}
\] 
%
\[  
\begin{aligned} 
&=  \sum_{\gamma=1}^{\iota-1}\left[ 
  \left( 
   \dfrac{ r_{\gamma\iota} s_{\iota\gamma}  }{r_{\iota\iota}-r_{\gamma\gamma}}
+\sum_{\widetilde{v}=1}^{\iota-\gamma-1} 
\sum_{\gamma< l_1 < \ldots < l_{\widetilde{v}}<\iota}
\dfrac{r_{\gamma l_1} r_{l_{\widetilde{v}}\iota} s_{\iota \gamma} }{
(r_{\iota\iota}-r_{l_{\widetilde{v}}l_{\widetilde{v}}})(r_{\iota\iota}-r_{\gamma\gamma})} \prod_{\widetilde{u}=1}^{\widetilde{v}-1} 
\dfrac{r_{l_{\widetilde{u}}l_{\widetilde{u}+1}}}{r_{\iota\iota}-r_{l_{\widetilde{u}}l_{\widetilde{u}}} } 
\right) \right. \\
&+ \sum_{\mu=\iota+1}^n \dfrac{r_{\gamma\iota} r_{\iota\mu}s_{\mu\gamma}}{ (r_{\iota\iota}-r_{\gamma\gamma})( r_{\iota\iota}-r_{\mu\mu})}
+ \sum_{\mu=\iota+1}^n   \sum_{\widetilde{v}=1}^{\iota-\gamma-1} \sum_{\gamma < l_1 < \ldots < l_{\widetilde{v}}<\iota} 
 \dfrac{r_{\gamma l_1}r_{l_{\widetilde{v}} \iota} r_{\iota\mu} s_{\mu\gamma} }{(r_{\iota\iota}-r_{l_{\widetilde{v}}l_{\widetilde{v}} })(r_{\iota\iota}-r_{\gamma\gamma})(r_{\iota\iota}-r_{\mu\mu})} \prod_{\widetilde{u}=1}^{\widetilde{v}-1} \dfrac{r_{l_{\widetilde{u}} l_{\widetilde{u}+1} }}{r_{\iota\iota}-r_{l_{\widetilde{u}}l_{\widetilde{u}} }}  
\\ 
&+ \sum_{\mu=\iota+1}^n \sum_{v=1}^{\mu-\iota-1} 
\sum_{\iota < k_1 < \ldots < k_v < \mu} 
 \dfrac{r_{\gamma\iota} r_{\iota k_1}r_{k_v \mu}s_{\mu\gamma} }{
(r_{\iota\iota}-r_{\gamma\gamma}) (r_{\iota\iota}-r_{k_1 k_1})(r_{\iota\iota}-r_{\mu\mu})} 
\prod_{u=1}^{v-1} \dfrac{r_{k_u k_{u+1}}}{r_{\iota\iota}-r_{k_{u+1}k_{u+1}}} 
 +  \sum_{\mu=\iota+1}^n 
 \sum_{v=1}^{\mu-\iota-1} 
 \sum_{\iota < k_1 < \ldots < k_v < \mu} 
 \\
&\sum_{\widetilde{v}=1}^{\iota-\gamma-1} 
\sum_{\gamma < l_1<\ldots < l_{\widetilde{v}}<\iota } 
\dfrac{r_{\gamma l_1}r_{l_{\widetilde{v}}\iota} r_{\iota k_1} r_{k_v \mu}s_{\mu\gamma}
}{(r_{\iota\iota}-
r_{l_{\widetilde{v}}l_{\widetilde{v}}})(r_{\iota\iota}-r_{\gamma\gamma})
(r_{\iota\iota}-r_{k_1 k_1})(r_{\iota\iota}-r_{\mu\mu})}  
\left. 
 \prod_{u=1}^{v-1} \dfrac{r_{k_u k_{u+1}} }{r_{\iota\iota}-r_{k_{u+1}k_{u+1}}} 
 \prod_{\widetilde{u}=1}^{\widetilde{v}-1} 
\dfrac{r_{l_{\widetilde{u}}l_{\widetilde{u}+1}} }{r_{\iota\iota}-r_{l_{\widetilde{u}}l_{\widetilde{u}}} }  
 \right]\\
&+ \underbrace{ \left( s_{\iota\iota}
+ \sum_{\mu=\iota+1}^n \left( 
\dfrac{r_{\iota\mu}}{r_{\iota\iota}-r_{\mu\mu}} 
+ \sum_{v=1}^{\mu-\iota-1} \sum_{\iota < k_1 < \ldots < k_v<\mu} 
\dfrac{r_{\iota k_1}r_{k_v \mu}}{(r_{\iota\iota}-r_{k_1 k_1 })( r_{\iota\iota}-r_{\mu\mu})} \prod_{u=1}^{v-1} \dfrac{r_{k_u k_{u+1}}}{r_{\iota\iota}-r_{k_{u+1}k_{u+1}}}
\right)s_{\mu\iota}
\right) }_{\mbox{ for }\gamma=\iota} \\ 
\end{aligned}
\] 
\[ 
\begin{aligned}  
&=  \sum_{\gamma=1}^{\iota-1}\left[ 
\overbrace{  \left( 
 \dfrac{ r_{\gamma\iota} }{r_{\iota\iota}-r_{\gamma\gamma}}
+\sum_{\widetilde{v}=1}^{\iota-\gamma-1} 
\sum_{\gamma< l_1 < \ldots < l_{\widetilde{v}}<\iota}
\dfrac{r_{\gamma l_1} r_{l_{\widetilde{v}}\iota} }{
(r_{\iota\iota}-r_{l_{\widetilde{v}}l_{\widetilde{v}}})(r_{\iota\iota}-r_{\gamma\gamma})} \prod_{\widetilde{u}=1}^{\widetilde{v}-1} 
\dfrac{r_{l_{\widetilde{u}}l_{\widetilde{u}+1}}}{r_{\iota\iota}-r_{l_{\widetilde{u}}l_{\widetilde{u}}} } \right) }^{L_{\gamma\iota}^{\iota} }
 s_{\iota\gamma} \right. \\
&\;\;\;\;\;\;\;\;\;\;\;+ \sum_{\mu=\iota+1}^n \left( 
\dfrac{r_{\gamma\iota} r_{\iota\mu} }{ (r_{\iota\iota}-r_{\gamma\gamma})( r_{\iota\iota}-r_{\mu\mu})}
+    \sum_{\widetilde{v}=1}^{\iota-\gamma-1} \sum_{\gamma < l_1 < \ldots <  l_{\widetilde{v}}<\iota} 
 \dfrac{r_{\gamma l_1}r_{l_{\widetilde{v}} \iota} r_{\iota\mu}  }{(r_{\iota\iota}-r_{l_{\widetilde{v}}l_{\widetilde{v}} })(r_{\iota\iota}-r_{\gamma\gamma})(r_{\iota\iota}-r_{\mu\mu})} \prod_{\widetilde{u}=1}^{\widetilde{v}-1} \dfrac{r_{l_{\widetilde{u}} l_{\widetilde{u}+1} }}{r_{\iota\iota}-r_{l_{\widetilde{u}}l_{\widetilde{u}} }}  \right.
\\ 
&\;\;\;\;\;\;\;\;\;\;\;\;\;\;\;\;\;\;\;\;\;\;\;\;\;\;\;\;\;+
  \sum_{v=1}^{\mu-\iota-1} 
\sum_{\iota < k_1 < \ldots < k_v < \mu} 
 \dfrac{r_{\gamma\iota} r_{\iota k_1}r_{k_v \mu}  }{
(r_{\iota\iota}-r_{\gamma\gamma}) (r_{\iota\iota}-r_{k_1 k_1})(r_{\iota\iota}-r_{\mu\mu})} 
\prod_{u=1}^{v-1} \dfrac{r_{k_u k_{u+1}}}{r_{\iota\iota}-r_{k_{u+1}k_{u+1}}} 
 \\
&\;\;\;\;\;\;\;\;\;\;\;\;\;\;\;\;\;\;\;\;\;\;\;\;\;\;\;\;\;+ 
  \sum_{v=1}^{\mu-\iota-1} 
\sum_{\iota < k_1 < \ldots < k_v < \mu} 
 \sum_{\widetilde{v}=1}^{\iota-\gamma-1} 
\sum_{\gamma < l_1<\ldots < l_{\widetilde{v}}<\iota } 
\dfrac{r_{\gamma l_1}r_{l_{\widetilde{v}}\iota} r_{\iota k_1} r_{k_v \mu} 
}{(r_{\iota\iota}-
r_{l_{\widetilde{v}}l_{\widetilde{v}}})(r_{\iota\iota}-r_{\gamma\gamma})
(r_{\iota\iota}-r_{k_1 k_1})(r_{\iota\iota}-r_{\mu\mu})} \cdot  \\
\end{aligned}
\] 
\[ 
\begin{aligned}
&\;\;\;\;\;\;\;\;\;\;\;\;\;\;\;\;\;\;\;\;\;\;\;\;\;\;\;\;\;\left.
\underbrace{ \left.
\cdot 
\prod_{u=1}^{v-1} \dfrac{r_{k_u k_{u+1}} }{r_{\iota\iota}-r_{k_{u+1}k_{u+1} } } 
\prod_{\widetilde{u}=1}^{\widetilde{v}-1} 
\dfrac{r_{l_{\widetilde{u}}l_{\widetilde{u}+1}} }{r_{\iota\iota}-r_{l_{\widetilde{u}}l_{\widetilde{u}}} } \right) }_{ 
L_{\gamma\mu}^{\iota}, \mbox{ which includes all terms for }\iota+1 \leq \mu\leq n} 
   s_{\mu\gamma} 
 \right] 
+ \underbrace{ \left( L_{\iota\iota}^{\iota} s_{\iota\iota}
+ 
\sum_{\mu=\iota+1}^n L_{\iota\mu}^{\iota}  s_{\mu\iota}
\right) }_{\mbox{ for }\gamma=\iota} \\ 
&= \sum_{\gamma=1}^{\iota-1}\underbrace{\left(
 \sum_{\mu=\iota}^n L_{\gamma\mu}^{\iota} s_{\mu\gamma}  \right) }_{ (L^{\iota}s)_{\gamma\gamma} }  
+ \underbrace{\left(  \sum_{\mu=\iota}^{n} L_{\iota\mu}^{\iota}s_{\mu\iota} \right)   }_{  (L^{\iota}s)_{\iota\iota}  } 
= \tr(L^{\iota}s). 
\end{aligned} 
\] 
\end{proof}

\begin{remark}\label{remark:deriving-diag-coordinates-of-bsb-inverse-opposite-order} 
Suppose we multiply $bsb^{-1}$ from right to left than from left to right as in Strategy~\ref{strategy:deriving-diag-coordinates-of-bsb-inverse}. That is,  
for $\gamma\geq \iota$, 
\[ (sb^{-1})_{\gamma\iota} = \sum_{\mu=1}^{\iota} s_{\gamma\mu}(b^{-1})_{\mu\iota}
\] with 
\[ (bsb^{-1})_{\iota\iota} = \sum_{\gamma=\iota}^{n} b_{\iota\gamma}
(sb^{-1})_{\gamma\iota}, 
\] 
   then we have 
\[ b_{\iota\gamma}
(sb^{-1})_{\gamma\iota} 
= 
\left( s L^{\iota} 
\right)_{\gamma\gamma}, 
\] 
and similar as before, for $\gamma < \iota$, 
\[ b_{\iota\gamma}
(sb^{-1})_{\gamma\iota} =0 
= 
\left( sL^{\iota}
 \right)_{\gamma\gamma}.  
\] 
\end{remark}

\begin{remark}
Proposition~\ref{proposition:distinct-ev-diagonalizable} may also be thought of as obtaining 1-dimensional eigenspaces for each eigenvalue of $r$. First writing $r$ as $(r_{\iota\gamma})$, 
find a nonzero vector $v_k=(v_1^{(k)},\ldots, v_n^{(k)})$ in the null space $N(r-r_{kk}I)$ of $r-r_{kk}I$ starting with $k=1$ and iterate the process for each $k> 1$, i.e.,   for each fixed $k$, 
\[(r-r_{kk}I)v_k = \left[ 
\begin{array}{ccccc}
r_{11}-r_{kk} & & & \ldots & \\ 
 & r_{22}-r_{kk} & & & r_{\iota\gamma }\\
 & & \ddots & &  \vdots \\ 
 & & & & r_{nn}-r_{kk} \\  
\end{array}
\right] v_k = 0. 
\] We obtain $v_{k+1}^{(k)},\ldots, v_n^{(k)}=0$ from the above constraints. Apply backward substitution to the rest of the relations 
\[ (r_{\iota\iota}-r_{kk})v_{\iota}^{(k)} + r_{\iota,\iota +1}v_{\iota+1}^{(k)} + \ldots + r_{\iota k}v_k^{(k)}=0 
\] for $1 \leq \iota < k$ 
and obtain 
$v_k$ to be 
\[ \left( *,\ldots, *, -\frac{r_{k-1,k}}{r_{k-1,k-1}-r_{kk}}, 1,0,\ldots, 0\right),  
\] 
where $*$'s are uniquely determined by the relations. 
By putting the eigenvectors $v_1,\ldots, v_n$ in a matrix form 
$E= [v_1 \ldots v_n]$  
with respect to the standard basis for  $\mathbb{C}^n$, 
 we see that $E$ is in the Borel and $rE = E\diag(r)$, or 
equivalently, $E^{-1}rE$ $=$ $\diag(r)$. Thus the matrix $b$ in the proof for Proposition~\ref{proposition:distinct-ev-diagonalizable} is precisely $E^{-1}$ after choosing each $b_{\iota\iota}^{-1}$ to equal $v_{\iota}^{(\iota)}$.   
\end{remark}

\section{\texorpdfstring{$B$}{B}-orbits on the \texorpdfstring{$rss$}{rss}-locus}

We now analyze the affine quotient of $\mu^{-1}(0)^{rss}$ by $B$. 

\begin{proposition}\label{proposition:s-in-HR-is-diagonalizable}
For $(r,s,0,0)$ in $\mu^{-1}(0)^{rss}$, $s$ is diagonal.  
\end{proposition}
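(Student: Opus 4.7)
The strategy is to reduce to the case where $r$ is diagonal, then read off the moment map condition componentwise. Since $(r,s,0,0) \in \mu^{-1}(0)^{rss}$ means $r$ has pairwise distinct eigenvalues and $[r,s] + \overline{ij} = 0$ in $\mathfrak{b}^*$ with $i=j=0$, the defining condition collapses to $[r,s] = 0$ in $\mathfrak{b}^* \cong \mathfrak{gl}_n/\mathfrak{n}^+$, i.e.\ the commutator $[r,s]$ is strictly upper triangular as an element of $\mathfrak{gl}_n$.

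First, by Proposition~\ref{proposition:distinct-ev-diagonalizable}, since $r \in \mathfrak{b}^{rss}$, there exists $b \in B$ such that $brb^{-1} = \diag(r)$. The $B$-action on $\mu^{-1}(0)^{rss}$ sends $(r,s,0,0)$ to $(\diag(r), bsb^{-1}, 0, 0)$, which again lies in $\mu^{-1}(0)^{rss}$ because $\mu$ is $B$-equivariant and the zero vectors remain zero. Thus, to prove the proposition, it suffices to show that any representative $s \in \mathfrak{b}^*$ for which $[\diag(r), s] = 0$ in $\mathfrak{b}^*$ admits a diagonal representative (in the sense of Remark~\ref{remark:diagonalize-borel-and-its-dual}, that its class modulo $\mathfrak{n}^+$ is represented by a diagonal matrix).

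Second, choose any lift of $s$ to $\mathfrak{gl}_n$ (concretely, a lower triangular representative works best, though any lift suffices). A direct computation gives
\[
[\diag(r), s]_{\iota\gamma} = (r_{\iota\iota} - r_{\gamma\gamma})\, s_{\iota\gamma}.
\]
The condition that $[\diag(r), s]$ lies in $\mathfrak{n}^+$ says this entry vanishes for all pairs $(\iota,\gamma)$ with $\iota \geq \gamma$. For $\iota > \gamma$, the $rss$ hypothesis gives $r_{\iota\iota} \neq r_{\gamma\gamma}$, so necessarily $s_{\iota\gamma} = 0$. The diagonal case $\iota = \gamma$ is automatic. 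Hence $s$ has no nonzero strictly lower triangular entries, and since its class in $\mathfrak{b}^* = \mathfrak{gl}_n/\mathfrak{n}^+$ ignores the strictly upper triangular part, $s$ is represented by a diagonal matrix, which is the desired conclusion.

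There is no real obstacle here: the proposition is essentially a one-line consequence of the explicit diagonalization in Proposition~\ref{proposition:distinct-ev-diagonalizable} combined with the standard fact that commutation with a regular semisimple diagonal matrix forces the off-diagonal entries to vanish. The only point that requires a moment's care is keeping track of the quotient $\mathfrak{b}^* = \mathfrak{gl}_n/\mathfrak{n}^+$: the moment map condition only controls entries on or below the diagonal, which is precisely where the argument needs it.
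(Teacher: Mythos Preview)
Your proof is correct but proceeds differently from the paper. The paper argues by strong induction on $n$: it computes $[r,s]$ directly with $r$ still a general upper triangular matrix, writes the $n\times n$ case as a block extension of the $(n-1)\times(n-1)$ case, and shows that the bottom row of $s$ vanishes entry by entry from the lower-left corner, invoking the distinctness of eigenvalues at each step. You instead first conjugate $r$ to diagonal form via Proposition~\ref{proposition:distinct-ev-diagonalizable}, after which the commutator computation $[\diag(r),s]_{\iota\gamma}=(r_{\iota\iota}-r_{\gamma\gamma})s_{\iota\gamma}$ is a single line. Your route is shorter and more conceptual; the paper's route is self-contained and does not invoke the explicit diagonalizing $b$.

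One point worth making explicit: you assert ``it suffices to show'' that $bsb^{-1}$ is diagonal, but the proposition is about the original $s$. The missing sentence is that once $bsb^{-1}$ is diagonal in $\mathfrak{b}^*$, Remark~\ref{remark:diagonalize-borel-and-its-dual} gives $s=b^{-1}(bsb^{-1})b=bsb^{-1}$ in $\mathfrak{b}^*$, so $s$ itself is diagonal. You cite the remark, so the idea is there, but the sentence that closes the loop should be stated rather than left to the reader.
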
 

We prove Proposition~\ref{proposition:s-in-HR-is-diagonalizable} using strong induction. 

\begin{proof} 
Let $r=(r_{\iota\gamma})$ and $s=(s_{\iota\gamma })$. 
Let $n=2$. Then 
\begin{align*} 
[r,s] &= \left[ 
\begin{array}{cc} 
r_{11} & r_{12} \\ 
 0 & r_{22} \\ 
\end{array}
\right] 
\left[ 
\begin{array}{cc} 
s_{11} & 0 \\ 
 s_{21} & s_{22} \\ 
\end{array}
\right] 
-
\left[ 
\begin{array}{cc} 
s_{11} & 0 \\ 
 s_{21} & s_{22} \\ 
\end{array}
\right] 
\left[ 
\begin{array}{cc} 
r_{11} & r_{12} \\ 
 0 & r_{22} \\ 
\end{array}
\right]  \\
&= \left[ 
\begin{array}{cc} 
r_{11}s_{11}+r_{12}s_{21} & * \\ 
 r_{22}s_{21} & r_{22}s_{22} \\ 
\end{array}
\right] 
-\left[ 
\begin{array}{cc} 
r_{11}s_{11} & * \\ 
 r_{11}s_{21} & r_{22}s_{22}+s_{21}r_{12} \\ 
\end{array}
\right]  \\
&= \left[ 
\begin{array}{cc} 
r_{12}s_{21} & * \\ 
 (r_{22}-r_{11})s_{21} & -s_{21}r_{12} \\ 
\end{array}
\right].  
\end{align*}
Since $r\in\mathfrak{b}^{rss}$, $s_{21}=0$ and since $s=\diag(s)$, we are done.  

Now assuming that the proposition holds when 
the rank of $r$ is $n-1$, 
consider $r'$ and $s'$ where 
\[ r' = 
\left[ \begin{array}{cc}
r & r_{\iota n} \\ 
0 & r_{nn} \\ 
\end{array}
\right] 
\mbox{ and }
s' = 
\left[ \begin{array}{cc}
s & 0    \\ 
s_{n\gamma} & s_{nn} \\ 
\end{array}
\right],  
\] which are $n\times n$ matrices whose upper left block are matrices $r$ and $s$, respectively, 
with $(r',s',0,0)$ in $\mu^{-1}(0)^{rss}$. 
Then $[r',s']=0$ in $\mathfrak{b}^*$, and $r_{nn}$ distinct from $r_{ll}$ for all $l<n$.   
So 
\[
([r',s'])_{\iota\gamma} 
=
 \left\{ 
\begin{aligned} 
  0   \;\;\;\;\;\;\;\;\;\;\;\;\;\;\;\;\;\;\;\;\;\;\;\;\;\;\;\;     & \mbox{ if $\iota< \gamma$} \\ 
(r_{\iota\iota}-r_{\gamma\gamma}) s_{\iota\gamma} +\sum_{k>\iota} r_{\iota k}s_{k\gamma }-\sum_{k<\gamma}s_{\iota k}r_{k\gamma}  \:\:\: &  \mbox{ if $\iota\geq \gamma$}.  
\end{aligned} 
\right. 
\] 
We rewrite $[r',s']$ as the sum 
\[ 
\begin{aligned}
&\left[ 
\begin{array}{cc}
[r,s] & 0 \\ 
0 & 0 \\  
\end{array}  
\right]           \\ 
&+ 
\left[ 
\begin{array}{ccccc}
r_{1n}s_{n1} &   & &  & \\ 
r_{2n}s_{n1} & r_{2n}s_{n2} & & &   \\ 
r_{3n}s_{n1} & r_{3n}s_{n2} & r_{3n}s_{n3} & &  \\ 
 & \vdots  & &   &   \\
 r_{\iota n}s_{n1} & r_{\iota n}s_{n2} & r_{\iota n}s_{n3} &  \ddots  & \\ 
 & \vdots & & &   \\  
(r_{nn}-r_{11})s_{n1} & (r_{nn}-r_{22})s_{n2} -s_{n1}r_{12} & (r_{nn}-r_{33})s_{n3} 
-\sum_{\iota =1}^{2} s_{n\iota}r_{\iota 3} 
& & -\sum_{k<n}s_{nk}r_{kn} \\ 
\end{array}
\right].     \\ 
\end{aligned}
\] 
Since $[r,s]=0$,  
the large matrix on the right hand side of the sum equals zero. 
Now consider the entry in the lower left corner of the large matrix. 
 Since the eigenvalues of $r'$ are pairwise distinct, $s_{n1}=0$. 
 Moving 
 over one column to the right, and remaining in the last row, we see that $s_{n2}=0$ as well. 
Continue recursively until we get to the last column in the large matrix:  
each term in the sum in the bottom right entry equals zero since 
each $s_{nk}=0$ for all $k<n$. 
By the strong induction hypothesis, $s=\diag(s)$ and by the above argument, $s'=\diag(s')$ and this completes the proof. 
\end{proof}

\begin{corollary}\label{corollary:i-or-j-equals-zero-then-s-diagonal}
If $(r,s,i,j)$ is in $\mu^{-1}(0)^{rss}$ with $i$ or $j$ equaling zero, then $s$ is diagonal. 
\end{corollary}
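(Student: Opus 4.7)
The plan is to reduce this corollary directly to Proposition~\ref{proposition:s-in-HR-is-diagonalizable}. The moment map is $\mu(r,s,i,j) = [r,s] + \overline{ij}$, so membership in $\mu^{-1}(0)^{rss}$ just means $[r,s] + \overline{ij} = 0$ in $\mathfrak{b}^*$ and $r$ has pairwise distinct eigenvalues. The observation to make is that the term $\overline{ij}$ (coming from the outer product of a column vector $i$ and a row vector $j$) vanishes as soon as either $i$ or $j$ is the zero vector: if $i=0$, every entry of the matrix $ij$ is $0\cdot j_\alpha = 0$, and analogously for $j=0$.

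First I would verify the vanishing of $\overline{ij}$ as above, so that the moment map equation collapses to $[r,s] = 0$ in $\mathfrak{b}^*$. Next I would note that this is precisely the condition that $(r,s,0,0)$ lies in $\mu^{-1}(0)^{rss}$: the regular semisimplicity of $r$ is a condition on $r$ alone (Notation~\ref{notation:distinct-r-eigenvalues}), so it is unaffected when we replace $(i,j)$ by $(0,0)$. Finally, I would invoke Proposition~\ref{proposition:s-in-HR-is-diagonalizable} applied to the quadruple $(r,s,0,0)$ to conclude that $s$ is diagonal.

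There is no real obstacle here: the corollary is essentially the observation that the additional data $(i,j)$ only contributes to the moment-map equation through the rank-one matrix $ij$, which is killed the moment either factor vanishes. Thus the hypothesis $i=0$ or $j=0$ puts us back into the situation of the previous proposition verbatim, and no new computation is required beyond remarking on the vanishing of the outer product.
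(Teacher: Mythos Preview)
Your proposal is correct and matches the paper's approach exactly: the paper's proof is the single sentence ``This is immediate from Proposition~\ref{proposition:s-in-HR-is-diagonalizable},'' and your write-up is precisely the natural unpacking of that sentence.
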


\begin{proof}
This is immediate from Proposition~\ref{proposition:s-in-HR-is-diagonalizable}.
\end{proof}

\begin{lemma}\label{lemma:r-s-0-0-s-is-diagonal}
Let $(r,s,0,0)$ be in $\mu^{-1}(0)^{rss}$. Then there exists $b\in B$ such that $(brb^{-1},bsb^{-1},0,0)$
 $=$ $(\diag(r),\diag(s),0,0)$. 
\end{lemma}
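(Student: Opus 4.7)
The plan is to assemble the conclusion by combining three results already proved in the excerpt. The key observation is that each ingredient required has already been developed: diagonality of $s$ in the zero $i,j$ setting, the explicit diagonalizer of $r$, and the fact that conjugating a diagonal element of $\mathfrak{b}^*$ keeps it diagonal.

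First, I would invoke Proposition~\ref{proposition:s-in-HR-is-diagonalizable} applied to the quadruple $(r,s,0,0)\in \mu^{-1}(0)^{rss}$ to conclude that $s$ is already diagonal, i.e.\ $s=\diag(s)$ in $\mathfrak{b}^*$. This reduces the problem to finding a single $b\in B$ that simultaneously diagonalizes $r$ and leaves $\diag(s)$ unchanged under conjugation in $\mathfrak{b}^*$.

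Next, since $r\in \mathfrak{b}^{rss}$, I would appeal to Proposition~\ref{proposition:distinct-ev-diagonalizable} to produce an explicit $b\in B$ (namely the one with coordinates $b_{\iota\gamma}$ given there, or equivalently $b=\sum_{\iota=1}^n E_{\iota\iota}\,\diag(b)\,L^{\iota}$) such that $brb^{-1}=\diag(r)$. Then for this same $b$, since $s=\diag(s)$, Lemma~\ref{lemma:s-diagonal-conjugation-still-diagonal} (or equivalently Remark~\ref{remark:diagonalize-borel-and-its-dual}, using that the strictly upper-triangular part vanishes modulo $\mathfrak{n}^+$ in $\mathfrak{b}^*=\mathfrak{gl}_n/\mathfrak{n}^+$) yields $bsb^{-1}=\diag(s)$. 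Finally, since $i=j=0$, the action of $b$ on the $\mathbb{C}^n\times (\mathbb{C}^n)^*$ factor is trivial, so $(bi,jb^{-1})=(0,0)$. Assembling these equalities gives $(brb^{-1},bsb^{-1},bi,jb^{-1})=(\diag(r),\diag(s),0,0)$, as required.

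There is essentially no obstacle here: the lemma is a clean corollary of the three prior results and the hard work (showing $s$ must be diagonal on the $rss$ locus and constructing an explicit diagonalizer $b$ in $B$) has already been done. The only thing one needs to be mildly careful about is that the conjugation of $s$ takes place in $\mathfrak{b}^*\cong \mathfrak{gl}_n/\mathfrak{n}^+$, so one interprets the equality $bsb^{-1}=\diag(s)$ modulo the strictly upper-triangular part, which is precisely the content of Lemma~\ref{lemma:s-diagonal-conjugation-still-diagonal} and Remark~\ref{remark:diagonalize-borel-and-its-dual}.
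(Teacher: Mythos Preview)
Your proof is correct and follows essentially the same approach as the paper: invoke Proposition~\ref{proposition:s-in-HR-is-diagonalizable} to get $s$ diagonal, Proposition~\ref{proposition:distinct-ev-diagonalizable} to diagonalize $r$ via some $b\in B$, and Remark~\ref{remark:diagonalize-borel-and-its-dual}/Lemma~\ref{lemma:s-diagonal-conjugation-still-diagonal} to see that $bsb^{-1}=\diag(s)$ in $\mathfrak{b}^*$. The paper additionally cites Proposition~\ref{proposition:diagonal-being-invariant} to emphasize that the diagonal entries of $r$ and $s$ are unchanged, but your observation that Proposition~\ref{proposition:distinct-ev-diagonalizable} already yields $brb^{-1}=\diag(r)$ subsumes this.
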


We note the subtlety in Lemma~\ref{lemma:r-s-0-0-s-is-diagonal} that the $B$-action on the points in $B.(r,s,0,0)$ does not change the diagonal coordinates of $r$ and $s$. 

\begin{proof}
By Proposition~\ref{proposition:s-in-HR-is-diagonalizable}, $s$ is diagonal. By 
Proposition~\ref{proposition:distinct-ev-diagonalizable}, there exists a matrix $b$ in the Borel so that $brb^{-1}$ is diagonal. By the second statement in  Remark~\ref{remark:diagonalize-borel-and-its-dual}, we see that $bsb^{-1}$ in $\mathfrak{b}^*$ is always  diagonal. By Proposition~\ref{proposition:diagonal-being-invariant} and Lemma~\ref{lemma:s-diagonal-conjugation-still-diagonal}, 
we see that the diagonal coordinates of $r$ and $s$ are not affected by the $B$-action.  
\end{proof}

\begin{proposition}\label{proposition:i-j-zero-closed-orbit}
Each $B$-orbit containing the quadruple $(r,s,0,0)$, where $r$ is regular semisimple and the commutator of $r$ and $s$ is zero, is closed.
\end{proposition}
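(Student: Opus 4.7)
The plan is to reduce to a canonical representative and then compute the orbit explicitly. First I would invoke Lemma~\ref{lemma:r-s-0-0-s-is-diagonal} to replace $(r,s,0,0)$ by the point $x := (\diag(r), \diag(s), 0, 0)$ lying in its $B$-orbit; since $B.(r,s,0,0) = B.x$, it suffices to verify that $B.x$ is closed in the ambient vector space.

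Next I would analyze the action on each coordinate. Writing $b = tu$ with $t \in T$ and $u \in U$, the torus $T$ fixes $x$ outright. For the $\mathfrak{b}$-coordinate, Proposition~\ref{proposition:diagonal-being-invariant} gives $B.\diag(r) \subseteq \diag(r) + \mathfrak{n}^+$, while Proposition~\ref{proposition:distinct-ev-diagonalizable} supplies the reverse inclusion: any upper-triangular matrix with diagonal equal to $\diag(r)$ has distinct eigenvalues and is therefore $B$-conjugate to $\diag(r)$. Consequently $B.\diag(r) = \diag(r) + \mathfrak{n}^+$, a closed affine subspace of $\mathfrak{b}$ of dimension $\binom{n}{2}$.

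The decisive step is to check that the coadjoint action of $B$ on $\diag(s) \in \mathfrak{b}^* = \mathfrak{gl}_n/\mathfrak{n}^+$ is trivial. Taking $\diag(s)$ as a representative in $\mathfrak{gl}_n$ and any $u = \id + N_0 \in U$ with $N_0$ strictly upper triangular, expanding
\[
u \diag(s) u^{-1} = \diag(s) + [N_0,\diag(s)] + (\text{higher order terms in } N_0)
\]
shows that every correction term is strictly upper triangular, since products of the form (strictly upper)(diagonal)(strictly upper) remain strictly upper. Conjugating further by $t \in T$ preserves this structure, so $b\,\diag(s)\,b^{-1} \equiv \diag(s) \pmod{\mathfrak{n}^+}$ for every $b \in B$; equivalently, $b.\diag(s) = \diag(s)$ in $\mathfrak{b}^*$.

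Assembling these observations, I would conclude
\[
B.x = (\diag(r) + \mathfrak{n}^+) \times \{\diag(s)\} \times \{0\} \times \{0\},
\]
which is visibly a closed subvariety of $\mathfrak{b} \times \mathfrak{b}^* \times \mathbb{C}^n \times (\mathbb{C}^n)^*$ and hence closed in $\mu^{-1}(0)^{rss}$. The only step requiring any real care is verifying the triviality of the coadjoint $U$-action on the diagonal representative $\diag(s)$; once this is in hand, the remainder is essentially bookkeeping, and it simultaneously pins down the stabilizer of $x$ as exactly $T$, so that the orbit has the expected dimension $\binom{n}{2}$ matching the affine space above.
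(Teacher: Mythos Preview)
Your proof is correct and differs from the paper's. The paper argues by degeneration: after noting (Proposition~\ref{proposition:s-in-HR-is-diagonalizable}) that $s$ is already diagonal, it exhibits a one-parameter subgroup $\lambda(t)$ with $\lim_{t\to 0}\lambda(t).(r,\diag(s),0,0) = (\diag(r),\diag(s),0,0)$ and observes that this limit already lies in the orbit by Lemma~\ref{lemma:r-s-0-0-s-is-diagonal}. Your approach is more explicit: you identify the orbit outright as the closed affine subspace $(\diag(r)+\mathfrak{n}^+)\times\{\diag(s)\}\times\{0\}\times\{0\}$, which makes closedness immediate and yields the stabilizer and orbit dimension as a byproduct. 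The paper's argument is brief but leaves implicit why this particular one-parameter limit exhausts the possible boundary points; your computation dispenses with that issue entirely. Incidentally, your verification that $B$ fixes $\diag(s)$ in $\mathfrak{b}^*$ is exactly the second statement of Remark~\ref{remark:diagonalize-borel-and-its-dual}, so you could cite it directly rather than redoing the expansion.
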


\begin{proof}
By Proposition~\ref{proposition:s-in-HR-is-diagonalizable}, $s$ must be diagonal.  
Choose an appropriate 1-parameter subgroup $\lambda(t)$ so that 
\[ \lim_{t\rightarrow 0} \lambda(t).( r,\diag(s),0,0)=(\diag(r),\diag(s),0,0).  \]
Since $(\diag(r),\diag(s),0,0)$ is in the $B$-orbit by 
Lemma~\ref{lemma:r-s-0-0-s-is-diagonal}, we are done. 
\end{proof}

\begin{corollary}\label{corollary:Dreg-affine-quotient-closed-orbits}
The affine quotient $\{(r,s,0,0):r \mbox{ regular}, [r,s]=0 \}/\!\!/B$ consists of closed orbits. 
\end{corollary}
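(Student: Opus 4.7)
The plan is to deduce this as an essentially immediate consequence of Proposition~\ref{proposition:i-j-zero-closed-orbit} together with the explicit diagonalization provided by Proposition~\ref{proposition:distinct-ev-diagonalizable} and Lemma~\ref{lemma:r-s-0-0-s-is-diagonal}. Write $Z := \{(r,s,0,0) : r \text{ regular semisimple},\ [r,s]=0\}$, a $B$-stable locally closed subvariety of $\mu^{-1}(0)^{rss}$, and consider the quotient map $\pi: Z \to Z/\!\!/B = \Spec(\mathbb{C}[Z]^B)$. The goal is to show each scheme-theoretic fiber of $\pi$ is a single closed $B$-orbit.

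First I would invoke Proposition~\ref{proposition:i-j-zero-closed-orbit} to conclude that \emph{every} $B$-orbit in $Z$ is closed in $Z$. This already forces any two distinct orbits to have disjoint closures in $Z$, so it remains only to show that $B$-invariants separate them. Here is where I would use the explicit normal form: by Lemma~\ref{lemma:r-s-0-0-s-is-diagonal}, every point $(r,s,0,0) \in Z$ lies in the $B$-orbit of the diagonal pair $(\diag(r), \diag(s), 0, 0)$, and by Proposition~\ref{proposition:diagonal-being-invariant} and Lemma~\ref{lemma:s-diagonal-conjugation-still-diagonal} the diagonal entries of $r$ and $s$ are $B$-invariant functions on $Z$. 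Consequently the map
\[
Z \longrightarrow \mathbb{C}^{2n}, \qquad (r,s,0,0) \mapsto (r_{11},\ldots, r_{nn}, s_{11},\ldots, s_{nn})
\]
is $B$-invariant and, by the normal-form statement, separates $B$-orbits: two quadruples are $B$-conjugate in $Z$ if and only if they have the same diagonal data $(\diag(r),\diag(s))$. Since these diagonal coordinate functions lie in $\mathbb{C}[Z]^B$, the image of any two distinct orbits in $Z/\!\!/B$ is distinct.

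Combining the two points, I conclude that $\pi$ induces a set-theoretic bijection between $B$-orbits in $Z$ and points of $Z/\!\!/B$, and that every such orbit is closed. In other words, $Z/\!\!/B$ parametrizes closed $B$-orbits, as claimed. The only subtlety to watch out for in writing this up is that $B$ is not reductive, so the standard GIT slogan ``points of an affine quotient correspond to closed orbits'' cannot simply be cited; it has to be established by hand as above, using the explicit closedness from Proposition~\ref{proposition:i-j-zero-closed-orbit} together with the manifestly invariant separating functions. This should be the main (modest) obstacle, and it is resolved by the normal-form lemmas already in place.
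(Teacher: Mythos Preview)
Your argument is correct, but it does considerably more than the paper's own proof, which is a single sentence: ``This follows from Proposition~\ref{proposition:i-j-zero-closed-orbit}.'' The paper reads the corollary simply as the assertion that every $B$-orbit in $\{(r,s,0,0):r\text{ regular},\ [r,s]=0\}$ is closed, which is literally the content of that proposition. You instead interpret the statement as asserting that points of the affine quotient are in bijection with closed $B$-orbits, and you supply the missing separation step by exhibiting the diagonal coordinates $(r_{11},\ldots,r_{nn},s_{11},\ldots,s_{nn})$ as $B$-invariant regular functions on $Z$ that distinguish orbits. This is a genuine strengthening: since $B$ is not reductive, the passage from ``all orbits are closed'' to ``the affine quotient parametrizes them'' is not automatic, and your appeal to the normal form of Lemma~\ref{lemma:r-s-0-0-s-is-diagonal} together with the invariance arguments is exactly what fills that gap. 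So your proof subsumes the paper's and is more careful about what the affine quotient actually sees; the paper's version is terser but leaves the separation of orbits implicit (it is established later, in Corollary~\ref{corollary:closed-orbits-are-disjoint} and the proof of Proposition~\ref{proposition:rss-extending-to-all-of-the-reg-semi-stable-locus}).
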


\begin{proof}
This follows from Proposition~\ref{proposition:i-j-zero-closed-orbit}. 
\end{proof}

\begin{corollary}\label{corollary:closed-orbits-are-disjoint}
If the points 
$(\diag(r),\diag(s),0,0)$ and $(\diag(r'),\diag(s'),0,0)$ are distinct, then the $B$-orbits 
$B.(\diag(r),\diag(s),0,0)$ and $B.(\diag(r'),\diag(s'),0,0)$ are disjoint. 
\end{corollary}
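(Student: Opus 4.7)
The plan is to reduce the statement to the two invariance results already established in the paper, namely Proposition~\ref{proposition:diagonal-being-invariant} (which shows that $\diag(brb^{-1})=\diag(r)$ for any $r\in\mathfrak{b}$ and $b\in B$) and Lemma~\ref{lemma:s-diagonal-conjugation-still-diagonal} (which shows that for a diagonal $s\in \mathfrak{b}^*$ and any $b\in B$, $bsb^{-1}=s$). The $i$ and $j$ components are identically zero, so they are automatically preserved under the $B$-action $b.(r,s,i,j)=(brb^{-1},bsb^{-1},bi,jb^{-1})$.

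First, I would argue the contrapositive: assume the two orbits are not disjoint, so there exist $b_1,b_2\in B$ with
\[
(b_1\diag(r)b_1^{-1},\, b_1\diag(s)b_1^{-1},\,0,\,0) = (b_2\diag(r')b_2^{-1},\, b_2\diag(s')b_2^{-1},\,0,\,0).
\]
Setting $b = b_2^{-1}b_1\in B$, this reduces to
\[
(b\,\diag(r)\,b^{-1},\, b\,\diag(s)\,b^{-1},\,0,\,0) = (\diag(r'),\diag(s'),0,0).
\]

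Next, apply Proposition~\ref{proposition:diagonal-being-invariant} to the first coordinate: $\diag(b\,\diag(r)\,b^{-1})=\diag(\diag(r))=\diag(r)$. But the right-hand side is $\diag(r')$, which is already diagonal, so comparing diagonals forces $\diag(r)=\diag(r')$. Independently, Lemma~\ref{lemma:s-diagonal-conjugation-still-diagonal} applied to $\diag(s)\in \mathfrak{b}^*$ gives $b\,\diag(s)\,b^{-1}=\diag(s)$ in $\mathfrak{b}^*$, so the second coordinate equation reads $\diag(s)=\diag(s')$. Therefore the two original quadruples coincide, contradicting the distinctness hypothesis.

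No step is a serious obstacle here, since everything reduces to invoking the two invariance results already proved. The only point requiring minor care is to notice that $\mathfrak{b}^*=\mathfrak{gl}_n/\mathfrak{n}^+$ so that the equality $b\,\diag(s)\,b^{-1}=\diag(s)$ is being taken in $\mathfrak{b}^*$, but since both $\diag(s)$ and $\diag(s')$ are genuine diagonal matrices (and the diagonal part of a coset representative is well-defined in $\mathfrak{b}^*$), the comparison of their diagonals is unambiguous.
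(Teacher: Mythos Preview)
Your proof is correct and follows essentially the same approach as the paper: argue the contrapositive, reduce to a single $b\in B$ carrying one diagonal quadruple to the other, and use the invariance of diagonal entries under $B$-conjugation to force $\diag(r)=\diag(r')$ and $\diag(s)=\diag(s')$. The only cosmetic difference is that for the $s$-coordinate the paper invokes Remark~\ref{remark:diagonalize-borel-and-its-dual} (which directly asserts $b\,\diag(s)\,b^{-1}=\diag(s)$ in $\mathfrak{b}^*$), whereas you cite Lemma~\ref{lemma:s-diagonal-conjugation-still-diagonal} and then explain the $\mathfrak{b}^*=\mathfrak{gl}_n/\mathfrak{n}^+$ point yourself; the content is identical.
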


\begin{proof}
Suppose the $B$-orbits are not disjoint. 
We will show that the quadruples 
$(\diag(r),\diag(s),0,0)$ are $(\diag(r'),\diag(s'),0,0)$ are the same point in $T^*(\mathfrak{b}\times\mathbb{C}^n)$. 
By assumption, 
the intersection of the $B$-orbits is nomempty, so choose $b\in B$ so that $b.(\diag(r),\diag(s),0,0)=(\diag(r'),\diag(s'),0,0)$. 
This implies  
$(b\diag(r)b^{-1}, b\diag(s)b^{-1},0,0)$ $=$ $(\diag(r')$, $\diag(s')$, $0$, $0)$.  
Setting 
the first coordinates equal, 
$b\diag(r)b^{-1}=\diag(r')$. So $b\diag(r)b^{-1}$ is diagonal. By Lemma~\ref{lemma:s-diagonal-conjugation-still-diagonal}, 
$\diag(r)$ is fixed under the $B$-action: $b\diag(r)b^{-1}=\diag(r)=\diag(r')$. 
Next, 
we see that 
$b\diag(s)b^{-1}=\diag(s)=\diag(s')$,  
where the first equality holds by   
the second sentence in Remark~\ref{remark:diagonalize-borel-and-its-dual}. 
\end{proof} 

\begin{remark}\label{remark:exhausting-all-closed-orbits}
Proposition~\ref{proposition:openlocus-affine-quotient} together with Proposition~\ref{proposition:i-j-zero-closed-orbit}
show that all closed $B$-orbits in $\mu^{-1}(0)^{rss}$ contain points of the form, and thus must be of the form, $(r,s,0,0)$.  
\end{remark}

\begin{proposition}\label{proposition:openlocus-affine-quotient}
Each orbit closure in $\mu^{-1}(0)^{rss}/\!\!/B$ contains a point of the form $(\diag(r),\diag(s'),0,0)$.  
\end{proposition}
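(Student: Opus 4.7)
The plan is to reduce to a point with diagonal $r$ using Proposition~\ref{proposition:distinct-ev-diagonalizable}, then exhibit an explicit $1$-parameter subgroup in the diagonal torus of $B$ whose limit sends the remaining data to the desired form. Given $(r,s,i,j)\in\mu^{-1}(0)^{rss}$, first apply the matrix $b\in B$ from Proposition~\ref{proposition:distinct-ev-diagonalizable} to obtain the $B$-equivalent point $(r',s',i',j') := b.(r,s,i,j)$ with $r'=\diag(r)$. Since $\mu^{-1}(0)^{rss}$ is $B$-invariant, this new point still satisfies the moment-map equation $[r',s']+\overline{i'j'}=0$ in $\mathfrak{b}^*\cong\mathfrak{gl}_n/\mathfrak{n}^+$. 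Represent $s'$ by its lower-triangular representative; then reading off the lower-triangular and diagonal entries of $[r',s']+i'j'$ gives
\[
  i'_\alpha j'_\alpha = 0 \quad\text{for all }\alpha, \qquad
  s'_{\alpha\beta} \;=\; -\,\frac{i'_\alpha\,j'_\beta}{r_{\alpha\alpha}-r_{\beta\beta}} \quad\text{for all }\alpha>\beta.
\]
In particular, if we partition $\{1,\dots,n\}=C_i\sqcup C_j\sqcup C_0$ according to whether $i'_\alpha\neq0$, $j'_\alpha\neq0$, or both vanish (these are disjoint by the first equation), then $s'_{\alpha\beta}$ can only be nonzero when $\alpha\in C_i$ and $\beta\in C_j$ (with $\alpha>\beta$).

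Next I would select integers $c_1,\dots,c_n$ with $c_\alpha>0$ for $\alpha\in C_i$ and $c_\alpha<0$ for $\alpha\in C_j$ (any choice for $\alpha\in C_0$), and form the $1$-parameter subgroup $\lambda(t)=\diag(t^{c_1},\dots,t^{c_n})\subseteq B$. Under the $B$-action, $\lambda(t).(\diag(r),s',i',j')=(\diag(r),(t^{c_\alpha-c_\beta}s'_{\alpha\beta}),\,(t^{c_\alpha}i'_\alpha),\,(t^{-c_\alpha}j'_\alpha))$. Taking $t\to 0$, the entries $i'_\alpha$ with $\alpha\in C_i$ and $j'_\alpha$ with $\alpha\in C_j$ vanish in the limit by the sign choices, while the only potentially nonzero off-diagonal entries $s'_{\alpha\beta}$ (those with $\alpha\in C_i$, $\beta\in C_j$, $\alpha>\beta$) satisfy $c_\alpha-c_\beta>0$ automatically, hence also vanish. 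The diagonal of $\diag(r)$ and of $s'$ is unchanged by the torus action (Proposition~\ref{proposition:diagonal-being-invariant} and Lemma~\ref{lemma:s-diagonal-conjugation-still-diagonal}), so
\[
  \lim_{t\to 0}\lambda(t).(\diag(r),s',i',j') \;=\; (\diag(r),\diag(s'),0,0),
\]
which lies in the orbit closure $\overline{B.(r,s,i,j)}$, completing the proof.

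The main point to check is the compatibility of the sign constraints with the off-diagonal requirement $c_\alpha>c_\beta$; this is exactly where the moment-map equation intervenes, since it forces the only obstructing entries of $s'$ to involve an index in $C_i$ paired (strictly below) with one in $C_j$, where the positive/negative dichotomy already gives the required inequality. Consequently no further reduction or Weyl-type correction is needed, and the argument is purely internal to the Borel. Together with Proposition~\ref{proposition:i-j-zero-closed-orbit} this will justify Remark~\ref{remark:exhausting-all-closed-orbits} and underlie the surjectivity statement of Proposition~\ref{proposition:rss-extending-to-all-of-the-reg-semi-stable-locus}.
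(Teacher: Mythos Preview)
Your proof is correct and follows essentially the same approach as the paper: diagonalize $r$ via Proposition~\ref{proposition:distinct-ev-diagonalizable}, read off from the moment-map equations that $i'_\alpha j'_\alpha=0$ and that the off-diagonal $s'_{\alpha\beta}$ are proportional to $i'_\alpha j'_\beta$, then apply a diagonal one-parameter subgroup with positive weights on the $i'$-support and negative weights on the $j'$-support to kill $i'$, $j'$, and the off-diagonal of $s'$ in the limit. The paper makes the specific choice $c_\alpha\in\{1,-1,0\}$, but your slightly more general sign choice works for exactly the reason you identify.
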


\begin{proof}
For $(r,s,i,j)$ be in $\mu^{-1}(0)^{rss}$,  
we will show that there exists a quadruple of the form 
$(\diag(r)$, $\diag(s')$, $0$, $0)$ in its $B$-orbit closure, where $s'$ is some other element in $\mathfrak{b}^*$. 

Firstly, consider points of the form $(r,s,0,0)$. By Lemma~\ref{lemma:r-s-0-0-s-is-diagonal}, we are done.  
Thus consider points of the form $(r,s,i,j)$ in $\mu^{-1}(0)^{rss}$
with $i$ or $j$ not necessarily 0. 
By 
Proposition~\ref{proposition:distinct-ev-diagonalizable}, there is $b\in B$ so that 
$(brb^{-1},bsb^{-1},bi,jb^{-1})=(\diag(r),s',i',j')$. 
Let us write $r= (r_{\iota\gamma})$, $s'=(s_{\iota\gamma}')$, $i'=(x_{\iota}')$, and $j'=(y_{\iota}')$.  
Since $\mu$ is $B$-equivariant, $[r,s]+ij=0$ implies $[\diag(r),s']+i'j'=0$ in $\mathfrak{b}^*$. 
Since 
\[
([\diag(r),s'] + i'j')_{\iota\gamma} = \left\{ 
\begin{aligned} 
0       \;\;\;\;\;\;\;\;\;\;\;\;\;\;\;\; & \mbox{ if $\iota< \gamma$} \\ 
x_{\iota}'y_{\iota}' \;\;\;\;\;\;\;\;\;\;\;\; \;\; &  \mbox{ if $\iota=\gamma$} \\ 
(r_{\iota\iota}-r_{\gamma\gamma})s_{\iota\gamma}' + x_{\iota}'y_{\gamma}' \;\; & \mbox{ if $\iota>\gamma$},   
\end{aligned} 
\right. 
\] 
  $x_{\iota}'=0$ or $y_{\iota}'=0$ for each $\iota$. 
At this point, we give a recipe for choosing the $a_{\iota}$'s in the 1-parameter subgroup $\lambda(t)=\diag(t^{a_1},\ldots,t^{a_n})$ where $a_{\iota}\in\mathbb{Z}$. Choose 
\[
a_{\iota}= \left\{ 
\begin{aligned} 
1   \;\;\;    & \mbox{ if $x_{\iota}' \not=0$} \\ 
-1  \;\;\; &  \mbox{ if $y_{\iota}'   \not=0$} \\ 
0   \;\;\; & \mbox{ if $x_{\iota}'=y_{\iota}'=0$}.  
\end{aligned} 
\right. 
\] 
Now consider $\lambda(t).(\diag(r),s',i',j')$, which equals 
\[\left( 
\diag(r), \left[ 
\begin{array}{cccc}
s_{11} &                    &         &  \\ 
       & \ddots             &         &  \\  
       &  t^{a_{\iota}-a_{\gamma}}s_{\iota\gamma}'& \ddots  & \\ 
       &                    &         & s_{nn}\\
\end{array}
\right], 
\left[ \begin{array}{c}
t^{a_1}x_1' \\ 
\vdots \\  
t^{a_n}x_n' \\  
\end{array}  
\right],  
\left[    
\begin{array}{ccc}
t^{-a_1} y_1' & \ldots & t^{-a_n} y_n' \\ 
\end{array}
\right] 
\right). 
 \]
If $x_{\iota}'\not=0$, then we have $t x_{\iota}'$. If $y_{\gamma}'\not=0$, then we also have $ty_{\gamma}'$. 
If $s_{\iota\gamma}'\not=0$, then since $s_{\iota\gamma}'=-x_{\iota}'y_{\gamma}'/(r_{\iota\iota}-r_{\gamma\gamma})$, we have $t^2 s_{\iota\gamma}'$. 
We conclude 
\[ \lim_{t\rightarrow 0} \lambda(t).(\diag(r),s',i',j')=(\diag(r),\diag(s'),0,0).
\] 
\end{proof}

\begin{remark}\label{remark:orbitclosure-at-most-one-limit-point} 
We would like to mention that the proof for  Proposition~\ref{proposition:rss-extending-to-all-of-the-reg-semi-stable-locus} 
includes showing that 
the closure of each $B$-orbit on $\mu^{-1}(0)^{rss}/\!\!/B$ contains at most one point of the form $(\diag(r),\diag(s'),0,0)$.   
\end{remark}

\begin{remark}\label{remark:orbitclosures-emptyintersection-implies-orbitsempty-intersection}
It is clear that if $\overline{B.P_1}\cap\overline{B.P_2}=\varnothing$ where $P_1, P_2\in T^*(\mathfrak{b}\times \mathbb{C}^n)$, then $B.P_1\cap B.P_2=\varnothing$. 
\end{remark}

\section{Proof of Proposition  \texorpdfstring{$\ref{proposition:rss-extending-to-all-of-the-reg-semi-stable-locus}$}{rss categorical quotient}}

We now prove  Proposition~\ref{proposition:rss-extending-to-all-of-the-reg-semi-stable-locus}.

\begin{proof} 
It is clear that $P$ is regular and by Proposition~\ref{proposition:diagonal-being-invariant} and \ref{proposition:deriving-diag-coordinates-of-bsb-inverse}, 
 $P$ is $B$-invariant. 
By 
  Remark~\ref{remark:exhausting-all-closed-orbits}, closed orbits are precisely those that contain a point of the form $(r,s,0,0)$ as orbits that contain a point of the form $(r,s,i,j)$, where $i$ or $j$ is nonzero, are not closed. 
Since 
 each 
   closed orbit contains a unique point of the form $(\diag(r),\diag(s),0,0)$ by Lemma~\ref{lemma:r-s-0-0-s-is-diagonal} and by Corollary~\ref{corollary:closed-orbits-are-disjoint}, it is clear that two such points  go to different points in $\mathbb{C}^{2n}\setminus \Delta_n$ via the map $P$. In particular, we see that two  $B$-orbits $B.(\diag(r),\diag(s),0,0)$ and $B.(\diag(r'),\diag(s'),0,0)$ cannot be in the same $B$-orbit closure  
 for 
 if it were nonempty, then two such closed orbits would not be separated by $P$. 
Thus each $B$-orbit closure contains a unique closed orbit, and 
exactly 1 point of the form $(\diag(r),\diag(s),0,0)$. 
Thus $P$ is injective on orbit closures.

We will now show that $P$ is surjective. Consider $(x_{11},\ldots,x_{nn},y_{11},\ldots,y_{nn})$ in $\mathbb{C}^{2n}\setminus \Delta_n$. 
A point $(r,s,i,j)$ in $\mu^{-1}(0)^{rss}$ is constrained by $[r,s]+ij=0$. 
Since 
\[ 
([r,s]+ij)_{\iota\gamma} =
\left\{ 
\begin{aligned}
0 \;\;\;\;\;\;\;\; \;\;\;\;\;\; \;\;\;\; \;\;\;\;\;\;\;\;    &\mbox{ if }\iota < \gamma  \\  
\sum_{\iota <k\leq n} r_{\iota k}s_{k \iota} 
- \sum_{1\leq k< \iota } s_{\iota k}r_{k \iota}+ x_{\iota}y_{\iota} \;\;\;\; &\mbox{ if } \iota = \gamma \\ 
\sum_{\iota \leq k\leq n} r_{\iota k}s_{k\gamma} 
- \sum_{1\leq k\leq \gamma}s_{\iota k}r_{k\gamma}+ x_{\iota} y_{\gamma}\;\;    &\mbox{ if }\iota >\gamma, \\ 
\end{aligned}
\right. 
\] 
none 
 of 
 the 
 coordinate functions of $[r,s]+ij$ involve $s_{\iota\iota}$'s. So $s_{\iota\iota}$ is a free parameter. 
Take $s_{\iota\iota}$ to equal 
\[ 
\begin{aligned}
&\:\: s_{\iota\iota} 
=  y_{\iota\iota}
-\left[  
\sum_{\mu=\iota+1}^n \left( 
\dfrac{r_{\iota\mu} s_{\mu\iota} }{r_{\iota\iota}-r_{\mu\mu}} 
+ \sum_{v=1}^{\mu-\iota-1} \sum_{\iota < k_1 < \ldots < k_v<\mu} 
\dfrac{r_{\iota k_1}r_{k_v \mu} s_{\mu\iota} }{(r_{\iota\iota}-r_{k_1 k_1 })( r_{\iota\iota}-r_{\mu\mu})} \prod_{u=1}^{v-1} \dfrac{r_{k_u k_{u+1}}}{r_{\iota\iota}-r_{k_{u+1}k_{u+1}}}
\right)\right.  
\\
&+ 
\sum_{\gamma=1}^{\iota-1}\left[ 
   \dfrac{ r_{\gamma\iota} s_{\iota\gamma}  }{r_{\iota\iota}-r_{\gamma\gamma}}
+\sum_{\widetilde{v}=1}^{\iota-\gamma-1} 
\sum_{\gamma< l_1 < \ldots < l_{\widetilde{v}}<\iota}
\dfrac{r_{\gamma l_1} r_{l_{\widetilde{v}}\iota} s_{\iota \gamma} }{
(r_{\iota\iota}-r_{l_{\widetilde{v}}l_{\widetilde{v}}})(r_{\iota\iota}-r_{\gamma\gamma})} \prod_{\widetilde{u}=1}^{\widetilde{v}-1} 
\dfrac{r_{l_{\widetilde{u}}l_{\widetilde{u}+1}}}{r_{\iota\iota}-r_{l_{\widetilde{u}}l_{\widetilde{u}}} } 
 \right. \\
&+ \sum_{\mu=\iota+1}^n \dfrac{r_{\gamma\iota} r_{\iota\mu}s_{\mu\gamma}}{ (r_{\iota\iota}-r_{\gamma\gamma})( r_{\iota\iota}-r_{\mu\mu})}
+ \sum_{\mu=\iota+1}^n   \sum_{\widetilde{v}=1}^{\iota-\gamma-1} 
\sum_{\gamma < l_1 < \ldots < l_{\widetilde{v}}<\iota} 
 \dfrac{r_{\gamma l_1}r_{l_{\widetilde{v}} \iota} r_{\iota\mu} s_{\mu\gamma} }{(r_{\iota\iota}-r_{l_{\widetilde{v}}l_{\widetilde{v}} })(r_{\iota\iota}-r_{\gamma\gamma})(r_{\iota\iota}-r_{\mu\mu})} \prod_{\widetilde{u}=1}^{\widetilde{v}-1} \dfrac{r_{l_{\widetilde{u}} l_{\widetilde{u}+1} }}{r_{\iota\iota}-r_{l_{\widetilde{u}}l_{\widetilde{u}} }}  
\\ 
&+ \sum_{\mu=\iota+1}^n \sum_{v=1}^{\mu-\iota-1} 
\sum_{\iota < k_1 < \ldots < k_v < \mu} 
 \dfrac{r_{\gamma\iota} r_{\iota k_1}r_{k_v \mu}s_{\mu\gamma} }{
(r_{\iota\iota}-r_{\gamma\gamma}) (r_{\iota\iota}-r_{k_1 k_1})(r_{\iota\iota}-r_{\mu\mu})} 
\prod_{u=1}^{v-1} \dfrac{r_{k_u k_{u+1}}}{r_{\iota\iota}-r_{k_{u+1}k_{u+1}}} 
 \\
&+ 
 \sum_{\mu=\iota+1}^n 
 \sum_{v=1}^{\mu-\iota-1} 
\sum_{\iota < k_1 < \ldots < k_v < \mu} 
 \sum_{\widetilde{v}=1}^{\iota-\gamma-1} 
\sum_{\gamma < l_1<\ldots < l_{\widetilde{v}}<\iota } 
\dfrac{r_{\gamma l_1}r_{l_{\widetilde{v}}\iota} r_{\iota k_1} r_{k_v \mu}s_{\mu\gamma}
}{(r_{\iota\iota}-
r_{l_{\widetilde{v}}l_{\widetilde{v}}})(r_{\iota\iota}-r_{\gamma\gamma})
(r_{\iota\iota}-r_{k_1 k_1})(r_{\iota\iota}-r_{\mu\mu})} \cdot\\
&\left. \left. \;\;\;\;\;\;\;\cdot\prod_{u=1}^{v-1} \dfrac{r_{k_u k_{u+1}} }{r_{\iota\iota}-r_{k_{u+1}k_{u+1}}} 
 \prod_{\widetilde{u}=1}^{\widetilde{v}-1} 
\dfrac{r_{l_{\widetilde{u}}l_{\widetilde{u}+1}} }{r_{\iota\iota}-r_{l_{\widetilde{u}}l_{\widetilde{u}}} }  
 \right]\right],\\
\end{aligned}
\] 
and 
 take $r_{\iota\iota}$ to equal $x_{\iota\iota}$. 
Then a quadruple $(r,s,i,j)$ satisfying such conditions, whose $B$-orbit closure contains the unique point $(\diag(x_{\iota\iota}),\diag(y_{\iota\iota}),0,0)$, will map to $(x_{11},\ldots,x_{nn},y_{11},\ldots,y_{nn})$. 
\end{proof}

\section{The coordinate ring of the \texorpdfstring{$rss$}{rss}-locus}

We will prove that 
the coordinate ring of the affine quotient $\mu^{-1}(0)^{rss}/\!\!/B$ is isomorphic to the coordinate ring of pairwise diagonal matrices in $\mu^{-1}(0)^{rss}$.

\subsection{Changing coordinates}

\begin{definition}\label{definition:subdiagonal-level-k}
Let $(a_{\iota\gamma})$ be a matrix. {\em Level $k$ subdiagonal entries} consist of those coordinates $a_{\iota\gamma}$ below the main diagonal that satisfy $\iota-\gamma=k$. 
\end{definition}

\begin{example}\label{example:subdiagonal-level-k-nxn-matrix}
For an $n\times n$ matrix, level $0$ subdiagonal entries are precisely those along the main diagonal. Level $1$ subdiagonal entries are those immediately below the main diagonal. Level $n-1$ subdiagonal entry is the $(n,1)$-entry, in the lower left corner. 
\end{example}
 
In the next Proposition, we prove that for $\iota > \gamma$,   $(\mu(r,s,i,j))_{\iota\gamma}$
 may be solved for the coordinate function $s_{\iota\gamma}$, which depends on those $s_{ij}$ satisfying $i-j>\iota-\gamma$. That is,  each $s_{\iota\gamma}$ is a regular function of $s_{ij}$'s in level $k$ subdiagonal, where $k>\iota-\gamma$. 

\begin{proposition}\label{proposition:off-diagonal-coordinate-function-defining-variety}
For each $\iota>\gamma$, the coordinate function $([r,s]+ij)_{\iota\gamma}$ 
may be solved for $s_{\iota\gamma}$, which is in 
\[\im \left( \mathbb{C}[ 
\{r_{\iota j}\}_{\iota<j}, \{r_{i\gamma} \}_{i<\gamma},\{s_{ij}\}_{i-j>\iota-\gamma},
x_{\iota},y_{\gamma}][(r_{\iota\iota}-r_{\gamma\gamma})^{-1}]
\longrightarrow\mathbb{C}[\mu^{-1}(0)^{rss}]\right). 
\] 
\end{proposition}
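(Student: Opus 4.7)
The plan is to write out $([r,s]+ij)_{\iota\gamma}$ explicitly and isolate $s_{\iota\gamma}$ algebraically. Because $r$ is upper triangular, $r_{\iota k}=0$ whenever $k<\iota$, and because $s$ represents a class in $\mathfrak{b}^{*}=\mathfrak{gl}_{n}/\mathfrak{n}^{+}$ (so only the lower-triangular entries carry information), $s_{\iota k}=0$ whenever $k>\iota$. Using these vanishings, the commutator entry simplifies to
\[
([r,s])_{\iota\gamma} \;=\; \sum_{k\geq \iota} r_{\iota k}\,s_{k\gamma}\;-\;\sum_{k\leq \gamma} s_{\iota k}\,r_{k\gamma},
\]
and so the defining relation $([r,s]+ij)_{\iota\gamma}=0$ reads
\[
\sum_{k\geq \iota} r_{\iota k}\,s_{k\gamma} \;-\; \sum_{k\leq \gamma} s_{\iota k}\,r_{k\gamma} \;+\; x_{\iota}y_{\gamma}\;=\;0.
\]

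The key observation is that the unique term on the left containing $s_{\iota\gamma}$ comes from $k=\iota$ in the first sum and $k=\gamma$ in the second, contributing $r_{\iota\iota}s_{\iota\gamma}$ and $-s_{\iota\gamma}r_{\gamma\gamma}$ respectively. Factoring yields
\[
(r_{\iota\iota}-r_{\gamma\gamma})\,s_{\iota\gamma} \;=\; -\sum_{k>\iota} r_{\iota k}\,s_{k\gamma} \;+\;\sum_{k<\gamma} s_{\iota k}\,r_{k\gamma}\;-\;x_{\iota}y_{\gamma}.
\]
Since we are working on $\mu^{-1}(0)^{rss}$, the eigenvalues of $r$ are pairwise distinct, so $(r_{\iota\iota}-r_{\gamma\gamma})$ is invertible in $\mathbb{C}[\mu^{-1}(0)^{rss}]$, and one may divide to solve for $s_{\iota\gamma}$.

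It remains to verify that every variable appearing on the right-hand side belongs to the listed generating set. The $r$-coordinates occurring are $r_{\iota k}$ with $k>\iota$ (i.e.\ $\{r_{\iota j}\}_{\iota<j}$) and $r_{k\gamma}$ with $k<\gamma$ (i.e.\ $\{r_{i\gamma}\}_{i<\gamma}$), exactly as prescribed. For the $s$-coordinates: in the first sum $s_{k\gamma}$ has level $k-\gamma > \iota-\gamma$ (since $k>\iota$), and in the second sum $s_{\iota k}$ has level $\iota-k > \iota-\gamma$ (since $k<\gamma$). In both cases the index satisfies $i-j>\iota-\gamma$, as required. Finally, the matrix entries $x_{\iota}$ and $y_{\gamma}$ appear only through the product $x_{\iota}y_{\gamma}$. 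Thus $s_{\iota\gamma}$ lies in the image of the claimed localized polynomial ring, completing the proof.

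There is essentially no obstacle here beyond bookkeeping; the only subtlety is being careful that $s \in \mathfrak{b}^{*}$ is represented by its lower-triangular part, which is what forces the two sums to truncate as above and guarantees that the coefficient of $s_{\iota\gamma}$ is precisely $(r_{\iota\iota}-r_{\gamma\gamma})$ rather than something more complicated.
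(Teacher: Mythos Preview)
Your proof is correct and follows essentially the same approach as the paper: expand $([r,s]+ij)_{\iota\gamma}$ using the triangularity of $r$ and $s$, isolate the $(r_{\iota\iota}-r_{\gamma\gamma})s_{\iota\gamma}$ term, and divide. The paper's proof is slightly terser (it simply writes down the expansion and the resulting formula for $s_{\iota\gamma}$ without spelling out the level-check on the remaining $s_{ij}$'s), but the argument is identical.
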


\begin{proof}
For $\iota>\gamma$, the sequence of equalities 
\[  
0 =([r,s]+ij)_{\iota\gamma}= \sum_{\iota\leq j}r_{\iota j}s_{j \gamma}-\sum_{i\leq \gamma}s_{\iota i}r_{i\gamma}+x_{\iota}y_{\gamma} \\ 
=(r_{\iota\iota}-r_{\gamma\gamma} )s_{\iota\gamma} +\sum_{\iota <j}r_{\iota j}s_{j\gamma}-\sum_{i<\gamma}s_{\iota i}r_{i\gamma} + x_{\iota}y_{\gamma} 
\] 
 implies 
\[ 
   s_{\iota\gamma} =\dfrac{1}{r_{\gamma\gamma}-r_{\iota\iota}}\left( \sum_{\iota <j}r_{\iota j}s_{j\gamma}-\sum_{i<\gamma}s_{\iota i}r_{i\gamma} + x_{\iota}y_{\gamma}\right).   
\] 
\end{proof}

\begin{coordinatering}\label{coordinatering:coordinate-ring-off-diagonal}
We apply Proposition~\ref{proposition:off-diagonal-coordinate-function-defining-variety}  starting 
  from level $n-1$ subdiagonal of 
  $[r,s]+ij$, 
    add $(r_{nn}-r_{11})^{-1}$ to, and thus will be able to remove the parameter $s_{n1}$ from, the coordinate ring $\mathbb{C}[\mu^{-1}(0)^{rss}]$. 
     We then move to level $n-2$ subdiagonal and repeat the procedure by adding 
$(r_{n-1,n-1}-r_{11})^{-1}$ to the ring and then removing $s_{n-1,1}$, and then 
adding  
 $(r_{nn}-r_{22})^{-1}$ to the coordinate ring and then removing 
 $s_{n2}$. 
  Continue by moving up to the next subdiagonal.  
 \end{coordinatering}

\begin{corollary}\label{corollary:off-diag-coord-fn-defining-variety-iterated-steps}
It follows from Proposition~\ref{proposition:off-diagonal-coordinate-function-defining-variety} that $s_{\iota\gamma}$ is in 
\[ \begin{aligned} 
\im (\: \mathbb{C}[\{r_{ij}\}_{j>i\geq \iota\mbox{ or }i<j\leq \gamma}, 
&\{ x_k\}_{k\geq \iota},\{ y_l\}_{l\leq \gamma}][\{(r_{ii}-r_{jj})^{-1} \}_{i-j\geq \iota-\gamma} ] \\ 
& \longrightarrow \mathbb{C}[\mu^{-1}(0)^{rss}] \: ). \\
\end{aligned}
\] 
\end{corollary}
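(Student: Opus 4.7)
The plan is to argue by downward induction on the subdiagonal level $k := \iota-\gamma$, applying Proposition~\ref{proposition:off-diagonal-coordinate-function-defining-variety} once at each level and then substituting the inductive expressions for the $s_{ij}$ of strictly deeper level that appear on the right-hand side. This is exactly the iterative process described in Systematic Procedure~\ref{coordinatering:coordinate-ring-off-diagonal}, recast as a single structural statement about the rings in which the solved $s_{\iota\gamma}$ lives.

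\emph{Base case.} When $k=n-1$, the only pair is $(\iota,\gamma)=(n,1)$. Proposition~\ref{proposition:off-diagonal-coordinate-function-defining-variety} writes $s_{n1}$ in terms of $r_{nj}$ with $j>n$ (an empty set), $r_{i1}$ with $i<1$ (an empty set), $s_{ij}$ with $i-j>n-1$ (an empty set), together with $x_n$, $y_1$, and $(r_{nn}-r_{11})^{-1}$. This trivially sits in the asserted ring.

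\emph{Inductive step.} Assume the corollary is known for every pair $(i,j)$ with $i-j>k$, and fix $(\iota,\gamma)$ with $\iota-\gamma=k$. By Proposition~\ref{proposition:off-diagonal-coordinate-function-defining-variety},
\[
s_{\iota\gamma}=\tfrac{1}{r_{\gamma\gamma}-r_{\iota\iota}}\Bigl(\sum_{\iota<j}r_{\iota j}s_{j\gamma}-\sum_{i<\gamma}s_{\iota i}r_{i\gamma}+x_{\iota}y_{\gamma}\Bigr).
\]
Each $s_{j\gamma}$ appearing here satisfies $j-\gamma>k$, and each $s_{\iota i}$ satisfies $\iota-i>k$, so the inductive hypothesis applies. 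The key bookkeeping step is then to verify that when these inductive expressions are substituted, every generator and every inverted difference that appears lies in the target ring for $(\iota,\gamma)$. For an $s_{j\gamma}$ with $j>\iota$, induction places it in the ring generated by $r_{ab}$ with $b>a\geq j$ or $a<b\leq\gamma$, with inverses $(r_{aa}-r_{bb})^{-1}$ for $a-b\geq j-\gamma$, and with $x_l$ ($l\geq j$), $y_l$ ($l\leq\gamma$); since $j>\iota$, all of these conditions refine the ones required at level $k$. The analogous bookkeeping works for $s_{\iota i}$ with $i<\gamma$. Finally, the directly visible terms $r_{\iota j}$ ($j>\iota$), $r_{i\gamma}$ ($i<\gamma$), $x_{\iota}$, $y_{\gamma}$, and $(r_{\iota\iota}-r_{\gamma\gamma})^{-1}$ manifestly lie in the target ring.

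\emph{Main obstacle.} No deep difficulty is expected; the only real content is the indexing check in the inductive step, where one has to confirm that the ``deeper'' rings supplied by the inductive hypothesis embed into the ring prescribed at level $k$. The induction is valid precisely because both the $r$-generators and the set of inverted differences are monotone in the quantity $\iota-\gamma$, i.e.\ deeper subdiagonal levels demand strictly more restrictive index conditions than shallower ones. Once this monotonicity is recorded, the corollary follows immediately.
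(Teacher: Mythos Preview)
Your proposal is correct and follows exactly the approach of the paper, which simply says ``Exhaust Systematic Procedure~\ref{coordinatering:coordinate-ring-off-diagonal} recursively decreasing to the next sublevel (and thus moving closer to the main diagonal).'' You have unpacked this one-line proof into an explicit downward induction on the subdiagonal level, and your monotonicity bookkeeping (verifying that the deeper-level rings embed into the prescribed ring at level $k$) is precisely the content that the paper leaves implicit.
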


Corollary~\ref{corollary:off-diag-coord-fn-defining-variety-iterated-steps} shows each $s_{\iota\gamma}$ (where $\iota>\gamma$) may be solved so that it does not depend on any of the entries of $s\in\mathfrak{b}^*$. 

\begin{proof}
 Exhaust Systematic Procedure~\ref{coordinatering:coordinate-ring-off-diagonal} 
 recursively decreasing to the next sublevel (and thus moving closer to the main diagonal). 
\end{proof}

\begin{corollary}\label{corollary:diagonalfunction-matrix-variety}
After replacing 
 each parameter $s_{\mu\nu}$ in the coordinate function $([r,s]+ij)_{\iota\iota}$ by recursively applying Systematic Procedure~\ref{coordinatering:coordinate-ring-off-diagonal}, 
 we obtain that $([r,s]+ij)_{\iota\iota}$ is in the image 
\[ 
   \im(\mathbb{C}[r_{ij},\{ x_k\}_{k\geq \iota} , \{ y_l\}_{l\leq \iota} ] [(r_{ii}-r_{jj})^{-1}]\rightarrow \mathbb{C}[\mu^{-1}(0)^{rss}]). 
\] 
\end{corollary}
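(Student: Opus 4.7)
The plan is to expand $([r,s]+ij)_{\iota\iota}$ explicitly, identify exactly which off-diagonal parameters $s_{\mu\nu}$ occur, apply Corollary~\ref{corollary:off-diag-coord-fn-defining-variety-iterated-steps} to each of them, and then verify that all resulting indices on the variables $x_{k}$ and $y_{l}$ satisfy $k \geq \iota$ and $l \leq \iota$ respectively. The computation is essentially bookkeeping on the subscripts.

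First I would write
\[
([r,s]+ij)_{\iota\iota} \;=\; \sum_{\iota < k \leq n} r_{\iota k}\, s_{k\iota} \;-\; \sum_{1 \leq k < \iota} s_{\iota k}\, r_{k\iota} \;+\; x_{\iota} y_{\iota}.
\]
The $r$- and $x$/$y$-terms appearing directly already lie in the target ring (indeed $x_\iota$ has index $\iota \geq \iota$ and $y_\iota$ has index $\iota \leq \iota$). For the two families of subdiagonal parameters that appear, I would apply Corollary~\ref{corollary:off-diag-coord-fn-defining-variety-iterated-steps} with different index specializations.

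For $s_{k\iota}$ with $k > \iota$, the corollary places $s_{k\iota}$ in the image of $\mathbb{C}[\{r_{ij}\}_{j > i \geq k \text{ or } i < j \leq \iota},\ \{x_a\}_{a \geq k},\ \{y_b\}_{b \leq \iota}]$ localized at suitable $(r_{ii}-r_{jj})^{-1}$. Since $a \geq k > \iota$ forces $a \geq \iota$, the $x$-indices are compatible; the $y$-indices $b \leq \iota$ are already compatible. For $s_{\iota k}$ with $k < \iota$, the corollary places $s_{\iota k}$ in the image of $\mathbb{C}[\{r_{ij}\}_{j > i \geq \iota \text{ or } i < j \leq k},\ \{x_a\}_{a \geq \iota},\ \{y_b\}_{b \leq k}]$ localized at suitable denominators. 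Here the $x$-indices $a \geq \iota$ are already compatible, and $b \leq k < \iota$ gives $b \leq \iota$, so the $y$-indices are compatible. In both cases the denominators are of the form $(r_{ii}-r_{jj})^{-1}$ with $i \neq j$, which are permitted generators of the target localization.

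Substituting the resulting expressions into $([r,s]+ij)_{\iota\iota}$ and combining with the $r_{\iota k}$, $r_{k\iota}$ coefficients (which are polynomials in the $r_{ij}$'s, hence already lie in the target ring) yields an element in the image of $\mathbb{C}[r_{ij},\ \{x_k\}_{k \geq \iota},\ \{y_l\}_{l \leq \iota}][(r_{ii}-r_{jj})^{-1}]$, as required. The main thing to be careful about is that the index constraints on the $x$'s and $y$'s track correctly through the substitution; there is no genuine obstacle here, since both incoming families of $s$-parameters have at least one of their two indices equal to $\iota$, which is precisely what forces the $x$- and $y$-index bounds to match the statement.
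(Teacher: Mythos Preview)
Your proof is correct and follows the same approach as the paper, which simply invokes Corollary~\ref{corollary:off-diag-coord-fn-defining-variety-iterated-steps} and notes that the diagonal entries $s_{\iota\iota}$ do not appear in $([r,s]+ij)_{\iota\iota}$. Your version is more thorough in that you explicitly verify the index constraints $k\geq\iota$ and $l\leq\iota$ for each of the two families $s_{k\iota}$ and $s_{\iota k}$, whereas the paper leaves this check implicit.
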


\begin{proof}
This follows from Corollary~\ref{corollary:off-diag-coord-fn-defining-variety-iterated-steps} since for each $\iota > \gamma$,  $s_{\iota\gamma}$ may be solved 
so that it is independent of 
the coordinates of $s$, 
and also since 
$s_{\iota\iota}$ are not constrained under the moment map. 
\end{proof}

\begin{corollary}\label{corollary:diag-fn-defining-matrixvariety-explicit-eqn}
Writing 
  $F_{\iota}:=(\mu(r,s,i,j))_{\iota\iota}$, 
the image under the map given in Corollary~\ref{corollary:diagonalfunction-matrix-variety}
is  
\[ 
\begin{aligned}
&F_{\iota} 
=  x_{\iota}y_{\iota} 
+ \sum_{\mu=\iota+1}^n \left( 
\dfrac{r_{\iota\mu} x_{\mu}y_{\iota} }{r_{\iota\iota}-r_{\mu\mu}} 
+ \sum_{v=1}^{\mu-\iota-1} \sum_{\iota < k_1 < \ldots < k_v<\mu} 
\dfrac{r_{\iota k_1}r_{k_v \mu} x_{\mu}y_{\iota}  }{(r_{\iota\iota}-r_{k_1 k_1 })( r_{\iota\iota}-r_{\mu\mu})} \prod_{u=1}^{v-1} \dfrac{r_{k_u k_{u+1}}}{r_{\iota\iota}-r_{k_{u+1}k_{u+1}}}
\right) 
\\
&+ 
\sum_{\gamma=1}^{\iota-1}\left[ 
   \dfrac{ r_{\gamma\iota} x_{\iota} y_{\gamma} }{r_{\iota\iota}-r_{\gamma\gamma}}
+\sum_{\widetilde{v}=1}^{\iota-\gamma-1} 
\sum_{\gamma< l_1 < \ldots < l_{\widetilde{v}}<\iota}
\dfrac{r_{\gamma l_1} r_{l_{\widetilde{v}}\iota} 
x_{\iota} y_{\gamma} }{
(r_{\iota\iota}-r_{l_{\widetilde{v}}l_{\widetilde{v}}})(r_{\iota\iota}-r_{\gamma\gamma})} \prod_{\widetilde{u}=1}^{\widetilde{v}-1} 
\dfrac{r_{l_{\widetilde{u}}l_{\widetilde{u}+1}}}{r_{\iota\iota}-r_{l_{\widetilde{u}}l_{\widetilde{u}}} } 
 \right. \\
&+ \sum_{\mu=\iota+1}^n \dfrac{r_{\gamma\iota} r_{\iota\mu}
x_{\mu} y_{\gamma}
}{ (r_{\iota\iota}-r_{\gamma\gamma})( r_{\iota\iota}-r_{\mu\mu})}
+ \sum_{\mu=\iota+1}^n   \sum_{\widetilde{v}=1}^{\iota-\gamma-1} 
\sum_{\gamma < l_1 < \ldots < l_{\widetilde{v}}<\iota} 
 \dfrac{r_{\gamma l_1}r_{l_{\widetilde{v}} \iota} r_{\iota\mu} 
 x_{\mu} y_{\gamma}
 }{(r_{\iota\iota}-r_{l_{\widetilde{v}}l_{\widetilde{v}} })(r_{\iota\iota}-r_{\gamma\gamma})(r_{\iota\iota}-r_{\mu\mu})} \prod_{\widetilde{u}=1}^{\widetilde{v}-1} \dfrac{r_{l_{\widetilde{u}} l_{\widetilde{u}+1} }}{r_{\iota\iota}-r_{l_{\widetilde{u}}l_{\widetilde{u}} }}  
\\ 
&+ \sum_{\mu=\iota+1}^n \sum_{v=1}^{\mu-\iota-1} 
\sum_{\iota < k_1 < \ldots < k_v < \mu} 
 \dfrac{r_{\gamma\iota} r_{\iota k_1}r_{k_v \mu}
 x_{\mu} y_{\gamma}  
 }{
(r_{\iota\iota}-r_{\gamma\gamma}) (r_{\iota\iota}-r_{k_1 k_1})(r_{\iota\iota}-r_{\mu\mu})} 
\prod_{u=1}^{v-1} \dfrac{r_{k_u k_{u+1}}}{r_{\iota\iota}-r_{k_{u+1}k_{u+1}}} 
 \\
&+ 
 \sum_{\mu=\iota+1}^n 
 \sum_{v=1}^{\mu-\iota-1} 
\sum_{\iota < k_1 < \ldots < k_v < \mu} 
 \sum_{\widetilde{v}=1}^{\iota-\gamma-1} 
\sum_{\gamma < l_1<\ldots < l_{\widetilde{v}}<\iota } 
\dfrac{r_{\gamma l_1}r_{l_{\widetilde{v}}\iota} r_{\iota k_1} r_{k_v \mu} 
x_{\mu} y_{\gamma} 
}{(r_{\iota\iota}-
r_{l_{\widetilde{v}}l_{\widetilde{v}}})(r_{\iota\iota}-r_{\gamma\gamma})
(r_{\iota\iota}-r_{k_1 k_1})(r_{\iota\iota}-r_{\mu\mu})} \cdot\\
&\;\;\;\;\;\;\;\;\;\;\;\;\;\left. \prod_{u=1}^{v-1} \dfrac{r_{k_u k_{u+1}} }{r_{\iota\iota}-r_{k_{u+1}k_{u+1}}} 
 \prod_{\widetilde{u}=1}^{\widetilde{v}-1} 
\dfrac{r_{l_{\widetilde{u}}l_{\widetilde{u}+1}} }{r_{\iota\iota}-r_{l_{\widetilde{u}}l_{\widetilde{u}}} }  
 \right],\\
\end{aligned}
\] 
%
%
which is compactly written as $jL^{\iota}i$ with $L^{\iota}$ as defined in 
Notation~\ref{notation:coordinatefunction-of-product-of-rs}. 
\end{corollary}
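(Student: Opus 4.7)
The plan is to bypass the brute-force iterative substitution by using the operator identity already available from Section 5.2. The key observation is that $L^\iota$ is a polynomial in $r$ (up to the scalar $[\tr(\prod_{k\neq\iota} l_k(r))]^{-1}$ that depends only on the diagonal of $r$), and therefore commutes with $r$. Consequently
$$\tr\bigl(L^\iota[r,s]\bigr) \;=\; \tr\bigl(L^\iota rs\bigr) - \tr\bigl(L^\iota sr\bigr) \;=\; \tr\bigl(rL^\iota s\bigr) - \tr\bigl(L^\iota sr\bigr) \;=\; 0$$
by cyclicity. Combined with $\tr(L^\iota ij) = \tr(ijL^\iota) = jL^\iota i$, this yields the polynomial identity
$$\tr\bigl(L^\iota([r,s]+ij)\bigr) \;=\; jL^\iota i$$
in $\mathbb{C}[r,s,x,y]$ (localized at the $r_{ii}-r_{jj}$ so that $L^\iota$ itself is defined).

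I would then expand the trace as $\tr(L^\iota M) = \sum_{\gamma,\mu} L^\iota_{\gamma\mu} M_{\mu\gamma}$ with $M = [r,s]+ij$ and invoke Lemma 5.2.2 to restrict the sum to indices $\gamma \leq \iota \leq \mu$. Writing $F_{(\mu,\gamma)} := ([r,s]+ij)_{\mu\gamma}$, the identity becomes
$$\sum_{\gamma \leq \iota \leq \mu} L^\iota_{\gamma\mu} \, F_{(\mu,\gamma)} \;=\; jL^\iota i.$$
Next I would apply Systematic Procedure 6.4.6 to both sides. The right-hand side depends only on $r, x, y$ and is untouched by substitutions that affect only $s$. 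On the left, for each pair $\mu > \gamma$ the procedure solves $F_{(\mu,\gamma)} = 0$ (via Proposition 6.4.5) by defining $s_{\mu\gamma}$ as a rational function of the remaining variables; consequently $F_{(\mu,\gamma)}$ is sent to $0$ in the localized ring. The only surviving contribution is $\gamma = \mu = \iota$, for which $L^\iota_{\iota\iota} = 1$ by Lemma 5.2.2, and $F_{(\iota,\iota)}$ becomes the substituted expression $\widetilde{F_\iota}$. This gives $\widetilde{F_\iota} = jL^\iota i$ as elements of $\mathbb{C}[r,x,y][(r_{ii}-r_{jj})^{-1}]$, which is exactly the compact form claimed.

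To recover the displayed explicit expression I would simply expand $jL^\iota i = \sum_{\gamma,\mu} y_\gamma L^\iota_{\gamma\mu} x_\mu$ using the case-by-case formulas for $L^\iota_{\gamma\mu}$ given in Lemma 5.2.2 and group terms by the pair $(\gamma,\mu)$; each of the four bracketed blocks in the statement is then the $(\gamma,\mu)$-block of $y_\gamma L^\iota_{\gamma\mu} x_\mu$ read off directly from that lemma.

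The main obstacle will be making the substitution step rigorous: one must argue that Systematic Procedure 6.4.6 really does annihilate every $F_{(\mu,\gamma)}$ with $\mu > \gamma$ in the localized polynomial ring. This follows from an induction on the subdiagonal level (starting at $\mu - \gamma = n - 1$ and working up), using Proposition 6.4.5 and Corollary 6.4.7 at each stage, but the bookkeeping that guarantees all deeper $s_{\mu'\gamma'}$ appearing in the solved expression for $s_{\mu\gamma}$ have already been replaced must be carried out carefully. Once this telescoping is established, the trace argument above delivers the result without ever entering the intricate zigzag-path combinatorics that would otherwise arise from iterating Proposition 6.4.5 by hand.
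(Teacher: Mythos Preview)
Your approach is correct, and in fact cleaner than anything the paper makes explicit. The paper states this corollary without proof; implicitly it relies on the reader comparing the displayed formula term-by-term with the explicit computation of $\tr(L^\iota s)$ carried out in Proposition~\ref{proposition:deriving-diag-coordinates-of-bsb-inverse} (the formulas are identical under the substitution $s_{\mu\gamma}\mapsto x_\mu y_\gamma$), or else on carrying out the recursive substitution of Proposition~\ref{proposition:off-diagonal-coordinate-function-defining-variety} by hand and recognizing the result.

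Your trace-identity route is genuinely different and more conceptual. The key step---that $\tr(L^\iota[r,s])=0$ because $L^\iota$ is a polynomial in $r$ up to a diagonal scalar---collapses the entire combinatorics into a single line, and the observation that the nonvanishing entries $L^\iota_{\gamma\mu}$ force $\gamma\le\iota\le\mu$ (hence $\mu\ge\gamma$) ensures that only the moment-map constraints $F_{(\mu,\gamma)}$ with $\mu\ge\gamma$ appear in the expanded trace. Once the substitution homomorphism $\sigma$ annihilates every off-diagonal $F_{(\mu,\gamma)}$ (which, as you note, follows by induction on the subdiagonal level exactly as in Systematic Procedure~\ref{coordinatering:coordinate-ring-off-diagonal}), only the term $L^\iota_{\iota\iota}\cdot\widetilde{F_\iota}=\widetilde{F_\iota}$ survives on the left. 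The explicit formula then drops out of Lemma~\ref{lemma:coordinate-fns-of-product-of-lkr} by reading off $jL^\iota i=\sum_{\gamma\le\iota\le\mu}y_\gamma L^\iota_{\gamma\mu}x_\mu$, rather than emerging at the end of an iterated substitution. What the paper's implicit approach buys is directness (one literally performs the procedure and watches the formula appear); what yours buys is a structural explanation of \emph{why} the answer is $jL^\iota i$, and in particular why the formula here is the same as that of Trace~\ref{trace:map-from-B-rss-to-complex-space} with $s$ replaced by $ij$.
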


\subsection{B-invariant functions}\label{subsection:B-invariantfunctions}

We will show that $B$-invariant functions on $\mu^{-1}(0)^{rss}$ include $F_{\iota}$ (involving $r$, $i$, and $j$) as  in Proposition~\ref{proposition:diagonal-rss-natural-matrix-representation}, $G_{\iota}$ (involving $r$ and $s$) as in Proposition~\ref{proposition:B-invariant-functions-involving-s}, $H_{\iota}$
(involving $r$) as in Proposition~\ref{proposition:B-invariant-functions-diagonal-entries-of-r}, and $K_{\gamma\nu}$ (involving the inverse of the difference of the diagonal coordinates of $r$) as in Proposition~\ref{proposition:B-invariant-invert-difference-diag-r}. They are summarized in the box
\[ 
\framebox{$
\begin{aligned}
F_{\iota}(r,s,i,j) &=  \tr\left(j L^{\iota} i  \right)
			\\
G_{\iota}(r,s,i,j)&=
  \tr \left( L^{\iota} s\right) 
			\\ 
H_{\iota}(r,s,i,j)&=\tr(L^{\iota}r) 
			\\
K_{\gamma\nu}(r,s,i,j) &= [\tr((L^{\nu}-L^{\gamma})r)]^{-1} 
			\\
\mbox{ where } &1\leq \iota\leq n\mbox{ and }\: 1\leq \gamma< \nu \leq n. \\
\end{aligned}
$}  
\]

\begin{proposition}\label{proposition:diagonal-rss-natural-matrix-representation}
Denoting 
$F_{\iota}(r,s,i,j):= ([r,s]+ij)_{\iota\iota}$ from Corollary~\ref{corollary:diag-fn-defining-matrixvariety-explicit-eqn},  
\[ 
F_{\iota}(r,s,i,j) =  \left[
\tr\left( \prod_{
1\leq k\leq n, k\not=\iota 
} l_k(r) \right)\right]^{-1}  \tr\left(  j \left(\prod_{
1\leq k\leq n, k\not=\iota
} l_k(r) \right)i \right) 
\]  is $B$-invariant. 
\end{proposition}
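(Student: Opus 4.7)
The plan is to break the proposition into two steps: first, to identify the explicit rational expression for $F_\iota$ from Corollary~\ref{corollary:diag-fn-defining-matrixvariety-explicit-eqn} with the compact matrix-trace $j L^\iota(r) i$, and second, to deduce $B$-invariance of this compact form directly from the conjugation equivariance of $L^\iota$ already established in Lemma~\ref{lemma:Baction-on-Liota-operator}.

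For the first step, I would observe that $j L^\iota(r) i$ is a scalar (a $1\times 1$ matrix equal to its own trace), and that
\[ j L^\iota(r) i = \sum_{\gamma,\mu} y_\gamma\, L^\iota_{\gamma\mu}(r)\, x_\mu, \]
so the coefficient of $x_\mu y_\gamma$ is exactly $L^\iota_{\gamma\mu}(r)$. Comparing this, case-by-case, with the coefficient of $x_\mu y_\gamma$ appearing in the expanded formula for $F_\iota$ in Corollary~\ref{corollary:diag-fn-defining-matrixvariety-explicit-eqn}, and invoking the explicit coordinate description of $L^\iota_{\gamma\mu}(r)$ from Lemma~\ref{lemma:coordinate-fns-of-product-of-lkr}, gives a term-by-term match. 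The four nonvanishing cases ($\gamma=\iota=\mu$, $\gamma=\iota<\mu$, $\gamma<\iota=\mu$, $\gamma<\iota<\mu$) correspond respectively to the pure $x_\iota y_\iota$ term, the sum over $\mu>\iota$, the initial contribution inside $\sum_{\gamma=1}^{\iota-1}$, and the compound sum in the same bracket; the two vanishing cases ($\mu<\iota$ and $\gamma>\iota$) correspond to the absence of such terms in $F_\iota$. This identifies $F_\iota(r,s,i,j) = j L^\iota(r) i$, and since the scalar prefactor $[\tr(\prod_{k\neq \iota} l_k(r))]^{-1}$ is built into $L^\iota(r)$ itself, the equality in the proposition follows.

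For the second step, $B$-invariance is almost immediate. Under $b.(r,s,i,j) = (brb^{-1},\, bsb^{-1},\, bi,\, jb^{-1})$, Lemma~\ref{lemma:Baction-on-Liota-operator} gives $L^\iota(brb^{-1}) = b L^\iota(r) b^{-1}$. Substituting,
\[ F_\iota(b.(r,s,i,j)) = (jb^{-1})\bigl(b L^\iota(r) b^{-1}\bigr)(bi) = j L^\iota(r) i = F_\iota(r,s,i,j), \]
and the $s$-coordinate does not appear in $F_\iota$, so there is nothing further to verify.

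I expect the main obstacle to be purely clerical: the formula of Corollary~\ref{corollary:diag-fn-defining-matrixvariety-explicit-eqn} is long, with nested sums indexed by chains $\iota<k_1<\cdots<k_v<\mu$ and $\gamma<l_1<\cdots<l_{\widetilde{v}}<\iota$, and matching these to Lemma~\ref{lemma:coordinate-fns-of-product-of-lkr} requires care with indices and with the empty-sum/empty-product conventions from Notation~\ref{notation:empty-sum-empty-product-convention}. Once the identification $F_\iota = jL^\iota i$ is accepted, the $B$-invariance argument is a one-line cancellation of $b$ and $b^{-1}$.
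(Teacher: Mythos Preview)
Your proposal is correct and matches the paper's proof essentially verbatim on the $B$-invariance step: the paper applies Lemma~\ref{lemma:Baction-on-Liota-operator} to get $L^\iota(d.r)=dL^\iota(r)d^{-1}$ and cancels $d,d^{-1}$ in $\tr(jd^{-1}L^\iota(d.r)di)$, exactly as you do. Your first step (the term-by-term identification $F_\iota=jL^\iota i$) is more detailed than what the paper writes---the paper simply records this compact form at the end of Corollary~\ref{corollary:diag-fn-defining-matrixvariety-explicit-eqn} and treats it as already established---but your verification via Lemma~\ref{lemma:coordinate-fns-of-product-of-lkr} is correct and is implicitly what underlies that Corollary.
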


\begin{proof}
For any $d\in B$, 
\[\begin{aligned} 
F_{\iota}(d.(r,s,i,j)) &=  \tr(jd^{-1} L^{\iota}(d.r) di) \\
&= \tr(  j d^{-1} \: d L^{\iota} (r)  d^{-1} \:  d i)   \mbox{ by Lemma~\ref{lemma:Baction-on-Liota-operator}} \\
&=\tr(  jL^{\iota}(r) i) = F_{\iota}(r,s,i,j).  \\
\end{aligned} 
\] 
\end{proof}

\begin{proposition}\label{proposition:B-invariant-functions-involving-s}
Denoting $G_{\iota}(r,s,i,j):=s_{\iota\iota}'$ from  Trace~\ref{trace:map-from-B-rss-to-complex-space},  
\[ G_{\iota}(r,s,i,j)=
\left[ \tr\left( \prod_{
1\leq k\leq n, k\not=\iota 
} l_k(r) \right)\right]^{-1} 
  \tr \left(\prod_{ 
1\leq k\leq n, k\not=\iota 
} l_k(r) \; s\right)  
\] 
is $B$-invariant.  
\end{proposition}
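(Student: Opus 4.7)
The plan is to establish $B$-invariance by a short direct computation using Lemma~\ref{lemma:Baction-on-Liota-operator} together with cyclicity of the trace. Before this can be done, however, one must verify that the expression $\tr\bigl(L^{\iota}(r)\,s\bigr)$ gives a well-defined function on $\mathfrak{b}^{*}=\mathfrak{gl}_{n}/\mathfrak{n}^{+}$, since $s$ is only specified modulo strictly upper triangular matrices; I expect this to be the only non-formal step of the argument.

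For the well-definedness step, the plan is to invoke the explicit description of $L^{\iota}$ in Lemma~\ref{lemma:coordinate-fns-of-product-of-lkr}, which records that $L^{\iota}_{\gamma\mu}=0$ whenever $\mu<\iota$ or $\gamma>\iota$. Given any $n\in\mathfrak{n}^{+}$, one then computes
$$
\tr(L^{\iota}\,n)=\sum_{\gamma,\mu}L^{\iota}_{\gamma\mu}\,n_{\mu\gamma},
$$
and observes that a nonzero contribution would require simultaneously $\gamma\le\iota\le\mu$ (from $L^{\iota}_{\gamma\mu}\neq 0$) and $\mu<\gamma$ (from $n_{\mu\gamma}\neq 0$), which is incompatible. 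Hence $\tr(L^{\iota}\,s)$ depends only on the class of $s$ in $\mathfrak{b}^{*}$, and the formula for $G_{\iota}$ is well-posed.

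With that in hand, the $B$-invariance becomes essentially a one-line computation. For $d\in B$, Lemma~\ref{lemma:Baction-on-Liota-operator} gives $L^{\iota}(drd^{-1})=d\,L^{\iota}(r)\,d^{-1}$, and the coadjoint action on $\mathfrak{b}^{*}$ is represented by the class of $dsd^{-1}$, so
$$
G_{\iota}(d.(r,s,i,j))=\tr\bigl(L^{\iota}(drd^{-1})\cdot dsd^{-1}\bigr)=\tr\bigl(d\,L^{\iota}(r)\,s\,d^{-1}\bigr)=\tr\bigl(L^{\iota}(r)\,s\bigr)=G_{\iota}(r,s,i,j),
$$
using cyclicity of trace at the third equality. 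A minor point worth recording is that the normalizing scalar $[\tr(\prod_{k\neq\iota}l_{k}(r))]^{-1}$ appearing inside $L^{\iota}$ is itself $B$-invariant, because $\prod_{k\neq\iota}l_{k}(drd^{-1})=d\bigl(\prod_{k\neq\iota}l_{k}(r)\bigr)d^{-1}$ and trace is conjugation-invariant; this is implicitly what makes the identity in Lemma~\ref{lemma:Baction-on-Liota-operator} respect the normalization. The main obstacle, as noted, will be the well-definedness verification on $\mathfrak{b}^{*}$; everything after that is formal.
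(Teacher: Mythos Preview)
Your proof is correct and follows essentially the same route as the paper: apply Lemma~\ref{lemma:Baction-on-Liota-operator} to get $L^{\iota}(drd^{-1})=d\,L^{\iota}(r)\,d^{-1}$, then use cyclicity of trace. Your additional well-definedness check on $\mathfrak{b}^{*}=\mathfrak{gl}_{n}/\mathfrak{n}^{+}$ is a point the paper's proof leaves implicit (it simply works with lower-triangular representatives), so your version is in fact slightly more careful.
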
 

\begin{proof}
 For any $d\in B$, 
\[\begin{aligned} 
G_{\iota}(d.(r,s,i,j)) &= \tr(L^{\iota}(d.r) dsd^{-1}) \\
 &= \tr(d L^{\iota}(r)d^{-1} \:   dsd^{-1}) \mbox{ by Lemma~\ref{lemma:Baction-on-Liota-operator}}  \\ 
 &= \tr(d L^{\iota}(r)  sd^{-1}) \\ 
  &= \tr(L^{\iota}(r)  s) = G_{\iota}(r,s,i,j).\\  
\end{aligned} 
\] 
\end{proof}

\begin{proposition}\label{proposition:B-invariant-functions-diagonal-entries-of-r}
Denoting $H_{\iota}(r,s,i,j):=r_{\iota\iota}$ from Proposition~\ref{proposition:diag-entries-r-is-trace}, 
\[ H_{\iota}(r,s,i,j)=
\left[ \tr\left( \prod_{
1\leq k\leq n, k\not=\iota 
} l_k(r) \right)\right]^{-1} 
  \tr \left(\prod_{ 
1\leq k\leq n, k\not=\iota 
} l_k(r) \; r\right) 
\] is $B$-invariant. 
\end{proposition}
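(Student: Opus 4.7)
The plan is to recognize that the proposed formula is exactly $\tr(L^\iota r)$ in the notation of Notation~\ref{notation:coordinatefunction-of-product-of-rs}, and then to imitate verbatim the short arguments already used for Propositions~\ref{proposition:diagonal-rss-natural-matrix-representation} and \ref{proposition:B-invariant-functions-involving-s}. The identification $H_\iota(r,s,i,j) = \tr(L^\iota r) = r_{\iota\iota}$ is precisely the content of Proposition~\ref{proposition:diag-entries-r-is-trace}, so the formula for $H_\iota$ is correct and well-defined on $\mu^{-1}(0)^{rss}$ (the inverse of the trace exists there because the eigenvalues of $r$ are pairwise distinct, making $L^\iota$ a well-defined rank-one idempotent as in Lemma~\ref{lemma:lkrmatrices-orthogonal-rss}).

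For the $B$-invariance, I would write $H_\iota(r,s,i,j) = \tr(L^\iota(r)\, r)$ and, for any $d \in B$, compute
\[
H_\iota(d.(r,s,i,j)) \;=\; \tr\bigl(L^\iota(d.r)\cdot drd^{-1}\bigr) \;=\; \tr\bigl(d L^\iota(r) d^{-1}\cdot drd^{-1}\bigr) \;=\; \tr\bigl(L^\iota(r)\, r\bigr) \;=\; H_\iota(r,s,i,j),
\]
where the second equality is Lemma~\ref{lemma:Baction-on-Liota-operator} and the third is the cyclic property of the trace. Alternatively, and more transparently, $B$-invariance is an immediate consequence of Proposition~\ref{proposition:diagonal-being-invariant}, which asserts that the diagonal entries of any $r \in \mathfrak{b}$ are unchanged by $B$-conjugation; since $H_\iota = r_{\iota\iota}$, this gives $H_\iota(d.(r,s,i,j)) = (drd^{-1})_{\iota\iota} = r_{\iota\iota} = H_\iota(r,s,i,j)$.

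There is no main obstacle here: all the ingredients (the orthogonal projector identity $L^\iota r = r_{\iota\iota} L^\iota$ from Lemma~\ref{lemma:lkr-times-r-identity}, the equivariance of $L^\iota$ under $B$-conjugation from Lemma~\ref{lemma:Baction-on-Liota-operator}, and the trace interpretation of diagonal entries from Proposition~\ref{proposition:diag-entries-r-is-trace}) are already established in the preceding subsection. The proof is essentially a two-line application of these results, following the template of the two immediately preceding propositions.
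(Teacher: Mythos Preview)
Your proposal is correct and matches the paper's approach exactly: the paper's proof simply says the argument is analogous to Proposition~\ref{proposition:B-invariant-functions-involving-s} with $s$ replaced by $r$, which is precisely the conjugation-plus-trace computation you wrote out. Your alternative one-line observation via Proposition~\ref{proposition:diagonal-being-invariant} is also valid (and indeed appears as Corollary~\ref{corollary:B-invar-fns-operator-and-r} immediately after).
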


\begin{proof}
This proof is analogous to the proof of  Proposition~\ref{proposition:B-invariant-functions-involving-s}: to prove this Proposition, we would need to replace $s$ with $r$.  
\end{proof}   
 
\begin{corollary}\label{corollary:B-invar-fns-operator-and-r}
$H_{\iota}(r,s,i,j)$ in Proposition~\ref{proposition:B-invariant-functions-diagonal-entries-of-r} may be written as $e_{\iota}^* \: r\: e_{\iota}$, 
 where $e_{\iota}$ is the standard basis vector for $\mathbb{C}^n$ and $e_{\iota}^*$ is an elementary covector. 
\end{corollary}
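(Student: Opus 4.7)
The plan is to unwind the definitions and observe that this corollary is essentially a tautology once one recalls the content of Proposition~\ref{proposition:diag-entries-r-is-trace} and Proposition~\ref{proposition:B-invariant-functions-diagonal-entries-of-r}.

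First I would recall that by Proposition~\ref{proposition:B-invariant-functions-diagonal-entries-of-r}, $H_{\iota}(r,s,i,j) = r_{\iota\iota}$, the $(\iota,\iota)$-entry of $r$. Next I would observe the elementary fact of linear algebra that for any matrix $A = (a_{\mu\nu})$ and the standard basis vector $e_{\iota} \in \mathbb{C}^n$ with dual covector $e_{\iota}^*$, the product $e_{\iota}^* A e_{\iota}$ extracts the $(\iota,\iota)$-entry: indeed, $A e_{\iota}$ is the $\iota$-th column of $A$, and then $e_{\iota}^*$ applied to this column picks out its $\iota$-th component, which is $a_{\iota\iota}$. Applying this to $A = r$ yields $e_{\iota}^* r e_{\iota} = r_{\iota\iota} = H_{\iota}(r,s,i,j)$.

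There is no real obstacle here; the statement is a convenient reformulation of the already-established identity $r_{\iota\iota} = \tr(L^{\iota} r)$ (Proposition~\ref{proposition:diag-entries-r-is-trace}) in a form that parallels the shape of $F_{\iota} = \tr(j L^{\iota} i)$ from Proposition~\ref{proposition:diagonal-rss-natural-matrix-representation}, suggesting its later use in comparisons between these $B$-invariants. If desired, one can also see the identity directly from the matrix-representation formula for $L^{\iota}$: since $L^{\iota}_{\iota\iota} = 1$ and all other diagonal entries of $L^{\iota}$ vanish (Lemma~\ref{lemma:coordinate-fns-of-product-of-lkr}), and since $L^{\iota}$ commutes with $r$ with $(L^{\iota} r)_{\gamma\gamma} = r_{\iota\iota} L^{\iota}_{\gamma\gamma}$ (Lemma~\ref{lemma:lkr-times-r-identity}), we recover $\tr(L^{\iota} r) = r_{\iota\iota} = e_{\iota}^* r e_{\iota}$.
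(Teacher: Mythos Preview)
Your proof is correct and takes essentially the same approach as the paper: both observe that $H_\iota = r_{\iota\iota}$ by definition and that $e_\iota^* r e_\iota = r_{\iota\iota}$ is an elementary linear-algebra identity. The paper's version additionally re-verifies $B$-invariance directly from the expression $e_\iota^* r e_\iota$ via $(drd^{-1})_{\iota\iota} = d_{\iota\iota} r_{\iota\iota} d_{\iota\iota}^{-1} = r_{\iota\iota}$, but this is supplementary and not required by the corollary's statement.
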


\begin{proof}  
This is clear, and the product $e_{\iota}^* \: r\: e_{\iota}$ of matrices is $B$-invariant since for any $d\in B$, $(drd^{-1})_{\iota\iota}=d_{\iota\iota}r_{\iota\iota}d_{\iota\iota}^{-1}=r_{\iota\iota}$. 
\end{proof} 

\begin{proposition}\label{proposition:B-invariant-invert-difference-diag-r}
Denoting $K_{\gamma\nu}(r,s,i,j):= (r_{\nu\nu} - r_{\gamma\gamma})^{-1}$ 
from Proposition~\ref{proposition:difference-of-r-is-trace},  
where 
$1\leq \gamma < \nu \leq n$,  
\[ K_{\gamma\nu}(r,s,i,j) = [\tr((L^{\nu}-L^{\gamma})r)]^{-1}
\] is $B$-invariant.  
\end{proposition}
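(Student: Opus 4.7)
The plan is to deduce this proposition directly from Proposition~\ref{proposition:B-invariant-functions-diagonal-entries-of-r}, which is the only nontrivial input needed. That earlier proposition already shows that for each $\iota$, the function $H_{\iota}(r,s,i,j) = \tr(L^{\iota}r) = r_{\iota\iota}$ is $B$-invariant; the argument there used Lemma~\ref{lemma:Baction-on-Liota-operator} together with the cyclic property of the trace. Proposition~\ref{proposition:difference-of-r-is-trace} gives the identity $[\tr((L^{\nu}-L^{\gamma})r)]^{-1} = (r_{\nu\nu}-r_{\gamma\gamma})^{-1}$, so the content of the current proposition is that this rational function in $r$ extends to a well-defined $B$-invariant regular function on $\mu^{-1}(0)^{rss}$.

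First I would observe that on the regular semisimple locus $\mu^{-1}(0)^{rss}$ (cf.~Notation~\ref{notation:distinct-r-eigenvalues}), the diagonal entries $r_{11},\dots,r_{nn}$ of $r$ are pairwise distinct, so $r_{\nu\nu}-r_{\gamma\gamma}$ is a nonvanishing regular function whenever $\gamma \neq \nu$. Hence its reciprocal $(r_{\nu\nu}-r_{\gamma\gamma})^{-1}$ is a regular function on $\mu^{-1}(0)^{rss}$.

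Next, for any $d \in B$ I would compute
\begin{align*}
K_{\gamma\nu}(d.(r,s,i,j))
&= [\tr((L^{\nu}(d.r)-L^{\gamma}(d.r))\,drd^{-1})]^{-1} \\
&= [\tr((dL^{\nu}(r)d^{-1} - dL^{\gamma}(r)d^{-1})\,drd^{-1})]^{-1} \\
&= [\tr(d(L^{\nu}(r)-L^{\gamma}(r))r\,d^{-1})]^{-1} \\
&= [\tr((L^{\nu}(r)-L^{\gamma}(r))r)]^{-1}
= K_{\gamma\nu}(r,s,i,j),
\end{align*}
where the second equality uses Lemma~\ref{lemma:Baction-on-Liota-operator} and the fourth uses the cyclic invariance of the trace. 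Equivalently, one can just cite Proposition~\ref{proposition:B-invariant-functions-diagonal-entries-of-r} twice: both $H_{\nu}$ and $H_{\gamma}$ are $B$-invariant, so their difference $H_{\nu}-H_{\gamma}$ is $B$-invariant, and the reciprocal of a nonvanishing $B$-invariant function is $B$-invariant wherever it is defined.

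There is no genuine obstacle here; the main thing to check is that the restriction to the $rss$-locus is what makes the reciprocal regular, and that the invariance argument is just the combination of Lemma~\ref{lemma:Baction-on-Liota-operator}, the cyclicity of the trace, and the fact that inversion is compatible with the $B$-action on the multiplicative group of nonvanishing invariants.
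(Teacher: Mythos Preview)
Your proposal is correct and matches the paper's approach. The paper's proof is precisely your ``equivalently'' paragraph: it writes $(K_{\gamma\nu})^{-1} = \tr(L^{\nu}r) - \tr(L^{\gamma}r)$, notes each term is $B$-invariant (implicitly by Proposition~\ref{proposition:B-invariant-functions-diagonal-entries-of-r}), and then observes that the reciprocal of a nonvanishing $B$-invariant is $B$-invariant.
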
 

\begin{proof} 
For any $1\leq \gamma< \nu\leq n$, 
\[ 
\left( K_{\gamma\nu}(r,s,i,j)\right)^{-1} =  \tr(L^{\nu}r - L^{\gamma}r)  
= 
  \underbrace{\tr(L^{\nu}r)}_{B-\mbox{inv}} - \underbrace{\tr(L^{\gamma}r)}_{B-\mbox{inv}}    
\]  implies $(K_{\gamma\nu})^{-1}$ is $B$-invariant. Since it is never vanishing, 
$K_{\gamma\nu}$ is $B$-invariant. 
\end{proof} 

\begin{corollary}\label{corollary:B-invar-fns-difference-of-operator-and-r} 
 $K_{\gamma\nu}(r,s,i,j)$ in Proposition~\ref{proposition:B-invariant-invert-difference-diag-r} 
 may be written as 
$(e_{\nu}^* (r-r_{\gamma\gamma}I)e_{\nu})^{-1}$, where $e_\nu$ 
is the standard basis vector for $\mathbb{C}^n$ and $e_\nu^*$ is its dual.  
\end{corollary}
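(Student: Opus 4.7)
The plan is to observe that this is essentially a direct computation: evaluating the scalar $e_\nu^*(r - r_{\gamma\gamma}I)e_\nu$ simply extracts the $(\nu,\nu)$-entry of the matrix $r - r_{\gamma\gamma}I$. Since $r-r_{\gamma\gamma}I$ has diagonal entries $r_{ii}-r_{\gamma\gamma}$, the $(\nu,\nu)$-entry is precisely $r_{\nu\nu}-r_{\gamma\gamma}$. On the regular semisimple locus $\mathfrak{b}^{rss}$ (Notation~\ref{notation:distinct-r-eigenvalues}) the eigenvalues of $r$ are pairwise distinct, so this scalar is nonzero and its reciprocal makes sense.

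Thus I would first write
\[
e_\nu^*(r - r_{\gamma\gamma}I)e_\nu = (r - r_{\gamma\gamma}I)_{\nu\nu} = r_{\nu\nu} - r_{\gamma\gamma},
\]
and then invert both sides to obtain
\[
\bigl(e_\nu^*(r - r_{\gamma\gamma}I)e_\nu\bigr)^{-1} = (r_{\nu\nu} - r_{\gamma\gamma})^{-1} = K_{\gamma\nu}(r,s,i,j),
\]
where the last equality is just the definition of $K_{\gamma\nu}$ recalled from Proposition~\ref{proposition:B-invariant-invert-difference-diag-r}. The $B$-invariance was already established in that Proposition via the trace identity $(K_{\gamma\nu})^{-1} = \tr((L^\nu - L^\gamma)r)$, so nothing further is needed on that front; the present corollary is really just giving an alternative (and more elementary) matrix expression for the same $B$-invariant function.

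There is essentially no obstacle here: the statement is a one-line identity, parallel in spirit to Corollary~\ref{corollary:B-invar-fns-operator-and-r} where the identity $H_\iota = e_\iota^* r\, e_\iota$ was verified in the same way. The only point worth flagging is that, unlike the trace-of-projector formula, the expression $e_\nu^*(r - r_{\gamma\gamma}I)e_\nu$ is \emph{not} manifestly $B$-invariant on its own as a function on $\mathfrak{b}$; $B$-invariance is inherited only through the identification with $r_{\nu\nu}-r_{\gamma\gamma}$, which is $B$-invariant by Proposition~\ref{proposition:diagonal-being-invariant}. So the proof amounts to the entry computation, invocation of Proposition~\ref{proposition:diagonal-being-invariant} (or equivalently Proposition~\ref{proposition:B-invariant-invert-difference-diag-r}) for the invariance, and a remark that invertibility holds on the $rss$-locus.
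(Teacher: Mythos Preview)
Your proposal is correct and follows essentially the same approach as the paper: both extract the $(\nu,\nu)$-entry of $r-r_{\gamma\gamma}I$ to get $r_{\nu\nu}-r_{\gamma\gamma}$, then invert. The only minor difference is that the paper re-verifies $B$-invariance directly within the corollary by computing $(d(r-r_{\gamma\gamma}I)d^{-1})_{\nu\nu}=r_{\nu\nu}-r_{\gamma\gamma}$ for $d\in B$, whereas you defer this to Proposition~\ref{proposition:diagonal-being-invariant} and Proposition~\ref{proposition:B-invariant-invert-difference-diag-r}; both are equally valid.
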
  

\begin{proof}
This is clear, and the product of matrices 
$(e_{\nu}^* (r-r_{\gamma\gamma}I)e_{\nu})^{-1}$ is $B$-invariant since for any $d\in B$ and for any $\gamma < \nu$, 
$(d(r-r_{\gamma\gamma}I)d^{-1})_{\nu\nu} 
 =(drd^{-1}-r_{\gamma\gamma}I)_{\nu\nu} 
 = (drd^{-1})_{\nu\nu} -(r_{\gamma\gamma}I)_{\nu\nu}
=r_{\nu\nu}-r_{\gamma\gamma}$, which is never zero; so its inverse is well-defined. 
\end{proof}

\subsection{The initial ideal and regular sequence}

\begin{notation}\label{notation:passingfrom-regular-to-polynomial-concept}
We will define $z_{ij}^{(kl)}:= \dfrac{r_{kl}}{r_{ii}-r_{jj}} $ for $i\not= j$. 
\end{notation}


\begin{weight}\label{weight:weight-on-coordinate-ring} 
Using Notation~\ref{notation:passingfrom-regular-to-polynomial-concept}, the coordinate ring $\mathbb{C}[x_k,y_l,z_{ij}^{(kl)}]$ has the following integral weight on each variable: $\wt(x_k)= 1, \wt(y_l)=1, \wt(z_{ij}^{(kl)})=0$. 
\end{weight} 

\begin{monomial}\label{monomial:grobner-basis-monomial-ordering}
We fix a term order $>$ on $\mathbb{C}[x_k, y_l, z_{ij}^{(kl)}]$ via the following refinement: 
write 
\[ m= x_1^{a_1}\cdots x_n^{a_n}
y_n^{b_n}\cdots y_1^{b_1}
(z_{ij}^{(kl)})^{c_{ij}^{(kl)}} >_{\lex,\rev}
 x_1^{a_1'}\cdots x_n^{a_n'} 
 y_n^{b_n'}\cdots y_1^{b_1'}
 (z_{ij}^{(kl)})^{{c_{ij}^{(kl)}}'}=m'
\] 
  if
\[ (a_1,\ldots, a_n,b_n,\ldots, b_1,c_{ij}^{(kl)}) - 
  (a_1',\ldots, a_n', b_n',\ldots, b_1', c_{ij}^{(kl)'})>0 
\] in the sense that the left-most nonzero coordinate of the difference of the exponent vector is positive. 

We impose any ordering on the $z_{ij}^{(kl)}$ as long as they succeed the ordering on the $x_{\iota}$'s and the $y_{\gamma}$'s; thus, we will view them as constants, which coincide with their weights as imposed in Remark~\ref{weight:weight-on-coordinate-ring}. 
\end{monomial}	

We will write $>$ rather than $>_{\lex,\rev}$ throughout this section. 

\begin{remark}\label{remark:monomial-ordering-no-mention-of-total-ordering} 
Total ordering by total degree in Monomial Ordering~\ref{monomial:grobner-basis-monomial-ordering} does not need to be mentioned since each $F_{\iota}$ is a homogeneous quadratic function. Furthermore, if we want to view $z_{ij}^{(kl)}$ as a rational function (rather than as a constant) and impose an ordering, one may define such ordering by $\dfrac{f_1}{f_2}>\dfrac{g_1}{g_2}$ if $f_1 g_2 > f_2 g_1$. 
%
%
\end{remark} 

\begin{remark}\label{remark:monomial-ordering}
We have imposed lexicographical order on the $x_i$'s, and 
reversed the indices on the $y_i$'s and applied lex on the $y_i$'s
(caution: this is not the same as reverse lex order since that has infinite descending sequences; thus it is not a monomial order), with the ordering on the $x_i$'s preceding the $y_i$'s. Monomial Ordering~\ref{monomial:grobner-basis-monomial-ordering} of  $\mathbb{C}[x_k,y_l,z_{ij}^{(kl)}]$ is multiplicative (i.e., if $m>m'$, then $m\widetilde{m}> m'\widetilde{m}$) and artinian ($m>1$ for all nonunit monomials $m$). 
\end{remark}

\begin{lemma}\label{lemma:initial-term-of-each-F}
With respect to Monomial Ordering~\ref{monomial:grobner-basis-monomial-ordering}, the initial term $\In(F_{\iota})$ of each $F_{\iota}$ equals $x_{\iota}y_{\iota}$.
\end{lemma}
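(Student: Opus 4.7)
\smallskip

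The plan is to read off the monomials that actually appear in $F_{\iota}$ from the explicit expression given in Corollary~\ref{corollary:diag-fn-defining-matrixvariety-explicit-eqn}, and then to compare each of them against $x_\iota y_\iota$ one exponent-coordinate at a time using the lex/reverse-lex refinement of Monomial Ordering~\ref{monomial:grobner-basis-monomial-ordering}. Inspection of Corollary~\ref{corollary:diag-fn-defining-matrixvariety-explicit-eqn} shows that, after passing to the variables $x_k, y_l, z_{ij}^{(kl)}$ of Notation~\ref{notation:passingfrom-regular-to-polynomial-concept}, every monomial appearing in $F_\iota$ is of the form
\[
x_\mu\, y_\gamma \, M(z)
\]
with $\mu \ge \iota$, $\gamma \le \iota$, and $M(z)$ a monomial in the $z_{ij}^{(kl)}$; the term with $(\mu,\gamma)=(\iota,\iota)$ and $M(z)=1$ has coefficient $1$, and no other monomial in $F_\iota$ has the pair $(\mu,\gamma)=(\iota,\iota)$.

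Next I would split the comparison into the two possibilities for a competing monomial $x_\mu y_\gamma M(z)$. If $\mu > \iota$, then the $x$-block of the exponent vector of $x_\iota y_\iota$ has entries $0,\ldots,0,1,0,\ldots,0$ with the $1$ in position $\iota$, whereas the competitor has $0$ in position $\iota$ (its unique nonzero $x$-entry is in position $\mu > \iota$). The leftmost nonzero coordinate of the exponent-vector difference therefore sits in position $\iota$ of the $x$-block and equals $1$, so by Monomial Ordering~\ref{monomial:grobner-basis-monomial-ordering} we conclude $x_\iota y_\iota > x_\mu y_\gamma M(z)$. If instead $\mu=\iota$ and $\gamma < \iota$, the $x$-blocks agree and we move to the $y$-block, which is listed as $(b_n,b_{n-1},\ldots,b_1)$. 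For $x_\iota y_\iota$ one has $b_\iota=1$ and all other $b$'s zero; for $x_\iota y_\gamma M(z)$ one has $b_\gamma=1$ and $b_\iota=0$. Since $\iota > \gamma$, the coordinate $b_\iota$ precedes $b_\gamma$ in the ordering, so the leftmost nonzero coordinate of the difference is in position $b_\iota$ with value $+1$; thus $x_\iota y_\iota$ again wins.

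In both cases the presence of the $z$-factor $M(z)$ is irrelevant: because the $z$-block comes last in the refinement, the comparison is already decided in an earlier block, and in fact the $z$-variables carry weight $0$ by Remark (Weight Function)~\ref{weight:weight-on-coordinate-ring}. Since these two cases exhaust all monomials of $F_\iota$ other than $x_\iota y_\iota$ itself, and the coefficient of $x_\iota y_\iota$ is $1$, this yields $\In(F_\iota)=x_\iota y_\iota$.

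The argument is essentially bookkeeping, so the only real pitfall is being careful about the \emph{ordering} of the $y$-coordinates inside the exponent vector. Because the $y$'s are written in the reversed order $(b_n,\ldots,b_1)$, higher-index $y$'s dominate; this is exactly what makes $y_\iota$ beat $y_\gamma$ for $\gamma<\iota$ in the second case, which is the step where one could easily slip a sign or a direction.
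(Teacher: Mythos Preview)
Your proof is correct and follows essentially the same approach as the paper's: both read off from Corollary~\ref{corollary:diag-fn-defining-matrixvariety-explicit-eqn} that every monomial of $F_\iota$ has the form $x_\mu y_\gamma M(z)$ with $\mu\ge\iota$ and $\gamma\le\iota$, and then use the lex-on-$x$/reversed-lex-on-$y$ order to conclude that $x_\iota y_\iota$ dominates. Your case split $(\mu>\iota)$ versus $(\mu=\iota,\ \gamma<\iota)$ is a bit more explicit than the paper's phrasing in terms of multi-indices, but the content is the same.
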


\begin{proof} 
Since each monomial corresponds to a unique exponent vector, 
write the exponents of each monomial of $F_{\iota}$ as a pair of multi-indices $\mathbf{a}=(a_1,\ldots, a_n)$ and $\mathbf{b}=(b_1,\ldots, b_n)$, i.e., 
$\mathbf{a}$ and $\mathbf{b}$ 
may be thought of as column vectors living in $\mathbb{Z}_{\geq 0}^n$ (for the time being, we omit keeping track of $z_{ij}^{(kl)}$). 
It is clear by Corollary~\ref{corollary:diag-fn-defining-matrixvariety-explicit-eqn} that 
both $\mathbf{a}$ and $\mathbf{b}$ are in 
$\{e_1,\ldots, e_n \}$, where $e_i$ is the standard basis vector for $\mathbb{Z}^n$, since exactly one of the exponents for $x_{\gamma}$'s and one of the exponents for  $y_{\nu}$'s are nonzero for each monomial of $F_{\iota}$. Since the degree of each monomial of $F_{\iota}$ is 2, higher powers of $x_{\gamma}$ or $y_{\nu}$ cannot occur. 

Now for a fixed $F_{\iota}$, we inspect the monomials in Corollary~\ref{corollary:diag-fn-defining-matrixvariety-explicit-eqn} to conclude 
the inclusion of sets 
\[\{ \mathbf{a}: \mathbf{a} \mbox{ is the multi-index of some monomial of }F_{\iota} \}\supseteq \{e_{\iota},\ldots, e_{n} \}.  
\] 
 The vector $\mathbf{a}$ corresponding to the first term $x_{\iota}y_{\iota}$ is $e_{\iota}$ while all the other summations show that $\mathbf{a}$ is in $\{ e_{\iota+1},\ldots, e_n\}$.   
When $\mathbf{a} =e_{\iota}$, 
 the possibilities for its corresponding  $\mathbf{b}$-vector 
 take on all values $e_{1},\ldots, e_{\iota}$,  
 which one may check by looking at the monomials in 
 Corollary~\ref{corollary:diag-fn-defining-matrixvariety-explicit-eqn}.  
In order to determine the leading term, 
if $\mathbf{a}=e_{\alpha}$, we want $\alpha$ to be as small as possible 
 since we have imposed lex on the $x_{\gamma}$'s, and  
 if $\mathbf{b}=e_{\beta}$, we want $\beta$ to be as big as possible  
since we have imposed a reverse ordering on the $y_{\nu}$'s. 
Since $x_{\iota}y_{\iota}$ occurs once in $F_{\iota}$ with coefficient 1 with 
$c_{ij}^{(kl)}=0$, the initial term of $F_{\iota}$ is $x_{\iota}y_{\iota}$.  
\end{proof}


\begin{lemma}\label{lemma:initial-term-of-Fiota}
The initial terms of $\{ F_{\iota}\}_{1\leq \iota \leq n}$ form a regular sequence. 
\end{lemma}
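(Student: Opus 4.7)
The plan is to invoke Lemma~\ref{lemma:initial-term-of-each-F}, which identifies $\In(F_\iota) = x_\iota y_\iota$ under the chosen term order, and then prove directly that the monomial sequence $x_1 y_1,\, x_2 y_2,\, \ldots,\, x_n y_n$ is regular in the polynomial ring $R = \mathbb{C}[x_k, y_l, z_{ij}^{(kl)}]$. The crucial structural observation is that these monomials have pairwise disjoint supports: the pair of variables $\{x_\iota, y_\iota\}$ appearing in $\In(F_\iota)$ shares no variable with any $\{x_j, y_j\}$ for $j \ne \iota$, and none of them involves any of the $z$-variables. This disjointness is exactly the hypothesis that makes monomial sequences regular, and it should drive the entire argument.

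Concretely, I would fix $\iota$ and show that $x_\iota y_\iota$ is a nonzerodivisor on the quotient $R / I_{\iota-1}$, where $I_{\iota-1} := (x_1 y_1, \ldots, x_{\iota-1} y_{\iota-1})$. Suppose $x_\iota y_\iota \cdot f \in I_{\iota-1}$ for some $f \in R$. Expand $f = \sum_\alpha c_\alpha m_\alpha$ as a sum of distinct monomials with nonzero coefficients. Since $I_{\iota-1}$ is a monomial ideal, membership of $x_\iota y_\iota f$ in $I_{\iota-1}$ forces each individual monomial $x_\iota y_\iota m_\alpha$ to be divisible by some $x_j y_j$ with $j < \iota$. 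Because $x_j y_j$ involves neither $x_\iota$ nor $y_\iota$, the variables $x_\iota, y_\iota$ cannot contribute to this divisibility, so $x_j y_j$ must already divide $m_\alpha$ itself. Hence every $m_\alpha \in I_{\iota-1}$, which gives $f \in I_{\iota-1}$. This establishes the nonzerodivisor property at each stage, and thus $\{\In(F_\iota)\}_{1 \le \iota \le n}$ is a regular sequence.

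Finally, I would note that the sequence is nontrivial in the sense required for regularity (none of the $x_\iota y_\iota$ is a unit and the quotient $R/I_n$ is nonzero, since the $z$-variables remain free). No step looks genuinely difficult here: the only moving part is recognizing that disjointness of supports among monomial generators suffices, which is a standard and elementary fact for monomial ideals in a polynomial ring over a field. Thus the proof reduces essentially to citing Lemma~\ref{lemma:initial-term-of-each-F} and writing out the short disjoint-support argument above.
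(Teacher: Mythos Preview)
Your proposal is correct and follows essentially the same approach as the paper: both reduce the claim to Lemma~\ref{lemma:initial-term-of-each-F}, which identifies $\In(F_\iota) = x_\iota y_\iota$. The paper's proof is a one-line citation of that lemma, implicitly treating the regularity of the monomial sequence $x_1 y_1, \ldots, x_n y_n$ as obvious; you spell out the disjoint-support argument that justifies this, which is a reasonable addition of detail rather than a different route.
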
 

\begin{proof}
This follows from Lemma~\ref{lemma:initial-term-of-each-F}.  
\end{proof} 

\begin{lemma}\label{lemma:deriving-reg-seq-from-initialterms} 
The set $\{ F_{\iota}\}_{1\leq \iota\leq n}$ of functions is $\mathbb{C}[T^*(\mathfrak{b}\times \mathbb{C}^n)^{rss}]$-regular. 
\end{lemma}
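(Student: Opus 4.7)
The approach combines the regularity of the initial terms (Lemma~\ref{lemma:initial-term-of-Fiota}) with the standard commutative-algebra fact that the height of an ideal is at least the height of its initial ideal, together with Cohen--Macaulayness of the ambient ring. Because each $F_\iota$ is a polynomial whose leading term under Monomial Ordering~\ref{monomial:grobner-basis-monomial-ordering} has already been computed to be $x_\iota y_\iota$ (Lemma~\ref{lemma:initial-term-of-each-F}), and because $\{x_\iota y_\iota\}$ is manifestly a regular sequence, only the Gr\"obner-style height comparison and the descent to the localized ring need work.

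The plan is as follows. First, observe that $R := \mathbb{C}[T^*(\mathfrak{b}\times\mathbb{C}^n)^{rss}]$ is a localization of a polynomial ring (inverting the discriminant $\Delta := \prod_{i<j}(r_{ii}-r_{jj})$), hence is Cohen--Macaulay. In a Cohen--Macaulay ring, a sequence of $n$ elements is regular if and only if the ideal it generates has height exactly $n$; Krull's principal ideal theorem gives $\mathrm{ht}(F_1,\ldots,F_n)\le n$ automatically, so it suffices to prove the reverse inequality. Next, by Corollary~\ref{corollary:diag-fn-defining-matrixvariety-explicit-eqn} and Notation~\ref{notation:passingfrom-regular-to-polynomial-concept}, each $F_\iota$ sits inside the polynomial ring $S := \mathbb{C}[x_k, y_l, z_{ij}^{(kl)}]$ with initial term $x_\iota y_\iota$, and these initial terms form a regular sequence of height $n$ there. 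The classical flat Rees-type degeneration of each $F_\iota$ to $\In(F_\iota)$ (via the weight filtration coming from Remark~\ref{weight:weight-on-coordinate-ring}, refined to the lex order if needed) together with upper semicontinuity of fiber dimension yields $\mathrm{ht}_S(F_1,\ldots,F_n)\ge \mathrm{ht}_S(\In F_1,\ldots,\In F_n) = n$. Finally, the height bound transfers to $R$: the natural substitution $z_{ij}^{(kl)}\mapsto r_{kl}/(r_{ii}-r_{jj})$ identifies $S$ with a subring of $R$, and $R$ is obtained from this subring by the faithfully flat polynomial extension adjoining the $s_{ij}$ variables, so heights of ideals are preserved. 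Combining these facts with Cohen--Macaulayness of $R$ gives the regular-sequence conclusion.

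The main obstacle is making rigorous the passage from the formal polynomial ring $S$ (where the Gr\"obner degeneration is cleanest) to the localized geometric ring $R$, since the $z_{ij}^{(kl)}$ are not independent variables but rational functions of the $r_{ij}$. The cleanest resolution is to clear denominators at the outset: multiply each $F_\iota$ by an appropriate power $\Delta^{N_\iota}$ to obtain honest polynomials in $P := \mathbb{C}[r_{ij}, s_{ij}, x_k, y_l]$, verify that with respect to a lex order on $P$ that prioritizes $x$'s and $y$'s (treating the $r$'s as coefficients in the manner of Remark~\ref{remark:monomial-ordering-no-mention-of-total-ordering}) the leading terms remain proportional to $x_\iota y_\iota$, apply the standard Gr\"obner height bound in the polynomial ring $P$, and then invert $\Delta$ (a unit in $R$) to descend the conclusion to $R$. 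Once this is in place, the proof of Lemma~\ref{lemma:deriving-reg-seq-from-initialterms} amounts to stitching together the Cohen--Macaulay and height statements stated above.
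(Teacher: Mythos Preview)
The paper's entire proof is the one-line citation of Eisenbud, Proposition~15.15: in a polynomial ring over a field, if the initial terms of $f_1,\dots,f_n$ form a regular sequence then so do the $f_i$. The paper applies this in $S=\mathbb{C}[x_k,y_l,z_{ij}^{(kl)}]$ with the $z$'s treated as independent indeterminates, and leaves the passage from $S$ to $R=\mathbb{C}[T^*(\mathfrak{b}\times\mathbb{C}^n)^{rss}]$ implicit. Your proposal follows the same Cohen--Macaulay-plus-height template, and you are right to flag that passage as the actual content.

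That said, both transfers you propose break. The substitution $z_{ij}^{(kl)}\mapsto r_{kl}/(r_{ii}-r_{jj})$ does \emph{not} embed $S$ into $R$: already for $n\ge 3$ the element $z_{12}^{(12)}z_{13}^{(13)}-z_{12}^{(13)}z_{13}^{(12)}$ lies in the kernel, and in any case $R$ is not a polynomial extension of the image by the $s_{ij}$ alone (it also contains the individual $r_{kl}$). Your ``cleanest resolution'' via clearing denominators fails for a different reason: with any genuine monomial order on $P=\mathbb{C}[r_{ij},s_{ij},x_k,y_l]$ over $\mathbb{C}$, the initial term of $\Delta^{N_\iota}F_\iota$ is $x_\iota y_\iota$ times the leading monomial of $\Delta^{N_\iota}$, and these share a common factor. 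Hence the initial terms are not a regular sequence in $P$ and the height bound degenerates to $\ge 1$. Already for $n=2$ one gets $\In\bigl((r_{11}-r_{22})F_1\bigr)=r_{11}x_1y_1$ and $\In\bigl((r_{22}-r_{11})F_2\bigr)=-r_{11}x_2y_2$, both in the height-one prime $(r_{11})$. A correct transfer is fibrewise over $A=\mathbb{C}[r_{ij}][(r_{ii}-r_{jj})^{-1}]$: since the leading coefficient of each $F_\iota$ in $A[x_k,y_l]$ is the unit $1$, Eisenbud~15.15 applies over every residue field $\kappa(\mathfrak{p})$ of $A$ and gives fibre codimension $n$; the fibre-dimension inequality then bounds the codimension of $V(F_1,\dots,F_n)$ in $R$, and Cohen--Macaulayness of $R$ finishes.
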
  

\begin{proof} 
The $F_{\iota}$'s form a regular sequence since their initial terms form a regular sequence by Proposition 15.15 in \cite{MR1322960}.  
\end{proof}

\section{Proof of Proposition \texorpdfstring{$\ref{proposition:rss-locus-to-complex-space-B-invariant}$}{rss isomorphism of varieties}}

The following proves Proposition~\ref{proposition:rss-locus-to-complex-space-B-invariant}. 

\begin{proof}
We have 
\[ 
\begin{aligned}
\mathbb{C}[\mu^{-1}(0)^{rss}] &= \mathbb{C}[T^*(\mathfrak{b}\times\mathbb{C}^n)^{rss}
]/\! \left<\: (\mu(r,s,i,j))_{\iota\gamma} \:\right>  \\ 
&\cong \dfrac{\mathbb{C}[r_{\alpha\beta}, \: x_k, \: y_l ][(r_{\nu\nu}-r_{\gamma\gamma})^{-1}]}{
\left<\: ([r,s]+ij)_{\iota\iota} \:\right> } \otimes \mathbb{C}[s_{11},\ldots, s_{nn}]  \\ 
&= \dfrac{\mathbb{C}[r_{\alpha\beta}, \: x_k, \: y_l ][(r_{\nu\nu}-r_{\gamma\gamma})^{-1}]}{
\left<\: F_{\iota}(r,s,i,j) \: \right> } \otimes \mathbb{C}[s_{11},\ldots, s_{nn}],  
\end{aligned}
\] 
where the second isomorphism holds by Corollary~\ref{corollary:diagonalfunction-matrix-variety} and the third 
equality holds by Corollary~\ref{corollary:diag-fn-defining-matrixvariety-explicit-eqn}. 
The locus 
		$\mu^{-1}(0)^{rss}$ is a complete intersection by Lemma~\ref{lemma:deriving-reg-seq-from-initialterms}. 
By Propositions~\ref{proposition:diagonal-rss-natural-matrix-representation}, \ref{proposition:B-invariant-functions-involving-s},  \ref{proposition:B-invariant-functions-diagonal-entries-of-r}, and \ref{proposition:B-invariant-invert-difference-diag-r}, 
\[ \begin{aligned} 
\mathbb{C}[\mu^{-1}(0)^{rss}]^B &=
   \dfrac{ \mathbb{C}\left[
    F_{\iota}(r,s,i,j),  G_{\iota}(r,s,i,j), H_{\iota}(r,s,i,j)\right]
    \left[ K_{\gamma\nu}(r,s,i,j) 
 \right]}{
\left< F_{\iota}(r,s,i,j)\right> } \\
&\cong  \mathbb{C}\left[r_{11},\ldots, r_{nn}, s_{11}',\ldots,s_{nn}'  
\right]\left[(r_{\nu\nu}-r_{\gamma\gamma})^{-1} \right] \\   
&\cong \mathbb{C}[\mathbb{C}^{2n}\setminus \Delta_n]. 
\end{aligned}
\] 
\end{proof}

\section{Extension to a general flag}

The results in this paper may be generalized in the following way. 
Let $F: V_0 = \{ 0\}\subsetneq V_{\iota_1}\subsetneq \ldots \subsetneq V_{\iota_k}=\mathbb{C}^n$ be a flag with $\dim(V_{\iota_l})=d_{\iota_l}$ for each $1\leq l\leq k$. Let $n_{\iota_l}=d_{\iota_l}-d_{\iota_{l-1}}$ with $d_{\iota_0}:=0$. 
 Let  $P$ be the corresponding parabolic group fixing $F$, and let $\mathfrak{p}=\lie(P)$.

Analogous to the construction for the complete flag, consider $T^*(\mathfrak{p}\times\mathbb{C}^n)\stackrel{\mu_P}{\longrightarrow} \mathfrak{p}^*$ where $(r,s,i,j)\mapsto \ad_r^*(s) + \overline{ij}$. 
Then the Hamiltonian reduction $\mu_P^{-1}(0)/P$ is isomorphic to $T^*(G\times_P \mathfrak{p}\times \mathbb{C}^n/G  )$ where $G$ is the general linear group over $\mathbb{C}$. 
Using Notations~\ref{notation:distinct-r-eigenvalues} and \ref{notation:definition-of-Delta-n-set}, 
we believe that 
$\mu_P^{-1}(0)^{rss}/\!\!/P$ is scheme-theoretically isomorphic to the dense open subset $\mathbb{C}^{2n}\setminus \Delta_n$. 
Laying out the key ideas for such a proof,  we first view the parabolic subgroup $P$ whose block diagonal matrices are in 
$GL(n_{\iota_l},\mathbb{C})$, where $1\leq l\leq k$. 
There exists an element in $GL(n_{\iota_l},\mathbb{C})$ putting the corresponding entries $\End(V_{\iota_l})$ in $\mathfrak{p}$ 
 into an upper triangular form, i.e., $g_{n_{\iota_l}}r_{n_{\iota_l}}g_{n_{\iota_l}}^{-1}$ is upper triangular (this amounts to imposing a finer refinement of the fixed flag by ordering and choosing a basis). Putting 
\[ p = \left[ \begin{array}{ccccc}
g_{n_{\iota_1}} & *      & * & *  & *   \\
                & \ddots & * & *  &  *  \\ 
                &        & g_{n_{\iota_l}}& * & * \\ 
                &        & & \ddots & * \\ 
                &        & & & g_{n_{\iota_k}}  \\
\end{array} 
\right] \in P,  
 \] 
   $prp^{-1}$ is now in $\mathfrak{b}^{rss}$, and we are now working with $(prp^{-1},psp^{-1},pi,jp^{-1}) \in T^*(\mathfrak{b}\times \mathbb{C}^n)$.  
 Modify $b\in B$ in Proposition~\ref{proposition:distinct-ev-diagonalizable} appropriately (replace all coordinates in $r$ to those in $prp^{-1}$) to obtain that as $b$ diagonalizes $prp^{-1}$, we simultaneously obtain $P$-invariant functions coming from $psp^{-1}$. 
We believe that an analogous form of traces will appear as $P$-invariants, and by mimicking the above proof, we may conclude that $\mu_P^{-1}(0)^{rss}/\!\!/P\cong \mathbb{C}^{2n}\setminus \Delta_n$.

\chapter{Future direction and statements of conjectures}\label{chapter:future-direction}

\section{Reflection functors for filtered quiver varieties}

Relating the category of filtered quiver representations and the category of associated graded quiver representations remains an open problem. 
Furthermore, 
let $X$ be a general representation in $F^{\bullet}Rep(Q,\beta)$ and let $S_i^+$ and $S_i^-$ be reflection functors at vertex $i\in Q_0$. 
Analogous to the classical setting, 
it would be interesting to find appropriate conditions such that 
 $S_i^+ \gr X \cong \gr S_i^+ X$ and $S_i^- \gr X \cong \gr S_i^- X$ for any $i\in Q_0$.   
Furthermore, it is interesting to construct a new filtration  
$\sigma_iF^{\bullet}$ of $F^{\bullet}$ such that 
$\widehat{D}:F^{\bullet}Rep(Q,\beta)\rightarrow (\sigma_iF^{\bullet})Rep(Q^{op},\beta')$ is an isomorphism of varieties, where $\beta'$ need not equal $\beta$,  
and if $i$ is a sink, construct a map 
$S_i^+:F^{\bullet}Rep(Q,\beta)\rightarrow (\sigma_iF^{\bullet})Rep(\sigma_iQ,\sigma_i\beta)$ such that 
$\widehat{D}\circ S_i^+=S_i^-\circ \widehat{D}$ (cf. Theorem~\ref{theorem:wolf-reflection-functors}).

\section{Semi-invariants for generalized filtered quiver varieties}\label{section:semi-invariants-generalized-filtered-quiver-vars} 

\subsection{\texorpdfstring{$m$}{m}-Jordan quiver}\label{subsection:m-Jordan-quiver}

Consider the $m$-Jordan quiver $Q$ where $m\geq 2$: 
\[\xymatrix@-1pc{ \ar@(lu,ld)_{a_1} \stackrel{1}{\bullet}\ar@(lu,ru)^{\cdots} \ar@(ru,rd)^{a_m} \\ 
}.
\]  
The first question we would like to ask is describe   $\mathbb{C}[F^{\bullet}Rep(Q,\beta)]^{\mathbb{U}_{\beta}}$.  
In the case when   
$\beta=n$ and $F^{\bullet}$ is the complete standard filtration of vector spaces over vertex $1$, 
we believe that 
$\mathbb{C}[\mathfrak{b}^{\oplus m}]^{\mathbb{U}_{\beta}}$ is generated by polynomials of degrees $d$, for each $1\leq d\leq n$. 
We refer the reader to Section~\ref{section:comment-regarding-at-most-two-paths} for the example when $Q$ is the $2$-Jordan quiver $\xymatrix@-1pc{ \stackrel{1}{\bullet} \ar@(dl,ul)^{a_1} \ar@(ur,dr)^{a_2}  
}$  
and $\beta=2$. In that example, we produce $5$ algebraically independent polynomials invariant under the $U$-action,  
where $U\subseteq B\subseteq GL_2(\mathbb{C})$: four polynomials of degree $1$ and one polynomial of degree $2$.     
If $\mathbb{C}[\mathfrak{b}^{\oplus m}]^{\mathbb{U}_{\beta}}$ is generated by polynomials of degree $d$,  
where $1\leq d\leq n$,  
then the upper bound on the degrees of invariant polynomial generators is $n$. 
After writing down the relations among the generators, it is natural to identify the quotient space 
$\mathfrak{b}^{\oplus m}/\!\!/U$ with well-known algebraic spaces. 
We believe that for all $m\geq 1$, 
\[ 
\dim_{\mathbb{C}} \mathfrak{b}^{\oplus m}/\!\!/U = \dim_{\mathbb{C}}\mathfrak{b}^{\oplus m}-\dim_{\mathbb{C}} U.   
\] 
Finally, for any two characters $\chi$ and $\chi'$ of the standard Borel $B\subseteq GL_n(\mathbb{C})$, 
it would be interesting to construct birational maps 
\[ 
\xymatrix@-1pc{
\Proj(\bigoplus_{i\geq 0} \mathbb{C}[\mathfrak{b}^{\oplus m}]^{B,\chi^i})\ar@{-->}[rrr] & & &   
\Proj(\bigoplus_{i\geq 0} \mathbb{C}[\mathfrak{b}^{\oplus m}]^{B,{\chi'}^i})   \\ 
}
\]  and study the birational geometry of GIT quotients.

\subsection{\texorpdfstring{$k$}{k}-Kronecker quiver}\label{subsection:k-Kronecker-quiver} 

Consider the $k$-Kronecker quiver where $k\geq 3$: 
\[ 
\xymatrix@-1pc{
\stackrel{1}{\bullet} \ar@/^1pc/[rrr]^{a_1}_{\vdots} \ar@/_1pc/[rrr]_{a_k} & & & 
\stackrel{2}{\bullet}.  \\  
}  
\]  
We refer the reader to Section~\ref{section:comment-regarding-at-most-two-paths}  
for the example of the $3$-Kronecker quiver when $\beta=(3,3)$ and $F^{\bullet}$ is the complete standard filtration of vector spaces at each vertex.   
For the $k$-Kronecker quiver, 
a problem we would like to suggest is give a complete description of  $\mathbb{C}[F^{\bullet}Rep(Q,\beta)]^{\mathbb{U}_{\beta}}$.  
In the case when   
$\beta=(n,n)$ and $F^{\bullet}$ is the complete standard filtration of vector spaces, 
then we believe that $\mathbb{C}[\mathfrak{b}^{\oplus k}]^{U\times U}$ 
is generated by polynomials of degrees $\dfrac{d(d+1)}{2}$, for each $1\leq d\leq n$; 
if this is true, then the upper bound on the degrees of invariant polynomial generators is $n(n+1)/2$. 
Next, after finding all generators, 
write down the relations among the generators and identify the quotient space $\mathfrak{b}^{\oplus k}/\!\!/U\times U$ 
with well-known algebraic spaces. For $k>1$, we believe that 
\[ 
\dim_{\mathbb{C}} \mathfrak{b}^{\oplus k}/\!\!/U\times U =\dim_{\mathbb{C}} \mathfrak{b}^{\oplus k}-\dim_{\mathbb{C}}  U\times U,
\] 
and finally, for any two characters $\chi$ and $\chi'$ of $B^2$, it would be interesting to study birational maps 
\[ 
\xymatrix@-1pc{
\Proj(\bigoplus_{i\geq 0}\mathbb{C}[\mathfrak{b}^{\oplus k}]^{B\times B,\chi^i})\ar@{-->}[rrr] & & & 
\Proj(\bigoplus_{i\geq 0}\mathbb{C}[\mathfrak{b}^{\oplus k}]^{B\times B,{\chi'}^i}) \\
}  
\]   
to understand the birational geometry of GIT quotients.

\subsection{Generalized dimension vector, generalized flags, and quivers with more than \texorpdfstring{$2$}{2} pathways}\label{subsection:generalized-quiver-varieties}  

We refer the reader to Definition~\ref{definition:pathway-of-quiver} for the definition of a pathway at a vertex. 
It remains an open problem to vary the dimension vector $\beta\in \mathbb{Z}_{\geq 0}^{Q_0}$ and the filtration of vector spaces $F^{\bullet}$ 
to study the natural $\mathbb{P}_{\beta}$-action on $F^{\bullet}Rep(Q,\beta)$,  
where some vertices of $Q$ may have more than $2$ pathways.  

\begin{example} 
Consider an $A_3$-Dynkin quiver: $\xymatrix@-1pc{
\stackrel{1}{\bullet}\ar[rr]^{a_1} & & \stackrel{2}{\bullet} \ar[rr]^{a_2} & & \stackrel{3}{\bullet} }$
and consider $\beta=(3,3,3)$. Let $\gamma=(1,2,1)$, 
where $\mathbb{C}^{\gamma_i}$ is the space spanned by the first $\gamma_i$ standard basis vectors. 
Then a general representation $W$ in $F^{\bullet}Rep(Q,\beta)$ is of the form 
\[ 
W(a_1) = 
\begin{pmatrix} 
a_{11} & a_{12} & a_{13} \\ 
a_{21} & a_{22} & a_{23} \\ 
 0     & a_{32} & a_{33}  
\end{pmatrix}
\mbox{ and } 
W(a_2) = 
\begin{pmatrix}
c_{11} & c_{12} & c_{13} \\ 
  0    &   0    & c_{23} \\ 
  0    &   0    & c_{33}    
\end{pmatrix}.  
\] 
Under $\mathbb{P}_{\beta}=P_1\times P_2\times P_3$-action, where a general point in $\mathbb{P}_{\beta}$ has coordinates 
\[ p_{\beta_1} = 
\begin{pmatrix}
p_{11} & p_{12} & p_{13} \\ 
   0   & p_{22} & p_{23} \\ 
   0   & p_{32} & p_{33}  
\end{pmatrix}\in P_1,  \hspace{2mm}
p_{\beta_2} =  
\begin{pmatrix} 
q_{11} & q_{12} & q_{13} \\ 
q_{21} & q_{22} & q_{23} \\ 
   0   &   0    & q_{33}  
\end{pmatrix} \in P_2,  
\mbox{ and }  \hspace{2mm}
p_{\beta_3} =  
\begin{pmatrix}  
r_{11} & r_{12} & r_{13} \\ 
  0    & r_{22} & r_{23} \\ 
  0    & r_{32} & r_{33}  
\end{pmatrix}\in P_3,   
\] 
we believe that 
$\displaystyle{\bigoplus_{\chi}} \mathbb{C}[F^{\bullet}Rep(Q,\beta)]^{\mathbb{P}_{\beta},\chi} \cong  \mathbb{C}[\det(W(a_1))]$. 
\end{example}  

\section{Moment maps and GIT for filtered quiver varieties}\label{section:moment-map-filtered-quiver-varieties-open-problems} 

\subsection{\texorpdfstring{$B$}{B}-moment map}\label{subsection:B-moment-map-open-problems} 

Returning to the moment map $T^*(\mathfrak{b}\times \mathbb{C}^n)\stackrel{\mu_B}{\longrightarrow}\mathfrak{b}^*$, where $\mu_B:(r,s,i,j)\mapsto [r,s]+ij$, 
it would be interesting to identify the affine quotient $\mu_B^{-1}(0)/\!\!/B$ of the entire locus with other varieties.  
In this section, we will show the calculations to $\mu_B^{-1}(0)/\!\!/B$ for $n=2$ since $\mu_B^{-1}(0)$ is a complete intersection when $n=2$. So consider 
\[ 
\mu_B(r,s,i,j)=\left[ 
\begin{array}{cc}  
r_{12}s_{21}+x_1y_1 & * \\ 
(r_{22}-r_{11})s_{21} +x_2 y_1 & -r_{12}s_{21}+x_2 y_2 \\ 
\end{array}\right] \in \mathfrak{b}^* = \mathfrak{gl}_2/\mathfrak{u}^+. 
\] 
By Theorem~\ref{theorem:two-paths-max-quiver-semi-invariants-framed}, 
classical techniques are applicable for this $B$-action on $\mathfrak{b}\times \mathbb{C}^n$ setting (since the framed $1$-Jordan quiver has at most $2$ pathways at each vertex).  
So using Domokos-Zubkov's technique (Section~\ref{subsection:Domokos-Zubkov-technique}) and omiting those polynomials that vanish mod the ideal  
$\langle r_{12}s_{21}+x_1y_1, -r_{12}s_{21}+x_2 y_2, (r_{22}-r_{11})s_{21} +x_2 y_1 \rangle$, 
the following are $B$-invariant generators on $T^*(\mathfrak{b}\times \mathbb{C}^2)$: 
\[ 
\begin{aligned} 
&r_{11},  \hspace{2mm} r_{22},  \hspace{2mm} s_{11}+s_{22} = \Tr(s), \hspace{2mm} x_1y_1+x_2y_2 = \Tr(ij),  \\ 
f &= (r_{11}-r_{22})s_{11} +r_{12}s_{21}, \hspace{2mm}  
g = (r_{11}-r_{22})s_{22}-r_{12}s_{21},    \\ 
h &= r_{11}s_{11}+r_{22}s_{22}+r_{12}s_{21},\hspace{2mm}  
k = r_{11}s_{22}+r_{22}s_{11}-r_{12}s_{21}.    \\ 
\end{aligned} 
\] 
Since 
\[   
\begin{aligned}  
\mathbb{C}[\mu_B^{-1}(0)] = \dfrac{\mathbb{C}[T^*(\mathfrak{b}\times \mathbb{C}^2)]}{
I(\mu_B^{-1}(0))}  
&= \dfrac{\mathbb{C}[T^*(\mathfrak{b}\times \mathbb{C}^2)]}{
\langle r_{12}s_{21}+x_1 y_1, -r_{12}s_{21}+x_2 y_2, (r_{22}-r_{11})s_{21}+x_2 y_1\rangle} \\ 
&=  \dfrac{\mathbb{C}[T^*(\mathfrak{b}\times \mathbb{C}^2)]}{
\langle r_{12}s_{21}+x_1 y_1, x_1y_1 + x_2 y_2, (r_{22}-r_{11})s_{21}+x_2 y_1\rangle},  \\ 
\end{aligned} 
\] 
\[ 
\begin{aligned}
\mathbb{C}[\mu_B^{-1}(0)]^B 
&=  \mathbb{C}[\overline{r_{11}},\overline{r_{22}},\overline{s_{11}+s_{22}},\overline{x_1y_1+x_2y_2}, \overline{f},\overline{g},\overline{h},\overline{k}] \hspace{4mm}\mbox{ where }\overline{f}:=f\mod I(\mu_B^{-1}(0)) \\ 
&= \mathbb{C}[\overline{r_{11}},\overline{r_{22}},\overline{s_{11}+s_{22}}, 
\overline{f},\overline{g},\overline{h},\overline{k}]  \hspace{4mm} \mbox{ since } x_1y_1+x_2y_2\in  I(\mu_B^{-1}(0)) \\ 
&= \mathbb{C}[\overline{r_{11}},\overline{r_{22}},\overline{s_{11}+s_{22}}, \overline{f},\overline{g},\overline{h}]  \hspace{4mm} \mbox{ since } \overline{f}-\overline{g}-\overline{h}+\overline{k} = 0 \\ 
&= \mathbb{C}[\overline{r_{11}},\overline{r_{22}},\overline{s_{11}+s_{22}}, \overline{f},\overline{g}]  \hspace{4mm} 
\mbox{ since } \overline{g}-\overline{r_{11}}(\overline{s_{11}}+\overline{s_{22}})+\overline{h} = 0 \\ 
&= \dfrac{\mathbb{C}[R_1,R_2,T,S_1,S_2]}{
\langle 
R_1 S_1 + R_1 S_2 + R_1 R_2 T - R_1^2 T  
\rangle 
} 
= \dfrac{\mathbb{C}[R_1,R_2,T,S_1,S_2]}{
\langle R_1 (S_1 + S_2 + (R_2 - R_1) T) \rangle. 
} 
 \\ 
\end{aligned}
\]

In Remark~\ref{remark:hilbert-scheme-of-n-points-symmetric-product}, we write down the details to  
$\mu_G^{-1}(0)/\!\!/G$ for the classical setting (cf., \cite{MR2210660}, \cite{MR1711344})  
when $n=2$ since it is interesting to relate it to the affine quotient 
$\mu_B^{-1}(0)/\!\!/B$. 

\begin{remark}[$G$-moment map]\label{remark:hilbert-scheme-of-n-points-symmetric-product} 
Consider the $G:=GL_n(\mathbb{C})$-adjoint action on $\mathfrak{gl}_n\times \mathbb{C}^n$, i.e., $g.(r,i)=(grg^{-1},gi)$, which induces the moment map: 
\[ \mu_G:T^*(\mathfrak{gl}_n \times \mathbb{C}^n)\longrightarrow \mathfrak{g}^*, \hspace{4mm} \mu_G:(r,s,i,j)\mapsto [r,s]+ij. 
\] 
Using coordinates $\mathbb{C}[\mathbb{C}^2 \times \mathbb{C}^2]=\mathbb{C}[r_{11},r_{22},s_{11},s_{22}]$,  
we have 
$\mu_G^{-1}(0)/\!\!/G\cong S^2\mathbb{C}^2$ by Nakajima (see also Theorem~\ref{theorem:nakajima-git-and-affine-quotient-identification}), with 
\[ \begin{aligned} 
\mathbb{C}[S^2\mathbb{C}^2] &= \mathbb{C}[\mathbb{C}^2\times \mathbb{C}^2/S_2] =\mathbb{C}[\mathbb{C}^2\times \mathbb{C}^2]^{S_2} \\ 
&=\mathbb{C}[r_{11}+r_{22}, s_{11}+s_{22}, r_{11}^2+r_{22}^2, s_{11}^2+s_{22}^2, 
r_{11}s_{11}+r_{22}s_{22}] \\ 
&= \dfrac{\mathbb{C}[R_1,S_1,R_2,S_2,T]}{
\langle  
-2T^2+2R_1S_1T+2R_2S_2-S_2R_1^2-R_2S_1^2 
\rangle}.   
\end{aligned}  
\] 
So $S^2\mathbb{C}^2$ is a singular $4$ dimensional variety sitting in $\mathbb{C}^5$. 
\end{remark} 

Next, it is worth pointing out that it is interesting to study the singular locus of $\mu_B^{-1}(0)/\!\!/B$. 
Note that the singular locus of $\mu_B^{-1}(0)/\!\!/B$ is precisely the points where all the partials of 
$\phi$ vanish, where  
$\phi$ $=$ $R_1 S_1 + R_1 S_2 + R_1 R_2 T - R_1^2 T$. 
So 
\[ 
\begin{aligned} 
\phi_{R_1} &= S_1+S_2+R_2T - 2 R_1 T,  \hspace{4mm}
\phi_{R_2} = R_1 T,     \\ 
\phi_T 		&= R_1 (R_2 -R_1),  \hspace{4mm}
\phi_{S_1} = R_1, \hspace{4mm}
\phi_{S_2} = R_1,     \\ 
\end{aligned} 
\] 
imply the singular locus of the affine quotient consists of the points where 
$R_1=0$ and $S_1 + S_2 + R_2 T = 0$. 
In terms of the coordinates $r_{ii}$ and $s_{ii}$, $R_1=r_{11}=0$ while 
\[ 
S_1+S_2+R_2T = 
(r_{11}-r_{22})s_{11} + r_{12}s_{21} + (r_{11}-r_{22})s_{22}-r_{12}s_{21} +r_{22}(s_{11}+s_{22})=0, 
\] 
which simplifies as 
$(r_{11}-r_{22})(s_{11}+s_{22}) + r_{22}(s_{11}+s_{22})=0$. 
So the singular locus is when $r_{11}=0$.

\begin{remark}[Singular locus of the $G$-moment map]
Since 
\[ 
\mathbb{C}[\mu_G^{-1}(0)]^G 
= 
\dfrac{\mathbb{C}[R_1,S_1,R_2,S_2,T]}{
\langle  
-2T^2+2R_1S_1T+2R_2S_2-S_2R_1^2-R_2S_1^2 
\rangle}, 
\]  
we will calculate the partials of $\psi$ $=$ $-2T^2+2R_1S_1T+2R_2S_2-S_2R_1^2-R_2S_1^2$: 
\[   
\begin{aligned} 
\psi_{R_1} &= 2 S_1 T - 2S_2 R_1, \hspace{4mm}
\psi_{S_1} = 2 R_1 T -2 R_2 S_1,  \\ 
\psi_{R_2} &= 2 S_2 - S_1^2, \hspace{4mm}
\psi_{S_2} = 2 R_2 - R_1^2, \hspace{4mm}
\psi_T = 2 R_1 S_1 -4T. 
\end{aligned}
\]   
Singular locus is precisely when $S_2=S_1^2/2$, $R_2=R_1^2/2$, and $T=R_1S_1/2$. 
In terms of $r_{ii}$ and $s_{ii}$, 
we have 
$S_1^2/2=(s_{11}+s_{22})^2/2 = s_{11}^2+ s_{22}^2=S_2$. 
This gives 
$s_{11}^2+s_{22}^2+2s_{11}s_{22}=2s_{11}^2 + 2s_{22}^2$, which simplifies as 
$s_{11}^2-2s_{11}s_{22}+s_{22}^2=(s_{11}-s_{22})^2 = 0$. 
We also have 
$R_1^2/2=(r_{11}+r_{22})^2/2 = r_{11}^2+ r_{22}^2=R_2$. 
This gives 
$r_{11}^2+r_{22}^2+2r_{11}r_{22} = 2r_{11}^2 + 2r_{22}^2$, which simplifies as 
$r_{11}^2-2r_{11}r_{22}+r_{22}^2=(r_{11}-r_{22})^2 = 0$. 
Lastly, 
we have 
\[ \dfrac{R_1S_1}{2}=\dfrac{(r_{11}+r_{22})(s_{11}+s_{22})}{2} = r_{11}s_{11}+r_{22}s_{22} = T. 
\] 
Expanding the left-hand side and moving all terms to one side, we have 
$r_{11}s_{11}-r_{22}s_{11}-r_{11}s_{22}+r_{22}s_{22} = (r_{11}-r_{22})(s_{11}-s_{22})=0$. 
Thus the singular locus is precisely the points when $r_{11}=r_{22}$ and $s_{11}=s_{22}$. 
Geometrically, this is precisely the locus when the two points $(r_{11},s_{11})$ and $(r_{22},s_{22})$ on $\mathbb{C}^2$ are the same. 
\end{remark}

Next, we discuss the irreducible components of $\mu_B^{-1}(0)$. 
It is given in \cite{Nevins-GSresolutions} that $\mu_B^{-1}(0)$ has at least $2^n$ irreducible components. 
If the moment map $\mu_B$ is a complete intersection, then $\mu_B^{-1}(0)$ has at most $2^n$ irreducible components corresponding to the vector $i=e_k$ or the covector $j=e_k^*$, 
where $e_k$ ($e_k^*$) is an elementary vector (covector), $1\leq k\leq n$. 
After having shown that the generators of $I(\mu_B^{-1}(0))$ form a regular sequence,  
we may try to single out an irreducible component with a stability condition. 
That is, 
given characters $\chi$ and $\chi'$ of the standard Borel, 
construct the morphisms 
\[ 
\xymatrix@-1pc{
 & & \ar[dll]  \mu_B^{-1}(0)  \ar[drr] & & \\ 
\mu_B^{-1}(0)/\!\!/_{\chi}B \ar@{-->}[rrrr]^{\psi_{\chi,\chi'}} \ar[drr]& & & & \mu_B^{-1}(0)/\!\!/_{\chi'} B \ar[dll]  \\ 
   & & \mu_B^{-1}(0)/\!\!/B & & \\ 
}
\] 
and then fix $\chi(b)=\det(b)$ and relate the map 
$\mu_B^{-1}(0)/\!\!/_{\det}B\longrightarrow \mu_B^{-1}(0)/\!\!/B$  to the Hilbert-Chow morphism $(\mathbb{C}^2)^{[n]}\stackrel{HC}{\longrightarrow}S^n\mathbb{C}^2$; $\psi_{\chi,\chi'}$ is known as wall-crossing from one chamber to another. 
Since we have found at least one semi-invariant polynomial for each character $\chi$ of the Borel, 
this means  
$\mu_B^{-1}(0)/\!\!/_{\chi} B$ can never be the empty scheme.

\subsection{\texorpdfstring{$\mathbb{P}_{\beta}$}{P}-moment map}\label{subsection:P-moment-map-filtered-quiver-varieties}

Let $Q$ be any (framed) quiver and let $F^{\bullet}$ be any filtration of vector spaces. Let $\beta\in \mathbb{Z}_{\geq 0}^{Q_0}$  
be a dimension vector. 
Let $\mathbb{G}_{\beta} = \displaystyle{\prod_{i\in Q_0} GL_{\beta_i}(\mathbb{C})}$
and 
$\mathbb{P}_{\beta} = \displaystyle{\prod_{i\in Q_0} P_{\beta_i}}$, $P_{\beta_i}\subseteq GL_{\beta_i}(\mathbb{C})$, 
where $\mathbb{G}_{\beta}$ acts on $Rep(Q,\beta)$ and $\mathbb{P}_{\beta}$ acts on $F^{\bullet}Rep(Q,\beta)$ as a change-of-basis. 
Let $\widetilde{Rep(Q,\beta)}:=\{(W,F^{\bullet}Rep(Q,\beta))\in Rep(Q,\beta)\times \mathbb{G}_{\beta}/\mathbb{P}_{\beta}:W\in F^{\bullet}Rep(Q,\beta) \}$ 
be the {\em generalized Grothendieck-Springer space of pairs}  
and let $V$ be an $m$-dimensional complex vector space. 
Then 
$\mathbb{G}_{\beta} \times_{\mathbb{P}_{\beta}}F^{\bullet}Rep(Q,\beta)\cong \widetilde{Rep(Q,\beta)}$, 
$\widetilde{Rep(Q,\beta)}/\mathbb{G}_{\beta}\cong F^{\bullet}Rep(Q,\beta)/\mathbb{P}_{\beta}$ as orbit spaces, and 
$\mathbb{G}_{\beta}$-orbits on the cotangent bundle $T^*(\widetilde{Rep(Q,\beta)}\times V)$ of the extended generalized Grothendieck-Springer resolution correspond to 
$\mathbb{P}_{\beta}$-orbits on the cotangent bundle $T^*(F^{\bullet}Rep(Q,\beta)\times V)$ of the extended filtered quiver variety. 
This motivates us to generalize Section~\ref{subsection:B-moment-map-open-problems} and study the Hamiltonian reduction of the $\mathbb{P}_{\beta}$-equivariant moment map 
\[ 
\xymatrix@-1pc{
T^*(F^{\bullet}Rep(Q,\beta)\times V) \ar[rrr]^{\mu_{\mathbb{P}_{\beta}}} & & & \lie(\mathbb{P}_{\beta})^* 
}
\] 
using GIT and symplectic geometry techniques,  
where $\mu_{\mathbb{P}_{\beta}}$ is the moment map for $\mathbb{P}_{\beta}$-action on $F^{\bullet}Rep(Q,\beta)$. 
After determining the criteria for which $\mu_{\mathbb{P}_{\beta}}^{-1}(0)$ is a complete intersection,  
find the semi-stable locus $\mu_{\mathbb{P}_{\beta}}^{-1}(0)^{ss}$ 
 and analyze when 
 $\mu_{\mathbb{P}_{\beta}}^{-1}(0)^{ss}\rightarrow \mu_{\mathbb{P}_{\beta}}^{-1}(0)/\!\!/_{\chi}\mathbb{P}_{\beta} = 
 \displaystyle{\Proj(\bigoplus_{i\geq 0}\mathbb{C}[\mu_{\mathbb{P}_{\beta}}^{-1}(0)]^{\mathbb{P}_{\beta},\chi^i})}$ 
 is surjective for we know that projectivity and the existence of a proper map follows from classical GIT in the reductive setting.  
 Construct the maps  
 \[
 \xymatrix@-1pc{
 & & \mu_{\mathbb{P}_{\beta}}^{-1}(0)^{ss} \ar[dll] \ar[drr] & & \\ 
 \mu_{\mathbb{P}_{\beta}}^{-1}(0)/\!\!/_{\chi}\mathbb{P}_{\beta} 
 \ar[drr]\ar@{-->}[rrrr]^{\psi_{\chi,\chi'}} & & & & \mu_{\mathbb{P}_{\beta}}^{-1}(0)/\!\!/_{\chi'}\mathbb{P}_{\beta} \ar[dll] \\
 & & \mu_{\mathbb{P}_{\beta}}^{-1}(0)/\!\!/\mathbb{P}_{\beta} =\spec(\mathbb{C}[\mu_{\mathbb{P}_{\beta}}^{-1}(0)]^{\mathbb{P}_{\beta}})   & & \\ 
 } 
 \]   
 for any two characters $\chi$ and $\chi'$ of $\mathbb{P}_{\beta}$. 
Note that the moduli space parameterizes isomorphic classes of $\mathbb{P}_{\beta}$-orbits of $\chi$ (or $\chi'$)-semi-stable representations of $Q$ with dimension vector $\beta$. 

Finally, it may be interesting to determine when the space $\mathcal{M}_{\chi}^{F^{\bullet}}(Q,\beta):=F^{\bullet}Rep(Q,\beta)/\!\!/_{\chi}\mathbb{P}_{\beta}$ 
is a projective variety, and if $\beta$ is indivisible, i.e., it is not a nontrivial multiple of another integer vector,  
the stable locus 
$\mathcal{M}_{\chi}^{F^{\bullet}}(Q,\beta)^s$ 
is a fine moduli space for families of 
$\chi$-stable representations. 
Finally, stating a remark from \cite{MR1315461} (Remark 5.4), 
if $\beta$ is indivisible, then the set of generic characters  
(cf. Section~\ref{subsection:moduli-spaces})  
is dense, and for any such character $\chi$, we believe that  
$\mathcal{M}_{\chi}^{F^{\bullet}}(Q,\beta)^s  
= \mathcal{M}_{\chi}^{F^{\bullet}}(Q,\beta)$ is smooth.




\appendix*

\include{Appendix.tex}

\backmatter

\bibliographystyle{amsalpha} 

\setcounter{tocdepth}{2}

\bibliography{inv-and-semi-inv-of-all-filtered-quivers}

\def\cprime{$'$} \def\cprime{$'$}
\providecommand{\bysame}{\leavevmode\hbox to3em{\hrulefill}\thinspace}
\providecommand{\MR}{\relax\ifhmode\unskip\space\fi MR }
\providecommand{\MRhref}[2]{%
  \href{http://www.ams.org/mathscinet-getitem?mr=#1}{#2}
}
\providecommand{\href}[2]{#2}
\begin{thebibliography}{ABW82}

\bibitem[ABW82]{MR658729}
Kaan Akin, David~A. Buchsbaum, and Jerzy Weyman, \emph{Schur functors and
  {S}chur complexes}, Adv. in Math. \textbf{44} (1982), no.~3, 207--278.

\bibitem[BGP73]{MR0393065}
I.~N. Bern{\v{s}}te{\u\i}n, I.~M. Gel{\cprime}fand, and V.~A. Ponomarev,
  \emph{Coxeter functors, and {G}abriel's theorem}, Uspehi Mat. Nauk
  \textbf{28} (1973), no.~2(170), 19--33.

\bibitem[BK09]{MR2551762}
Jonathan Brundan and Alexander Kleshchev, \emph{Blocks of cyclotomic {H}ecke
  algebras and {K}hovanov-{L}auda algebras}, Invent. Math. \textbf{178} (2009),
  no.~3, 451--484.

\bibitem[Bri08]{Brion-rep-of-quivers}
Michel Brion, \emph{Representations of quivers},
  \url{http://www-fourier.ujf-grenoble.fr/~mbrion/notes_quivers_rev.pdf}, 2008.

\bibitem[Bru13]{Brundan-quiver-Hecke-algebras}
Jonathan Brundan, \emph{Quiver {H}ecke algebras and categorification},
  \url{http://arxiv.org/pdf/1301.5868v2}, 2013.

\bibitem[CB92]{Crawley-Boevey-rep-quivers}
William Crawley-Boevey, \emph{Lectures on representations of quivers},
  \url{http://www.maths.leeds.ac.uk/~pmtwc/quivlecs.pdf}, 1992.

\bibitem[CB01]{MR1834739}
\bysame, \emph{Geometry of the moment map for representations of quivers},
  Compositio Math. \textbf{126} (2001), no.~3, 257--293.

\bibitem[CdS01]{MR1853077}
Ana Cannas~da Silva, \emph{Lectures on symplectic geometry}, Lecture Notes in
  Mathematics, vol. 1764, Springer-Verlag, Berlin, 2001.

\bibitem[CG10]{MR2838836}
Neil Chriss and Victor Ginzburg, \emph{Representation theory and complex
  geometry}, Modern Birkh\"auser Classics, Birkh\"auser Boston Inc., Boston,
  MA, 2010, Reprint of the 1997 edition.

\bibitem[Cra11]{MR2772068}
Alastair Craw, \emph{Quiver flag varieties and multigraded linear series}, Duke
  Math. J. \textbf{156} (2011), no.~3, 469--500.

\bibitem[DH98]{MR1659282}
Igor~V. Dolgachev and Yi~Hu, \emph{Variation of geometric invariant theory
  quotients}, Inst. Hautes \'Etudes Sci. Publ. Math. (1998), no.~87, 5--56,
  With an appendix by Nicolas Ressayre.

\bibitem[DM69]{MR0262240}
P.~Deligne and D.~Mumford, \emph{The irreducibility of the space of curves of
  given genus}, Inst. Hautes \'Etudes Sci. Publ. Math. (1969), no.~36, 75--109.

\bibitem[DW00]{MR1758750}
Harm Derksen and Jerzy Weyman, \emph{Semi-invariants of quivers and saturation
  for {L}ittlewood-{R}ichardson coefficients}, J. Amer. Math. Soc. \textbf{13}
  (2000), no.~3, 467--479 (electronic).

\bibitem[DW02]{MR1958904}
\bysame, \emph{Semi-invariants for quivers with relations}, J. Algebra
  \textbf{258} (2002), no.~1, 216--227, Special issue in celebration of Claudio
  Procesi's 60th birthday.

\bibitem[DZ01]{MR1825166}
M.~Domokos and A.~N. Zubkov, \emph{Semi-invariants of quivers as determinants},
  Transform. Groups \textbf{6} (2001), no.~1, 9--24.

\bibitem[Eis95]{MR1322960}
David Eisenbud, \emph{Commutative algebra}, Graduate Texts in Mathematics, vol.
  150, Springer-Verlag, New York, 1995, With a view toward algebraic geometry.

\bibitem[Fog68]{MR0237496}
John Fogarty, \emph{Algebraic families on an algebraic surface}, Amer. J. Math
  \textbf{90} (1968), 511--521.

\bibitem[Gei06]{MR2258040}
Christof Gei{\ss}, \emph{Introduction to moduli spaces associated to quivers
  (with an appendix by {L}ieven {L}e {B}ruyn and {M}arkus {R}eineke)}, Trends
  in representation theory of algebras and related topics, Contemp. Math., vol.
  406, Amer. Math. Soc., Providence, RI, 2006, With an appendix by Lieven Le
  Bruyn and Markus Reineke, pp.~31--50.

\bibitem[GG06]{MR2210660}
Wee~Liang Gan and Victor Ginzburg, \emph{Almost-commuting variety,
  $\mathscr{D}$-modules, and {C}herednik algebras}, IMRP Int. Math. Res. Pap.
  (2006), 26439, 1--54, With an appendix by Ginzburg.

\bibitem[Gin98]{MR1649626}
Victor Ginzburg, \emph{Geometric methods in the representation theory of
  {H}ecke algebras and quantum groups}, Representation theories and algebraic
  geometry ({M}ontreal, {PQ}, 1997), NATO Adv. Sci. Inst. Ser. C Math. Phys.
  Sci., vol. 514, Kluwer Acad. Publ., Dordrecht, 1998, Notes by Vladimir
  Baranovsky [V. Yu. Baranovski{\u\i}], pp.~127--183.

\bibitem[Gin09]{Ginzburg-Nakajima-quivers}
\bysame, \emph{Lectures on {N}akajima's quiver varieties},
  \url{http://arxiv.org/pdf/0905.0686v2}, 2009.

\bibitem[GRB98]{MR1643864}
Markus Grassl, Martin R{\"o}tteler, and Thomas Beth, \emph{Computing local
  invariants of quantum-bit systems}, Phys. Rev. A (3) \textbf{58} (1998),
  no.~3, 1833--1839.

\bibitem[Gro97]{MR1489234}
Frank~D. Grosshans, \emph{Algebraic homogeneous spaces and invariant theory},
  Lecture Notes in Mathematics, vol. 1673, Springer-Verlag, Berlin, 1997.

\bibitem[GTL00]{MR1741551}
J.~G{\'o}mez-Torrecillas and F.~J. Lobillo, \emph{Global homological dimension
  of multifiltered rings and quantized enveloping algebras}, J. Algebra
  \textbf{225} (2000), no.~2, 522--533.

\bibitem[Har77]{MR0463157}
Robin Hartshorne, \emph{Algebraic geometry}, Springer-Verlag, New York, 1977,
  Graduate Texts in Mathematics, No. 52.

\bibitem[HM98]{MR1631825}
Joe Harris and Ian Morrison, \emph{Moduli of curves}, Graduate Texts in
  Mathematics, vol. 187, Springer-Verlag, New York, 1998.

\bibitem[HM11]{Hu-Mathas-quiver-schur-algebras-I}
Jun Hu and Andrew Mathas, \emph{Quiver {S}chur algebras {I}: linear quivers},
  \url{http://arxiv.org/pdf/1110.1699v3}, 2011.

\bibitem[HS04]{MR2073194}
Mark Haiman and Bernd Sturmfels, \emph{Multigraded {H}ilbert schemes}, J.
  Algebraic Geom. \textbf{13} (2004), no.~4, 725--769.

\bibitem[Hui06]{MR2221028}
Mark~E. Huibregtse, \emph{An elementary construction of the multigraded
  {H}ilbert scheme of points}, Pacific J. Math. \textbf{223} (2006), no.~2,
  269--315.

\bibitem[Kac80]{MR557581}
V.~G. Kac, \emph{Infinite root systems, representations of graphs and invariant
  theory}, Invent. Math. \textbf{56} (1980), no.~1, 57--92.

\bibitem[Kin94]{MR1315461}
A.~D. King, \emph{Moduli of representations of finite-dimensional algebras},
  Quart. J. Math. Oxford Ser. (2) \textbf{45} (1994), no.~180, 515--530.

\bibitem[KL09]{MR2525917}
Mikhail Khovanov and Aaron~D. Lauda, \emph{A diagrammatic approach to
  categorification of quantum groups. {I}}, Represent. Theory \textbf{13}
  (2009), 309--347.

\bibitem[KL11]{MR2763732}
\bysame, \emph{A diagrammatic approach to categorification of quantum groups
  {II}}, Trans. Amer. Math. Soc. \textbf{363} (2011), no.~5, 2685--2700.

\bibitem[Kos66]{MR0213476}
Bertram Kostant, \emph{Orbits, symplectic structures and representation
  theory}, Proc. {U}.{S}.-{J}apan {S}eminar in {D}ifferential {G}eometry
  ({K}yoto, 1965), Nippon Hyoronsha, Tokyo, 1966, p.~p. 71.

\bibitem[MFK94]{MR1304906}
D.~Mumford, J.~Fogarty, and F.~Kirwan, \emph{Geometric invariant theory}, third
  ed., Ergebnisse der Mathematik und ihrer Grenzgebiete (2) [Results in
  Mathematics and Related Areas (2)], vol.~34, Springer-Verlag, Berlin, 1994.

\bibitem[MMS72]{MR0398664}
F.~Jessie MacWilliams, Colin~L. Mallows, and Neil J.~A. Sloane,
  \emph{Generalizations of {G}leason's theorem on weight enumerators of
  self-dual codes}, IEEE Trans. Information Theory \textbf{IT-18} (1972),
  794--805.

\bibitem[MS10]{MR2592504}
Diane Maclagan and Gregory~G. Smith, \emph{Smooth and irreducible multigraded
  {H}ilbert schemes}, Adv. Math. \textbf{223} (2010), no.~5, 1608--1631.

\bibitem[Nak94]{MR1302318}
Hiraku Nakajima, \emph{Instantons on {ALE} spaces, quiver varieties, and
  {K}ac-{M}oody algebras}, Duke Math. J. \textbf{76} (1994), no.~2, 365--416.

\bibitem[Nak98]{MR1604167}
\bysame, \emph{Quiver varieties and {K}ac-{M}oody algebras}, Duke Math. J.
  \textbf{91} (1998), no.~3, 515--560.

\bibitem[Nak99]{MR1711344}
\bysame, \emph{Lectures on {H}ilbert schemes of points on surfaces}, University
  Lecture Series, vol.~18, American Mathematical Society, Providence, RI, 1999.

\bibitem[Nev11]{Nevins-GSresolutions}
Thomas Nevins, \emph{Stability and {H}amiltonian reduction for
  {G}rothendieck-{S}pringer resolutions},
  \url{http://www.math.uiuc.edu/~nevins/papers/b-hamiltonian-reduction-2011-0316.pdf},
  2011.

\bibitem[New09]{MR2537067}
P.~E. Newstead, \emph{Geometric invariant theory}, Moduli spaces and vector
  bundles, London Math. Soc. Lecture Note Ser., vol. 359, Cambridge Univ.
  Press, Cambridge, 2009, pp.~99--127.

\bibitem[Nit05]{MR2223407}
Nitin Nitsure, \emph{Construction of {H}ilbert and {Q}uot schemes}, Fundamental
  algebraic geometry, Math. Surveys Monogr., vol. 123, Amer. Math. Soc.,
  Providence, RI, 2005, pp.~105--137.

\bibitem[Rou08]{Rouquier-2-Kac-Moody-algebras}
Raphael Rouquier, \emph{2-{K}ac-{M}oody algebras},
  \url{http://arxiv.org/pdf/0812.5023v1}, 2008.

\bibitem[Sch91]{MR1113382}
Aidan Schofield, \emph{Semi-invariants of quivers}, J. London Math. Soc. (2)
  \textbf{43} (1991), no.~3, 385--395.

\bibitem[Sch92]{MR1162487}
\bysame, \emph{General representations of quivers}, Proc. London Math. Soc. (3)
  \textbf{65} (1992), no.~1, 46--64.

\bibitem[Slo77]{MR0424398}
N.~J.~A. Sloane, \emph{Error-correcting codes and invariant theory: new
  applications of a nineteenth-century technique}, Amer. Math. Monthly
  \textbf{84} (1977), no.~2, 82--107.

\bibitem[SvdB01]{MR1908144}
Aidan Schofield and Michel van~den Bergh, \emph{Semi-invariants of quivers for
  arbitrary dimension vectors}, Indag. Math. (N.S.) \textbf{12} (2001), no.~1,
  125--138.

\bibitem[SW11]{Stroppel-Webster-quiver-schur-algebras-q-fock-space}
Catharina Stroppel and Ben Webster, \emph{Quiver {S}chur algebras and
  $q$-{F}ock space}, \url{http://arxiv.org/pdf/1110.1115}, 2011.

\bibitem[Tha96]{MR1333296}
Michael Thaddeus, \emph{Geometric invariant theory and flips}, J. Amer. Math.
  Soc. \textbf{9} (1996), no.~3, 691--723.

\bibitem[Wol09]{wolf2009geometric}
Stefan Wolf, \emph{A geometric version of {BGP} reflection functors},
  \url{http://arxiv.org/pdf/0908.4244v1}, 2009.

\end{thebibliography}

%

\end{document}
\endinput